\DeclareMathOperator{\spa}{span}
\DeclareMathOperator{\ad}{ad}
\DeclareMathOperator{\im}{im}
\DeclareMathOperator{\End}{End}
\DeclareMathOperator{\sign}{sign}
\DeclareMathOperator{\supp}{supp}
\theoremstyle{definition}
\newtheorem{defi}{Definition}[section]
\theoremstyle{plain}
\newtheorem{theo}[defi]{Theorem}
\newtheorem{lem}[defi]{Lemma}
\newtheorem{cor}[defi]{Corollary}
\newtheorem{prop}[defi]{Proposition}
\theoremstyle{remark}
\newtheorem{rem}[defi]{Remark}
\newtheorem{expl}[defi]{Example}
\newcommand{\gradla}{\ensuremath{\underline{\mathbf{LA}}_\Gamma}}
\newcommand{\gce}[1]{\ensuremath{\mathfrak{gce}(#1)}}
\newcommand{\uce}[1]{\ensuremath{\mathfrak{uce}(#1)}}
\newcommand{\che}{\sp{\scriptscriptstyle\vee}} 
\newcommand{\card}[1]{\ensuremath{\mathrm{card}({#1})}}
\newcommand{\str}{\ensuremath{\mathfrak{der}}}
\newcommand{\instr}{\ensuremath{\mathfrak{ider}}}
\newcommand{\HF}{\ensuremath{\mathrm{HC}}}
\newcommand{\TKK}{\ensuremath{\mathrm{TKK}}}
\newcommand{\Hom}{\ensuremath{\mathrm{Hom}}}
\newcommand{\uTKK}{\ensuremath{\mathrm{ULE}}}
\newcommand{\uider}{\ensuremath{\mathfrak{uider}}}
\newcommand{\tw}{\ensuremath{ \tilde \wedge}}
\newcommand{\ud}{\ensuremath{ \mathfrak{ud}}}
\begin{document}
\sloppy


\author{Angelika Welte}
\frontmatter
\begin{titlepage}
\begin{center}
%


%
\ \\
\ \\
\ \\
{\huge \bfseries
Central Extensions of Graded Lie Algebras
\\}
\ \\
\ \\
{\large Angelika Welte}
\ \\
\ \\
\ \\ 
{\large \today} \\
\ \\
\ \\
\textsc{\Large
Thesis submitted to the \\
Faculty of Graduate and Postdoctoral Studies\\
In partial fulfillment of the requirements 
For the Doctoral degree in\\
Mathematics 
\ \\
}
\ \\
\begin{center}
\textsc{\Large
Ottawa-Carleton Institute for Graduate Studies and Research in Mathematics and
Statistics}
\end{center}
\ \\
\ \\
\ \\
\ \\
\ \\
\ \\
\ \\
\ \\
\ \\
\begin{center}
\copyright Angelika Welte, University of Ottawa, Canada, 2009
\end{center}
\footnotesize{\ }
%

%

%
\end{center}
\end{titlepage}

\ \\
\ \\
\ \\
\ \\
\ \\
\ \\
\ \\
\begin{center}
\textbf{Abstract}
\end{center}
\ \\
\ \\
\ \\
In this thesis we describe the universal central extension of two important classes of so-called root-graded Lie algebras defined over a commutative associative unital ring $k.$ Root-graded Lie algebras are Lie algebras which are graded by the root lattice of a locally finite root system and contain enough $\mathfrak{sl}_2$-triples to separate the homogeneous spaces of the grading. Examples include the infinite rank analogs of the simple
finite-dimensional complex Lie algebras. \\ In the thesis we show that in general the universal central extension of a root-graded Lie algebra $L$ is not root-graded anymore, but that we can measure quite easily how far it is away from being so, using the notion of degenerate sums, introduced by van der Kallen. We then concentrate on root-graded Lie algebras which are
graded by the root systems of type $A$ with rank at least 2 and of type $C$. For them one can use the theory of Jordan algebras.
\\
 Given a Jordan algebra $J$, we establish a functorial construction which produces a Lie algebra from $J$, called the universal Tits-Kantor-Koecher algebra of $J$. We are led to study the derivation algebras of Jordan algebras and alternative algebras. Under mild assumptions on the base ring $k$, it is proven that the Albert algebra (a Jordan algebra) and the octonion algebra (an alternative algebra) have derivation algebras which are isomorphic to exceptional Lie algebras of type $F_4$ and $G_2$ respectively. We also show that certain root-graded Lie algebras which are defined by the Albert algebra resp. the octonion algebra are simply connected, i.e., coincide with their central extensions. 
\chapter*{Acknowledgements}
I would like to thank my thesis supervisor, Dr. Erhard Neher. Not only did he provide the intellectual support for my research, he also proved to possess infinite patience. He  helped me order my thoughts, listened to my vague ideas and often explained my own results to me.  He was also unfailing when it came to pointing out my various typographical and mathematical errors. It has been  a great experience to work with him, I just hope he thinks the same.    \\
\ \\
I am grateful to my parents. It was not easy for them to see me leave for Canada. However, they have never criticized my decision and gave me all the loving and caring support possible.  \\
To my friends here and across the ocean. To someone special. You were simply there for me. Thank you. \\
\ \\
I also acknowledge financial support from the Department of Mathematics and Statistics and the University of Ottawa.\\

\tableofcontents

\mainmatter
\chapter{Introduction}
\section{Motivation}
Let us start by looking at two classical results from the theory of semisimple Lie algebras over the field of complex numbers.
 \paragraph{Root systems.}
 A finite (reduced) root system can be  defined as a pair consisting of a $\mathbb Q$-vector space $E$ and a finite subset $R$ of $E$ such that 
\begin{itemize}
\narrower
\item[] \rm{(i)} $0 \in R$ and $\spa_{\mathbb Q} R = E,$

\item[] \rm{(ii)} for every $\alpha \in R^\times := R\setminus \{0\}$ there exists an $\alpha \che$ in $E^*$ such that $\langle \alpha, \alpha \che\rangle =2$ and $s_\alpha(R) = R$ where the reflection $s_\alpha$ is defined by $s_\alpha(x) = x -\langle x, \alpha\che  \rangle \alpha $ for $x \in E,$
\item[] \rm{(iii)} $\langle \beta, \alpha \che\rangle \in \mathbb Z$ where $\langle \beta, \alpha \che\rangle = \alpha \che(\beta)$, $\alpha, \beta \in R.$
\end{itemize}
If  for $\alpha \in R$, the only multiples of $\alpha$ in $R$ are $-\alpha, 0, \alpha,$ then $(R, E)$ is called \emph{reduced.}
Usually (\cite{BouLie2}), $0$ is not assumed to be a root, but for our purposes the requirement $0 \in R$ is more convenient. Also, sometimes (iv) is omitted. \\
If $R$ can be written as union $R = R_1 \cup R_2$ of two root systems such that $R_1 \cap R_2 = \{0\}$ and $\langle R_1, R_2 \che\rangle = 0,$ then $R$ is called reducible. Otherwise $R$ is called irreducible. A root system is finite, if $R$ is finite as set. \\
Finite  reduced root systems are classified in \cite{Hum} and in \cite{BouLie2}. There are four infinite series of reduced irreducible root systems, usually denoted by $A_n, B_n, C_n$ and $D_n$ and five exceptional irreducible root systems, $E_6, E_7, E_8$,$F_4$ and $G_2.$ If (iv) in the definition of a root system is left out, there is another infinite series, denoted by $BC_n.$ The structure theory of simple finite dimensional Lie algebras over $\mathbb C$ is essentially the theory of these root systems.  The root system allows us to construct the Lie bracket on a so-called \emph{Chevalley basis} of the Lie algebra $L$ and the structure coefficients with respect to this basis are integers, which are (up to a sign) given by the root system. Furthermore, every root system corresponds to a unique Dynkin diagram and a unique Cartan matrix, with irreducible Dynkin diagrams belonging to irreducible root systems. From the Dynkin diagram we can then obtain a presentation of $L$ by generators and relations. 
 Root systems (combinatorial discrete objects) therefore hold the key to the structure of all simple finite dimensional Lie algebras over $\mathbb C.$ \\
\paragraph{Whitehead's Lemma and central extensions.}
Let $L$ be a Lie algebra.  A \emph{central extension} $f: L'\rightarrow L$ of $L$ is  an exact sequences of Lie algebras  
$$
\xymatrix{& 0 \ar[r]&  \ker(f) \ar[r]&  L' \ar[r]^f & L \ar[r]&  0
}
$$
with the property that $\ker(f) \subseteq Z(L')$. If $L'$ is a \emph{perfect} Lie algebra, i.e.  $L' = [L', L']$, then $f: L' \rightarrow L$ is called a \emph{covering}. \\
Central extensions form a category and if $L$ is perfect, then $L$ admits a unique \emph{universal central extension} $ u : \mathfrak{uce}(L) \rightarrow L$. This means that $u: \mathfrak{uce} \rightarrow L$ is a central extension such that there is a unique morphism of central extensions from the central extension $u : \mathfrak{uce}(L) \rightarrow L$ to any other central extension $f: L ' \rightarrow L$ of $L.$ For more details on how to construct a universal central extension see Definition~\ref{uce_def} in this thesis.  The map that takes a perfect Lie algebra $L$ to the Lie algebra $\mathfrak{uce}(L)$ is a functor. A Lie algebra $L$ is called \emph{simply connected} if $L$ is perfect and $u: \mathfrak{uce}(L) \rightarrow L$ is an isomorphism of Lie algebras. 
We can formulate Whitehead's Lemma (see for example~\cite[Cor 7.9.5]{weibel}) as follows:
\begin{lem}
Every simple finite dimensional complex Lie algebra $L$ is simply connected. 
\end{lem}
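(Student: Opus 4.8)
The plan is to verify directly the two conditions in the definition of simple connectedness: that $L$ is perfect, and that the canonical map $u\colon\mathfrak{uce}(L)\to L$ is an isomorphism. The first is immediate: a simple Lie algebra is by definition non-abelian, so $[L,L]$ is a nonzero ideal of $L$, hence equals $L$. Thus $L$ is perfect and $\mathfrak{uce}(L)$ exists. Moreover, from the construction of $\mathfrak{uce}(L)$ as a subquotient of $\Lambda^2 L$ (see Definition~\ref{uce_def}), $\mathfrak{uce}(L)$ is again finite-dimensional --- a point we shall need below. (Alternatively one may use that $\ker u\cong H_2(L,\mathbb{C})$, finite-dimensional as the homology of the finite-dimensional Chevalley--Eilenberg complex.)

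Write $Z:=\ker u$; then $Z$ is central in $\mathfrak{uce}(L)$ and $\mathfrak{uce}(L)$ is perfect. The crucial point is that $L$ acts on $\mathfrak{uce}(L)$: for $x\in L$ choose any preimage $\tilde x\in\mathfrak{uce}(L)$ and set $x\cdot\xi:=[\tilde x,\xi]$. This is independent of the choice of $\tilde x$ because $Z$ is central, and since $u$ is a homomorphism, $[\tilde x,\tilde y]$ is a preimage of $[x,y]$; together with the Jacobi identity in $\mathfrak{uce}(L)$ this shows that the formula makes the finite-dimensional space $\mathfrak{uce}(L)$ into an $L$-module. By construction $Z$ is an $L$-submodule on which $L$ acts trivially, and more generally an $L$-submodule of $\mathfrak{uce}(L)$ is exactly the same thing as an ideal of $\mathfrak{uce}(L)$.

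Now I would invoke Weyl's theorem on complete reducibility: since $L$ is a finite-dimensional semisimple Lie algebra over a field of characteristic $0$, every finite-dimensional $L$-module is semisimple. Hence there is an $L$-submodule $V$ with $\mathfrak{uce}(L)=Z\oplus V$. As just noted, $V$ is an ideal, and because $Z$ is central we get $[\mathfrak{uce}(L),\mathfrak{uce}(L)]=[V,V]\subseteq V$. But $\mathfrak{uce}(L)$ is perfect, so $\mathfrak{uce}(L)=[\mathfrak{uce}(L),\mathfrak{uce}(L)]\subseteq V$, which forces $V=\mathfrak{uce}(L)$ and hence $Z=Z\cap V=0$. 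Therefore $u$ is an isomorphism and $L$ is simply connected.

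The only genuinely substantive input here is Weyl's complete reducibility theorem --- equivalently, the vanishing $H_2(L,\mathbb{C})=0$ --- whose proof relies on the non-degeneracy of the Killing form (Cartan's criterion) and the associated Casimir operator; I expect this to be the main obstacle, in the sense that everything else is formal bookkeeping with the universal property of $\mathfrak{uce}(L)$ and its perfectness. A secondary point to keep straight is the finite-dimensionality of $\mathfrak{uce}(L)$, needed so that Weyl's theorem applies in its standard form.
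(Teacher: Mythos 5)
Your argument is correct and complete: lift the $L$-action to $\mathfrak{uce}(L)$, observe it is a finite-dimensional $L$-module with $\ker u$ a trivial submodule, split it off by Weyl's complete reducibility theorem, and use perfectness of $\mathfrak{uce}(L)$ to kill the trivial summand. This is the standard proof of Whitehead's vanishing $H_2(L,\mathbb{C})=0$. The paper itself does not supply a proof at all — it simply cites \cite[Cor 7.9.5]{weibel} — so there is no alternative route in the text to compare against; the reference proves exactly this statement via Whitehead's lemmas, whose substance is the same Casimir/complete-reducibility input you correctly identify as the only non-formal ingredient. Two small remarks: your argument in fact establishes the result for all finite-dimensional semisimple complex Lie algebras, not just simple ones, which is the form one usually wants; and the step identifying $L$-submodules of $\mathfrak{uce}(L)$ with ideals of $\mathfrak{uce}(L)$ deserves the one-line justification you give (surjectivity of $u$), since it is what lets you pass from a module complement to an ideal complement.
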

 Mathematicians were interested in generalizing simple finite dimensional Lie algebras. 
One direction which is of lesser importance for the thesis, is the path chosen by Kac and Moody. In 1968, almost at the same time, they modified the presentations that had given the simple finite dimensional Lie algebras. The resulting Lie algebras are nowadays known as \emph{Kac-Moody algebras}. Unfortunately, the connection with root systems as defined above is lost in the transition to Kac-Moody algebras. 
\section{Background material}
\paragraph{Root-graded Lie algebras.}
From now on we will define all algebraic structures over a commutative associative unital ring  $k$, the base ring. Also, $\mathcal Q(R)$ will denote the abelian group generated by $R$ inside the vector space $E$.  We call $\mathcal Q(R)$ the root lattice. 
\begin{defi}[\cite{Neh3}]
Let $R$ be a reduced root system. An \emph{$R$-graded Lie algebra} is a $k$-Lie algebra together with a $\mathcal Q(R)$-grading
such that
\begin{enumerate}[(i)] 
\item  $L = \bigoplus_{\alpha \in R}L_\alpha,$ where $L_\alpha$ is a submodule of $L$,  called the \emph{homogeneous space of degree $\alpha,$} i.e., the support of the $\mathcal Q(R)$-grading is contained in $R.$
\item for every  non-zero $\alpha \in R$  there exists an element $e_\alpha \in L_{\alpha}$ and an element $f_\alpha \in L_{-\alpha}$ such that $\ad h$, $h := [f_\alpha, e_\alpha]$, acts diagonally on $L$: 
$$\ad h|L_\beta = \langle \beta, \alpha\che \rangle \mathrm{id}_{L_\beta}\quad \mbox{for all }\beta \in R.$$
\item $\sum_{ 0 \neq \alpha \in R}[L_{\alpha}, L_{-\alpha}] = L_{0}.$
\end{enumerate}
\label{r_delta_graded_def_inro}
\end{defi} 
The following questions come up quite naturally for a root-graded Lie algebra.
  \begin{itemize}
  \item How can we recognize the root system $R$ when we are given a root-graded Lie algebra? 
  \item What happens to Whitehead's Lemma? 
  \begin{itemize}
   \item How far away is $L$ from being simply connected or centreless? 
   \item Can we describe all  central coverings of $L$? 
   \end{itemize}
   \end{itemize}
   Those questions have been addressed by many authors before, among them Allison, Benkart,  Faulkner, Gao, Kassel, Loday, Moody, Neher, Seligman and Smirnov.  We will shortly summarize the  publications which have inspired or helped us the most. 
\subparagraph{Gradings and central extensions.} 
Let $L$ be a Lie algebra and $\Gamma$ a group. It was proven in \cite{Neh2}, that if $L$ is $\Gamma$-graded then $u: \mathfrak{uce}(L) \rightarrow L$ is $\Gamma$-graded as well and  $u$ is a $\Gamma$-graded Lie algebra morphism. \\
As a consequence, if $L$ is $R$-graded, then $\mathfrak{uce}(L)$ is $\mathcal{Q}(R)$-graded, where $\mathcal{Q}(R)$ is the root lattice.  
\\
The first definition of a root-graded Lie algebra can be found in the paper \cite{BM92} by Berman and Moody. However, already in Seligman's  book \cite{Seligman} the concept appears more or less hidden. The case of an $A_1$-graded Lie algebra even goes back to Tits \cite{TitsKK}, Kantor \cite{TKantorK} and Koecher \cite{TKKoecher}. Seligman also provides a good part of the machinery which was later used to classify root-graded Lie algebras. In particular, he uses Jordan algebras, alternative algebras and associative algebras to describe what he calls Lie algebras of type $R$  (where $R$ is a root system).\\
Therefore, once root-graded Lie algebras were defined, the project was to classify these Lie algebras and describe them in terms of other algebraic structures, called \emph{coordinate algebras}, in a way much similar to Seligman's approach. 
\paragraph{Root graded Lie algebras in characteristic zero.}
This project was carried out over fields of characteristic zero. We commonly call classification theorems for root-graded Lie algebras recognition theorems.  Berman and Moody were able to prove recognition theorems for root graded Lie algebras of type $A_n, n \geq 2,$ $D_n, n \geq 4$ and $E_6$, $E_7$ and $E_8$ in the same paper where the definition appears. In 1994, Benkart and Zelmanov proved recognition results for the remaining reduced root systems in \cite{BZ1}. At about the same time, Neher provided results over rings instead of fields in \cite{Neh1996}.\\
It became clear that even if one knew the coordinate algebra and the root system, it was only possible to retrieve the Lie algebra up to a central quotient, i.e., a quotient by a central ideal. Since all root-graded Lie algebras (over fields of characteristic $0$) are perfect, it was clear that one had to find the central coverings. For reduced root systems, the answer was given by Allison, Benkart and Gao in \cite{ABG2}. For the non-reduced types not equal to $BC_1,$ the project was accomplished in \cite{ABG} by the same authors. The recognition theorems for type $BC_1$ finally can be found in \cite{AllisonBC} and the central extensions are described in \cite{BS}. \\
This finalized the project for Lie algebras over a field of characteristic $0.$ However, the question was still open for finite fields and arbitrary base rings. 
\paragraph{Known results for more general base rings.}
Neher's work in \cite{Neh1996} makes it clear that we can switch from a field of characteristic $0$ to some more general base ring and the theory will still be very rich. In this paper, Jordan algebras are an  important instrument and their theory is well developed over arbitrary base rings. There is also an article by Berman, Gao, Krylyuk and Neher (\cite{BGKN}) which gave us some inspiration in the case of $A_2$-graded Lie algebras. \\
When dealing with central extensions, it is for many reasons advantageous to start with a centreless Lie algebra. Neher provided in \cite{Neh1996} a construction of centreless root-graded Lie algebras as Tits-Kantor-Koecher algebras of certain Jordan pairs. Working over an arbitrary ring $k$, he finds realizations over $k$ for all types except $F_4, G_2$ and $E_8.$  His work already includes various examples for central extensions of those root-graded Lie algebras, but it is in general not clear if they are universal or not. We would like to point out that also Benkart and Smirnov in \cite{BS} replace the Jordan pairs by more general Jordan-Kantor pairs while working over a field of characteristic $\neq 2,3.$ Also, they construct some central extensions of the aforementioned Tits-Kantor Koecher algebras. It is remarkable that, while the Tits-Kantor Koecher construction is not functorial, they introduce a generalized Tits-Kantor-Koecher construction which is functorial as a map from Jordan-Kantor pairs to Lie algebras. \\
If we go back to the finite dimensional simple Lie algebras over $\mathbb C,$ then all of them have adequate analogs over any ring (``forms''). In \cite{vdK} van der Kallen was able to describe the universal central extensions  of those forms over any ring. In particular he proves that Whitehead's Lemma is false over the integers. Certain $\mathbb Z$-forms (for the expert, the simply connected Chevalley forms) of simple finite dimensional Lie algebras over $\mathbb C$  are in general not simply connected. In particular, van der Kallen is able to prove that additional homogeneous spaces  of non-zero degree show up in the universal central extension which lie in the centre. This is in sharp contrast to the result for root-graded Lie algebras in characteristic $0,$ where the kernel of the universal central extension always has degree $0$ with respect to the grading by the root lattice. Van der Kallen named the additional weights in the support of the central extension degenerate sums.
Degenerate sums can be described as follows
\begin{center}
A \emph{degenerate sum} is an element $\gamma $ of the root lattice $\mathcal Q (R)$ which is a sum of two linearly independent roots and has the property that $\langle \gamma, \alpha \che \rangle \neq \pm 1$ for all $\alpha \in R$. \end{center}
In fact, for $\gamma$ a degenerate sum, we either have $ \langle \gamma, R\che \rangle \subset  2 \mathbb Z$ or  $\langle \gamma,R \che \rangle \subset 3 \mathbb Z. $ \\
In \cite{GS07}, Gao and Shang proved, without using degenerate sums explicitly, that the degenerate sums  of the root systems  $A_2$ and $A_3$ will show up in the support of the universal central extension, if we compute the universal central extensions of $\mathfrak{sl}_3(D)$  and $\mathfrak{sl}_4(D)$ for an associative algebra $D$ over a field of characteristic $2$ or $3.$ The structure of the universal central extension was obtained for $\mathfrak{sl}_n(D),$ $n \geq 5$ in \cite{KasLo}. Hence with Gao and Shang's work, we have a complete understanding of the universal central extension of $\mathfrak{sl}_n(D), n \geq 3$ over a field of arbitrary characteristic. \\

\section{A short guide to this thesis}
In our thesis, we work towards describing the central coverings of root-graded Lie algebras over base rings. 
We will now give an overview of our main results. 
\paragraph{Tits-Kantor-Koecher algebras.}
Chapter 4 and part of Chapter 5 concern Jordan pairs or more generally Jordan-Kantor pairs. For the purpose of this introduction we will only talk about Jordan pairs. 
A Jordan pair is a pair $J = (J^+, J^-)$ together with  quadratic maps $Q^\sigma : J^{\sigma} \to \mathrm{End}(Q^{-\sigma}), a \mapsto Q_a$, such that the following identities hold in all scalar extensions of $k$: for $a, c \in J^\sigma$, $b \in J^{-\sigma}$
\begin{eqnarray*}
D_{a,b}Q_{a} &=& Q_{a}D_{b,a} , \\
D_{Q_a b, b } &=& D_{a, Q_b a}, \\
Q_{Q_a b} &=& Q_a Q_b Q_a.
\end{eqnarray*} 
Here $Q^\sigma_{a,c} b  = D^\sigma_{a, b} c$ denotes the linearization of the quadratic map $Q^\sigma.$ The sign $\sigma$ is omitted to avoid repetitions. Sometimes we also write $Q(a) := Q_a, Q(a, c) := Q_{a, c}$ or $D(a, b) := D_{a, b}. $\\ 
We define in Chapter~\ref{TKK_chapter} two  Lie algebras, namely the derivation algebra $\instr(J)$ and the universal derivation algebra $\uider(J).$ 
With these definitions, the $k$-modules $\TKK(J) = J^ + \oplus \instr(J) \oplus J^-$ and $\uTKK(J) = J^ + \oplus \uider(J) \oplus J^-$ can be endowed with Lie algebra structures such that $\TKK(J)$ is a centreless  Lie algebra and $\uTKK(J)$ is a central extension of $\TKK(J)$. Both Lie algebras are $\mathbb Z$-graded with support $\{-1, 0, 1\}.$ 
This provides us with a useful generalization of the (classical) Tits-Kantor-Koecher construction. 
In particular, $\uTKK(-)$ is functorial and we prove (see Corollary~\ref{JKP_central_ext_kernel_cor} ) that the following holds:
\begin{quote}
Let $J$ be a Jordan pair, $L = \TKK(J)$ and endow $\mathfrak{uce}(L)$ with the canonical $\mathbb Z$-grading obtained from the $\mathbb Z$-grading on $L$. Then there is a Lie algebra isomorphism 
$\uce{L}^0 \rightarrow \uider(J).$
\end{quote}
(We use superscripts for the $\mathbb Z$-grading in order to avoid confusion with gradings by $\mathcal{Q}(R)$).
Our work generalizes \cite[Sec. 5]{BS} to arbitrary base rings. 
\paragraph{Degenerate sums.}
Our next result (Proposition~\ref{torsion_bijection_prop}) describes the support of the universal central extensions of a root-graded Lie algebra in the root lattice. In complete analogy with \cite{vdK} we prove the following
for a  perfect root-graded Lie algebra  $L$ and $u: \mathfrak{uce}(L) \rightarrow L$ a $\mathcal{Q}(R)$-graded central extension: If $\gamma \in \mathcal{Q}(R),$ then: 
\begin{itemize}
\item  $\mathfrak{uce}(L)_\gamma \neq \{0\}$ implies that $\gamma$ is either a root or a degenerate sum. 
\item If there is $\alpha \in R$ such that $ \langle \gamma , \alpha \che \rangle \in k^\times,$ then the restriction map $ u : \mathfrak{uce}(L)_\alpha \rightarrow L_\alpha$ is an isomorphism. In particular, this holds if $\gamma \in R^\times.$ 
\item If $\gamma$ is a degenerate sum, then either $2\mathfrak{uce}(L)_\gamma = \{0\}$ or $3\mathfrak{uce}(L)_\gamma = \{0\}.$
\end{itemize}
This extends van der Kallen's result to root-graded Lie algebras. 
We study how our results unfold when they are applied to the most common models for Lie algebras graded by root systems of type $A$ and $C$ and of rank at least $2.$ So this excludes type $A_1.$  
\paragraph{Type A.}
Fix a finite index set $K$, $\card K \geq 3,$ a partition $\{1\} \cup J$ of $K$, and
a unital alternative $k$-algebra $D$, which is associative, if $\card K \geq 4.$ The rectangular matrix Jordan pair $\mathbf M(\{1\}, J, D)$ of size $1 \times J$ is the pair of modules
$$(V^+, V^-) = (\mathrm{Mat}(1, J, D),\mathrm{Mat}(J, 1, D))$$
with quadratic operator given by $$Q_x(y) = xyx \quad \mbox{for  } (x,y) \in V.$$
It is well-known that $L = \TKK(V)$ is an $ A_{\card K + 1}$-graded Lie algebra (see for instance \cite{Neh1996}). Hence we can use our results about the universal central extensions of Tits-Kantor-Koecher algebras to understand the universal central extension of $L$.\\
The structure of the universal derivation algebra of $\TKK(V)$ is given in Proposition \ref{uider_alt_decomp_prop}.
As far as the module structure  of the universal central extension is concerned, we have the following description
\begin{itemize}
\item
 The $k$-module $\uce L_0$ is isomorphic to $$ \frac{(D \otimes D)  \oplus \bigoplus_{j \in J} (D)_{j}}{U}$$  where $D_{j}$, $j \in J,$ are copies of $D$ and $U$ is the submodule generated by:
\begin{eqnarray*}
a \otimes b - b\otimes a, \\
ab\otimes c + bc \otimes a + ca \otimes b - (a, b,c)_{j_0} -(a, b,c)_{j},  
\end{eqnarray*} for distinct $j, j_0$ in $J$ and any $a, b, c \in D.$
Here $(a, b, c) = (ab)c - a(bc) $ is the \emph{associator} of three elements $a, b$ and $c$ in $D$.
\item If $\card K \geq 5,$ then no degenerate sums occur.
\item If $\card K  = 4,$ then a total of $6$ degenerate sums occur and  
$$ \mathfrak{uce}(L)_\gamma  \cong D / (\langle 2D, [D, D]\rangle_{\mathrm{ideal}})$$
for $\gamma$  a degenerate sum.
See Theorem~\ref{deg_sum_theo} for details.
\item If $\card K = 3,$ then again $6$ degenerate sums occur and
$$ \mathfrak{uce}(L)_\gamma  \cong D / (\langle 3D, D[D, D], (D, D, D), \{(ad.c + a.dc + a.cd) b : a, b, c, d \in D \}\rangle_{k-mod})$$
for $\gamma$ a degenerate sum.
See Theorem~\ref{deg_sum_theo_A_3} for details.
\end{itemize}
The Lie product on the universal central extension is described by Lemma~\ref{Lemma_1_for_A_2} and Lemma~\ref{Lemma_1_for_A_3}.  Since $\mathfrak{uce}(L)_0$ can be viewed as a subalgebra of $\uider(V),$ the Lie structure on $\uTKK(V)$ also gives the structure of $\mathfrak{uce}(L)_0.$\\  
If we assume that $D$ is associative and that $k$ is a field, then  our result implies  the main results of the paper \cite{GS07}. Progress has been made in so far that we are not restricted to fields anymore and that we have also treated the case of an alternative algebra. 
\subparagraph{Octonion algebras.}If we specialize the coordinate algebra to an octonion algebra $\mathbb O$  over the ring $k$ and assume that $1/3$ lies in the ring $k,$ then we can prove that $L = \TKK(\mathrm{Mat}(1, 2, \mathbb O),\mathrm{Mat}(2,1, \mathbb O))$ is simply connected, see Theorem~\ref{simply_Connected_oct_theo}. Here we do not assume that $\mathbb O$ is split or reduced and in this generality the result has been unknown. As a beautiful by-product we prove that $\mathrm{Der}(\mathbb O)$ is a Lie algebra of type $G_2.$  The definition of a Lie algebra of type $G_2$ is given in Definition~\ref{type_g_defi}. It had just recently been shown  by Loos, Petersson and Racine (\cite[p.966]{lopera}) that  the derivation algebra  of $\mathbb O$ is finitely generated projective of rank $14$ and this was of course a strong indicator that it should be of type $G_2.$ 
\paragraph{Type C.}
When working with Lie algebras graded by root systems of type $C,$  we assume $1/2 \in k.$ The presence of $1/2$ guarantees that $L$ is perfect and that we do not have to take degenerate sums into consideration (Corollary~\ref{ker_ud_supporr_cor}). 
It is possible to go back and forth between unital Jordan algebras and Jordan pairs.  
and the  following  holds for $V= (J, J)$ the Jordan pair associated to a Jordan algebra $J$: 
$$ \TKK(V) = \TKK(J), \mbox{  and}$$
$$\uider(V) \cong J \oplus J * J $$
where  $J * J$ is the quotient of $J \otimes J$ modulo the submodule $M$ generated by 
  \begin{eqnarray*}
  a \otimes (b  c) + b \otimes (c  a) + c \otimes (a  b),\\
  a \otimes b + b \otimes a.
  \end{eqnarray*}
 We view this  as the universal version of the well-known decomposition 
$\instr(V) = L_{J} \oplus [L_{J}, L_{J}]$ where $V$ is the Jordan pair given by the Jordan algebra $J.$
Proposition~\ref{one_half_jordan_hom_H_2_prop} states that then $\uTKK(V)$ is a  universal covering of $\TKK(V).$ The kernel of the universal central extension $u: \mathfrak{uce}(\TKK(J)) \rightarrow \TKK(J)$ is given by $\{\sum a_i * b_i : \sum [L_{a_i}, L_{b_i}] = 0 \} \subset J*J.$ 
\\
The  Jordan algebra of hermitian matrices with entries in a unital algebra $D$ with nuclear involution ($ \mathcal H_n(D, -), n \geq 3$) is of particular significance. The algebra $D$ is alternative if $n = 3$ and associative if $n \geq 4.$ 
The hermitian matrix Jordan algebra admits a Peirce decomposition with respect to a set of $n$ orthogonal idempotents. This  defines a $C_n$-grading on its Tits-Kantor-Koecher algebra $\TKK(J)$. \\
We can further decompose $J*J$ with respect to the idempotents. This is carried out in Section~\ref{idempotent_homology}. The results enable us to compute the centre of $\uce{\TKK(J)},$  see Proposition~\ref{C_N_prop_Center}.
\subparagraph{Albert algebras.}
Finally, we consider another important example, namely the Albert algebra $ \mathcal A = \mathcal H_3(\mathbb O, -)$ where $\mathbb O$ is an octonion algebra over $k$.  Assuming that $1/2$ and $1/3$ lie in in the ring $k$, we prove  that $\mathrm{IDer}(\mathcal A)$ is simply connected and of type $F_4.$ See Definition~\ref{Albert_Alg_Def_ring} for a explanation of type $F_4.$
\paragraph{An even shorter guide to this thesis.}
Chapter~\ref{fundamental_concepts_chapter} introduces some of the  \textbf{non-associative structures} that frequently show up in the thesis, for example alternative algebras and Jordan algebras. \\
People who are interested in \textbf{graded central extensions}, should read Chapter~\ref{central_extensions_chapter}. 
We define universal central extensions and introduce some interesting ways to construct central extensions. For graded central extensions the functor $\mathfrak{gce}$, introduced in Section~\ref{gce_Section} is quite useful. 
 The results in this chapter hold for a larger class of Lie algebras than root-graded Lie algebras. \\
 A reader who is familiar with \textbf{Jordan algebras} might find Chapter~\ref{TKK_chapter} a good starting point. The universal \textbf{Tits-Kantor-Koecher algebras} are defined in Section~\ref{TKK_Section}. \\
 Somebody who looks for information on \textbf{root systems and root graded algebras}, could begin with  Chapter~\ref{root_systems_chapter}. This chapter contains the definitions of root systems and root-graded Lie algebras.  The first part uses very little Jordan theoretic methods. There we work towards understanding the support of the universal central extension in the root lattice. In the second part of Chapter~\ref{root_systems_chapter}  we look at \textbf{grids} which are roughly to Jordan pairs what root systems are to Lie algebras. In order to understand all the proofs, you might have to go back and read up on some of the material in earlier chapters. \\ The thesis contains a good deal of statements about  \textbf{trialities, alternative algebras  and derivations of alternative algebras}. These can be found in Chapter~\ref{fundamental_concepts_chapter}, Section~\ref{derivations_section}, Lemma~\ref{der_alt_explicit_form_lem}, Subsection~\ref{alt_coordinates} and Subsection~\ref{Octonion_alg}. Some of the statements are our own, some are ``folklore'', some are lesser known results of other authors, but all are referenced to the best of our knowledge.  \\
Those who have seen the \textbf{Freudenthal Magic Square},  might appreciate Sections~\ref{Octonion_alg} and \ref{Albert_example}. We prove that two entries, namely the first two ones in the last row, of the Magic Square work over very general base rings. 
The proofs of this rely on Chapter~\ref{TKK_chapter}, Section~\ref{Jordan_graded_Lie_alg} and Section~\ref{derivations_section}, and the article \cite{lopera}.
  \\ Chapter~\ref{AC_chapter} combines all of the previous chapters in an effort to understand two important examples where the root system are of type $A$ or $C.$  Section~\ref{derivations_section} establishes the facts about alternative algebras which are needed throughout. The next three sections describe central coverings of  \textbf{$A$-graded Lie algebras}, and the last section deals with the same questions for \textbf{$C$-graded Lie algebras}.

\chapter{Algebraic structures}
We recall some definitions and facts about algebraic objects which will play a central role in this thesis. 
\label{fundamental_concepts_chapter}

\section{Categories of algebras}

\subsection*{Algebras}
Let $k$ be a unital commutative associative ring with unit $1$, called the \emph{base ring}.
All modules are assumed to be over $k$ unless indicated otherwise. We will be mostly interested in categories where all the sets of morphisms  $\mathcal{Mor}(X, Y)$ are actually $k$-modules and all the objects are $k$-modules.  Besides categories where the objects are algebras, we will need certain categories of pairs which will however be defined while we go along. 
\subsubsection*{Graded linear algebras}

\begin{defi}  
\label{linear_algebra_definition} 
A  (linear) $k$-algebra is a $k$-module $A$ together with a bilinear operation $\cdot : A \times A \rightarrow A$, called the \emph{product}.  We sometimes will write also $\mu(a,b) := a \cdot b$ and
$L_a(b) = R_b(a) = a \cdot b.$
We write $(A, \cdot)$ for the $k$-module $A$ with product $\cdot$ or $A$ if the product is understood.
The collection of $k$-algebras forms a category and the morphisms are the $k$-linear maps $f : A \rightarrow B$ such that $f(a \cdot b) = f(a)f(b).$  
These morphisms are called \emph{$k$-algebra morphisms}.
The category of $k$-algebras will be denoted by $\mathbf{Alg}_k$.
\end{defi}
\begin{defi} A \emph{unital}  $k$-algebra is a $k$-algebra $A$ together with an element $1_A \in A$ such that 
$$ L_{1_A} = R_{1_A} = \mathrm{id}_A.$$
Unital  $k$-algebras form a category with  morphisms all those  algebra morphisms $f : A \rightarrow B$ such that $f(1_A)= 1_B.$ 
An element $a \in A$, $A$ a unital $k$-algebra, is called \emph{invertible,} if there is $a^{-1} \in A$ such that 
$a a^{-1} = a^{-1}a = 1_A.$

\label{algebra_with_involution_defi}
For  $(A, \cdot)$,  an object of $\mathbf{Alg}_k$ with product $\cdot$, the $k$-algebra $(A, \cdot_{\mathrm{op}})$ where $a  \cdot_{\mathrm{op}} b = b \cdot a$ is called the \emph{opposite algebra} of $A$ or $A^{\mathrm{op}}.$ 
The morphisms $\mathcal{Mor}(A, A^{\mathrm{op}})$ are called anti-morphisms. 
An \emph{involution} on $A$ is an element of $\mathcal{Mor}(A, A^{\mathrm{\mathrm{op}}})$ which has order $2$ as module morphism. 
\end{defi}

\begin{defi} \label{commutator_defi} \label{associator_defi}
Let $A$ be a $k$-algebra. The \emph{commutator} of $a$ and $b$ in $A$ is
$$[a, b] = a\cdot b - b \cdot a .$$
The \emph{associator} of $a$, $b$ and $c$ in $A$ is
$$(a, b, c) = (ab)c - a(bc). $$
\end{defi}

\begin{defi} Let $A$ be a $k$-algebra. Then the \emph{nucleus} of $A$ is the $k$-module
$$ \mathrm{Nuc}(A) = \{a \in A: (a,x, y) = (x ,y, a) = (y, a, x)= 0, \mbox{ for all } x, y \in A \} $$
and the \emph{centre} of $A$ is
$$\mathrm{Cent}(A) = \{ a \in \mathrm{Nuc}(A): [x,a] = 0 \mbox{ for all } x\in A \}.$$
A map $f: A \rightarrow A$ is called \emph{central} resp. \emph{nuclear}, if $f(a) = a$ implies that $a \in \mathrm{Cent}(A)$ resp. $a \in \mathrm{Nuc}(A).$ 
\label{nucleus_defi}
\label{centre_defi}
\end{defi}

\begin{defi}
\label{alternative_algebra_definition}
A  $k$-algebra is an \emph{alternative algebra} if for all $a, b \in A$: 
$$ (a \cdot a) \cdot b = a\cdot (a \cdot b), \quad b \cdot (a \cdot a) = (b \cdot a) \cdot a .$$
Alternative algebras form a full subcategory of the category of  algebras, i.e., every algebra morphism between alternative algebra $f: A \rightarrow B$ is a morphism of alternative algebras. 
\end{defi}

In every alternative algebra, the following identities hold true (see \cite[p. 936]{lopera}):
 
\begin{eqnarray}
\left [L_a, L_b \right ] &=& L_{[a,b]} -2[L_a, R_b] \label{alt_id_1} \\
\left [R_a, R_b \right] &=& - R_{[a,b]} -2[L_a, R_b] \label{alt_id_2} \\
\left [[L_a, R_b], L_c \right] &=& L_{(a,b,c)} -[L_{[a,b]}, R_c]
\label{alt_id_3}
\\ \left [[L_a, R_b], R_c \right ] &=& R_{(a,b,c)} + [R_{[a,b]}, L_c]
\label{alt_id_4}
\end{eqnarray}

\begin{defi}
\label{associative_algebra_definition}
A  $k$-algebra is an \emph{associative} algebra if for all $a, b,c \in A$: 
$$ (a \cdot b) \cdot c = a \cdot (b \cdot c).$$
Associative algebras form a full subcategory of the category of algebras. 
\end{defi}

\begin{defi}
\label{commutative_algebra_definition}
A $k$-algebra is a \emph{commutative algebra} if for all $a, b\in A$: 
$$ a \cdot b= b \cdot a.$$
Commutative algebras form a full subcategory of the category of  algebras. 
\end{defi}

\begin{defi} \label{Lie_algebra_Definition}
A  $k$-algebra $L$ is a \emph{Lie algebra} if for all $a$, $b$  and $c$ in $L$
$$a a = 0, \quad a (bc) + b(ca) + c (ab) = 0 .$$
In the case of a Lie algebra, we denote the product usually by $[a, b].$ This conflicts with previous notation, but it will not cause any problems in our work. \\
\label{Liealgebra_centre_definition}
The \emph{(Lie) centre of a Lie algebra} is the set
$$Z(L) = \{a \in L : ax = 0 \mbox{ for all } x \in L \}$$
\end{defi}
\begin{rem} If $L$ does not have $2$-torsion, then $Z(L)= \mathrm{Cent}(L).$
\end{rem}

\subsection*{Base ring extension}
A unital ring homomorphism $k \rightarrow K$ is called a \emph{base ring extension}. If $M$ is a $k$-module, then there is  a canonical map from $\mathrm{End}_k(M)$ into $\mathrm{End}_K(M \otimes_k K)$ mapping $T$ to  $T_K$ such that $T_K(m \otimes_k a) := T(m) \otimes_k a$ for all $m \in M$ and $a \in K.$  \\
  If the extension $k \rightarrow K$ is flat, then  $T \mapsto T_K$ is injective. If in addition $M$ is finitely generated as a $k$-module, then  $ \mathrm{End}_k(M) \otimes_k K \rightarrow   \mathrm{End}_K(M \otimes_k K)$ is injective.  If $k \rightarrow K$ is flat and $M$ is finitely presented as a $k$-module, then $\mathrm{End}_k(M) \otimes_k K \rightarrow   \mathrm{End}_K(M \otimes_k K)$ is bijective. If $M$ is finitely generated and projective over $k,$ then $\mathrm{End}_k(M) \otimes_k K \rightarrow   \mathrm{End}_K(M \otimes_k K)$ is bijective for all base changes $k \rightarrow K$.

\subsubsection*{Quadratic structures}

\begin{defi}
Let $M$ be a $k$-module. A \emph{quadratic} operator on $M$ is a map $Q : M \rightarrow \mathrm{End}_k(M). x \mapsto Q_{x}$ with the property that 
$$Q_{\lambda x} = \lambda^2 Q_{x}, \quad \forall x \in M, \; \forall \lambda \in k.$$
Every quadratic map can be linearized to obtain a bilinear symmetric map
$$ Q_{x, y} = Q_{x + y} - Q_{x} - Q_{y}. $$ One can show that quadratic maps extend to $M \otimes_k K$ for every extension $k \rightarrow K.$ Therefore, if $Q : M \rightarrow \mathrm{End}_k(M)$ is a quadratic operator, then $$Q  \otimes _k K: M \otimes_k K \rightarrow \mathrm{End}_K(M \otimes_k K)$$ is also a quadratic operator. 
If $1/2 \in k,$ bilinear symmetric operators and quadratic operators are in one to one correspondence. 
\label{quadratic_operator_definition}
\end{defi}

\begin{defi} The category of (quadratic) Jordan algebras has as objects  pairs $(J, U)$ where $J$ is a $k$-module and $U : J \rightarrow \mathrm{End}_k(J), a \mapsto U_a$ is a quadratic operator. We require the following identities  to hold in all scalar extensions of $k$: \\ For $a, b, c \in J$, 
\begin{eqnarray}
V_{a,b}U_{a} &=& V_{a}V_{b,a}, \\
V_{U_a b, b } &=& V_{a, U_b a},\\
U_{U_a b} &=& U_a U_b U_a. 
\end{eqnarray}
Here  we define $U_{a, c} := U(a+ b) - U(a) -U(c) $ and $V_{a, b}c := U_{a, c}b$ for any $a, b, c \in J.$

If there is an element $1_J$ in $J$ such that $U_{1_J} = \mathrm{id}_J$, then $J$ is called a \emph{unital quadratic Jordan algebra.  }
Since the identities defining a quadratic Jordan algebra are required to hold  in all scalar extensions, the pair $(J \otimes_k K, U \otimes_k K)$ is a quadratic Jordan algebra for every base ring extension  $k \rightarrow K.$
\label{quadratic_Jordan_algebra_definition}

\end{defi}

\begin{rem}If $1/2 \in k,$ then every unital quadratic Jordan algebra gives  a unital Jordan algebra by setting
$$2 a b = U_{a,b} 1. $$ 
\end{rem}

\subsection*{Derivations}
If $M$ is a $k$-module, then $\mathrm{End}_k(M)$ is an associative unital algebra. It is easy to check that $[S, T] = S \cdot T- T \cdot S $ defines on $\mathrm{End}_k(M)$ a Lie bracket. The resulting Lie algebra is denoted by $\mathrm{End}_k(M)^{-}.$
For any algebra $A$ we define inside the Lie algebra $\mathrm{End}_k(A)^-$ the \emph{Lie multiplication algebra} $\mathfrak{L}(A)$ as the subalgebra generated by the left and right multiplications. \\
Even if $A$ is unital (see Definition~\ref{linear_algebra_definition}), it is always possible to form the unital hull of $A$, that is the $k$-module $\hat A = 1 k \oplus A$ together with the map $\hat \mu (\alpha \oplus a, \beta \oplus b) = \alpha\beta \oplus (\alpha b + \beta b + \mu (a,b))$ which will result in a unital algebra $\hat A$ with unit $1 \oplus 0.$ 
\begin{defi} A \emph{derivation} of $A$ is an  element $\Delta \in \End_k(A)$ such that 
$\Delta(ab) = \Delta(a)b + a\Delta(b).$ Every derivation $\Delta$ of $A$ extends uniquely to a derivation of $\hat A$ denoted by $\hat {\Delta}$ by setting $\hat \Delta(1 \oplus 0) = 0.$ The $k$-module of all derivations of $A$ is denoted by $\mathrm{Der}(A).$ 
With this convention \emph{multiplication derivations} are the following: 
$$ \mathrm{MulDer}(A) := \{\Delta \in \mathrm{Der}(A) : \hat \Delta \in \mathfrak{L}(\hat A)\} \subset \mathrm{Der}(A)$$
where $\mathfrak{L}(\hat A)$ is the Lie algebra generated by all left and right multiplications. 
 \label{Der_defi}
\label{Mul_Der_Defi}
\end{defi}
It is not difficult to show that $\mathrm{Der}(A)$ is a Lie subalgebra of $\mathrm{End}_k(A)$: Let $\Delta_1$ and $\Delta_2$ be derivations. 
Then $\Delta_1(\Delta_2(ab)) = \Delta_1(\Delta_2(a)b + a\Delta_2(b)) = \Delta_1\Delta_2(a)b + \Delta_2(a)\Delta_1(b) + \Delta_1(a)\Delta_2(b) + a\Delta_1\Delta_2(b), $
hence
$[\Delta_1, \Delta_2](ab) = [\Delta_1, \Delta_2](a)b  + a[\Delta_1, \Delta_2](b).$

\subsection*{Graded structures}
\begin{defi}
Let $M$ be a $k$-module and $\Gamma$ a group. If there is a family of submodules $(M_\gamma)_{\gamma \in \Gamma}$ of $M$  such that $M = \bigoplus_{\gamma \in \Gamma}M_\gamma,$ then $M$ is a $\Gamma$-graded module. The $\Gamma$-graded modules form a subcategory of the category of $k$-modules.  \\
Let $M$ and $N$ be two $\Gamma$-graded modules.  Define $\mathrm{grHom}_k(M,N) = \bigoplus_{\gamma \in \Gamma} \mathrm{grHom}_k(M,N)_\gamma $ where $\mathrm{grHom}_k(M,N)_\gamma$ consists of all $f \in \mathrm{Hom}_k(M,N)$ such that $f(M_\delta) \subset N_{\delta + \gamma}$ for all $\delta \in \Gamma.$
 A $\Gamma$-graded morphism is an element $f \in \mathrm{grHom}_k(M,N).$ 
 \end{defi}
\begin{defi} A  $\Gamma-$\emph{graded algebra} $A$ is an algebra such that $A$ is a $\Gamma$-graded module and such that for $a_\gamma \in A_\gamma,$ the operators $L_{a_\gamma}$ and $R_{a_\gamma}$ are elements of $\mathrm{grHom}_k(A, A)_{\gamma}$.  The morphisms $\mathcal{Mor}_{\mathrm{grAlg}}(A, B)$ are the elements of $\mathrm{grHom}_k(A, B) \cap \mathcal{Mor}_{\mathrm{Alg}_k}(A, B).$ 
\end{defi}
\subsection*{Notation}
Although this might be out of place here, we would like to introduce the following notation which we frequently use: Let $S$ be a set and $s_1, s_2, \ldots, s_k$ a family of elements of $S.$ Then  $s_1, s_2, \ldots, s_k \neq$  means that $s_1, s_2, \ldots, s_k$ are pairwise distinct. This notation is common in Jordan theory. 
\chapter{Central extensions and gradings}
Unless stated otherwise, $k$ will be an arbitrary commutative associative ring, the base ring. This assumption holds for the whole thesis. 
\label{central_extensions_chapter}
\section{The category of graded central extensions}

\subsection*{Central extensions}
Let $L$ be a Lie algebra over $k$. 
By definition, a \emph{central extension} $f: L'\rightarrow L$ of $L$ is  an exact sequence of Lie algebras  
$$
\xymatrix{& 0 \ar[r]&  \ker(f) \ar[r]&  L' \ar[r]^f & L \ar[r]&  0
}
$$
with the property that $\ker(f) \subseteq Z(L')$. If $L'$ is a perfect Lie algebra, i.e.,  $L' = [L', L']$, then $f: L' \rightarrow L$ is called a \emph{covering} and we refer to the homomorphism $f$ as  the \emph{covering map} of the central extension. \\
A \emph{homomorphism of central extensions} from the central extension $f : L' \rightarrow L$ to the central extension  $g : K \rightarrow L$ is a Lie algebra homomorphism $h :L' \rightarrow K$ such that the following diagram commutes:
$$
\xymatrix{
& 0 \ar[r]&  \ker(g) \ar[r]&  L' \ar[r]^f\ar[d]^h & L\ar[d]^{\mathrm{id}} \ar[r]&  0 \\
& 0 \ar[r]&  \ker(f) \ar[r]&  K \ar[r]^g & L \ar[r]&  0.
}
$$
A central extension $f : L' \rightarrow L$ is called \emph{universal}, if for every central extension $g : K \rightarrow L,$ there is a unique homomorphism of central extensions $h : L' \rightarrow K$.

\subsection*{Central extensions from $2$-cocycles}
\begin{defi}Let $L$ be a Lie algebra and $M$ a $k$-module. A $k$-bilinear map $\psi : L \times L \rightarrow M$ is called a \emph{$2$-cocycle}, if
\begin{itemize}
\item[\rm{(i)}] $\psi$ is alternating, 
\item[\rm{(ii)}] $\psi(x, [y,z]) + \psi(y, [z, x]) + \psi(z, [x, y]) = 0$ for all $x, y, z  \in L.$
\end{itemize}
\end{defi}
\begin{defi}Let $L$ be a Lie algebra and $\psi : L \times L \rightarrow M$ a $2$-\emph{cocycle} on $L$ with values in  a $k$-module $M$. Then 
$L \oplus_{\psi} M$ with bracket defined by $[(x, m), (y, m')] = ([x, y], \psi(x,y))$  is  a central extension of $L$. We have an exact sequence
$$0 \rightarrow M \rightarrow L \oplus_{\psi} M \rightarrow L $$ where the homomorphism $L \oplus_{\psi} M \rightarrow L$ is simply projection on the first coordinate. This central extension will be referred to as the \emph{central extension of $L$ by $(M, \psi)$ or the central extension of $L$ by $M$ via $\psi.$}
\label{central_ext_by_cocyle}
\end{defi}
\begin{expl}
Every Lie algebra $L$ admits the trivial $2$-cocycle, $\psi = 0$ for any module $M$. More concretely, for $M$ a $k$-module, the direct sum $L \oplus M$ carries a  Lie algebra product 
$$[(x, v), (y, w)]_{L \oplus M} = ([x,y]_L, 0) $$
where the bracket on the right hand side is the Lie bracket on $L$. Then 
\begin{eqnarray*}
f: L\oplus M &\rightarrow & L \\
(x, v) &\mapsto & x
\end{eqnarray*}
is a central extension. The corresponding exact sequence is
$$
\xymatrix{& 0 \ar[r]&  M \ar[r]&  M \oplus L \ar[r]^f & L \ar[r]&  0.
}
$$
\end{expl}
\begin{expl} Every Lie algebra  $K$ which has a non-trivial centre is obviously a central extension in the following sense: If 
 $J \subset Z(K)$ is a central ideal of $K,$ then we have a central extension $f : K \rightarrow K/ J$, where the homomorphism $f$ is given by the quotient map. 

\end{expl}

\begin{lem}[The central trick]Let $f: K \rightarrow L$ be a central extension. If $f(x)= f(x')$ and $f(y) = f(y'),$ then $[x,y] = [x', y'].$
\label{central_trick}
\end{lem}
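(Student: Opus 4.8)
The plan is to use the defining property of a central extension directly, namely that $\ker(f)$ is central in $K$. Write $x = x' + z$ and $y = y' + w$ where $z := x - x'$ and $w := y - y'$; by hypothesis $f(z) = f(x) - f(x') = 0$ and $f(w) = f(y) - f(y') = 0$, so both $z$ and $w$ lie in $\ker(f) \subseteq Z(K)$.

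Now expand the bracket using bilinearity:
$$[x,y] = [x' + z, y' + w] = [x', y'] + [x', w] + [z, y'] + [z, w].$$
Since $z, w \in Z(K)$, each of the three brackets $[x', w]$, $[z, y']$, $[z, w]$ vanishes: the bracket of any element of $K$ with a central element is $0$. Hence $[x,y] = [x', y']$, as claimed.

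There is no real obstacle here — the only point requiring care is to notice that both correction terms $z$ and $w$ are separately central, so that every mixed bracket drops out, not merely the bracket $[z,w]$. This lemma is the reason one may unambiguously transfer bracket computations from $L$ up to any central extension $K$, which is exactly how it will be used in the sequel.
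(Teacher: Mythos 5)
Your proof is correct and uses essentially the same idea as the paper: write the differences $z = x - x'$ and $w = y - y'$, observe they lie in $\ker f \subseteq Z(K)$, and conclude the brackets agree. The paper phrases this slightly more compactly as $\ad x = \ad x'$ and $\ad y = \ad y'$, but the argument is identical.
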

\begin{proof} We have $x' \in x + \ker f$, $y ' \in y + \ker f$. But since $\ker f \subset Z(K)$, it follows that $\ad x  = \ad x'$ and also $\ad y = \ad y'$ which proves the claim.
\end{proof}

\subsection*{Central extensions from actions}
The next lemma is slight generalization of \cite[Lemma 5.6]{BS}, not assuming $1/2 \in k$ and it is also a consequence of the more general \cite[Lemma 3.6]{lopera}. We include a proof for the convenience of the reader. 
\begin{lem}Let $L$ be a Lie algebra and $Q$ an $L$-module. If there is a $k$-linear map $\lambda : Q \rightarrow L$ such that for all $x, y \in Q$ \label{Lie_alg_from_mod}
\begin{itemize}
\item[\rm(i)] $\lambda(x).x = 0,$
\item[\rm(ii)]$[\lambda(x), \lambda(y)] = \lambda(\lambda(x).y).$
\end{itemize}
Then
\begin{itemize}
\item[\rm(i)]  $Q$ is a Lie algebra under the product $[x,y] = \lambda(x).y$ and $\lambda : Q \rightarrow L$ is a Lie algebra homomorphism. 
\item[\rm(ii)]  If $\lambda$ is surjective, then $\lambda : Q \rightarrow L$ is a central extension. 
\end{itemize}
\end{lem}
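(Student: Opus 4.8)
The plan is to verify the two assertions in order, using the hypotheses (i) $\lambda(x).x=0$ and (ii) $[\lambda(x),\lambda(y)]=\lambda(\lambda(x).y)$ directly. For part (i), I first need to check that the bracket $[x,y]:=\lambda(x).y$ makes $Q$ a Lie algebra. Alternation: from (i), $[x,x]=\lambda(x).x=0$, which is the required identity $aa=0$ in the sense of Definition~\ref{Lie_algebra_Definition} (so no need to separately invoke antisymmetry, though it follows). The Jacobi identity is the main computation: I would expand $[x,[y,z]]+[y,[z,x]]+[z,[x,y]]$ as $\lambda(x).(\lambda(y).z)+\lambda(y).(\lambda(z).x)+\lambda(z).(\lambda(x).y)$, then rewrite the first two terms using that $Q$ is an $L$-module, so $\lambda(x).(\lambda(y).z)-\lambda(y).(\lambda(x).z)=[\lambda(x),\lambda(y)].z=\lambda(\lambda(x).y).z$ by (ii). This reduces the cyclic sum to $\lambda(\lambda(x).y).z+\lambda(y).(\lambda(x).z)+\lambda(y).(\lambda(z).x)+\lambda(z).(\lambda(x).y)$ after regrouping; careful bookkeeping of the three cyclic terms against the three module-action commutators should collapse everything to $0$. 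That $\lambda$ is a Lie homomorphism is then immediate: $\lambda([x,y])=\lambda(\lambda(x).y)=[\lambda(x),\lambda(y)]$ is exactly hypothesis (ii).

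For part (ii), assume $\lambda$ is surjective. By part (i) $\lambda$ is already a Lie algebra homomorphism, and surjectivity gives the exactness $Q\xrightarrow{\lambda}L\to 0$, so it remains only to show $\ker\lambda\subseteq Z(Q)$. Take $z\in\ker\lambda$ and any $y\in Q$; then $[z,y]=\lambda(z).y=0.y=0$, so $z$ is central — indeed $\lambda(z)=0$ acts as the zero operator on the module $Q$. Hence $\ker\lambda$ is central and $\lambda:Q\to L$ is a central extension in the sense of the definition at the start of this chapter.

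The only genuinely non-routine step is the Jacobi identity verification; I expect the main obstacle there to be keeping track of signs and the cyclic relabelling when converting the three nested actions into commutators via (ii). A clean way to organize it: write $A=\lambda(x)$, $B=\lambda(y)$, $C=\lambda(z)$ as elements of $\End_k(Q)$ acting on $Q$; then the cyclic sum is $A.(B.z)+B.(C.x)+C.(A.y)$, and I would pair it against the ``wrong-order'' terms $B.(A.z)+C.(B.x)+A.(C.y)$, whose difference from the first sum is $[A,B].z+[B,C].x+[C,A].y=\lambda(A.y).z+\lambda(B.z).x+\lambda(C.x).y$; then use $A.y=[x,y]$, $B.z=[y,z]$, $C.x=[z,x]$ together with another application of the module axiom to recognize the whole thing as a telescoping sum that vanishes. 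Everything else is bookkeeping, so the statement should fall out cleanly once the Jacobi computation is set up this way.
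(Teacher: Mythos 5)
Your treatment of alternation and of part (ii) is fine and matches the paper. The issue is the Jacobi step, where your write-up doesn't actually land the proof and your proposed ``clean way to organize it'' is, as stated, circular. Your first sketch is a workable route: after writing $J=\lambda(x).(\lambda(y).z)+\lambda(y).(\lambda(z).x)+\lambda(z).(\lambda(x).y)$ and substituting $\lambda(x).(\lambda(y).z)=\lambda(\lambda(x).y).z+\lambda(y).(\lambda(x).z)$, the middle two terms cancel because $\lambda(x).z+\lambda(z).x=[x,z]+[z,x]=0$ by the alternation you already established, and the outer two terms $\lambda([x,y]).z+\lambda(z).[x,y]=[[x,y],z]+[z,[x,y]]$ cancel the same way. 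But you left this as ``careful bookkeeping should collapse everything to $0$,'' which is exactly the part that needs to be written down; until the two alternation uses are exhibited it is not a proof.

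The second organization you propose actually fails. You form $S_1-S_2=[A,B].z+[B,C].x+[C,A].y=[[x,y],z]+[[y,z],x]+[[z,x],y]$ and assert this ``telescopes to zero.'' It does not: that sum \emph{is} the cyclic Jacobi expression you are trying to prove vanishes, so claiming it vanishes is circular. If instead you push through with the alternation relations $A.z=-C.x$, $B.x=-A.y$, $C.y=-B.z$, you find $S_2=-S_1$ and also $S_1-S_2=-S_1$, which together give only $3S_1=0$. Over the arbitrary base ring $k$ used in this thesis (e.g.\ $k=\mathbb Z$, or any $k$ with $3$-torsion) this does not imply $S_1=0$, so the ``clean'' version genuinely does not close.

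The paper takes a shorter route that sidesteps the cancellation entirely: it computes a single term, $[[x,y],z]=\lambda(\lambda(x).y).z=[\lambda(x),\lambda(y)].z=\lambda(x).\lambda(y).z-\lambda(y).\lambda(x).z=[x,[y,z]]-[y,[x,z]]$, which is the Leibniz form of Jacobi and needs no division and no summing of cyclic terms. If you want to keep your first sketch, spell out the two alternation cancellations; if you want something ``clean,'' the paper's one-line chain is the better way to organize it.
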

 \begin{proof} The bracket $[\cdot, \cdot]$ is well-defined and bilinear because $\lambda$ and the action of $L$ on $Q$ are well-defined linear maps. Also (i) is equivalent to the condition $[\cdot,\cdot]$ is alternating. Let $x, y, z \in Q$. Then $[[x, y], z] = [\lambda(x). y, z] = \lambda(\lambda(x).y).z  = [\lambda(x), \lambda(y)]. z= \lambda(x).\lambda(y). z - \lambda(y).\lambda(x).z = [x, [y,z]] - [y, [x,z]].$ This shows that the Jacobi identity holds for this bracket product. \\
 If $\lambda(x) = 0,$ then $\lambda(x).y = 0$ which shows (ii).
 \end{proof}

There is an explicit way to construct a central extension for the derived algebra $[L, L]$ which we describe now.
\begin{defi} Let $L$ be a Lie algebra. 
We will denote by $\mathfrak{uce}(L)$ the object constructed in the following way: \\
Let $\mathcal{B}$ be the submodule of $L \wedge L$ generated by: 
\begin{equation}  
x\wedge [y,z] + y \wedge [z,x] + z \wedge [x,y]; \quad x,y, z \in L .\end{equation}
Put
$$\mathfrak{uce}(L) = (L \wedge L)/\mathcal{B}.$$
and
$$ \left< x, y \right> = x\wedge y + \mathcal{B} \in \mathfrak{uce}(L) .$$
Note that we have a well-defined linear map $u : \mathfrak{uce}(L) \rightarrow L$ given by $u(\langle x,y \rangle) = [x,y].$

\label{uce_def}
\end{defi}

\begin{lem} The $k$-module $\mathfrak{uce}(L)$ is a Lie algebra with bracket given by
$$ [X, Y] = \langle u(X), u(Y) \rangle, \quad for \; X, Y \in \mathfrak{uce}(L) $$
The linear map  $u : \mathfrak{uce}(L) \rightarrow L$ is a central extension of $[L,L ].$ \label{uce_is_ce}
\end{lem}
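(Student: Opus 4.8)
\textbf{Proof plan for Lemma~\ref{uce_is_ce}.}
The plan is to verify, in order, that the proposed bracket on $\mathfrak{uce}(L)$ is well-defined, that it satisfies the Lie algebra axioms, that $u$ is a Lie algebra homomorphism onto $[L,L]$, and finally that $\ker u$ is central. First I would check well-definedness: the formula $[X,Y]=\langle u(X),u(Y)\rangle$ is written purely in terms of the linear map $u$ and the bilinear map $\langle\,\cdot\,,\,\cdot\,\rangle\colon L\times L\to\mathfrak{uce}(L)$, both of which are already well-defined by Definition~\ref{uce_def}, so the composite is a well-defined bilinear map $\mathfrak{uce}(L)\times\mathfrak{uce}(L)\to\mathfrak{uce}(L)$. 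No choice of representatives in $L\wedge L$ is being made, so there is nothing further to check here.

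Next I would verify the Lie axioms. Alternativity is immediate because $\langle x,x\rangle=x\wedge x+\mathcal B=0$ in $\mathfrak{uce}(L)$, so $[X,X]=\langle u(X),u(X)\rangle=0$. For the Jacobi identity, write $x=u(X)$, $y=u(Y)$, $z=u(Z)$ and compute $[[X,Y],Z]=\langle u(\langle x,y\rangle),z\rangle=\langle [x,y],z\rangle$, using that $u(\langle x,y\rangle)=[x,y]$ by the defining property of $u$. Hence the cyclic sum of $[[X,Y],Z]$ equals $\langle[x,y],z\rangle+\langle[y,z],x\rangle+\langle[z,x],y\rangle$, which up to reordering the arguments of $\langle\,\cdot\,,\,\cdot\,\rangle$ (which only changes a sign) is exactly the image in $\mathfrak{uce}(L)=(L\wedge L)/\mathcal B$ of a generator of $\mathcal B$, hence zero. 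This is the one computational point, but it is short and is precisely why $\mathcal B$ was defined the way it was.

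Then $u$ is a homomorphism essentially by construction: $u([X,Y])=u(\langle u(X),u(Y)\rangle)=[u(X),u(Y)]$. Its image is the span of all $[x,y]$, which is $[L,L]$, so $u\colon\mathfrak{uce}(L)\to[L,L]$ is surjective. It remains to show $\ker u\subseteq Z(\mathfrak{uce}(L))$: if $u(X)=0$, then for any $Y$ we have $[X,Y]=\langle u(X),u(Y)\rangle=\langle 0,u(Y)\rangle=0$, so $X$ is central. This exhibits the exact sequence $0\to\ker u\to\mathfrak{uce}(L)\xrightarrow{u}[L,L]\to 0$ as a central extension, completing the proof.

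I do not expect a genuine obstacle here; the only place requiring care is confirming that the cyclic sum in the Jacobi computation really lands in the submodule $\mathcal B$ rather than merely resembling its generators — one must track the sign conventions for the alternating product $L\wedge L$ and use bilinearity of $\langle\,\cdot\,,\,\cdot\,\rangle$ to match the cyclic sum $\langle[x,y],z\rangle+\langle[y,z],x\rangle+\langle[z,x],y\rangle$ against $x\wedge[y,z]+y\wedge[z,x]+z\wedge[x,y]$. Everything else is formal manipulation of the identity $u(\langle x,y\rangle)=[x,y]$.
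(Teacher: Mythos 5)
Your proof is correct, but it takes a genuinely different route from the paper. The paper deliberately frames this lemma as ``a neat application of Lemma~\ref{Lie_alg_from_mod}'': it first shows that $L$ acts on $L\wedge L$ via $a.(x\wedge y)=[a,x]\wedge y+x\wedge[a,y]$, verifies by a somewhat lengthy computation that this action preserves the submodule $\mathcal B$ and hence descends to $\mathfrak{uce}(L)$, and then checks the two hypotheses of Lemma~\ref{Lie_alg_from_mod} for the map $u$ to conclude that $\mathfrak{uce}(L)$ is a Lie algebra and $u$ a central extension. You instead bypass that lemma entirely and verify the Lie axioms by hand: alternativity is immediate from $a\wedge a=0$, and the Jacobi identity follows because the cyclic sum $\langle[x,y],z\rangle+\langle[y,z],x\rangle+\langle[z,x],y\rangle$ is, after using $\langle a,b\rangle=-\langle b,a\rangle$, exactly the negative of the image of a generator $x\wedge[y,z]+y\wedge[z,x]+z\wedge[x,y]$ of $\mathcal B$ and hence vanishes. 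Your centrality argument ($u(X)=0\Rightarrow[X,Y]=\langle 0,u(Y)\rangle=0$) is the same as the one implicit in condition~(ii) of the paper's Lemma~\ref{Lie_alg_from_mod}. The trade-off: your route is shorter and more self-contained, requiring no verification that the $L$-action factors through $\mathcal B$; the paper's route is longer but illustrates a reusable principle (which the paper also applies later in a Jordan-pair context in Lemma~\ref{structure_of_uider}), and makes the action of $L$ on $\mathfrak{uce}(L)$ explicit, which is useful elsewhere. Both are valid proofs of the stated lemma.
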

\begin{proof} The Lie algebra $L$ acts canonically on $L \wedge L$ by $a.(x \wedge y) = [a,x] \wedge y + x \wedge [a,y]$ for $a \in L$
Moreover, this factors to an action on $\langle L, L\rangle, $ since  
\begin{eqnarray*}
\lefteqn{a.(x\wedge [y,z] + y \wedge [z,x] + z \wedge [x,y])} \\
&=& [a,x] \wedge [y,z] + x \wedge[a, [y,z]] \\
&&+  [a,y] \wedge [z,x] + y \wedge[a, [z,x]] \\
&&+  [a,z] \wedge [x,y] + z \wedge[a, [x,y]] \\
&=& [a,x] \wedge [y,z] - x \wedge[y, [z,a]] - x \wedge [z,[a,y]] \\
&&+  [a,y] \wedge [z,x] - y \wedge[z, [x,a]] - y \wedge [x,[a,z]]\\
&&+  [a,z] \wedge [x,y] - z \wedge[x, [y,a]] - z \wedge [y,[a,x]] \\
&=& [a,x] \wedge [y,z] - y \wedge[z, [x,a]] - z \wedge [y,[a,x]]  \\
&&+  [a,y] \wedge [z,x]  - x \wedge [z,[a,y]] - z \wedge[x, [y,a]]\\
&&+  [a,z] \wedge [x,y]  - x \wedge[y, [z,a]] - y \wedge [x,[a,z]] \\
&\in& \mathcal B
\end{eqnarray*}
The map $u : \mathfrak{uce}(L) \rightarrow [L,L]$ defined by $\langle x, y \rangle \mapsto [x,y] $ is well-defined (these are just the axioms for a Lie algebra product). Moreover,  
$$u(\langle x, y \rangle ).(\langle x, y \rangle ) = \langle[[x,y], x], y \rangle+  \langle x ,  [[x,y], y] \rangle = \langle [x,y],  [x,y] \rangle = 0 $$
It is also clear that
\begin{eqnarray*}
u(u(\langle x,y \rangle ).(\langle a, v \rangle )) 
&=& u( \langle [[x,y,]a], v \rangle +  \langle a, [[x,y],v] \rangle)\\
&=& \left [[[x,y],a], v \right] + \left[a,[[x,y],v] \right]\\
&=& \left[ [x,y], [a,v]\right]\\
\left[ u(\langle x,y \rangle, u((\langle a, v \rangle ))\right]
\end{eqnarray*}
Thus by Lemma~\ref{Lie_alg_from_mod}, $u: \mathfrak{uce}(L) \rightarrow [L,L]$ is a central extension.
\end{proof}

For the proof of the following statements  see \cite{Neh2}. We only included the proof of \ref{uce_is_ce} because it is a neat application of Lemma \ref{Lie_alg_from_mod}.
\begin{prop}
The central extension $u : \mathfrak{uce}(L) \rightarrow L$ is universal if $L$ is perfect, and in this case $\mathfrak{uce}(L)$ is  also perfect.
\end{prop}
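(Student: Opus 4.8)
The plan is to prove the two claims of the proposition separately, building only on the concrete model of $\uce{L}$ from Definition~\ref{uce_def} and Lemma~\ref{uce_is_ce}. First I would establish perfectness of $\uce{L}$ under the hypothesis that $L$ is perfect, since this fact is needed anyway (a universal central extension must itself be perfect, but here it can also be checked directly and quickly). Given $X = \langle x, y\rangle$ with $x,y \in L$, use $L = [L,L]$ to write $x = \sum_i [a_i, b_i]$; then in $\uce{L}$ we have $\langle x, y\rangle = \sum_i \langle [a_i,b_i], y\rangle = \sum_i [\langle a_i, b_i\rangle, \langle \cdot,\cdot\rangle]$-type expressions. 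More precisely, from the bracket formula $[\langle a,b\rangle, \langle c,d\rangle] = \langle [a,b],[c,d]\rangle$ and the $\mathcal B$-relation $\langle [a,b], y\rangle = -\langle [b,y],a\rangle - \langle [y,a],b\rangle = \langle a,[b,y]\rangle - \langle b,[a,y]\rangle$, one sees that every generator $\langle x,y\rangle$ with $x \in [L,L]$ lies in $[\uce{L},\uce{L}]$; since $L = [L,L]$ this covers all generators, so $\uce{L}$ is perfect.

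Next I would prove universality. Let $g : K \to L$ be an arbitrary central extension; I must produce a unique morphism of central extensions $h : \uce{L} \to K$. For existence, pick for each $x \in L$ a preimage $\sigma(x) \in K$ (a set-theoretic section of $g$), and define a map on $L \wedge L$ by $x \wedge y \mapsto [\sigma(x), \sigma(y)]_K$. By Lemma~\ref{central_trick} (the central trick) this value is independent of the choice of section, so the map is well defined, and it is clearly bilinear and alternating, hence factors through $L \wedge L$. One then checks it kills the generators of $\mathcal B$: $\sigma(x)\wedge[\sigma(y),\sigma(z)]$-type Jacobiator maps to the Jacobiator of $[\cdot,\cdot]_K$, which vanishes — here one uses that $[\sigma(x),[y,z]]$ computed via $\sigma([y,z])$ versus via $[\sigma(y),\sigma(z)]$ agree modulo the centre $\ker g$, and brackets with central elements vanish, so again the central trick applies. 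This yields a well-defined $k$-linear $h : \uce{L} \to K$ with $h(\langle x,y\rangle) = [\sigma(x),\sigma(y)]_K$, and $g \circ h = u$ because $g([\sigma(x),\sigma(y)]) = [x,y] = u(\langle x,y\rangle)$. That $h$ is a Lie homomorphism follows from $h([\langle x,y\rangle,\langle a,b\rangle]) = h(\langle [x,y],[a,b]\rangle) = [\sigma[x,y],\sigma[a,b]]_K = [[\sigma x,\sigma y],[\sigma a,\sigma b]]_K$, the last equality once more by the central trick.

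For uniqueness, suppose $h, h' : \uce{L} \to K$ are both morphisms of central extensions. Then $g\circ(h - h') = 0$, so $(h-h')(\uce{L}) \subseteq \ker g \subseteq Z(K)$. Consequently, for any $X, Y \in \uce{L}$ we get $h([X,Y]) - h'([X,Y]) = [hX, hY]_K - [h'X, h'Y]_K$, and writing $h'X = hX + z$, $h'Y = hY + w$ with $z,w$ central shows $[h'X,h'Y]_K = [hX,hY]_K$; hence $h$ and $h'$ agree on $[\uce{L},\uce{L}]$. Since $\uce{L}$ is perfect by the first part, $h = h'$. The main obstacle, such as it is, is organizing the well-definedness checks in the existence step: one must be careful that expressions like $[\sigma(x),\sigma([y,z])]$ and $[\sigma(x),[\sigma(y),\sigma(z)]]$ differ by something central before invoking Lemma~\ref{central_trick}, so the verification that $h$ annihilates $\mathcal B$ should be written out with this in mind rather than treated as purely formal.
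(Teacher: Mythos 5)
Your proof is correct and follows the standard approach — the paper itself defers the proof of this proposition to Neher's article \cite{Neh2}, and your argument is essentially the same as the textbook proof of universality for the wedge-product model of $\mathfrak{uce}(L)$.

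Two small points worth tightening. First, in the existence step you say the map $L\times L \to K$, $(x,y)\mapsto [\sigma(x),\sigma(y)]_K$, is ``clearly bilinear'' — it is not clear for a merely set-theoretic section $\sigma$. What one actually knows is that $\sigma(\lambda x + \mu x') - \lambda\sigma(x) - \mu\sigma(x') \in \ker g \subseteq Z(K)$, so bilinearity itself is an instance of Lemma~\ref{central_trick}, not an obvious fact; this is a point you should make explicit, since it is the same mechanism you rightly flag elsewhere. Second, in your perfectness argument the detour through the $\mathcal B$-relation is not needed: from $L=[L,L]$ it follows that $u:\mathfrak{uce}(L)\to L$ is surjective, and then for any $x,y\in L$ one picks $X,Y$ with $u(X)=x$, $u(Y)=y$ and reads off $\langle x,y\rangle = \langle u(X),u(Y)\rangle = [X,Y] \in [\mathfrak{uce}(L),\mathfrak{uce}(L)]$ directly from the bracket formula. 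Your route via the $\mathcal B$-relation reaches the same conclusion but is more roundabout and slightly obscures why perfectness of $L$ is exactly the needed hypothesis. The rest — vanishing on the generators of $\mathcal B$ via the Jacobi identity in $K$, the Lie-homomorphism check, and uniqueness from perfectness of $\mathfrak{uce}(L)$ — is handled correctly.
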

\begin{theo}
A Lie algebra $L$ has a universal central extension $g: K \rightarrow L$ if and only if $L$ is perfect. In this case, the central extensions $g: K \rightarrow L$ and $u: \mathfrak{uce}(L) \rightarrow L$ are isomorphic. 
\end{theo}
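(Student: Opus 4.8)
The plan is to prove both directions of the equivalence, and then the uniqueness statement, by combining the construction $\mathfrak{uce}(L)$ of Definition~\ref{uce_def} with the preceding Proposition, which already tells us that $u:\mathfrak{uce}(L)\to L$ is a universal central extension \emph{provided} $L$ is perfect. So the content to supply is: (a) if $L$ admits a universal central extension, then $L$ is perfect; and (b) any two universal central extensions of $L$ are canonically isomorphic.

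For direction (a), I would argue by contraposition. Suppose $L$ is not perfect, so $[L,L]\subsetneq L$ and the quotient $L/[L,L]$ is a nonzero abelian $k$-module. Given any central extension $g:K\to L$, I produce a \emph{second}, genuinely different central extension by adding a trivial summand: using the trivial $2$-cocycle example in the excerpt, form $g':K\oplus M\to L$, $(x,v)\mapsto g(x)$, for a nonzero $k$-module $M$ (e.g. $M=L/[L,L]$ itself, or just $k$ if that is nonzero). The obstruction to universality is that there are genuinely two distinct homomorphisms of central extensions $K\to K\oplus M$: the inclusion $x\mapsto (x,0)$, and the map $x\mapsto (x,\pi(x))$ where $\pi:K\to L/[L,L]$ is the composite of $g$ with the canonical projection — this second map is a Lie algebra homomorphism precisely because $L/[L,L]$ is abelian (so $\pi([x,y])=0$) and it is a morphism of central extensions because it covers $\mathrm{id}_L$. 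Since these two morphisms differ, no central extension of $L$ can be universal; hence a universal central extension exists only if $L$ is perfect.

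For direction (b) together with the isomorphism claim: if $L$ is perfect, the preceding Proposition already exhibits $u:\mathfrak{uce}(L)\to L$ as a universal central extension, settling the ``if''. For the ``only if'' plus uniqueness, suppose $g:K\to L$ is \emph{any} universal central extension. By universality of $g$ there is a unique morphism $K\to\mathfrak{uce}(L)$, and by universality of $u$ there is a unique morphism $\mathfrak{uce}(L)\to K$; composing gives an endomorphism of each of the two universal objects over $L$. The standard categorical uniqueness argument applies: an endomorphism of a universal object must be \emph{the} morphism to itself, and since $\mathrm{id}$ is such a morphism, both composites equal the identity, so $g$ and $u$ are isomorphic as central extensions. (One small point worth a sentence: to invoke uniqueness of the self-morphism on $g:K\to L$ we only need $g$ to be universal, which is our hypothesis; no perfectness of $K$ is needed for this step, though in fact it follows.)

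The main obstacle, and the only place any real verification is needed, is the contrapositive in direction (a): one must check carefully that the map $x\mapsto(x,\pi(x))$ really is a Lie algebra homomorphism into $K\oplus M$ with the trivial-cocycle bracket — this is where perfectness of $L$ (equivalently, vanishing of $\pi$ on brackets) is used in an essential way — and that it is a bona fide morphism of central extensions distinct from the inclusion. Everything else is a formal diagram chase using the definitions of central extension, covering, and homomorphism of central extensions given earlier in the chapter, together with the already-proved Proposition and the trivial-cocycle example.
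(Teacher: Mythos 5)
The paper itself does not prove this theorem --- it cites \cite{Neh2} for both this statement and the preceding Proposition, so there is no in-text proof to compare against. Your argument is nevertheless correct and is the standard one: necessity of perfectness via the ``trivial summand'' extension $K \oplus M$ with the two distinct lifts $x \mapsto (x,0)$ and $x \mapsto (x, \pi(x))$, and uniqueness up to isomorphism by the usual universal-object yoga (both composites are the unique self-morphism of a universal extension, hence the identity). The observation that uniqueness of the self-morphism needs only universality of $g$, not perfectness of $K$, is also right.

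One small slip in your closing ``main obstacle'' paragraph: you write that ``perfectness of $L$ (equivalently, vanishing of $\pi$ on brackets) is used in an essential way'' to check that $x \mapsto (x, \pi(x))$ is a Lie homomorphism. In fact $\pi$ vanishes on brackets \emph{unconditionally}: $\pi([x,y])$ is the class of $[g(x),g(y)] \in [L,L]$ in $L/[L,L]$, which is zero whether or not $L$ is perfect, so the map is always a Lie homomorphism. What the non-perfectness of $L$ buys you is something else, namely that $\pi \neq 0$ (since $g$ is onto and $L/[L,L] \neq 0$), which is what makes your two morphisms of central extensions distinct. Your main argument has this right; only the parenthetical in the recap mislabels the role of (non-)perfectness.
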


\begin{defi}
Two perfect Lie algebras $L$ and $L'$ are \emph{centrally isogeneous}, if $\mathfrak{uce}(L) \cong \mathfrak{uce}(L').$
\end{defi}
\begin{lem} Let $f : L' \rightarrow L$ be a universal central extension and $g : K \rightarrow L'$ a covering. Then $g$ is an isomorphism.\label{uni_cov}
\end{lem}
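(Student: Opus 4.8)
The plan is to exploit the universal property of $f : L' \to L$ twice, once in each direction, and then invoke the fact (from Proposition following Definition~\ref{uce_def}) that a universal central extension is perfect. First I would observe that since $g : K \to L'$ is a covering, $K$ is perfect, and the composite $f \circ g : K \to L$ is again a central extension of $L$: its kernel is $g^{-1}(\ker f)$, which is central in $K$ because $\ker f \subseteq Z(L')$ pulls back into $Z(K)$ (using that $g$ is surjective, so $\ad$ of an element of $g^{-1}(\ker f)$ kills all of $K$). Hence $f \circ g$ is a central extension of $L$.

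Now apply the universal property of $f : L' \to L$. Since $f$ is universal, there is a unique morphism of central extensions $h : L' \to K$ with $g \circ h = f$ — wait, more carefully: universality of $f$ gives a morphism from $f$ to the central extension $f\circ g : K \to L$, i.e. a homomorphism $h : L' \to K$ with $(f\circ g)\circ h = f$. The key step is then to show $h \circ g = \mathrm{id}_K$ and $g \circ h = \mathrm{id}_{L'}$. For the second: $g \circ h : L' \to L'$ satisfies $f \circ (g\circ h) = f$, so $g \circ h$ is an endomorphism of the central extension $f : L' \to L$ over $\mathrm{id}_L$; but $\mathrm{id}_{L'}$ is also such an endomorphism, and by the uniqueness clause in the universal property of $f$ there is only one, so $g \circ h = \mathrm{id}_{L'}$. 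This already shows $g$ is a split surjection with section $h$.

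For $h \circ g = \mathrm{id}_K$: consider $h \circ g : K \to K$. It is a homomorphism, and $g \circ (h \circ g) = (g \circ h)\circ g = g$, so $h\circ g$ agrees with $\mathrm{id}_K$ after composing with $g$. Thus $h\circ g - \mathrm{id}_K$ maps $K$ into $\ker g \subseteq Z(K)$ (here I use that $g$ is a covering, hence central). A linear map $K \to Z(K)$ that vanishes on $[K,K]$ — which it does, since on a bracket $[x,y]$ we have $(h\circ g)([x,y]) = [(h\circ g)x,(h\circ g)y] = [x,y]$ because the correction terms lie in the centre — must be zero, as $K = [K,K]$ by perfectness of the covering $K$. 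Therefore $h \circ g = \mathrm{id}_K$, and together with $g\circ h = \mathrm{id}_{L'}$ this shows $g$ is an isomorphism with inverse $h$.

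The main obstacle is bookkeeping rather than depth: one must be careful that the several maps produced by universality are genuinely \emph{morphisms of central extensions} (i.e. commute with the covering maps down to $L$), so that the uniqueness clause of the universal property of $f$ can legitimately be applied to identify composites with identity maps; and one must twice use perfectness (of $L'$ via the cited Proposition, and of $K$ as a covering) to upgrade ``agrees modulo the centre'' to ``equal''. No computation beyond the centrality arguments already used in Lemma~\ref{central_trick} is needed.
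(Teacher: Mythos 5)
Your overall strategy is sound and is the standard one (the paper itself relegates the proof to \cite{Neh2}): show that $f\circ g$ is again a central extension of $L$, use universality of $f$ to produce $h:L'\to K$ over $\mathrm{id}_L$, prove $g\circ h=\mathrm{id}_{L'}$ by the uniqueness clause of universality, and prove $h\circ g=\mathrm{id}_K$ by a ``differs from the identity by a central-valued correction, hence agrees on brackets, hence agrees on all of the perfect Lie algebra $K$'' argument.

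There is, however, a genuine gap in the step where you claim $f\circ g$ is a central extension. You assert that $g^{-1}(\ker f)\subseteq Z(K)$ ``using that $g$ is surjective, so $\ad$ of an element of $g^{-1}(\ker f)$ kills all of $K$.'' Surjectivity of $g$ does not give this. For $z\in g^{-1}(\ker f)$ and $x\in K$, you only get $g([z,x])=[g(z),g(x)]=0$, i.e.\ $[z,x]\in\ker g$; so $[z,K]\subseteq\ker g\subseteq Z(K)$, which says the brackets $[z,x]$ are \emph{central}, not that they are \emph{zero}. To conclude $\ad z=0$ you must invoke perfectness of $K$ (available, since $g$ is a covering): for $x,y\in K$,
$$[z,[x,y]]=[[z,x],y]+[x,[z,y]]=0$$
because $[z,x],[z,y]\in Z(K)$, and hence $[z,K]=[z,[K,K]]=0$. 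This is exactly the ``agree modulo the centre, therefore agree on brackets, therefore agree'' argument you correctly deploy later to get $h\circ g=\mathrm{id}_K$; the same idea is needed one step earlier, and without it the application of the universal property of $f$ to the composite $f\circ g$ is not justified. Once this step is repaired, the proof is complete.
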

Our goal is to find generators and relations for the universal central extensions of certain Lie algebras. In view of this, the next lemma explains how to obtain a generating set for $\mathfrak{uce}(L),$ if generators of $L$ are given: 
\begin{lem}\label{generating_lemma}
Let $L$ be a perfect  Lie algebra which is generated by $X \subset L$ and let $f : \tilde L \rightarrow L$ be a covering. Then any pre-image $\tilde X$ of $X$ under $f$ generates $\tilde L$. 
\end{lem}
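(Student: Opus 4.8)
The plan is to show that the Lie subalgebra $\langle \tilde X\rangle \subseteq \tilde L$ generated by $\tilde X$ is all of $\tilde L$, exploiting perfectness and the fact that $\ker f$ is central.

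First I would observe that $f(\langle \tilde X\rangle) = \langle f(\tilde X)\rangle = \langle X\rangle = L$, since $f$ is a Lie algebra homomorphism and $X$ generates $L$; here I use that $\tilde X$ is a pre-image of $X$, so $f(\tilde X) = X$. Consequently $\tilde L = \langle \tilde X\rangle + \ker f$. The next step is to improve this to $\tilde L = \langle \tilde X\rangle + Z(\tilde L)$, which is immediate because $\ker f \subseteq Z(\tilde L)$ by the definition of a central extension.

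Now I would use perfectness of $\tilde L$ (which holds since $\tilde L$ is a covering, hence perfect, or more simply since $\tilde L = [\tilde L,\tilde L]$ by hypothesis). Write $M = \langle \tilde X\rangle$. From $\tilde L = M + Z(\tilde L)$ we get
$$\tilde L = [\tilde L, \tilde L] = [M + Z(\tilde L),\, M + Z(\tilde L)] = [M, M] \subseteq M,$$
because every bracket involving a central element vanishes and $[M,M]\subseteq M$ as $M$ is a subalgebra. Hence $\tilde L = M = \langle \tilde X\rangle$, as desired.

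There is essentially no serious obstacle here: the argument is a short two-line computation once the pieces are assembled. The only point requiring a little care is making sure $\tilde L$ is perfect — but this is given directly, since $f:\tilde L\to L$ being a \emph{covering} means by definition that $\tilde L = [\tilde L,\tilde L]$. One should also note that the statement does not require $f$ to be universal, only a covering, and indeed the proof uses nothing beyond perfectness of $\tilde L$ and centrality of $\ker f$.
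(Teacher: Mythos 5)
Your proof is correct and follows essentially the same route as the paper: show $\tilde L = \langle\tilde X\rangle + \ker f$, invoke centrality of $\ker f$ and perfectness of $\tilde L$, and conclude $\tilde L = [\tilde L,\tilde L] = [\langle\tilde X\rangle,\langle\tilde X\rangle] \subseteq \langle\tilde X\rangle$. The only cosmetic difference is that you explicitly replace $\ker f$ by $Z(\tilde L)$ before the bracket computation, whereas the paper leaves that step implicit.
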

\begin{proof}
For $\tilde X$ with $f(\tilde X) = X$ let $K$  be the subalgebra of $\tilde L$ generated by $\tilde X.$  Since $X \subset f(K)$, the image of $K$ under $f$ contains a set of generators for $L.$ Therefore $f$ is surjective. It follows that
$$f(K) = L \mbox{ and } \tilde  L = K + \ker f.$$
We can use that $\tilde  L$ is a covering to conclude
$$\tilde L = [\tilde L, \tilde L] = [K + \ker f, K + \ker f] = [K, K] \subset K.$$
Obviously $K \subset \tilde  L$ and we have equality: 
$K = \tilde L. $

\end{proof}

\subsection*{Graded Central Extensions}
\label{gradings}
Throughout let $\Gamma$ be an abelian group. 
\begin{prop}[\cite{Neh2}]\label{graded_cover_neh}
Suppose that $L$ is $\Gamma$-graded. Then the central extension $\mathfrak{uce}(L)$ of L is also $\Gamma$-graded and 
$$\mathfrak{uce}(L) = \bigoplus_{\gamma \in \Gamma} \mathfrak{uce}(L)_\gamma, \quad \mbox{where} \quad \mathfrak{uce}(L)_\gamma = \sum_{\delta \in \Gamma} \left< L_\delta, L_{\gamma - \delta}\right>.$$
The canonical map $u : \mathfrak{uce}(L) \rightarrow L$ is a homomorphism of graded Lie algebras. If in addition $L$ is perfect and $L_0 =\sum_{\gamma \neq 0} [L_\gamma, L_{-\gamma}],$ then
$$\mathfrak{uce}(L)_0 = \sum_{\gamma \neq 0}[\mathfrak{uce}(L)_\gamma, \mathfrak{uce}(L)_{-\gamma}]. $$
\end{prop}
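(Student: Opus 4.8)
The plan is to establish the three assertions in order, using the explicit construction of $\mathfrak{uce}(L)$ from Definition~\ref{uce_def} throughout. First, for the $\Gamma$-grading: since $L = \bigoplus_\gamma L_\gamma$, the module $L \wedge L$ is $\Gamma$-graded with $(L\wedge L)_\gamma = \sum_\delta L_\delta \wedge L_{\gamma-\delta}$, because the wedge of a $\delta$-homogeneous and an $\epsilon$-homogeneous element should sit in degree $\delta+\epsilon$ (one checks the wedge relation $x\wedge y = -y\wedge x$ is homogeneous, so this is well-defined). The defining submodule $\mathcal B$ is generated by the Jacobi elements $x\wedge[y,z] + y\wedge[z,x] + z\wedge[x,y]$; when $x,y,z$ are homogeneous of degrees $\delta_1,\delta_2,\delta_3$, this element lies in $(L\wedge L)_{\delta_1+\delta_2+\delta_3}$ since $[L_{\delta_i},L_{\delta_j}]\subseteq L_{\delta_i+\delta_j}$. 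Hence $\mathcal B$ is a graded submodule, the quotient $\mathfrak{uce}(L)$ inherits a $\Gamma$-grading, and by construction $\mathfrak{uce}(L)_\gamma = \sum_\delta \langle L_\delta, L_{\gamma-\delta}\rangle$. That $u$ is graded is immediate: $u(\langle x,y\rangle) = [x,y]$ carries degree $\delta+\epsilon$ to degree $\delta+\epsilon$.

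Second, I would check that the Lie bracket on $\mathfrak{uce}(L)$ respects the grading: by Lemma~\ref{uce_is_ce} the bracket is $[X,Y] = \langle u(X), u(Y)\rangle$, and since $u$ is graded and $\langle\,,\rangle$ raises degrees additively, $[\mathfrak{uce}(L)_\gamma, \mathfrak{uce}(L)_\delta]\subseteq \mathfrak{uce}(L)_{\gamma+\delta}$. So $\mathfrak{uce}(L)$ is a $\Gamma$-graded Lie algebra and $u$ a morphism of graded Lie algebras, proving the first two claims.

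Third, for the formula $\mathfrak{uce}(L)_0 = \sum_{\gamma\neq 0}[\mathfrak{uce}(L)_\gamma, \mathfrak{uce}(L)_{-\gamma}]$ under the hypotheses that $L$ is perfect and $L_0 = \sum_{\gamma\neq 0}[L_\gamma, L_{-\gamma}]$: the inclusion $\supseteq$ is clear from homogeneity. For $\subseteq$, note $\mathfrak{uce}(L)_0 = \sum_\delta \langle L_\delta, L_{-\delta}\rangle$, so it suffices to show each $\langle L_\delta, L_{-\delta}\rangle$ lies in the right-hand side; the $\delta\neq 0$ terms are handled directly once we know $\langle L_\delta, L_{-\delta}\rangle \subseteq [\mathfrak{uce}(L)_\delta, \mathfrak{uce}(L)_{-\delta}]$, which follows because $L$ perfect gives $L_\delta = \sum_\epsilon [L_\epsilon, L_{\delta-\epsilon}]$ so any $x\in L_\delta$ is a sum of brackets, letting us rewrite $\langle x, y\rangle$ for $y\in L_{-\delta}$ using the Jacobi relation in $\mathcal B$ as a sum of brackets of the form $[\langle\cdot,\cdot\rangle, \langle\cdot,\cdot\rangle]$ with matching homogeneous degrees. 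The remaining case is $\langle L_0, L_0\rangle$: here I use $L_0 = \sum_{\gamma\neq 0}[L_\gamma, L_{-\gamma}]$ to write elements of $L_0$ as sums of $[a,b]$ with $a\in L_\gamma$, $b\in L_{-\gamma}$, $\gamma\neq 0$, and then expand $\langle [a,b], c\rangle$ via the Jacobi generator of $\mathcal B$ into a combination of $\langle a, [b,c]\rangle$ and $\langle b,[c,a]\rangle$ type terms, which are brackets of homogeneous elements in nonzero-degree pieces.

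The main obstacle I anticipate is the bookkeeping in the last step: one must be careful that when expanding $\langle [a,b], c\rangle$ using the Jacobi relation, the resulting terms $\langle a,[b,c]\rangle = [\langle\cdot\rangle,\langle\cdot\rangle]$-type expressions genuinely land in $\sum_{\gamma\neq 0}[\mathfrak{uce}(L)_\gamma,\mathfrak{uce}(L)_{-\gamma}]$ and not accidentally in degree-zero-times-degree-zero pieces — this requires tracking that at least one factor in each bracket is homogeneous of nonzero degree, which is guaranteed by the choice $a\in L_\gamma$, $\gamma\neq 0$. Everything else is a routine, if slightly tedious, manipulation of the antisymmetry and Jacobi relations defining $\mathfrak{uce}(L)$. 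Since the excerpt says the proof is in~\cite{Neh2}, I would likely just sketch these points and refer there for full details.
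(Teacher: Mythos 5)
The paper does not prove this proposition itself; it cites it to \cite{Neh2}. Your proposal gives a correct proof, and the overall architecture — grading $L\wedge L$, observing that $\mathcal B$ is generated by homogeneous elements so passes to a graded quotient, checking $u$ is graded via the formula $[X,Y]=\langle u(X),u(Y)\rangle$, and then splitting $\mathfrak{uce}(L)_0=\sum_\delta\langle L_\delta,L_{-\delta}\rangle$ into the $\delta\neq 0$ and $\delta=0$ contributions — is the expected one.

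One small conceptual slip is worth flagging. For $\delta\neq 0$ you say that $\langle x,y\rangle$ with $x\in L_\delta$, $y\in L_{-\delta}$ can be rewritten ``using the Jacobi relation in $\mathcal B$'' as a sum of brackets $[\langle\cdot,\cdot\rangle,\langle\cdot,\cdot\rangle]$. But the Jacobi generator only produces $\langle[a,b],y\rangle=\langle a,[b,y]\rangle+\langle b,[y,a]\rangle$, which is a sum of \emph{generators}, not brackets. What actually works here is pure bilinearity and perfectness: write $x=\sum_i[a_i,b_i]$ and \emph{also} $y=\sum_j[c_j,d_j]$ with homogeneous factors of the right degrees; then bilinearity of $\langle\cdot,\cdot\rangle$ and the definition $[\langle a,b\rangle,\langle c,d\rangle]=\langle[a,b],[c,d]\rangle$ give
$\langle x,y\rangle=\sum_{i,j}[\langle a_i,b_i\rangle,\langle c_j,d_j\rangle]\in[\mathfrak{uce}(L)_\delta,\mathfrak{uce}(L)_{-\delta}]$.
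No Jacobi relation enters; equivalently, this is just the observation that $u|_{\mathfrak{uce}(L)_\gamma}\to L_\gamma$ is onto for perfect $L$, so $[\mathfrak{uce}(L)_\delta,\mathfrak{uce}(L)_{-\delta}]=\langle L_\delta,L_{-\delta}\rangle$. The Jacobi relation in $\mathcal B$ is genuinely needed only in the $\delta=0$ step, exactly as you use it: there the bilinearity route would land you in $[\mathfrak{uce}(L)_0,\mathfrak{uce}(L)_0]$, which is not visibly inside $\sum_{\gamma\neq 0}[\mathfrak{uce}(L)_\gamma,\mathfrak{uce}(L)_{-\gamma}]$, whereas $\langle[a,b],y\rangle=\langle a,[b,y]\rangle+\langle b,[y,a]\rangle$ redistributes the weight and reduces to the $\delta\neq 0$ case already established. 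So the proof is sound; just swap the justification you cite in the $\delta\neq 0$ step.
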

The above result asserts in particular that
$$ \supp L \subset \supp \mathfrak{uce}(L) \subset \supp L + \supp L.$$

The following lemma will be useful later on: 
 \begin{lem}
 \label{central_ext_lem}Let $L$ and $K$ be $\Gamma$-graded Lie algebras and assume that $L$ is generated by $\{L_\gamma: \gamma \neq 0\}.$
If $f: L \rightarrow K$ is a graded homomorphism such that $\ker f \subset L_0,$ then $ \ker f \subset Z(L).$ 
and $f$ is a central extension.
\end{lem}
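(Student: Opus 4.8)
The plan is to show that $\ker f$ is both a graded ideal contained in $L_0$ and central, and then invoke the definition of central extension directly. Since $f$ is a graded homomorphism, $\ker f$ is a graded submodule of $L$, and by hypothesis $\ker f \subseteq L_0$. The key observation is that it suffices to check that $\ker f$ is annihilated by every homogeneous component $L_\gamma$ with $\gamma \neq 0$, because these generate $L$ as a Lie algebra and the centralizer of a generating set is the centre.

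First I would fix $z \in \ker f$ and $\gamma \neq 0$, and consider $[z, x]$ for $x \in L_\gamma$. Since $z \in L_0$ and $x \in L_\gamma$, we have $[z,x] \in L_\gamma$. Applying $f$, which is a graded Lie algebra homomorphism, gives $f([z,x]) = [f(z), f(x)] = [0, f(x)] = 0$, so $[z,x] \in \ker f \cap L_\gamma \subseteq L_0 \cap L_\gamma = \{0\}$, where the last equality uses $\gamma \neq 0$ and the directness of the grading $L = \bigoplus_\delta L_\delta$. Hence $[z, L_\gamma] = 0$ for every $\gamma \neq 0$.

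Next, since $L$ is generated by $\{L_\gamma : \gamma \neq 0\}$, the set $C = \{ y \in L : [z, y] = 0\}$ is a Lie subalgebra of $L$ (it is the centralizer of the element $z$, which is always a subalgebra by the Jacobi identity) containing all generators, hence $C = L$. Therefore $z \in Z(L)$. As $z \in \ker f$ was arbitrary, $\ker f \subseteq Z(L)$. Combined with the fact that $f$ is surjective onto $K$ — which we get for free only if we know $f$ is onto; here $f : L \to K$ is simply assumed to be a graded homomorphism, so to speak of "central extension" in the sense of the definition we read $f$ as the covering map onto its image, or note that the statement implicitly takes $f$ surjective — the exact sequence $0 \to \ker f \to L \to K \to 0$ with $\ker f \subseteq Z(L)$ is precisely a central extension.

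I expect no serious obstacle here: the argument is entirely formal, the only subtlety being the bookkeeping that $\ker f \cap L_\gamma = 0$ for $\gamma \neq 0$, which follows from $\ker f \subseteq L_0$ together with the directness of the $\Gamma$-grading. The one point that deserves a word of care is the reduction to generators: one should state explicitly that a centralizer of a single element is a subalgebra, so that containing a generating set forces it to be all of $L$.
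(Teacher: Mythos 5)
Your argument is essentially the same as the paper's: both observe that for homogeneous $z\in\ker f\subseteq L_0$ and $x\in L_\gamma$ with $\gamma\neq 0$, the bracket $[z,x]$ lies in $\ker f\cap L_\gamma\subseteq L_0\cap L_\gamma=\{0\}$, and then conclude via the generating set $\{L_\gamma:\gamma\neq 0\}$ that $z$ is central. You are in fact a bit more careful than the paper on two minor points — spelling out that the centralizer of $z$ is a subalgebra, and flagging that surjectivity of $f$ is not a formal consequence of the stated hypotheses (the paper dismisses this with ``clearly'', whereas in the intended applications $f$ is given as an epimorphism).
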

\begin{proof} Clearly, such an $f$ is always an epimorphism. In general, $\ker f$ is a graded ideal of $L$, for all $z \in \ker f$ and $x \in K$, $[z, x] \in \ker f.$ The Lie algebra $L$ is graded, so $[z, x] \in L_0 \cap L_\alpha.$ Thus $[z, L_\alpha ] = \{0\}$ for $\alpha \neq  0.$ By assumption  $\{L_\gamma: \gamma \neq 0\}$ generates $L$, hence this implies $[z, L] = \{0\}.$ 
\end{proof}

\section{A construction for a graded central extension}
\label{gce_Section}
\begin{defi}Let $\Gamma$ be an abelian group. The category $\underline{\mathbf{LA}}_\Gamma$ has as objects all $\Gamma$-graded $k$-Lie algebras which satisfy
\begin{equation}L_0 = \sum_{\gamma \neq 0}[L_\gamma, L_{-\gamma}] \label{gradla} \end{equation}
 and as morphisms the $\Gamma$-graded Lie algebra homomorphisms.
\end{defi}
We will give some examples. 
\begin{expl}
Let $L$ be a $\Gamma$-graded Lie algebra. Its \emph{core} is  
$$L_c = \sum_{\gamma \neq 0}[L_\gamma, L_{-\gamma}] \oplus \sum_{\gamma +\delta \neq 0} [L_\gamma, L_\delta].$$ 
It follows that $L_c$ is an ideal of $L$ which is $\Gamma$-graded and clearly $(L_c)_0 = \sum_{\gamma \neq 0}[(L_c)_\gamma, (L_c)_{-\gamma}]$. So $L_c$ is an object of $\underline{\mathbf{LA}}_\Gamma.$
\end{expl}
\begin{expl}
Assume that $L$ is perfect and an object of $\underline{\mathbf{LA}}_\Gamma$. According to Proposition~\ref{graded_cover_neh}, if $u : \mathfrak{uce}(L) \rightarrow L$ is a universal central extension of $L$, then $\mathfrak{uce}(L)$ is also an object of $\underline{\mathbf{LA}}_\Gamma$ and $u$ is a morphism in $\underline{\mathbf{LA}}_\Gamma$. 
\end{expl}
\begin{expl} Let $A$ be a unital commutative associative $k$-algebra. If $L$ is an object of \gradla,  then $ A \otimes_k L $ is again an object in this category. It is $\Gamma$-graded in the obvious way with
$(A \otimes L)_\gamma =  A \otimes  L_\gamma, \mbox{ for all }\gamma \in \Gamma.$
Note that 
$$ A \otimes_k L_0 = \sum_{\gamma \neq 0} [ 1 \otimes_k L_\gamma,   A\otimes_k  L_{-\gamma}].$$
If $L$ is perfect then $ A \otimes_k L$ is easily seen to be perfect as well. 
\end{expl}
\begin{expl}
Let $\Delta$ be a finite irreducible root system and $\Gamma =  Q(\Delta)$ the root lattice. Examples for objects in $\underline{\mathbf{LA}}_\Gamma$ are the finite dimensional simple Lie algebras, and the $\Delta$-graded algebras in the sense of \cite{BM92}. The more general $(R; S, \Lambda)$-graded Lie algebras introduced in \cite{Neh3} also fall under this concept. The cores of extended affine Lie algebras provide also examples of objects in $\gradla.$ 
\end{expl}
An object of $\underline{\mathbf{LA}}_\Gamma$ need not be perfect as the next examples illustrates:
\begin{expl}
Let $\mathfrak{a}$ be a Lie algebra such that $\mathfrak{a} = \mathfrak{c} \oplus \mathfrak{h}$ for a non-trivial ideal $\mathfrak c$  satisfying:
\begin{eqnarray*}
\left[\mathfrak{h}, \mathfrak{h}\right] &=& \mathfrak{c},\\
\left[\mathfrak{a}, \mathfrak{c}\right] &=& \{0\}.
\end{eqnarray*}
For $\Gamma = C_2$, the cyclic group of order $2$ we have a $\Gamma$-grading on $\mathfrak{a}$:

$$\mathfrak{a}_1 = \mathfrak{h},\quad \mbox{and}\quad
\mathfrak{a}_0 = \mathfrak{c}. $$
The Lie algebra $\mathfrak{a}$ is in general not perfect and $\mathfrak{a}_0 = [\mathfrak{a}_1, \mathfrak{a}_1].$
\end{expl}

\begin{defi}\label{pres}Let $L$ be an object in $\underline{\mathbf{LA}}_\Gamma$.  
We denote by $\gce{L}$ the Lie algebra given by the following presentation: \\
\textbf{Generators}: $\{z(x),x \in L_\gamma  : \gamma \neq 0\}$. \\
\textbf{Relations}:  For $x_{\pm \gamma}\in L_{\pm \gamma}$, $y_\gamma \in L_\gamma,$ $x_{\epsilon} \in L_{\epsilon}$ and  $x_\delta \in L_\delta$ with $\gamma, \epsilon,  \delta \neq  0: $
\begin{itemize}
\narrower
\item[] (T0) $ z(s x_\gamma + t y_\gamma) = s z(x_\gamma) + t z(y_\gamma) \mbox{ for }s, t \in k. $
\item[](T1) If $0\neq \gamma + \delta $ then 
\begin{subequations}
\begin{equation}
 \left[z(x_\gamma), z(x_\delta) \right] = z([x_\gamma,  x_\delta]) \mbox{ if }   \gamma + \delta \in \supp L. 
 \label{T1a}
 \end{equation}
 \begin{equation}
  \left[[z(x_\gamma), z(x_\delta)], z(x_{\epsilon}) \right] =  0  \mbox{ if } \gamma +\delta \notin \supp L. 
  \label{T1b}
  \end{equation}
\end{subequations}
\item[](T2) $ \big[[z(x_\gamma), z(x_{-\gamma})] , z(y_{\gamma})\big] = z([[x_\gamma,  x_{-\gamma}],y_\gamma ]) .$
\end{itemize} 

\end{defi}
\begin{rem} By (T0), $z(0_{L_\gamma}) = z(0_k \cdot 0_{L_\gamma}) = 0_kz(0_{L_\gamma}) = 0_{\mathfrak{gce}(L)}. $ By (T1) the element  $[z(x_\gamma), z(x_\delta)]$ is central if $ 0 \neq \gamma +\delta \notin \supp L. $
\end{rem}
Let $X = \bigcup_{\gamma \neq 0}L_\gamma\subset L.$
Note that the set map
\begin{eqnarray*}
z : X & \rightarrow & \gce L\\
x & \mapsto &  z(x)
\end{eqnarray*}
is well-defined.
We denote the image of $X$ in $\gce L$ by $\mathfrak{X}$. It is clear that $\mathfrak{X}$ generates $\gce L$.
In the following we will abbreviate an element $[z(x), z(y)]$ where $x \in L_{\gamma}$, $y  \in L_{-\gamma}$, $\gamma \neq 0,$ by $g(x,y)$. 
Whenever we write $z(x)$ or $g(x,y)$ it is understood that $x\in L_\gamma$ and $y \in L_{-\gamma}$ for some $0 \neq \gamma \in \supp_L $. Likewise when we use the symbol $z(x_\gamma)$ or $g(x_\gamma, x_{-\gamma})$ it is implicit that $x_{\pm \gamma} \in L_{\pm \gamma}$ and that $\gamma \neq 0$. 

\begin{lem} 
Let $0\neq \sum_{i =1}^3 \gamma_i \in \supp L$ and all $\gamma_i \neq 0$. Then for any $x_i \in L_{\gamma_i}$  \label{three_elem}
\begin{equation}[z(x_1),[z(x_2), z(x_3)]] = z([x_1, [x_2, x_3]]).\end{equation}
\end{lem}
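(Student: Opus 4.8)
The plan is to verify equation (the triple-bracket identity) by a case distinction on which partial sums $\gamma_i + \gamma_j$ are zero or lie in $\supp L$, using the defining relations (T0), (T1a), (T1b), (T2) together with the Jacobi identity in $\gce L$. The key observation is that the Jacobi identity lets me rewrite $[z(x_1),[z(x_2),z(x_3)]]$ in terms of $[[z(x_1),z(x_2)],z(x_3)]$ and $[[z(x_1),z(x_3)],z(x_2)]$ (up to signs), and each of those inner brackets $[z(x_i),z(x_j)]$ can be evaluated by (T1a) when $\gamma_i+\gamma_j \in \supp L$ and $\gamma_i+\gamma_j \neq 0$, or is recognized as lying in the relevant subspace via (T1b)/(T2) otherwise.

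First I would dispose of the easiest case: if $\gamma_i + \gamma_j \neq 0$ and $\gamma_i + \gamma_j \in \supp L$ for some pair, then $[z(x_i),z(x_j)] = z([x_i,x_j])$ by (T1a); since $\sum \gamma_\ell \neq 0$ lies in $\supp L$, and (since we are reducing a triple bracket to a double bracket of a $z$ with a $z$) I can then apply (T1a) again, or (T1b)/(T2) as appropriate, to collapse the outer bracket. Concretely, writing $w = [x_i,x_j] \in L_{\gamma_i+\gamma_j}$, I must compute $[z(x_\ell), z(w)]$ where $\ell$ is the remaining index; since $\gamma_\ell + (\gamma_i+\gamma_j) = \sum \gamma = $ the given nonzero element of $\supp L$, relation (T1a) gives $z([x_\ell, w]) = z([x_\ell,[x_i,x_j]])$, which is the right-hand side up to sign, and the Jacobi identity in $L$ reconciles the signs.

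The harder case is when $\gamma_1+\gamma_2$, $\gamma_1+\gamma_3$, $\gamma_2+\gamma_3$ are each either zero or outside $\supp L$, so that none of the inner double brackets can be simplified directly by (T1a). Here I would split further: if, say, $\gamma_1 + \gamma_2 = 0$, then $[z(x_1),z(x_2)] = g(x_1,x_2)$ and the outer bracket $[g(x_1,x_2), z(x_3)]$ is governed by (T2) — noting $\gamma_3 = \sum \gamma_\ell \neq 0$ and the bracket equals $z([[x_1,x_2],x_3])$, again matching the RHS via Jacobi; a symmetric argument handles $\gamma_1+\gamma_3 = 0$ or $\gamma_2+\gamma_3 = 0$. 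If instead all three pairwise sums are nonzero but none is in $\supp L$, I expect to use (T1b): $[z(x_i),z(x_j)]$ is central, so $[[z(x_i),z(x_j)],z(x_\ell)] = 0$, and combining the three instances of this with the Jacobi identity for $z(x_1),z(x_2),z(x_3)$ forces the LHS to vanish — but then I must also argue the RHS $z([x_1,[x_2,x_3]])$ vanishes, which should follow because $[x_2,x_3] \in L_{\gamma_2+\gamma_3}$ with $\gamma_2+\gamma_3 \notin \supp L$ forces $[x_2,x_3] = 0$ in $L$ (the support of $L$ being exactly where $L$ is nonzero). The main obstacle is bookkeeping the sign conventions and making sure that in the mixed subcases (one pairwise sum zero, another outside $\supp L$, a third inside) the repeated applications of Jacobi in $\gce L$ — which is legitimate since $\gce L$ is a Lie algebra by construction — actually close up consistently; I would organize this as a short lemma-internal case table rather than a long computation.
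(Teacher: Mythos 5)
Your overall plan (case analysis driven by (T0)--(T2) plus the Jacobi identity) is the same as the paper's, but the organizing principle you chose introduces a genuine gap. The paper structures its cases around the single quantity $\gamma_2 + \gamma_3$, i.e.\ the degree of the \emph{inner} bracket that actually appears in $[z(x_1),[z(x_2),z(x_3)]]$: if $\gamma_2+\gamma_3\neq 0$ one applies (T1a) or (T1b) directly to the inner bracket and finishes; if $\gamma_2+\gamma_3=0$ one splits further on $\gamma_1+\gamma_2$ and $\gamma_1+\gamma_3$, using Jacobi to reduce to the first case when both are nonzero, and using (T2) only when one of them is also zero --- which forces $\gamma_1=\gamma_3$, exactly the shape (T2) requires. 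Your version case-splits on whether \emph{some} pair $\gamma_i+\gamma_j$ is nonzero and in $\supp L$, and this is where the argument leaks.

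Two concrete problems. First, in your ``harder case'' you claim that if $\gamma_1+\gamma_2=0$ then $[g(x_1,x_2),z(x_3)]$ ``is governed by (T2).'' But (T2) reads $[[z(x_\gamma),z(x_{-\gamma})],z(y_\gamma)]=z([[x_\gamma,x_{-\gamma}],y_\gamma])$ and therefore only applies when $\gamma_3$ equals $\gamma_1$ or $\gamma_2$, which is an extra hypothesis you did not impose; it is equivalent to \emph{another} pairwise sum vanishing. When $\gamma_1+\gamma_2=0$ but $\gamma_3\neq\pm\gamma_1$, the right observation is entirely different: then $\gamma_2+\gamma_3\neq 0$ and, under your harder-case hypothesis, $\gamma_2+\gamma_3\notin\supp L$, so $[z(x_2),z(x_3)]$ is central by (T1b) and $[x_2,x_3]=0$ in $L$, making both sides vanish --- no appeal to (T2) at all. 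Second, in your ``easiest case'' you allow the ``nice'' pair $\{i,j\}$ to be any of $\{1,2\}$, $\{1,3\}$, $\{2,3\}$; but if it is not $\{2,3\}$, reducing $[z(x_1),[z(x_2),z(x_3)]]$ via Jacobi produces a second summand whose inner bracket (e.g.\ $[z(x_1),z(x_3)]$) may have degree $0$, and then (T2) again only applies under the unverified condition $\gamma_2=\gamma_1$ or $\gamma_2=\gamma_3$. If instead you always organize the case split by $\gamma_2+\gamma_3$ first, as the paper does, every branch closes with exactly one of (T1a), (T1b), or (T2), and no such edge cases arise.
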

\begin{proof}
\bf{Case 1.} \normalfont  $\gamma_2 + \gamma_3 \neq 0$.\\ If $\gamma_2 + \gamma_3 \in \supp_\Gamma(L),$ then  $[z(x_2), x(x_3)] = z([x_2, x_3])$ by (T1).  Thus by (T1) again: 
$$[[z(x_1),[z(x_2), z(x_3)]] = \big[z(x_1), z([x_2, x_3])\big]= z([x_1, [x_2, x_3]]). $$
If $\gamma_2 + \gamma_3 \notin \supp L$ then $[x_2, x_3] = 0$ and $[z(x_2), z(x_3)]$ is central, hence
$$[[z(x_1),[z(x_2), z(x_3)]] = 0 = z([x_1, [x_2, x_3]]) = z(0) = 0. $$
\bf{Case 2.} \normalfont  $\gamma_2 + \gamma_3 = 0$, $\gamma_1 + \gamma_2 \neq 0$ and $\gamma_1 + \gamma_3 \neq 0$.\\ The Jacobi identity yields
$$[z(x_1), [z(x_2), z(x_3)]] =[z(x_2), [z(x_1), z(x_3)]] + [z(x_3), [z(x_2), z(x_1)]]. $$
Now we may use the result of the previous case on both summands: 
$$[z(x_1), [z(x_2), z(x_3)]] =z([x_2, [x_1, x_3]]) + z([x_3, [x_2, x_1]]), $$
and (T0) allows to write this expression as
$$[z(x_1), [z(x_2), z(x_3)]] = z([x_2, [x_1, x_3]]) + [x_3, [x_2, x_1]]),$$
which, by the Jacobi identity, is equal to $z([x_1, [x_2, x_3]])$.\\
\bf{Case 3.} \normalfont $\gamma_2 + \gamma_3 = 0$ and w.l.o.g. $\gamma_1 + \gamma_2 =0$.\\ 
Then $\gamma_1 + \gamma_2 + \gamma_3 = \gamma_1 = \gamma_3=:\gamma$ and $\gamma_2 = -\gamma$ and by (T2) we obtain
$$[z(x_1), [z(x_2), z(x_3)]] = z([x_1, [x_2, x_3]]). $$
\end{proof}

\subsection*{The grading of $\gce{L}$.}

\begin{prop}Let $L$ be an object in \gradla. Define $S = \supp_{\Gamma}(L) \cup \{0\},$ $S^\times  = S \setminus \{0\}$.  Then the Lie algebra $\gce{L}$ is an object of $\underline{\mathbf{LA}}_\Gamma$ with homogeneous components given by
$$\gce{L}_\gamma = z(L_\gamma):=\{z(x_\gamma): x_\gamma \in L_\gamma\} \quad \mbox{if } 0\neq \gamma \in \supp L,$$ 
$$ \gce{L}_0 =\tilde{H}:= \spa\{g(x,y): x \in L_\gamma, y \in L_{-\gamma},  \gamma\in S^\times  \},$$
$$\gce{L}_\gamma = \sum_{\delta, \gamma - \delta \in S^\times} [z(L_{\delta}), z(L_{\gamma - \delta})]  \quad \mbox{if } 0\neq \gamma \in (S + S) \setminus S .$$
$$ \gce{L}_\gamma = 0 \quad \mbox{if }\gamma \in \Gamma \setminus (S  + S).$$
We have a $\underline{\mathbf{LA}}_\Gamma$-epimorphism uniquely defined by 
\begin{eqnarray*}
\eta : \gce L & \rightarrow & L \\
z(x_\gamma) & \rightarrow &  x_\gamma, \quad x_\gamma \in L_\gamma, \gamma \neq 0. 
\end{eqnarray*}
\label{grading_prop}
 \end{prop}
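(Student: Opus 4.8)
The strategy is to verify directly that the family of submodules displayed in the statement furnishes a $\Gamma$-grading on $\gce{L}$ compatible with the bracket, and then to identify $\eta$ and check it lands in $\underline{\mathbf{LA}}_\Gamma$. First I would build a candidate graded module $\mathfrak{g} = \bigoplus_{\gamma} \mathfrak{g}_\gamma$ with $\mathfrak{g}_\gamma$ as in the statement and show that $\gce{L}$, which by definition is a quotient of the free Lie algebra on the generators $\{z(x) : x \in L_\gamma, \gamma \neq 0\}$ by the relations (T0)--(T2), surjects onto this module and vice versa. The cleanest route is: put a $\Gamma$-grading on the free Lie algebra on the symbols $z(x_\gamma)$ by declaring $z(x_\gamma)$ to have degree $\gamma$; observe that all the relations (T0), (T1a), (T1b), (T2) are homogeneous (each side lies in a single $\Gamma$-component --- here one uses that $[x_\gamma, x_\delta] \in L_{\gamma + \delta}$ in $L$, so $z([x_\gamma, x_\delta])$ has degree $\gamma + \delta$, matching $[z(x_\gamma), z(x_\delta)]$); hence the ideal they generate is graded and the quotient $\gce{L}$ inherits a $\Gamma$-grading. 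Then $\gce{L}_\gamma$ is spanned by brackets of the $z(x_{\gamma_i})$ whose degrees sum to $\gamma$, and I would use Lemma~\ref{three_elem} (and its obvious iteration, or a short induction on bracket length) to reduce every such bracket to the claimed normal form: a single $z(x_\gamma)$ when $\gamma \in \supp L \setminus\{0\}$, a sum of $g(x,y)$'s when $\gamma = 0$, and a sum of $[z(L_\delta), z(L_{\gamma-\delta})]$'s when $\gamma \in (S+S)\setminus S$. That every bracket of length $\geq 3$ collapses is exactly the content of Lemma~\ref{three_elem} combined with (T1b); the degree-$0$ case is the definition of $\tilde H$, and the ``$\gamma \notin S+S$'' case follows because any nonzero homogeneous element is a sum of brackets of generators, and the sum of two degrees in $S^\times$ always lies in $S + S$.

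Next I would pin down $\gce{L}_0 = \tilde H$ and check the relation $\gce{L}_0 = \sum_{\gamma \neq 0}[\gce{L}_\gamma, \gce{L}_{-\gamma}]$ required for membership in $\underline{\mathbf{LA}}_\Gamma$. The inclusion $\supseteq$ is immediate once we know $\gce{L}_\gamma = z(L_\gamma)$ for $\gamma \in \supp L$, since $[z(L_\gamma), z(L_{-\gamma})] = \spa\{g(x,y)\}$, and for $\gamma \in (S+S)\setminus S$ the bracket $[\gce{L}_\gamma, \gce{L}_{-\gamma}]$ again reduces via Lemma~\ref{three_elem} into $\tilde H$. The reverse inclusion $\gce{L}_0 \subseteq \sum_{\gamma\neq 0}[\gce L_\gamma, \gce L_{-\gamma}]$ is built into the definition of $\tilde H$. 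One subtlety to dispatch: one must check $\mathfrak g_\gamma = 0$ for $\gamma \notin S+S$ is consistent, i.e. that no relation forces an unexpected collapse and no generator has such a degree --- but generators have degrees in $S^\times \subseteq S+S$ trivially, so this is automatic.

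For the morphism $\eta$, I would define it on generators by $z(x_\gamma) \mapsto x_\gamma$ and check that it respects (T0)--(T2): (T0) is the $k$-linearity of $x \mapsto x$; (T1a) becomes $[x_\gamma, x_\delta] = [x_\gamma, x_\delta]$ in $L$; (T1b) becomes $[[x_\gamma, x_\delta], x_\epsilon] = 0$, which holds because $\gamma + \delta \notin \supp L$ forces $[x_\gamma, x_\delta] \in L_{\gamma+\delta} = 0$; (T2) becomes the trivial identity in $L$. So $\eta$ is a well-defined Lie homomorphism, it is $\Gamma$-graded by construction, and it is surjective because every $L_\gamma$ with $\gamma \neq 0$ is in the image and the relation $L_0 = \sum_{\gamma \neq 0}[L_\gamma, L_{-\gamma}]$ (the defining property of objects of $\underline{\mathbf{LA}}_\Gamma$) puts $L_0$ in the image as well. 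Uniqueness of $\eta$ follows since the $z(x_\gamma)$ generate $\gce{L}$.

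\textbf{Main obstacle.} The real work is the normal-form reduction showing $\gce{L}_\gamma$ is \emph{exactly} the claimed span and not something larger --- in particular that every iterated bracket of generators of total degree $\gamma$ can be rewritten, using only (T0)--(T2) and the Jacobi identity, as a single $z$-term (when $\gamma \in \supp L$) or a sum of length-two brackets (otherwise). Lemma~\ref{three_elem} handles the length-three step, but one needs a careful induction on bracket length, with attention to the case distinctions on which partial sums of the $\gamma_i$ vanish or fail to lie in $\supp L$, to be sure the reduction always terminates in the stated form; the potential annoyance is bookkeeping the various degenerate cases (several $\gamma_i$ summing to zero, intermediate sums leaving $S$), all of which must be shown to reduce consistently.
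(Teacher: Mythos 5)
Your proposal is correct and follows essentially the same path as the paper: grade the free Lie algebra on the symbols $z(x_\gamma)$ by $\deg z(x_\gamma)=\gamma$, observe that the generating relations (T0)--(T2) are homogeneous so the quotient inherits a $\Gamma$-grading, invoke Lemma~\ref{three_elem} to collapse brackets, and then verify $\eta$ on generators. The one place where the paper is slightly slicker than what you sketch: rather than running an induction on bracket length with case analysis on intermediate partial sums (which you flag as the ``main obstacle''), the paper defines $\tilde L = \tilde H + \tilde N + \tilde Z$ (with $\tilde N = \bigoplus_{\gamma\neq 0} z(L_\gamma)$ and $\tilde Z$ the central piece supported on $(S+S)\setminus S$), checks directly that $\ad z(x_\gamma)$ maps each of $\tilde N$, $\tilde H$, $\tilde Z$ into $\tilde L$ (a short three-case verification using (T1), Lemma~\ref{three_elem}, and centrality of $\tilde Z$ via (T1b)), and then concludes $\tilde L = \gce{L}$ because $\tilde L$ contains the generators and is stable under $\ad$ of generators. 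This ad-stability argument trades the bracket-length induction you anticipate for a finite check and is what keeps the bookkeeping manageable; the directness of the sum is then automatic from the free-Lie-algebra grading, as you note. Everything else in your outline -- the identification $\gce{L}_0 = \tilde H$, the well-definedness and surjectivity of $\eta$, and the check that $\gce{L}$ lands in $\underline{\mathbf{LA}}_\Gamma$ -- matches the paper.
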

 \begin{proof}
 The ideal generated by the relations (T0)-(T2) is $\Gamma$-graded since its generators are homogeneous elements of the free Lie algebra on $\mathfrak X,$ $\mathrm{Lie}(\mathfrak X )$; therefore the quotient $\gce{L}$ has a canonical grading which is induced from the grading on $\mathrm{Lie}(\mathfrak X)$ such that $z(x_\gamma)$ lies in $\gce{L}_\gamma.$
 Relation (T0) implies that $z(L_\gamma)$ is a  $k$-module on generators $\{z(x): x \in L_\gamma \}$. \\
Define 
$$\tilde{N} = \bigoplus_{\gamma \neq 0} z({L}_\gamma).$$
$$\tilde Z_\gamma = \sum_{\delta, \gamma - \delta \in S^\times} [z(L_{\delta}), z(L_{\gamma - \delta})], \quad \gamma \in (S + S)\setminus S.$$
\begin{equation}\tilde  Z = \sum_{\gamma \in (S +S) \setminus S} \tilde Z_ \gamma.  \label{Z_definition}\end{equation}
By relation (T1)(b), the module $\tilde Z$ lies in the centre of $\gce L. $
Consider the subspace of $\mathfrak{gce} (L)$ given by 
$$  \tilde L = \tilde{H} + \tilde{N} + \tilde Z \subset \mathfrak{gce}(L).$$

Let $ z(x_\gamma)$ be a generator of $\gce L$, $x_\gamma \in L_\gamma$ where $\gamma \in S^\times,$  and  let $n \in \tilde N$. 
We may assume that $n = z(y_\delta),$ $y_\delta \in L_\delta$ where $\delta \in S^\times.$
Thus
$$[z(x_\gamma), n] \in \begin{cases} \tilde H & \gamma + \delta = 0\\
 \tilde N & \gamma + \delta \in S^\times \\
 \tilde Z_{\gamma + \delta} & \gamma + \delta \in (S + S)\setminus S
 \end{cases} $$
 Thus $[z(x), \tilde N] \subset \tilde L.$
 For $g(y, w)\in \tilde{H},$  Lemma~\ref{three_elem} shows that $$[ g(y, w), z(x_\gamma)  ] \in z(L_\gamma)\subseteq \tilde{N}$$ since
$[[z(y), z(w)],z(x) ]  = z([[y, w] ,x ]) \in z(L_\gamma).$\\
The submodule $\tilde Z$ is central by (T1)(b) since it is spanned by homogeneous elements  whose degrees lie in $(S + S)\setminus S$, thus 
$$[z(x), \tilde Z] = \{0\}. $$
The $k$-module $\tilde{N} \subset \tilde L$ contains the generators of $\gce{L}$ and since  we have just shown that $\ad z(x)$ maps $\tilde L$ into itself for any generator $z(x)$, $x \in L_\delta$, it follows that $\tilde L = \mathfrak{gce}(L).$
The sum $\tilde{N} + \tilde{H} + \tilde Z$ is direct by construction of the $\Gamma$-grading on $\gce L$. We have therefore proven that 
$$\mathfrak{gce}(L) = \tilde N \oplus \tilde H \oplus \tilde Z. $$
 At this point we know the map  $\eta : z(L_\gamma) \rightarrow L_\gamma$, given by $z(x) \rightarrow x,$ is surjective and maps $\gce{L}_\gamma$ onto $L_\gamma$. 
Since $L$ is generated by $\eta(\mathfrak X),$ the map $\eta$ extends to a map $L(\eta) : \mathrm{Lie}(\mathfrak{X}) \rightarrow L.$ Moreover, $L(\eta)$ factors through the relations in (T1) and (T2), thus we obtain by factorization a uniquely defined Lie algebra epimorphism $\eta : \gce L \rightarrow L$ with the property that $\eta(z(x)) = x$ for all $x \in L_\gamma,$ $\gamma \neq 0. $  
\end{proof}

\begin{cor} With the same notation as in the proof of  Proposition~\ref{grading_prop}, 
$$\tilde Z \subset Z(\gce L) $$ In particular, if $0 \neq \gamma \in (S + S)\setminus S,$ then $\gce{L}_\gamma \subset Z(\gce L).$ 
\label{Z_is_central_Cor}
\end{cor}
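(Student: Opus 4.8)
The plan is to use two facts. First, $\gce L$ is generated as a Lie algebra by the set $\mathfrak X = \{z(x_\gamma) : x_\gamma \in L_\gamma,\ \gamma \neq 0\}$, as noted right after Definition~\ref{pres}. Second, for any fixed $w \in \gce L$ the centralizer $C(w) = \{v \in \gce L : [w,v] = 0\}$ is a Lie subalgebra of $\gce L$: it is visibly a submodule, and if $[w,v_1] = [w,v_2] = 0$ then the Jacobi identity gives $[w,[v_1,v_2]] = [[w,v_1],v_2] + [v_1,[w,v_2]] = 0$. Hence to prove $w \in Z(\gce L)$ it suffices to check $[w, z(x_\epsilon)] = 0$ for every generator $z(x_\epsilon) \in \mathfrak X$, since then $C(w) \supseteq \mathfrak X$ forces $C(w) = \gce L$.

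I would then apply this to the spanning elements of $\tilde Z$. Recall from the proof of Proposition~\ref{grading_prop} that $\tilde Z = \sum_{\gamma \in (S+S)\setminus S}\tilde Z_\gamma$ and that each $\tilde Z_\gamma$ is spanned by brackets $g = [z(x_\delta), z(x_{\gamma-\delta})]$ with $\delta,\ \gamma - \delta \in S^\times$ and $\gamma \in (S+S)\setminus S$. The degree-sum of the inner bracket here is $\delta + (\gamma-\delta) = \gamma$, and $\gamma \in (S+S)\setminus S$ means exactly that $\gamma \neq 0$ and $\gamma \notin \supp L$, since $S = \supp L \cup \{0\}$. These are precisely the hypotheses under which relation (T1)(b) applies, so $[g, z(x_\epsilon)] = [[z(x_\delta), z(x_{\gamma-\delta})], z(x_\epsilon)] = 0$ for every generator $z(x_\epsilon)$. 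By the first paragraph $g \in Z(\gce L)$; since such $g$ span $\tilde Z$ and the centre is a submodule, $\tilde Z \subseteq Z(\gce L)$.

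For the final assertion, if $0 \neq \gamma \in (S+S)\setminus S$ then Proposition~\ref{grading_prop} gives $\gce{L}_\gamma = \tilde Z_\gamma \subseteq \tilde Z \subseteq Z(\gce L)$, as claimed.

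I do not expect any genuine obstacle: this corollary merely records explicitly the remark made inside the proof of Proposition~\ref{grading_prop} that $\tilde Z$ lies in the centre of $\gce L$. The only point that demands a little care is the index bookkeeping — matching the degree $\gamma$ of an element of $\tilde Z_\gamma$ with the degree-sum condition in relation (T1)(b), and noting that membership in $(S+S)\setminus S$ simultaneously encodes $\gamma \neq 0$ and $\gamma \notin \supp L$.
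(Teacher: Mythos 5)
Your proof is correct and follows essentially the same route as the paper: it identifies the spanning elements $[z(x_\delta), z(x_{\gamma-\delta})]$ of $\tilde Z$ and applies relation (T1)(b), using that $\gamma \in (S+S)\setminus S$ is equivalent to $\gamma \neq 0$ and $\gamma \notin \supp L$. The paper cites the remark after Definition~\ref{pres} that (T1)(b) already makes such brackets central; your explicit detour through the fact that the centralizer of a fixed element is a Lie subalgebra containing all generators is precisely the reasoning that remark compresses.
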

\begin{proof} By definition, the submodule $\tilde Z$ is spanned by $[z(x), z(y)]$ where $x \in L_\alpha$, $y \in L_\beta$ and where $\alpha \in S, \beta \in S$ and $\alpha + \beta \in (S+ S)\setminus S.$ According to relation (T1)(b) the element $[z(x), z(y)]$ is central in $\gce L.$
\end{proof}

We can now  obtain an improved version of Lemma~\ref{three_elem}. \\
\textbf{Notation:} For elements $x_1,\ldots, x_n$ in a Lie algebra we define
$$[x_n, \ldots, x_1] = \ad x_n \ad x_{n-1}\ldots \ad x_2(x_1). $$
\begin{lem}
\label{calc_in_root_spaces}
\label{n_elem_lem} 
For elements $x_k \in L_{\gamma_k}$, $1\leq k \leq n$ and $0\neq \sum_{k= 1}^n{\gamma_k} \in \supp L$
$$\big[z(x_n), z(x_{n-1}),  \ldots, z(x_1)\big] = z\Big (\big [x_n,x_{n-1} \ldots, x_1 \big]\Big). $$
\end{lem}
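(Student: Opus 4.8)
The plan is to prove Lemma~\ref{n_elem_lem} by induction on $n$. The base cases $n=1$ and $n=2$ are immediate: for $n=1$ the statement is the tautology $z(x_1)=z(x_1)$, and for $n=2$, since $0\neq\gamma_1+\gamma_2\in\supp L$, relation (T1)(a) gives $[z(x_1),z(x_2)]=z([x_1,x_2])$ (with $[x_2,x_1]=-[x_1,x_2]$ handled by (T0)); the case $n=3$ is exactly Lemma~\ref{three_elem}. So assume $n\geq 4$ and that the statement holds for all shorter iterated brackets (for all choices of homogeneous arguments whose degree-sum is a nonzero element of $\supp L$).

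The main step is to analyze $[z(x_n),z(x_{n-1}),\ldots,z(x_1)] = \ad z(x_n)\bigl([z(x_{n-1}),\ldots,z(x_1)]\bigr)$. Write $\delta = \gamma_{n-1}+\cdots+\gamma_1$, so that $\gamma_n+\delta = \sum_{k=1}^n\gamma_k \neq 0$ lies in $\supp L$ by hypothesis. I would split into cases according to where $\delta$ sits. If $\delta\in S^\times$, then by the inductive hypothesis $y:=[z(x_{n-1}),\ldots,z(x_1)] = z\bigl([x_{n-1},\ldots,x_1]\bigr)$ is of the form $z(w)$ with $w\in L_\delta$, and then $[z(x_n),z(w)] = z([x_n,w])$ by (T1)(a) (legitimate since $\gamma_n+\delta\in\supp L$), which is exactly $z([x_n,\ldots,x_1])$. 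If $\delta = 0$, then $\gamma_n = \gamma_n+\delta\in\supp L$; here I would peel off one bracket: $[z(x_{n-1}),\ldots,z(x_1)] = [z(x_{n-1}),\,y']$ where $y' = [z(x_{n-2}),\ldots,z(x_1)]$ has degree $-\gamma_{n-1}$, and apply Lemma~\ref{three_elem} (the three-element version) to $\bigl[z(x_n),[z(x_{n-1}),y']\bigr]$ after first using the inductive hypothesis to replace $y'$ by $z([x_{n-2},\ldots,x_1])$ — giving a genuine three-fold bracket $\bigl[z(x_n),[z(x_{n-1}),z(w')]\bigr]$ with $w'\in L_{-\gamma_{n-1}}$, $\gamma_n-\gamma_{n-1}+\gamma_{n-1}=\gamma_n\neq 0$ — and Lemma~\ref{three_elem} yields $z([x_n,[x_{n-1},w']])=z([x_n,\ldots,x_1])$. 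The remaining possibility is $\delta\in(S+S)\setminus S$, i.e.\ $\delta\neq 0$, $\delta\notin\supp L$; then $[x_{n-1},\ldots,x_1]=0$ so the desired right-hand side is $z(0)=0$, while Corollary~\ref{Z_is_central_Cor} (or directly (T1)(b)) shows that $[z(x_{n-1}),\ldots,z(x_1)]\in\gce{L}_\delta\subset Z(\gce L)$, so $[z(x_n),\,[z(x_{n-1}),\ldots,z(x_1)]]=0$ as well, and both sides vanish.

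I expect the main obstacle to be the $\delta=0$ case, because there the inner bracket $[z(x_{n-1}),\ldots,z(x_1)]$ need not be expressible as a single generator $z(w)$ (its degree is $0$, not in $S^\times$), so one cannot simply apply (T1)(a); the trick is to keep the last-but-one bracketing visible and invoke the three-element Lemma~\ref{three_elem} in its (T2)-flavored generality rather than Lemma~\ref{n_elem_lem}'s own inductive hypothesis. One must be slightly careful that the sub-bracket $[z(x_{n-2}),\ldots,z(x_1)]$ to which the inductive hypothesis is applied has degree-sum $-\gamma_{n-1}$, which is nonzero (since $\gamma_{n-1}\neq 0$) and lies in $\supp L$ — this holds because $L_{-\gamma_{n-1}}\ni[x_{n-2},\ldots,x_1]$ and if that element were zero there would be nothing to prove (both sides of the lemma would be $z(0)=0$ after one more application of Jacobi and induction). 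A clean way to organize this is: first dispose of the case where any partial bracket $[x_j,\ldots,x_1]$ vanishes (both sides are then $0$), and otherwise every partial bracket lies in a homogeneous space indexed by a nonzero element of $\supp L$, so the three cases above apply uniformly. Finally, (T0)'s bilinearity is used throughout to absorb signs and reduce to the stated forms.
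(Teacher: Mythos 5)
Your proposal is essentially correct and for the cases $\delta := \gamma_{n-1}+\cdots+\gamma_1 \neq 0$ it matches the paper's proof exactly (apply the inductive hypothesis to rewrite the inner bracket as $z([x_{n-1},\ldots,x_1])$, then invoke (T1)(a) when $\delta\in\supp L$; invoke Corollary~\ref{Z_is_central_Cor} together with $L_\delta=0$ when $\delta\notin\supp L$).

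The one genuine departure is the case $\delta=0$, and here the two arguments differ in a small but meaningful way. The paper does not peel off $x_{n-1}$; instead it uses that $[z(x_{n-1}),\ldots,z(x_1)]\in\gce{L}_0=\tilde H$ is by Proposition~\ref{grading_prop} a finite linear combination $\sum_i g(u_i,v_i)$, applies Lemma~\ref{three_elem} to each summand $[z(x_n),g(u_i,v_i)]=z([x_n,[u_i,v_i]])$, and collapses the sum via (T0) and the fact that $\eta$ is a Lie homomorphism (which forces $\sum_i[u_i,v_i]=[x_{n-1},\ldots,x_1]$). This route buys you something: it never needs to know whether $-\gamma_{n-1}$ lies in $\supp L$. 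Your route (peel off $x_{n-1}$, replace $y'=[z(x_{n-2}),\ldots,z(x_1)]$ by $z(w')$, apply Lemma~\ref{three_elem} once) is also correct, and is arguably shorter when $-\gamma_{n-1}\in\supp L^\times$, but it forces you to handle the edge case $-\gamma_{n-1}\notin\supp L$ separately. Your treatment of that edge case is where the writing is loose: ``if that element were zero there would be nothing to prove'' is not quite right as stated, because $[x_{n-2},\ldots,x_1]=0$ does not by itself make $[z(x_{n-2}),\ldots,z(x_1)]$ vanish. What actually closes this case is that for $-\gamma_{n-1}\neq 0$ with $-\gamma_{n-1}\notin\supp L$, Proposition~\ref{grading_prop} forces $\gce{L}_{-\gamma_{n-1}}$ to be either $\{0\}$ (if $-\gamma_{n-1}\notin S+S$) or central (by Corollary~\ref{Z_is_central_Cor}), so $[z(x_{n-1}),[z(x_{n-2}),\ldots,z(x_1)]]=0$ and the whole left-hand side dies. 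Similarly, your suggested ``clean organization'' (first dispose of all $n$ where some partial bracket $[x_j,\ldots,x_1]$ vanishes) is not actually a shortcut: making that disposal rigorous when $\sum_{i\le j}\gamma_i=0$ requires exactly the $\gce{L}_0=\tilde H$ decomposition plus $\eta$ being a Lie homomorphism --- i.e., the very argument the paper uses for $\delta=0$ --- so it does not simplify anything. None of this is a fatal error; the gaps are all fillable with tools already available at this point in the paper, but as written the $\delta=0$ case is under-justified in a way the paper's version is not.
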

\begin{proof}
The proof is by induction on $n$. The formula is true for $n \leq 3$ by Lemma~\ref{three_elem}. Assume that it is proved for all $n < N$. Consider
$$\big[z(x_N),[ z(x_{N-1}), \ldots, z(x_1)]\big] $$
If $\sum_{j = 1}^{N-1}\gamma_{j} \neq 0$ we either have $\sum_{j = 1}^{N-1}\gamma_{j} \in \supp L$ and in this case by (T1)
$$\big [z(x_N),[ z(x_{N-1}), \ldots, z(x_1)]\big] = z\big ([x_N, \ldots, x_1]\big ),$$
or $\sum_{j = 1}^{N-1}\gamma_{j} \notin \supp L$ and then
$[ z(x_{N-1}), \ldots, z(x_1)]$ is central  by Corollary~\ref{Z_is_central_Cor} and $[x_{N-1}, \ldots, x_1] = 0$ so that
$$\big [z(x_N),[ z(x_{N-1}), \ldots, z(x_1)]\big] = 0 = z\big ([x_N, \ldots, x_1]\big ) .$$
This finishes the case $\sum_{j = 1}^{N-1}\gamma_{j} \neq 0.$
So assume that $\sum_{j = 1}^{N-1}\gamma_{j} = 0$. Then $[ z(x_{N-1}), \ldots, z(x_1)] \in \gce{L}_0,$ and hence can be written as linear combination
$$\big [ z(x_{N-1}), \ldots, z(x_1)\big] = \sum_j g(u_j, v_j )$$
for $u_i \in L_{\gamma_j},v_i \in L_{-\gamma_i}$, some $\gamma_i \neq 0$. 
By Lemma~\ref{three_elem} 
$$\big[z(x_N),g(u_i, v_i)\big] = z\Big(\big[x_N,[u_i, v_i]\big]\Big)  $$
and summation over $i$ yields
$$\sum_i \big[z(x_N),g(u_i, v_i)\big] = z\big([x_N, \ldots, x_1]\big). $$
\end{proof}
\begin{prop} Let $L$ and $L'$ be objects in $\underline{\mathbf{LA}}_\Gamma$ such that $\supp_{\Gamma} L = \supp_{\Gamma} L.$ If $f: L \rightarrow L'$ is an epimorphism in $\underline{\mathbf{LA}}_\Gamma,$ then
the set map $z(x)\rightarrow z(f(x))$, $x \in L_\gamma$, $\gamma \neq 0$ extends uniquely to an $\underline{\mathbf{LA}}_\Gamma$-epimorphism $\gce f : \gce L \rightarrow \gce{L'}$ such that the following diagram commutes:
$$\xymatrix{ \gce L \ar[d]_\eta \ar[rr]^{\gce f} && \gce{L'}\ar[d]^\eta\\
 L \ar[rr]^f && L'
}$$

\end{prop}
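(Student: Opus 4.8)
The plan is to use the explicit presentation of $\gce{L}$ from Definition~\ref{pres} together with the grading description from Proposition~\ref{grading_prop}. Since $f$ is a $\Gamma$-graded Lie algebra homomorphism we have $f(L_\gamma)\subseteq L'_\gamma$, so for $x\in L_\gamma$ with $\gamma\neq 0$ the element $z(f(x))$ is a legitimate generator of $\gce{L'}$. First I would extend the set map $z(x)\mapsto z(f(x))$ to a Lie algebra homomorphism $\mathrm{Lie}(\mathfrak X)\to\gce{L'}$ from the free Lie algebra on $\mathfrak X$, and then show that it annihilates the ideal generated by the relations (T0)--(T2), so that it factors through $\gce{L}$.

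The verification of the relations is the step to carry out with care, though it is routine. Relation (T0) transports because $f$ is $k$-linear and (T0) holds in $\gce{L'}$. For (T1) and (T2) the crucial point is the hypothesis $\supp_\Gamma L=\supp_\Gamma L'$: it ensures that, for $\gamma,\delta\neq 0$ with $\gamma+\delta\neq 0$, the alternative "$\gamma+\delta\in\supp L$" versus "$\gamma+\delta\notin\supp L$" is unchanged when read inside $L'$. In the first case $f([x_\gamma,x_\delta])=[f(x_\gamma),f(x_\delta)]$ since $f$ is a Lie homomorphism, and $\gamma+\delta\in\supp L'$, so (T1a) in $\gce{L'}$ applies; in the second case $\gamma+\delta\notin\supp L'$, so $[z(f(x_\gamma)),z(f(x_\delta))]$ is central in $\gce{L'}$ by (T1b) and any further bracket vanishes, matching the image of the left-hand side of (T1b) (and $f([x_\gamma,x_\delta])=f(0)=0$ is consistent). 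Relation (T2) is handled identically: $f$ being a Lie homomorphism gives $f([[x_\gamma,x_{-\gamma}],y_\gamma])=[[f(x_\gamma),f(x_{-\gamma})],f(y_\gamma)]$, and since this element of $L$ lies in $L_\gamma$ with $\gamma\neq 0$, its image has a well-defined $z(\cdot)$. Thus we obtain a Lie algebra homomorphism $\gce f:\gce L\to\gce{L'}$ with $\gce f(z(x))=z(f(x))$.

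It remains to record the bookkeeping properties. Since "epimorphism in \gradla" means $f$ is onto and $f$ is graded, $f(L_\gamma)=L'_\gamma$, hence $\gce f(\mathfrak X)=\mathfrak X'$, which generates $\gce{L'}$ by Proposition~\ref{grading_prop}; so $\gce f$ is surjective. It is $\Gamma$-graded because it sends the homogeneous generator $z(x)\in\gce{L}_\gamma$ to $z(f(x))\in\gce{L'}_\gamma$, and since $\gce{L'}$ is an object of \gradla\ by Proposition~\ref{grading_prop}, $\gce f$ is a morphism in \gradla. Commutativity of the square is checked on generators, $\eta(\gce f(z(x)))=\eta(z(f(x)))=f(x)=f(\eta(z(x)))$, and propagates to all of $\gce L$ because $\mathfrak X$ generates $\gce L$ and all four maps are Lie homomorphisms. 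Uniqueness is immediate: the prescription $z(x)\mapsto z(f(x))$ fixes $\gce f$ on a generating set. There is no deep obstacle here; the only point that genuinely needs attention, and which I would spell out, is invoking the equal-support hypothesis at exactly the place where the "in support / central" dichotomy of (T1) must be transferred from $L$ to $L'$, and checking throughout that every symbol $z(\cdot)$ that appears is indexed by a nonzero degree.
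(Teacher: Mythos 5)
Your proposal is correct and mirrors the paper's own argument: you extend $f$ to the free Lie algebra on the generating set, check that the relations (T0)--(T2) are respected (invoking the equal-support hypothesis exactly where the "in support / not in support" dichotomy of (T1) must transfer from $L$ to $L'$), and then obtain surjectivity and gradedness from the corresponding properties of $f$ and of the ideals of relations. The only cosmetic quibble is that the free Lie algebra should be on $X=\bigcup_{\gamma\neq 0}L_\gamma$ rather than on its image $\mathfrak X$ inside $\gce L$, but this does not affect the substance of the argument.
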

\begin{proof}  By assumption, $\gce{L'}$ is generated by 
$$X' := \{z((L')_\gamma): \gamma \neq 0\} .$$
Denote by $\mathcal{I}$ (resp. $\mathcal{I}'$) the ideal in $\mathrm{Lie}(X)$ (resp. $\mathrm{Lie}(X')$) generated by (T0)-(T2). 
Since $f$ is graded we have a set map
\begin{eqnarray*}
f : X_\gamma & \rightarrow & (X')_\gamma\\
w(x) & \mapsto &w(f(x)).
\end{eqnarray*}
where $w(x)$ for $x \in X$ resp. $X'$  denotes the image of $x$ in $\mathrm{Lie}(X)$ resp. $\mathrm{Lie}(X').$
By the universal property of $\mathrm{Lie}(X),$ $f$ extends to a homomorphism $L(f): \mathrm{Lie}(X) \rightarrow \mathrm{Lie}(X')$. We prove that $\mathcal{I}$ is mapped into $\mathcal{I}'$. 
For (T0) there is nothing to check since Lie algebra homomorphisms are always linear maps. For (T1)  there are two cases. Throughout $x \in L_\gamma,$ $y \in L_\delta$, $\gamma, \delta, \gamma + \delta \neq 0.$ 
 \begin{itemize}
\item[-] Let $\gamma + \delta \in \supp_\Gamma(L)  =  \supp_{\Gamma}(L').$  Then 
 $$L(f)\left( [w(x), w(y)] - w([x,y]) \right) = [w(f(x)), w(f(y))] - w(f([x,y]))$$ which is relation (T1)(a) for elements in $X'$ with $f(x) \in (L')_\gamma$ and $f(y) \in (L')_\delta.$
  \item[-]Assume $\gamma + \delta \notin \supp_{\Gamma}(L)$ and thus $\gamma + \delta \notin \supp_{\Gamma}(L').$
 $$L(f)[w(x), w(y)] = [w(f(x)), w(f(y))] = w([f(x), f(y))] $$
 which is central in $\mathfrak{gce}(L')$ by (\ref{T1b}).  
 \end{itemize}
 Next,
  \begin{eqnarray*} 
   && L(f)\big[[w(x_\gamma), w(x_{-\gamma})] , w(y_{\gamma})\big]- w([[x_\gamma,  x_{-\gamma}],y_\gamma ])\\ &=& \big[[w(f(x_\gamma)), w(f(x_{-\gamma}))] , w(f(y_{\gamma}))\big] - w(f([[x_\gamma,  x_{-\gamma}],y_\gamma ]))
  \end{eqnarray*}
   since $f$ is a Lie algebra homomorphism. 
  This proves that $L(f)$ factors through (T2). 
 Since the ideals $\mathcal{I}$ and $\mathcal{I}'$ are $\Gamma$-graded it follows that $\gce{f}$ is also graded, hence a morphism in $\underline{\mathbf{LA}}_\Gamma.$ Since $z(X')$ generates $\gce{L'}$ if follows that $\gce{f}$ is surjective.  
 \end{proof}
 \begin{lem}
 \label{central_ext_lem_gce}
If $f: L \rightarrow L'$ is an $\underline{\mathbf{LA}}_\Gamma$-epimorphism such that $\ker f \subset L_0$ then $f$ is a central extension.
\end{lem}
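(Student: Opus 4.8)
The plan is to derive this statement directly from Lemma~\ref{central_ext_lem}, so that the only thing to verify is that lemma's standing hypothesis: that $L$ is generated as a Lie algebra by its homogeneous components of nonzero degree.

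First I would observe that this generation hypothesis is automatic for objects of \gradla. Indeed, by the defining relation~\eqref{gradla} we have $L_0 = \sum_{\gamma \neq 0}[L_\gamma, L_{-\gamma}]$, so every element of $L_0$ already lies in the subalgebra of $L$ generated by $X := \bigcup_{\gamma \neq 0} L_\gamma$; combined with the decomposition $L = L_0 \oplus \bigoplus_{\gamma \neq 0} L_\gamma$, this shows that $X$ generates $L$.

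Next, $L'$ is $\Gamma$-graded (being an object of \gradla), and $f$, being a morphism of \gradla, is a graded Lie algebra homomorphism; by hypothesis $\ker f \subset L_0$. Hence all the hypotheses of Lemma~\ref{central_ext_lem} are met with $K = L'$, and that lemma yields $\ker f \subset Z(L)$. Since $f$ is an epimorphism it is in particular surjective, so the exact sequence $0 \to \ker f \to L \to L' \to 0$ with central kernel is exactly a central extension, as claimed.

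I do not expect any real obstacle here: the entire content is the elementary remark that property~\eqref{gradla} forces $L$ to be generated by its nonzero-degree part, after which Lemma~\ref{central_ext_lem} does the rest. The one point worth flagging is that the conclusion ``$f$ is a central extension'' incorporates surjectivity of $f$, which in the present setting is supplied by the assumption that $f$ is an epimorphism --- this is precisely what distinguishes the statement from Lemma~\ref{central_ext_lem}, where surjectivity had to be argued separately within the proof.
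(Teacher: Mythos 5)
Your proof is correct and takes the same route as the paper, which simply cites Lemma~\ref{central_ext_lem} as immediate; you have merely spelled out the one detail the paper leaves implicit, namely that the defining relation~\eqref{gradla} of \gradla{} forces $L$ to be generated by its nonzero-degree components.
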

\begin{proof}This is immediate from Lemma~\ref{central_ext_lem}. 
\end{proof}

\begin{prop}Let $L \in \underline{\mathbf{LA}}_\Gamma.$ 
\begin{itemize}
\label{central_ext_prop}
\item[\rm{(a)}]
$\eta : \gce{L} \rightarrow L$ is a graded central extension of $L$.  
\item[\rm{(b)}]  If $L$ is perfect, then $\gce{L}$ is perfect and $\eta : \gce{L} \rightarrow L$ is a covering. \label{perfect_lem}
\end{itemize}
\end{prop}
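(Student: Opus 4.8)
The plan is to prove (a) first, then derive (b) quickly. For (a), recall from Proposition~\ref{grading_prop} that we already have the $\Gamma$-grading $\gce{L} = \tilde N \oplus \tilde H \oplus \tilde Z$ and the canonical epimorphism $\eta : \gce{L} \to L$ with $\eta(z(x_\gamma)) = x_\gamma$. By Lemma~\ref{central_ext_lem_gce} it suffices to check that $\ker\eta \subset \gce{L}_0 = \tilde H$. Since $\eta$ is graded and maps $\gce{L}_\gamma = z(L_\gamma)$ bijectively onto $L_\gamma$ for $0 \neq \gamma \in \supp L$ (the map $z(x_\gamma) \mapsto x_\gamma$ has the obvious linear inverse coming from relation (T0)), no nonzero homogeneous element of degree $\gamma \neq 0$ with $\gamma \in \supp L$ can lie in $\ker\eta$. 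For $\gamma \in (S+S)\setminus S$ the component $\gce{L}_\gamma = \tilde Z_\gamma$ is central by Corollary~\ref{Z_is_central_Cor}; I would argue that such a component is in fact carried isomorphically onto $L_\gamma = 0$... no — rather, I must show $\tilde Z \subset \ker\eta$ poses no problem, which it does not, because $\tilde Z$ sits in degrees outside $\supp L$ where $L$ vanishes, so $\tilde Z \subseteq \ker\eta$ automatically, and $\tilde Z \subseteq Z(\gce{L})$ by Corollary~\ref{Z_is_central_Cor}, so that part of the kernel is harmless. The point that needs care is therefore: an element of $\ker\eta$ decomposes into homogeneous pieces, the pieces in degrees $\gamma \notin S$ are automatically in the center, the pieces in degrees $0 \neq \gamma \in \supp L$ must vanish by injectivity of $\eta|_{z(L_\gamma)}$, so what survives lies in $\gce{L}_0 = \tilde H$. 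Hence $\ker\eta \subseteq \tilde H = \gce{L}_0$, and since $L \in \gradla$ is generated by $\{\gce{L}_\gamma : \gamma \neq 0\}$ (the generators $z(x_\gamma)$, $\gamma \neq 0$, generate $\gce{L}$), Lemma~\ref{central_ext_lem} applies and $\eta$ is a central extension.

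For part (b), suppose $L$ is perfect. The generators of $\gce{L}$ are the $z(x_\gamma)$ with $0 \neq \gamma \in \supp L$. I would first show $\gce{L}$ is perfect: since $L$ is perfect and $L \in \gradla$, we have $L_\gamma = \sum [L_\delta, L_{\gamma-\delta}]$ appropriately, and more to the point $L = [L,L]$ together with the grading condition $L_0 = \sum_{\gamma\neq 0}[L_\gamma, L_{-\gamma}]$ forces every $L_\gamma$ (including $\gamma = 0$) to be a sum of brackets of elements lying in nonzero-degree components. Lifting through $\eta$: each generator $z(x_\gamma)$ with $x_\gamma = \sum [a_i, b_i]$, $a_i \in L_{\delta_i}$, $b_i \in L_{\gamma - \delta_i}$ with $\delta_i, \gamma - \delta_i \neq 0$, can be rewritten — using relation (T1)(a) — as $z(x_\gamma) = \sum [z(a_i), z(b_i)] \in [\gce{L}, \gce{L}]$; if some $\delta_i$ or $\gamma - \delta_i$ is zero one re-expresses the offending factor again as a bracket of nonzero-degree elements. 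Likewise $\tilde H = \gce{L}_0$ is spanned by the $g(x,y) = [z(x),z(y)]$, which are manifestly brackets, and $\tilde Z$ is spanned by brackets as well. Thus $\gce{L} = [\gce{L}, \gce{L}]$, i.e. $\gce{L}$ is perfect, and a perfect central extension is by definition a covering.

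The main obstacle I anticipate is the bookkeeping in showing $\ker\eta \subseteq \gce{L}_0$ cleanly — specifically making rigorous that $\eta$ restricted to each $\gce{L}_\gamma = z(L_\gamma)$, $0 \neq \gamma \in \supp L$, is injective. This is where relation (T0) does the work: it guarantees $z(L_\gamma)$ is a homomorphic image of the module $L_\gamma$ under $x_\gamma \mapsto z(x_\gamma)$, and $\eta$ inverts this on the nose, so the composite $L_\gamma \to z(L_\gamma) \to L_\gamma$ is the identity, forcing $z(L_\gamma) \cong L_\gamma$ and $\eta|_{z(L_\gamma)}$ injective. Once that is pinned down, the rest of (a) is a one-line appeal to Lemma~\ref{central_ext_lem}, and (b) follows from the perfectness argument above. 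I would present (a) and (b) together, citing Proposition~\ref{grading_prop}, Corollary~\ref{Z_is_central_Cor}, Lemma~\ref{central_ext_lem}, and Lemma~\ref{central_ext_lem_gce} for the structural facts already established.
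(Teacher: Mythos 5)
Your part (b) matches the paper's argument almost verbatim, including the use of Lemma~\ref{three_elem} for the degree-zero factors, and is fine. Part (a), however, contains a genuine logical gap that keeps the argument from closing.

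You correctly observe two things: first, that $\eta$ restricted to $\gce{L}_\gamma = z(L_\gamma)$ is injective for $0 \neq \gamma \in \supp L$ (so no kernel sits in those degrees); and second, that $\tilde Z$ lies entirely inside $\ker\eta$ (its degrees are outside $\supp L$, where $L$ vanishes) and is central by Corollary~\ref{Z_is_central_Cor}. But the conclusion you then draw, ``Hence $\ker\eta \subseteq \tilde H = \gce{L}_0$,'' contradicts the second observation: since $\tilde Z \subseteq \ker\eta$ and $\tilde Z$ is concentrated in nonzero degrees $(S+S)\setminus S$, we cannot have $\ker\eta \subseteq \gce{L}_0$ unless $\tilde Z = 0$ — and $\tilde Z$ is nonzero in exactly the cases the construction is designed to capture (the degenerate-sum contributions, cf.\ Theorems~\ref{deg_sum_theo}, \ref{deg_sum_theo_A_3}). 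What your decomposition actually gives is $\ker\eta \subseteq \gce{L}_0 \oplus \tilde Z$. Since Lemma~\ref{central_ext_lem} (equivalently \ref{central_ext_lem_gce}) requires the hypothesis $\ker f \subset L_0$, you cannot apply it directly to $\eta$; the invocation is not justified.

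The repair is short. Either do what the paper does: note $\tilde Z$ is a central ideal contained in $\ker\eta$, pass to $\eta' : \gce{L}/\tilde Z \to L$, observe $\ker\eta' \subseteq (\gce{L}/\tilde Z)_0$, apply Lemma~\ref{central_ext_lem} to $\eta'$, and then deduce centrality of $\ker\eta$ by going back up (for $g_0 \in (\ker\eta)_0$ and $y \in \gce{L}_\gamma$ with $\gamma \in S^\times$, $[g_0, y] \in \gce{L}_\gamma \cap \tilde Z = 0$, and such $y$ generate). Or, more directly, skip the quotient: the graded ideal $\ker\eta$ satisfies $(\ker\eta)_\gamma = 0$ for $\gamma \in S^\times$, so for any $g_0 \in (\ker\eta)_0$ and generator $y \in z(L_\gamma)$, $[g_0, y] \in (\ker\eta)_\gamma = 0$; hence $(\ker\eta)_0 \subseteq Z(\gce{L})$, and combined with $\tilde Z \subseteq Z(\gce{L})$ you get $\ker\eta \subseteq Z(\gce{L})$. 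You identify the ``main obstacle'' as making the injectivity $\eta|_{z(L_\gamma)}$ rigorous, but that is the easy part; the actual point needing care is how $\tilde Z$ sits inside $\ker\eta$ outside degree $0$, which your write-up glosses over.
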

\begin{proof} 
 By Proposition~\ref{grading_prop}, $\eta$ is an epimorphism in $\underline{\mathbf{LA}}_\Gamma$. Thus $\ker \eta$ is graded. We have $\ker \eta \subseteq \mathfrak{gce}(L)_0 \oplus \tilde {\mathcal Z}$  with $\tilde {\mathcal Z}$   defined by (\ref{Z_definition}).  Note that $\tilde {\mathcal Z}$ is a central ideal in $\gce L$ which is contained in $\ker \eta$. Therefore $\gce L$ is a central extension, if the quotient map $\eta' : \gce L/ \tilde {\mathcal Z} \rightarrow L$  is a central extension. But $\ker \eta' \subset \gce L_0$ and  Lemma~\ref{central_ext_lem} gives that $\eta '$ is central extension. This proves (a).\\
  It suffices to show that each generator is contained in $[\gce{L}, \gce{L} ]$. The Lie algebra $L$ is perfect. Thus for any $x_\gamma \in L_\gamma, \gamma \neq 0$ there are $x_i \in L_{\delta_i}, y_i \in L_{\gamma- \delta_i}$ such that 
$$x_\gamma = \sum_{i = 1}^n [x_i,  y_i]. $$
By (T0) it suffices to prove that $z([x, y])\in [\gce{L}, \gce{L}]$ where $x \in L_\delta$ and $y \in L_{\gamma - \delta}.$
Assume first that $\delta \neq 0$ and $\gamma - \delta \neq 0$. Then it is an immediate consequence of (T1) that 
$$[z(x), z(y)] = z([x,y]) \in [\gce{L}, \gce{L}] .$$
For the case $\delta = 0$ observe that $L_0 = \sum_{\gamma \neq 0} [L_\gamma, L_{-\gamma}]$. Thus each $x\in L_0$ can be written as $x = \sum_i [x_i,y_i]$ with $x_i \in L_{\gamma_i}$, $y_i \in L_{-\gamma_i}$ and $\gamma_i \neq 0$. By Lemma~\ref{three_elem} we obtain
$$z([x, y]) = z(\sum_i[ [x_i,y_i], y])= \sum_i[ [z(x_i),z(y_i)], z(y)]. $$
Therefore $z([x,y]) \in [\gce{L}, \gce{L}]$. \\
Combining this with (a) it follows that $\eta$ is a perfect central extension, i.e., a covering. 
 \end{proof}
At this point we know by Proposition~\ref{central_ext_prop} that $\eta : \gce{L} \rightarrow L$ is a central extension of $L$. Naturally the question arises under which conditions $\eta$ is a universal central extension. 

\begin{cor}\label{hull_cor}
If $L \in \underline{\mathbf{LA}}_\Gamma$ is perfect, then the Lie algebras $\gce{\mathfrak{uce}(L)}$ and $\mathfrak{uce}(L)$ are isomorphic. 
\end{cor}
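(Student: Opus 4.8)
The statement is really a formal consequence of the results already assembled in this section, so the proof I would give is short. The idea is to recognize $\eta : \gce{\mathfrak{uce}(L)} \to \mathfrak{uce}(L)$ as a covering and then invoke the rigidity of universal central extensions (Lemma~\ref{uni_cov}) to upgrade it to an isomorphism.

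First I would check that $\mathfrak{uce}(L)$ is a legitimate input for the $\mathfrak{gce}$ construction, i.e.\ that it lies in $\underline{\mathbf{LA}}_\Gamma$ and is perfect. Since $L$ is perfect, the proposition on universal central extensions gives that $\mathfrak{uce}(L)$ is perfect, and the example following Proposition~\ref{graded_cover_neh} records that, because $L\in\underline{\mathbf{LA}}_\Gamma$ is perfect, $\mathfrak{uce}(L)$ is again an object of $\underline{\mathbf{LA}}_\Gamma$ (indeed $u:\mathfrak{uce}(L)\to L$ is a morphism there). So both hypotheses of Proposition~\ref{central_ext_prop}(b) are met with $\mathfrak{uce}(L)$ playing the role of $L$.

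Next I would apply Proposition~\ref{central_ext_prop}(b) to $\mathfrak{uce}(L)$: this yields that the canonical map $\eta : \gce{\mathfrak{uce}(L)} \to \mathfrak{uce}(L)$ is a covering, i.e.\ a central extension with perfect total space. Finally, the map $u : \mathfrak{uce}(L) \to L$ is by definition a universal central extension (here we use that $L$ is perfect), so Lemma~\ref{uni_cov}, applied to the universal central extension $u$ and the covering $\eta$ of $\mathfrak{uce}(L)$, forces $\eta$ to be an isomorphism of Lie algebras. Hence $\gce{\mathfrak{uce}(L)} \cong \mathfrak{uce}(L)$, and the isomorphism is even $\Gamma$-graded since $\eta$ is a $\underline{\mathbf{LA}}_\Gamma$-morphism.

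\textbf{Main obstacle.} There is essentially no obstacle here; the only thing to be careful about is verifying the two standing hypotheses (perfectness and membership in $\underline{\mathbf{LA}}_\Gamma$) for $\mathfrak{uce}(L)$ rather than $L$, so that Proposition~\ref{central_ext_prop}(b) genuinely applies. Once that is in place, the conclusion is immediate from Lemma~\ref{uni_cov}.
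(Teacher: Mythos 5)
Your proof is correct and follows exactly the paper's argument: show $\eta:\gce{\mathfrak{uce}(L)}\to\mathfrak{uce}(L)$ is a covering via Proposition~\ref{central_ext_prop}(b), then invoke Lemma~\ref{uni_cov} with $u:\mathfrak{uce}(L)\to L$ as the universal central extension. The extra care you take in checking that $\mathfrak{uce}(L)$ is perfect and lies in $\underline{\mathbf{LA}}_\Gamma$ is a reasonable expansion of what the paper leaves implicit.
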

\begin{proof}The epimorphism $\eta: \gce{\mathfrak{uce}(L)} \rightarrow  \mathfrak{uce}(L)$ is a covering. Hence we can apply Lemma~\ref{uni_cov} and conclude that $\eta$ is an isomorphism. 
 \end{proof}
\begin{theo}\label{uni_central_cov_theo} Let $L$ be an object in $\underline{\mathbf{LA}}_\Gamma$ which is perfect. Assume that the restriction $u|_{\mathfrak{uce}(L)_\gamma} : {\mathfrak{uce}(L)_\gamma} \rightarrow L_\gamma$ is bijective for all $0\neq \gamma \in \supp L$.
Then
$$\mathfrak{uce}(L)\cong \gce{L}.$$
\end{theo}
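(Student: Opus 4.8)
The plan is to produce mutually inverse morphisms between $\gce{L}$ and $\mathfrak{uce}(L)$, both of which are coverings of $L$ by the earlier results, and then invoke Lemma~\ref{uni_cov}. More precisely, I would proceed as follows. By Proposition~\ref{central_ext_prop}(b), since $L$ is perfect, $\eta:\gce{L}\rightarrow L$ is a covering; by Lemma~\ref{uce_is_ce} and the universality proposition, $u:\mathfrak{uce}(L)\rightarrow L$ is the universal central extension, hence a covering. By the universal property there is a unique morphism of central extensions $\varphi:\mathfrak{uce}(L)\rightarrow\gce{L}$ with $\eta\circ\varphi=u$. If I can exhibit \emph{any} Lie algebra homomorphism $\psi:\gce{L}\rightarrow\mathfrak{uce}(L)$ with $u\circ\psi=\eta$, then $\psi$ is a covering map onto $\mathfrak{uce}(L)$ (it is surjective since $\gce L$ is perfect and its image contains a generating set of the perfect algebra $\mathfrak{uce}(L)$, or one argues via $\mathrm{im}\,\psi + \ker u = \mathfrak{uce}(L)$ as in Lemma~\ref{generating_lemma}), and since $\mathfrak{uce}(L)$ is itself a universal central extension, Lemma~\ref{uni_cov} forces $\psi$ to be an isomorphism. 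That gives the theorem.

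The crux is therefore to build $\psi:\gce{L}\rightarrow\mathfrak{uce}(L)$, i.e.\ to check that the generators $z(x_\gamma)$ can be sent to elements of $\mathfrak{uce}(L)$ respecting the defining relations (T0)--(T2). This is exactly where the hypothesis that $u|_{\mathfrak{uce}(L)_\gamma}:\mathfrak{uce}(L)_\gamma\to L_\gamma$ is bijective for every $0\neq\gamma\in\supp L$ is used: for such $\gamma$ let $s_\gamma:=(u|_{\mathfrak{uce}(L)_\gamma})^{-1}:L_\gamma\to\mathfrak{uce}(L)_\gamma$, a $k$-linear section of $u$ over the nonzero homogeneous pieces, and define $\psi(z(x_\gamma)):=s_\gamma(x_\gamma)$. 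Relation (T0) holds because each $s_\gamma$ is linear. For (T1a): if $\gamma+\delta\in\supp L$ is nonzero, then $[s_\gamma(x_\gamma),s_\delta(x_\delta)]$ lies in $\mathfrak{uce}(L)_{\gamma+\delta}$ and has image $[x_\gamma,x_\delta]$ under $u$; since $u$ is injective there it must equal $s_{\gamma+\delta}([x_\gamma,x_\delta])$, which is $\psi$ of the right-hand side. For (T1b): if $0\neq\gamma+\delta\notin\supp L$ then $\mathfrak{uce}(L)_{\gamma+\delta}=0$ by Proposition~\ref{graded_cover_neh} (the support of $\mathfrak{uce}(L)$ is contained in $\supp L$... more carefully, in $\supp L+\supp L$, so I must instead note $[s_\gamma(x_\gamma),s_\delta(x_\delta)]\in\mathfrak{uce}(L)_{\gamma+\delta}\subseteq\ker u$ is central, hence bracketing with anything gives $0$), matching the relation. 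For (T2) one runs the same ``compute the image under $u$, then use injectivity in the relevant homogeneous degree $\gamma$'' argument, using that $[[x_\gamma,x_{-\gamma}],y_\gamma]\in L_\gamma$ and $u$ is bijective on $\mathfrak{uce}(L)_\gamma$. Hence $\psi$ is well defined on the free Lie algebra on $\mathfrak X$ and descends to $\gce L$, and by construction $u\circ\psi=\eta$ on generators, hence everywhere.

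I expect the main obstacle to be purely bookkeeping: verifying (T2) and, more delicately, handling the central/degenerate degrees correctly — one has to be careful that in (T1b) and in the $\sum\gamma_i\notin\supp L$ substeps the relevant bracket really does land in $\ker u$ (so that it is central in $\mathfrak{uce}(L)$, allowing the relation ``this triple bracket is $0$'' to be satisfied), rather than trying to force a homogeneous section there. Once $\psi$ is constructed, the categorical wrap-up is short. As a remark, one could alternatively argue directly that $\ker\eta=\{0\}$: the hypothesis implies $\ker\eta$ meets every nonzero homogeneous component trivially, so $\ker\eta\subseteq\gce{L}_0\oplus\tilde Z$, and then one shows it is actually zero using perfectness and Lemma~\ref{n_elem_lem}; but going through $\mathfrak{uce}$ and Lemma~\ref{uni_cov} is cleaner and avoids re-deriving universality.
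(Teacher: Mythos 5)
Your proposal is correct and follows essentially the same route as the paper: both construct the map $\gce{L}\to\mathfrak{uce}(L)$ on generators by $z(x_\gamma)\mapsto (u|_{\mathfrak{uce}(L)_\gamma})^{-1}(x_\gamma)$, verify relations (T0)--(T2) by computing images under $u$ and using injectivity on the homogeneous component in question (with centrality of $\ker u$ handling (T1b) and the triple bracket in (T2)), and then conclude with the universal property. The only cosmetic difference is in the wrap-up — you invoke Lemma~\ref{uni_cov} after noting $\psi$ is a covering of $\mathfrak{uce}(L)$, while the paper checks directly that $\psi$ and the universal map $\mathfrak{uce}(L)\to\gce{L}$ are mutually inverse on generating sets; both are valid and equivalent in effort.
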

\begin{proof} By the universal property of $\mathfrak{uce}$ the central extension $f : \mathfrak{uce}(L) \rightarrow \mathfrak{gce}(L)$ is also universal (Corollary 3.8 in \cite{Neh3}).
We define a map 
$$\sigma : \bigcup_{\gamma \in \supp L}z(L_\gamma)  \rightarrow \bigcup_{\gamma \in \supp L}\mathfrak{uce}(L)_\gamma, \quad \sigma(z(x_\gamma))\rightarrow u^{-1}(x_\gamma). $$
Then $\sigma$ is well-defined and bijective since the restriction of $u$ (resp. $\eta$) to $\mathfrak{uce}(L)_\gamma$ (resp. $\gce L_\gamma$) is bijective for $\gamma \in \supp L$. 
The image of $\sigma$ generates $\mathfrak{uce}(L)_\lambda$:  
$$\mathfrak{uce}(L)_\lambda = \sum_{\delta \in \supp L}[\mathfrak{uce}(L)_{\lambda -\delta}, \mathfrak{uce}(L)_\delta] $$
and 
$$\mathfrak{uce}(L)_0 = \sum_{0 \neq \lambda \in \supp L}[\mathfrak{uce}(L)_\lambda, \mathfrak{uce}(L)_{-\lambda}].$$
  We claim that the elements $\{\sigma(z(x_\gamma)): 0\neq \gamma \in \supp L\}$ constitute a subset of $\mathfrak{uce}(L)$ fulfilling relations (T0)-(T2). \\
(T0) is clear because the restriction of $f$ to $\mathfrak{uce}(L)_\gamma$, $0\neq \gamma \in \supp L$ is in particular a $k$-module isomorphism. \\
Let $x \in L_\gamma$, $y \in L_\delta$. \\
Case 1): $\gamma + \delta \in \supp L$. 
 $$\sigma(z([x,y ])) - [\sigma(z(x)), \sigma(z(y))] \in \ker f \cap \mathfrak{uce}(L)_{\gamma+ \delta} = \{0\}.$$
 Hence
$$\sigma(z([x,y ])) = [\sigma(z(x)), \sigma(z(y))].$$
Case 2): $\gamma + \delta \notin \supp L$. Then 
$$[f^{-1}(z(x)), f^{-1}(z(y))] \in \ker f \subset Z(\mathfrak{uce}(L)).$$ 
For (T2) we use the central trick,  let $ x, u \in L_\gamma$, $y \in L_{- \gamma}$, then
$$[\sigma(z(x)), \sigma(z(y))] - \sigma([z(x), z(y)]) \in \ker f \subset Z(L).$$  
Thus we may choose any pre-image of $[z(x), z(y)]$ and the following will hold: 
$$[[\sigma(z(x)), \sigma(z(y))], \sigma(z(u))] = [\sigma([z(x), z(y)]), \sigma(z(u))].$$
Moreover, $$[\sigma([z(x), z(y)]), \sigma(z(u))]- \sigma([[z(x), z(y)],z(u)]) \in \ker f \cap \mathfrak{uce}(L)_\gamma = \{0\}.$$
Thus we have Lie algebra homomorphism 
$$\sigma: \gce L \rightarrow \mathfrak{uce}(L). $$
Since the restriction of $\sigma \circ f$ (resp. $f \circ \sigma$) is the identity on a generating set of $\mathfrak{uce}(L)$ (resp. \gce L) it follows that $f$ is invertible as a Lie algebra homomorphism with inverse $\sigma$. Thus $f$ is an isomorphism. 
 \end{proof}
 

\chapter{Tits-Kantor-Koecher constructions}

\label{TKK_chapter}
\section{The universal inner derivation algebra}
\begin{defi}A \emph{Jordan-Kantor Pair} is a quadruple $$P= (J, M) = ((J^{-}, J^+), (M^{-}, M^{+}))$$ of $k$-modules together with quadratic maps $Q^\sigma : J^\sigma \rightarrow \Hom(J^{-\sigma}, J^\sigma)$, linear maps $\circ : J^\sigma \rightarrow \End(M^{-\sigma}, M^{-\sigma})$, bilinear operators $V^{\sigma} : M^\sigma \times M^{\sigma} \rightarrow \End(M^{\sigma})$ and bilinear maps $\kappa : M^\sigma \times M^\sigma \rightarrow J^\sigma$ such that
\begin{enumerate}
\item $J$ is a Jordan pair with quadratic maps $Q^\sigma$, i.e., the following identities hold in all scalar extensions of $k$: for $a, c \in J^\sigma$, $b \in J^{-\sigma}$
\begin{eqnarray}
D_{a,b}Q_{a} &=& Q_{a}D_{b,a} \label{JP1}, \\
D_{Q_a b, b } &=& D_{a, Q_b a}, \label{derid1}, \label{JP2}\\
Q_{Q_a b} &=& Q_a Q_b Q_a \label{JP3}
\end{eqnarray} 
where $Q_{a , c}b =:D_{a, b }c  $ is the linearization of the quadratic map $Q$. We also abbreviate $D_{a,b}c$ by $\{a, b, c\}.$

\item $M$ is a \emph{special $J$-module} with respect to $\circ$. In terms of identities this means:   $$\{a, b, c\}\circ y = a \circ(b \circ (c \circ y)) + c \circ (b \circ (a\circ y))$$ for all $a,c  \in J^\sigma$, $b \in J^{-\sigma}$, $y \in M^{-\sigma}.$
\item For all $(x,y) \in M$, $(z, w) \in M:$
\begin{equation}[V_{x, y}, V_{z, w}] = V_{V_{x,y} z, w} - V_{z, V_{y,x}w} \label{KP0} \end{equation}  
We also abbreviate $V_{x,y}z$ by $\{x, y, z\}.$
\item $\kappa(x, x)$ = 0  for $x \in M^\sigma$.
\item The maps and operators are compatible in the following sense:\\ for all $a, c  \in J^\sigma$, $b \in J^{-\sigma}$, $x,u, z \in M^\sigma$, $y, w \in M^{-\sigma}$: 
\begin{eqnarray} 
\kappa(x, z)\circ y &=& \{x,y,z\} - \{z,y,x\} \label{JK4} \\
\kappa(x,z) \circ (b \circ u) &=& \{z, b \circ x, u\} - \{x, b\circ z, u\} \\
\kappa(b \circ x, y) \circ z &=& b \circ \{x,y,z\} - \{y,x, b\circ z\}\\
\{ \kappa(x,u), b , a \} &=& \kappa(a \circ (b \circ x), z)  + \kappa(x, a \circ (b \circ z)) \label{derid2}\\
\{a, \kappa(y, w), c\} &=& \kappa(a \circ w, c\circ y) + \kappa(c \circ w, a \circ y) \label{derid3}\\
\kappa(\kappa(z,u) \circ y, x) &=& \kappa(\{x,y,z\}, u) + \kappa(z, \{x,y,u\}) \label{KP1}
\end{eqnarray}
\end{enumerate}
Given two Jordan-Kantor pairs $P = (J, M)$ and $P' = (J',M')$, a Jordan-Kantor homomorphism from $P$ to $P'$ is a quadruple of $k$-linear maps
$ f = (f_J^+,f_J^- ,f_M^+, f_M^-)$ such that for all $a \in J^\sigma,$ $x, z \in M^\sigma,$ $y \in M^{-\sigma}$ 
\begin{equation}  f_J^\sigma Q^\sigma(a) = {Q'}^\sigma(f_J^\sigma(x))f_J^{-\sigma}, \quad f_M^\sigma V_{x,y} = {V'}^\sigma_{f_M^\sigma(x), f_M^{-\sigma}(y)}f_M^\sigma ,\end{equation}
 $$\kappa' (f_M^\sigma(x), f_M^\sigma(z)) = f_J^\sigma (\kappa(x,z)), \quad  f_M^\sigma(a \circ y ) = f_J^\sigma(a) \circ' f_M^{-\sigma}(y).  $$
Jordan-Kantor pairs form a catergory with morphisms the Jordan-Kantor homomorphisms. 
\end{defi} \begin{rem}
Jordan-Kantor pairs were introduced by Benkart and Smirnov in \cite[3.1]{BS}, but there it is only required that $\kappa$ is anti-commutative. This is clearly equivalent to $\kappa(x,x) = 0$ for all $x$ if $1/2 \in k$. Since the authors work over a base ring containing $1/2$ our definition is an appropriate extension of theirs. Likewise Benkart and Smirnov use different identities to define a Jordan pair which can be shown to be equivalent to  \ref{JP1} , \ref{JP2} and \ref{JP3} if $1/2 \in k$ and $P$ does not have $3$-torsion. 
\end{rem}
\begin{expl} Clearly, Jordan pairs are a subcategory of the category of Jordan-Kantor pairs. All the constructions which follow can therefore also be carried out for a Jordan pair. 
\end{expl}
\begin{expl} If $1/2$, $1/3 \in k,$ then every Kantor pair embeds into a Jordan-Kantor pair, see \cite[7.4]{BS}. 
\end{expl}

The $k$-module $P  = J^- \oplus J^+ \oplus M^+ \oplus M^-$ can be endowed with a $5$-grading in the following manner: $P^{\pm 2} = J^{\pm 1}$, $P^{\pm 1} = M^{\pm 1}$ and $P^{\pm 0} = \{0\}$. 
 We denote by $\mathcal E$ the subring $$\mathcal E := \End_k(P)_0 = \{T \in \End_k(P): T.P^i \subset P^i, -2 \leq i \leq 2\} ,$$ the ring of all $k$-endomorphisms of $P$ that preserve the $5$-grading. Every element $T$ of $\mathcal E$ can be thought of as a block diagonal matrix $$T = \left (\begin{array}{cccc}T_{-2} &&&\\  &T_{-1}&& \\ &&T_1& \\  &&&T_2\\   \end{array} \right ), \mbox{ where }T_i \in \End{P^i}.$$
For the sake of brevity we will write $ T = (T_{-2}, T_{2})$ if $T_{|M} = 0$ and $ T = (T_{-1}, T_{1})$ if $T_{|J} = 0$. If $p \in P^{i}$  and $T  \in \mathcal E$ then $T.p := T_i.p.$
\begin{defi} Let $(a, b) \in J$ and $(x, y) \in M$. We define $\delta(a, b) \in \End_k(P)$ and $v(x,y) \in \End_k(P)$ by
\begin{eqnarray*}\delta(a, b)|_J = (D_{a, b}, - D_{b,a})\\
\delta(a, b)(x, y) = (a \circ (b \circ x), - b \circ (a \circ y))\\
v(x,y)|_M = (V_{x,y}, - V_{y,x})\\
v(x,y)(a, b) = (\kappa(a \circ y, x), -\kappa(b \circ x, y))
\end{eqnarray*}

\label{extended_ops_JKP_defi}
\end{defi}

\begin{rem}For all $(a, b) \in J$ and $(x, y) \in M$ the linear operators $\delta(a, b)$ and $v(x,y)$ are elements of $\mathcal E$.
\end{rem}
\begin{defi} The \emph{derivation algebra} of $P$ denoted by $\str(P)$ consists of all $T$ in $\mathcal E$ which have the following properties: 
\begin{eqnarray}
[T, D_{a, b}] =  D_{T.a, b} + D_{a, T.b} && [T, V_{x, y}] = V_{T.x, y} + V_{x, T.y} \label{str1}\\ 
T.(a \circ y) = T.a \circ y + a \circ T.y & & T.\kappa(x,z) = \kappa(T.x, z) + \kappa(x, T.z) \label{str2}
\end{eqnarray}
where $a \in J^\sigma$, $b \in J^{-\sigma}$, $x,z \in M^\sigma$ and $y \in M^{-\sigma}$.
It follows from (\ref{derid1}), (\ref{KP0}), (\ref{derid2}) and (\ref{derid3}) that for all $T \in \str(T)$
\begin{equation}
[T, \delta(a, b)] = \delta(T.a,b) + \delta(a, T.b), \quad [T, v(x,y)] = v(T.x, y) +v(x, T.y)\label{str11}. \end{equation}
It is easily checked that $\delta(a, b)$ and $v(x,y)$ are derivations (see \cite{BS}). 
Hence \str(P) is a Lie subalgebra of $\mathcal E^-$ which contains all $\delta(a, b)$ and $v(x,y).$ 
 By  \ref{str11}, the submodule spanned by all $\delta(a, b)$ and $v(x, y)$ is an ideal in $\str(P),$ called the \emph{inner derivation algebra}, $\instr(P)$.
 \end{defi}
\label{subsection_uce}

\begin{defi} Let $I(P)$ be the submodule of the direct sum $(J^+ \otimes J^-) \oplus (M^+ \otimes M^-)$ which is generated by the following elements\\
\label{uider_JkP_Def}
\begin{eqnarray}
x \otimes \{yxy\} - \{xyx\} \otimes y, \label{HCP11}\\ 
\{xy u\} \otimes  w + \{uwx\} \otimes y - u \otimes \{yx w\} - x \otimes \{wu y\}, \label{HCP12} \\
   a \otimes \{bab\} - \{aba\} \otimes b \label{HCP112},  \\ 
  \{abc\} \otimes d - c \otimes \{bad\} + \{cda\} \otimes b - a \otimes \{bcd\}\label{HCP113} \\
\kappa(u, x) \otimes b - x \otimes (b \circ u) + u \otimes (b\circ x) \label{HCP13}\\
a \otimes \kappa(y, w) - (a \circ y) \otimes w + (a \circ w ) \otimes y  \label{HCP131}
\end{eqnarray}
for $(a, b), (c,d) \in J$, $(x,y), (u,w) \in M$.\\
The quotient $$ (J^+ \otimes J^-) \oplus (M^+ \otimes M^-)/ I(P) \mbox{ is denoted by }\mathfrak{uider}_{JKP}(P) \mbox{ or also by } P \diamond P $$ and the cosets $x\otimes y + I(P)$ and $a \otimes b + I(P)$ by $x\diamond y$ (resp. $a \diamond b$).\\
In the notation $\mathfrak{uider}_{JKP}$ the subscript will often be omitted; it is our intention not to cause any confusion by doing so. This also holds for notation that is ``derived'' from this one. \\
  We define the \emph{cyclic homology} of the Jordan-Kantor pair as $$\HF(P) = \left \{\sum a_i \diamond b_i + \sum x_j \diamond y_j: \sum \delta({a_i, b_i}) + \sum v({x_j, y_j}) = 0 \right \}.$$

\end{defi}
\begin{rem} 
\label{one_half_remark}
Our definition of $\HF(P)$ is equivalent to \cite[5.3]{BS} if $1/2 \in k$: The relation (\ref{HCP112}) is omitted in their paper. It is easily seen that (\ref{HCP112}) and (\ref{HCP113}) are equivalent whenever we have $1/2 \in k$. The same holds for (\ref{HCP11}) and(\ref{HCP12}). 
\end{rem}
\begin{rem}
 The module $I(P)$ is generated by elements which lie either in $(J^+ \otimes J^-)$ or in $(M^+ \otimes M^-).$ Therefore the module $\uider(P)$ has  a decomposition $(M  \diamond M) \oplus (J \diamond J)$ where  $J  \diamond J  : = J^+ \otimes J^-/(I(P) \cap (J^+ \otimes J^-))$ and $M  \diamond M  : = M^+ \otimes M^-/(I(P) \cap (M^+ \otimes M^-)).$
 \end{rem}
\begin{lem}
\label{uider_is_ider_module}
The $k$-module $\uider(P)$ is a module for the Lie algebra $\str (P)$ with action defined as follows (for $T \in \str (P), a \diamond b \in J \diamond J, x \diamond y \in M \diamond M$):
$$T.(a \diamond b) = T.a \diamond b  + a \diamond T.b \mbox{ and } T.(x \diamond y) = T.x \diamond y  + x \diamond T.y$$
\end{lem}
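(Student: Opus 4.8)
The plan is to verify that the two proposed formulas, $T.(a\diamond b) = T.a\diamond b + a\diamond T.b$ and $T.(x\diamond y) = T.x\diamond y + x\diamond T.y$, first give well-defined maps on the quotient $\uider(P) = (J\diamond J)\oplus(M\diamond M)$, and then that they satisfy the module axiom $[T,S].w = T.(S.w) - S.(T.w)$ for $T,S\in\str(P)$. For well-definedness, recall that $\str(P)$ acts on the free modules $J^+\otimes J^-$ and $M^+\otimes M^-$ by the Leibniz rule $T.(a\otimes b) = T.a\otimes b + a\otimes T.b$, and we must show $T$ preserves the submodule $I(P)$ of Definition \ref{uider_JkP_Def}. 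Since $I(P)$ is generated by the six families of relations (\ref{HCP11})--(\ref{HCP131}), it suffices to check that $T$ maps each generator back into $I(P)$. This is where the defining identities (\ref{str1}), (\ref{str2}) of $\str(P)$ are used: for instance, applying the Leibniz rule to (\ref{HCP11}) and using $[T,V_{x,y}] = V_{T.x,y}+V_{x,T.y}$ (so that $T.\{yxy\} = \{(T.y)xy\}+\{y(T.x)y\}+\{yx(T.y)\}$, and similarly for $\{xyx\}$) produces, after regrouping, a sum of terms each of the shape (\ref{HCP11}), hence lies in $I(P)$. The same mechanism works for (\ref{HCP12}), (\ref{HCP112}), (\ref{HCP113}) using the $D$-part of (\ref{str1}), and for (\ref{HCP13}), (\ref{HCP131}) using the compatibility relations (\ref{str2}) governing $T$'s interaction with $\circ$ and $\kappa$. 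Thus $T.I(P)\subseteq I(P)$ and the action descends to $\uider(P)$.

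Once the action is well-defined, the module axiom is a short formal computation. For $a\diamond b\in J\diamond J$ we expand
\begin{align*}
T.(S.(a\diamond b)) &= T.(S.a\diamond b + a\diamond S.b) \\
&= TS.a\diamond b + S.a\diamond T.b + T.a\diamond S.b + a\diamond TS.b,
\end{align*}
subtract the analogous expression with $T$ and $S$ interchanged, and the mixed terms cancel, leaving $[T,S].a\diamond b + a\diamond[T,S].b = [T,S].(a\diamond b)$; the computation for $M\diamond M$ is identical with $\diamond$ on $M^+\otimes M^-$. One also notes $k$-bilinearity is immediate since each formula is $k$-linear in $T$ and in the tensor argument.

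The only genuine obstacle is the well-definedness step, i.e.\ checking $T.I(P)\subseteq I(P)$ generator by generator; this is routine but somewhat lengthy because there are six families and the closure of (\ref{HCP12}) and (\ref{HCP113}) under the Leibniz rule requires careful bookkeeping of the four (resp.\ four) terms and their images under $T$. I would present one or two representative cases in full — say (\ref{HCP11}) and (\ref{HCP13}) — and indicate that the remaining cases follow by the same pattern using the corresponding identities among (\ref{str1})--(\ref{str2}). Everything else is formal.
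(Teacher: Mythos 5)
Your plan is correct and matches the paper's strategy: the tensor product carries the canonical $\mathcal E^-$-action, so one only needs to check $T.I(P)\subseteq I(P)$ generator by generator using (\ref{str1})–(\ref{str2}). One small inaccuracy: applying the Leibniz rule to a generator of shape (\ref{HCP11}) does not regroup into terms of shape (\ref{HCP11}); the cross-terms in $T.x$ (and separately in $T.y$) are generators of the linearized shape (\ref{HCP12}) — so it lands in $I(P)$, just via the four-variable relations, not the two-variable ones. The paper streamlines the bookkeeping by first rewriting the quadratic and linearized generators as $\delta(a,b)(a\otimes b)$, $v(x,y)(x\otimes y)$, $\delta(a,b)(c\otimes d)+\delta(c,d)(a\otimes b)$, etc., and then invoking the commutator identity $[T,\delta(a,b)]=\delta(T.a,b)+\delta(a,T.b)$ from (\ref{str11}); this packages your Leibniz expansion in a form where each resulting term is visibly a generator of $I(P)$ and saves the term-by-term tracking you anticipate being lengthy.
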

\begin{proof}
The $k$-module $(J^+ \otimes J^-) \oplus (M^+\otimes M^-)$ has a canonical $\mathcal E$-module structure, given by $T(a \otimes b) = T.a \otimes b + a \otimes T.b$ and $T(x \otimes y) = T.x \otimes y + x.\otimes T.y$, and this defines an $\mathcal E^-$-action on $(J^+ \otimes J^-) \oplus (M^+\otimes M^-)$. It suffices therefore to show that every element $T \in \str(P)$ leaves the submodule $I(P)$ invariant, or, equivalently that every spanning element of $I(P)$ is mapped by $T$ into $I(P)$. 
 Note that $\{aba\} \otimes b - a \otimes \{bab\} = \delta(a, b)(a \otimes b)$ and $\{xyx\} \otimes x - x \otimes \{yxy\} = v(x,y)(x \otimes y).$ The relations  (\ref{HCP12}) resp. (\ref{HCP113})  can be rewritten as $\delta(a, b)(c\otimes d)+ \delta(c,d)(a \otimes b)$ resp. $v(x,y)(w \otimes z) + v(z, w).(x \otimes y).$  
This observation greatly simplifies the calculation: 
\begin{eqnarray*}
T.(\delta(a, b)(a \otimes b)) &=& [T, \delta(a, b)].(a \otimes b) + \delta(a, b)T.(a \otimes b)\\
&=& \delta(T.a, b)(a \otimes b) + \delta(a, T.b)(a \otimes b) \\ &&+ \delta(a, b)(T.a \otimes b) + \delta(a, b).(a  \otimes T.b)\\
&=& \delta(T.a, b)(a \otimes b)  + \delta(a, b)(T.a \otimes b) \\ &&+ \delta(a, b).(a  \otimes T.b) + \delta(a, T.b).(a  \otimes b) \in I(P)
\end{eqnarray*}
The computation for $v(x, y). (x \otimes y) \in M$ is the same.\\
For elements of the form (\ref{HCP12}): 
\begin{eqnarray*}
T.(\delta(a, b)(c \otimes d) + \delta(c,d)(a \otimes b)) &=& [T, \delta(a, b)].(c \otimes d) + \delta(a, b)T.(c \otimes d) \\ &&+ [T,\delta(c,d)](a \otimes b) + \delta(c,d)T(a \otimes b)\\
&=& \delta(T.a, b)(c\otimes d) + \delta(c,d)(T.a, b) \\ &&+ \delta(a, T.b)(c \otimes d) + \delta(c,d)(a \otimes T.b)\\
&&+ \delta(a, b)(T.c \otimes d) + \delta(T.c, d)(a \otimes b) \\ &&+ \delta(a, b)(c \otimes T.d) + \delta(c, T.d)(a \otimes b) \in I(P)
\end{eqnarray*}
It follows in an analogous fashion that $T(v(x,y).(w \otimes z) + v(z, w).(x \otimes y))\in I(P)$. For an element of the form (\ref{HCP13}): 
\begin{eqnarray*} && T.(\kappa(z,x)\otimes b - x\otimes (b \circ z) + z\otimes (b \circ x)  ) \\ &=& T.\kappa(z,x) \otimes b + \kappa(z,x) \otimes T.b  - T.x \otimes (b\circ z) \\ &&- x \otimes T.(b \circ z) + T.z \otimes ( b \circ x) + z \otimes T.(b \circ x)\\
&=& \kappa(T.x, z) \otimes b - T.x \otimes (b\circ z) + z \otimes (b \circ T.x) \\
&&+ \kappa(x, T.z) \otimes b - x \otimes (b \circ T.z) + T.z \otimes (b \circ x)\\
&&+ \kappa(z,x) \otimes T.b - x \otimes (T.b \circ z) + z \otimes (T.b \circ x) \in I(P)  
\end{eqnarray*}
Therefore the algebra $\str P$ acts on $I(P)$ and thus there is a well-defined $\str(P)$ action on $\uider(P).$
\end{proof}
The following Lemma is identical to \cite[Prop. 5.18]{BS} in case $1/2, 1/3 \in k.$
\begin{lem} \label{structure_of_uider} Let $P$ be a Jordan-Kantor pair. Then 
\begin{itemize} 
\item[\rm{(i)}] The $\instr(P)$-module $\uider(P)$ is a Lie algebra with bracket defined by
\begin{eqnarray*}
\left [a \diamond b, c \diamond d\right] &=& \delta(a, b).c \diamond d + c \diamond \delta(a, b).d\\
\left [x \diamond y, u \diamond v\right] &=& v(x,y).u \diamond v + u \diamond v(x,y).v\\
\left [a \diamond b, x \diamond y\right] &=& \delta(a, b).x \diamond y + x \diamond \delta(a, b).y\\
\left [x \diamond y, a \diamond b\right] &=&  v(x,y).a \diamond b + a \diamond v(x,y).b \\
\end{eqnarray*}
\item[\rm{(ii)}]  The $k$-linear map $\ud_{JKP} :\uider(P) \rightarrow \instr(P)$ given by linear extension of $\ud_{JKP}: a \diamond b \mapsto \delta(a, b)$, $x \diamond y \mapsto v(x,y)$ is a central extension of Lie algebras. 
\end{itemize}
\label{instr_aleg_is Lie_lage_lem}
\end{lem}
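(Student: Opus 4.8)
The plan is to verify directly that the bracket formulas in (i) define a Lie algebra structure on $\uider(P)$ and that $\ud_{JKP}$ is a central extension, using the machinery already assembled. For part (ii) the most efficient route is to apply Lemma~\ref{Lie_alg_from_mod}: we know from Lemma~\ref{uider_is_ider_module} that $\uider(P)$ is a module for $\str(P)$, hence in particular for the ideal $\instr(P)$; and the map $\lambda = \ud_{JKP} : \uider(P) \to \instr(P)$ is $k$-linear and, by the very definition of $\instr(P)$, surjective onto $\instr(P)$. So it suffices to check the two hypotheses of Lemma~\ref{Lie_alg_from_mod}: first, $\lambda(\xi).\xi = 0$ for each generator $\xi = a\diamond b$ or $\xi = x\diamond y$; second, $[\lambda(\xi),\lambda(\eta)] = \lambda(\lambda(\xi).\eta)$ for all $\xi,\eta$ in the two families of generators (the four combinations displayed in (i)). Once these hold, Lemma~\ref{Lie_alg_from_mod} gives that $\uider(P)$ is a Lie algebra under $[\xi,\eta] := \lambda(\xi).\eta$ (which is exactly the bracket written in (i)) and that $\lambda = \ud_{JKP}$ is a central extension.

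First I would check the alternating condition $\lambda(\xi).\xi = 0$. For $\xi = a\diamond b$ this reads $\delta(a,b).(a\diamond b) = \delta(a,b).a \diamond b + a\diamond \delta(a,b).b = \{aba\}\diamond b + a\diamond(-\{bab\})$, and this is precisely the class of the generator (\ref{HCP112}) of $I(P)$, hence $0$ in $\uider(P)$; the case $\xi = x\diamond y$ is identical using $v(x,y).(x\diamond y)$ and the generator (\ref{HCP11}). Next I would check the compatibility identity $[\lambda(\xi),\lambda(\eta)] = \lambda(\lambda(\xi).\eta)$. For $\xi = a\diamond b$, $\eta = c\diamond d$ this says
$$[\delta(a,b),\delta(c,d)] = \delta(\delta(a,b).c,\,d) + \delta(c,\,\delta(a,b).d),$$
which is exactly the second half of (\ref{str11}) applied with $T = \delta(a,b)$ (legitimate since $\instr(P)\subseteq\str(P)$); note that the right-hand side is $\ud_{JKP}$ applied to $\delta(a,b).c\diamond d + c\diamond\delta(a,b).d$, which by the proposed bracket is $\lambda([\,a\diamond b,\,c\diamond d\,])$. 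The mixed cases $\xi = a\diamond b,\ \eta = x\diamond y$ and $\xi = x\diamond y,\ \eta = a\diamond b$ and the case $\xi = x\diamond y,\ \eta = u\diamond v$ are handled the same way, invoking the relations $[\,T,\delta(a,b)\,] = \delta(T.a,b)+\delta(a,T.b)$ and $[\,T,v(x,y)\,] = v(T.x,y)+v(x,T.y)$ of (\ref{str11}) with $T = v(x,y)$ or $T = \delta(a,b)$ as appropriate, again after observing that $v(x,y),\delta(a,b)\in\str(P)$ and that both sides, read back into $\uider(P)$, match the bracket in (i) via $\ud_{JKP}$. This yields (i) with the stated bracket and, simultaneously, (ii).

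The one point requiring care — the main obstacle — is the passage between an identity in $\instr(P)$ (an identity of operators in $\mathcal E$) and the corresponding statement in $\uider(P)$: Lemma~\ref{Lie_alg_from_mod} only asks for $[\lambda(\xi),\lambda(\eta)] = \lambda(\lambda(\xi).\eta)$, an equation in $\instr(P)$, and this is supplied verbatim by (\ref{str11}); but to conclude that the bracket it produces on $\uider(P)$ is literally the one written in (i) we must know that $\lambda(\xi).\eta$, computed via the $\str(P)$-action of Lemma~\ref{uider_is_ider_module}, is the element displayed — e.g.\ $\delta(a,b).(c\diamond d) = \delta(a,b).c\diamond d + c\diamond\delta(a,b).d$ — which is immediate from the definition of that action. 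One should also note that the four displayed bracket rules are mutually consistent, i.e.\ well-defined on all of $\uider(P) = (J\diamond J)\oplus(M\diamond M)$: this is guaranteed because the $\str(P)$-action of Lemma~\ref{uider_is_ider_module} is already defined on the whole module $\uider(P)$, so no separate well-definedness argument is needed. Finally, the remark preceding the lemma that it coincides with \cite[Prop.~5.18]{BS} when $1/2,1/3\in k$ needs no proof beyond the equivalences of Remark~\ref{one_half_remark}.
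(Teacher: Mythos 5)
Your overall strategy — invoking Lemma~\ref{Lie_alg_from_mod} with $Q = \uider(P)$, $L = \instr(P)$, $\lambda = \ud_{JKP}$, and the $\str(P)$-action of Lemma~\ref{uider_is_ider_module} — is exactly the route the paper takes, and your use of (\ref{str11}) for the compatibility condition $[\lambda(\xi),\lambda(\eta)] = \lambda(\lambda(\xi).\eta)$ is correct. However, two substantive verifications are missing, and they are the actual content of the paper's argument.

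First, you take for granted that $\ud_{JKP}$ is a well-defined $k$-linear map on the quotient $\uider(P)$. That is not automatic and is not supplied by Lemma~\ref{uider_is_ider_module} (which gives well-definedness of the $\str(P)$-\emph{action}, a different thing). The assignment $a\otimes b\mapsto\delta(a,b)$, $x\otimes y\mapsto v(x,y)$ is defined on $(J^+\otimes J^-)\oplus(M^+\otimes M^-)$, and one must check that it annihilates every generator of $I(P)$. For (\ref{HCP112}) this reads $\delta(a,\{bab\})-\delta(\{aba\},b) = -[\delta(a,b),\delta(a,b)] = 0$ via (\ref{str11}); for (\ref{HCP13}) and (\ref{HCP131}) the required vanishing is precisely (\ref{derid2}) and (\ref{derid3}), which give $\delta(\kappa(u,x),b)=v(x,b\circ u)+v(u,b\circ x)$ and $\delta(a,\kappa(y,w))=v(a\circ y,w)-v(a\circ w,y)$. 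Until this is done, $\lambda$ is not a map at all, and Lemma~\ref{Lie_alg_from_mod} cannot be invoked.

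Second, the hypothesis $\lambda(\xi).\xi = 0$ in Lemma~\ref{Lie_alg_from_mod} is required for \emph{every} $\xi\in\uider(P)$, and a general element is a sum of simple tensors from both $J\diamond J$ and $M\diamond M$. Because the condition is quadratic in $\xi$, checking only $\xi = a\diamond b$ and $\xi = x\diamond y$ does not suffice: you also need the polarized identities $\delta(a,b).(c\diamond d)+\delta(c,d).(a\diamond b)=0$ (this is (\ref{HCP113})), $v(x,y).(u\diamond v)+v(u,v).(x\diamond y)=0$ (this is (\ref{HCP12})), and — the crucial mixed case — $v(x,y).(a\diamond b)=-\delta(a,b).(x\diamond y)$. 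That last identity is the computational heart of the proof: one expands $v(x,y).(a\diamond b)=\kappa(a\circ y,x)\diamond b - a\diamond\kappa(b\circ x,y)$ and then rewrites via (\ref{HCP13}) and (\ref{HCP131}) to obtain $-\delta(a,b).x\diamond y - x\diamond\delta(a,b).y$. Your sketch omits it entirely, yet the paper's proof is built around it.
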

\begin{proof}
We have seen in (\ref{str1}) that $\instr(P)$ is an ideal of $\str(P)$ with Lie bracket defined by $[T, \delta(a,b)]  = \delta(T.a, b) + \delta(a, T.b)$  and $[T, v(x,y)]  =  v(T.x, y) + \delta(x, T.y)$ for $T \in \instr(P).$
By restriction, $\uider(P)$ is a Lie algebra module for $\instr(P)$. Moreover, let $(a, b), (c,d) \in J$ and $(x,y), (u,w) \in M,$ then  
$[\delta(a, b), \delta(a,b)] = \delta(\delta(a,b)a, b) + \delta(a, \delta(a,b)b) = 0$ similarly, $v(v(x,y)x, y) + v(x, v(x,y)y) = 0,$ $[\delta(a,b), \delta(c,d)] + [\delta(c,d), \delta(a,b)] = [v(x,y), v(u,w)] + [v(u,w), v(x,y)] = 0.$ Further, (\ref{derid2}) and (\ref{derid3}) show that $\delta(a, \kappa(y,w)) = v(a \circ y, w) - v(a \circ w, y)$ and $\delta(\kappa(u,x), b) = v(x, b \circ u) + v(u, b \circ x).$ \\ 
According to these observations, there is a well-defined linear map $\ud : \uider(P) \rightarrow \instr(P)$ obtained by extending: $a \diamond b \mapsto \delta(a, b)$, $x \diamond y \mapsto v(x,y).$ The bracket defined in (i) can be expressed as $[p,q] = \ud (p).q$ for $p, q \in \uider(P).$ It suffices therefore to check the two conditions of Lemma~\ref{Lie_alg_from_mod}. By (\ref{HCP11}) $\ud (a\diamond b).(a\diamond b) = \ud (x \diamond y).(x \diamond y) = 0$ and similarly  (\ref{HCP12}) is equivalent to $ \ud (a\diamond b).c\diamond d +  \ud (c\diamond d).a\diamond b =  \ud(x \diamond y).(w \diamond z) + \ud(w \diamond z).(x \diamond y) = 0.$  \\
Moreover, 
$  \ud(x \diamond y).a \diamond b  =  \kappa(a \circ y, x) \diamond b  -a \diamond \kappa(b \circ x, y) $
and by (\ref{HCP13}) and (\ref{HCP131}) this is equal to  $x \diamond  (b \circ (a \circ y)) - a \circ  y \diamond b \circ x +a \circ  y \diamond b \circ x - a \circ (b \circ x) \diamond y. $  By definition of $\delta(a, b)$ this equals $-x \diamond \delta(a, b) y - \delta(a, b)x \diamond y = -\delta(a, b)(x \diamond y)$. Hence, $\ud(x \diamond y).a \diamond b  = -\ud(a \diamond b).x \diamond y$ as required. \\
It suffices to check $[\ud(p), \ud(q)] = \ud(\ud(p).q)$ for elements $p, q$ in $J \diamond J,$ $M \diamond M$ or $P.$ \\
First case: $p\in J \diamond J$, $ q \in  M \diamond M.$ 
\begin{eqnarray*}
\left[\ud(a \diamond b), \ud(x\diamond y)\right] &=& [\delta(a, b), v(x,y)]\\
&=& \delta(\delta(a, b)x, y) + \delta(x, \delta(a,b).y)\\
&=& \ud(\delta(a,b).x \diamond y + x \diamond\delta(a,b).y)\\
&=& \ud(\delta(a, b).(x\diamond y))\\
&=& \ud(\ud(a \diamond b).(x \diamond y))
\end{eqnarray*}
Second case: $p,q \in (J\diamond J)$ or $p, q\in (M\diamond M)$. 
\begin{eqnarray*}
\left[\ud(a \diamond b), \ud(c\diamond d)\right] &=& [\delta(a, b), \delta(c,d)]\\
&=& \delta(\delta(a, b)c, d) + \delta(c, \delta(a,b).d)\\
&=& \ud(\delta(a,b).c \diamond b + c \diamond\delta(a,b).d)\\
&=& \ud(\delta(a, b).(c\diamond d))\\
&=& \ud(\ud(a \diamond b).(c\diamond d))
\end{eqnarray*}
It is also a straightforward verification that by (\ref{HCP13}),  we have $\delta(a, b)(x \diamond y) + v(x,y)(a \diamond b) = 0. $
For the other cases replace $\delta$ by $v$, $(a, b)$ by $(x,y)$ and $(c,d)$ by $(z,w)$. The calculations are identical in both cases. \\
We can apply Lemma~\ref{Lie_alg_from_mod} and conclude that $\ud_{JKP} : \uider(P) \rightarrow \instr(P)$ is a central extension of Lie algebras.
 \end{proof}

\begin{defi} If $P$ is a Jordan-Kantor pair, then $\ud_{JKP} : \uider(P) \rightarrow \instr(P)$ will from now on always denote the map defined in Lemma~\ref{structure_of_uider}. 
\end{defi}

\begin{prop} \label{uider_functor_prop} Let $f: P \rightarrow Q$ be a homomorphism of Jordan-Kantor pairs. Then
\begin{eqnarray*}
\uider(f) : \uider(P) & \rightarrow & \uider(Q) \\
a \diamond b & \mapsto  & f(a) \diamond f(b) \in J \\
 x \diamond y  &\mapsto  & f(x) \diamond f(y) \in M
 \end{eqnarray*}
is a Lie algebra homomorphism and
the assignment $\mathfrak{uider}_{JKP} : P \rightarrow \uider(P),$  $\uider : f \rightarrow \uider(f)$ 
is a covariant functor from the category of Jordan-Kantor pairs to the category of Lie algebras. 
\end{prop}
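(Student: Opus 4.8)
The plan is to verify directly that $\uider(f)$ is a well-defined Lie algebra homomorphism, and then that $\uider$ respects identities and composition, which gives functoriality. First I would check that $\uider(f)$ is well-defined as a $k$-linear map: by Definition~\ref{uider_JkP_Def} the target $\uider(P)$ is the quotient of $(J^+\otimes J^-)\oplus(M^+\otimes M^-)$ by the submodule $I(P)$ generated by the relations (\ref{HCP11})--(\ref{HCP131}). The naive map $a\otimes b\mapsto f(a)\otimes f(b)$, $x\otimes y\mapsto f(x)\otimes f(y)$ on the tensor product is clearly $k$-linear; one must show it carries $I(P)$ into $I(Q)$. Since $f$ is a Jordan-Kantor homomorphism, it intertwines the triple products $\{a,b,c\}$, $\{x,y,z\}$, the $\circ$-action, and $\kappa$ (this is exactly the content of the compatibility identities in the definition of a Jordan-Kantor homomorphism). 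Applying $f\otimes f$ to each generator of $I(P)$ and pushing $f$ through these operations turns it into the corresponding generator of $I(Q)$; so the map descends to $\uider(f):\uider(P)\to\uider(Q)$.

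Next I would show $\uider(f)$ is a Lie algebra homomorphism. By Lemma~\ref{structure_of_uider}(i) the bracket on $\uider(P)$ is $[p,q]=\ud_{JKP}(p).q$, where $\ud_{JKP}$ sends $a\diamond b\mapsto\delta(a,b)$ and $x\diamond y\mapsto v(x,y)$, and $\delta,v$ act on the diamond products by the Leibniz-type formulas of Lemma~\ref{uider_is_ider_module}. It therefore suffices to check, on generators, that $\uider(f)$ intertwines the actions of $\delta(a,b)$ and $v(x,y)$, i.e. that
\begin{align*}
\uider(f)\big(\delta(a,b).(c\diamond d)\big) &= \delta(f(a),f(b)).\big(f(c)\diamond f(d)\big),\\
\uider(f)\big(v(x,y).(u\diamond w)\big) &= v(f(x),f(y)).\big(f(u)\diamond f(w)\big),
\end{align*}
together with the two mixed cases $\delta(a,b).(x\diamond y)$ and $v(x,y).(a\diamond b)$. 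Each of these reduces, via Definition~\ref{extended_ops_JKP_defi} and the explicit action formulas, to the statement that $f$ commutes with the triple products, the $\circ$-action, and $\kappa$ — which is again just that $f$ is a Jordan-Kantor homomorphism. Hence $[\uider(f)(p),\uider(f)(q)]=\uider(f)([p,q])$ on generators, and by bilinearity everywhere.

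Finally, functoriality: $\uider(\mathrm{id}_P)=\mathrm{id}_{\uider(P)}$ is immediate from the formula, and $\uider(g\circ f)=\uider(g)\circ\uider(f)$ follows since both sides send $a\diamond b\mapsto g(f(a))\diamond g(f(b))$ and $x\diamond y\mapsto g(f(x))\diamond g(f(y))$ and agree on a generating set. The main obstacle — really the only nontrivial point — is the well-definedness step: one has to run through all six families of generators (\ref{HCP11})--(\ref{HCP131}) of $I(P)$ and confirm that $f\otimes f$ maps each into $I(Q)$; this is a routine but slightly lengthy bookkeeping exercise, and it is where the precise form of the Jordan-Kantor homomorphism axioms is used. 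Everything after that is the observation that the bracket is built functorially out of operations $f$ already respects.
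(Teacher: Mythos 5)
Your proposal is correct and takes essentially the same approach as the paper: well-definedness is checked by showing $f\otimes f$ carries $I(P)$ into $I(Q)$ using the Jordan-Kantor homomorphism axioms, the Lie homomorphism property is reduced to $f$ intertwining the $\delta$- and $v$-actions via the bracket formula $[X,Y]=\ud(X).Y$, and functoriality is noted to be immediate from the defining formula.
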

\begin{proof} By Lemma~\ref{structure_of_uider}, $\mathfrak{uider}(P)$ is a Lie algebra. Let $(f_J, f_M): P \rightarrow Q$ be a morphism of Jordan-Kantor pairs where $ P = (J_P, M_P)$ and $Q= (J_Q, M_Q).$  
 By the universal property of the tensor product $f$ extends uniquely to a map $f \otimes f = (f_J^+ \otimes f_J^- , f_M^+ \otimes f_M^-) : J_P^+ \otimes J_P^- +  M_P^+ \otimes M_P^- \rightarrow  J_Q^+ \otimes J_Q^- +  M_Q^+ \otimes M_Q^- .$ 
It is sufficient to prove that $(f \otimes f)(I(P)) \subset I(Q)$, see Definition~\ref{uider_JkP_Def}. 
Throughout $(a, b), (c,d) \in J_P$, $(x,y), (u,w) \in M_P.$
The map  $f: P \rightarrow Q$ is a homomorphism, therefore
\begin{eqnarray*}
f(\delta(a, b)c) \otimes f(d) &=& \delta(f(a), f(b)) f(c) \otimes f(d)\\
 f(c \otimes  \delta(a, b)d) &=& f(c) \otimes \delta(f(a), f(b))f(d) \\
f(v(x, y)z) \otimes f(w) &=& v(f(x), f(y)) f(z) \otimes f(w)\\ 
f(z \otimes  v(x, y)w)&=& f(z) \otimes v(f(x), f(y))f(w) 
\end{eqnarray*}
These identities show that  $f$ maps the elements defined in (\ref{HCP11}) and (\ref{HCP12}) into $I(Q)$ and  that $f$ sends elements of the form (\ref{HCP112})  and (\ref{HCP113}) to elements of $I(Q)$.
By the same argument, the following equalities hold true:
\begin{eqnarray*}
f(a) \circ f(y) = f(a \circ y),\,&& f(b) \circ f(x)= f(b \circ x) \\
f(\kappa(u,x)) = \kappa(f(u),\,&& f(x)) , f(\kappa(y,w)) = \kappa(f(y), f(w))
\end{eqnarray*} 
Hence it is easy to see that $f$ respects also relations  of the form (\ref{HCP13}). 
 By factorization, we obtain a well-defined  module homomorphism $\uider(f) : \uider(P) \rightarrow \uider(Q).$
 
 It remains to show that $\uider(f)$ is a Lie algebra homomorphism. For the product on $\uider(P)$ and $\uider(Q)$ defined by the equations in Lemma~\ref{structure_of_uider}, we easily check $[f(a) \diamond f(b), f(c) \diamond f(d)] = [\uider(f)(a \diamond b), \uider(f)(c \diamond d)] = \uider(f) \left ([a \diamond b, c \diamond d] \right ),$ and likewise, $[f(a) \diamond f(b), f(x) \diamond f(y)] = [\uider(f)(a \diamond b), \uider(f)(x \diamond y)] = \uider\left ( [a \diamond b, x \diamond y]\right ),$ $[f(u) \diamond f(w), f(x) \diamond f(y)] = [\uider(f)(u \diamond w), \uider(f)(x \diamond y)] = \uider \left ( [u \diamond w, x \diamond y] \right ).$ \\
 The Lie bracket on $\uider(P)$ is given by $[X, Y] = \ud(X).Y$ where the inner derivation algebra acts canonically on the tensor product and factors through $I(P),$ see Proposition~\ref{structure_of_uider}. Thus $\uider(f)([X,Y]) = \uider(f)(\ud(X).Y) = f(\ud(X)).f(Y) = \ud(f(X)).f(Y) = [\uider(f)(X), \uider(f)(Y)] $ and this proves that $\uider(f)$ is  a homomorphism of Lie algebras. Covariance of the functor is easily checked.

   \end{proof}
The results for the special case of a Jordan pair are as follows.
\begin{defi}Let $V$ be a Jordan pair. The \emph{universal derivation algebra} of $V$ is the $k$-module $\uider(V) := V^+\otimes V^-/I(V)$ where $I(V)$ is the $k$-submodule generated by the elements
$$\delta(x, y)(u \otimes v) + \delta(u, v)(x \otimes y),\quad\mbox{and}\quad \delta(x,y)(x \otimes y), \;(x,y), \,(u,v) \in V. $$
The coset $x \otimes y + I(V)$ is denoted by $x \diamond y.$

\label{uider_JP_defi}
\end{defi}
\begin{cor} Let $V$ be a Jordan pair. The $k$-module $\uider(V)$ is a Lie algebra under the product $[x\diamond y, u \diamond v] = \delta(x,y)(u\diamond v).$ \\
Moreover, the map $\ud : \uider(V) \rightarrow \instr(V)$ defined by linear extension of $x \diamond y \mapsto \delta(x,y)$ is a central extension of Lie algebras with kernel $$\mathrm{HC}(V) := \{\sum x_i \diamond y_i : \sum \delta(x_i, y_i) = 0\}.$$
The assignment $\mathfrak{uider}_{JP} : V \rightarrow \uider(V),$  $\uider : f \rightarrow \uider(f)$ 
is a covariant functor from the category of Jordan pairs to the category of Lie algebras. 
\label{JP_uider_cor}
\end{cor}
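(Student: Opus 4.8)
The plan is to obtain the statement for Jordan pairs as a direct specialization of the Jordan–Kantor pair results already proved, namely Lemma~\ref{structure_of_uider} and Proposition~\ref{uider_functor_prop}. A Jordan pair $V$ is a Jordan–Kantor pair in which the $M$-part is zero (see the example following the definition of Jordan–Kantor pairs), so $P = (V, 0)$. Under this identification the operators $v(x,y)$ and the maps $\circ$ and $\kappa$ all vanish, so $\instr(P)$ reduces to the span of the $\delta(a,b) = (D_{a,b}, -D_{b,a})$, which is exactly $\instr(V)$. Likewise the module $\uider_{JKP}(P) = P \diamond P$ has its $M \diamond M$ summand equal to zero, and the defining relations $(\ref{HCP11})$–$(\ref{HCP131})$ collapse to just the two families $(\ref{HCP112})$ and $(\ref{HCP113})$, i.e.\ $\delta(x,y)(x\otimes y)$ and $\delta(x,y)(u\otimes v) + \delta(u,v)(x\otimes y)$. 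This is precisely the submodule $I(V)$ of Definition~\ref{uider_JP_defi}, so $\uider_{JKP}(P) = \uider(V)$.

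First I would make this identification explicit and note that it is compatible with morphisms: a Jordan pair homomorphism $f : V \to V'$ is the same thing as a Jordan–Kantor pair homomorphism $(f,0): (V,0) \to (V',0)$. Then the Lie algebra structure on $\uider(V)$, with bracket $[x\diamond y, u\diamond v] = \delta(x,y)(u\diamond v)$, and the fact that $\ud : \uider(V) \to \instr(V)$, $x\diamond y \mapsto \delta(x,y)$, is a central extension of Lie algebras, follow immediately from Lemma~\ref{structure_of_uider} applied to $P = (V,0)$. The description of the kernel, $\mathrm{HC}(V) = \{\sum x_i \diamond y_i : \sum \delta(x_i,y_i) = 0\}$, is just the specialization of the definition of $\HF(P)$ in Definition~\ref{uider_JkP_Def} with the $M$-terms dropped; that it actually is the kernel of $\ud$ is the standard fact that for a central extension arising from Lemma~\ref{Lie_alg_from_mod} via a surjective $\lambda$, the kernel is $\ker\lambda$, and here $\ud$ is surjective onto $\instr(V)$ by construction since $\instr(V)$ is spanned by the $\delta(x,y)$. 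Finally, functoriality of $\uider_{JP}$ follows from Proposition~\ref{uider_functor_prop}: the functor $\uider_{JKP}$ restricted to the full subcategory of Jordan pairs (those Jordan–Kantor pairs with $M = 0$) is $\uider_{JP}$, and $\uider(f)$ acts as $x\diamond y \mapsto f(x)\diamond f(y)$, which is well defined and a Lie homomorphism by that proposition.

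There is essentially no obstacle here; the only thing requiring a word of care is checking that the relation set genuinely collapses as claimed — that $I(V) = I(P) \cap (V^+ \otimes V^-)$ when $P = (V,0)$. Relations $(\ref{HCP11})$ and $(\ref{HCP12})$ become $(\ref{HCP112})$ and $(\ref{HCP113})$ once one observes that in a Jordan pair the maps $\{xyx\}$ coincide (up to relabelling $x\leftrightarrow a$, $y\leftrightarrow b$) with the operators $D_{a,b}$, and relations $(\ref{HCP13})$, $(\ref{HCP131})$ involve only $\kappa$ and $\circ$, hence disappear. Thus every spanning element of $I(P)$ either lies in the $M$-part (and is zero) or is one of the two generators of $I(V)$, and conversely. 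After that the corollary is just a transcription of the $M = 0$ case of the three Jordan–Kantor statements, so I would simply cite Lemma~\ref{structure_of_uider} and Proposition~\ref{uider_functor_prop} and record the resulting kernel formula.
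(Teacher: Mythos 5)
Your proposal is correct and follows the same route as the paper: the corollary is stated without separate proof precisely because it is the $M = 0$ specialization of Lemma~\ref{structure_of_uider} and Proposition~\ref{uider_functor_prop}, as the paper's remark that Jordan pairs form a (full) subcategory of Jordan--Kantor pairs signals. One small slip in your last paragraph --- relations $(\ref{HCP11})$ and $(\ref{HCP12})$ do not ``become'' $(\ref{HCP112})$, $(\ref{HCP113})$ under relabelling but simply vanish identically when $M = 0$, while the latter survive unchanged and are exactly the two generators of $I(V)$ --- but this does not affect the argument, since your earlier description of the collapse was already accurate.
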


\section{(Universal) Tits-Kantor-Koecher algebras}
\label{TKK_Section}
\subsection{Tits-Kantor Koecher algebras}

 

\begin{defi}\label{gen_uTKK_Defi}
Let $P = (J,M)$  be a Jordan-Kantor pair and let $D^0$ be a Lie algebra with bracket $[\;, \; ]^0$ such that 
$$\xymatrix{ \uider(P) \ar[r]^{f} &  D^0 \ar[r]^{g} & \instr(P) \\}$$
is a sequence of central extensions with $g \circ f = \ud.$
Define on 
$\hat{L}(D^0) := P \oplus  D^0 =  J^+ \oplus M^+ \oplus D^0 \oplus M^- \oplus J^- $ a product by bilinear extension of
\begin{eqnarray}
\left [f(X'), f(Y') \right] &=& [f(X'), f(Y')]^0, \\
\left [(a, x), (b,y) \right] &=& a \circ y + f(a \diamond b) + f(x \diamond y) - b \circ x,  \label{bracket_to_zero_pos1}\\
 &=& -\left [(b, y), (a,x) \right],  \label{bracket_to_zero_neg1}\\
\left [f(X'), (a,x) \right ] &=& (\ud(X').a, \ud(X').x), \\
										 & =& - \left [(a, x), f(X') \right ], \\
\left [f(X'), (b,y) \right ] &=& (\ud(X').b, \ud(X').y), \\
 & =& - \left [(b, y), X \right ], \\
\left[(a,x), (c,u) \right] &=& (0, \kappa(x,z)), \\
\left[(b,y), (d,w) \right] &=& (0, \kappa(y,w)), 
\end{eqnarray}
for $(a, b), (c,d) \in J$, $(x,y), (u,w) \in M$ and $X', Y' \in \uider(P).$
 
\end{defi}
\begin{rem} Since $f$ is a Lie algebra epimorphism  and the multiplication in $P$ and the induced action of $\uider(P)$ on $P$ are all well-defined it follows that the product is indeed well-defined. 
Bilinearity of the product  is also clear. This gives  $\hat L(D^0)$ an algebra structure with $5$-graded product $[\,,\,].$\\
 \label{well_Def_ULE_Rem}
\end{rem}

We are now ready to define the Tits-Kantor-Koecher algebra of a Jordan-Kantor pair $P$: 
\begin{defi} For $D^0= \instr (P)$, $f = \uider(P), g = \mathrm{id}$, the algebra  of Definition~\ref{gen_uTKK_Defi} is called the \emph{Tits-Kantor-Koecher algebra} of $P$ or $\TKK(P).$\\
For $D^0= \uider (P)$, $f = \mathrm{id}, g = \uider(P)$ the algebra  of Definition~\ref{gen_uTKK_Defi} is called the \emph{ universal Tits-Kantor-Koecher algebra} of $P$ or $\uTKK(P).$  Here $\uTKK$ stands for \textbf{u}niversal \textbf{L}ie \textbf{e}nvelope. Then
$$\uTKK(P) = P \oplus \uider(P) .$$
For $h: P \rightarrow Q$ a morphism of Jordan-Kantor pairs define
\begin{eqnarray*}
\uTKK(h) : P \oplus \uider(P) \rightarrow Q \oplus \uider(Q)\\ 
\uTKK(h) := (h, \uider(h))
\end{eqnarray*}
Since $\uider$ is a functor, it is clear that $\uTKK( \;)$ is a functor with respect to the algebra structure defined in Definition~\ref{gen_uTKK_Defi}. In particular, $\uTKK(h)$ as defined above is a morphism. 
 \end{defi}

\subsection{The universal property and the functor $\uTKK$}

We would like to establish central extensions of $\TKK(V)$ and their relationship to the central extension $\uTKK(V) \rightarrow \TKK(V).$ It will turn out that $\uTKK(V)$ has a universal property which is closely related to the universal property of $\uce{\TKK(V)},$ if $V$ is perfect in the sense of Definition~\ref{perfect_JKP_defi}.

 
The following proposition extends \cite[Thm. 5.16]{BS}.

\begin{prop} 
\label{ULE_TKK_lem}
Let $P$ be a Jordan-Kantor pair
\begin{itemize}
\item[\rm(i)]
The algebra $\hat{L}(D^0) $ is a  $5$-graded Lie algebra. 
\item[\rm(ii)] There are unique  graded central extensions $\hat f : \uTKK(P) \rightarrow \hat L(D^0)$ and
$\hat g : \hat L(D^0) \rightarrow \TKK(P)$ such that $\hat f |_P = \hat g |_P  = \mathrm{id}_P.$ 
\item[\rm(iii)] The map $\uTKK$ is a covariant functor from the category of Jordan-Kantor pairs to the category of Lie algebras. \\ In particular, for every homomorphism $f: P \rightarrow Q$ of Jordan-Kantor pairs, the homomorphism $\uTKK(f) : \uTKK(P) \rightarrow \uTKK(Q)$ is the unique map that renders the diagram below commutative
$$
\xymatrix{P  \ar[r] \ar[d]^{f}&  \uTKK(P) \ar[d]^{\uTKK(f)} \\
Q  \ar[r]& \uTKK(Q) .
}
$$

\end{itemize}
\end{prop}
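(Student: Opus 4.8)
The plan is to verify the three claims in order, using that both $\uTKK(P)$ and $\TKK(P)$ are instances of the construction $\hat L(D^0)$ from Definition~\ref{gen_uTKK_Defi} (for $D^0 = \uider(P)$ with $f = \mathrm{id}$, resp. $D^0 = \instr(P)$ with $g = \mathrm{id}$), and that $\uider(P) \xrightarrow{\ud} \instr(P)$ is already known to be a central extension by Lemma~\ref{structure_of_uider}. For part (i) I would check the Jacobi identity on $\hat L(D^0)$ by cases according to the $5$-grading of the three arguments: the purely-$D^0$ case is just the Jacobi identity in $D^0$; the cases with one argument in $D^0$ reduce to the fact that $D^0$ acts on $P$ by derivations (i.e.\ the action factors through $\ud$ and $\instr(P)$ consists of operators satisfying (\ref{str1})--(\ref{str2})); and the cases with all three arguments in $P$ split into the subcases living in degrees summing to $0$, $\pm 1$, $\pm 2$ (or out of range, where everything is zero). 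These $P$-only identities are exactly the linearized Jordan-Kantor pair axioms (\ref{JK4})--(\ref{KP1}), together with the defining relations (\ref{HCP11})--(\ref{HCP131}) of $I(P)$ which guarantee that expressions such as $f(a \diamond b)$ and $f(x \diamond y)$ are well-defined in $D^0$; this is where I would invoke \cite[Thm. 5.16]{BS} for the case $1/2, 1/3 \in k$ and supply the extra verifications needed in general (the relation (\ref{HCP112}), which by Remark~\ref{one_half_remark} is the only genuinely new ingredient). The $5$-grading is visible from the formulas: $J^{\pm}$ sit in degree $\pm 2$, $M^{\pm}$ in degree $\pm 1$, $D^0$ in degree $0$.

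For part (ii), I would define $\hat f : \uTKK(P) \to \hat L(D^0)$ to be the identity on $P$ and the given central extension $f : \uider(P) \to D^0$ on the degree-$0$ part, and $\hat g : \hat L(D^0) \to \TKK(P)$ to be the identity on $P$ and the central extension $g : D^0 \to \instr(P)$ on the degree-$0$ part. Each is a Lie homomorphism: on brackets within $P$ this is immediate since $g \circ f = \ud$ and $\hat f, \hat g$ fix $P$, so both sides of (\ref{bracket_to_zero_pos1}) match; on brackets involving $D^0$ it follows because $f$ and $g$ are Lie homomorphisms intertwining the actions on $P$ (the action being through $\ud$, $g$, resp.\ $g \circ f$). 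Each has kernel contained in the degree-$0$ part (namely $\ker f$, resp.\ $\ker g$), which is central in $\uTKK(P)$, resp.\ $\hat L(D^0)$, because a degree-$0$ element killed by $f$ acts trivially on $P$ by the central-extension property and the degree-$0$ parts are perfect modulo acting-nontrivially; more precisely $\ker f$ is central since it already lies in the center of $\uider(P)$ and $\uider(P)$ acts on $P$ through $\ud = g \circ f$. Uniqueness of $\hat f, \hat g$ subject to restricting to $\mathrm{id}_P$ follows because $\hat L(D^0) = P \oplus D^0$ with $D^0$ spanned (as we may assume, after passing to the core if necessary, or directly from the structure of $\uider$) by brackets $[p,q]$ with $p, q \in P$, and these brackets are determined once the map is fixed on $P$ — concretely $\hat f(f(a \diamond b))$ must equal $[\hat f(a), \hat f(b)]$ in $\uTKK(P)$, pinning down $\hat f$ on all of $D^0$.

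For part (iii), functoriality: $\uTKK(h) = (h, \uider(h))$ is a Lie homomorphism $\uTKK(P) \to \uTKK(Q)$ because $h$ respects the Jordan-Kantor operations and $\uider(h)$ is a Lie homomorphism by Proposition~\ref{uider_functor_prop}, so comparing the bracket formulas of Definition~\ref{gen_uTKK_Defi} on $P$ and on $\uider(P)$ termwise shows $\uTKK(h)$ preserves all brackets. Covariance, $\uTKK(h' \circ h) = \uTKK(h') \circ \uTKK(h)$ and $\uTKK(\mathrm{id}) = \mathrm{id}$, is inherited from the corresponding properties of $\uider$ (Proposition~\ref{uider_functor_prop}) componentwise. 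Finally the commuting square and the uniqueness assertion: $\uTKK(f)$ restricts to $f$ on $P$ by construction, so the square commutes; uniqueness follows exactly as in part (ii), since $\uider(Q)$ is generated by brackets of elements of $Q$, so any graded Lie homomorphism $\uTKK(P) \to \uTKK(Q)$ extending $P \xrightarrow{f} Q \hookrightarrow \uTKK(Q)$ is forced on the degree-$0$ part.

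I expect the main obstacle to be part (i): verifying the Jacobi identity in full generality (without $1/2, 1/3 \in k$) requires carefully matching the mixed Jordan-Kantor identities (\ref{derid2}), (\ref{derid3}), (\ref{KP1}) and the relations (\ref{HCP11})--(\ref{HCP131}) defining $\uider(P)$, and in particular checking that the "new" relation (\ref{HCP112}) — absent from \cite{BS} — is exactly what is needed so that the degree-$0$ component is a well-defined Lie algebra and acts by derivations; the other two parts are then essentially bookkeeping on top of Lemma~\ref{structure_of_uider} and Proposition~\ref{uider_functor_prop}.
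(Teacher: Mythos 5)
Your proposal follows essentially the same route as the paper: case-by-case verification of the Jacobi identity by degree (the paper does cases $P$-$P$-$D^0$, $P^\sigma$-$P^\sigma$-$D^0$, $P^\sigma$-$P^{-\sigma}$-$D^0$, and $P^\sigma$-$P^\sigma$-$P^{-\sigma}$, invoking (\ref{JK4}) and (\ref{HCP13}) at the end), then construction of $\hat f$ and $\hat g$ as the identity on $P$ with $f$, $g$ in degree $0$, noting that $P$ generates $\hat L(D^0)$ and that $\ker \hat f$, $\ker \hat g$ sit in degree $0$ (the paper closes via Lemma~\ref{central_ext_lem}, you argue centrality directly, both are fine), and finally functoriality from Proposition~\ref{uider_functor_prop} componentwise. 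The one point you correctly single out — that relation (\ref{HCP112}), absent from the $1/2,1/3\in k$ setting of \cite{BS}, must be brought in when verifying that $[p,p]=0$ and the degree-$(\pm 2)$ pieces of the Jacobi identity vanish — is exactly what the paper's computations quietly handle.
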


\begin{proof}
(i) We have to show that the product $[\,,\, ]$ is  alternating and fulfills the Jacobi identity. 

Since $\hat L(D^0)$ is a graded algebra with respect to $[\,,\, ]$ it suffices to consider homogeneous elements. The space $D^0$ is by definition a Lie algebra in its own right, so we may restrict to pairs or triplets of elements where at least one of the elements is not of degree $0.$\\
Then Definition~\ref{gen_uTKK_Defi} explicitly states that the bracket between elements in $P$ and $D^0$ is anti-commutative. For an element $ p = ((a, b), (x,y)) \in P$ 
\begin{eqnarray*}
\left[p, p \right] &=& [(a, b), (a, b)] + [(a, b), (x,y)]\\ 
&&+ [(x,y), (a, b)] + [(x,y), (x,y)]\\ 
&=& [\delta(a, b)a, b] + [a, \delta(a ,b)b] + [\delta(a, b)x, y] + [x, \delta(a,b).y ]\\
&&+ [\delta(x, y)a, b] + [a, \delta(x,y).b ] + [v(x,y).x, y] + [x, v(x,y)]\\
&=& f(\delta(a, b)(a \diamond b)) + f(\delta(a, b)(x \diamond y) ) \\
&&+ f(\delta(x,y)(a \diamond b)) + f(v(x,y)(x \diamond y))
\end{eqnarray*}
and this is zero, since $f : \uider(P) \rightarrow D^0$ is well-defined.  Thus $[p,p] = 0.$\\
For the Jacobi identity it suffices again to consider homogeneous elements.  
It suffices to check the Jacobi identity on $\uTKK(P),$ since $\hat L(D^0)$ is an algebra homomorphism.
Let $X, Y \in \mathcal D_0$ and choose $X'$ and $Y'$ in $\mathfrak{uider}(P)$ such that $\ud(X') = g(X) = T$ and $g(Y') = Y = S.$ 

 The module $P$ is a Lie algebra module for $g(D^0) = \instr(P)$, thus if $\deg X '= \deg Y' = 0$ and $r \in P$ then $[[X,Y], r] = g([X',Y']).r = g(X').g(Y').r - g(Y').g(X').r = [T, [S,r]] - [S,[T.r]]$ which shows the Jacobi identity in this case. \\ We do not have to check the Jacobi identity for elements whose degree sum up to an integer smaller than $-2$ or greater than $2,$  since, in this case, all the three terms are already zero.
The remaining cases are: 
\begin{itemize}
\item[\rm{(i)}] $(x,a) \in P^{\sigma}, (z, c) \in P^{\sigma}$, $X \in \mathcal D_0$ as above,
\item[\rm{(ii)}] $(x,a)\in P^{\sigma},(y,b) \in P^{-\sigma}$,$X \in \mathcal D_0$ as above,
\item[\rm{(iii)}] $(x,a) \in P^{\sigma}, (z,c) \in P^{\sigma}$, $(y,b) \in P^{-\sigma}$. 
\end{itemize}
\begin{itemize}
\item[\rm{(i)}] $(x,a) \in P^{\sigma}, (z, c) \in P^{\sigma}$, $X \in \mathcal D_0$ as above.
\begin{eqnarray*}
\left[[(x,a), (z,c)], X\right] &=& [\kappa(x,z), X] \\
&=& - [X, \kappa(x,z)]
=  -T.\kappa(x,z)\\
&=& -(\kappa(T.x, z) - \kappa(z, T.x) \\
&=& [(x, a)[(z, c), X]] - [(z, c)[(x,a), X]]
\end{eqnarray*}
In the last equation it was possible to re-introduce $a$ and $c$ because the last line does not depend on the choice of the elements in $J^\sigma.$\\
\item[\rm{(ii)}]  $(x,a)\in P^{\sigma},(y,b) \in P^{-\sigma}$,$X \in \mathcal D_0$ as above.
\begin{eqnarray*}
\left[X, [(x,a), (y,b)]\right] &=& [X,  (-b \circ x, 0)_{-\sigma} + f(a \diamond b) + f(x \diamond y) + (a \circ y, 0)_\sigma]\\
&=& -T.(b \circ x) + [X, f(a \diamond b)] + [X, f(x \diamond y)] + T.(a \circ y)\\
&=& -(T.b \circ x) - (b \circ T.x) + f(T.a \diamond  b +  a\diamond T.b) \\ 
&&+ f(T.x\diamond y  + x \diamond T.y) + T.a \circ y + a \circ T.y\\
&=& [(x,a),(T.y, b) + (y, T.b) ] - [(y,b), (T.x, a) + (a, T.x)]\\
&=& [(x,a),[X, (y,b)]] - [(y,b),  [X, (x,a)]]
\end{eqnarray*}
\item[\rm{(iii)}] Let $(x,a) \in P^{+}$, $(z, c) \in P^{+}$, $t  =(y,b) \in P^{-}$, the case where all three signs are multiplied by $-1$ can be obtained in the same fashion. 
\begin{eqnarray*}
\left[(x,a), [(y,b), (z,c)] \right] &=& [(x,a), (b \circ z, 0)  - f(c \diamond b) - f(z \diamond y) - (c \circ y, 0)]\\
 &=& \delta(c,b)a + v(z,y)a - \kappa(x, c \circ y) + a \circ (b \circ z) \\ &&+ v(z,y)x + \delta(c,b)x + f(x \diamond(b \circ z)) \\
 \left[(z,c) [(x,a), (y,b)]\right] &=&  -  \delta(a,b).c - v(x,y).c +  \kappa(z, a \circ y)  - c \circ (b \circ x) \\ &-& \delta(a,b).z -  v(x,y).z  - f(z \diamond(b \circ x)) \\
 \left[(y,b), [(z,c), (x,a)]\right] &=& [(y,b), \kappa(x,z)]\\
 &=& -f( \kappa(x,z) \diamond b )- \kappa(x,z) \circ y
\end{eqnarray*} 
Thus the homogeneous components of $ [(x,a), [(y,b), (z,c)]]  + [(y,b), [(z,c), (x,a)]] + [(z,c) [(x,a), (y,b)]]$ are as  follows: In degree $2$: 
$\delta(c,b)a + v(z,y)a -  \delta(a,b).c - v(x,y).c - \kappa(x, c \circ y) +   \kappa(z, a \circ y) = v(z,y)a  - v(x,y).c - \kappa(x, c \circ y) +   \kappa(z, a \circ y) = 0$ by Definition~\ref{extended_ops_JKP_defi} and (\ref{JK4}). 
In degree $1$, one obtains 
$v(z,y)x + \delta(c,b)x + x \diamond(b \circ z) - z \diamond(b \circ x) + a \circ (b \circ z)- c \circ (b \circ x) - \delta(a,b).z -  v(x,y)z  - \kappa(x,z) \circ y.$ Again by (\ref{JK4}) we have that $0 = v(z,y)x -  v(x,y)z  - \kappa(x,z) \circ y,$ and further $ \delta(c,b)x  - c \circ (b \circ x) + a \circ (b \circ z) - \delta(a,b).z = 0$ by definition of $\delta.$ Lastly, in degree $0$, $f(x \diamond(b \circ z) - z \diamond(b \circ x) - \kappa(x,z) \diamond b) = 0$ by (\ref{HCP13}).
\end{itemize}
(ii) Let $f = \mathrm{id}$. Then  by (i) $\uTKK(P) = \hat L(\uider(P))$ is a Lie algebra. If $g = \mathrm{id},$ then (i) shows that $\TKK(P)$ is a Lie algebra.  \\
Since $P$ generates $\hat L(D^0),$ it follows that $\hat f$ and $\hat g$ are onto. It is also clear that $\hat f$ and $\hat g$ are $5$-graded, with $\ker \hat f = \ker f \subset \uTKK(P)_0$ and  $\ker \hat g = \ker g \subset D^0.$ Thus Lemma~\ref{central_ext_lem} implies that  the homomorphisms are central extensions. \\
(iii) Remark~\ref{well_Def_ULE_Rem} already states that $\uTKK(h)$ is an algebra morphism. Combined with (ii) this gives that $\uTKK(h)$ is a Lie algebra homomorphism. Since $\uider$ is covariant and preserves  the identity morphism, it follows that $\uTKK$ is also covariant and that $\uTKK(\mathrm{id}_P) = (\mathrm{id}_P, \uider(\mathrm{id}_P) ) =  \mathrm{id}_{\uTKK(P)}.$
\end{proof}
\begin{defi} Proposition~\ref{ULE_TKK_lem} gives a uniquely determined central extension
$$\hat \ud : \uTKK(P) \rightarrow \TKK(P) $$
such that $\ker \hat \ud = \HF(P).$
\label{universal_TKK_ce_def}
\end{defi}

\begin{defi}
\label{JKP_type_defi}
Let $L$ be a $5$-graded Lie algebra.  We say that $L$ is of \emph{Jordan-Kantor-type} if the pair $(L_2, L_{-2})$ can be endowed with the structure of a Jordan pair $J$  such that
 $J^\sigma = L_\sigma,$  $ D^\sigma(a, b) = \ad[a,b]|_{L_\sigma},$ and $L_0 = [L_{-1}, L_{-1}] + [L_{-2}. L_{2}].$ \\
 \end{defi}

\begin{prop} If $L$ is of Jordan-Kantor type, then  $((L_{2}, L_{-2}), (L_1, L_{-1}))$ is a Jordan-Kantor pair, with respect to 
$v(x,y) = \ad[x,y]$ for $x \in L_\sigma, y \in L_{-\sigma}$ and  $\kappa(x,u) = [x,u]$ for $x,u \in L_\sigma.$
\\
Let $L$ be a $5$-graded Lie algebra such that $L_0 = [L_{-1}, L_{-1}] + [L_{-2}. L_{2}].$ If  $1/2 \in k$ and $L$ does not have $3$-torsion, then $L$ is of Jordan-Kantor type. 
\end{prop}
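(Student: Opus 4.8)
The plan is to establish the two assertions separately; both rest on the single observation that, since $L$ is $5$-graded, $L_i=0$ whenever $|i|\ge 3$, so every iterated bracket that would leave the band $\{-2,-1,0,1,2\}$ vanishes and the Jordan--Kantor pair axioms collapse to instances of the Jacobi identity in $L$.

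\emph{First assertion.} I would set $J^{+}=L_{2}$, $J^{-}=L_{-2}$, $M^{+}=L_{1}$, $M^{-}=L_{-1}$, take $Q^{\sigma}$ to be the Jordan pair structure furnished by the hypothesis that $L$ is of Jordan-Kantor type (Definition~\ref{JKP_type_defi}), and define $a\circ y:=[a,y]$ for $a\in J^{\sigma}$, $y\in M^{-\sigma}$, together with $v(x,y)=V_{x,y}:=\ad[x,y]$ and $\kappa(x,u):=[x,u]$. Axiom~(1) is precisely the Jordan-Kantor-type hypothesis, and axiom~(4) is immediate since $\kappa(x,x)=[x,x]=0$. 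For the rest I would exploit two reductions. First, the Jordan-Kantor-type hypothesis $D^{\sigma}(a,b)=\ad[a,b]|_{L_{\sigma}}$ lets one rewrite every triple product appearing in axioms (2), (3) and (5) — be it a $J$-triple $\{a,b,c\}=D_{a,b}c$ or an $M$-triple $\{x,y,z\}=V_{x,y}z$ — as an iterated bracket $[[\,\cdot\,,\,\cdot\,],\,\cdot\,]$ in $L$. Second, since $v=\ad[\,\cdot\,,\,\cdot\,]$ and $\ad[\alpha,\beta]=[\ad\alpha,\ad\beta]$, axiom~(3) amounts to the Jacobi identity $[[x,y],[z,w]]=[[[x,y],z],w]+[z,[[x,y],w]]$. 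With these in place, axiom~(2) and the six identities (\ref{JK4})--(\ref{KP1}) each become an equality of iterated brackets in $L$, proved by expanding the outermost $\ad$ by Jacobi, discarding the summands that land in $L_{\pm3}$ or $L_{\pm4}$, and re-associating what survives. For example, for $a,c\in L_{2}$, $b\in L_{-2}$, $y\in L_{-1}$ one checks, using $[a,c]\in L_{4}=0$ and then $[c,[a,y]]\in L_{3}=0$, that $[a,[b,[c,y]]]+[c,[b,[a,y]]]=[[a,b],[c,y]]+[[c,b],[a,y]]$, and, using $[b,y]\in L_{-3}=0$, that $[[[a,b],c],y]$ equals the same expression; this is axiom~(2). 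I would also record — again thanks to the short grading — that the operators $\delta(a,b)$ and $v(x,y)$ of Definition~\ref{extended_ops_JKP_defi} coincide with $\ad[a,b]|_{P}$ and $\ad[x,y]|_{P}$, so nothing further about their interaction with the $L$-bracket needs to be checked.

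\emph{Second assertion.} Because $L_{\pm3}=L_{\pm4}=0$, the submodule $\mathfrak g:=L_{-2}\oplus L_{0}\oplus L_{2}$ is a graded Lie subalgebra of $L$ with $[L_{2},L_{2}]=[L_{-2},L_{-2}]=0$, i.e.\ a short $\mathbb Z$-graded Lie algebra. Since $1/2\in k$, the classical Tits--Kantor--Koecher correspondence (cf.\ \cite{Neh1996}) endows $(L_{2},L_{-2})$ with a Jordan pair structure via $Q^{\sigma}_{a}b=\tfrac12[[a,b],a]$; its linearization satisfies $Q_{a,c}b=\tfrac12\bigl([[a,b],c]+[[c,b],a]\bigr)=[[a,b],c]$, the last equality using $[a,c]\in L_{4}=0$ and Jacobi, so that $D^{\sigma}(a,b)=\ad[a,b]|_{L_{\sigma}}$. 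The no-$3$-torsion hypothesis is what guarantees that this structure satisfies (\ref{JP1})--(\ref{JP3}) in the form demanded by the definition (cf.\ the Remark following the definition of a Jordan-Kantor pair). Since $L_{0}=[L_{-1},L_{-1}]+[L_{-2},L_{2}]$ is assumed, all three requirements of Definition~\ref{JKP_type_defi} hold and $L$ is of Jordan-Kantor type.

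\emph{Main obstacle.} In the first assertion there is no conceptual difficulty; the only genuine care is to notice that $\circ$ must be taken to be the bracket (the quickest sanity check being that (\ref{JK4}) then reduces to Jacobi) and then to track, identity by identity, which graded piece each nested bracket lands in. The real content of the second assertion is the fundamental formula (\ref{JP3}), $Q_{Q_{a}b}=Q_{a}Q_{b}Q_{a}$, for $Q^{\sigma}_{a}b=\tfrac12[[a,b],a]$ in a short-graded Lie algebra — the classical TKK theorem — and it is exactly there that $1/2\in k$ and the absence of $3$-torsion enter; I would import this rather than recompute it.
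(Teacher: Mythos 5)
Your proposal is correct and follows essentially the same route as the paper: both take the Jordan pair structure on $(L_{2},L_{-2})$ as given (first assertion) or construct it via $Q^{\sigma}_{a}=-\tfrac12(\ad a)^{2}$ and its linearization $D_{a,b}=\ad[a,b]$ (second assertion), invoke Loos's result (the paper cites \cite{loosJP}) to get the full Jordan pair axioms under the $1/2$ and no-$3$-torsion hypotheses, and dispose of the remaining Jordan--Kantor identities as short Jacobi computations in which the terms of degree $\pm3,\pm4$ vanish. Your proof supplies more explicit detail for the ``straightforward computations'' the paper waves at, but the skeleton is identical.
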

\begin{proof} Let $\sigma = \pm.$ 
If $1/2 \in k,$ then $Q^\sigma_{a} = -1/2 (\ad(a))^2,$ $a \in L_{\sigma 2}$ is a quadratic map from  $L_{\sigma2}$ into $\Hom(L_{-\sigma2}, L_{\sigma2}).$ For $a,c \in L_{\sigma2}$ and $b \in L_{-\sigma 2}$ the linearized operator acts as
\begin{eqnarray*}
Q^\sigma_{a,c}(y) &=& -1/2 (\ad(a + c))^2(b) + -1/2 (\ad(a))^2(b) - -1/2 (\ad(c))^2(b)\\
&=& -1/2([a+ c,[a + c, b ]] - [a, [a,b]] - [c,[c,b]])\\
&=& - 1/2([a,[ c, b ]] + [c,[a, b ]]  )
\end{eqnarray*}
Since $[a,c] = 0$
this equals 
$$ -1/2( [c, [a,b]] - [[a,b], c]) = \ad[a,b](c).$$
Thus $D_{a,b}(c) = Q_{a,c}(b) = \ad[a, b].$ 
The Jacobi identity  implies in particular that for elements $a, c \in L_{\sigma2}, b,d \in L_{-\sigma 2}$
$$[\ad[a, b], \ad[c,d]] = \ad [c, \ad[a, b]d  ] + \ad[\ad[a, b]c, d].$$
Since $1/2 \in k$ and $L$ does not have $3$-torsion in particular $L$ does not have $6$-torsion
Thus by \cite{loosJP}, $(L_{2}, L_{-2})$ is a Jordan pair with quadratic operators $Q^\sigma.$ 
The remaining identities for a Jordan-Kantor pair are quite straightforward computations which require only the axioms for a Lie algebra.  
\end{proof}
\begin{defi} If $L$ is of Jordan-Kantor type, then we denote the Jordan-Kantor pair $((L_{2}, L_{-2}), (L_1, L_{-1}))$ by $P_L.$
\end{defi}

See \cite[Thm. 5.18]{BS} for the following result, if $1/2$, $1/3$ are in $k.$
\begin{cor}
Let $L$ be a $5$-graded Lie algebra of Jordan-Kantor type. 
\begin{itemize} 
\item[(i)]
There are uniquely defined graded central extensions 
\begin{eqnarray*}
\hat f: \uTKK(P_L) & \rightarrow  & L \\
\hat g : L  &\rightarrow & \TKK(P_L)
\end{eqnarray*}
such that $\hat g \circ \hat f = \hat \ud$ and $\hat f|_{P_L} = \mathrm{id}_{P_L}.$
\item[(ii)]
The Lie algebra $\uTKK(P)$ is universal in the following sense:  For any homomorphism of Jordan-Kantor pairs $h : P \rightarrow P_L$ there exists a unique extension $\tilde h $ of $h$ to a graded Lie algebra homomorphism $\tilde h : \uTKK(P) \rightarrow L.$
\label{uTKK_universal_prop}
\end{itemize}
\end{cor}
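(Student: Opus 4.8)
The plan is to deduce both parts of the corollary from Proposition~\ref{ULE_TKK_lem} together with the preceding proposition identifying $P_L$ as a Jordan-Kantor pair. For part (i), I would apply Proposition~\ref{ULE_TKK_lem}(ii) directly to the Jordan-Kantor pair $P = P_L$. The only thing that needs checking is that $L$ itself can be realized as $\hat L(D^0)$ for a suitable Lie algebra $D^0$ sitting in a chain of central extensions $\uider(P_L) \xrightarrow{f} D^0 \xrightarrow{g} \instr(P_L)$ with $g\circ f = \ud$. Here the natural candidate is $D^0 = L_0$, with $g : L_0 \to \instr(P_L)$ the restriction of the adjoint action (since by definition of Jordan-Kantor type $L_0 = [L_{-1},L_{-1}] + [L_{-2},L_2]$, the image of $\ad|_{L_0}$ lands in $\instr(P_L)$ and $g$ is onto), and $f : \uider(P_L) \to L_0$ the map $a \diamond b \mapsto \ad[a,b]|_{\ldots}$, $x \diamond y \mapsto \ad[x,y]$, which is well-defined because the defining relations of $I(P_L)$ translate into Jacobi-type identities in $L$. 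One checks $\ker g \subseteq Z(L_0)$ and $\ker f \subseteq Z(\uider(P_L))$ using the bracket formulas, so both are central extensions; and $g \circ f = \ud$ by construction. Then Proposition~\ref{ULE_TKK_lem}(ii) gives the unique graded central extensions $\hat f : \uTKK(P_L) \to \hat L(L_0) = L$ and $\hat g : L \to \TKK(P_L)$ with $\hat f|_{P_L} = \hat g|_{P_L} = \mathrm{id}_{P_L}$, and $\hat g \circ \hat f = \hat\ud$ follows from the uniqueness clause in Definition~\ref{universal_TKK_ce_def} (both sides restrict to $\mathrm{id}$ on $P_L$, hence agree).

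For part (ii), given a homomorphism of Jordan-Kantor pairs $h : P \to P_L$, I would first apply the functor $\uTKK$ to get $\uTKK(h) : \uTKK(P) \to \uTKK(P_L)$, and then compose with the central extension $\hat f : \uTKK(P_L) \to L$ from part (i) to obtain a graded Lie algebra homomorphism $\tilde h := \hat f \circ \uTKK(h) : \uTKK(P) \to L$. By construction $\tilde h$ restricts on $P$ to $\hat f|_{P_L} \circ h = h$, using that $\uTKK(h)|_P = h$ (which is immediate from the definition $\uTKK(h) = (h, \uider(h))$) and $\hat f|_{P_L} = \mathrm{id}_{P_L}$. For uniqueness I would argue that $P$ generates $\uTKK(P)$ as a Lie algebra (from the $5$-grading and the relation $\uider(P) = $ span of $\delta$'s and $v$'s, which are brackets of elements of $P$), so any graded extension of $h$ is determined on a generating set, hence unique.

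The main obstacle I expect is verifying the well-definedness and centrality of the map $f : \uider(P_L) \to L_0$ in part (i) — i.e., that the generators of $I(P_L)$ listed in Definition~\ref{uider_JkP_Def} are killed, and that $\ker f$ lies in the center. This is a somewhat lengthy bookkeeping exercise translating each of the relations \eqref{HCP11}--\eqref{HCP131} into a consequence of the Jacobi identity in $L$; fortunately the observation already recorded in the proof of Lemma~\ref{uider_is_ider_module} (that \eqref{HCP12} rewrites as $\delta(a,b)(c\otimes d) + \delta(c,d)(a\otimes b)$, etc.) shows these relations are exactly the ones forced by Jacobi, so the verification is routine rather than subtle. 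An alternative, cleaner route that avoids this entirely is to observe that $L$ itself, being a $5$-graded Lie algebra of Jordan-Kantor type, is already of the form $\hat L(D^0)$ for $D^0 = L_0$ essentially by the definitions — the product formulas in Definition~\ref{gen_uTKK_Defi} for $\hat L(D^0)$ are precisely the Jacobi-identity consequences in $L$ — so one can invoke Proposition~\ref{ULE_TKK_lem}(ii) with minimal extra work; I would present the argument this way to keep it short.
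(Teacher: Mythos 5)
Your proposal is correct and follows essentially the same route as the paper: realize $L$ as $\hat L(L_0)$ by exhibiting the chain of central extensions $\uider(P_L)\xrightarrow{f}L_0\xrightarrow{g}\instr(P_L)$ with $g\circ f=\ud$, invoke Proposition~\ref{ULE_TKK_lem}(ii) for part (i), and set $\tilde h=\hat f\circ\uTKK(h)$ with uniqueness from $P$ generating $\uTKK(P)$ for part (ii). The ``alternative cleaner route'' you mention at the end is not actually a shortcut, since $\hat L(D^0)$ is only defined once the chain $f,g$ is exhibited, so constructing and verifying $f$ and $g$ cannot be avoided --- but you correctly identify that this verification is exactly the bookkeeping the paper carries out, and the rest of your argument is the paper's argument.
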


 \begin{proof}
If such a $\hat f$ exist, then  it is uniquely determined by the condition: $\hat f|_{P_L} = \mathrm{id}_{P_L}.$ In this case, we also have  $\hat g|_{P_L} = \mathrm{id}_{P_L}.$
 \begin{itemize}
\item[(i)] In view of Lemma~\ref{ULE_TKK_lem} it suffices to show that there is a chain of central extensions $\uider(P_L) \stackrel{f}{\rightarrow} L_0 \stackrel{g}{\rightarrow} \instr(P_L)$ such that $g \circ f = \ud.$ 
If $L$ is of Jordan-Kantor type then 
$L_0 = [L_2, L_{-2}] + [L_1, L_{-1}]$ and thus $L_0$ is a quotient of $ L_2 \otimes L_{-2} + L_1 \otimes L_{-1}.$
Since the Lie bracket is bilinear and alternating, there is a well-defined epimorphism of the following form
\begin{eqnarray*}
 \tilde {f} :  L_2 \otimes L_{-2} + L_1 \otimes L_{-1} \rightarrow L_0,\\
 a \otimes b \mapsto [a, b], && (a, b) \in J_L, \\
 x \otimes y \mapsto [x,y], && (x,y) \in M_L.
 \end{eqnarray*}
Let $J_L = (L_{2}, L_{-2})$ and $M_L = (L_1, L_{-1})$ be the Jordan resp. the Kantor components of $L.$
Then, for $(a, b) \in J_L$ and $(x,y) \in M_L$, the Jordan-Kantor pair structure on $(J_L, M_L)$ gives $\ad_L[a, b] = \delta(a, b)$ and $\ad_L[x,y] = v(x,y)$ when restricted to $J_L \oplus M_L.$ 
For $(a, b)$ and $(c,d)$ in $J_L$, this gives us the following identities: 
\begin{eqnarray*}
0 &=&  [[a, b], [a, b]] \\
&=& [ \delta(a,b)a, b] + [a, \delta(a,b)b] \\
&=& [\{a,b,a\}, b] - [a, \{b,a,b\}] \\
0 &=& [[a, b], [c,d]] + [[c, d], [a,b]]\\
&=& [\delta(a, b)c, d] + [c, \delta(a, b)d] + [\delta(c, d)a, b] + [a, \delta(c, d)b]\\
&=& [\{a, b, c\} ,d] - [c, \{b,a, d\}]   + [\{c, d, a\} ,b] + [a, \{c,d,b \}] 
\end{eqnarray*}
and likewise for $(x,y)$ and $(u,w)$ in $M_L$
\begin{eqnarray*}
0 &=&  [[x, y], [u, w]] \\
&=& [\{x,y,x\}, y] - [x, \{y, x, y\}] \\
0 &=& [[x, y], [u,w]] + [[u, w], [x,y]]\\
&=& [\{x, y, u\} ,w] - [u, \{y,x, w\}]   + [\{w, u, x\} ,y] + [x, \{u,w,y \}]. 
\end{eqnarray*}
The calculations above show that $\tilde f$ factors through the submodule generated by (\ref{HCP11}) to (\ref{HCP113}).
It follows similarly  that 
\begin{eqnarray*}
0 &=&  \left [a,[y,w]\right ] -[a \circ y, w] + [a \circ w, y],\\
0 &=&  [[u,x], b] + [u, b \circ w]  - [x, b \circ u].
\end{eqnarray*}
Therefore, $\tilde f$ also factors through (\ref{HCP13}) and we have a Lie algebra epimorphism 
\begin{eqnarray*}
f : \uider((J_L, M_L)) \rightarrow L_0, \\
a \diamond b \mapsto [a, b], && (a, b) \in J_L, \\
 x \diamond y \mapsto [x,y], && (x,y) \in M_L.
 \end{eqnarray*}
For any  finite number of elements $((a_i, b_i), (x_i, y_i)) \in P,$  $\sum_{i} [a_i, b_i] + [x_i, y_i] = 0$ implies that $\ad(\sum_{i} [a_i, b_i] + [x_i, y_i])|_{P_L} = \sum \delta(a_i, b_i) + v(x_i, y_i)  = 0.$ 
Hence there is a well-defined Lie algebra homomorphism:
 \begin{eqnarray*}
  g: L_0 \rightarrow \instr((J_L, M_L)),\\
\left [a, b \right] \mapsto \delta(a, b), && (a, b) \in J_L, \\
\left [x,y \right] \mapsto \delta(x,y), && (x,y) \in M_L.
 \end{eqnarray*}
 It is obvious that $g \circ f = \ud.$ Therefore $\ker f \subset Z(L)$. Assume $z \in \ker g.$ Then $\ad z|_{P} = 0,$ since $g$ is onto and $\instr(P)$ acts faithfully on $P.$ However, $P$ generates $L,$ hence $ z \in Z(L_0).$ Hence $f$ and $g$ are central extensions.  \\
 Using the notation in Lemma~\ref{ULE_TKK_lem} with $P = P_L$ and $D^0 = L_0,$ it is easy to see that $L= \hat L(D^0)$. Thus there is a unique Lie algebra homomorphism $ \hat f : \uTKK(P_L) \rightarrow L$ such that $\hat f|_{P_L} = \mathrm{id}.$ Lemma~\ref{ULE_TKK_lem}(iii) gives for every Jordan-Kantor homomorphism $h: P \rightarrow P_L$ the existence of a unique Lie algebra morphism $\uTKK(h) : \uTKK(P) \rightarrow \uTKK(P_L)$ which extends $h : P \rightarrow P_L.$ Thus $\tilde h = \hat f \circ \uTKK(h) $ has the properties  required in (ii). Let $h'$ be another Lie algebra homomorphism $h' : \uTKK(P) \rightarrow L$ such that $h'|_P = h.$ Since $P$ generates $\uTKK(P)$ as Lie algebra, this determines $h'$ on all of $\uTKK(P).$ Therefore $h = \tilde h. $
\end{itemize}
\end{proof}

\begin{defi} A \emph{perfect} Jordan-Kantor pair is a Jordan-Kantor pair $P$ such that for $\sigma  = \pm$: 
\begin{eqnarray*}
J^\sigma &=& \{J^\sigma J^{-\sigma }J^\sigma \} + \kappa(M^\sigma,M^\sigma), \\
M^\sigma &=&  \{M^\sigma M^{-\sigma }M^\sigma \} + J^{\sigma} \circ M^{-\sigma}.
\end{eqnarray*} \label{perfect_JKP_defi}
\end{defi}
\begin{lem}
The following are equivalent:
\begin{itemize} 
\item[\rm{(i)}] $P$ is a perfect Jordan-Kantor pair.
\item[\rm{(ii)}] $\TKK(P)$ is a perfect Lie algebra.
\item[\rm{(iii)}] $\uTKK(P)$ is a perfect Lie algebra.
\item[\rm (iv)] There is a perfect Lie algebra $L$ of Jordan-Kantor  type such that $P_L = P$ (as Jordan -Kantor pair). 
\end{itemize}
\label{perfect_TKK_lem}
\end{lem}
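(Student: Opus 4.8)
The plan is to prove the cycle of equivalences $(i) \Rightarrow (ii) \Rightarrow (iii) \Rightarrow (iv) \Rightarrow (i)$, exploiting the $5$-graded structure of $\TKK(P)$ and $\uTKK(P)$ together with the central extension $\hat\ud : \uTKK(P) \to \TKK(P)$ from Definition~\ref{universal_TKK_ce_def} and the universal property in Corollary~\ref{uTKK_universal_prop}.

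For $(i) \Rightarrow (ii)$: recall $\TKK(P) = J^+ \oplus M^+ \oplus \instr(P) \oplus M^- \oplus J^-$. Assuming $P$ is perfect, I would show each graded piece lies in $[\TKK(P),\TKK(P)]$. The identities $[J^\sigma, J^{-\sigma}] \ni \delta(a,b)$, $[M^\sigma, M^{-\sigma}] \ni v(x,y)$ together with $\instr(P) = \spa\{\delta(a,b), v(x,y)\}$ give $\instr(P) \subseteq [\TKK(P),\TKK(P)]$. Next, from the bracket formulas in Definition~\ref{gen_uTKK_Defi}, $[\delta(a,b), c] = \{a,b,c\}$ and $[\kappa(x,u), \cdot\,]$-type brackets realize $\{J^\sigma J^{-\sigma} J^\sigma\}$ and $\kappa(M^\sigma, M^\sigma) \circ(\cdot)$; perfectness of $P$ in the form $J^\sigma = \{J^\sigma J^{-\sigma} J^\sigma\} + \kappa(M^\sigma, M^\sigma)$ and $M^\sigma = \{M^\sigma M^{-\sigma} M^\sigma\} + J^\sigma \circ M^{-\sigma}$ then forces $J^\sigma, M^\sigma \subseteq [\TKK(P),\TKK(P)]$. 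So $\TKK(P)$ is perfect.

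For $(ii) \Rightarrow (iii)$: since $\hat\ud : \uTKK(P) \to \TKK(P)$ is a central extension (Definition~\ref{universal_TKK_ce_def}) with $\hat\ud|_P = \mathrm{id}_P$, and $P$ generates $\uTKK(P)$ as a Lie algebra, one argues as in Lemma~\ref{generating_lemma}: $\uTKK(P) = [\uTKK(P),\uTKK(P)] + \ker\hat\ud$, but the subalgebra generated by $P$ is all of $\uTKK(P)$ and equals its own derived algebra once $\TKK(P)$ is perfect, because perfectness lifts along central extensions when a generating set of $L$ lifts to a generating set of the cover — precisely the content of Lemma~\ref{generating_lemma}/Lemma~\ref{uce_is_ce}-style reasoning. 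For $(iii) \Rightarrow (iv)$: take $L = \uTKK(P)$, which is $5$-graded with $L_0 = \uider(P) = [L_{-1},L_{-1}] + [L_{-2},L_2]$ by construction (Lemma~\ref{structure_of_uider} gives $\uider(P)$ is spanned by $a \diamond b$ and $x \diamond y$, and these are the images of the relevant brackets), so $L$ is of Jordan-Kantor type; one checks $P_L = P$ directly from the definitions of $Q$, $v$, $\kappa$ on $(L_2, L_{-2}, L_1, L_{-1})$. For $(iv) \Rightarrow (i)$: if $L$ is perfect and of Jordan-Kantor type with $P_L = P$, then $L = [L,L]$ together with the grading forces $L_2 = [L_*, L_*]\cap L_2$, and the only brackets landing in degree $2$ are $[L_0, L_2]$ and $[L_1, L_1]$; translating via $[L_0, L_2] = \delta(\cdot,\cdot)L_2 + v(\cdot,\cdot)L_2 = \{J^+ J^- J^+\} + \kappa(M^+,M^+)\circ$-terms and $[L_1, L_1] = \kappa(M^+, M^+)$ (in degree $2$, recalling $M^+ = P^1$, $\kappa(M^+, M^+) \subseteq J^+ = P^2$) gives $J^+ = \{J^+ J^- J^+\} + \kappa(M^+, M^+)$, and similarly in degree $1$ using $[L_0, L_1]$ and $[L_{-1}, L_2]$ one gets $M^+ = \{M^+ M^- M^+\} + J^+ \circ M^-$; the negative signs are symmetric.

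The main obstacle I expect is the bookkeeping in $(iv) \Rightarrow (i)$: one must carefully identify which graded brackets contribute to $L_2$ and $L_1$ and then recognize the resulting expressions as exactly the Jordan-triple and $\kappa$-terms appearing in Definition~\ref{perfect_JKP_defi}, using $\ad[a,b]|_{L_\sigma} = \delta(a,b)$, $\ad[x,y]|_{L_\sigma} = v(x,y)$, and $\kappa(x,u) = [x,u]$ — this requires being precise about the $5$-grading conventions ($P^{\pm2} = J^{\pm}$, $P^{\pm1} = M^{\pm}$) so that no contribution is missed and none is spuriously included (e.g.\ $[L_2, L_0]$ versus $[L_{-1}, \ldots]$). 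The other equivalences are more formal, leaning on the already-established fact that $P$ generates both $\TKK(P)$ and $\uTKK(P)$ and on the central-extension lemmas (Lemma~\ref{central_ext_lem}, Lemma~\ref{generating_lemma}) from Chapter~\ref{central_extensions_chapter}.
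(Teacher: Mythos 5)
Your cycle runs in a different order from the paper. The paper proves $(i)\Rightarrow(iii)\Rightarrow(ii)\Rightarrow(i)$ and then $(iii)\Rightarrow(iv)\Rightarrow(ii)$; crucially, $(iii)\Rightarrow(ii)$ is the trivial direction (a perfect Lie algebra maps onto a perfect one, and $\TKK(P)$ is an epimorphic image of $\uTKK(P)$). You instead attempt $(ii)\Rightarrow(iii)$, i.e., lifting perfectness \emph{up} a central extension, and this is where the gap is. The principle you invoke — ``perfectness lifts along central extensions when a generating set of $L$ lifts to a generating set of the cover'' — is false in general. For instance, take $L=\mathfrak{sl}_2$, $K=L\oplus kz$ with $z$ central, and the generating set $\{e,f\}$ of $L$: the preimage $\{e+z,\,f\}$ generates $K$ (since $[e+z,f]=h$ and $[h,e+z]=2e$ recover $e$ and hence $z$), yet $K$ is not perfect. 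Neither Lemma~\ref{generating_lemma} (which requires the cover to already be a covering, i.e.\ perfect) nor Lemma~\ref{uce_is_ce} supports the claim, so as written your chain does not close.

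The statement $(ii)\Rightarrow(iii)$ is nonetheless true, and there are two clean repairs. The simplest is to reorder: show $(ii)\Rightarrow(i)$ first (your computation for $(iv)\Rightarrow(i)$ already contains this), then $(i)\Rightarrow(iii)$ by the same graded computation you gave for $(i)\Rightarrow(ii)$, run inside $\uTKK(P)$ instead of $\TKK(P)$. Alternatively, one can argue directly: since $\hat\ud|_P=\mathrm{id}_P$ and $\ker\hat\ud\subseteq\uider(P)$, every homogeneous element of nonzero degree that lies in $[\TKK(P),\TKK(P)]$ already lies in $[\uTKK(P),\uTKK(P)]$ (lift each factor of each bracket and note that the discrepancy sits in degree zero, which is impossible for a nonzero-degree element); and $\uider(P)=J\diamond J+M\diamond M$ is automatically in the derived algebra because $a\diamond b=[a,b]$ and $x\diamond y=[x,y]$ in $\uTKK(P)$. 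The specific graded structure is what makes this work — a generic central extension would not. Your $(i)\Rightarrow(ii)$, $(iii)\Rightarrow(iv)$, and the direct grading argument for $(iv)\Rightarrow(i)$ are all fine (the last being somewhat more hands-on than the paper's, which observes that $L$ of Jordan-Kantor type is a central extension of $\TKK(P_L)$ and thus $(iv)\Rightarrow(ii)$ immediately); only $(ii)\Rightarrow(iii)$ needs the fix above.
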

\begin{proof} We prove (i) $\implies$ (iii) $\implies$ (ii) $\implies$ (i), and (iii) $\implies$ (iv) $\implies$ (ii). 
Assume that $P$ is a perfect Jordan-Kantor pair. 
As Lie algebra $\uTKK(P)$ is generated by $\sum_{i \neq 0}P^i$. It suffices therefore to show that $P^i \subset [\uTKK(P), \uTKK(P)]$ for $-2 \leq i \leq 2$. If $i =  \sigma2, $, $\sigma = \pm,$ then $\{J^\sigma J^{-\sigma} J^{\sigma}\} + \kappa(M^\sigma, M^\sigma) = [J^\sigma, [J^{-\sigma}, J^{\sigma}]] + [M^\sigma, M^\sigma] = P^{\sigma 2} \subset [\uTKK((P)), \uTKK(P)]$. For $ i = \sigma 1$ a similar argument works:  $\{M^\sigma M^{-\sigma} M^{\sigma}\} + J^{-\sigma} \circ M^\sigma) = [M^\sigma [M^{-\sigma} M^{\sigma}]] + [J^{-\sigma}, M^\sigma] = P^{\sigma 1} \subset [\uTKK(P),\uTKK(P)].$ Thus (i) implies (iii). The implication (iii) $\implies$ (ii) is clear,  since $\TKK(P)$ is an epimorphic image of $\uTKK(P).$ \\
If $\TKK(P)$ is perfect then $$J^\sigma = [[J^\sigma, J^{-\sigma}], J^\sigma ] + [[M^\sigma, M^{-\sigma}], J^\sigma] + [M^\sigma, M^\sigma] \subset \{J^\sigma J^{-\sigma} J^\sigma \} + \kappa(M^\sigma, M^\sigma)$$ and
$$M^\sigma = [[J^{-\sigma}, J^\sigma], M^\sigma] + [[M^{-\sigma}, M^\sigma], M^\sigma] + [J^\sigma, M^{-\sigma}] = \{M^\sigma M^{-\sigma }M^\sigma \} + J^{\sigma} \circ M^{-\sigma}.$$ Thus $P$ is perfect, i.e (ii) $\implies$ (i).\\
Now (iii) implies (iv) since $\uTKK(P)$ is such a Lie algebra. Assume (iv). By \ref{uTKK_universal_prop}, $L$ is a central extension of $\TKK(P)$ whence $\TKK(P)$ is perfect which is (ii).  
\end{proof}

Every Jordan-Kantor pair $P$  comes with an involution, the ``flip'' that exchanges the positive and the negative parts. The Jordan-pair obtained in this way is called the \emph{opposite pair} of $P$ and it is denoted by $P^{\mathrm{op}}.$ It is very easy to check that the following holds.

\begin{prop} 
\label{opp_TKK_prop}
Let $P$ be a Jordan-Kantor pair and $P^{\mathrm{op}}$ the opposite pair of $P$. Then 
\begin{itemize}
\item[\rm(i)] $\TKK(P) \cong \TKK(P^{\mathrm{op}}),$
\item[\rm(ii)] $\uTKK(P) \cong \uTKK(P^{\mathrm{op}}),$
\item[\rm(iii)] $P$ is perfect if and only if $P^{\mathrm{op}}$ is perfect. 
\end{itemize}
\end{prop}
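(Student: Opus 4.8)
The plan is to establish each isomorphism by producing an explicit Jordan-Kantor pair isomorphism between $P$ and $P^{\mathrm{op}}$-related data and then invoking the functoriality of $\uTKK$ together with the already-established behaviour of $\TKK$. First I would make precise what the opposite pair $P^{\mathrm{op}}$ is: for $P = (J,M) = ((J^-,J^+),(M^-,M^+))$ one sets $(P^{\mathrm{op}})^\sigma = P^{-\sigma}$, with quadratic maps, $V$-operators, $\circ$-action and $\kappa$ all obtained by interchanging the roles of the two signs; concretely $Q^{\mathrm{op},\sigma}_a = Q^{-\sigma}_a$, $V^{\mathrm{op},\sigma}_{x,y} = V^{-\sigma}_{x,y}$, $\kappa^{\mathrm{op}}(x,z)=\kappa(x,z)$ read in $J^{-\sigma}$, and $a \circ^{\mathrm{op}} y = a \circ y$ read with the swapped grading. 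One then checks that the identities (\ref{JP1})--(\ref{KP1}) are symmetric under this swap — this is the routine verification alluded to in the sentence preceding the statement ("It is very easy to check that the following holds") — so that $P^{\mathrm{op}}$ is again a Jordan-Kantor pair. In fact the swap itself is a Jordan-Kantor pair isomorphism $P^{\mathrm{op}} \to P$ in the sense that the quadruple of identity maps (reinterpreting the grading) satisfies the definition of a Jordan-Kantor homomorphism.

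For part (ii), the key observation is that $\uTKK$ is a functor (Proposition~\ref{ULE_TKK_lem}(iii)); applying it to the isomorphism $P^{\mathrm{op}} \xrightarrow{\sim} P$ immediately gives a Lie algebra isomorphism $\uTKK(P^{\mathrm{op}}) \cong \uTKK(P)$. Alternatively, and more transparently, one exhibits the isomorphism directly on $\uTKK(P) = J^+ \oplus M^+ \oplus \uider(P) \oplus M^- \oplus J^-$: the map that sends $P^i \ni p \mapsto p \in (P^{\mathrm{op}})^{-i}$ and, on the degree-zero part, sends $a \diamond b \mapsto -b \diamond a$ and $x \diamond y \mapsto -y \diamond x$ (the sign coming from the anti-symmetry in (\ref{bracket_to_zero_pos1})--(\ref{bracket_to_zero_neg1}) of the bracket between $P^+$ and $P^-$, equivalently from $\delta(b,a) = -\delta(a,b)^{\mathrm{op}}$ type relations). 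One verifies this is well-defined on the quotient $\uider$ by checking it respects the defining relations $I(P)$ from Definition~\ref{uider_JkP_Def} — each relation is manifestly symmetric under simultaneously swapping the two tensor factors and the grading — and that it is a Lie algebra homomorphism by comparing the bracket formulas of Definition~\ref{gen_uTKK_Defi} for $P$ and $P^{\mathrm{op}}$ term by term. It is an isomorphism because its evident inverse is of the same form.

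For part (i), one proceeds identically with $\instr$ in place of $\uider$: the flip induces $\instr(P) \cong \instr(P^{\mathrm{op}})$ via $\delta(a,b) \mapsto -\delta(b,a)$, $v(x,y) \mapsto -v(y,x)$, and this is compatible with the $\TKK$ bracket by the same term-by-term check; equivalently one notes $\TKK$ is visibly functorial for Jordan-Kantor pair isomorphisms (it is the case $D^0 = \instr(P)$, $f = \uider$, $g = \mathrm{id}$ of Definition~\ref{gen_uTKK_Defi}, and $\instr$ is a functor on the subgroupoid of isomorphisms). Part (iii) then follows from Lemma~\ref{perfect_TKK_lem}: $P$ is perfect iff $\TKK(P)$ is perfect, and by (i) $\TKK(P) \cong \TKK(P^{\mathrm{op}})$, so $\TKK(P)$ is perfect iff $\TKK(P^{\mathrm{op}})$ is, iff $P^{\mathrm{op}}$ is perfect. (One could equally read off perfectness of $P^{\mathrm{op}}$ directly from the defining equations in Definition~\ref{perfect_JKP_defi}, which are symmetric in $\sigma$.)

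The only mildly delicate point — and the one I would expect to absorb most of the writing — is bookkeeping the signs: the brackets in Definition~\ref{gen_uTKK_Defi} treat $P^+$ and $P^-$ asymmetrically (note the $-b \circ x$ versus $a \circ y$, and $-D_{b,a}$ versus $D_{a,b}$ in Definition~\ref{extended_ops_JKP_defi}), so the naive "identity after regrading" map is \emph{not} a Lie homomorphism; one genuinely needs the sign twist $a \diamond b \mapsto -b\diamond a$ on the degree-zero part, and one must confirm that with this twist every structural equation matches. There is no conceptual obstacle — it is the "very easy to check" claim made explicit — but care is required so that the chosen signs are globally consistent across all the bracket relations and across the passage from $P$ to $P^{\mathrm{op}}$ in Definition~\ref{extended_ops_JKP_defi}.
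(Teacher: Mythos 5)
The paper gives no proof at all; the sentence immediately preceding the statement reads ``It is very easy to check that the following holds,'' so there is no paper-route to compare with. Your sketch of the \emph{direct} isomorphism is the intended argument: the degree-reversing identification $P^i \to (P^{\mathrm{op}})^{-i}$ together with the sign twist $a \diamond b \mapsto -b \diamond a$, $x \diamond y \mapsto -y \diamond x$ on the degree-zero part, and the term-by-term check against Definition~\ref{gen_uTKK_Defi}. The sign you introduce is forced by $\delta^{\mathrm{op}}(b,a) = -\delta(a,b)$ (after regrading), and a similar check shows the isomorphism carries $\HF(P)$ onto $\HF(P^{\mathrm{op}})$, which yields (i) by passing to quotients. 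Part (iii) via Lemma~\ref{perfect_TKK_lem} (or directly from Definition~\ref{perfect_JKP_defi}) is fine.

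However, the ``key observation'' you lead with is wrong, and it is not just a notational lapse. The flip is \emph{not} a Jordan-Kantor pair isomorphism $P^{\mathrm{op}} \to P$, because morphisms of Jordan-Kantor pairs are by definition quadruples $(f_J^+, f_J^-, f_M^+, f_M^-)$ with $f_J^\sigma : J^\sigma_P \to J^\sigma_{P'}$, i.e.\ they \emph{preserve} the $\sigma$-grading, whereas the flip reverses it. For $P' = P^{\mathrm{op}}$ the component $f_J^+$ would have to be a map $J^+ \to (J^{\mathrm{op}})^+ = J^-$, and there is no ``identity map'' here — $J^+$ and $J^-$ are different modules in general. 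Concretely, $\mathbf{M}(1,n,D)^{\mathrm{op}} \cong \mathbf{M}(1,n,D^{\mathrm{op}})$, which is not isomorphic to $\mathbf{M}(1,n,D)$ unless $D \cong D^{\mathrm{op}}$; so $P \not\cong P^{\mathrm{op}}$ in general, and you cannot invoke the functoriality of $\uTKK$ from Proposition~\ref{ULE_TKK_lem}(iii) to get $\uTKK(P) \cong \uTKK(P^{\mathrm{op}})$. If you want a categorical phrasing, you would need to enlarge the morphism class to include grading-reversing ``anti-homomorphisms'' and re-verify that $\uTKK$ remains a functor on that larger category; the paper never does this, and your direct construction sidesteps the issue entirely, so you should drop the functoriality argument and promote the direct computation from ``alternatively, and more transparently'' to the actual proof.
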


\subsubsection{The central extension $\mathfrak{uce}(L)$ vs.  the central extension $\uTKK(P)$}
Recall the construction of $\uce L$ where $L$ is a $\Gamma$-graded Lie algebra which we described in Definition~\ref{uce_def}. 
\begin{cor}
\label{JKP_central_ext_kernel_cor} 
Let $P$ be a perfect Jordan-Kantor pair, $L = \TKK(P)$ and endow $\mathfrak{uce}(L)$ with the canonical $\mathbb Z$-grading obtained from the $\mathbb Z$-grading on $L$. Then there is a Lie algebra isomorphism 
$\uce{L}_0 \rightarrow \uider(P)$, given by $\langle a , b\rangle  \mapsto a \diamond b$, $\langle x, y\rangle \rightarrow x \diamond y $ for $(a, b) \in J, (x, y) \in M.$  In particular, for  $u : \mathfrak{uce}(L) \rightarrow L, $ we have $$(\ker u)_0\cong \mathrm{HC}(P).$$
\end{cor}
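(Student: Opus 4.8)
The plan is to produce a $\mathbb{Z}$-graded isomorphism $\uce{L}_0 \to \uider(P)$ and then read off the statement about kernels. First I would recall from Proposition~\ref{graded_cover_neh} that since $L = \TKK(P)$ is $\mathbb{Z}$-graded with support $\{-2,-1,0,1,2\}$ and $L$ is perfect (Lemma~\ref{perfect_TKK_lem}, since $P$ is perfect), the universal central extension inherits a $\mathbb{Z}$-grading and $u : \uce{L}\to L$ is a graded morphism; in particular $\uce{L}_0 = \sum_{\gamma\neq 0}[\uce{L}_\gamma,\uce{L}_{-\gamma}]$, which is spanned by the brackets $\langle a,b\rangle$ with $(a,b)\in J = (L_2,L_{-2})$ and $\langle x,y\rangle$ with $(x,y)\in M = (L_1,L_{-1})$. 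So the stated assignment $\langle a,b\rangle \mapsto a\diamond b$, $\langle x,y\rangle \mapsto x\diamond y$ at least has a chance of being surjective onto $\uider(P)$.

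The core of the argument is to use the functor $\uTKK$ and the universal property from Corollary~\ref{uTKK_universal_prop}. Since $P$ is perfect, $\uTKK(P)$ is perfect (Lemma~\ref{perfect_TKK_lem}), and by Proposition~\ref{ULE_TKK_lem}(ii) the map $\hat\ud : \uTKK(P)\to\TKK(P) = L$ is a graded central extension. By the universal property of $\uce{L}\to L$, there is a unique morphism of central extensions $\varphi : \uce{L}\to\uTKK(P)$. I would then show $\varphi$ is an isomorphism by exhibiting an inverse. Concretely, $\uce{L}$ is a covering of $L$ (it is perfect, being the uce of a perfect algebra), and $\uTKK(P)$ is a central extension of $L$; a covering mapping to another central extension via a morphism of central extensions, where the target is itself generated by the image of $P$, should force $\varphi$ to be surjective, and then Lemma~\ref{uni_cov} (universal central extension vs. covering) gives that $\varphi$ is an isomorphism once one checks $\uTKK(P)\to L$ is also a covering — which it is, since $\uTKK(P)$ is perfect. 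Thus $\uce{L}\cong\uTKK(P)$ as graded central extensions of $L$, hence $\uce{L}_0\cong\uTKK(P)_0 = \uider(P)$, and under this identification $\langle a,b\rangle\leftrightarrow a\diamond b$, $\langle x,y\rangle\leftrightarrow x\diamond y$ because $\varphi$ restricted to $P\subset\uce{L}$ (the degree $\pm1,\pm2$ parts) is the identity and $\hat\ud$ sends $a\diamond b$ to $[a,b]$.

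For the final assertion: under the isomorphism $\uce{L}\cong\uTKK(P)$ the covering map $u : \uce{L}\to L$ corresponds to $\hat\ud : \uTKK(P)\to\TKK(P)$, whose kernel is $\HF(P)$ by Definition~\ref{universal_TKK_ce_def}. Restricting to degree $0$, $\ker\hat\ud$ lies entirely in degree $0$ (it equals $\ker\ud \subset\uider(P) = \uTKK(P)_0$), so $(\ker u)_0 \cong \ker\hat\ud = \HF(P)$, which in the Jordan-pair case is $\{\sum a_i\diamond b_i + \sum x_j\diamond y_j : \sum\delta(a_i,b_i) + \sum v(x_j,y_j) = 0\} = \mathrm{HC}(P)$.

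The main obstacle I anticipate is verifying carefully that the comparison map $\varphi$ is genuinely a bijection rather than just a morphism — i.e. pinning down that both $\uce{L}\to L$ and $\uTKK(P)\to L$ are coverings (perfectness of $\uTKK(P)$ is exactly Lemma~\ref{perfect_TKK_lem}, so this is available) and then invoking Lemma~\ref{uni_cov} in the right direction. One must be slightly careful that $\uce{L}\to L$ universal plus $\uTKK(P)\to L$ a covering yields, via the unique morphism of central extensions, an isomorphism; alternatively one checks directly that the assignment $a\diamond b\mapsto\langle a,b\rangle$, $x\diamond y\mapsto\langle x,y\rangle$ is well-defined on $\uider(P)$ (the defining relations of $I(P)$ map to the Jacobi/perfectness relations defining $\mathcal{B}$ in $\uce{L}$, using Lemma~\ref{three_elem}-type identities and the fact that $\delta(a,b) = \ad[a,b]$, $v(x,y) = \ad[x,y]$) and gives a two-sided inverse on degree $0$. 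Checking that well-definedness — i.e. that each generator of $I(P)$ in Definition~\ref{uider_JkP_Def} is killed — is the routine but slightly lengthy computational step.
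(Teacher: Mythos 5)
Your primary argument has a genuine gap: the comparison map $\varphi : \uce{L} \to \uTKK(P)$ obtained from the universal property of $\uce{L}$ is \emph{not} an isomorphism in general, so the reasoning that would derive the degree-$0$ statement from a full isomorphism $\uce{L}\cong\uTKK(P)$ breaks down. Since $L$ is $5$-graded with support $\{-2,\dots,2\}$, Proposition~\ref{graded_cover_neh} only bounds $\supp\uce{L}\subset\supp L + \supp L\subset\{-4,\dots,4\}$, whereas $\uTKK(P)$ has support contained in $\{-2,\dots,2\}$; the paper explicitly records (in the example following Corollary~\ref{LemmaNeh1996}) that $(\ker u)_{\pm 1}$ and $(\ker u)_{\pm 2}$ can be nonzero for $L=\TKK(V)$. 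Your appeal to Lemma~\ref{uni_cov} is also pointed the wrong way: that lemma forces a covering \emph{of} a universal central extension to be an isomorphism, but what you have is a morphism $\uce{L}\to\uTKK(P)$ from the universal object \emph{onto} a covering, and such a map is surjective but need not be injective. You in fact flag this as the main obstacle, and it is not a mere technical hurdle but a genuine failure of the intermediate claim.

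The correct route — which you mention only as an ``alternative'' in passing — is exactly what the paper does. One does not compare $\uce{L}$ with $\uTKK(P)$ globally. Instead, the universal property of $\uce{L}$ gives the unique morphism of central extensions $f:\uce{L}\to\uTKK(P)$ over $\hat\ud$; restricting to degree $0$ (and using that $[K,K]_0=K_0$ for each algebra in the diagram) produces a surjective Lie algebra homomorphism $f_0:\uce{L}_0\to\uider(P)$ sending $\langle a,b\rangle\mapsto a\diamond b$, $\langle x,y\rangle\mapsto x\diamond y$. Then one constructs the inverse $g_0:\uider(P)\to\uce{L}_0$ explicitly, by defining $g$ on $J^+\otimes J^-\oplus M^+\otimes M^-$ via $a\otimes b\mapsto a\wedge b$, $x\otimes y\mapsto x\wedge y$ and checking that $g$ annihilates each generator of $I(P)$ from Definition~\ref{uider_JkP_Def} (this reduces to identities like $g(a\otimes\{bab\}-\{aba\}\otimes b)=[a,b]\wedge[a,b]=0$ and the Jacobi relations defining $\mathcal B$ in $\uce{L}$). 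Since this verification lives entirely in degree $0$, the higher-degree discrepancy between $\uce{L}$ and $\uTKK(P)$ never intervenes. Your last paragraph essentially describes this computation, so you were close; but presenting the global isomorphism as the main argument and the explicit inverse as a fallback inverts the logical weight — the global isomorphism fails, and the explicit inverse in degree $0$ is the whole content of the proof.
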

\begin{proof}
By the universal property of $\mathfrak{uce}(L)$ there is a unique homomorphism $f$ such that the following diagram commutes
\begin{equation*}
\xymatrix{\mathfrak{uce}(L) \ar[r]^f\ar[d] & \uTKK(P) \ar[d]^{\ud_{JKP}}\\
[L, L] \ar[r] & \TKK(P)
}.
\end{equation*}
Now restrict to the zero component. For any Lie algebra $K$ in this diagram we have $[K, K]_0 = K_0$. This gives a surjective Lie algebra homomorphism $f_0 : (\mathfrak{uce}(L))_0 \rightarrow \uTKK(P)_0 = \uider(P)$ which is uniquely determined by $\langle x, y\rangle \rightarrow x \diamond y$ and $\langle a, b\rangle \rightarrow a \diamond b$. 
Define $g: x \otimes y \rightarrow x \wedge y$ and $g: a \otimes b \rightarrow  a \wedge b$ 
for $(x,y) \in M, (a, b) \in J.$ 
It suffices to prove that $g$ respects the relations in $I(P)$ : 
For relations of the form $ x \otimes \{yxy\} - \{xyx\} \otimes y $ or $ a \otimes \{bab\} - \{aba\} \otimes b $ this follows from 
\begin{eqnarray*}
g(a \otimes \{bab\} - \{aba\} \otimes b ) &=&  a \wedge [[b,a],b]  -  b \wedge [[a, b],a]. \\
&= &  [a, b] \wedge [a, b] = 0
\end{eqnarray*}
 Similarly, a relation of the form (\ref{HCP113})
 is mapped into $$[a, b] \wedge [c,d] + [c,d] \wedge [a, b],  $$
 which is in the same coset as $0$ in $\uce L_0.$
 If we consider a relation (\ref{HCP13}) then under $g$ it lies in the coset of
 $$[z,x]  \wedge b + [b,z] \wedge x + [x,b] \wedge z $$
 which is the coset of $0$ in $\uce  L.$
 Thus there is a well-defined linear map $g_0 : \uider(P) \rightarrow J^{+} \wedge J^{-}  \oplus M^+ \wedge M^{-}.$ Since $\TKK(P)$ is of Jordan-Kantor type and perfect, Lemma~\ref{graded_cover_neh} implies that $g_0$ maps $\uider(P)$ indeed onto $\mathfrak{uce}(L)_0.$ 
  Then $f_0$ and $g_0$ are well-defined, linear and inverse to each other. 
  \end{proof}

\subsection*{Base change}
\begin{prop}Let $P = (J,M)$ be a Jordan-Kantor pair of $k$ such that $J^\sigma$ and $M^\sigma$  are finitely generated as $k$-modules. If $k\rightarrow K$ is a faithfully flat base change, then 
$\uTKK(P \otimes_k K) = \uTKK(P) \otimes_k K.$
\label{TKKalghalfthirddfunc}
\end{prop}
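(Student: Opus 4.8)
The plan is to exploit the explicit construction $\uTKK(P) = P \oplus \uider(P)$ and reduce the claim to the analogous statement for $\uider$, which in turn reduces to compatibility of $I(P)$ with base change. First I would establish the module-level identity $\uider(P \otimes_k K) \cong \uider(P) \otimes_k K$. Recall that $\uider(P)$ is by definition the quotient of the finitely generated module $(J^+\otimes_k J^-)\oplus(M^+\otimes_k M^-)$ by the submodule $I(P)$ generated by the explicit elements \eqref{HCP11}--\eqref{HCP131}. Tensoring the short exact sequence
\[
0 \to I(P) \to (J^+\otimes_k J^-)\oplus(M^+\otimes_k M^-) \to \uider(P) \to 0
\]
with $K$ over $k$ is exact because $k \to K$ is flat, so $\uider(P)\otimes_k K$ is the cokernel of $I(P)\otimes_k K \to \big((J^+\otimes_k J^-)\oplus(M^+\otimes_k M^-)\big)\otimes_k K$. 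The target canonically identifies with $\big((J^+\otimes_k K)\otimes_K (J^-\otimes_k K)\big)\oplus\big((M^+\otimes_k K)\otimes_K(M^-\otimes_k K)\big)$, which is the ambient module for $\uider(P\otimes_k K)$. Since the defining identities \eqref{HCP11}--\eqref{HCP131} hold in all scalar extensions (the quadratic/bilinear operators $Q$, $V$, $\kappa$, $\circ$ extend to $K$ as noted in Definition~\ref{quadratic_operator_definition} and the discussion preceding it), the image of $I(P)\otimes_k K$ is exactly the submodule $I(P\otimes_k K)$ generated by the corresponding elements over $K$. Here flatness gives the inclusion $I(P)\otimes_k K \hookrightarrow$ ambient module, and one checks the image is generated by the base-changed relators — flatness is what guarantees no relations are lost and finite generation of $J^\sigma$, $M^\sigma$ ensures the generators of $I(P)$ map onto a generating set for $I(P\otimes_k K)$. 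Hence $\uider(P\otimes_k K)\cong \uider(P)\otimes_k K$ as $K$-modules.

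Next I would upgrade this to an isomorphism of Lie algebras. By Lemma~\ref{structure_of_uider}, the bracket on $\uider(P)$ is $[X,Y]=\ud(X).Y$, where $\ud$ sends $a\diamond b\mapsto \delta(a,b)$, $x\diamond y\mapsto v(x,y)$, and the action of $\str(P)$ on $\uider(P)$ is the one from Lemma~\ref{uider_is_ider_module}, built entirely from the operators $\delta$, $v$ of Definition~\ref{extended_ops_JKP_defi}, which are themselves polynomial in $Q$, $V$, $\kappa$, $\circ$. All of these commute with base change, so the isomorphism of modules intertwines the brackets, giving $\uider(P\otimes_k K)\cong \uider(P)\otimes_k K$ as Lie algebras. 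Then, since as $k$-modules $\uTKK(P) = J^+\oplus M^+ \oplus \uider(P) \oplus M^-\oplus J^-$ with the bracket of Definition~\ref{gen_uTKK_Defi} again expressed through $\delta$, $v$, $\kappa$, $\circ$ and the canonical action on $P$, tensoring the whole $k$-module with $K$ and comparing bracket formulas term by term yields $\uTKK(P\otimes_k K) = \uTKK(P)\otimes_k K$ (where $P\otimes_k K$ means the Jordan-Kantor pair $(J\otimes_k K, M\otimes_k K)$, which is a Jordan-Kantor pair precisely because the defining identities hold in scalar extensions).

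The main obstacle is the identification of $I(P)\otimes_k K$ with $I(P\otimes_k K)$ inside the ambient base-changed tensor module: one must be careful that flatness gives an \emph{injection} $I(P)\otimes_k K \to \big((J^+\otimes J^-)\oplus(M^+\otimes M^-)\big)\otimes_k K$, and then verify that its image is generated \emph{as a $K$-module} by the base-changed relators \eqref{HCP11}--\eqref{HCP131}. This is where finite generation of the $J^\sigma$ and $M^\sigma$ enters: it ensures that a $k$-generating set of $I(P)$ (of the given explicit form) has image generating $I(P)\otimes_k K$ over $K$, and the base-changed relators span that image since each relator is visibly $k$-bilinear in the generators and its formation commutes with $k\to K$. (Faithful flatness beyond mere flatness is not strictly needed for this direction, but is harmless; it would matter if one wanted to descend statements the other way.) Everything else — the Lie algebra structure, the chain of central extensions, functoriality — transports formally once the module identity is in place.
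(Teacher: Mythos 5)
Your proof takes a genuinely different route from the paper's. The paper's proof does not touch $\uider(P)$ at all: it establishes $\instr(P\otimes_k K)=\instr(P)\otimes_k K$ by observing that the natural map $\omega:\instr(P)\otimes_k K\to\instr(P\otimes_k K)$ is the restriction of the canonical $\Omega:\End_k(P)\otimes_k K\to\End_K(P\otimes_k K)$, which is injective because $P$ is finitely generated and the base change is flat, while $\omega$ is visibly surjective onto inner derivations. That is where the finite-generation hypothesis in the proposition is actually used. But $\uTKK(P)=P\oplus\uider(P)$, with degree-$0$ part $\uider(P)$ and not $\instr(P)$; the paper's argument as written proves $\TKK(P\otimes_k K)=\TKK(P)\otimes_k K$ and then asserts ``which proves the proposition'' without addressing $\uider$. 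You correctly identify that the proposition really requires $\uider(P\otimes_k K)\cong\uider(P)\otimes_k K$, and you prove it directly from the presentation, which is the cleaner route: flatness makes the quotient commute with $-\otimes_k K$, and the canonical identification of the ambient tensor modules does the rest. In your argument neither finite generation (which you mistakenly attribute to the identification of $I(P)\otimes_k K$ with $I(P\otimes_k K)$) nor faithful flatness (which you already note) is needed; both are specific to the paper's $\Omega$-injectivity step.

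The one step you should spell out rather than wave at is why the image of $I(P)\otimes_k K$ in the ambient $K$-module coincides with $I(P\otimes_k K)$. The containment $\supseteq$ does not follow from ``the identities hold in scalar extensions'' --- that phrase concerns the pair operators $Q$, $V$, $\kappa$, $\circ$, not the submodule $I(P)$. For the multilinear relators \eqref{HCP12}, \eqref{HCP113}, \eqref{HCP13}, \eqref{HCP131} a $K$-point is immediately a $K$-linear combination of $k$-points, so those land in $I(P)\otimes_k K$. For the biquadratic relators \eqref{HCP11} and \eqref{HCP112}, writing $B(x,y)$ for \eqref{HCP11}, a $K$-point $B(\sum_i x_i\alpha_i,\sum_j y_j\beta_j)$ expands as a $K$-linear combination of the diagonal terms $B(x_i,y_j)$ and the partial linearizations $B(x_i+x_k,y_j)-B(x_i,y_j)-B(x_k,y_j)$ (and their further linearizations in $y$); each of these lies in $I(P)$ because $I(P)$ contains $B$ at every $k$-point, hence also the differences. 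With that observation added, the rest of your argument transports the bracket built from $\delta$, $v$, $\kappa$, $\circ$ across the isomorphism as you describe, and the proposition follows.
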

\begin{proof} Since Jordan-Kantor pairs are given by identities of degree at most $2$ in every variable, the category of Jordan-Kantor pairs  is stable under base changes. Thus $ P_K = P \otimes_k K$ is also a Jordan-Kantor pair.
For any base change the map $\omega :\instr(P) \otimes_k K \to \instr(P \otimes_k \otimes K)$, given by  $\delta(a, b) \otimes \alpha  \mapsto \delta(a \otimes \alpha, b\otimes 1)$, $v(x,y) \otimes \alpha \mapsto v(x \otimes \alpha, y \otimes 1)$, is surjective onto the inner derivations of $P \otimes_k K$. Note that $\omega$ is the restriction of the canonical map $ \Omega: \mathrm{End}_k(P) \otimes_k  K  \to \mathrm{End}_K(P \otimes_k K).$
If $k \rightarrow K$ is a faithfully flat base change, then $\Omega$ and thus also $\omega$ is injective and hence $\instr(P \otimes_k K ) = \instr(P) \otimes_k K$ which proves the proposition. 
\end{proof}

\begin{rem}The result also holds for Jordan pairs and Kantor pairs. 
\end{rem}

\subsection{Jordan pairs}
If $V$ is a Jordan pair, then we have already defined $\uider_{JP}(V),$ see Definition~\ref{uider_JP_defi}. 
If the Jordan-Kantor pair is a Jordan pair $V$, that is when $M = 0,$ the $\pm 1$ components of $\uTKK$ and $\TKK$ vanish. These Lie algebras are thus only $3$-graded once we have defined $\uTKK(V)^{\pm 1} = \uTKK(V)_{\pm 2}$ and $\TKK(V)^{\pm 1} = \TKK(V)_{ \pm 2}.$ 
\begin{defi} Let $L= L^{-1} \oplus L^{0} \oplus L^{1}$ be  a $3$-graded Lie algebra, i.e., $L$ is $\mathbb Z$-graded and $\supp_{\mathbb Z}L \subset \{-1, 0 ,1\}.$ Then $L$ is \emph{Jordan $3$-graded or of Jordan type} if  the pair of modules $(L^1, L^{-1})$ can be endowed with the structure of a Jordan pair in such a way that $V^\sigma = L^\sigma,$  $ D^\sigma(x, y) = \ad[x,y]|_{L^\sigma}$ and $L^0 = [L^{1}, L^{-1}].$ \label{jordan_type_Defi}
\end{defi}

\begin{rem} This definition is consistent  with Definition~\ref{JKP_type_defi}. If in Definition~\ref{JKP_type_defi} $L_{\pm \sigma} = 0$ then we obtain a Lie algebra of Jordan type by setting $L^\sigma = L_{2\sigma}$ (with the notation used there). 
\end{rem}


\begin{cor}
Let $L= L^{-1} \oplus L^{0} \oplus L^{1}$ be a $3$-graded Lie algebra of Jordan-Kantor type and $V_L = (L_1, L_{-1})$ a Jordan pair.  
\begin{itemize} 
\item[ \rm (i)]
There are uniquely defined graded central extensions 
\begin{eqnarray*}
\hat f: \uTKK(V_L) & \rightarrow  & L \\
\hat g : L  &\rightarrow & \TKK(V_L)
\end{eqnarray*}
such that $\hat g \circ \hat f = \hat \ud.$
\item[\rm (ii)] Let $V$ be any Jordan pair. 
The Lie algebra $\uTKK(V)$ is universal in the following sense:  For any homomorphism of Jordan pairs $h : V \rightarrow V_L$ there exists a unique extension $\tilde h $ of $h$ to a graded Lie algebra homomorphism $\tilde h : \uTKK(V) \rightarrow L.$
\label{uTKK_universal_prop_JP}
\end{itemize}
\end{cor}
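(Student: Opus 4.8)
The plan is to derive this corollary directly from its Jordan-Kantor analog, Corollary~\ref{uTKK_universal_prop}, by viewing a Jordan pair as a Jordan-Kantor pair whose $M$-component vanishes.

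First I would record the identification. Given a Jordan pair $V$, the quadruple $(V,0)$ is a Jordan-Kantor pair: conditions (2)--(5) of the definition hold vacuously, since every structure map $V_{x,y}$, $\circ$, $\kappa$ either acts on or takes values in the zero module. Under this identification the submodule $I((V,0))$ of Definition~\ref{uider_JkP_Def} reduces to the submodule $I(V)$ of Definition~\ref{uider_JP_defi} — relations (\ref{HCP112}) and (\ref{HCP113}) are exactly the defining relations there, and the relations involving $M$ are $0$ — so $\uider((V,0)) = \uider_{JP}(V)$, $\instr((V,0)) = \instr(V)$, and hence $\TKK((V,0)) = \TKK(V)$ and $\uTKK((V,0)) = \uTKK(V)$, once the $5$-grading, which is now supported on $\{-2,0,2\}$, is relabelled as the $3$-grading by halving degrees as in the remark following Definition~\ref{jordan_type_Defi}. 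Since this relabelling is a bijection of gradings, ``graded central extension'' and ``graded Lie algebra homomorphism'' mean the same thing for both gradings, and $\hat\ud$ of Definition~\ref{universal_TKK_ce_def} for $(V,0)$ is exactly the map $\hat\ud : \uTKK(V)\to\TKK(V)$.

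Next I would match the hypothesis. By the remark after Definition~\ref{jordan_type_Defi}, a $3$-graded Lie algebra $L = L^{-1}\oplus L^0\oplus L^1$ of Jordan type becomes, upon setting $L_{\pm 2} := L^{\pm 1}$, $L_{\pm 1} := 0$, $L_0 := L^0$, a $5$-graded Lie algebra of Jordan-Kantor type: the requirement $L^0 = [L^1, L^{-1}]$ is precisely $L_0 = [L_{-1},L_{-1}] + [L_{-2},L_2]$ when $L_{\pm 1} = 0$, and its associated Jordan-Kantor pair is $P_L = (V_L, 0)$ with $V_L = (L_1, L_{-1})$. Then I would simply invoke Corollary~\ref{uTKK_universal_prop} for this $L$. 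Part (i) of that corollary yields the unique graded central extensions $\hat f : \uTKK(P_L) \to L$ and $\hat g : L \to \TKK(P_L)$ with $\hat g \circ \hat f = \hat\ud$ and $\hat f|_{P_L} = \mathrm{id}$; translating back through the first paragraph gives exactly the maps of (i). For (ii), a Jordan pair homomorphism $h : V \to V_L$ is the same as the Jordan-Kantor pair homomorphism $(h^+, h^-, 0, 0) : (V, 0) \to (V_L, 0) = P_L$, so Corollary~\ref{uTKK_universal_prop}(ii) supplies a unique graded Lie algebra homomorphism $\tilde h : \uTKK(V) \to L$ extending $h$; uniqueness is also immediate from the fact that $V$ generates $\uTKK(V)$ as a Lie algebra.

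There is essentially no obstacle here beyond the bookkeeping: the only thing to verify carefully is that the passage from $V$ to $(V,0)$ respects all four constructions $\uider$, $\instr$, $\TKK$, $\uTKK$ and the grading conventions, which is immediate from the definitions because every structure map involving $M$ is trivial. In particular no extra hypotheses on $k$ (such as invertibility of $2$ or $3$) are needed, since $L$ is assumed to be of Jordan type — hence of Jordan-Kantor type — from the outset.
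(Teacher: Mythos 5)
Your proposal is correct and is essentially the paper's own (implicit) argument: the corollary is stated without proof precisely because it is the specialization of Corollary~\ref{uTKK_universal_prop} to Jordan-Kantor pairs of the form $(V,0)$, with the $\{-2,0,2\}$-supported grading relabelled as a $3$-grading, exactly as you lay out. The only minor note is that the hypothesis ``$3$-graded Lie algebra of Jordan-Kantor type'' in the statement should be read as ``of Jordan type'' in the sense of Definition~\ref{jordan_type_Defi}; you handle this correctly via the remark identifying the two notions when $L_{\pm 1}=0$.
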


Perfect Jordan pairs are those that are perfect as Jordan-Kantor pairs, i.e.

\begin{center}
A Jordan pair $V$ is perfect if and only if $\{V ^\sigma, V^{-\sigma} V^{\sigma} \} = V^{\sigma}$ for $\sigma = \pm. $
\end{center}
We immediately obtain from Lemma \ref{perfect_TKK_lem} the following corollary: 
\begin{cor}
The following are equivalent:
\begin{itemize} 
\item[\rm{(i)}] $V$ is a perfect Jordan pair.
\item[\rm{(ii)}] $\TKK(V)$ is a perfect Lie algebra.
\item[\rm{(iii)}] $\uTKK(V)$ is a perfect Lie algebra. 
\item[\rm (vi)] There is a perfect Lie algebra $L$ of Jordan type such that $(L^1, L^{-1}) = V$ (as Jordan pairs).
\end{itemize}
\label{perfect_TKK_cor}
\end{cor}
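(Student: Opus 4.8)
This corollary is an immediate specialization of Lemma~\ref{perfect_TKK_lem} once we observe that a Jordan pair is nothing but a Jordan-Kantor pair with $M = 0$. The plan is therefore to invoke Lemma~\ref{perfect_TKK_lem} directly, after translating each of its four conditions into the Jordan-pair language, and to check that no genuinely new argument is needed.

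\textbf{Key steps.} First I would recall that for a Jordan pair $V$, viewed as the Jordan-Kantor pair $P = (V, 0)$, the perfectness condition of Definition~\ref{perfect_JKP_defi} collapses: the identities involving $\kappa$, $\circ$ and the $M^\sigma$ become vacuous, leaving exactly $\{V^\sigma V^{-\sigma} V^\sigma\} = V^\sigma$ for $\sigma = \pm$, which is the stated notion of a perfect Jordan pair. Second, since $\TKK(P) = \TKK(V)$ and $\uTKK(P) = \uTKK(V)$ by construction (the $\pm 1$ components vanish and we relabel the $\mathbb{Z}/2$-grading as a $3$-grading via $\uTKK(V)^{\pm 1} = \uTKK(V)_{\pm 2}$, as explained in the text preceding Definition~\ref{jordan_type_Defi}), conditions (ii) and (iii) of this corollary are literally conditions (ii) and (iii) of Lemma~\ref{perfect_TKK_lem}. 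Third, for the equivalence with (vi), I would note that a $3$-graded Lie algebra of Jordan type with $(L^1, L^{-1}) = V$ is precisely a Lie algebra of Jordan-Kantor type with $L_{\pm 1} = 0$ and $P_L = P = (V, 0)$ (this is the content of the Remark following Definition~\ref{jordan_type_Defi}); so condition (iv) of Lemma~\ref{perfect_TKK_lem} specializes to condition (vi) here. Hence the chain of implications (i) $\Rightarrow$ (iii) $\Rightarrow$ (ii) $\Rightarrow$ (i) and (iii) $\Rightarrow$ (vi) $\Rightarrow$ (ii) established in Lemma~\ref{perfect_TKK_lem} carries over verbatim.

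\textbf{Main obstacle.} There is essentially no obstacle of substance here; the only point requiring a moment's care is the bookkeeping of the grading: one must make sure that ``perfect Lie algebra of Jordan type'' in (vi) really does match ``perfect Lie algebra of Jordan-Kantor type with vanishing odd part'' so that Lemma~\ref{perfect_TKK_lem}(iv) applies without modification. Once that identification is made explicit, the proof is a one-line appeal to Lemma~\ref{perfect_TKK_lem}, and I would simply write: ``This is the special case $M = 0$ of Lemma~\ref{perfect_TKK_lem}, using that a Jordan pair is a Jordan-Kantor pair with trivial $M$-part and that a $3$-graded Lie algebra of Jordan type is exactly a $5$-graded Lie algebra of Jordan-Kantor type whose $\pm 1$ components vanish.''
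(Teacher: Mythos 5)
Your proposal is correct and follows exactly the route the paper takes: the text preceding the corollary identifies perfect Jordan pairs as the Jordan-Kantor pairs with trivial $M$-part that are perfect in the sense of Definition~\ref{perfect_JKP_defi}, and then states the corollary follows immediately from Lemma~\ref{perfect_TKK_lem}. You have simply spelled out the $M = 0$ specialization (collapse of the perfectness conditions, vanishing of the $\pm 1$-components so that Jordan-Kantor type reduces to Jordan type, and hence Lemma~\ref{perfect_TKK_lem}(iv) reduces to condition~(vi)), which is precisely the bookkeeping implicit in the paper's ``immediate''. One trivial slip: you refer to relabeling ``the $\mathbb{Z}/2$-grading''; what you mean is halving the $\mathbb{Z}$-grading, i.e., sending degree $\pm 2$ to $\pm 1$, as the paper does when it sets $\uTKK(V)^{\pm 1} = \uTKK(V)_{\pm 2}$.
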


\chapter{Root systems and root graded Lie algebras}
\label{root_systems_chapter}

\section{Locally finite root systems}
Throughout this section, $\mathbb K$ is a field of characteristic $0$.
\begin{defi}[\cite{lfrs}]
A \emph{(locally finite) root system} is a pair $(R, E)$ consisting of a $\mathbb K$-vector space $E$ and a subset $R \subset E$ satisfying the following properties: 
\begin{itemize}
\label{root_sys_defi}
\narrower
\item[] \rm{(i)} $0 \in R$ and $\spa_{\mathbb K} R = E,$
\item[] \rm{(ii)} $R$ is \emph{locally finite}, i.e., $R \cap F$ is finite for every finite dimensional subspace $F$ of $E,$ 
\item[] \rm{(iii)} for every $\alpha \in R^\times := R\setminus \{0\}$ there exists an $\alpha \che$ in $E^*$ such that $\langle \alpha, \alpha \che\rangle =2$ and $s_\alpha(R) = R$ where the reflection $s_\alpha$ is defined by $s_\alpha(x) = x -\langle x, \alpha\che  \rangle \alpha $ for $x \in E,$
\item[] \rm{(iv)} $\langle \beta, \alpha \che\rangle \in \mathbb Z$ for all $\alpha, \beta \in R$, where $0\che = 0.$
\end{itemize}
\end{defi}
The \emph{rank} of a root system $(R, E)$ is the dimension of the underlying vector space $E$. Every finite root system (i.e., $R$ is finite in the category of sets) trivially has finite rank and whenever a root system has finite rank, then the root system itself is finite. Otherwise the intersection in (ii) with $F = E$ would be infinite, and $R$ would not be locally finite. \\
In most cases the underlying vector space $E$ is of little importance, so we will write $R$ instead of $(R, E).$ Instead of saying ``underlying vector space,'' $E$ is also called the \emph{ambient space} of the root system. \\ 
One can show that the element $\alpha \che$ in (iii) of the definition is uniquely determined. It  is called the \emph{coroot} of $\alpha$. The coroots form a root system in $E\che := \spa_{\mathbb K}\{\alpha\che : \alpha \in R\}.$ This root system is called the \emph{coroot system } of $R$. 
A subset $S \subset R$ is called a $\emph{subsystem}$ of $R$ if it contains $0$ and if it is closed under all reflections $s_\alpha$, $\alpha \in S$. Indeed, a subsystem $S \subset R$ is again a root system with ambient space $F = \spa_{\mathbb K}(S) \subset E.$ Therefore, as any well-behaved category, root systems are closed under taking ``subobjects''. Let $(R, E)$ and $(S, F)$ be root systems. It is somewhat tricky to find an adequate notion of a morphism  between  $(R,E)$ and $(S,F)$ (see \cite[p.23]{lfrs}), but for the purposes of our work it is enough to define an isomorphism of root systems as a vector space isomorphism $f: E \rightarrow F$ such that $f(R) = S.$ We denote by $\mathrm{Aut}(R)$ the group of isomorphisms from $R$ to $R$. It is not difficult to verify that all reflections $s_\alpha$ are automorphisms. \\
The \emph{weights} of $R$ are the set $\mathcal P(R) = \{p \in (E\,\che)^*: \langle R\che,\, p\rangle \subset \mathbb Z \}.$ The \emph{root lattice} $\mathcal Q(R)$ is the $\mathbb Z$-span of $R$ in $E.$ The sets $\mathcal P(R)$ and $\mathcal Q(R)$ carry both the structure of an abelian group where the group operation is the vector addition in $E$ resp. $(E\,\che)^*$ . Note that $\mathcal P(R)$ is not a subset of $E$. However, there is an embedding of $\mathcal Q(R)$ into $\mathcal P(R)$ which is indeed a homomorphism of abelian groups. This embedding is given by assigning to $w \in \mathcal Q(R)$ the linear form $\alpha\che \mapsto \langle w, \alpha \che \rangle.$
\begin{defi}
Let $(R, E)$ be a root system. The \emph{Weyl group} of $R$, denoted $\mathcal W(R),$ is the subgroup of $\mathrm{Aut}(E)$ generated by all $s_\alpha$, $\alpha \in R$.
\end{defi}
Any finite root system $(R, E)$ in the sense of \cite{BouLie2} is a root system as defined above (after including $0$) and the definition of the Weyl group made here is consistent with the usual notion. A special feature of a finite root system $R$ is that $R$ has a root basis $B$, i.e., a  linearly independent subset $B = \{\alpha_n, 1\leq n \leq r\}$ of $R$  such that any root $\alpha$ in $R$ can be written as
$$\alpha = \sum_{i =1}^r  k_i \alpha_i$$
where $k_i \in \mathbb Z$ and either all $k_i \leq 0$ or all $k_i \geq 0.$ This concept generalizes obviously to the setting of arbitrary root systems, although it is not true that every (infinite) root system admits a root basis.\\
 Two non-zero roots $\alpha$ and $\beta$ are called \emph{connected} if there are roots $\beta_1, \ldots, \beta_k$ such that 
$\langle \alpha, \beta_1 \che\rangle \neq 0$, $\langle \beta, \beta_k \che\rangle \neq 0$  and $\langle \beta_i, \beta_{i+1}\che \rangle \neq 0$ for all $1 \leq i \leq (k-1).$ It is easy to show that connectivity is an equivalence relation on $R$ and therefore we obtain a partition of $R$ in equivalence classes, the \emph{connected components}. A root system is called \emph{connected} or \emph{irreducible}, if all its non-zero roots are connected to each other. In \cite{lfrs} the authors prove for an irreducible root system $R$ that
$$ R \mbox{ has a root basis} \iff R \mbox{ is either finite or countably infinite}. $$
The elements of a root basis are also called \emph{simple roots}. \\
If $(R,E)$ is a root system, then there exists an inner product $( \cdot | \cdot)$ on $E$ such that $(x | s_\alpha y) = (s_\alpha x| y)$ for all $x,y \in E$ and $\alpha \in R$ (see \cite[Thm 4.2]{lfrs}). With respect to this inner product 
$$\langle \alpha, \beta \che \rangle = 2 \frac{(\alpha | \beta)}{(\alpha |\alpha)}. $$ The \emph{length} of a root is defined as $(\alpha | \alpha). $  Of course, an inner product with the property $\langle \alpha, \beta \che \rangle = 2 \frac{(\beta | \alpha)}{(\alpha| \alpha)}$ is only unique up to a non-zero constant.
It is a non-trivial  fact that the set $$ \{(\alpha | \alpha) : \alpha \in R\} $$ is bounded from above and below and that in fact we can choose the constant in such a way that 
$$ \{(\alpha | \alpha) : \alpha \in R\} \subset \{1, 2 ,3 ,4\} .$$
The integer $(\alpha | \alpha )$ is the \emph{length of the root} $\alpha.$ 
\begin{defi}A root system is called \emph{reduced} if $\mathbb Z \alpha \cap R \subset\{-\alpha, 0, \alpha\}$ for all $\alpha \in R.$
\end{defi}
If the root system is reduced and irreducible, then only two root lengths occur and we can distinguish between \emph{long and short roots}.  In this case, assuming that short roots have length $1,$ there are no roots of length $4$.  If there is only one root length, then all roots are called long and have according to convention length $2$. If all the roots have the same length, the root system is called \emph{simply laced} and if two root lengths occur in a root system, then it is called \emph{doubly laced.} \\

\begin{defi}Let $R$ be a root system with root basis $B= \{\alpha_i : i \in I\}.$ 
For $\lambda, \mu$ in $\mathcal Q(R),$
$\lambda = \sum_i k_i \alpha_i$ and $\mu = \sum_i m_i \alpha_i,$ we define
$$\lambda \prec \mu \iff \lambda \neq \mu \quad \mbox{and} \quad k_i \leq m_i \quad \mbox{for all}\;i \in I.$$
Then $\prec$ is a partial order on $\mathcal Q(R)$ which depends on the choice of $B$. 
\end{defi}
The subset $R \subset \mathcal Q(R)$ decomposes as
$$ R = (-R_+)\;  \dot{\cup}\; \{0\} \;\dot{\cup}\; R_{+} $$
where $R_+ = \{\lambda \in R: 0\prec \lambda\}.$\\
For $\lambda$ as above $$\supp(\lambda) := \{\alpha_i: k_i \neq 0\}\subset B.$$
\begin{lem} Let $(R, E)$ be a root system and $F \subset E$ a subspace. Then $(R \cap F, F)$ is a subsystem. \label{subsystem_lem}
\end{lem}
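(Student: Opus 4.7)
The plan is to verify the two defining conditions of a subsystem (as given in the text just before Definition 4.1.2): namely, that $R \cap F$ contains $0$, and that it is closed under the reflections $s_\alpha$ for every $\alpha \in R \cap F$. Both conditions are immediate consequences of the fact that $F$ is a linear subspace, so this will be quite short.

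First I would observe that $0 \in R \cap F$, since $0 \in R$ by axiom (i) of the definition of a root system and $0 \in F$ because $F$ is a subspace. Next, fix any $\alpha \in R \cap F$. If $\alpha = 0$ then $s_\alpha$ is the identity, so there is nothing to check. Otherwise, for $\beta \in R \cap F$ one has
\[
s_\alpha(\beta) \;=\; \beta - \langle \beta, \alpha^{\vee}\rangle\,\alpha.
\]
Since $R$ is a root system and $\alpha \in R$, the reflection $s_\alpha$ maps $R$ into $R$, so $s_\alpha(\beta) \in R$. On the other hand, $\langle \beta, \alpha^{\vee}\rangle$ is a scalar (in fact an integer, by axiom (iv)), and both $\beta$ and $\alpha$ lie in the subspace $F$; hence $s_\alpha(\beta) \in F$. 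Combining, $s_\alpha(\beta) \in R \cap F$, which shows that $R \cap F$ is stable under $s_\alpha$.

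Thus $R \cap F$ is a subset of $R$ containing $0$ and closed under the reflections associated with its non-zero elements, which is precisely the definition of a subsystem. (As noted in the text, its proper ambient space is $\operatorname{span}_{\mathbb K}(R \cap F) \subseteq F$, and local finiteness of $R \cap F$ is inherited from that of $R$ since every finite-dimensional subspace of $\operatorname{span}(R\cap F)$ is already a finite-dimensional subspace of $E$.) There is essentially no obstacle here; the lemma is really a direct unpacking of the definitions, with the only point to notice being that the integrality of $\langle \beta, \alpha^{\vee}\rangle$ together with the linearity of $F$ forces $s_\alpha(\beta)$ back into $F$.
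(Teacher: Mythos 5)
Your proof is correct and follows essentially the same route as the paper's: both verify $0\in R\cap F$ and then show that $s_\alpha(\beta)\in R\cap F$ by noting that $s_\alpha(\beta)$ lies in $R$ (reflection axiom) and in $F$ (since $\beta-\langle\beta,\alpha^\vee\rangle\alpha$ is an integral combination of the two elements $\alpha,\beta\in F$, and $F$ is a subspace). The only cosmetic difference is that the paper phrases the containment through the chain $\mathbb{Z}\alpha+\mathbb{Z}\beta\subset Q(R\cap F)\cap R\subset R\cap F$, while you argue membership in $R$ and in $F$ separately, which is arguably a little cleaner.
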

\begin{proof} Set $S = R \cap F.$ Since $ 0 \in F,$ we have $0 \in S.$  Pick $\alpha, \beta \in S.$ Then $s_{\alpha}(\beta) = \beta  - \langle \alpha, \beta \che \rangle \alpha \in \mathbb Z \alpha + \mathbb Z \beta \subset Q(S) \cap R \subset S.$ Thus $S$ is closed under reflections $s_\alpha,$ $\alpha \in S.$ According to the definition of a subsystem, $S$ is a  subsystem of $R.$
\end{proof}

\begin{lem}Let R be a root system and $\gamma \in \mathcal Q(R).$
There is a finite subsystem $S$ of $R$ such that $\gamma \in Q(S)$. If $R$ has a root basis $B,$ then one can choose a root basis $B'$ of $S$ such that $B'\subset B$. \end{lem}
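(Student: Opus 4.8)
The plan is to reduce the claim to the finiteness properties built into the definition of a locally finite root system. Write $\gamma = \sum_{i\in I} k_i \alpha_i$ in terms of a fixed root basis $B = \{\alpha_i : i\in I\}$ (if one exists); only finitely many $k_i$ are nonzero, so set $I_0 = \{i : k_i \neq 0\}$, a finite set. The first step is to produce a suitable finite-dimensional subspace of the ambient space $E$. I would take $F_0 = \spa_{\mathbb K}\{\alpha_i : i \in I_0\}$, which certainly contains $\gamma$. By Lemma~\ref{subsystem_lem}, $S_0 := R \cap F_0$ is a subsystem of $R$; and since $R$ is locally finite and $F_0$ is finite-dimensional, $S_0$ is finite. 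This already gives a finite subsystem $S$ with $\gamma \in F_0 = \spa_{\mathbb K}(S_0)$. To get the stronger statement $\gamma \in \mathcal{Q}(S)$ rather than just $\gamma$ in the span, note that each $\alpha_i$ with $i \in I_0$ lies in $R \cap F_0 = S_0$, so $\gamma = \sum_{i\in I_0} k_i \alpha_i$ is visibly an integral combination of elements of $S_0$; hence $\gamma \in \mathcal{Q}(S_0)$.

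For the second assertion — that when $R$ has a root basis $B$ one can arrange $B' \subset B$ to be a root basis of $S$ — the naive guess $B' = \{\alpha_i : i \in I_0\}$ need not work, because the finite subsystem $S_0 = R \cap F_0$ may contain roots not expressible with nonnegative-or-nonpositive coefficients in the $\alpha_i$, $i\in I_0$ alone; $S_0$ could be strictly larger than the subsystem generated by those simple roots. The fix is to enlarge the index set: I would instead consider the subsystem $S$ of $R$ obtained as $R \cap \spa_{\mathbb K}(B'')$ where $B'' = \{\alpha_i : i \in I_1\}$ for a suitable finite $I_1 \supseteq I_0$, chosen so that $B''$ is "closed" in the relevant sense. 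Concretely, for a finite root system the subset of a root basis cut out by restricting to a face/parabolic is again a root basis of the corresponding parabolic subsystem; so one should take $S$ to be the parabolic (or more precisely the subsystem $\{\alpha \in R : \supp(\alpha) \subseteq I_0\}$, then observe this is finite and has root basis $\{\alpha_i : i \in I_0\}$). Actually the cleanest route: let $S = \{\alpha \in R : \alpha = \sum_{i\in I_0} m_i\alpha_i,\ m_i \in \mathbb{Z}\}$. One checks $S$ is a subsystem (closed under reflections $s_\alpha$ for $\alpha \in S$, using that $s_\alpha$ preserves $\spa_{\mathbb K}\{\alpha_i : i\in I_0\}$ and preserves $R$), that $S$ is finite (it sits inside $R \cap F_0$), that $\gamma \in \mathcal{Q}(S)$ trivially, and that $B' := \{\alpha_i : i\in I_0\} \subset B$ is a root basis of $S$ — the triangularity/sign condition for $S$ is inherited from that for $R$ restricted to the coordinates indexed by $I_0$.

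The main obstacle I anticipate is precisely this last point: verifying that $B'$ genuinely serves as a root basis of $S$, i.e. that every root of $S$ is a nonnegative or nonpositive integral combination of $\{\alpha_i : i\in I_0\}$, and that $B'$ is linearly independent (inherited from $B$) and spans $\spa_{\mathbb K}(S)$. Linear independence and spanning are immediate; the sign condition follows because if $\alpha \in S \subseteq R$ then by definition of a root basis of $R$ we have $\alpha = \sum_{i\in I} k_i\alpha_i$ with all $k_i \geq 0$ or all $\leq 0$, and membership in $S$ forces $k_i = 0$ for $i \notin I_0$, so the same sign pattern holds over $I_0$. I would also double-check that $S$ is closed under its own reflections — this needs that for $\alpha,\beta \in S$, $s_\alpha(\beta) = \beta - \langle\beta,\alpha\che\rangle\alpha$ again lies in the $\mathbb{Z}$-span of $\{\alpha_i : i\in I_0\}$ and in $R$, which is clear, exactly as in the proof of Lemma~\ref{subsystem_lem}. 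Finiteness of $S$ is then automatic from local finiteness applied to the finite-dimensional space $\spa_{\mathbb K}(B')$, completing the argument.
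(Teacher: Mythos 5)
There is a genuine gap in your treatment of the first assertion. The claim ``there is a finite subsystem $S$ of $R$ such that $\gamma \in \mathcal Q(S)$'' makes no hypothesis that $R$ has a root basis, yet your argument opens by writing $\gamma = \sum_{i\in I} k_i\alpha_i$ with respect to a root basis and hedging with ``(if one exists).'' As the paper notes, irreducible locally finite root systems only have a root basis when they are finite or countably infinite, so for uncountable rank your construction of $F_0$ is unavailable and your proof of the first assertion does not go through. The paper sidesteps this entirely: $\gamma \in \mathcal Q(R)$ means by definition that $\gamma$ lies in the $\mathbb Z$-span of a finite set $S'$ of roots; then one takes $F = \operatorname{span}_{\mathbb K}(S')$ and $S = R \cap F$, invokes local finiteness plus Lemma~\ref{subsystem_lem}, and $\gamma \in \operatorname{span}_{\mathbb Z}(S') \subset \mathcal Q(S)$. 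No root basis is needed. Only for the \emph{second} assertion does the paper specialize to $S' \subset B$.

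For the second assertion your argument ends up being correct and essentially the paper's, but you take an unnecessary detour. You first worry that $B' = \{\alpha_i : i \in I_0\}$ ``need not work'' because $R \cap F_0$ ``may contain roots not expressible with nonnegative-or-nonpositive coefficients in the $\alpha_i$, $i \in I_0$ alone,'' and then propose replacing $F_0$ by the $\mathbb Z$-span. But that worry is unfounded, as your own final paragraph shows: if $\alpha \in R \cap F_0$ then $\alpha = \sum_{i\in I} k_i\alpha_i$ with all $k_i \geq 0$ or all $k_i \leq 0$ by the root-basis property of $B$, and linear independence of $B$ forces $k_i = 0$ for $i \notin I_0$, so the sign condition already holds over $I_0$. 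In particular $R \cap F_0 = R \cap \operatorname{span}_{\mathbb Z}(B')$, so your ``fix'' produces the same set and the detour can be deleted. With the first part repaired as above and the spurious objection removed, your proof matches the paper's.
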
 
\begin{proof}
Since $\gamma \in \mathcal Q(R),$ there is a finite set $S' \subset R,$ such that $\gamma \in \spa_{\mathbb Z}S'.$ Let $F = \spa_{\mathbb K}(S')$ and  $S = R \cap F.$ By local finiteness, $S$ is finite and, by Lemma~\ref{subsystem_lem}, $S$ is a finite subsystem of $R.$\\  
If $R$ has  a  root basis $B,$ then we may assume that the set $S'$ defined above is contained in $B.$ Let $\alpha \in R \cap F.$ Then, since $S'$ is contained in the root basis $B$ and in addition spans $Q(S),$ every $\beta \in S$ can be uniquely written as 
$ \beta = \sum_{\alpha \in S' }  k_\alpha \alpha $ where either all $k_\alpha \geq 0$ or all $k_\alpha \leq 0.$
Hence $S'$ is  a root basis. 
\end{proof}

\paragraph{Root strings.} 
Let $\alpha, \beta \in R$. Then the subspace $\mathbb K \alpha + \mathbb K \beta$ in $E$  is at most $2$-dimensional and closed under reflections $s_{k_1\alpha + k_2 \beta}$, hence the intersection $S = R \cap (\mathbb K \alpha + \mathbb K \beta)$ is a finite root system of rank at most $2$. We define the \emph{$\beta$-string} through $\alpha$ as the set
$$S_{\alpha, \beta} = \{\alpha + k\beta: k \in \mathbb Z\} \cap R.$$
Let $d = \min\{k \in \mathbb Z : \alpha + k\beta \in R\}$ and $u = \max\{k \in \mathbb Z : \alpha + k\beta \in R\}$.
Note that, since $S$ is finite, $d$ and $u$ are also finite. From the theory of finite root systems it follows that for two roots $\alpha$ and $\beta$ the following hold true:
\begin{itemize}
\item[(i)] For all integers $k$, $d \leq k \leq u$ the element $\alpha + k\beta$ is in $R$ (root string property).
\item[(ii)] $u - d = \langle \beta, \alpha\che \rangle \leq 4 .$
\end{itemize}

The following lemma is well-known for finite root systems, see for example  \cite[Lemma 2.3]{vdK}
  \begin{lem} Let $R$ be an irreducible reduced root system and let $\alpha$ and $\beta$ be independent roots. Then there is $\gamma \in R$ such that $S = \spa\{\alpha, \beta , \gamma\} \cap R$ is an irreducible  reduced root system. If the rank of $R$ is greater  than or equal to $3,$ then $S$ can be chosen to be of rank $3$. 
  \label{irred_rank_3_lem}
  \end{lem}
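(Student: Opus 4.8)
The plan is to reduce the statement to known facts about rank-$2$ and rank-$3$ finite root systems, using local finiteness to pass from the locally finite $R$ to a finite subsystem containing the two given roots. First I would invoke Lemma~\ref{subsystem_lem} (together with the lemma immediately following it) to replace $R$ by the finite irreducible subsystem $S_0 = R \cap (\mathbb{K}\alpha + \mathbb{K}\beta + V)$ where $V$ is a suitably chosen one-dimensional subspace; more carefully, since $\alpha$ and $\beta$ are independent, the rank-$2$ subsystem $R \cap (\mathbb{K}\alpha + \mathbb{K}\beta)$ is a finite irreducible root system of type $A_1\times A_1$, $A_2$, $B_2$, $G_2$ or $BC$-variants, and because $R$ is irreducible and has rank $\geq 3$ (or $\geq 2$ for the weaker conclusion) there is a root not lying in $\mathbb{K}\alpha + \mathbb{K}\beta$ that is connected to $\{\alpha,\beta\}$. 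This is where the irreducibility hypothesis does its work: connectivity lets me find $\gamma$ with $\langle \gamma, \alpha^\vee\rangle \neq 0$ or $\langle\gamma,\beta^\vee\rangle\neq 0$ after possibly walking along a chain.

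Next I would verify that $\gamma$ can be chosen so that $S = \mathrm{span}\{\alpha,\beta,\gamma\}\cap R$ is \emph{reduced}. The only obstruction is that a chosen $\gamma$ might have $2\gamma$ or $\tfrac12\gamma$ also in $R$; since $R$ is assumed reduced in the statement, this obstruction does not arise --- any $\gamma\in R$ automatically satisfies $\mathbb{Z}\gamma\cap R\subset\{-\gamma,0,\gamma\}$, and a subsystem of a reduced root system is reduced (the reflection-closedness of $S$ together with $S\subset R$ forces $\mathbb{Z}\delta\cap S\subset\mathbb{Z}\delta\cap R$). So reducedness of $S$ is inherited for free, and I only need to secure irreducibility and, when $\mathrm{rank}(R)\geq 3$, the rank-$3$ condition.

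For irreducibility of $S$: I would argue that if $S$ decomposed as $S_1\cup S_2$ with $\langle S_1, S_2^\vee\rangle = 0$, then since $\alpha$ and $\beta$ are connected inside $S$ (they are connected in $R$ via a chain, and I can absorb the chain into the span by iterating the choice of $\gamma$, or more simply use that the rank-$2$ subsystem they generate is already irreducible unless it is $A_1\times A_1$), a contradiction follows from the way $\gamma$ was selected to connect to the $\{\alpha,\beta\}$ component. The delicate case is exactly $R\cap(\mathbb{K}\alpha+\mathbb{K}\beta)$ of type $A_1\times A_1$: then I must choose $\gamma$ connected to \emph{both} $\alpha$ and $\beta$ (possible because $R$ is irreducible, so $\alpha$ and $\beta$ lie in the same connected component of $R$ and there is a root adjacent to each, which by the rank-$\geq 3$ geometry can be taken in a common $3$-dimensional subspace), making $\{\alpha,\gamma\}$ and $\{\beta,\gamma\}$ connected and hence $S$ irreducible. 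For the rank-$3$ refinement, I note $\dim\mathrm{span}\{\alpha,\beta,\gamma\} = 3$ once $\gamma\notin\mathbb{K}\alpha+\mathbb{K}\beta$, and by Lemma~\ref{subsystem_lem} $S = R\cap\mathrm{span}\{\alpha,\beta,\gamma\}$ is then a subsystem of ambient rank exactly $3$.

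The main obstacle I anticipate is the $A_1\times A_1$ case and, more generally, bookkeeping the connectivity chain: a priori the chain $\beta_1,\ldots,\beta_k$ joining $\alpha$ to $\beta$ could be long and need not lie in a $3$-dimensional subspace, so I cannot simply throw in one $\gamma$. The fix is to observe that in an irreducible \emph{finite} root system of rank $\geq 3$ any two roots are connected by a chain of length at most $2$ through the subsystem's Dynkin diagram, or alternatively to first pass to a finite subsystem (via the second lemma above) and then use the explicit classification of rank-$3$ irreducible reduced root systems ($A_3$, $B_3$, $C_3$) to exhibit $\gamma$ by inspection. I would structure the final write-up as: (1) reduce to finite $R$; (2) handle the case where $\alpha,\beta$ already generate an irreducible rank-$2$ system by picking any connecting $\gamma$; (3) handle $A_1\times A_1$ by the classification argument; (4) note reducedness is automatic; (5) conclude the rank-$3$ statement from $\dim\mathrm{span}=3$ and Lemma~\ref{subsystem_lem}.
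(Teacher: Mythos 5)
Your proposal is essentially correct and uses the same case split as the paper: when $\alpha\not\perp\beta$ the rank-$2$ subsystem $S'$ generated by $\alpha$ and $\beta$ is already irreducible, and any root $\gamma$ outside $\mathrm{span}(S')$ connected to $S'$ does the job, while in the $A_1\times A_1$ case one invokes the nontrivial fact that a \emph{single} $\gamma$ connecting both $\alpha$ and $\beta$ exists (the paper cites this from Loos--Neher and van der Kallen). Two points to tighten in the write-up you sketch: the phrase ``chain of length at most $2$'' must mean exactly one intermediate root, since a chain with two intermediate roots need not fit in a three-dimensional span; and the alternative route via ``classification of rank-$3$ irreducible root systems'' is circular as stated, because you cannot appeal to the rank-$3$ irreducible structure of $S$ until after $\gamma$ has been chosen and the irreducibility of $S$ established.
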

\begin{proof}
If the rank of $R$ is  $2,$ then $S = \spa \{\alpha, \beta\} = R$ is an irreducible reduced root system. Assume that the rank of $R$ is $3$ or greater.\\ If $\alpha$ and $\beta$ are not connected,  then  there is a third root $\gamma \in R$ such that $\gamma$ connects $\alpha$ and $\beta$ (see \cite[p.26]{lfrs} or also \cite[p.14]{vdK}). Since $\alpha$ and $\beta$ are not connected, $S = (\mathbb K \alpha \oplus \mathbb K \beta \oplus \mathbb K \gamma) \cap R$ is  a root system of rank $ 3.$  Assume that $S = S_1 \oplus S_2$ is a decomposition such that $S_1$ and $S_2$ are not connected.  Without loss of generality $\alpha \in S_1,$ thus $\beta \in S_1$ and also $\gamma \in S_1.$ It follows that $S_2 \subset \{0\}$ thus $S$ is irreducible and reduced. \\
If $\alpha$ and $\beta$ are connected, then $S' = (\mathbb K \alpha + \mathbb K \beta) \cap R$ is an irreducible root system. Since $R$ is connected, there is a root $\gamma \in R$ such that $\langle \gamma , (S')\che \rangle \neq 0. $ Thus
 $S = (\mathbb K \alpha \oplus \mathbb K \beta \oplus \mathbb K \gamma) \cap R$ is  an irreducible and reduced  root system of rank $3.$ 
\end{proof}

\begin{lem}\label{hum_ex}Assume that $R$ is an irreducible root system with root basis  $B= \{\alpha_i: i \in J\}.$
Let $\lambda \in \mathcal Q(R)$ such that either $\lambda \prec 0$ or $0 \prec \lambda.$
Then exactly one of the following holds:
\begin{itemize}
\item[(i)] there is $\sigma \in \mathcal{W}(R)$ such that $\sigma \lambda = n \alpha$ for some $\alpha \in B$ and $n \in \mathbb{Z}\setminus \{0\},$
\item[(ii)]there is $\sigma \in \mathcal{W}(R
)$ such that $\sigma \lambda = \sum_{i = 1}^n k_i' \alpha_i$ and at least one $k_i' < 0$ and at least one $k_i' > 0.$
\end{itemize}
\end{lem}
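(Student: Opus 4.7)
The plan is to first dispatch mutual exclusion of (i) and (ii), and then to produce one of the two witnesses by reducing to a finite irreducible subsystem and applying a height-minimization argument.

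For mutual exclusion, if (i) holds then every Weyl conjugate of $\lambda$ has the form $n\beta$ for some $\beta \in R$. Because every nonzero root of $R$ lies either in $R_+$ or in $-R_+$, such an element has consistent signs in the basis $B$, so (ii) cannot hold simultaneously.

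For existence, assume without loss of generality that $0 \prec \lambda$. Since $R$ is irreducible, its Dynkin diagram is connected, and because $\supp \lambda$ is finite, I can enlarge it to a finite connected sub-diagram with vertex set $B' \subset B$. Setting $F = \spa_{\mathbb K}(B')$ and $S = R \cap F$, Lemma~\ref{subsystem_lem} together with local finiteness gives that $S$ is a finite irreducible subsystem with root basis $B'$, and $\mathcal W(S) \subset \mathcal W(R)$. So it suffices to find $\sigma \in \mathcal W(S)$ witnessing (i) or (ii). Let $\mu$ be an element of the finite set $\mathcal W(S)\lambda$ satisfying $0 \prec \mu$ and with minimal height $\mathrm{ht}(\mu) = \sum_{i \in B'} k_i^\mu$; such a $\mu$ exists because $\lambda$ itself is in the set.

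If $|\supp \mu| = 1$, then $\mu$ is a nonzero integer multiple of a simple root and (i) holds. Otherwise $\mu$ cannot be anti-dominant: using the positive definite invariant form $(\cdot|\cdot)$ on $F$ coming from \cite[Thm~4.2]{lfrs}, the identity $(\mu|\mu) = \sum_i k_i^\mu (\alpha_i|\mu)$ together with $(\alpha_i|\mu) = \tfrac{(\alpha_i|\alpha_i)}{2}\langle \mu, \alpha_i\che\rangle$ would force $(\mu|\mu) \leq 0$ whenever every $\langle \mu, \alpha_i\che\rangle \leq 0$, hence $\mu = 0$, contradicting $0 \prec \mu$. So there is $\alpha_i \in B'$ with $\langle \mu, \alpha_i\che\rangle > 0$. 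The reflection $s_{\alpha_i}\mu = \mu - \langle\mu,\alpha_i\che\rangle\alpha_i$ then has strictly smaller height; by the minimality of $\mu$ it cannot be positive, which forces its $\alpha_i$-coefficient $k_i^\mu - \langle \mu, \alpha_i\che\rangle$ to be strictly negative. Combined with some $k_j^\mu > 0$ for $j \neq i$ (which exists because $|\supp \mu| \geq 2$), this produces mixed signs, so (ii) holds.

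The main obstacle is the reduction step: even though an arbitrary intersection $R \cap F$ may be reducible, one has to enlarge $\supp \lambda$ to a connected sub-diagram to ensure that the resulting $S$ is irreducible, that $B'$ is its root basis, and that $\mathcal W(S)$ sits inside $\mathcal W(R)$. Once inside such an $S$, the dichotomy is a standard finite Weyl-group argument, but one has to use simultaneously the ``no smaller positive conjugate'' condition (to force a negative $\alpha_i$-coefficient after the reflection) and the ``not anti-dominant'' condition (to guarantee the existence of a reflection to apply).
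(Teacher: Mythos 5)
Your proof is correct, and it takes a genuinely different route at the decisive step. The paper reduces, as you do, to a finite irreducible subsystem $S$ with root basis $B'\subset B$ and $\mathcal W(S)\subset\mathcal W(R)$, but then simply invokes Lemma 10 of \cite[VIII.4.3]{BouLie3} for the dichotomy; you instead prove the finite case from scratch by minimizing the height over the positive elements of the orbit $\mathcal W(S)\lambda$, using positive definiteness of the invariant form to produce a simple root $\alpha_i$ with $\langle\mu,\alpha_i\che\rangle>0$ and then reading off either case (i) (when $|\supp\mu|=1$) or mixed signs after the reflection $s_{\alpha_i}$. This buys self-containedness and, unlike the paper's proof, also explicitly establishes the ``exactly one'' part (your observation that under (i) every Weyl conjugate is an integer multiple of a root, hence sign-consistent, is exactly what is needed and is left implicit in the paper). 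Your reduction is in fact slightly more careful than the paper's, since the cited Lemma~\ref{subsystem_lem} and its companion only produce a finite subsystem with basis inside $B$, and irreducibility has to be arranged separately, as you do by enlarging $\supp\lambda$ to a finite connected subdiagram. Two small points you gloss over: the connectedness of the Dynkin diagram of $B$ for an irreducible locally finite root system (true, but it needs the standard fact that the support of every root is connected, obtained by passing to a finite subsystem), and the inclusion $\mathcal W(S)\subset\mathcal W(R)$, which is not really a consequence of Lemma~\ref{subsystem_lem} plus local finiteness but rather of \cite[Thm 5.7]{lfrs} (or a short direct argument with the reflections $s_\alpha$, $\alpha\in S$); neither affects the validity of your argument.
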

\begin{proof}
Let $S$ be a finite irreducible subsystem of $R$ which contains $\lambda$ and has root basis $B' \subset B$ (see Lemma~\ref{subsystem_lem}). The Weyl group $\mathcal W (S)$ is a subgroup of $\mathcal W (R)$, see \cite[Thm 5.7]{lfrs}. Lemma 10 in \cite[VIII.4.3]{BouLie3} asserts the existence of an element $\sigma \in \mathcal W (S)$ with the desired property. 
\end{proof}

\subsection{Classification}

For finite root systems we will use the standard notation as it can be found for example in \cite{BouLie2} or \cite{Hum}.
\paragraph{Examples of infinite root systems.} 
Let $I$ be a non-empty set and $X = \mathbb K^{(I)} = \bigoplus_{i \in I}\mathbb K \varepsilon_i$ the free $\mathbb K$-vector space on the set $I$. 
\begin{expl}Define a linear functional $t : X \rightarrow \mathbb K$ by $t(\epsilon_i) = 1$ for all $i \in I$ and let $\dot X = \ker t$. Then
$\dot A_I = \{\varepsilon_i - \varepsilon_j : i, j \in I\} $
is an  irreducible reduced root system in $\dot X$. If $\card I < \infty$ then $\dot A_I \cong A_{\card I -1}$. \label{A_I}
\label{type_A_expl}
\end{expl}
\begin{expl}$B_I = \{\pm (\varepsilon_i \pm \varepsilon_i): i\neq j \in I \} \cup \{\varepsilon_i: i \in I\}\cup \{0\}$ is an irreducible reduced root system in $X$. If $\card I < \infty$ then $B_I \cong B_{\card I}$. \label{B_I}
\end{expl}
\begin{expl}$C_I = \{\pm (\varepsilon_i \pm \varepsilon_i): i\neq j \in I \} \cup \{2\varepsilon_i: i \in I\}\cup \{0\}$ is an irreducible reduced root system in $X$. If $\card I < \infty$ then $C_I \cong C_{\card I}.$ In particular if $\card I = 1$ then $C_I = A_1.$ \label{C_I}
\end{expl}
\begin{expl}$D_I = \{\pm (\varepsilon_i \pm \varepsilon_i): i\neq j \in I \} \cup \{0\}$ is a reduced root system in $X$ if $\card I \neq 1$. If $\card I > 2$ then $D_I$ is irreducible.  If $\card I < \infty$ then $D_I \cong D_{\card I}$. \label{D_I}
\end{expl}
\begin{expl}$BC_I = B_I \cup C_I$ is an irreducible root system in $X$ which is non reduced.  If $\card I < \infty$ then $BC_I \cong BC_{\card I}$. \label{BC_I} \\
\end{expl}
The following result classifies irreducible root systems up to isomorphism.
\begin{theo}[\cite{lfrs}]
Let $R$ be an irreducible root system. If $R$ is finite, then $R$ is isomorphic to one of $A_n$, $B_n$, $C_n$, $BC_n$, $D_n$, $E_6$, $E_7$, $E_8$, $F_4$ or $G_2$. \\
If $R$ is infinite, then $R$ is isomorphic to one of $\dot{A}_I$, $B_I$, $C_I$, $BC_I$ or $D_I$ for an infinite index set $I$.\\ Moreover $R$ is reduced if and only if $R \not \cong BC_I$. 
\label{loc_fin_irred_class_thm}
\end{theo}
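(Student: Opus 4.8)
The plan is to prove the classification theorem for locally finite irreducible root systems (Theorem~\ref{loc_fin_irred_class_thm}) by reducing to the finite case, which is assumed known. First I would observe that the finite case is exactly the classical classification in \cite{BouLie2} (after adjoining $0$), so there is nothing to do there except to cite it. The genuinely new content is the infinite case, and the key organizing idea is a \emph{direct-limit argument}: an infinite irreducible root system is determined by its finite irreducible subsystems, and these are compatible in a directed way.

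Here is the structure I would use. Let $R$ be an infinite irreducible root system with ambient space $E$. For any finite subset $F \subset R$, the intersection $R \cap \spa_{\mathbb K}(F)$ is a finite subsystem of $R$ by Lemma~\ref{subsystem_lem} together with local finiteness. These finite subsystems form a directed set under inclusion (the span of two finite subsystems is again finite-dimensional), and $R = \varinjlim (R \cap \spa_{\mathbb K}(F))$. The first real step is to show that \emph{one can restrict attention to finite irreducible subsystems}: given any two non-zero roots $\alpha,\beta$, by irreducibility (connectedness) there is a chain of roots linking them, and one can enlarge any finite subsystem to a finite \emph{irreducible} one containing a prescribed pair of connected roots — this is essentially the content already used in Lemma~\ref{irred_rank_3_lem} and Lemma~\ref{hum_ex}. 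So $R$ is an increasing union of finite irreducible subsystems $S_1 \subset S_2 \subset \cdots$ (when $R$ is countable; in the uncountable case one works with the directed system rather than a chain).

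The second step is to pin down which ``shape'' the $S_n$ have. Each $S_n$ is finite irreducible, hence of type $A,B,C,D,BC,E,F,G$. Since the ranks $\dim \spa_{\mathbb K}(S_n) \to \infty$, only the four classical series $A,B,C,D$ and the non-reduced series $BC$ can occur for all large $n$ (the exceptional types have bounded rank, so they can appear only as proper subsystems, not cofinally). Moreover a subsystem of type $X_m$ sitting inside one of type $Y_{m'}$ with $m' > m$ forces $Y = X$ for the classical/$BC$ series once $m$ is, say, $\geq 2$ or $3$: e.g.\ $D_{m'}$ has no long roots of the form $2\varepsilon_i$, so it cannot contain a cofinal family of $C$'s, and $B$ versus $C$ versus $D$ versus $BC$ are distinguished by the presence/absence of short roots $\varepsilon_i$, long roots $2\varepsilon_i$, and whether the system is reduced. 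So there is a single type $X \in \{A, B, C, D, BC\}$ with $S_n \cong X_{r_n}$ for all large $n$, compatibly with the inclusions. The third step is then to build an explicit isomorphism $R \to \dot A_I$, $B_I$, $C_I$, $BC_I$, or $D_I$: choose a compatible family of standard realizations of the $S_n$ inside $\mathbb K^{(I)}$ for an index set $I$ of cardinality equal to the ``rank'' of $R$ (dimension of $E$, or $\dim E\che$ for type $A$), check that the transition maps can be taken to be inclusions of coordinate subspaces, and pass to the limit; finally verify $\spa_{\mathbb K}R = E$ so that the limiting linear map is a vector space isomorphism carrying $R$ onto the model. The reducedness statement ($R$ reduced iff $R \not\cong BC_I$) is then immediate from the corresponding fact for the $S_n$ and the definition of reducedness as a condition checkable on the line $\mathbb K\alpha$ through each root.

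The main obstacle, I expect, is the \emph{coherence} of the local-to-global step: ensuring that the isomorphisms $S_n \cong X_{r_n}$ can be chosen so that $S_n \hookrightarrow S_{n+1}$ becomes a standard coordinate inclusion $X_{r_n} \hookrightarrow X_{r_{n+1}}$, rather than merely an abstract embedding. This requires knowing that, inside $X_{r_{n+1}}$, every full-rank irreducible subsystem of type $X$ is Weyl-conjugate to the standard one on a coordinate subspace — a finite root-system fact that one extracts from the classical theory (and which is implicitly what Lemma~\ref{hum_ex}, citing \cite{BouLie3}, is set up to support). Handling the uncountable case cleanly (replacing the chain $S_1 \subset S_2 \subset \cdots$ by Zorn's lemma or by a transfinite/directed colimit, and checking the chosen coordinatizations remain compatible) is the other place where care is needed, though conceptually it is the same argument. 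Everything else — local finiteness giving finiteness of $R \cap F$, the list of finite types, the distinguishing invariants between $B,C,D,BC$ — is routine and can be cited or checked directly from Examples~\ref{A_I}--\ref{BC_I}.
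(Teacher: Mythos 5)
The paper does not prove this theorem; it is imported verbatim from Loos--Neher \cite{lfrs} (there it is Theorem~8.4), so there is no in-paper proof to compare against. That said, your direct-limit strategy is the right high-level idea and is consistent with how the cited source actually proceeds: one writes $R$ as the directed union of its finite full subsystems $R\cap F$ ($F$ a finite-dimensional subspace), shows one may restrict to \emph{irreducible} finite subsystems that are cofinal, observes that unbounded rank excludes the exceptional types, and then stabilizes the classical/$BC$ type using rank-invariant features (reducedness, presence of roots $\varepsilon_i$ or $2\varepsilon_i$, multiplicity of root lengths).

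Two points deserve comment. First, the reference to Lemma~\ref{hum_ex} as underwriting the coherence step is misplaced: that lemma (from \cite[VIII.4.3]{BouLie3}) conjugates a \emph{weight} into a multiple of a simple root or into a mixed-sign expression, not a full-rank subsystem onto a coordinate subsystem, which is a different assertion. Second, the coherence step you flag as ``the main obstacle'' is indeed the real content and is not just bookkeeping. One must show that for $X\in\{A,B,C,D,BC\}$ and $m$ large enough, every full-rank irreducible subsystem of type $X_m$ inside $X_{m'}$ ($m'>m$) is conjugate under $\mathrm{Aut}(X_{m'})$ to the standard coordinate one, and one must rule out low-rank pathologies such as $A_3\cong D_3$ and the triality outer automorphisms of $D_4$, which cause non-Weyl-conjugate embeddings at small rank even though they wash out as the rank grows. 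Loos--Neher handle this with a careful analysis of embeddings of classical root systems; your sketch correctly identifies where the work lies but leaves it open. The uncountable case by a directed colimit (rather than a chain) is likewise a genuine, if routine, point of care that you correctly anticipate.
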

We are accustomed to refer to finite root systems as being of type $X$ where $X$ is one of $A, B,BC,  C,D,E,F$ or $G.$ We use the same language for a root system of infinite rank. By \cite[p.65]{lfrs}, an infinite irreducible root system $R = X_I = \
\lim_{\rightarrow} X_\lambda$ is the direct limit of finite root systems $X_\lambda$ of type $X.$ It follows that $R$ has finite subsystems of type $X$ of arbitrarily high rank.

\section{Degenerate sums}
Later in this section, we will define the main objects of interest for the rest of the thesis, namely root-graded Lie algebras, Definition~\ref{r_delta_graded_def}. It is convenient to look first at root systems as combinatorial or geometric objects. Similar to \cite{vdK} which deals with a certain special case of root-graded Lie algebras defined over the integers, we can show that a root system $R$ contains a huge amount of information about $R$-graded Lie algebras and their behaviour under central extensions.\\
Throughout this section $R$ is a reduced root system, $\mathcal Q(R)$ the root lattice and $\mathcal P(R)$ the weight lattice. 
 We will include here some more details for  the proofs in \cite{vdK}, but  essentially follow the very same strategy and make use of (mostly combinatorial) properties of the root system. Before we give the definition of a degenerate sum we will prove the following lemma: 

\begin{lem} Let $\gamma \in \mathcal P(R), \gamma \neq 0$. Then there is a uniquely determined maximal positive integer $n_\gamma$ such that 
$\langle \gamma, R \che\rangle \subset n_\gamma \mathbb Z.$ 
\end{lem}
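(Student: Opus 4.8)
The plan is to reduce the statement to an elementary fact about subsets of $\mathbb Z$. Write $S_\gamma := \langle \gamma, R\che\rangle = \{\langle \gamma, \alpha\che\rangle : \alpha \in R\}$. Since $\gamma \in \mathcal P(R)$, by definition of the weight lattice $S_\gamma \subseteq \mathbb Z$, and clearly $0 = \langle \gamma, 0\che\rangle \in S_\gamma$. Put $\Sigma := \{n \in \mathbb Z_{>0} : S_\gamma \subseteq n\mathbb Z\}$; this set is nonempty since $1 \in \Sigma$. The assertion of the lemma is precisely that $\Sigma$ has a largest element, which is then automatically unique (being the maximum of a set), and $n_\gamma$ is that element, i.e.\ $n_\gamma = \gcd\{\langle \gamma, \alpha\che\rangle : \alpha \in R\}$.

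Next I would establish the one point where the hypothesis $\gamma \neq 0$ is used, namely that $S_\gamma \neq \{0\}$. Recall that $\gamma$ is by definition an element of $(E\che)^*$, where $E\che = \spa_{\mathbb K}\{\alpha\che : \alpha \in R\}$ is spanned by the coroots. If we had $\langle \gamma, \alpha\che\rangle = 0$ for every $\alpha \in R$, then $\gamma$ would vanish on a spanning set of $E\che$, hence on all of $E\che$, forcing $\gamma = 0$ and contradicting the hypothesis. Therefore there is some $\alpha_0 \in R$ with $m := \langle \gamma, \alpha_0\che\rangle \neq 0$.

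Finally, $\Sigma$ is bounded above: every $n \in \Sigma$ divides each element of $S_\gamma$, in particular $n \mid m$, so $n \leq |m|$. A nonempty subset of $\mathbb Z_{>0}$ that is bounded above possesses a largest element; call it $n_\gamma$. By construction $\langle \gamma, R\che\rangle \subseteq n_\gamma\mathbb Z$ and no strictly larger positive integer has this property, and $n_\gamma$ is uniquely determined as the maximum. There is no genuine obstacle here; the only step that deserves to be spelled out is the nonvanishing $S_\gamma \neq \{0\}$, which rests on the coroots spanning $E\che$ together with $\gamma$ being a nonzero linear functional on that space. (Note that local finiteness of $R$ plays no role in the argument.)
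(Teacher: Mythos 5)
Your proof is correct and follows essentially the same route as the paper: both identify $n_\gamma$ as $\gcd\{\langle\gamma,\alpha\che\rangle : \alpha\in R\}$. Your write-up is somewhat more careful in that it explicitly verifies the set $\{\langle\gamma,\alpha\che\rangle : \alpha\in R\}$ is not identically zero (via the coroots spanning $E\che$), a point the paper leaves implicit.
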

\begin{proof} Let $n_\gamma$ be the greatest common divisor of all $n \in \{\langle \gamma, \alpha\che \rangle : \alpha \in R \}$. Then $\langle \gamma, R  \che \rangle \subset n_\gamma \mathbb Z$ and $n_\gamma$ is the greatest integer with this property.
\end{proof}
\begin{defi}The integer $n_\gamma$ is called the \emph{divisor} of the weight $\gamma.$
\end{defi}
\begin{defi} 
 A \emph{degenerate sum with respect to $n$} is an element $\gamma $ of the root lattice $\mathcal Q (R)$ which is a sum of two linearly independent roots and has divisor $n > 1$. The set of all degenerate sums of divisor $n$ of a root system $R$ is denoted by $\mathbb{DS}(R, n)$ or, if $n$ is understood from the context, $\mathbb{DS}(R)$. \\
 A \emph{degenerate pair} (of divisor $n$) is a pair of two non-zero linearly independent roots whose sum is a degenerate sum (of divisor $n$).
 The set of degenerate pairs (of divisor $n$) of a root system $R$ will be denoted $\mathbb{DP}(R, n)$ or, if $n$ is understood, $\mathbb{DP}(R)$. 
 When we refer to a degenerate sum $\gamma$ we mean an element $ \gamma = \alpha + \beta \in \mathbb{DS}(R)$, $(\alpha, \beta) \in \mathbb{DP}(R)$. Sometimes also the terminology ``$\gamma = \alpha + \beta$ is a degenerate sum'' is used which means that $(\alpha, \beta) \in \mathbb{DP}(R).$ 
 \label{deg_sum_defi}
\end{defi} 
\begin{lem}[\cite{vdK}]Let $\gamma = \alpha + \beta$ be a degenerate sum of divisor $n_\gamma$ and assume  $(\alpha | \alpha) \leq (\beta | \beta).$ Then
\begin{itemize}
 \item[\rm(i)]$n_\gamma$ is either $2$ or $3$, and $n_{\gamma} = \langle \gamma, \beta \che\rangle.$
 \item[\rm(ii)] $\gamma \in n_\gamma \mathcal P(R) \cap \mathcal Q(R).$
  \item[\rm(iii)] If $n_\gamma = 2$, then $\langle \alpha , \beta\che \rangle = 0.$
 \item[\rm(iv)] Let $S \subset R$ be a subsystem containing $\alpha$ and $\beta$. Then $(\alpha, \beta) \in \mathbb{DP}(S, n_\gamma).$
 \end{itemize}\label{vdK_2.6}
 \end{lem}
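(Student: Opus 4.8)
The statement is a collection of four combinatorial facts about a degenerate sum $\gamma = \alpha + \beta$, where $\alpha,\beta$ are linearly independent roots, $(\alpha|\alpha) \le (\beta|\beta)$, and $n_\gamma$ is the divisor, i.e.\ the gcd of all $\langle \gamma, \delta\che\rangle$, $\delta \in R$. The key reduction is that everything can be checked inside the rank-2 subsystem $S = R \cap (\mathbb K\alpha + \mathbb K\beta)$, which (being finite of rank $2$) is one of $A_1\times A_1$, $A_2$, $B_2 = C_2$, $BC_2$ or $G_2$ (with $A_1\times A_1$ and $BC_1\times BC_1$ etc.\ ruled in/out by reducedness — actually the relevant possibilities for the pair are governed by $\langle \alpha,\beta\che\rangle\langle\beta,\alpha\che\rangle \in \{0,1,2,3\}$). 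I would organize the proof as a short case analysis on this Cartan integer product, or equivalently on the angle between $\alpha$ and $\beta$.

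First I would prove (i). Since $\gamma \ne 0$ the divisor $n_\gamma$ is defined; by hypothesis $\gamma$ is degenerate so $n_\gamma > 1$. Writing $m = \langle\alpha,\beta\che\rangle$ we have $\langle \gamma,\beta\che\rangle = \langle\alpha,\beta\che\rangle + 2 = m + 2$. From the standard constraints on rank-2 root systems, $m \in \{0,\pm 1,\pm 2,\pm 3\}$, and the Cauchy–Schwarz-type inequality together with $(\alpha|\alpha)\le(\beta|\beta)$ forces $|\langle\beta,\alpha\che\rangle| \le |m|$; moreover $m\langle\beta,\alpha\che\rangle \ge 0$ is the product $4\cos^2\theta \in \{0,1,2,3\}$. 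If $m < 0$ one checks $\gamma$ is a root or zero (it is $\alpha + \beta$ in a root string, since $\langle\alpha,\beta\che\rangle<0$ forces $\alpha+\beta\in R$), contradicting that $\gamma$ is a degenerate sum with linearly independent summands — a root $\gamma$ has $\langle\gamma,\gamma\che\rangle = 2$, so $n_\gamma = 1$. Hence $m \ge 0$, so $\langle\gamma,\beta\che\rangle = m+2 \in \{2,3,4,5\}$; since $n_\gamma$ divides this and $n_\gamma > 1$, and since $n_\gamma$ must also divide $\langle\gamma,\alpha\che\rangle = 2 + \langle\beta,\alpha\che\rangle$, a direct check of the finitely many $(m,\langle\beta,\alpha\che\rangle)$ pairs shows $n_\gamma \in \{2,3\}$ and that in every surviving case $n_\gamma = m+2 = \langle\gamma,\beta\che\rangle$ (this is where the hypothesis that $\alpha$ is a \emph{short} root is used: it pins down which coroot realizes the gcd). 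For (iii), if $n_\gamma = 2$ then $m + 2 = 2$, i.e.\ $\langle\alpha,\beta\che\rangle = 0$, giving the claim immediately.

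For (ii): $\gamma \in \mathcal Q(R)$ by definition, so I only need $\gamma \in n_\gamma\mathcal P(R)$, i.e.\ $\tfrac{1}{n_\gamma}\gamma \in \mathcal P(R)$, which means $\langle R\che, \tfrac1{n_\gamma}\gamma\rangle \subset \mathbb Z$, i.e.\ $\langle\gamma,\delta\che\rangle \in n_\gamma\mathbb Z$ for all $\delta \in R$ — but that is exactly the defining property of the divisor $n_\gamma$. So (ii) is essentially a restatement; I would phrase it cleanly via the embedding $\mathcal Q(R)\hookrightarrow\mathcal P(R)$ described in the text. For (iv): if $S \subset R$ is any subsystem containing $\alpha,\beta$, then $\alpha,\beta$ are still linearly independent nonzero roots of $S$ with sum $\gamma$, and I must show the divisor $n_\gamma^S$ of $\gamma$ computed within $S$ is $> 1$ and indeed equals $n_\gamma$. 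One direction is trivial: $S\che \subset R\che$ so $n_\gamma \mid n_\gamma^S$; for the reverse, $\alpha\che,\beta\che \in S\che$ already, and by part (i) the value $n_\gamma = \langle\gamma,\beta\che\rangle$ is attained on the coroot $\beta\che$ of an element of $S$, so $n_\gamma^S \mid \langle\gamma,\beta\che\rangle = n_\gamma$. Hence $n_\gamma^S = n_\gamma > 1$ and $(\alpha,\beta) \in \mathbb{DP}(S, n_\gamma)$.

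The main obstacle will be part (i): making the case analysis on the rank-2 subsystem airtight without tedium, in particular justifying cleanly that whenever $\langle\alpha,\beta\che\rangle < 0$ the element $\alpha+\beta$ is a root (via the root-string property already recorded in the excerpt) and hence $n_\gamma = 1$, and conversely enumerating the non-obtuse configurations to see that the gcd defining $n_\gamma$ is always realized and equals $\langle\gamma,\beta\che\rangle \in \{2,3\}$. Everything else (ii)–(iv) then follows formally from (i) and the definitions, with no further geometry.
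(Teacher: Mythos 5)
Parts (ii), (iii) and (iv) of your plan match the paper's proof in substance: (ii) is a restatement of the definition of the divisor combined with the embedding $\mathcal Q(R)\hookrightarrow\mathcal P(R)$, (iii) follows from $n_\gamma=\langle\gamma,\beta\che\rangle$, and (iv) combines the trivial divisibility $n_\gamma^R\mid n_\gamma^S$ with the fact that $\beta\che\in S\che$ already realizes the value $n_\gamma^R$. Part (i), however, has two genuine errors.

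First, you state the length inequality backward. Since $\langle\alpha,\beta\che\rangle\,\langle\beta,\alpha\che\rangle=4\cos^2\theta\in\{0,1,2,3\}$ for linearly independent roots and $\langle\alpha,\beta\che\rangle/\langle\beta,\alpha\che\rangle=(\alpha\,|\,\alpha)/(\beta\,|\,\beta)\le 1$ (when $(\alpha\,|\,\beta)\neq 0$), the hypothesis $(\alpha\,|\,\alpha)\le(\beta\,|\,\beta)$ forces $|\langle\alpha,\beta\che\rangle|\le|\langle\beta,\alpha\che\rangle|$, hence $\langle\alpha,\beta\che\rangle\in\{-1,0,1\}$. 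You instead claim $|\langle\beta,\alpha\che\rangle|\le|\langle\alpha,\beta\che\rangle|$, which lets $m=\langle\alpha,\beta\che\rangle$ run over $\{0,\pm1,\pm2,\pm3\}$ and produces $\langle\gamma,\beta\che\rangle\in\{2,3,4,5\}$; the values $4$ and $5$ would then have to be ruled out by exactly the kind of ad hoc inspection the hypothesis is there to avoid. With the correct inequality, $\langle\gamma,\beta\che\rangle=\langle\alpha,\beta\che\rangle+2\in\{1,2,3\}$, and since $n_\gamma>1$ divides $\langle\gamma,\beta\che\rangle$, the value $1$ is excluded and the equality $n_\gamma=\langle\gamma,\beta\che\rangle\in\{2,3\}$ is immediate. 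Second, your handling of $m<0$ is unsound: you argue that $\gamma=\alpha+\beta\in R$ and that ``a root $\gamma$ has $\langle\gamma,\gamma\che\rangle=2$, so $n_\gamma=1$.'' But $\langle\gamma,\gamma\che\rangle=2$ only gives $n_\gamma\mid 2$, not $n_\gamma=1$, and the paper's own Lemma on $R\cap\mathbb{DS}(R)$ shows the long roots $\pm 2\epsilon_i$ of $C_I$ \emph{are} degenerate sums with $n_\gamma=2$; a degenerate sum may be a root. The correct dispatch of $m=-1$ is simply that then $\langle\gamma,\beta\che\rangle=1$, so $n_\gamma\mid 1$, contradicting degeneracy; once the inequality is right, no other negative $m$ arises and no detour through ``$\gamma$ is a root'' is needed.
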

 \begin{proof}(i) Let $\alpha$ and $\beta$ be two linearly independent roots and let $(\alpha | \alpha) \leq (\beta | \beta)$. Then 
 $-1 \leq  \langle \alpha, \beta\che \rangle \leq 1$ and therefore $1 \leq  \langle \alpha + \beta, \beta\che \rangle \leq 3.$ Hence $n_\gamma \in \{1 ,2 ,3\}.$ As $n_\gamma | \langle \gamma , \beta \che \rangle$  we have the equality $n_\gamma = \langle \gamma, \beta \che \rangle.$ Clearly, $\gamma \in \mathcal Q(R)$ and $\frac{1}{n_\gamma} \gamma \in \mathcal P(R)$. This shows (ii).\\
 For  (iii) assume $n_\gamma = 2,$ then $\langle \alpha + \beta, \beta\che \rangle \in 2\mathbb Z$, hence $\langle \alpha + \beta, \beta\che \rangle = 2 = \langle \beta, \beta\che\rangle + \langle \alpha,  \beta\che \rangle = 2 + \langle \alpha, \beta \che \rangle.$\\
 (iv) A degenerate sum in $\mathcal{Q}(R)$ is trivially also degenerate in $\mathcal Q(S)$. It follows from (i) that $\langle \alpha + \beta, \beta\che \rangle = n_\gamma$. Since $\{\alpha, \beta\} \subset S$ this proves (iv).
 \end{proof}

 \begin{lem}[\cite{vdK}]
\label{root_deg_sums_intersection_lem} Let $R$ be an irreducible reduced root system of rank at least $2$. 
\begin{itemize} 
\item[\rm(i)] If $R \neq C_I$, then $R \cap \mathbb{DS}(R) = \emptyset. $
\item[\rm(ii)] If $R = C_I$, then $R \cap \mathbb{DS}(R) = \{\pm 2 \epsilon_i :i \in I  \}$
\end{itemize}
\end{lem}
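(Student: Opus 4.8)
The strategy is to reduce everything to the rank $2$ case and then argue componentwise. Suppose $\gamma = \alpha+\beta \in \mathbb{DS}(R)$ with $(\alpha,\beta)\in\mathbb{DP}(R)$, and without loss of generality assume $(\alpha\,|\,\alpha)\leq(\beta\,|\,\beta)$. By Lemma~\ref{vdK_2.6}, the divisor $n_\gamma$ is $2$ or $3$. The key observation is that $\alpha$ and $\beta$ span a subspace $F = \mathbb{K}\alpha\oplus\mathbb{K}\beta$ of $E$, and $S' := R\cap F$ is a finite root system of rank $2$ by Lemma~\ref{subsystem_lem}; moreover $(\alpha,\beta)\in\mathbb{DP}(S',n_\gamma)$ by Lemma~\ref{vdK_2.6}(iv). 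So it suffices to understand degenerate pairs inside a rank $2$ root system. Since $R$ is reduced and irreducible of rank $\geq 2$, the subsystem $S'$ is one of $A_1\times A_1$, $A_2$, $B_2=C_2$, or $G_2$ (or a proper subsystem thereof).

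\textbf{Case analysis on $S'$.} First I would dispose of $A_1\times A_1$: here $\langle\alpha,\beta\che\rangle = 0$, so for $\gamma=\alpha+\beta$ one computes $\langle\gamma,\alpha\che\rangle = 2$ and $\langle\gamma,\beta\che\rangle=2$, and $R\che\cap F\che$ is spanned by $\alpha\che,\beta\che$; but this forces $n_\gamma\mid 2$ and indeed $n_\gamma=2$ is achievable, so $\gamma$ is a genuine degenerate sum of divisor $2$. Then I must check whether such a $\gamma$ can itself be a root of $R$. For $A_2$: there are no orthogonal pairs, and any degenerate sum has $\langle\alpha,\beta\che\rangle = -1$ (else $\alpha+\beta\notin R$ is forced to behave badly); one checks directly that $A_2$ has \emph{no} degenerate sums at all, so this case contributes nothing. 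For $B_2 = C_2$: working in coordinates $\epsilon_1,\epsilon_2$ with roots $\pm\epsilon_i,\pm\epsilon_1\pm\epsilon_2$ (for $B_2$) or $\pm 2\epsilon_i$ etc., the degenerate sums turn out to be exactly the long roots $\pm 2\epsilon_i$ of the $C_2$-picture, realized as $\epsilon_i + \epsilon_i$-type sums $( \epsilon_1-\epsilon_2)+(\epsilon_1+\epsilon_2) = 2\epsilon_1$; this is precisely where a degenerate sum coincides with a root, and the relevant divisor is $2$. For $G_2$: here one checks that the long roots arise as sums of two short roots with divisor $3$, so there are degenerate sums of divisor $3$ which are roots.

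\textbf{Assembling the global statement.} The point of part (i) is: if $R\neq C_I$ then no root is a degenerate sum. The danger cases are $S' = B_2$ inside $R$ and $S' = G_2$ inside $R$. If $R$ is of type $A$, $D$, $E$, or $B_I$ with $R\neq C_I$, I would argue that whenever $\gamma = \alpha+\beta$ is a degenerate sum lying in $R$, the rank $2$ analysis forces $S'$ to be of $B_2$ or $G_2$ type and $\gamma$ to be the long root of $S'$; but then I must check that a root of $R$ which is long \emph{within} $S'$ but also a degenerate sum globally cannot occur unless $R = C_I$. The cleanest way: in $C_I$ the roots $2\epsilon_i$ are the unique roots with $\langle 2\epsilon_i, R\che\rangle \subset 2\mathbb{Z}$ — every coroot $\beta\che$ pairs with $2\epsilon_i$ to an even integer because $\langle 2\epsilon_i,(2\epsilon_j)\che\rangle = 0$ or $\pm 2$ and $\langle 2\epsilon_i,(\epsilon_j\pm\epsilon_k)\che\rangle \in\{0,\pm 2\}$ — and $2\epsilon_i = (\epsilon_i-\epsilon_j)+(\epsilon_i+\epsilon_j)$ is manifestly a degenerate sum. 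For all other $R$ (excluding $G_2$ which has rank exactly $2$ hence is excluded by "$R\neq C_I$" only if we note $G_2$ is genuinely exceptional — I should check the statement's scope, but presumably $R$ is one of the classical series or the rank $\geq 2$ hypothesis is meant to include $G_2$, in which case $G_2$ needs separate treatment showing its long roots are degenerate sums of divisor $3$, contradicting (i) — so I expect the intended reading restricts to the infinite/classical families or handles $G_2$ as an exception), no root has all its coroot-pairings divisible by $2$ or $3$.

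\textbf{Main obstacle.} The hard part will be the uniform verification, for each irreducible reduced $R$ of rank $\geq 2$ with $R\neq C_I$, that \emph{no} root $\alpha$ satisfies $\langle\alpha,R\che\rangle\subset n\mathbb{Z}$ for $n\in\{2,3\}$ — equivalently, that every root is connected by a non-trivial odd pairing to some coroot. This is a finite check once one reduces (via Lemma~\ref{vdK_2.6}(iv) and the direct-limit description of infinite root systems) to rank $2$ and rank $3$ subsystems containing the putative degenerate root, but care is needed to ensure the reduction genuinely localizes the divisor condition: $\langle\gamma,\alpha\che\rangle$ for $\alpha$ outside the span of $\gamma$ must also be controlled, which is where one uses that $\gamma\in n_\gamma\mathcal{P}(R)\cap\mathcal{Q}(R)$ from Lemma~\ref{vdK_2.6}(ii) to pin down $\gamma$ on \emph{all} coroots simultaneously. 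The explicit identification $R\cap\mathbb{DS}(R) = \{\pm 2\epsilon_i\}$ for $C_I$ in part (ii) then follows by exhibiting $2\epsilon_i = (\epsilon_i - \epsilon_j) + (\epsilon_i+\epsilon_j)$ as a degenerate sum and checking no other root of $C_I$ has even divisor.
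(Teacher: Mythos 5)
Your proposal has genuine gaps and, more seriously, a factual error that leads you to doubt the correctness of the statement itself.

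The crucial simplification you miss is the very first step of the paper's argument: if $\gamma \in R \cap \mathbb{DS}(R)$, then $\gamma$ is a root, so $\langle\gamma,\gamma\che\rangle = 2$; but the divisor $n_\gamma$ divides every value of $\langle\gamma,\cdot\che\rangle$ on $R$, so $n_\gamma \mid 2$, and since $n_\gamma \in \{2,3\}$ by Lemma~\ref{vdK_2.6}(i), we get $n_\gamma = 2$ immediately. This kills the divisor-$3$ case at once and means you never need to entertain $G_2$ or $A_2$ as sources of roots that are degenerate sums of divisor $3$. You instead carry the divisor-$3$ possibility through your whole argument, which is what produces your confusion at the end.

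That confusion turns into a concrete mistake. You assert that in $G_2$ the long roots ``arise as sums of two short roots with divisor $3$, so there are degenerate sums of divisor $3$ which are roots,'' and then suggest the lemma might need $G_2$ excluded. This is false: the long root $\gamma = 2\epsilon_1 - \epsilon_2 - \epsilon_3$ pairs with the short coroot $(\epsilon_1-\epsilon_2)\che$ to give $\pm 3$, but $\langle\gamma,\gamma\che\rangle = 2$, so $n_\gamma = \gcd(3,2,\dots) = 1$ and $\gamma$ is not a degenerate sum. (The actual degenerate sums of $G_2$, as tabulated in the paper, are $\{3(\epsilon_i-\epsilon_j)\}$ and $\{2(\epsilon_i-\epsilon_j)\}$, none of which are roots.) Similarly, your claim that $A_2$ has ``no degenerate sums at all'' is wrong — $A_2$ has six degenerate sums of divisor $3$; what is true (and what you need) is that none of them lie in $R$, but that again follows instantly from $n_\gamma = 2$, which is impossible in the simply laced rank-$2$ system $A_2$.

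There is also a structural weakness in how you localize. You work in the subsystem $S'$ spanned by the degenerate pair $(\alpha,\beta)$, whereas the divisor condition $\langle\gamma, R\che\rangle \subset 2\mathbb{Z}$ is a condition on \emph{all} of $R$. You acknowledge the issue but do not resolve it. The paper's cleaner move is the reverse: once $n_\gamma = 2$ is known, the condition $\langle\gamma, S\che\rangle\subset 2\mathbb{Z}$ must hold for \emph{every} rank-$2$ irreducible subsystem $S$ containing $\gamma$, which forces each such $S \cong B_2 = C_2$ with $\gamma$ a long root; passing to rank-$3$ subsystems containing $\gamma$ then rules out $B_3$ and forces $C_3$, and hence $R \cong C_I$. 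That top-down use of the divisor condition (rather than bottom-up from the pair $(\alpha,\beta)$) is what makes the classification go through without having to control pairings against coroots outside the span of $\alpha,\beta$ by separate argument.
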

 \begin{proof}
 Let $\gamma \in R \cap \mathbb{DS}.$ Since $ \langle \gamma, \gamma \che \rangle  = 2$ we have $n_\gamma = 2$ by Lemma~\ref{vdK_2.6}(i). Because $\langle \alpha, \beta \che \rangle  \subset \{0, \pm 1, \pm 2 ,\pm 3\}$ for two roots $\alpha, \beta$ in a root system $R$, (see \cite[VI,  1.3]{BouLie2}), it follows that $\langle \gamma, \alpha \che \rangle  \subset \{0,  \pm 2\}$ for all $\alpha \in R.$ Let $S \subset R$ be an irreducible subsystem of rank $2$ containing  $\gamma.$ Thus $S \cong A_2, B_2 = C_2 $ or $G_2.$ The condition $\langle \gamma, S \che \rangle \subset 2 \mathbb Z$ implies $S = B_2 = C_2$ and that $\gamma$ is a long root.  In particular, $R$ is not simply laced. If the rank of $R$ is $2,$ then $R = C_2$.   If the rank of $R$ is $3$ or greater, we can choose an irreducible subsystem $T \subset R$ of rank $3$  containing $\gamma.$ The condition $\langle \gamma, T \che \rangle \subset 2 \mathbb Z$ rules out $ T \cong B_3,$ whence $T \cong C_3.$ It now follows that $R$ does not contain a subsystem of type $F_4.$ Hence $R \cong C_I$ for some set $I.$  \\
 It remains to verify that $\pm 2 \epsilon_i$, $i \in I,$ is a degenerate sum. First $\pm 2 \epsilon_i = \pm (\epsilon_i - \epsilon_j + \epsilon_i + \epsilon_j)$ is a representation of the root in question as  a sum of two linearly independent roots. Moreover, for $j, k \in I$ the integer $\pm \langle 2 \epsilon_i, \pm (\epsilon_j \pm \epsilon_k )\che \rangle = \pm 2(\delta_{ij} \pm \delta_{ik})$ is divisible by $2$ which proves that $2 \epsilon_i \in 2 \mathcal P(R).$ It follows that $2 \epsilon_i$ is degenerate sum and at the same time a root. 
 \end{proof}

\begin{lem}
If $R = \bigoplus_{i \in I} R_i$ is a reducible root system with irreducible reduced components $R_i,$ then $\mathbb{DS}(R) = \bigcup\mathbb {DS}( R_i) $ unless 
all $R_i$ are of type $C$. In the latter case $\mathbb{DS}(R) = \bigcup \mathbb{DS}( R_i) \cup \bigcup_{i\neq j} (({\mathbb{DS}}( R_i) \cap R_i) + ({\mathbb{DS}}( R_j)) \cap R_j).$
\end{lem}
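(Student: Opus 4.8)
The plan is to reduce the reducible case to the irreducible classification (Lemma~\ref{vdK_2.6} and Lemma~\ref{root_deg_sums_intersection_lem}) by analysing how a degenerate sum $\gamma \in \mathcal Q(R) = \bigoplus_{i \in I}\mathcal Q(R_i)$ decomposes along the orthogonal summands. Write $\gamma = \sum_i \gamma_i$ with $\gamma_i \in \mathcal Q(R_i)$, and write $\gamma = \alpha + \beta$ for linearly independent non-zero roots $\alpha,\beta \in R$. Since $R = \bigcup_i R_i$ (the non-zero roots of $R$ are exactly the non-zero roots of the components), each of $\alpha$ and $\beta$ lies in a single component: say $\alpha \in R_{i_0}$, $\beta \in R_{j_0}$.

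First I would treat the case $i_0 = j_0 =: i$. Then $\alpha,\beta$ are linearly independent roots inside the single irreducible component $R_i$, so $\gamma = \gamma_i \in \mathcal Q(R_i)$ and $\gamma_j = 0$ for $j \neq i$. The divisor condition $\langle \gamma, R\che\rangle \subset n\mathbb Z$ with $n>1$ restricts, for $\alpha' \in R_i$, to $\langle \gamma, {\alpha'}\che\rangle$; for $\alpha' \in R_j$ ($j\neq i$) we get $\langle \gamma, {\alpha'}\che\rangle = 0 \in n\mathbb Z$ automatically by orthogonality. Hence $\gamma$ has divisor $n$ in $R$ if and only if it has divisor $n$ in $R_i$, i.e. $\gamma \in \mathbb{DS}(R_i,n)$. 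Conversely any $\gamma \in \mathbb{DS}(R_i,n)$ is clearly in $\mathbb{DS}(R,n)$ by the same orthogonality remark. This accounts for the term $\bigcup_i \mathbb{DS}(R_i)$.

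Next the case $i_0 \neq j_0$. Then $\gamma_{i_0} = \alpha$, $\gamma_{j_0} = \beta$, and $\gamma_j = 0$ for $j \notin\{i_0,j_0\}$; here $\gamma$ is a sum of the root $\alpha\in R_{i_0}$ and the root $\beta\in R_{j_0}$, automatically linearly independent. The divisor condition now says: for every $\alpha' \in R_{i_0}$, $\langle\alpha,{\alpha'}\che\rangle \in n\mathbb Z$ (the $\beta$-contribution vanishes by orthogonality), and symmetrically $\langle\beta,{\alpha'}\che\rangle \in n\mathbb Z$ for every $\alpha'\in R_{j_0}$; on the remaining components the pairing is $0$. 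Taking $\alpha' = \alpha$ gives $2 \in n\mathbb Z$, so $n = 2$; the condition $\langle\alpha, R_{i_0}\che\rangle \subset 2\mathbb Z$ is precisely the condition (via $\alpha = \frac12((\alpha-\text{something})+\dots)$, or directly $\alpha \in 2\mathcal P(R_{i_0})\cap\mathcal Q(R_{i_0})$) that $\alpha$ be itself a degenerate sum of divisor $2$ in $R_{i_0}$ that happens to be a root — i.e. $\alpha \in \mathbb{DS}(R_{i_0},2)\cap R_{i_0}$, and likewise $\beta \in \mathbb{DS}(R_{j_0},2)\cap R_{j_0}$. By Lemma~\ref{root_deg_sums_intersection_lem}, such roots exist in $R_{i_0}$ exactly when $R_{i_0}\cong C_{I_{i_0}}$ (and then they are the $\pm 2\epsilon_\ell$). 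So cross-component degenerate sums occur if and only if both $R_{i_0}$ and $R_{j_0}$ are of type $C$, and they are exactly the elements of $(\mathbb{DS}(R_{i_0})\cap R_{i_0}) + (\mathbb{DS}(R_{j_0})\cap R_{j_0})$.

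Assembling the two cases: if not all $R_i$ are of type $C$ then no cross terms survive and $\mathbb{DS}(R) = \bigcup_i \mathbb{DS}(R_i)$; if all components are of type $C$ (so that $\mathbb{DS}(R_i)\cap R_i \neq\emptyset$ for every $i$) we additionally pick up $\bigcup_{i\neq j}\bigl((\mathbb{DS}(R_i)\cap R_i) + (\mathbb{DS}(R_j)\cap R_j)\bigr)$, which is exactly the stated formula. I would double check that the one-component contributions in the all-$C$ case are still just $\bigcup_i\mathbb{DS}(R_i)$ with nothing extra, which is immediate from the $i_0=j_0$ analysis above. The only mildly delicate point — the ``main obstacle'' if there is one — is making the bookkeeping of the weight/divisor condition across orthogonal summands airtight, in particular the equivalence ``$\langle\alpha, R_i\che\rangle\subset 2\mathbb Z$'' $\iff$ ``$\alpha\in\mathbb{DS}(R_i,2)\cap R_i$''; this follows by combining $\alpha = 2\cdot\frac12\alpha$ with $\frac12\alpha\in\mathcal P(R_i)$, exactly as in the proof of Lemma~\ref{root_deg_sums_intersection_lem}(ii), together with the fact (used implicitly there) that a root $\alpha$ with $\langle\alpha,R_i\che\rangle\subset 2\mathbb Z$ is automatically expressible as a sum of two linearly independent roots when $R_i$ has rank $\geq 2$, which the excerpt's analysis of $C_I$ already provides.
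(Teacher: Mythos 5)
Your proof follows essentially the same route as the paper's: decompose $\gamma = \alpha + \beta$ along the orthogonal components, use $\langle \mathcal Q(R_i), R_j\che\rangle = 0$ for $i\neq j$ to reduce the divisor condition component‐by‐component, conclude that a cross-component degenerate sum forces each of $\alpha,\beta$ to be a degenerate sum that is also a root inside its own component, and then invoke Lemma~\ref{root_deg_sums_intersection_lem} to restrict to type $C$. You are slightly more careful than the paper in two places: you explicitly derive $n_\gamma = 2$ by pairing $\gamma$ with $\alpha\che$, and you flag that the implication ``$\langle\alpha, R_i\che\rangle\subset 2\mathbb Z \Rightarrow \alpha\in\mathbb{DS}(R_i)\cap R_i$'' needs $R_i$ to have rank $\geq 2$ (since $\alpha$ must also be a sum of two independent roots of $R_i$); this is the same hidden hypothesis in the paper's proof, and indeed both arguments (and the stated formula) silently exclude rank-one components $C_1=A_1$, where $\mathbb{DS}(R_i)\cap R_i=\emptyset$ even though two $A_1$-summands do produce a cross-component degenerate sum in $R$.
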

\begin{proof}
For two indices $i\neq j$ we always have $\langle \mathcal Q(R_i), R_j\che \rangle  = \{0\}$, so that $\bigcup \mathbb{DS}(R_i) \subset \mathbb{DS}(R). $
Let $\alpha+ \beta = \gamma$ be a degenerate sum in $R$ which is not in $\bigcup \mathbb{DS}(R_i)$ . It follows that $\alpha \in R_i$ and $\beta \in R_j$, for $i\neq j$. By definition for every $\delta \in R$ , $\langle \alpha + \beta, \delta \che \rangle \in n_\gamma\mathbb Z $. Let $\delta \in R_i$,  then $\langle \alpha + \beta, \delta \che \rangle  = \langle \alpha , \delta \che \rangle $, thus $\alpha$ must be a degenerate sum in $R_i$ and likewise $\beta$ is degenerate in $R_j$. But we have seen that the only root systems which have roots that are degenerate sums are those of type $C_I$. Let $I$ be the index set of $R_i = C_I$ and $J$ the index set of $R_j  = C_J$. Since the choice of indices $i$ and $j$ was arbitrary, the degenerate sums in $R$ are contained in $\mathbb{DS}(R) = \bigcup \mathbb{DS}( R_i) \cup \bigcup_{i\neq j} (({\mathbb{DS}}( R_i) \cap R_i) + ({\mathbb{DS}}( R_j)) \cap R_j).$ The other inclusion can easily be verified. 
\end{proof}
\subsection{Classification of degenerate sums}
\label{Class_deg_Sums}
For the convenience of the reader this subsection contains a full proof of the classification of degenerate sums in finite root systems. 
Throughout $R$ is assumed to be an irreducible finite reduced root system and $(\cdot | \cdot)$ denotes the inner product introduced in \cite[4.6]{lfrs}. This inner product is normalized in the sense that for any  short root $(\alpha, \alpha) = 1.$ Also we abbreviate $ \| \alpha \| = (\alpha, \alpha)$ and refer to this number as the length of $\alpha.$ Often the length is defined as the square root of $(\alpha, \alpha)$,  but it is easier for us to work with this definition, for example because of the well-known fact (see for instance \cite[Chap.VI, \S 1] {BouLie2}) that the length function normalized in this way can only assume the values $1, 2$ or $3$ if evaluated on a root. 

  
  \subsubsection*{Divisor $3$}
  
  \begin{lem}Let $R$ be root system which has degenerate sums of divisor $3$. 
  Then  $\alpha + \beta$ is a degenerate sum of divisor $3$ if and only if the following two conditions hold: 
  \begin{itemize} 
\item[\rm{(i)}]$R$ is of type $G_2$ or $A_2$, and 
 \item[\rm{(ii)}] the roots $\alpha$ and $\beta$ are both long and $\langle \alpha, \beta \che \rangle = 1.$
  \end{itemize}
  \end{lem}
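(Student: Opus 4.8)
The statement is a biconditional, so the plan is to prove both implications, with the "only if" direction carrying the bulk of the work. For the "if" direction, suppose $R$ is of type $G_2$ or $A_2$ and $\alpha,\beta$ are long roots with $\langle\alpha,\beta\che\rangle=1$; I would compute directly that $\gamma=\alpha+\beta$ is a sum of two linearly independent roots (independence is automatic since $\langle\alpha,\beta\che\rangle=1\neq 2$, so $\beta\notin\mathbb Z\alpha$ by the reduced hypothesis), and then check that $\langle\gamma,\delta\che\rangle\in 3\mathbb Z$ for every $\delta\in R$. In type $A_2$ all roots have the same length and one verifies this on the six roots by hand; in type $G_2$ one splits the check according to whether $\delta$ is long or short, using $\|\alpha\|=\|\beta\|=3\cdot\|\text{short}\|$ and the explicit Cartan integers for $G_2$. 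Since $\langle\gamma,\beta\che\rangle=\langle\beta,\beta\che\rangle+\langle\alpha,\beta\che\rangle=2+1=3$, the divisor $n_\gamma$ divides $3$ and is $>1$, hence equals $3$.

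For the "only if" direction, suppose $\gamma=\alpha+\beta$ is a degenerate sum of divisor $3$, with $(\alpha|\alpha)\leq(\beta|\beta)$ after possibly swapping. By Lemma~\ref{vdK_2.6}(i), $n_\gamma=\langle\gamma,\beta\che\rangle=3$, and since $-1\leq\langle\alpha,\beta\che\rangle\leq 1$ (linear independence) the only way to get $\langle\beta,\beta\che\rangle+\langle\alpha,\beta\che\rangle=3$ is $\langle\alpha,\beta\che\rangle=1$. This forces $\|\alpha\|=\|\beta\|$ (equal lengths, since $\langle\alpha,\beta\che\rangle=1$ means the angle is $60^\circ$ and in a reduced root system roots at $60^\circ$ have equal length). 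Next I would use Lemma~\ref{vdK_2.6}(iv): if $S\subseteq R$ is any rank-$2$ irreducible subsystem containing $\alpha$ and $\beta$ — which exists since $\alpha,\beta$ are independent — then $(\alpha,\beta)\in\mathbb{DP}(S,3)$. Running through the rank-$2$ possibilities $A_2,B_2{=}C_2,G_2$, the condition $\langle\gamma,S\che\rangle\subset 3\mathbb Z$ rules out $B_2$ (there one can find $\delta$ with $\langle\gamma,\delta\che\rangle$ not divisible by $3$), leaving $S\cong A_2$ or $G_2$, and in both cases $\alpha,\beta$ must be long roots of $S$.

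It remains to rule out the possibility that $R$ has rank $\geq 3$. Here is where I expect the main obstacle: one must show that no larger irreducible reduced root system admits a divisor-$3$ degenerate sum. The argument I would use parallels the divisor-$2$ case in Lemma~\ref{root_deg_sums_intersection_lem}: if $R$ has rank $\geq 3$ and contains a degenerate sum $\gamma=\alpha+\beta$ of divisor $3$, pick an irreducible rank-$3$ subsystem $T\subseteq R$ containing $\alpha,\beta$ (using Lemma~\ref{irred_rank_3_lem}); then $(\alpha,\beta)\in\mathbb{DP}(T,3)$ as well, but one checks that none of the rank-$3$ irreducible reduced root systems $A_3,B_3,C_3$ has a divisor-$3$ degenerate sum — e.g. in each there is always a short coroot $\delta\che$ (or a suitable long one) with $\langle\gamma,\delta\che\rangle=1$ or $2$. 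Since every irreducible reduced $R$ of rank $\geq 3$ contains such a rank-$3$ subsystem (and for $R$ of infinite rank we reduce to the finite case via the direct-limit description after Theorem~\ref{loc_fin_irred_class_thm}, or simply note that $\alpha,\beta$ lie in a finite subsystem by Lemma~\ref{subsystem_lem}), this forces $\mathrm{rank}(R)=2$, hence $R\cong A_2$ or $G_2$. Combining this with the conclusion $\langle\alpha,\beta\che\rangle=1$ and both roots long finishes the proof. The delicate bookkeeping is entirely in the finite rank-$2$ and rank-$3$ case checks; I would organize these as short explicit tables of Cartan integers rather than prose.
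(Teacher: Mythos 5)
Your overall architecture is sound and takes a genuinely different route through the ``only if'' direction than the paper does. The paper controls things through the connection index $n = |\mathcal P(R)/\mathcal Q(R)|$: it bounds $(\gamma|\gamma)$ between $0$ and $9$, shows equality forces $G_2$, and for strict inequality deduces $3 \mid n$, which (by the classification of connection indices) restricts $R$ to $A_{3m-1}$ or $E_6$; the rank-$\geq 3$ case is then excluded by passing to a rank-$3$ irreducible subsystem and noting none of $A_{3m-1}$, $E_6$ has rank $3$. You instead skip the connection-index bookkeeping entirely and go straight to rank-$2$ and rank-$3$ subsystems via Lemma~\ref{vdK_2.6}(iv) and Lemma~\ref{irred_rank_3_lem}, then finish with an explicit check that $A_3$, $B_3$, $C_3$ admit no divisor-$3$ degenerate sum. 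That is a legitimate and arguably cleaner reduction, at the cost of a longer (but elementary) case check.

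There is, however, one genuine gap in the argument that both roots are long. You write that $\langle\alpha,\beta\che\rangle = 1$ ``means the angle is $60^\circ$'' and conclude equal lengths. That inference is wrong as stated: $\langle\alpha,\beta\che\rangle = 1$ alone is compatible with $\langle\beta,\alpha\che\rangle \in \{1,2,3\}$, giving angles $60^\circ$, $45^\circ$, or $30^\circ$ and length ratios $1$, $2$, or $3$. Recall $(\beta|\beta)/(\alpha|\alpha) = \langle\beta,\alpha\che\rangle/\langle\alpha,\beta\che\rangle$, so you need to pin down $\langle\beta,\alpha\che\rangle$. The fix is short: since $\gamma$ has divisor $3$, also $\langle\gamma,\alpha\che\rangle = \langle\alpha,\alpha\che\rangle + \langle\beta,\alpha\che\rangle = 2 + \langle\beta,\alpha\che\rangle$ must lie in $3\mathbb Z$, so $\langle\beta,\alpha\che\rangle \equiv 1 \pmod 3$; combined with $\langle\beta,\alpha\che\rangle \in \{1,2,3\}$ this forces $\langle\beta,\alpha\che\rangle = 1$, hence $(\alpha|\alpha) = (\beta|\beta)$. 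Once that is repaired, the rest of your sketch goes through; the remaining work (the rank-$2$ and rank-$3$ Cartan-integer tables, and the direct verification in $A_2$ and $G_2$ for the converse) is just bookkeeping as you indicate.
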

  \begin{proof} 
  Let the  integer $n$ denote the connection index of $R$, that is the order of the group $\mathcal P(R)/\mathcal Q(R).$\\
  First we show that (i) and (ii) are necessary. Let $\gamma = \alpha + \beta$ be a degenerate sum of divisor $3$.
 Then $\gamma \in 3\mathcal{P}(R) \cap \mathcal Q(R)$ and thus $n^2(\gamma | \gamma) \in  9 \mathbb Z$ and $(\gamma | \gamma) \in \mathbb Z$.  On the other hand
  \begin{eqnarray*}
  (\gamma | \gamma) &=& (\alpha | \alpha) + 2(\alpha | \beta) + (\beta| \beta)\\
  &=& (\alpha | \alpha) + \langle \alpha, \beta\che \rangle (\beta | \beta) + (\beta | \beta).
  \end{eqnarray*}
  With the chosen normalization a root can have a length of at most $3$ and so 
  \begin{eqnarray*}
     (\alpha | \alpha) + \langle \alpha, \beta\che \rangle (\beta | \beta) + (\beta | \beta) \leq 6 + 3\langle \alpha, \beta\che \rangle .
  \end{eqnarray*}
  Moreover, without loss of generality $\|\alpha \| \leq \|\beta \|$ and thus $\langle \alpha, \beta\che  \rangle\leq 1$. Hence the following inequality holds true: 
  $$0 \leq (\gamma, \gamma) \leq 9, $$
  and equality holds if and only if $\alpha$ and $\beta$ both have length $3$ and  $\langle \alpha, \beta\che \rangle = 1.$ This can only occur if $R = G_2$. Otherwise there are no roots of length $3$. For the remaining consideration we may therefore assume that the inequality is strict.  In this case the largest power of $3$ that could divide $(\gamma, \gamma)$ is $3^1.$ Since at the same time $9 | n^2(\gamma, \gamma)$, by uniqueness of prime factor decomposition, it follows that $3 |n^2$ and $3|n$. There are only two types of roots systems such that the index of the root lattice in the weight lattice is divisible by $3$ and these are $A_{3m - 1}, m \geq 1$ and $E_6$ (see for instance \cite[Planches I - IX]{BouLie2}). Both root systems are simply laced. Note that $A_{3m-1}$ has never rank $3$, so it is already clear that root systems of rank $3$ do not have any degenerate sums with respect to $3$. If the rank of $R$ is greater  than or equal to $3$,  we can pass to an irreducible subsystem $S$ of rank $3$ which contains $\alpha$ and $\beta$, (see Lemma~\ref{irred_rank_3_lem}). But then $\gamma$ must be degenerate in $S$ as well, contradicting the fact that there are not any root systems of rank $3$ which have degenerate sums with respect to $3$. It follows that the rank of $R$ is $2$ which leaves $R = A_2$ ($m = 1$) as the single candidate.
  The integer $\langle \alpha + \beta, \beta \che\rangle$ has to be divisible by $3$,  hence  $3 = \langle \alpha + \beta, \beta \che\rangle =  \langle \alpha, \beta \che\rangle + \langle \beta, \beta \che\rangle$ and $\langle \alpha, \beta \che\rangle = 1.$
  This shows that the conditions (i) and (ii) are necessary. \\
  Next assume that $R= A_2$ or $G_2$ and that $\alpha$ and $\beta$ fulfill (ii). By (ii) the angle between $\alpha$ and $\beta$ is $\pi/3. $
In $G_2$ two short roots enclosing an angle of $\pi/3$ will sum up to a root and we have seen before in Lemma~\ref{root_deg_sums_intersection_lem} that for $R = G_2$ or $R = A_2$ the intersection $R \cap \mathbb{DS}(R)$ is empty. Also, a long and a short root never form an angle of $\pi/3$ in $G_2$.  Therefore, $\alpha$ and $\beta$ are long. The long roots in $G_2$ form a root system of type $A_2$ and for all long roots $\alpha ,\beta$ and every short root $\gamma$ the integer $\langle \alpha + \beta, \gamma \che \rangle$ is divisible by $3$.  In order to find the elements of $\mathcal Q(G_2) \cap 3 \mathcal P(G_2)$ which are degenerate sums, it thus suffices to classify degenerate sums in $A_2$. The degenerate sums in $G_2$ can then be obtained by rotating and scaling. \\ So let $R = A_2$ and $\alpha = \varepsilon_i - \varepsilon_j \in R$. The two roots that form an angle of $\pi/3$ with $\alpha$ are $\beta = \varepsilon_i - \varepsilon_k$ and $\beta ' = \varepsilon_k - \varepsilon_j,$ $\{i,j,k\} = \{1, 2 ,3\}.$ The sum of $\alpha$ with any of these is of the form $\pm (2 \epsilon_i - \epsilon_j -\epsilon_k).$ It remains to show that $\pm (2 \epsilon_i - \epsilon_j -\epsilon_k) \in 3\mathcal P(R)$. Pick a root $\epsilon_n - \epsilon_m$, $1 \leq n \neq m \leq 3$. We consider the case where $n \neq i, m \neq i$ and here it may even be assumed that $j = n$ and $m = k$
 $$ \langle 2 \epsilon_i - \epsilon_j -\epsilon_k, (\epsilon_n - \epsilon_m)\che \rangle = 0.$$
 In the other case (without loss of generality $n = i$ and $ m = j$) one obtains
 $$ \langle 2 \epsilon_i - \epsilon_j -\epsilon_k, (\epsilon_n - \epsilon_m)\che \rangle = 2 + 1 = 3.$$
 In both cases the result is divisible by $3$ which shows that $2 \epsilon_i - \epsilon_j -\epsilon_k$ is a degenerate sum. To pass from the root system $A_2$ to $G_2$ it suffices to rotate the degenerate sums about the origin by $\pi/6$ and scale each of them by $\sqrt{3}.$ 
     \end{proof}
    For convenient reference, we summarize the classification in the divisor $3$ case in the table below.
  \begin{equation}\label{tab:DegSums3}
\begin{tabular}[h]{|l|l|l|}
\hline
Type & $n_\gamma$ & Degenerate sums \\
\hline
$A_2$ & $3$ & $\pm\{ \varepsilon_1 - 2 \varepsilon_2 + \varepsilon_3, \varepsilon_1 + \varepsilon_2 - 2\varepsilon_3, 2\varepsilon_1 - \varepsilon_2 - \varepsilon_3 \} $\\
\hline
$G_2$ & $3$ & $\{3(\varepsilon_i - \varepsilon_j) : 1 \leq i\neq j \leq 3 )\}$\\
\hline
\end{tabular}
\end{equation}
    \subsubsection*{Divisor $2$}
  \begin{lem}[Proof of Proposition 2.12 in \cite{vdK}]Let $R$ be a  finite irreducible reduced root system and $\gamma \in 2 \mathcal{P}(R) \cap \mathcal{Q}(R)$, $\gamma \neq 0$. If there is a long root $\alpha'$ such that $(\gamma| \gamma) \leq 2(\alpha', \alpha')$, then  $\gamma$ is in the  $\mathcal W(R)$-orbit of $2 \omega$ for a fundamental weight $\omega.$ 
  \label{deg_sum_as_double_fund_weight_lem}
  \end{lem}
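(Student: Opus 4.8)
The plan is to reduce the statement to a purely rank-$2$ computation by passing to a suitable subsystem, and then to check the conclusion case by case on the short list of rank-$2$ root systems. First I would fix a root basis $B$ of $R$ and write $\gamma = \sum_i k_i \alpha_i$. Since $\gamma \in 2\mathcal{P}(R)\cap\mathcal{Q}(R)$ and $\gamma \ne 0$, all the coefficients $\langle \gamma, \alpha_i\che\rangle$ are even; in particular $\gamma$ is never a root (if it were, $\langle\gamma,\gamma\che\rangle = 2$ would be odd-divisible-by-$2$ but not by the forced parity constraints that rule out short roots — more precisely, apply Lemma~\ref{root_deg_sums_intersection_lem}, keeping in mind that for $R$ of type $C$ the "root" degenerate sums $\pm 2\epsilon_i$ are themselves already $2\omega$ for a fundamental weight, so the conclusion holds trivially in that subcase).

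Next I would use the length hypothesis to localize. Because $R$ is irreducible and finite, I can pick (using Lemma~\ref{irred_rank_3_lem}, or directly the finiteness of $R$) an irreducible reduced subsystem $S$ of rank $2$ that contains a chosen long root $\alpha'$ and also "sees" $\gamma$ in the sense that $\gamma$ has a nonzero pairing with $S\che$; the point of the inequality $(\gamma|\gamma) \le 2(\alpha'|\alpha')$ is that it bounds $(\gamma|\gamma)$ from above by a small constant (at most $6$, and typically $4$), which severely restricts how $\gamma$ can decompose. I would then argue that $\gamma$ restricted to $S$ must again satisfy $\gamma_S \in 2\mathcal{P}(S)\cap\mathcal{Q}(S)$ with the same length bound, so it suffices to prove the statement for $S \in \{A_2, B_2=C_2, G_2\}$ and then note that the Weyl group $\mathcal{W}(S) \le \mathcal{W}(R)$ together with the fact that fundamental weights of $S$ extend appropriately allows us to lift the conclusion back to $R$. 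Here one has to be slightly careful: I would handle separately the possibility that $\gamma$ lies in the subspace spanned by $S$ versus having extra components, using the length bound to force $\gamma$ into a rank-$\le 2$ subspace whenever $(\gamma|\gamma)$ is small enough.

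Then comes the explicit rank-$2$ verification. In $A_2$, the elements of $2\mathcal{P}\cap\mathcal{Q}$ of length $\le 4$ are exactly the $\mathcal{W}$-translates of $2\omega_1$ and $2\omega_2$ (which have length $8/3 \cdot \tfrac{3}{?}$ — I would normalize so short roots have length $1$ as in the cited inner product; a short computation gives $(\gamma|\gamma) = 4$ for $\gamma = 2\omega$), so the claim holds. In $C_2$, the length-$\le 2(\text{long})= 2\cdot 2 = 4$ elements of $2\mathcal{P}\cap\mathcal{Q}$ are the $\mathcal{W}$-orbit of $2\varepsilon_i$ (the long roots, which equal $2\omega$ for the relevant fundamental weight) and possibly $2\omega$ for the other node; again the conclusion holds. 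In $G_2$, the only candidates of length $\le 2\cdot 3 = 6$ in $2\mathcal{P}\cap\mathcal{Q} = 2\mathcal{Q}$ (since $G_2$ has trivial connection index) turn out to be $\mathcal{W}$-orbits of $2\times(\text{short root})$, and these are $2\omega$ for the short fundamental weight. Assembling the three cases and transporting back via $\mathcal{W}(S) \le \mathcal{W}(R)$ finishes the proof.

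\textbf{Main obstacle.} The delicate point is the localization step: making rigorous that $\gamma$ can be assumed to live in (the span of) a rank-$2$ subsystem, rather than merely having nonzero pairing with one. The length bound $(\gamma|\gamma) \le 2(\alpha'|\alpha')$ is exactly what makes this work — a vector with too many nonzero coordinates in an orthogonal-ish basis would have length exceeding the bound — but one must combine it carefully with the evenness of all the $\langle\gamma,\alpha\che\rangle$ and with the classification of which root systems have connection index divisible by $2$ (namely the non-simply-laced ones and $A_{2m-1}$, $D_n$, $E_7$). I expect that, as in van der Kallen's original argument, the cleanest route is: first reduce to irreducible $R$; then observe $\gamma/2 \in \mathcal{P}(R)$ has all coroot pairings integral, pick a minimal-support representative in its $\mathcal{W}$-orbit, and show the support has size $\le 2$ using the length bound; then read off the answer from the rank-$\le 2$ tables. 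The rest is bookkeeping with the explicit root data in \cite[Planches]{BouLie2}.
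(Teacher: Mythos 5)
There is a genuine gap in your proposed localization step, and your proposed fix does not repair it. You aim to reduce to a rank-$2$ subsystem $S$ and then case-check $A_2$, $B_2$, $G_2$. But the elements of $2\mathcal{P}(R)\cap\mathcal{Q}(R)$ satisfying the length bound can genuinely live in rank $3$ or $4$: for instance in $B_3$ the vector $\gamma = \epsilon_1+\epsilon_2+\epsilon_3$ has $(\gamma|\gamma)=3 \le 2\cdot 2$ and lies in no rank-$2$ subspace spanned by roots; similarly $\epsilon_1+\epsilon_2-\epsilon_3-\epsilon_4$ in $A_3$ and $\epsilon_1+\epsilon_2+\epsilon_3+\epsilon_4$ in $B_4$, $D_4$, $F_4$ all satisfy the hypothesis, are equal to $2\omega$ for a fundamental weight, yet require the full rank. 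The length bound does not force small support; on the contrary, the paper's proof shows (Claim (i) there) that for the dominant representative $\gamma = \sum m_j \alpha_j$, all $m_j$ are strictly positive, so the support in terms of simple roots is the entire root basis. Your suggested fallback (take a minimal-support representative and show support $\le 2$) is therefore also incorrect as stated.

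The paper's argument works in the full ambient space rather than a subsystem. It writes the dominant $\gamma$ both as $2\sum n_i\omega_i$ and as $\sum m_j\alpha_j$, proves that all $m_j>0$ and that $n_i \le m_i - 1$ (using evenness of $\langle\gamma,\alpha_i\che\rangle$ and connectivity of the Dynkin diagram), and then feeds these bounds into
$$2(\alpha'|\alpha') \ge (\gamma|\gamma) = \sum_i m_i n_i(\alpha_i|\alpha_i) \ge \sum_i (n_i+1)n_i(\alpha_i|\alpha_i),$$
from which a short case analysis by lacing forces exactly one $n_i=1$ with the rest zero. Type $C_\ell$ is treated separately via the orthogonal basis $\{\epsilon_i\}$ for the weight lattice. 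If you want to salvage the spirit of your proposal, the object whose support you can hope to bound is the vector of $n_i$'s, not the $m_j$'s, and even then the bound only becomes usable after establishing $n_i \le m_i - 1$; there is no way to discard ambient coordinates.
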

  \begin{proof} Fix a root basis $B = \{\alpha_j : 1 \leq j \leq \mathrm{rank} R \}$ of $R.$
 First assume that $R$ is not of type $C_\ell$, $\ell \geq 1.$ Let $ 0 \neq\gamma \in 2\mathcal{P}(R) \cap \mathcal Q(R)$ be  arbitrary, but fixed. Since for $R \neq C_I,$ the divisor of a root is $1,$ and it follows that $\gamma \notin R.$   Without loss of generality we may assume that $\gamma$ is a dominant weight since the set $ 2 \mathcal{P}(R) \cap \mathcal{Q}(R)$ is invariant under the Weyl group and every weight is conjugate to a dominant one. Therefore it is possible to express $\gamma$ as $2 \sum n_i \omega_i = \sum m_j \alpha_j$ where the $\omega_i$ are the fundamental weights with respect to $B,$ and the coefficients $m_i, n_i$ are non-negative integers.\\
 Claim: \begin{itemize}
 \item[\rm{(i)}] $m_j > 0$ for all $j$, 
 \item[\rm{(ii)}] $0 \leq n_i \leq m_i - 1$ for all $i$.
 \end{itemize}
 Since $\gamma \neq 0$ there is at least one $m_i$ which is non-zero, hence greater than or equal to $1$. Let $k$ be adjacent to $i$ in the Dynkin diagram of $R$ and consider the set $K = \{j : j \mbox{ is adjacent to }k\}.$ Clearly, $ i \in K.$ The weight $\gamma$ is dominant, hence 
 $$0 \leq \langle \gamma, \alpha_k \che \rangle  = \sum_{j \in K}m_j \langle \alpha_j, \alpha_k \che \rangle + 2 m_k.$$
 In every Dynkin diagram, if $j$ and $k$ are adjacent, then $\langle \alpha_j, \alpha_k \che \rangle \leq -1 .$ Thus  
 $$ 0 \leq - \sum_{j \in K}m_j + 2m_k.$$ By assumption, $- \sum_{j \in K}m_j \leq -m_i \leq -1,$ whence
 $2m_k  \geq 1, $ and thus $m_k > 0.$ The Dynkin diagram is connected, so  we can apply induction and obtain $m_j > 0$ for all $j.$ \\
 Next, since $n_i$ is the coefficient of the $i$th fundamental weight, 
\begin{equation*}
2n_i = \langle \gamma, \alpha_i \che\rangle = 2 m_i + \sum_{j \neq i}m_j \langle \alpha_j, \alpha_i \che \rangle
\end{equation*}
We have shown that all $m_j$ are greater than zero. Moreover, $\langle \alpha_j, \alpha_i \che \rangle < 0$ if $i$ and $j$ are neighbours in the Dynkin diagram,  $\langle \alpha_j, \alpha_i \che \rangle = 0$ if $i$ and $j$ distinct and not adjacent, and $\langle \alpha_i, \alpha_i \che \rangle = 2$ for all $i,$
Hence $\sum_{j \neq i}m_j \langle \alpha_j, \alpha_i \che \rangle < 0$
and $2n_i < 2 m_i. $ Since $n_i$ and $m_i$  are non-negative integers, it follows that $1 \leq n_i + 1\leq m_i.$
Let $\alpha'$ be a long root. Then, using that $(\omega_j, \alpha_i) = 0$ for $j \neq i$
\begin{eqnarray*} 2(\alpha'| \alpha') \geq (\gamma | \gamma) &=& \sum m_i(\gamma | \alpha_i) = 2\sum \sum n_j m_i (\omega_j | \alpha_i) \\ &=&  \sum m_i n_i (\alpha_i | \alpha_i) \geq \sum (n_i + 1)n_i (\alpha_i | \alpha_i)\\ &=& \sum n_i^2 (\alpha_i | \alpha_i) + \sum n_i (\alpha_i| \alpha_i).\end{eqnarray*}
Note that there is at least one $i$ such that $n_i \geq 1.$ \\
 We first consider  the case that $R$ is simply laced. Then we can divide the inequality by $(\alpha' |\alpha')$ which yields
 $$  0 \neq \sum n_i^2 + n_i \leq 2 .$$
 This inequality holds if and only if $n_i = 1$ for some $i$ and $n_j = 0$ for $j \neq i.$ It follows that $\gamma = 2 \omega_i$ for a fundamental weight $\omega_i.$\\ 
If $R$ is doubly laced, then a long root has length either $2$ or $3$. 
Assume that the longest root has length $2$, i.e., $R = B_\ell$, $\ell > 2$ or $F_4$ (type $C_\ell$ was excluded earlier).
There are only two non-trivial cases where this inequality holds. Either $n_i = 1$ for some $i$ and $n_j = 0$ for $i \neq j$ or there are distinct short simple roots $\alpha_i$ and $\alpha_j$ such that $n_i = n_j = 1$ and $n_k = 0$ for $k, i,j$ pairwise distinct.   
The second case can only occur when $R = F_4.$ However, in this case the two short roots are adjacent in the Dynkin diagram, leading to $\gamma = \alpha_i + \alpha_j \in R.$ We have seen above that the intersection of $2 \mathcal P(F_4)$ with the root system $F_4$ is trivial, whence a contradiction to $\gamma \in 2 \mathcal P(F_4).$ Therefore the only possibility is that $\gamma = 2 \omega_i$ for a fundamental weight $\omega_i.$

Let now the longest root be of length $3$, i.e., $R= G_2$. The condition on the coefficients $n_i$, $i = 1, 2$ yields
$$   \sum_{i = 1}^2 (n_i + 1)n_i (\alpha_i, \alpha_i) \leq 6.$$

Let $\alpha_1$ be the short simple root in $B$ and $\alpha_2$ the long simple root in $B.$ 
If $n_1 = 1$ and $n_2 = 0$ or $n_2 = 1$ and $n_1 = 0,$ then the inequality holds and $\gamma = 2 \omega_i$, for a fundamental weight $\omega.$ Also, since the inequality is strict in the case $n_2 = 1$ and $n_1 = 0,$ we may assume now $n_2 = 0.$ The only case left is then $n_1 = 2$ and $n_2 = 0.$ Since $\omega_1$ is in fact equal to $2\alpha_1 + \alpha_2$ this gives $m_1 = 2,$ contradicting $n_1 < m_1.$ \\
Lastly, in case $R = C_\ell$, then $\{\epsilon_i : 1 \leq i \leq \ell\}$ is an orthogonal basis for the weight lattice $\mathcal{P}(R)$.
  We have $\gamma \in 2 \mathcal P(R),$ if and only if $\gamma = 2 \sum \lambda_i \epsilon_i$  for integers $\lambda_i.$ The length of such an element is $ 2\sum m_i^2 $ and we need to find those that have length at most $4.$ This is only possible if at most two coefficient are non-zero and then these coefficients have to equal $\pm 1.$ It follows that 
 $\gamma $ is conjugate to $2 \epsilon_1$ or $2(\epsilon_1 + \epsilon_2)$ and which both are equal to twice a fundamental weight. 
  \end{proof}
  \begin{prop}[Propostion 2.12 \cite{vdK}] \label{deg_sums_char_by_length_prop} 
  If $\gamma$ is a degenerate sum of divisor $2$, then there is a long root $\alpha'$ such that $(\gamma | \gamma) \leq 2(\alpha'| \alpha').$ \label{semi_char_prop}
  \end{prop}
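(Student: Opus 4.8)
The statement to prove is: if $\gamma$ is a degenerate sum of divisor $2$, then there is a long root $\alpha'$ with $(\gamma \mid \gamma) \leq 2(\alpha' \mid \alpha')$. My plan is to write $\gamma = \alpha + \beta$ with $(\alpha,\beta) \in \mathbb{DP}(R)$ and, without loss of generality, $\|\alpha\| \leq \|\beta\|$. By Lemma~\ref{vdK_2.6}(iii), since $n_\gamma = 2$ we have $\langle \alpha, \beta\che \rangle = 0$, i.e., $\alpha$ and $\beta$ are orthogonal. Hence
$$(\gamma \mid \gamma) = (\alpha \mid \alpha) + 2(\alpha \mid \beta) + (\beta \mid \beta) = \|\alpha\| + \|\beta\| \leq 2\|\beta\|.$$
So the problem reduces to showing that $\beta$, the longer of the two roots summing to $\gamma$, is itself $\mathcal{W}(R)$-conjugate (or at least length-comparable) to a long root — which is automatic once we observe that in an irreducible reduced root system every root has length at most $\|\alpha'\|$ for any long root $\alpha'$. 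Thus $\|\beta\| \leq \|\alpha'\|$ for a long root $\alpha'$, giving $(\gamma \mid \gamma) \leq 2\|\alpha'\| = 2(\alpha' \mid \alpha')$ directly.

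The first thing I would do is reduce to the irreducible case: by the decomposition lemma for $\mathbb{DS}(R)$ of a reducible root system, a degenerate sum of divisor $2$ either lies inside a single irreducible component $R_i$, or (in the all-type-$C$ case) is a sum of two roots lying in distinct type-$C$ components. In the first case we are reduced to $R$ irreducible. In the second case $\gamma = 2\epsilon_i + 2\epsilon_j$ with the two summands being long roots in their respective $C$-components (by Lemma~\ref{root_deg_sums_intersection_lem}(ii) the roots in $R \cap \mathbb{DS}$ of a type-$C$ system are exactly the $\pm 2\epsilon$'s), and $(\gamma\mid\gamma) = \|2\epsilon_i\| + \|2\epsilon_j\| = 2\|2\epsilon_i\|$, so we may take $\alpha' = 2\epsilon_i$. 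So the heart of the matter is the irreducible case, where the argument of the previous paragraph applies: orthogonality of $\alpha$ and $\beta$ forces $(\gamma\mid\gamma) = \|\alpha\| + \|\beta\|$, and since every root length is bounded by the long-root length, $\|\alpha\| + \|\beta\| \leq 2\|\alpha'\|$.

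The one subtlety — and the step I expect to require the most care — is the edge case where $R$ is of type $C_I$ and $\gamma$ itself is a root, since then one must check that the ``long root $\alpha'$'' can still be found; but here $\gamma = \pm 2\epsilon_i$ is itself long, so $(\gamma\mid\gamma) = \|\gamma\| \leq 2\|\gamma\|$ trivially, taking $\alpha' = \gamma$. A second point to handle cleanly is the normalization convention: the inner product is normalized so that short roots have length $1$ and all root lengths lie in $\{1,2,3\}$; I would invoke this (as recalled in the discussion preceding the statement) to justify the bound $\|\beta\| \leq \|\alpha'\|$ where $\alpha'$ is any long root, and also to note that in a doubly laced system long roots have a well-defined common length. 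With these pieces the proof is short: reduce to irreducible via the $\mathbb{DS}$-decomposition lemma, apply Lemma~\ref{vdK_2.6}(iii) to get orthogonality, compute $(\gamma\mid\gamma) = \|\alpha\| + \|\beta\|$, and bound each summand by the length of a long root.
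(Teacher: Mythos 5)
Your argument is correct and essentially the same as the paper's: both use Lemma~\ref{vdK_2.6}(iii) to get $(\alpha \mid \beta) = 0$, compute $(\gamma \mid \gamma) = (\alpha \mid \alpha) + (\beta \mid \beta)$, and bound this by twice the length of a long root. The discussion of the reducible case is superfluous here, since the whole subsection already restricts to $R$ irreducible, but it does no harm.
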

 \begin{proof} If $\gamma$ is degenerate sum with respect to $2,$ then $\gamma = \alpha + \beta$ for $(\alpha | \beta) = 0$ by Lemma~\ref{vdK_2.6}(iii). 
Thus
 $$(\gamma | \gamma) = (\alpha| \alpha) + (\beta | \beta) \leq 2 \max\{(\alpha | \alpha), (\beta | \beta)\} \leq 2\max_{\alpha ' \in R}\{(\alpha' | \alpha')\}. $$
 
\end{proof}
 Lemma~\ref{deg_sum_as_double_fund_weight_lem} allows us to use the following strategy to find degenerate sums with respect to $2$: 
\begin{enumerate}
\item Compute the intersection of the doubled weight lattice with the root lattice.
\item Find an element $\gamma$ in this intersection of length shorter than  or equal to twice the length of  a long root.  
\item If such an element exists, there is a fundamental weight $\omega$ such that $\gamma$ is conjugate to $2\omega.$
\item Write $2 \omega = \alpha + \beta$ for two independent roots. 
\item Compute the Weyl group orbit of all $(\alpha, \beta)$ in step 4 to obtain the degenerate pairs in the orbit of $\gamma$.
\item Go back to step 2 and if there is  $\gamma$ of length shorter than or equal to twice the length of a long root, which does not lie in the orbits of the already found degenerate sums, repeat steps 3 through 6. 
\end{enumerate}
This strategy was as well suggested in \cite{vdK}. 
\subsection{Classification for the finite types and divisor $2$}

The goal is now to classify degenerate sum of index $2$ in finite root systems. Note that for convenience, in step 2 of the algorithm above we might choose another normalization of the inner product, most frequently this will be done in such a way that the length and the norm in the underlying Euclidean vector space coincide.  We will do the classification case-by-case. For each case, the degenerate sums will be listed in a table for quick reference later on. 
\subsubsection*{Type $A_\ell$, $\ell > 1$}
\begin{itemize}
\item[] $R = \{\epsilon_i - \epsilon_j: 1 \leq i \neq j \leq \ell + 1\},$
\item[] $\mathcal{Q}(R) \subset \bigoplus_{i = 1}^{\ell + 1} \mathbb Z \epsilon_i, $ consisting of those vectors whose coordinates sum up to zero, 
\item[] $\mathcal{P}(R) = \mathcal{Q}(R) + \mathbb Z ( \epsilon_1 + \frac{1}{\ell + 1}(\epsilon_1 + \ldots + \epsilon_{\ell + 1})) ,$
\end{itemize}
 The $i$th fundamental weight is $\sum_{j = 1}^i (1- \frac{i}{\ell + 1})\epsilon_j + \sum_{j > i}(- \frac{i}{\ell + 1}\epsilon_j).$ In particular for each $j$ the coefficient of $\epsilon_j$ in $\omega_i$ is non-zero. The length of a (long) root is $2.$\\
An element $\gamma$ in the root lattice, which is not a root, can have length less than or equal to $4$ if and only if one of the following holds: Either $\gamma = 2(\epsilon_i - \epsilon_j)$ or $\gamma = \epsilon_i- \epsilon_j + \epsilon_k - \epsilon_l $ for $i,j,k,l$ pairwise distinct. The  element $\omega = \epsilon_i - \epsilon_j$ is not conjugate to  a fundamental weight unless $\ell = 1$ and this contradicts the assumption $\ell > 1.$ The element $\omega = 1/2(\epsilon_i- \epsilon_j + \epsilon_k - \epsilon_l )$ is conjugate to a fundamental weight if and only if $\ell = 3.$
Then $\frac{\gamma}{2}$ is conjugate to  $\omega = 1/2 (\epsilon_1 + \epsilon_2 - \epsilon_3 - \epsilon_4)$, and $\epsilon_1 + \epsilon_2 - \epsilon_3 - \epsilon_4$ is also a sum of two linearly independent roots, hence a degenerate sum. The orbit of $2\omega$ gives all degenerate sums. 
 
\begin{equation}
\begin{tabular}[h]{|l|l|l|}
\hline
Type & $n_\gamma$ & Degenerate sums \\
\hline
$A_3$ & $2$ & $ \pm \{\varepsilon_1 - \varepsilon_2 + \varepsilon_3 - \varepsilon_4, \varepsilon_1 - \varepsilon_2 - \varepsilon_3 + \varepsilon_4, \varepsilon_1 + \varepsilon_2 - \varepsilon_3 - \varepsilon_4 \}$\\
\hline
\end{tabular}
	\label{tab:DegSumsA}
\end{equation}
\subsubsection{Type $B_\ell$, $\ell > 2$}
\begin{itemize}
\item[] $R = \{\pm \epsilon_i , \pm \epsilon_i \pm \epsilon_j: 1 \leq i \neq j \leq \ell\},$
\item[] $\mathcal{Q}(R) = \bigoplus_{i = 1}^\ell \mathbb Z \epsilon_i, $
\item[] $\mathcal{P}(R) = \mathcal{Q}(R) + \mathbb Z 1/2(\epsilon_1 + \ldots + \epsilon_\ell),$
\item[] $2 \mathcal{P}(R) \cap \mathcal{Q}(R) = \{\sum m_i \epsilon_i: m_i - m_j \in 2 \mathbb Z, \;\forall \, 1 \leq i, j\leq \ell \}.$
\end{itemize}
The description of $ \mathcal P(R)$ and $\mathcal Q(R)$ can for instance be found in \cite{BouLie2}. The elements in $2 \mathcal P(R)$ are precisely those of the form $\sum_{i = 1}^\ell (2 m_i + b)\epsilon_i$ where $m_i \in \mathbb Z$ and $b = 0$ or $b = 1.$ Equivalently, the difference of two coefficients is divisible by $2.$
The length of a long root is $2$. A degenerate sum  $\gamma$ must satisfy at least the following:  $\gamma = \sum m_i \epsilon_i$ such that $ 1\leq \sum m_i^2 = (\gamma, \gamma) \leq 4$ and the coefficients are pairwise congruent modulo $2$. Thus the coefficients of a degenerate sum are  $0,\pm 1$ or $\pm 2$. If one $m_i = \pm 2$, then all others are zero. Hence $\gamma$ is conjugate to $2\epsilon_1$ and since $2\epsilon_1 = \epsilon_1 - \epsilon_ i + \epsilon_ i +\epsilon_1$, this is indeed a degenerate sum. \\
If one of the $m_i = \pm 1$,  all the others have to equal $\pm 1$ as well since otherwise they would not be congruent modulo $2$. All such elements are conjugate under the Weyl group and we may assume that $m_i = 1$ for all $1 \leq  i \leq \ell$. The length of such an element is $\ell$ and hence it can only be a degenerate sum if $3 \leq \ell \leq 4$ and in this case $  1/2 \sum_{i = 1}^\ell \epsilon_i$ is a fundamental weight and its double $\sum_{i = 1}^\ell \epsilon_i$ is the sum of two non-proportional roots. 
 \begin{equation}
\begin{tabular}[h]{|l|l|l|}
\hline
Type & $n_\gamma$ & Degenerate sums \\
\hline
$B_3$ & $2$ & $\pm \{2 \varepsilon_i : 1 \leq i \leq 3\} \cup \{\pm \epsilon_1 \pm \epsilon_2 \pm \epsilon_3\}$\\
\hline
$B_4$ & $2$ & $\pm \{2 \varepsilon_i : 1 \leq i \leq 4\} \cup \{\pm \epsilon_1 \pm \epsilon_2 \pm \epsilon_3 \pm \epsilon_4\}$\\
\hline
$B_I, \mathrm{card}(I)> 4$ & $2$ & $\pm \{2 \varepsilon_i, i \in I\}$\\
\hline
\end{tabular}
	\label{tab:DegSumsB}
\end{equation}

\subsubsection*{Type $C_\ell$, $\ell \geq 2$}
\begin{itemize}
\item[] $R = \{ \pm \epsilon_i \pm \epsilon_j: 1 \leq i , j \leq \ell\},$
\item[] $\mathcal{Q}(R) = \{\sum_{i = 1}^\ell m_i \epsilon_i:  m_i \in \mathbb Z,  \sum m_i \in 2 \mathbb Z\},$
\item[] $\mathcal{P}(R) = \bigoplus_{i = 1}^\ell \mathbb Z \epsilon_i,$
\item[] $2 \mathcal{P}(R) \cap \mathcal{Q}(R) = \bigoplus_{i = 1}^\ell 2 \mathbb Z \epsilon_i.$
\end{itemize}
The elements of $2 \mathcal{P}(R)$ are those vectors that have only even coefficients. Evidently, the sum over the coefficients will be even as well, so that $2 \mathcal{P}(R) \cap \mathcal{Q}(R) = \bigoplus 2 \mathbb Z \epsilon_i.$
The length of a long root is $4$ (after appropriate scaling).  An element $\gamma$ of $2 \mathcal{P}(R) \cap \mathcal{Q}(R)$ can only be a degenerate sum if its length is $4$ or $8$, i.e., $\gamma$ equals $\pm 2 \epsilon_i$ or $2(\epsilon_i \pm \epsilon_j),\, i\neq j.$ Both can be written as sum of two independent roots and thus they are degenerate sums. 
 \begin{equation}
\begin{tabular}[h]{|l|l|l|}
\hline
Type & $n_\gamma$ & Degenerate sums \\
\hline
$C_I, \mathrm{card}(I) \geq 2$ & $2$ & $\pm  \{2\varepsilon_i: i \in I\} \cup \pm \{ 2(\varepsilon_i \pm \varepsilon_j): i\neq j \in I\}$ \\
\hline
\end{tabular}
	\label{tab:DegSumsC}
\end{equation}

\subsubsection*{Type $D_\ell$, $\ell > 3$}
\begin{itemize}
\item[] $R = \{ \pm \epsilon_i \pm \epsilon_j: 1 \leq i \neq j \leq \ell\},$
\item[] $\mathcal{Q}(R) = \{\sum m_i \epsilon_i:  \sum m_ i \in 2 \mathbb Z\},$
\item[] $\mathcal{P}(R) = \mathcal{Q}(R) + \mathbb Z 1/2(\epsilon_1 + \ldots + \epsilon_\ell),$
\item[] $2 \mathcal{P}(R) \cap \mathcal{Q}(R) = \begin{cases} 2 \mathcal P(R), & \ell \in 2 \mathbb Z, \\
 2 \mathcal Q(R),&  \ell \in  2 \mathbb Z + 1.  \end{cases} $
\end{itemize}
An element in $2 \mathcal P(R)$ is of the form $\gamma = \sum_{i = 1}^\ell (2m_i + b)\epsilon_i$ where $\sum m_i \in 2 \mathbb Z $ and $b = 0$ or $b = 1.$ If $\ell$ is even, then  the sum over the coefficients is divisible by $2$ and $\gamma$ is contained in the root lattice, hence $2 \mathcal P(R) \cap \mathcal Q(R) = 2 \mathcal P(R) $ for $\ell$ even. If $\ell$ is odd, then $\gamma \in 2 \mathcal P(R) \cap \mathcal Q(R)$ if and only if $b = 0$ and then $2 \mathcal P(R) \cap \mathcal Q(R) = 2 \mathcal Q(R)$, $\ell$ odd. 
The length of the $i$th fundamental weight $\omega_i$ is $i$ for $1 \leq i \leq \ell - 2$ and the other two fundamental weights have length $\ell/4$, \cite[Planche IV]{BouLie2}. Thus, since we have to find the fundamental  weights such that their length multiplied by $4$ is smaller than or equal to $4,$ the first candidate is $\epsilon_1$ and if $\ell \neq 4,$ this is the only one.  Indeed $ \gamma = 2 \epsilon_1 = \epsilon_1 + \epsilon_2 + \epsilon_1 - \epsilon_2$ is a degenerate sum. The Weyl group orbit of $\gamma$ is $\pm 2 \epsilon_i,$ $1 \leq i \leq \ell.$ 
If $\ell = 4,$ then if $\gamma  = 2\omega_3$ or $\gamma = 2 \omega_4,$ the length of $\gamma$ is $4.$ In fact, $\omega_3$ and $\omega_4$ are conjugate under the Weyl group, so that it suffices to observe that $2 \omega_4 = \epsilon_1 + \epsilon_2 + \epsilon_3 + \epsilon_4$ is a degenerate sum. The Weyl group acts on $2 \omega_4$ by flipping the signs arbitrarily.
  \begin{equation}
\begin{tabular}[h]{|l|l|l|}
\hline
Type & $n_\gamma$ & Degenerate sums \\
\hline
$D_4$ & $2$ & $\pm \{2 \varepsilon_i: 1\leq i\leq 4\} \cup \{\pm \varepsilon_1 \pm \varepsilon_2 \pm \varepsilon_3 \pm \varepsilon_4\},$ \\
\hline
$D_I, \mathrm{card}(I) > 4$ & $2$  & $\{\pm 2\varepsilon_i, i \in I\}.$ \\ 
\hline
\end{tabular}
	\label{tab:DegSumsD}
\end{equation}

\subsubsection{Type $E$}
\begin{itemize}
\item[] $E_8 = \{ \pm \epsilon_i \pm \epsilon_j: 1 \leq i \neq j \leq 8\} \cup \{\frac{1}{2}\sum (-1)^{\nu(i)}\epsilon_i: \sum \nu(i) \in 2 \mathbb Z\},$
\item[] $\mathcal{Q}(R) = \{\sum m_i \epsilon_i:   2m _i \in \mathbb Z, \sum m_ i  \in 2 \mathbb Z, m_i - m_j \in \mathbb Z\},$
\item[] $\mathcal{P}(R) = \mathcal{Q}(R),$
\item[] $2 \mathcal{P}(R) \cap \mathcal{Q}(R) = 2 \mathcal{P}(R) = \{\sum m_i \epsilon_i: m_i \in \mathbb Z, \sum m_i \in 4 \mathbb Z, m_i - m_j \in 2\mathbb Z\}.$
\end{itemize} First assume $R = E_8,$ then
the connection index of the root system is $1$, so the weight lattice equals the root lattice and $2 \mathcal{P}(R) \cap \mathcal{Q}(R) = 2 \mathcal{P}(R).$
The candidates for degenerate sums have squared length less than or equal to $4$ and lie in $2 \mathcal{P}(R)$. Since the rank of the lattice is greater than $8$ and all entries must be congruent modulo $2$ this leaves only one option: $m_i = \pm 2$ for exactly one index $i$. But then $\sum m_ i = \pm 2$ which is not in $4 \mathbb Z.$ Thus there are no elements of the required length in $2\mathcal{P}(R)$, hence no degenerate sums for type $E_8.$ Similarly, since the weight lattices  resp the root lattices of $E_6$ and $E_7$ are contained in the weight lattice  resp. the root lattice of $E_8,$ there are no degenerate sums for $E_6$ and $E_7$ either.  
\subsubsection*{Type $F_4$}
\begin{itemize}
\item[] $R = \{ \pm \epsilon_i, \pm \epsilon_i \pm \epsilon_j: 1 \leq i \neq j \leq 4\} \cup \{1/2\sum (-1)^{\nu(i)}\epsilon_i : \nu(i) \in \mathbb Z\},$
\item[] $\mathcal{P}(R) = \mathcal{Q}(R) = \bigoplus \mathbb Z \epsilon_i + \mathbb Z 1/2(\epsilon_1 + \ldots + \epsilon_4),$
\item[] $2 \mathcal{P}(R) \cap \mathcal{Q}(R) = 2 \mathcal P(R) = \{\sum m_i \epsilon_i: m_i - m_j \in 2 \mathbb Z, \;\forall \, 1 \leq i, j\leq 4 \}.$
\end{itemize}
The root system $B_4$ sits inside $F_4$ and their weight lattices coincide. Therefore the elements in the double weight lattice which have length at most $4$ are the same in both cases. Since $B_4 \subset F_4$, this allows us to conclude $\mathbb{DS}(B_4) \subset \mathbb{DS}(F_4)$. The other inclusion is trivial, since a degenerate sum for $F_4$ will be a degenerate sum for any subsystem. The summands have to contained in the subsystem in question by definition of a degenerate sum. 
Therefore $\mathbb{DS}(B_4) =  \mathbb{DS}(F_4)$.
\begin{equation}
\begin{tabular}[h]{|l|l|l|}
\hline
Type & $n_\gamma$ & Degenerate sums \\
\hline
$F_4$ & $2$ & $\pm \{2 \varepsilon_i : 1 \leq i \leq 4\} \cup \{\pm \epsilon_1 \pm \epsilon_2 \pm \epsilon_3 \pm \epsilon_4\}$\\
\hline
\end{tabular}
	\label{tab:DegSumsF}
\end{equation}
\subsubsection{Type $G_2$}
\begin{itemize}
\item[] $R = \pm \{\epsilon_i - \epsilon_j, 2 \epsilon_i - \epsilon_j - \epsilon_k : \{i, j , k\} = \{ 1, 2, 3\} \},$
\item[] $\mathcal{P}(R) = \mathcal{Q}(R) \cong  \mathbb{Z}^3 \cap \{m_1\epsilon_1 + m_2\epsilon_2 + m_3\epsilon_3: m_1 + m_2 + m_3 = 0\}, $
\item[] $2\mathcal{P}(R) \cong 2 \mathbb{Z}^3 \cap \{m_1\epsilon_1 + m_2\epsilon_2 + m_3\epsilon_3: m_1 + m_2 + m_3 = 0\}.$ 
\end{itemize}
Since short roots have length $1$ in $G_2,$ then the length function in terms of coordinates is given by $ \|m_1\epsilon_1 + m_2\epsilon_2 + m_3\epsilon_3\| = \frac{1}{2}(m_1^2 + m_2^2 + m_3^2) $, so a candidate for a degenerate sum has to be of the form $\gamma = 2 m_1 \epsilon_1 + 2 m_2 \epsilon_2 + 2 m_3 \epsilon_3$ such that
$ (m_1^2 + m_2^2 + m_3^2) \leq 6$  and the sum over the coordinates is $0$. Thus $\gamma$ can only be conjugate to $2(\epsilon_1 - \epsilon_2)$ or $2(\epsilon_1 + \epsilon_2 - 2\epsilon_3)$. The latter cannot be written as sum of two independent roots, but for the former we get $2(\epsilon_1 - \epsilon_2) = 2\epsilon_1  - \epsilon_2 - \epsilon_3 + (-\epsilon_3 - \epsilon_1)$, so it is indeed a degenerate sum. 
  \begin{equation}
\begin{tabular}[h]{|l|l|l|}
\hline
Type & $n_\gamma$ & Degenerate sums \\
\hline
$G_2$ & $2$ & $\{ 2(\varepsilon_i - \varepsilon_j) : 1 \leq i\neq j \leq 3 )\}$ \\
\hline
\end{tabular}
	\label{tab:DegSumsG}
\end{equation}
The classification of degenerate sums for finite reduced irreducible root systems can be extended to possibly infinite ones. The key observation is that most of the results of \cite[\S 2]{vdK} depend only on local properties of the root system and generalize therefore to locally finite root systems. 
\begin{lem}
 \label{loc_deg_Sum_lem}
 Let $\gamma = \alpha + \beta$ be a degenerate sum in $R$ where $R$ is irreducible, reduced and of rank at least 5. Then there is a finite subsystem $S$ of $R$ containing $\alpha$ and $\beta$ such that $\gamma$ is a degenerate sum in $\mathcal Q(S)$. Moreover we may assume that the type of $R$ equals the type of $S$ and that the rank of $S$ is at least $5$.
 \end{lem}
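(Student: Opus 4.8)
The plan is to reduce the statement to two things: first, finding a single finite subsystem $S_0 \subseteq R$ that contains $\alpha$ and $\beta$ and in which $\gamma = \alpha+\beta$ is still a degenerate sum; second, enlarging $S_0$ inside $R$ to a finite subsystem $S$ of the same type as $R$ and of rank at least $5$, while preserving the degenerate-sum property. For the first part, I would argue as follows. Since $\alpha,\beta$ are linearly independent, $F_0 := \spa_{\mathbb K}\{\alpha,\beta\}$ is a $2$-dimensional subspace, and by Lemma~\ref{subsystem_lem} the set $S' := R \cap F_0$ is a finite subsystem containing $\alpha$ and $\beta$. Now $\gamma$ is certainly a sum of two linearly independent roots of $S'$, so by Lemma~\ref{vdK_2.6}(iv) (with $S = S'$) the pair $(\alpha,\beta)$ lies in $\mathbb{DP}(S', n_\gamma)$, i.e.\ $\gamma$ is a degenerate sum in $\mathcal Q(S')$. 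This already gives a finite witnessing subsystem, so the only remaining work is to arrange that its type and rank match those of $R$ (up to having rank at least $5$).

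For the second part I would use the direct-limit structure of infinite irreducible root systems recalled after Theorem~\ref{loc_fin_irred_class_thm}: since $R$ is irreducible reduced of rank $\geq 5$, it is of some type $X \in \{A, B, C, BC, D\}$ (note $BC$ is excluded here because $R$ is reduced) and $R = X_I = \varinjlim X_\lambda$ where the $X_\lambda$ are finite subsystems of $R$ of type $X$ and arbitrarily high rank. The finite set $S' \cup B'$, where $B'$ is a root basis of $S'$, is finite, so $F := \spa_{\mathbb K}(S')$ is finite-dimensional; by local finiteness $R \cap F$ is finite, hence contained in some member of a cofinal chain of finite type-$X$ subsystems. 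Concretely, I would pick a finite type-$X$ subsystem $S \subseteq R$ of rank at least $5$ with $\alpha,\beta \in S$: one can take a finite index subset $I_0 \subseteq I$ with $|I_0| \geq 5$ large enough that $\alpha$ and $\beta$ (being $\mathbb Z$-combinations of finitely many $\varepsilon_i$) both lie in the corresponding coordinate subspace, and set $S = X_{I_0}$, which is of type $X$ and of rank $|I_0|$ (resp.\ $|I_0|$ or close to it, matching the finite classification in Examples~\ref{type_A_expl}--\ref{BC_I}). Finally, $S' \subseteq S$, so by Lemma~\ref{vdK_2.6}(iv) applied with this $S$, the pair $(\alpha,\beta)$ lies in $\mathbb{DP}(S, n_\gamma)$; that is, $\gamma$ is a degenerate sum in $\mathcal Q(S)$. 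Since $S$ has the same type as $R$ and rank $\geq 5$, this is exactly the assertion.

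The main obstacle I anticipate is purely bookkeeping with the ambient vector spaces: the subsystem $S' = R \cap F_0$ sits in a $2$-dimensional space, while the finite type-$X$ model $X_{I_0}$ lives in a coordinate subspace $\bigoplus_{i \in I_0}\mathbb K\varepsilon_i$, and one must make sure these are compatible, i.e.\ that $\alpha,\beta$ really do lie in $R \cap \bigl(\bigoplus_{i\in I_0}\mathbb K\varepsilon_i\bigr) = X_{I_0}$ for a suitable finite $I_0$. This is where the hypothesis $\operatorname{rank} R \geq 5$ is used: it guarantees there is room to choose $|I_0| \geq 5$ while still capturing the (finitely many) coordinates occurring in $\alpha$ and $\beta$, and the classification Theorem~\ref{loc_fin_irred_class_thm} together with the direct-limit remark guarantees $X_{I_0}$ is again of type $X$. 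Everything else — that $\gamma$ stays degenerate when we pass between subsystems — is handed to us by Lemma~\ref{vdK_2.6}(iv), which is the crucial input.
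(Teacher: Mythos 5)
Your proof is correct and follows essentially the same route as the paper's (very terse) proof: the key inputs are the direct-limit structure of infinite irreducible locally finite root systems (the paper cites \cite[Lemma 8.3]{lfrs}, which is what you use via the remark after Theorem~\ref{loc_fin_irred_class_thm}) and Lemma~\ref{vdK_2.6}(iv) for the descent of the degenerate-sum property to subsystems; the paper simply leaves these details implicit. The only inefficiency is your preliminary step with $S' = R \cap \spa_{\mathbb K}\{\alpha,\beta\}$, which is subsumed by the later choice of a rank-$\geq 5$ type-$X$ subsystem $S$ containing $\alpha$ and $\beta$ and is not actually needed.
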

 \begin{proof}For finite $R$ there is nothing to show. If $R$ is infinite this is a consequence of \cite[Lemma 8.3]{lfrs}.
  \end{proof}
\begin{cor}The following table contains all degenerate sums for $R$ an irreducible reduced root system. 
 \begin{equation}
\begin{tabular}[h]{|l|l|l|}
\hline
Type & $n_\gamma$ & Degenerate sums \\
\hline
$A_2$ & $3$ & $  \pm\{ \varepsilon_1 - 2 \varepsilon_2 + \varepsilon_3, \varepsilon_1 + \varepsilon_2 - 2\varepsilon_3, 2\varepsilon_1 - \varepsilon_2 - \varepsilon_3 \} $\\
\hline
$A_3$ & $2$ & $ \pm \{\varepsilon_1 - \varepsilon_2 + \varepsilon_3 - \varepsilon_4, \varepsilon_1 - \varepsilon_2 - \varepsilon_3 + \varepsilon_4, \varepsilon_1 + \varepsilon_2 - \varepsilon_3 - \varepsilon_4 \}$\\
\hline
$B_3$ & $2$ & $\pm \{2 \varepsilon_i : 1\leq i \leq 3\} \cup \{\pm \epsilon_1 \pm \epsilon_2 \pm \epsilon_3\}$\\
\hline
$B_4$ & $2$ & $\pm \{2 \varepsilon_i : 1\leq i \leq 4\} \cup \{\pm \epsilon_1 \pm \epsilon_2 \pm \epsilon_3 \pm \epsilon_4\}$\\
\hline
$B_I, \mathrm{card}(I)> 4$ & $2$ & $\pm \{2 \varepsilon_i, i \in I\}$\\
\hline
$C_I, \mathrm{card}(I) \geq 2$ & $2$ & $\pm  \{2\varepsilon_i: i \in I\} \cup \pm \{ 2(\varepsilon_i \pm \varepsilon_j): i\neq j \in I\}$ \\
\hline
$D_4$ & $2$ & $\pm \{2 \varepsilon_i: 1\leq i\leq 4\} \cup \{\pm \varepsilon_1 \pm \varepsilon_2 \pm \varepsilon_3 \pm \varepsilon_4\}$ \\
\hline
$D_I, \mathrm{card}(I) > 4$ & $2$  & $\{\pm 2\varepsilon_i, i \in I\}$ \\ 
\hline
$F_4$ & $2$ & $\pm \{ 2 \varepsilon_i: 1 \leq i \leq 4 \} \cup \{ \pm \varepsilon_1 \pm \varepsilon_2 \pm \varepsilon_3 \pm \varepsilon_4 \}$\\
\hline
$G_2$ & $2$ & $\{ 2(\varepsilon_i - \varepsilon_j) : 1 \leq i\neq j \leq 3 )\}$ \\
\hline
$G_2$ & $3$ & $\{3(\varepsilon_i - \varepsilon_j) : 1 \leq i\neq j \leq 3 )\}$\\
\hline
\end{tabular}
	\label{tab:DegSums}
\end{equation}
\end{cor}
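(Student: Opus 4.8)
The plan is to treat this corollary as an assembly result: essentially all of the work has already been done in the preceding subsections for the finite types, and the locality Lemma~\ref{loc_deg_Sum_lem} lets one transport those answers to the infinite types. I would proceed in two stages.

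\emph{Finite types.} For $R$ a finite irreducible reduced root system I would simply collect the conclusions established case-by-case in Subsection~\ref{Class_deg_Sums}. For divisor $3$ this gives exactly types $A_2$ and $G_2$, with the displayed lists; for divisor $2$ the separate treatments of types $A_\ell$, $B_\ell$, $C_\ell$, $D_\ell$, $E$, $F_4$ and $G_2$ yield respectively: degenerate sums only for $A_3$; the stated lists for $B_3$, $B_4$ and a single list $\pm\{2\varepsilon_i\}$ for $B_\ell$ with $\ell\geq 5$; the list $\pm\{2\varepsilon_i\}\cup\pm\{2(\varepsilon_i\pm\varepsilon_j)\}$ for every $C_\ell$ with $\ell\geq 2$; the list for $D_4$ and $\pm\{2\varepsilon_i\}$ for $D_\ell$ with $\ell\geq 5$; nothing for $E_6,E_7,E_8$; the $B_4$-list for $F_4$; and the two-element list for $G_2$. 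It then remains to record the low-rank coincidences $C_1\cong A_1$ (which has no independent pair of roots, hence no degenerate sums), $B_2\cong C_2$ (absorbed into the $C_I$ row with $\card{I}=2$) and $D_3\cong A_3$ (absorbed into the $A_3$ row), and to note that $A_1$ and $A_\ell$ with $\ell\geq 4$ contribute nothing. This exhausts all finite irreducible reduced types and reproduces the finite part of the table.

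\emph{Infinite types.} For $R$ infinite I would invoke Theorem~\ref{loc_fin_irred_class_thm}: such $R$ is isomorphic to one of $\dot A_I$, $B_I$, $C_I$, $D_I$ for an infinite index set $I$, the type $BC_I$ being excluded since it is not reduced. An infinite root system has rank $\geq 5$, so Lemma~\ref{loc_deg_Sum_lem} applies: for any degenerate sum $\gamma=\alpha+\beta$ of $R$ there is a finite subsystem $S\subset R$ of the same type, of rank $\geq 5$, containing $\alpha$ and $\beta$, in which $\gamma$ is a degenerate sum of the same divisor. Reading off the finite table for a rank-$\geq 5$ subsystem then forces: no $\gamma$ at all for $\dot A_I$ (no $A_m$ with $m\geq 5$ has degenerate sums), $\gamma\in\pm\{2\varepsilon_i\}$ for $B_I$ and $D_I$, and $\gamma\in\pm\{2\varepsilon_i\}\cup\pm\{2(\varepsilon_i\pm\varepsilon_j)\}$ for $C_I$. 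For the reverse inclusion I would verify directly that each surviving candidate is genuinely a degenerate sum of the full infinite $R$: it is a sum of two linearly independent roots (e.g.\ $2\varepsilon_i=(\varepsilon_i-\varepsilon_j)+(\varepsilon_i+\varepsilon_j)$), and pairing it with an arbitrary coroot of $R$ produces an even integer — the identical computation to the one in the finite subsections, which does not depend on the size of $I$.

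The one point needing care — and what I regard as the main, though mild, obstacle — is precisely this reverse inclusion in the infinite case. A degenerate sum of a subsystem $S$ need not remain degenerate in $R$: enlarging the root system can only shrink the divisor $n_\gamma$, since it is a greatest common divisor taken over more coroots, so one cannot simply cite Lemma~\ref{vdK_2.6}(iv) in that direction. Hence for each retained candidate $2\varepsilon_i$ (and $2(\varepsilon_i\pm\varepsilon_j)$ in type $C$) one must re-run the check $\langle\gamma,R \che\rangle\subset 2\mathbb Z$ over \emph{all} roots of the infinite system. As this merely repeats the finite computations, the whole corollary reduces to bookkeeping layered on top of Subsection~\ref{Class_deg_Sums} and Lemma~\ref{loc_deg_Sum_lem}.
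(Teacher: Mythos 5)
Your proposal follows the same route as the paper: assemble the finite cases from the case-by-case analysis of Subsection~\ref{Class_deg_Sums} (the paper's proof literally says one can merge the tables, or cite van der Kallen directly), and for the infinite types use Lemma~\ref{loc_deg_Sum_lem} to reduce to a finite subsystem of rank $\geq 5$ and then re-verify the divisibility condition $\langle\gamma,R\che\rangle\subset 2\mathbb Z$ over the whole infinite system. Your remark that this last re-verification cannot be skipped — because enlarging $S$ to $R$ can only shrink the divisor, so degeneracy in $S$ does not automatically persist in $R$ — is exactly the point the paper's proof makes explicit, so the argument is correct and matches.
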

\begin{proof}
If $R$ is finite, then this is \cite[Table 1, p.13]{vdK} or can be obtained by merging Tables 1 to 7. Assume now that $R = X_I$ is infinite and that $\gamma = \alpha + \beta$ is a degenerate sum of divisor $n_\gamma$. By Lemma~\ref{loc_deg_Sum_lem} there is a finite subsystem $S$ of type $X$  and rank at least $5$ such that $\alpha, \beta \in S$ and $\gamma = \alpha + \beta$ is a degenerate sum in $\mathcal Q(S)$. It follows that $X \neq \dot{A}$, since for $A_n$, $n \geq 5$ no degenerate sums occur. Without loss of generality we may assume $X = B, C$ or $D$. The classification of the finite cases tells us that for $4 < \mathrm{rank}(S) < \infty$ all degenerate sums $\gamma = \alpha + \beta$ have divisor $2$, so infinite root systems will also only have degenerate sums of divisor $2$ and the set of degenerate sums for type $X_I$ is contained in $\bigcup_{J \subset I, 4 <\card J < \infty} \mathbb{DS}(X_J)$ : 
\begin{eqnarray*}
\mathbb{DS}(B_I)& \subset &\pm \{2 \varepsilon_i, i \in I\},\\
\mathbb{DS}(C_I)& \subset & \pm \{2 \varepsilon_i, i \in I\} \cup \pm \{ 2(\varepsilon_i \pm \varepsilon_j): i\neq j \in I\},\\
 \mathbb{DS}(D_I)& \subset & \pm \{2 \varepsilon_i, i \in I\}.
\end{eqnarray*}
Assume that $\gamma = \alpha + \beta$ is a degenerate sum for $X_J$, $4 < \card J <\infty$, $J \subset I$. Then the roots $\alpha$ and $\beta$ are also linearly independent in $X_I$ and the argument above gives us that $n_\gamma =2$. It remains to prove that $\langle \gamma, \alpha\che \rangle \in 2 \mathbb Z$ for all $\alpha \in X_I$. \\
Let $X = B$ or $D$. Then $\gamma = 2\varepsilon_i $ some $i \in I$ and $\langle 2 \varepsilon_i, \alpha\che \rangle = 2\langle  \epsilon_i, \alpha\che \rangle $ where $\langle  \epsilon_i, \alpha\che \rangle  \in \{0, \pm 1\}$ for all roots $\alpha$. 
Thus $\langle \gamma, \alpha\che \rangle$ is twice an integer.\\
If $X =C$ then a degenerate sum $\gamma$ is either a long root or twice a short root. Since the long roots are of the form $2 \epsilon_i$  $i \in I$ the same argument as for types $B$ and $C$ shows that $\langle \gamma, R \rangle \subset 2 \mathbb Z$. If $\gamma = 2\beta$, $\beta \in R$ then $\langle \gamma, \alpha\che \rangle = 2 \langle \beta, \alpha \che\rangle \in 2 \mathbb Z$ for all roots $\alpha$. 
So equality holds in all three inclusions above. 
\end{proof}
\begin{rem} We note the following mysterious facts:
\begin{eqnarray*}
A_2 \cup \mathbb{DS}(A_2) &=& G_2, \\
B_I \cup  \mathbb{DS}(B_I) &=& BC_I, \card I > 4, \\
D_I \cup  \mathbb{DS}(D_I) &=& C_I, \card I > 4.\\
\end{eqnarray*}
\end{rem}
\subsection{Degenerate sums and central extensions of R-graded Lie algebras}

\begin{defi}[\cite{Neh3}]
Let $R$ be a reduced root system. An \emph{$R$-graded Lie algebra} is a $k$-Lie algebra together with a $\mathcal Q(R)$-grading
such that
\begin{enumerate}[(i)] 
\item  $L = \bigoplus_{\alpha \in R}L_\alpha,$ where $L_\alpha$ is a submodule of $L$,  called the \emph{homogeneous space of degree $\alpha.$}
\item For every $\alpha \in R^\times$ the homogeneous space $L_\alpha$ contains an invertible element $e_\alpha$ of the $\mathcal Q(R)$-graded Lie algebra $L$, i.e., there exists an element $f_\alpha \in L_{-\alpha}$ such that $\ad h_{\alpha}$, $h_{\alpha} := [f_\alpha, e_\alpha]$, acts diagonally on $L$: 
$$\ad h_{\alpha}|L_\beta = \langle \beta, \alpha\che \rangle \mathrm{id}_{L_\beta}\quad \mbox{for all }\beta \in R.$$
\item $\sum_{\alpha \in R^\times}[L_{\alpha}, L_{-\alpha}] = L_{0}.$
\end{enumerate}
\label{r_delta_graded_def}

Note that $(e_\alpha, h_\alpha, f_\alpha)$ is an $\mathfrak{sl}_2$-triple in the sense of \cite{BouLie3}.\\
\end{defi} 

We would like to include either in the definition of a root-graded Lie algebra $L$ as axiom that $L$ is perfect or show that perfectness follows from the axioms. This holds ``almost'' as we will see soon.

\begin{lem} 
\label{root_on_root_div_one}
Let $R$ be a reduced irreducible root system, $R \neq C_I$ (so in particular $R \neq A_1$ and $R \neq B_2$). Then for every root $\alpha \in R^\times,$ there is a root $\beta \in R^\times$ such that 
\begin{itemize}
\item[\rm(i)] $ \langle \alpha, \beta \che \rangle =  -1, $
\item[\rm (ii)] $(\alpha + \mathbb Z \beta) \cap R = \{\alpha, \alpha + \beta\}.$
\end{itemize}
\end{lem}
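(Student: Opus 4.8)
The statement is a purely combinatorial fact about root systems, so I would prove it by a short case analysis driven by rank reduction, exactly in the spirit of Lemma~\ref{irred_rank_3_lem} and Lemma~\ref{root_deg_sums_intersection_lem}. The plan is as follows. First I would reduce to a rank-$2$ subsystem: given $\alpha \in R^\times$, pick any root $\gamma$ connected to $\alpha$ (which exists since $R$ is irreducible), and set $S = (\mathbb{K}\alpha + \mathbb{K}\gamma) \cap R$, a finite irreducible rank-$2$ subsystem by Lemma~\ref{subsystem_lem}. So $S$ is one of $A_2$, $B_2 = C_2$, or $G_2$. Since $R \neq C_I$, I claim we can always arrange $S \not\cong B_2$: if $R$ is simply laced then every rank-$2$ irreducible subsystem is $A_2$; if $R$ is doubly laced and not of type $C_I$, then (because $R \neq B_2$ itself and $R \neq C_I$) $R$ must contain a subsystem of type $G_2$ or $F_4$, hence contain, through $\alpha$, either a $G_2$ (when $\alpha$ lies in a $G_2$-subsystem) or at worst we can still find a rank-$2$ subsystem through $\alpha$ avoiding the ``pure $B_2$'' obstruction — this is the delicate point and I address it below.

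Once $S$ is $A_2$ or $G_2$ (or, more carefully, once we have located a suitable rank-$2$ irreducible subsystem containing $\alpha$ in which the desired $\beta$ lives), the conclusion is immediate from the explicit structure of these two root systems. In $A_2$, for $\alpha = \varepsilon_i - \varepsilon_j$ one takes $\beta = \varepsilon_j - \varepsilon_k$: then $\langle \alpha, \beta\che\rangle = -1$ and the $\beta$-string through $\alpha$ is exactly $\{\alpha, \alpha + \beta\}$ (it has length $u - d = \langle \beta, \alpha\che\rangle = 1$ by the root string property and $d = 0$). In $G_2$, for $\alpha$ short one chooses $\beta$ short with $\langle \alpha, \beta\che\rangle = -1$, and for $\alpha$ long one chooses $\beta$ long with $\langle \alpha, \beta\che\rangle = -1$ (the long roots of $G_2$ form an $A_2$); in each case $u - d = 1$ and $d = 0$, so $(\alpha + \mathbb{Z}\beta)\cap R = \{\alpha, \alpha+\beta\}$. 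Here I use the standard finite root-system facts already quoted in the text: the root-string property and $u - d = \langle \beta, \alpha\che\rangle$.

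The main obstacle is the correct handling of type $B_I$ with $\mathrm{card}(I) \geq 3$ (and, for rank $\geq 4$, $F_4$): these are doubly laced, are not of type $C$, yet a naive choice of rank-$2$ subsystem through $\alpha$ might be $B_2$, in which a short root $\alpha$ has no root $\beta$ with $\langle\alpha,\beta\che\rangle = -1$ giving a length-one string (the short--long pairing in $B_2$ produces a string of length $2$). The fix: work directly rather than only through a $2$-subsystem. For $\alpha$ short in $B_I$, write $\alpha = \varepsilon_i$ and take $\beta = \varepsilon_j - \varepsilon_i$ (possible since $\mathrm{card}(I)\geq 2$); then $\langle \alpha,\beta\che\rangle = -1$ and $(\alpha + \mathbb{Z}\beta)\cap R = \{\varepsilon_i, \varepsilon_j\}$ has exactly two elements. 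For $\alpha$ long, $\alpha = \pm\varepsilon_i \pm \varepsilon_j$, use a suitable long root $\beta$ inside the $D$-subsystem spanned by two further coordinate directions, or even inside a $D_3 = A_3$ subsystem, to get $\langle\alpha,\beta\che\rangle = -1$ with a two-element string; the existence of enough coordinates is guaranteed by $\mathrm{card}(I)\geq 3$. The same coordinate-level argument disposes of $F_4$ (which contains $B_4$, hence all needed configurations) and of $D_I$, $E_6$, $E_7$, $E_8$ (simply laced, so one just picks any $A_2$-subsystem through $\alpha$ and argues as in the $A_2$ case). Assembling these cases — simply laced via $A_2$-subsystems, $B$ and $F_4$ via explicit coordinates, $G_2$ explicitly — covers all irreducible reduced $R \neq C_I$ and completes the proof. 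The genuinely careful bookkeeping is only in verifying in each listed type that the chosen $\beta$ really yields $\langle\alpha,\beta\che\rangle = -1$ \emph{and} a two-element root string simultaneously; everything else is routine.
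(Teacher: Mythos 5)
Your proof is correct and follows essentially the same route as the paper's: a case-by-case analysis driven by reduction to small-rank irreducible subsystems ($A_2$, $G_2$, or a $B$-type subsystem handled in coordinates), with the root-string condition verified by direct computation in each case. One minor misstatement worth flagging, though it does not affect the argument since you treat $B_I$ by explicit coordinates anyway: inside $B_2 = C_2$ the obstruction sits on the \emph{long} roots, not the short ones --- for a short root $\alpha = \varepsilon_1$ the long root $\beta = \varepsilon_2 - \varepsilon_1$ already gives $\langle\alpha,\beta\che\rangle = -1$ with two-element string $\{\varepsilon_1,\varepsilon_2\}$, whereas a long root $\alpha$ in $B_2$ has $\langle\alpha,\beta\che\rangle$ always even, which is exactly the divisor-$2$ phenomenon that the hypothesis $R\neq C_I$ is designed to exclude.
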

\begin{proof} Depending on the type of $R$ any two connected roots in $R$ are contained in a subsystem of type $A_2$, $G_2$ or $B_3.$ The case of $R = A_2$ follows by inspection, we may just pick one of the two roots which form an angle $\pi/3$ with $\alpha.$ 
If $\alpha$ is a long root in $B_3$ or $G_2$ then there is a subsystem of  type $A_2$ containing $\alpha$ and the argument for type $A_2$ applies. Let $\alpha$ now be a short root in $G_2,$ the  root $\beta$ can again be found by inspection.  In fact $\beta$ is long and the angle between $\alpha$ and $\beta$ is $5\pi/6.$
If $\alpha$ is short in $B_3,$ then the existence of $\beta$ can again be shown by inspection, the angle between $\alpha$ and $\beta$ is $\pi/4$ in this case. 
\end{proof}

\begin{lem} Let $R$ be a root system, $R \neq C_I$ and let $L$ be an $R$-graded Lie algebra. Then, for every $\alpha \in R^\times,$ there is $\beta \in R^\times$ such that $\alpha + \beta \in R^\times$ and
$$ [L_{-\beta}, L_{\alpha + \beta}] = L_{\alpha}. $$
\label{not_type_C_generation_lem}
\end{lem}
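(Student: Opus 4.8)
The statement is a local claim about a single root $\alpha\in R^\times$, so the strategy is to reduce to the rank-$2$ or small-rank situation exactly as in Lemma~\ref{root_on_root_div_one}, and then exploit the $\mathfrak{sl}_2$-triple axiom (ii) of Definition~\ref{r_delta_graded_def} to transport $L_\alpha$ onto a bracket of two other homogeneous spaces. First I would invoke Lemma~\ref{root_on_root_div_one} to pick a root $\beta\in R^\times$ with $\langle\alpha,\beta\che\rangle=-1$ and $(\alpha+\mathbb Z\beta)\cap R=\{\alpha,\alpha+\beta\}$. In particular $\alpha+\beta\in R^\times$, so $L_{\alpha+\beta}$ contains an invertible element $e_{\alpha+\beta}$ with partner $f_{\alpha+\beta}\in L_{-(\alpha+\beta)}$ and $h_{\alpha+\beta}=[f_{\alpha+\beta},e_{\alpha+\beta}]$ acting diagonally: $\operatorname{ad}h_{\alpha+\beta}|_{L_\delta}=\langle\delta,(\alpha+\beta)\che\rangle\,\mathrm{id}_{L_\delta}$.

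\textbf{Key steps.} The inclusion $[L_{-\beta},L_{\alpha+\beta}]\subseteq L_{\alpha+\beta-\beta}=L_\alpha$ is immediate from the $\mathcal Q(R)$-grading, so the content is the reverse inclusion. To get it, I would argue as follows. For $\gamma:=\alpha+\beta$ consider the subalgebra generated by the $\mathfrak{sl}_2$-triple $(e_\gamma,h_\gamma,f_\gamma)$ and let $L$ be viewed as a module over this $\mathfrak{sl}_2$. Because the $\beta$-string through $\alpha$ is just $\{\alpha,\alpha+\beta\}$, one computes $\langle\alpha,\gamma\che\rangle$ and $\langle\alpha+\gamma,\gamma\che\rangle$ to see which weight spaces of $\operatorname{ad}h_\gamma$ are nonzero along the line $\alpha+\mathbb Z\gamma$; the point is that $\operatorname{ad}f_\gamma$ (or $\operatorname{ad}e_\gamma$, depending on signs) maps $L_\alpha$ into $L_{\alpha-\gamma}=L_{-\beta}$ and then $\operatorname{ad}e_\gamma$ maps back, and on a finite-dimensional $\mathfrak{sl}_2$-string the composite $\operatorname{ad}e_\gamma\operatorname{ad}f_\gamma$ acts on a weight space as a nonzero integer scalar coming from the $\mathfrak{sl}_2$ representation theory. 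Concretely: since $\langle\alpha,\gamma\che\rangle$ is $\pm1$ (one checks $\langle\alpha+\beta,(\alpha+\beta)\che\rangle=2$ and $\langle\alpha,(\alpha+\beta)\che\rangle = \langle\alpha,\alpha\che\rangle - \langle\alpha,\beta\che\rangle\cdot(\text{scaling})$; with $\langle\alpha,\beta\che\rangle=-1$ and the string condition this works out to $1$), the element $x\in L_\alpha$ satisfies $x=\lambda\,[e_\gamma,[f_\gamma,x]]$ for a unit $\lambda\in\mathbb Z\subseteq k$ (in fact $\lambda=\pm1$ because the string has length $2$, so no denominators appear). Since $[f_\gamma,x]\in L_{\alpha-\gamma}=L_{-\beta}$ and $[e_\gamma,\,\cdot\,]$ maps $L_{-\beta}$ into $L_{-\beta+\gamma}=L_\alpha$, this exhibits $x\in[L_{-\beta},L_{\alpha+\beta}]$, so $L_\alpha\subseteq[L_{-\beta},L_{\alpha+\beta}]$, as required.

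\textbf{Main obstacle.} The delicate point is making sure the scalar $\lambda$ that appears when one writes $x$ as $\operatorname{ad}e_\gamma\operatorname{ad}f_\gamma$ applied to $x$ is a \emph{unit} of $k$ — over an arbitrary base ring $k$ one cannot divide. This is exactly why Lemma~\ref{root_on_root_div_one}(ii) is invoked: because the relevant root string $(\alpha+\mathbb Z\gamma)\cap R$ has length $2$, the $\mathfrak{sl}_2$-string of weight spaces that $L_\alpha$ sits in is a string of length $2$ as well, on which the Casimir-type composition $\operatorname{ad}e_\gamma\operatorname{ad}f_\gamma$ (suitably normalized, using $\operatorname{ad}h_\gamma|_{L_\alpha}=\pm1$) acts as multiplication by $\pm1$, with no integer denominators. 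I would therefore spend the bulk of the argument verifying the string-length claim via the explicit $\langle\,\cdot\,,\,\cdot\che\rangle$ computations (case $A_2$, and $\alpha$ long or short inside $G_2$ or $B_3$, mirroring the case division in the proof of Lemma~\ref{root_on_root_div_one}), and then the $\mathfrak{sl}_2$-representation bookkeeping is routine. Finally I would note the excluded case $R=C_I$ is genuinely excluded: there the long roots $2\varepsilon_i$ are degenerate sums and no such $\beta$ exists, which is consistent with the hypothesis.
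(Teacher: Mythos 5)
Your overall plan---reduce via Lemma~\ref{root_on_root_div_one}, then use an $\mathfrak{sl}_2$-triple to realize $L_\alpha$ as a bracket---is the right idea, but you have chosen the wrong triple, and the numerical claim on which your argument turns is false. You work with $(e_\gamma,h_\gamma,f_\gamma)$ for $\gamma=\alpha+\beta$ and assert $\langle\alpha,\gamma\che\rangle=\pm1$, justified by an identity of the form $\langle\alpha,(\alpha+\beta)\che\rangle=\langle\alpha,\alpha\che\rangle-\langle\alpha,\beta\che\rangle\cdot(\text{scaling})$. No such formula holds when the roots involved have mixed lengths, since $(\alpha+\beta)\che$ is then not an integral combination of $\alpha\che$ and $\beta\che$. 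Concretely, take $R=B_3$, $\alpha=\epsilon_1$ (short), $\beta=\epsilon_2-\epsilon_1$ (long)---exactly the choice Lemma~\ref{root_on_root_div_one} produces for a short $\alpha$ in $B_3$, with $\langle\alpha,\beta\che\rangle=-1$ and $\beta$-string $\{\alpha,\alpha+\beta\}$. Then $\gamma=\epsilon_2$ is short, $\gamma\che=2\epsilon_2$, and $\langle\alpha,\gamma\che\rangle=0$, not $\pm1$. Worse, $\alpha+\gamma=\epsilon_1+\epsilon_2$ is a root, so the $\gamma$-string through $\alpha$ has length $3$, not $2$; on a weight-$0$ vector in a length-$3$ $\mathfrak{sl}_2$-string the composite $\ad e_\gamma\,\ad f_\gamma$ acts by $2$, which is not a unit over, e.g., $\mathbb Z$. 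So neither the unit-scalar claim nor the string-length claim survives in $B_3$.

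The fix is to keep the $\beta$-triple $(e_\beta,h_\beta,f_\beta)$ and not pass to $\gamma$. Since $\alpha-\beta\notin R$ (from the $\beta$-string condition), $\ad f_\beta$ annihilates $L_\alpha$, so
\[
\ad f_\beta\,\ad e_\beta|_{L_\alpha}=\ad\bigl([f_\beta,e_\beta]\bigr)|_{L_\alpha}=\ad h_\beta|_{L_\alpha}=\langle\alpha,\beta\che\rangle\,\mathrm{id}_{L_\alpha},
\]
and $\langle\alpha,\beta\che\rangle=-1$ is exactly what Lemma~\ref{root_on_root_div_one} supplies. Hence $\ad f_\beta\colon L_{\alpha+\beta}\to L_\alpha$ is onto (and one checks $\ad e_\beta\,\ad f_\beta|_{L_{\alpha+\beta}}=\pm\mathrm{id}$ similarly, using $\alpha+2\beta\notin R$), and since $f_\beta\in L_{-\beta}$ the image lies in $[L_{-\beta},L_{\alpha+\beta}]$, giving $[L_{-\beta},L_{\alpha+\beta}]=L_\alpha$. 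No statement about $\langle\alpha,(\alpha+\beta)\che\rangle$ is needed at all; the quantity that must be a unit is the one Lemma~\ref{root_on_root_div_one} already controls.
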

\begin{proof} Since $R \neq C_I,$  by Lemma~\ref{root_on_root_div_one} there is a  root $\beta,$ such that $(\alpha + \mathbb Z \beta) \cap R = \{ \alpha, \alpha + \beta\}$ and $\langle  \alpha, \beta \che \rangle = -1.$  Consider the linear maps $\ad e_\beta : L_{\alpha} \rightarrow L_{\alpha + \beta}$ and $\ad f_{\beta} : L_{\alpha + \beta} \rightarrow L_{\alpha}$.  By assumption  $\alpha - \beta$ is not a root, hence for $x_\alpha \in L_{\alpha}$
$$\ad f_{\beta} \ad e_\beta. x_\alpha =  \ad[f_\beta, e_\beta]x_\alpha = -\ad h_{\beta}x_{\alpha} = - \langle  \alpha, \beta \che \rangle x_{\alpha} = x_{\alpha}. $$
Similarly for $x_{\alpha + \beta} \in L_{\alpha + \beta}$ since $\alpha + 2 \beta \notin R:$
$$\ad e_{\beta} \ad f_{\beta} x_{\alpha + \beta} = \langle \alpha + \beta, \beta \che \rangle x_{\alpha + \beta} = (-1 + 2)x_{\alpha + \beta} = x_{\alpha + \beta.} $$
Therefore, $\ad f_{\beta} : L_{\alpha + \beta} \mapsto L_{\alpha} $  is an isomorphism and  in particular onto. 
\end{proof}
\begin{prop}
\label{K_R_turns_rogral_perfect}
 Let $R$ be a reduced root system and $L$ an $R$-graded Lie algebra. The Lie algebra $L$ is perfect, if
\begin{itemize}
\item[\rm(i)] $R$ has no irreducible component of type $C_I,$ or
\item[\rm(ii)] $1/2 \in k.$ \end{itemize}
\end{prop}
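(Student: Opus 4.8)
The statement claims that an $R$-graded Lie algebra $L$ is perfect provided either $R$ has no component of type $C_I$, or $1/2 \in k$. The natural strategy is to reduce to showing that every homogeneous space $L_\alpha$ (for $\alpha \in R^\times$) and $L_0$ lie in $[L,L]$; then perfectness follows from axiom (iii) of Definition~\ref{r_delta_graded_def}, since $L_0 = \sum_{\alpha \in R^\times}[L_\alpha, L_{-\alpha}] \subseteq [L,L]$ once the $L_\alpha \subseteq [L,L]$ are established. So the whole problem localizes to showing $L_\alpha \subseteq [L,L]$ for each nonzero root $\alpha$, and moreover it suffices to do this component by component of $R$, since the grading respects the decomposition of $R$ into irreducible components (homogeneous spaces for roots in different components bracket into spaces indexed by $\mathcal{Q}(R)$ but the relevant $\mathfrak{sl}_2$-triples stay within a component).

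\textbf{Case (i): $R$ has no component of type $C_I$.} Here I would invoke Lemma~\ref{not_type_C_generation_lem} directly: for each $\alpha \in R^\times$ there is $\beta \in R^\times$ with $\alpha + \beta \in R^\times$ and $[L_{-\beta}, L_{\alpha+\beta}] = L_\alpha$. Since $L_{-\beta}$ and $L_{\alpha+\beta}$ are both homogeneous spaces, this exhibits $L_\alpha \subseteq [L,L]$. Combined with axiom (iii) to handle $L_0$, we get $L = \sum_{\alpha \in R}L_\alpha \subseteq [L,L]$, i.e.\ $L$ is perfect. Note that for this case one does not even need $A_1$ or $B_2$ to occur or not, because the hypothesis already excludes $C_I$ which includes $A_1 = C_1$ and $B_2 = C_2$; but one should double-check that Lemma~\ref{not_type_C_generation_lem} (via Lemma~\ref{root_on_root_div_one}) really covers the case where a component has rank $1$ — it doesn't, but rank-$1$ components are precisely $A_1 = C_1$, which are excluded. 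So the reduction to irreducible non-$C$ components of rank $\geq 2$ is exactly what the lemma handles.

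\textbf{Case (ii): $1/2 \in k$.} Now a $C_I$ component is allowed, so Lemma~\ref{not_type_C_generation_lem} no longer applies to its roots. The idea is to use the $\mathfrak{sl}_2$-triple $(e_\alpha, h_\alpha, f_\alpha)$ attached to $\alpha \in R^\times$ together with the presence of $1/2$. Concretely, for $x_\alpha \in L_\alpha$ we have $[h_\alpha, x_\alpha] = \langle \alpha, \alpha\che\rangle x_\alpha = 2 x_\alpha$, so $x_\alpha = \tfrac12 [h_\alpha, x_\alpha]$, and since $h_\alpha = [f_\alpha, e_\alpha] \in [L,L]$ this at first only shows $x_\alpha \in [[L,L],L]$ — not immediately what we want. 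The correct move is: $x_\alpha = \tfrac12 [h_\alpha, x_\alpha] = \tfrac12[[f_\alpha, e_\alpha], x_\alpha]$; expand by Jacobi to write this as $\tfrac12\big([f_\alpha,[e_\alpha,x_\alpha]] - [e_\alpha,[f_\alpha,x_\alpha]]\big)$, a sum of brackets of homogeneous elements, hence in $[L,L]$. (Here $[e_\alpha, x_\alpha] \in L_{2\alpha}$ and $[f_\alpha, x_\alpha] \in L_0$, both genuine homogeneous spaces; if $2\alpha \notin R$ then $L_{2\alpha} = 0$ and the first term drops, which is fine.) Thus again $L_\alpha \subseteq [L,L]$ for all $\alpha \in R^\times$, and $L_0 \subseteq [L,L]$ by axiom (iii), giving perfectness.

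\textbf{Main obstacle.} The genuinely delicate point is ensuring the argument in Case (ii) does not secretly require $1/2$ again at a hidden spot and that it is uniform across all root lengths and both the reduced and $C_I$ situations — the Jacobi-expansion trick is the cleanest way and it works verbatim for every $\alpha$, so I expect the bookkeeping to be routine rather than an obstacle. The slightly more careful part is Case (i): one must verify that Lemma~\ref{root_on_root_div_one}/Lemma~\ref{not_type_C_generation_lem} genuinely cover all of $R^\times$ in every admissible irreducible type (recall the lemma's proof reduces to $A_2$, $G_2$, $B_3$ subsystems and checks short/long roots by inspection), and that the reducible case assembles correctly from the irreducible ones with no cross-component interference. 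I would therefore structure the proof as: (1) reduce to $L_\alpha \subseteq [L,L]$ for $\alpha \in R^\times$; (2) reduce to irreducible $R$; (3) dispatch (i) via Lemma~\ref{not_type_C_generation_lem}; (4) dispatch (ii) via the Jacobi/$\mathfrak{sl}_2$ computation above.
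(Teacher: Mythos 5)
Your strategy is correct and matches the paper's: case~(i) is Lemma~\ref{not_type_C_generation_lem} and case~(ii) uses $1/2 \in k$ together with the $\mathfrak{sl}_2$-triples. One thing worth correcting in your write-up of case~(ii): the Jacobi expansion is an unnecessary detour. You say $x_\alpha = \tfrac12[h_\alpha, x_\alpha]$ ``at first only shows $x_\alpha \in [[L,L],L]$'' and then rewrite via Jacobi, but $h_\alpha$ and $x_\alpha$ are both elements of $L$ (with $h_\alpha \in L_0$), so $[h_\alpha, x_\alpha]$ already lies in $[L,L]$ by definition, and $x_\alpha = \tfrac12[h_\alpha, x_\alpha]\in [L,L]$ with no further work. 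The Jacobi rewriting would only be relevant if you wanted the stronger statement that $L$ is generated as a Lie subalgebra by $\bigoplus_{\alpha\neq 0}L_\alpha$ (the content of Lemma~\ref{generating_lemma} for coverings), which perfectness does not require. For reference, the paper's printed proof of~(ii) is terser than yours: it explicitly treats only the long roots $2\epsilon_i$ of a $C_I$ component, which are the only roots genuinely inaccessible to the mechanism of Lemma~\ref{not_type_C_generation_lem}; your version, which invokes $1/2$ uniformly for every $\alpha \in R^\times$, is also fine and arguably cleaner since it avoids the case split.
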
 
\begin{proof} If there is no irreducible component of type $C_I,$ then (i) follows by Lemma~\ref{not_type_C_generation_lem}. If there is an irreducible component of type $C,$ then the roots $2\epsilon_i$ have divisor $2$ and therefore $L_{2\epsilon_i} \in [L,L]$ if $1/2 \in k. $
\end{proof}
For a Lie algebra of type $C,$ the assumption $1/2 \in k$ is necessary. It was shown in \cite[Proposition 2.2]{vdK} that Lie algebras of the form $\mathfrak{sp}(2n, \mathbb Z)$ are never perfect.

It will be convenient to have the following notation at hand:
\begin{defi} For a reduced root system $R$ we denote by $k_R$ a commutative associative unital ring such that 
$1/2 \in k_R$ if $R$ has an irreducible component of type $C_I ,\card I \geq 1$ (recall $C_1 = A_1$ and $C_2 = B_2$) and $k = k_R$ otherwise.  
\label{root_base_ring_Defi}
\end{defi}

\begin{prop} Let $R$ be an irreducible reduced root system, let $L$ be an $R$-graded Lie algebra over $k_R$ and  let $f: K \rightarrow L$ be a $\mathcal Q(R)$-graded central covering. If $0\neq \gamma \in \supp_{\mathcal Q(R)}(K),$ then either
\begin{itemize}
\item[\rm (i)] $\gamma \in R^\times$ in which case $f : K_\gamma \rightarrow L_\gamma$ is a bijection, or
 \item [\rm(ii)] $\gamma = \alpha + \beta$ is a degenerate sum,  $K_\gamma \subset \ker f$ and $n_\gamma K_\gamma =0.$
 \end{itemize}\label{torsion_bijection_prop}
\end{prop}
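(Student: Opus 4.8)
The plan is to fix an irreducible reduced root system $R$, an $R$-graded Lie algebra $L$ over $k_R$, and a $\mathcal{Q}(R)$-graded central covering $f\colon K\to L$. Since $L$ is perfect by Proposition~\ref{K_R_turns_rogral_perfect} (this is exactly why we assume the base ring is $k_R$), the covering $f$ factors through the universal central extension, so it suffices to treat $K=\mathfrak{uce}(L)$; a general covering is a quotient of this one, and the statements (i), (ii) pass to quotients. By Proposition~\ref{graded_cover_neh} we know $\supp K\subseteq \supp L+\supp L\subseteq R+R$, so every $0\neq\gamma\in\supp K$ is a sum of two roots. The first step is to reduce to the case where $\gamma$ is a sum of two \emph{linearly independent} roots that is \emph{not} itself a root: if $\gamma=c\alpha$ lies on a line through a root, or if $\gamma\in R^\times$, we are in case (i) and must show $f\colon K_\gamma\to L_\gamma$ is bijective.

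\textbf{Case $\gamma\in R^\times$ (and, more generally, $\langle\gamma,\alpha\che\rangle\in k_R^\times$ for some $\alpha$).} Here I would invoke the $\mathfrak{sl}_2$-triple $(e_\alpha,h_\alpha,f_\alpha)$ coming from axiom (ii) of Definition~\ref{r_delta_graded_def}. The element $\tilde h_\alpha := \langle e_\alpha,f_\alpha\rangle$ (or a suitable lift to $K$) acts on $K_\gamma$, and because $u$ is a graded central extension the central trick (Lemma~\ref{central_trick}) shows $\ad \tilde h_\alpha$ acts on $K_\gamma$ by the scalar $\langle\gamma,\alpha\che\rangle$. When $\gamma\in R^\times$ we can pick $\alpha=\gamma$, giving the scalar $2$, but $2$ need not be invertible; instead, following van der Kallen, I use Lemma~\ref{root_on_root_div_one}/\ref{not_type_C_generation_lem}: there is $\beta$ with $\alpha+\beta$ a root and $[L_{-\beta},L_{\alpha+\beta}]=L_\alpha$, and this $\ad f_\beta\ad e_\beta$ argument lifts verbatim to $K$ because it only uses the diagonal action of $h_\beta$ (the scalar $1$!) — so $\ad f_\beta\colon K_{\alpha+\beta}\to K_\alpha$ and $\ad e_\beta\colon K_\alpha\to K_{\alpha+\beta}$ are mutually inverse, and combined with the corresponding statement for $L$ and the commuting square, $f\colon K_\gamma\to L_\gamma$ is forced to be bijective. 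For type $C_I$ and $\gamma=2\epsilon_i$ one uses instead that $\gamma$ has divisor $2$ and $1/2\in k_R$, so $\ad h_\alpha$ acts invertibly. More uniformly: if $\langle\gamma,\alpha\che\rangle\in k_R^\times$ for some $\alpha$, then $\ad \tilde h_\alpha$ is invertible on $K_\gamma$, and since it also acts invertibly on $L_\gamma$ and kills $\ker f$ (which lies in the centre), we get $\ker f\cap K_\gamma=0$; surjectivity is automatic for a covering.

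\textbf{Case $\gamma$ a sum of two independent roots, $\gamma\notin R$.} First I must show $\gamma$ is a degenerate sum, i.e.\ $\langle\gamma,\alpha\che\rangle\neq\pm1$ for all $\alpha\in R$. Suppose instead $\langle\gamma,\alpha\che\rangle=\pm1$ for some $\alpha$; then $\pm1\in k_R^\times$ so the previous paragraph applies and $f\colon K_\gamma\to L_\gamma$ is bijective — but $\gamma\notin R$ forces $L_\gamma=0$, hence $K_\gamma=0$, contradicting $\gamma\in\supp K$. So $\gamma$ is a degenerate sum; by Lemma~\ref{vdK_2.6} its divisor $n_\gamma$ is $2$ or $3$. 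Since $L_\gamma=0$ (as $\gamma\notin R$), we immediately get $K_\gamma\subseteq\ker f$. For the torsion claim $n_\gamma K_\gamma=0$: pick $\alpha$ with $\langle\gamma,\alpha\che\rangle=n_\gamma$ (exists by Lemma~\ref{vdK_2.6}(i)). The element $\tilde h_\alpha$ acts on $K_\gamma$ by the scalar $n_\gamma$ via the central trick; but $K_\gamma\subseteq\ker f\subseteq Z(K)$, so $\ad\tilde h_\alpha$ acts as $0$ on $K_\gamma$. Hence $n_\gamma\cdot x = \ad\tilde h_\alpha(x)=0$ for all $x\in K_\gamma$, which is exactly $n_\gamma K_\gamma=0$.

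\textbf{Main obstacle.} The delicate point is getting the scalar action of the Cartan-type element right on $K_\gamma$ when $K=\mathfrak{uce}(L)$ — one must be careful that the lift $\tilde h_\alpha$ of $h_\alpha$ has the property $\ad\tilde h_\alpha|_{K_\beta}=\langle\beta,\alpha\che\rangle\,\mathrm{id}$, which follows from Lemma~\ref{central_trick} since $\ad\tilde h_\alpha$ depends only on $f(\tilde h_\alpha)=h_\alpha$, but this needs $\tilde h_\alpha$ to actually be an element of $K$ lifting $h_\alpha$ and one should check the grading degree is $0$. A second subtlety is the reduction from an arbitrary covering $f\colon K\to L$ to the universal one: one must verify that surjectivity of $\mathfrak{uce}(L)_\gamma\to L_\gamma$ plus bijectivity on each root space descends correctly, and that the torsion statement is preserved under the quotient $\mathfrak{uce}(L)\twoheadrightarrow K$; this is routine but worth stating. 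Everything else is a direct transcription of van der Kallen's argument using the combinatorial facts about degenerate sums established earlier in this section.
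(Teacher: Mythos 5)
There is a genuine gap in the reduction step. You correctly observe, via Proposition~\ref{graded_cover_neh}, that $\supp K\subseteq\supp L+\supp L\subseteq R+R$, so any $0\ne\gamma\in\supp K$ is a sum of two roots. You then assert that if $\gamma=c\alpha$ lies on a line through a root you are ``in case (i)''; that is false. When $R$ is reduced and $\gamma=2\alpha'$ for $\alpha'\in R^\times$, the element $\gamma$ is not a root, so case (i) does not apply, and in several root systems (e.g.\ type $A_\ell$, $\ell\ge 2$) one can check directly that $2\alpha'$ admits no expression as a sum of two \emph{linearly independent} roots — so $\gamma$ is not a degenerate sum either, per Definition~\ref{deg_sum_defi}. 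The dichotomy (i)/(ii) of the proposition can then only hold because $K_{2\alpha'}$ is forced to vanish, and that is precisely the step you have not supplied. Your blanket claim that $\gamma\in\supp K\setminus R$ must be a sum of two \emph{independent} roots is not automatic from $\gamma\in R+R$.

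The paper spends an entire paragraph on this point. For $R\ne C_I$ (the $C_I$ case is killed outright because $1/2\in k_R$, $n_\gamma\mid 4$, so the torsion argument gives $K_\gamma=0$), it argues by contradiction: if $\gamma=\alpha'+\alpha'$ were the only way to express $\gamma$ as a sum of two non-zero roots, then, since $K$ is generated by the $K_\alpha$, $\alpha\in R^\times$ (Lemma~\ref{generating_lemma}), and $K_\gamma$ is central, one has $K_\gamma=[K_{\alpha'},K_{\alpha'}]$. Writing $K_{\alpha'}=[K_{\beta'},K_{\gamma'}]$ via Lemma~\ref{not_type_C_generation_lem} (using part (i) to transfer the identity $L_{\alpha'}=[L_{\beta'},L_{\gamma'}]$ up to $K$), the Jacobi identity gives $[K_{\alpha'},K_{\alpha'}]\subseteq[K_{\beta'},[K_{\gamma'},K_{\alpha'}]]+[K_{\gamma'},[K_{\beta'},K_{\alpha'}]]$, which is non-zero only if $\beta'+\alpha'$ or $\gamma'+\alpha'$ is a root — and then $\gamma$ is a sum of two linearly independent roots after all, contradicting the assumption. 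You need to add this argument (or something equivalent) for the proof to be complete.

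Two smaller remarks. First, your central-trick reasoning for (i) and for the $n_\gamma$-torsion is essentially the paper's, but the paper goes directly to the scalar action: pick any preimage $t'$ of $h_\delta$ in $K$; then $\ad t'$ acts on $K_\gamma$ by $\langle\gamma,\delta\che\rangle$, and choosing $\delta$ with $\langle\gamma,\delta\che\rangle\in k_R^\times$ kills $K_\gamma\cap\ker f$, while for $\gamma$ not a root the same scalar annihilates the central $K_\gamma$. You don't need the $\ad f_\beta\ad e_\beta$ detour. Second, the paper does not reduce to $K=\mathfrak{uce}(L)$; it works with the arbitrary graded covering, which avoids the (minor, but unnecessary) verification that the statements descend along quotients.
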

\begin{proof} Under the assumption on the base ring $k_R$, $L$ is always perfect. 
By Lemma~\ref{generating_lemma}, $K$ is generated as a Lie algebra by $\bigoplus_{\alpha \in R^\times}K_\alpha.$ 
If $\gamma \in \supp K,$ then $K_\gamma = \sum_{\alpha +\beta  = \gamma}[K_{\alpha}, K_{\beta}].$ In fact, $K_\gamma = \sum_{\alpha \in R, \gamma - \alpha \in R}[K_\alpha, K_{\gamma - \alpha}],$ since a homogeneous space $K_\beta$, $\beta \notin R,$ lies in the kernel of $f$ and hence in the centre of $K.$ 

Let $\gamma \in \supp K$ and
 let $ [x_{\alpha}, y_{ \beta}] \in K_\gamma$ where $x_{\alpha} \in K_\alpha, y_{\beta} \in K_\beta.$ Pick $t' \in f^{-1}(h_{\delta})$, $\delta \in R^\times$. Then, $f([t', y_{\beta}]) = [f(t'), f(y_\beta)] = [h_{\delta}, f(y_\beta)]= \langle \beta, \delta \che \rangle f(y_\beta)$ since $f$ is $\mathcal Q(R)$-graded. Likewise $f([t', x_\alpha]) = \langle \alpha, \delta \che \rangle f(x_\alpha).$ 
The central trick gives
\begin{eqnarray*}
[t', [x_\alpha, y_\beta]] &=& [[t', x_\alpha], y_\beta] + [ x_\alpha,[t', y_\beta]]\\
&=&\langle \alpha, \delta \che \rangle  [x_\alpha, y_\beta] + \langle \beta, \delta \che \rangle [x_\alpha, y_\beta] = \langle \alpha + \beta, \delta \che \rangle [x_\alpha, y_\beta]. 
\end{eqnarray*}
Since $K_{\gamma}$ is spanned by $[x_\alpha, y_\beta]$ with $\alpha + \beta = \gamma,$ this means that $\ad t'$ is diagonalizable on $K.$ By assumption on $k_R,$  for  $\gamma \in R^\times$ there is $\delta \in R^\times$ such that $ \langle \gamma, \delta \che \rangle \in k^\times. $ Choose $x_\gamma \in K_\gamma \cap  \ker f \subset Z(K).$ Then
$$0 = \langle \gamma, \delta \che \rangle^{-1}[t', x_{\gamma} ] = \langle \gamma, \delta \che \rangle ^{-1}\langle \gamma, \delta \che \rangle  x_\gamma = x_{\gamma} $$
whence $K^\gamma \cap \ker f = \{0\}.$ This proves (i). \\
If $\gamma \notin \supp L,$ then since $f :K \rightarrow L$ is graded, $K_\gamma $  is central. Pick $t' \in f^{-1}(h_\delta)$ where $\delta \in R^\times, $ then 
$ 0 = [t', K_\gamma] = \langle \gamma, \delta \che \rangle K_\gamma,$ whence $K_\gamma$ has $n_\gamma$-torsion. 
Assume that $\gamma = \alpha + \beta \in \supp_{\mathcal Q(R)}K\setminus R$. The goal is to prove that $\gamma$ is a degenerate sum. \\
 Assume that $\gamma = \alpha ' + \alpha '$ for a non-zero root $\alpha'$ and this is the only way to express $\gamma$ as sum of two non-zero roots. By assumption on $k_R$ there is a  either a root $\beta \in R,$ such that $ \langle \alpha', \beta \che  \rangle = 1$ or if $R = C_I,$ $ \langle \alpha', \beta \che  \rangle = 2.$  If $R \neq C_I,$ then $ \langle \alpha'+ \alpha' , \beta \che  \rangle  = 2$ and hence $n_\gamma = 2.$  If $R = C_I,$ then $n_\gamma$ is a divisor of $2  \langle \alpha' ,(\alpha')\che \rangle  = 4.$ Hence  $n_\gamma$ is invertible in $k_R,$ and $K_\gamma$ is trivial. We may from now on assume that $R \neq C_I.$
 Recall from the proof of (i) that the  graded covering $K$ is generated by  the spaces $K_{\alpha}, \alpha \in R^\times$, see Lemma~\ref{generating_lemma}.
  Under the assumption that $\gamma = \alpha ' + \alpha '$ for a non-zero root $\alpha'$ and that this is the only way to express $\gamma$ as sum of two roots, it follows that $K_{\gamma} = [K_{\alpha'}, K_{\alpha'}] + [K_{0}, K_{\gamma}].$ However, $K_{\gamma}$ lies in the center of $K,$ so we have $K_{\gamma} = [K_{\alpha'}, K_{\alpha'}]$. According to Lemma~\ref{not_type_C_generation_lem}, there are non-zero roots $\beta'$ and $\gamma'$ such that $L_{\alpha'} = [L_{\beta'}, L_{\gamma'}].$ Part (i) implies, that the restriction of $f$ to $K_{\beta'} + K_{\gamma'} + K_{\alpha'}$ is a bijection, and hence  $K_{\alpha'} = [K_{\beta'}, K_{\gamma'}].$ The Jacobi identity gives
 $$[K_{\alpha'}, K_{\alpha' }] = [[K_{\beta'}, K_{\gamma'}], K_{\alpha'}] =  [K_{\beta'}, [K_{\gamma'}, K_{\alpha'}]] + [K_{\gamma'}, [K_{\beta'}, K_{\alpha'}]] . $$
 This space can only be non-zero if at least one of $\beta' + \alpha'$ and $\gamma' + \alpha'$ lies in $R^\times.$ Then $(\beta', \gamma' + \alpha')$ or $(\gamma', \beta' + \alpha')$ is a degenerate pair, contradicting the assumption that it is not possible to express $\gamma$ as a degenerate sum. Thus, by contradiction, (ii) is true. 
\end{proof}

\section{Jordan graded Lie algebras}
\label{Jordan_graded_Lie_alg}
Throughout this section, $k$ is a unital commutative associative ring. 
\subsection{Idempotents and grids}All the results and proofs can be found in \cite{three_graded_triple_neher} (the passage from triple systems to pairs is straightforward.)
\begin{defi} Let $V = (V^+, V^-)$ be a Jordan pair with quadratic operator $Q = (Q^+, Q^-)$. An \emph{idempotent} is a pair $(e^+, e^-) \in V$ such that for $\sigma = \pm$,  $$ Q_{e^\sigma}^\sigma e^{-\sigma} = e^\sigma, $$ or shorter $Q_ee = e. $\\
If $e$ is an idempotent of $V,$ we define the Peirce space with respect to $e$ as: 
$$V_2^\sigma (e) = \im(Q_{e^\sigma}), V_1^\sigma(e) = \ker (\mathrm{id} - D(e^\sigma, e^{-\sigma})), V_0^\sigma(e) = \ker(D(e^\sigma, e^{-\sigma})) \cap \ker(Q_{e^{-\sigma}}).$$\label{idempotent_def}
\end{defi}
As usual, we will often suppress the superscript $\sigma$ whenever it is clear from the context. Also to avoid subscripts, one often write $Q(a)$ instead of $Q_a$ and likewise $Q(a, c)$ and $D(a, b)$ for $a, c \in V^\sigma$ and $b \in V^{-\sigma}. $
The immediate consequence of Definition~\ref{idempotent_def} is the proposition below. 
\begin{prop}Let $V$ be a Jordan pair and $e$ an idempotent. For $\sigma = \pm$ the following hold:
\begin{itemize} 
\item[\rm(i)]$\ker(Q^\sigma_{e^\sigma}) = V_1^\sigma(e) \oplus V_0^\sigma(e),$ 
\item[\rm(ii)] $V_i^\sigma(e) \subset \{x \in V^\sigma : \{e^\sigma, e^{-\sigma}, x\} =  ix\},$ with equality for $i = 1$ and for all $i$, if $1/2 \in k^\times.$
\end{itemize}
\end{prop}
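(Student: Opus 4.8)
The statement to prove collects two basic facts about Peirce spaces of an idempotent $e = (e^+, e^-)$ in a Jordan pair $V$: part (i), that $\ker(Q^\sigma_{e^\sigma}) = V_1^\sigma(e) \oplus V_0^\sigma(e)$, and part (ii), that $V_i^\sigma(e)$ is contained in the $i$-eigenspace of the operator $x \mapsto \{e^\sigma, e^{-\sigma}, x\} = D(e^\sigma,e^{-\sigma})x$, with equality for $i=1$ always and for all $i$ when $1/2 \in k^\times$. The whole statement is a formal consequence of the fundamental Peirce relations, so the plan is to reduce everything to the behaviour of the single operator $T := D(e^\sigma, e^{-\sigma})$ acting on $V^\sigma$, together with the identity relating $Q_{e^\sigma}$, $Q_{e^{-\sigma}}$ and $T$.

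First I would record the key operator identities available in any Jordan pair for an idempotent $e$: from the Jordan pair axioms (\ref{JP1})--(\ref{JP3}) one derives $Q_{e^\sigma}Q_{e^{-\sigma}} = D(e^\sigma, e^{-\sigma}) - Q_{e^\sigma}Q_{e^{-\sigma}} + \ldots$; more precisely, using $Q_e e = e$ one gets the fundamental formula $T^2 = T + 2\,Q_{e^\sigma}Q_{e^{-\sigma}}$ on $V^\sigma$ (equivalently $T(T-\mathrm{id}) = 2Q_{e^\sigma}Q_{e^{-\sigma}}$), and that $Q_{e^\sigma}$ maps onto the subspace where $T$ acts as $2\cdot\mathrm{id}$, while $Q_{e^\sigma}Q_{e^{-\sigma}}$ is a projection-like operator onto $V_2^\sigma(e)$. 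The cleanest route is to cite \cite{three_graded_triple_neher} (as the excerpt already announces: ``All the results and proofs can be found in \cite{three_graded_triple_neher}''), but I would still give the short arguments. For (i): if $x \in \ker Q_{e^\sigma}$ then $Q_{e^\sigma}Q_{e^{-\sigma}}Q_{e^\sigma}=Q_{e^\sigma}$-type relations force $Tx$ to lie again in $\ker Q_{e^\sigma}$, and on this submodule $T$ satisfies $T(T-\mathrm{id})=0$; hence $\ker Q_{e^\sigma} = \ker(T-\mathrm{id})\oplus\ker T$ once one checks these two kernels meet only in $0$ (immediate, since $x$ in both gives $x = Tx = 0$) and span (write $x = Tx - (T-\mathrm{id})x$, wait — rather $x = Tx + (x - Tx)$ with $Tx \in \ker(T-\mathrm{id})$ because $T(Tx) = T^2x = Tx$ using $T^2 = T$ there, and $x - Tx \in \ker T$ similarly). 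One then identifies $\ker(T-\mathrm{id})\cap\ker Q_{e^\sigma} = V_1^\sigma(e)$ and $\ker T \cap \ker Q_{e^\sigma} = V_0^\sigma(e)$; the first is by definition, the second because $V_0^\sigma(e) = \ker T \cap \ker Q_{e^{-\sigma}}$ and $\ker Q_{e^{-\sigma}} \supseteq \ker Q_{e^\sigma}$ is not quite what's needed, so here I would instead argue $\ker T \subseteq \ker Q_{e^{-\sigma}}$ directly from $Q_{e^{-\sigma}}$'s image sitting in the $T$-eigenspace for eigenvalue $2$ — this is the point to be careful about.

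For (ii), the containment $V_2^\sigma(e) = \im Q_{e^\sigma} \subseteq \ker(T - 2\cdot\mathrm{id})$ is the Peirce relation $D(e^\sigma,e^{-\sigma})Q_{e^\sigma} = 2Q_{e^\sigma}$ (a linearization of $Q_ee=e$ combined with (\ref{JP1})); $V_1^\sigma(e) = \ker(T-\mathrm{id})$ is equality by definition; and $V_0^\sigma(e) \subseteq \ker T$ by definition. Equality $V_1^\sigma(e) = \ker(T - \mathrm{id})$ needs nothing extra. When $1/2\in k$, the operator $\tfrac12 T(T-\mathrm{id})(T-2\cdot\mathrm{id}) = 0$ on $V^\sigma$ (the minimal-polynomial identity $T(T-\mathrm{id})(T-2) = 0$, which holds over any ring from the fundamental identities), so $T$ is diagonalizable with eigenvalues in $\{0,1,2\}$ and $V^\sigma = V_0^\sigma\oplus V_1^\sigma\oplus V_2^\sigma$ with each $V_i^\sigma$ exactly the $i$-eigenspace; the only place $1/2$ is used is to split $\ker T(T-\mathrm{id})(T-2)$ into the three eigenspaces via the Lagrange-interpolation idempotents $\tfrac12(T-1)(T-2)$, $-T(T-2)$, $\tfrac12 T(T-1)$, which requires inverting $2$.

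\textbf{Main obstacle.} The genuine content is establishing the fundamental operator identities for an idempotent over an arbitrary base ring $k$ — in particular that $T(T-\mathrm{id})(T-2\,\mathrm{id}) = 0$ on $V^\sigma$, that $\im Q_{e^\sigma}$ is exactly $\ker(T-2\,\mathrm{id})\cap(\text{something})$, and that $\ker Q_{e^{-\sigma}}\cap\ker Q_{e^\sigma}$ relates cleanly to $\ker T$. These are standard in Jordan pair theory but the linearization bookkeeping is delicate; I expect I would simply invoke \cite{three_graded_triple_neher} for them rather than rederive the Macdonald/Peirce machinery, and spend the written proof only on assembling (i) and (ii) from those cited relations and flagging where $1/2\in k$ enters.
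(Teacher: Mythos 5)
The paper gives no proof of this proposition: immediately above the block of definitions it says ``All the results and proofs can be found in \cite{three_graded_triple_neher},'' and the statement is introduced as ``the immediate consequence of Definition~\ref{idempotent_def}.'' Your plan---invoke \cite{three_graded_triple_neher} for the delicate Peirce/linearization identities and sketch the assembly---is therefore exactly what the paper does, and your overall outline (reduce to the operator $T = D(e^\sigma,e^{-\sigma})$, use $T(T-1)(T-2)=0$, identify $V_2^\sigma(e)=\mathrm{im}\,Q_{e^\sigma}$ with the $2$-eigenspace, flag where $1/2$ enters) is the right shape.

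Two imprecisions are worth cleaning up, though. First, read literally, $Q^\sigma_{e^\sigma}$ maps $V^{-\sigma}\to V^\sigma$, so $\ker(Q^\sigma_{e^\sigma})\subseteq V^{-\sigma}$, which cannot equal $V_1^\sigma(e)\oplus V_0^\sigma(e)\subseteq V^\sigma$; the statement only makes sense for $\ker Q_{e^{-\sigma}}$ (consistent with $V_0^\sigma(e) = \ker D(e^\sigma,e^{-\sigma})\cap\ker Q_{e^{-\sigma}}$), and your comparison ``$\ker Q_{e^{-\sigma}}\supseteq\ker Q_{e^\sigma}$'' is ill-posed for the same reason, since those two kernels lie in opposite components. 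Second, you conflate the two containments at play. For part (i) the one you need is $V_1^\sigma(e)\subseteq\ker Q_{e^{-\sigma}}$, and this holds over an \emph{arbitrary} base ring: write $E_2 = Q_{e^\sigma}Q_{e^{-\sigma}}$; the identity $D(x,y)Q_x = Q(Q_x y, x)$ (Loos's JP7, valid in any Jordan pair) applied at $(e^\sigma,e^{-\sigma})$ gives $TQ_{e^\sigma} = Q(e^\sigma,e^\sigma) = 2Q_{e^\sigma}$, hence $E_2T = TE_2 = 2E_2$; then $Tx = x$ forces $E_2 x = E_2 Tx = 2E_2 x$, so $E_2 x = 0$, and since $Q_{e^\sigma}$ is injective on $\mathrm{im}\,Q_{e^{-\sigma}}$ (by the fundamental formula $Q_{e^{-\sigma}}Q_{e^\sigma}Q_{e^{-\sigma}} = Q_{e^{-\sigma}}$) this gives $Q_{e^{-\sigma}}x = 0$. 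No $1/2$ is needed anywhere in part (i). The containment you say you would argue, $\ker T\subseteq\ker Q_{e^{-\sigma}}$, is the one that matters for the $i=0$ \emph{equality} in part (ii); there the same computation only yields $2Q_{e^{-\sigma}}x = 0$, and that is precisely where the hypothesis $1/2\in k^\times$ is used (likewise for $\ker(T-2)\subseteq\mathrm{im}\,Q_{e^\sigma}$ at $i=2$). Separating these two roles would make your written proof cleaner and would dispel the concern in your final paragraph that the direct-sum decomposition in (i) might itself be torsion-sensitive---it is not.
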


\begin{defi}\label{cog_relations} Let $V$ be a Jordan pair. 
Two idempotents $e$ and $f$ are called 
 \begin{itemize}
\item[\rm(i)] \emph{orthogonal}, if $e \in V_0(f)$ and $f \in V_0(e)$, denoted by $e \bot f$,
\item[\rm(ii)] \emph{collinear}, if $e \in V_1(f)$ and $f \in V_1(e)$, denoted by $e \top f$,
\item[\rm(iii)] \emph{associated}, if $e \in V_2(f)$ and $f \in V_2(e)$, denoted by $e \approx f$.
\item[\rm{(iv)}] If  $e \in V_1(f)$ and $f \in V_2(e)$, then $e$ \emph{governs} $f$, denoted by $e \vdash f$ and $f \dashv e$. 
\end{itemize}
We will refer to the relations $\bot, \top, \approx, \vdash$ and $\dashv $  as \emph{cog relations}. It is useful to note that
$$ e \approx f \iff V_2(e) = V_2(f) \iff V_i(e) =V_i(f) \mbox{ for }i \in \{0,1,2\}.$$
\end{defi}

\begin{defi}A root system $R$ is \emph{3-graded} if there exists a partition $R  = R_{-1}  \cup R_0 \cup R_{1}$ satisfying
\begin{itemize}
\item[\rm(i)] $-R_{1} = R_{-1}$, 
\item[\rm(ii)]$(R_{i} + R_{j}) \cap R \subset R_{i+ j},$ where $R_{i+ j} = \emptyset$ if $ i+j  \notin \{-1, 0, 1\}$, 
\item[\rm(iii)] $(R_1 + R_{-1}) \cap R = R_0.$
\end{itemize}
 \label{3_grad_def}
\end{defi}
\begin{prop} See \cite[17.8 and 17.9]{lfrs}. An irreducible root system $R$ has a $3$-grading  if and only if $R$ is of type ${A}_I$, $B_I$, $C_I$, $D_I$, $E_6$ or $E_7$. 
In this case,
\begin{itemize}
\item $R_1$ spans $\mathcal Q(R)$, 
\item For any two $\alpha \neq \beta \in R_1$ one of the following holds: 
\begin{itemize}
\item $\alpha$ is orthogonal to $\beta$, $\langle\alpha , \beta  \check{\;} \rangle = 0$, denoted by $\alpha \perp \beta,$
\item $\alpha$ governs $\beta$, $\langle \alpha, \beta  \check{\;} \rangle = 1$, $\langle \beta, \alpha \check{\;} \rangle = 2,$ denoted by $
\alpha \vdash\beta$, 
\item $\beta$ governs $\alpha$, $\langle \beta, \alpha \check{\;} \rangle = 1$, $\langle \alpha, \beta \check{\;} \rangle = 2,$ denoted by $\alpha \dashv \beta$, 
\item $\alpha$ and $\beta$ are collinear, $\langle \alpha, \beta \check{\;} \rangle = 1 = \langle \beta, \alpha \check{\;} \rangle$, denoted by $\alpha \top \beta$. 
\end{itemize} 
\end{itemize}
\end{prop}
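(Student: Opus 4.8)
The classification statement --- that an irreducible root system $R$ admits a $3$-grading precisely when $R$ is of type $A_I$, $B_I$, $C_I$, $D_I$, $E_6$ or $E_7$ --- is the deep ingredient, and the plan is to quote it from \cite[17.8 and 17.9]{lfrs}. If one wished to reproduce it, the first move is to observe that a $3$-grading of an irreducible $R$ is the same datum as a non-zero element $q$ of the coweight lattice with $\langle R, q\rangle \subset \{-1,0,1\}$, the grading being recovered as $R_i = \{\alpha \in R : \langle \alpha, q\rangle = i\}$ (axioms (i) and (ii) of Definition~\ref{3_grad_def} are then automatic, and (iii) holds because $R$ is irreducible). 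For finite $R$ with a root basis $B = \{\alpha_j\}$, such a $q$ is $\mathcal{W}(R)$-conjugate to a fundamental coweight $\omega_j\che$ for which $\alpha_j$ occurs with coefficient $1$ in the highest root; reading off the Bourbaki tables, these ``cominuscule'' nodes exist exactly for the types listed (none for $E_8$, $F_4$, $G_2$, where every coefficient is $\ge 2$), and the non-reduced type $BC_I$ is excluded at once, since $\varepsilon_i$ and $2\varepsilon_i$ are both roots, forcing $\langle\varepsilon_i, q\rangle = 0$ for all $i$, that is, $q = 0$. The infinite-rank cases then follow by writing $R = \lim_{\rightarrow} R_\lambda$ and transporting $3$-gradings along the direct system.

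Granting the classification, the two remaining assertions are routine. For ``$R_1$ spans $\mathcal{Q}(R)$'': axiom (i) gives $R_{-1} = -R_1$, while axiom (iii) writes every $\gamma \in R_0$ as $\gamma = \alpha + (-\beta)$ with $\alpha, \beta \in R_1$; hence $R = R_1 \cup R_0 \cup R_{-1} \subset \mathbb{Z}R_1$, and therefore $\mathcal{Q}(R) = \mathbb{Z}R \subseteq \mathbb{Z}R_1 \subseteq \mathcal{Q}(R)$, so equality holds (in particular $R_1$ also spans the ambient space over $\mathbb{K}$).

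For the classification of the relation between $\alpha \ne \beta$ in $R_1$: since $R$ is reduced (all the types above are), $R_1 \cap R_{-1} = \emptyset$ and $0 \in R_0$, so $\alpha$ and $\beta$ are non-zero with $\alpha \ne \pm\beta$, hence linearly independent, and the root-string calculus applies to the $\beta$-string through $\alpha$. Axiom (ii) gives $\alpha + \beta \notin R$ (it would lie in $R_2 = \emptyset$), so in the notation of the root-string property $u = 0$; on the other hand, if $\alpha - \beta$ is a root it lies in $R_0$, if $\alpha - 2\beta$ is a root it lies in $R_{-1}$, and $\alpha - 3\beta$ would then lie in $R_{-2} = \emptyset$, so $\alpha - 3\beta \notin R$ and $d \ge -2$. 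Thus $\langle\beta, \alpha\che\rangle = u - d \in \{0,1,2\}$, and symmetrically $\langle\alpha, \beta\che\rangle \in \{0,1,2\}$, both non-negative. Since $\alpha$ and $\beta$ are independent we have the well-known bound $\langle\alpha,\beta\che\rangle\langle\beta,\alpha\che\rangle < 4$, which excludes $(2,2)$; and $\langle\alpha,\beta\che\rangle = 0$ iff $(\alpha\mid\beta) = 0$ iff $\langle\beta,\alpha\che\rangle = 0$. Hence the pair $(\langle\alpha,\beta\che\rangle, \langle\beta,\alpha\che\rangle)$ is one of $(0,0)$, $(1,1)$, $(1,2)$, $(2,1)$, corresponding respectively to $\alpha \perp \beta$, $\alpha \top \beta$, $\alpha \vdash \beta$ and $\alpha \dashv \beta$ in the sense of Definition~\ref{cog_relations}; since the four values of the Cartan data are pairwise distinct, exactly one of the four relations holds.

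The genuine obstacle is the classification itself --- reducing ``$R$ is $3$-graded'' to the existence of a cominuscule coweight and running the case analysis over the Dynkin diagrams, together with the direct-limit bookkeeping for infinite rank. Everything downstream of it, as above, is elementary manipulation of root strings and Cartan integers, and the honest plan is to cite \cite[\S17]{lfrs} for the hard direction and supply the two short arguments above for the rest.
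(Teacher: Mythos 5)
Your proposal is correct and follows the same route as the paper, namely citing \cite[17.8, 17.9]{lfrs} for the classification of $3$-gradable irreducible root systems; the paper in fact gives no argument at all for the subsidiary bullet points, which you supply. Your proof that $R_1$ spans $\mathcal{Q}(R)$ (via $R_{-1} = -R_1$ and $R_0 \subset R_1 - R_1$) is clean, and your root-string analysis for the cog relations is sound: the one step worth making fully explicit is that $\alpha - 3\beta \notin R$ because the degree function is forced, by repeated application of axiom (ii), to satisfy $\deg(\alpha - k\beta) = 1 - k$, which would put $\alpha - 3\beta$ in the nonexistent $R_{-2}$ --- exactly the inductive step you are implicitly using.
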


\begin{lem} Assume that $(R, R_1)$ is  a $3$-graded root system. Let $0 \neq \delta$ in $R_0$. Then there are $\alpha, \beta \in R_1$ such that $\delta = \alpha - \beta$, $\langle \beta, \alpha\check{\;} \rangle  = 1$ or $\langle \alpha, \beta \check{\;} \rangle  = 1$ and for all $\gamma \in R$. 
\begin{equation}  \langle \gamma, (\alpha -\beta)\che  \rangle = \langle \alpha, \beta \che \rangle \langle \gamma, \alpha \check{\;} \rangle - \langle \beta, \alpha \che \rangle \langle \gamma, \beta \che \rangle  .\label{root_inner_formula} \end{equation}

\end{lem}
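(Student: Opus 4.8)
The statement asserts, for a $3$-graded root system $(R,R_1)$ and a non-zero $\delta \in R_0$, the existence of $\alpha,\beta \in R_1$ with $\delta = \alpha - \beta$ and one of the two governing relations holding, together with the numerical identity \eqref{root_inner_formula}. The first task is to produce the decomposition $\delta = \alpha - \beta$. Since $(R_1 + R_{-1})\cap R = R_0$ by Definition~\ref{3_grad_def}(iii), and $R_{-1} = -R_1$, any $0 \neq \delta \in R_0$ can be written as $\delta = \alpha + (-\beta) = \alpha - \beta$ for some $\alpha,\beta \in R_1$. So the existence part is essentially immediate from the axioms of a $3$-graded root system; I would just quote this.

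\textbf{Forcing a governing relation.} Next I need to arrange that the pair $(\alpha,\beta)$ coming out of this decomposition satisfies $\langle \beta,\alpha\che\rangle = 1$ or $\langle \alpha,\beta\che\rangle = 1$. Here I would invoke the trichotomy from the preceding Proposition: for $\alpha \neq \beta$ in $R_1$, exactly one of $\alpha \perp \beta$, $\alpha \vdash \beta$, $\alpha \dashv \beta$, $\alpha \top \beta$ holds. Since $\delta = \alpha - \beta \neq 0$ we have $\alpha \neq \beta$, so one of these four holds. In the collinear case $\langle\beta,\alpha\che\rangle = 1$, and in the governing cases one of the two brackets equals $1$; the only troublesome case is $\alpha \perp \beta$, where $\langle\alpha,\beta\che\rangle = \langle\beta,\alpha\che\rangle = 0$. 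So the real content is: if the particular representation $\delta = \alpha - \beta$ we first chose has $\alpha \perp \beta$, I must find a \emph{different} representation $\delta = \alpha' - \beta'$ with $\alpha',\beta' \in R_1$ not orthogonal. I expect this is where the argument needs care. One approach: if $\alpha \perp \beta$ then $\langle\delta,\delta\che\rangle = \langle\alpha,\alpha\che\rangle + \langle\beta,\beta\che\rangle$ would force $\langle\delta,\delta\che\rangle$ to behave incompatibly — more concretely, $(\alpha|\beta) = 0$ gives $(\delta|\delta) = (\alpha|\alpha) + (\beta|\beta)$, and since $\alpha,\beta \in R_1$ are roots while $\delta \in R_0$ is a root, length constraints in an irreducible root system (lengths among $1,2,3$, at most two lengths in the reduced irreducible case) severely limit this. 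Alternatively, and probably cleaner, one reduces to a finite irreducible subsystem containing $\alpha$, $\beta$, $\delta$ (using Lemma~\ref{subsystem_lem} and local finiteness) and checks case-by-case over the finite $3$-gradable types $A_I, B_I, C_I, D_I, E_6, E_7$ that any non-zero $\delta \in R_0$ admits a representation $\alpha - \beta$ with $\alpha \not\perp \beta$; in the orthogonal case one exhibits a chain $\alpha, \alpha'$ in $R_1$ connecting appropriately. I anticipate this case analysis — or finding the slick uniform argument that avoids it — to be the main obstacle.

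\textbf{The bracket identity.} Finally, I would verify \eqref{root_inner_formula}. Given $\delta = \alpha - \beta$, for any $\gamma \in R$ one has, using bilinearity of $\langle\,\cdot\,,\,\cdot\che\rangle$ in the second slot and the relation between coroots, $\langle\gamma, (\alpha - \beta)\che\rangle$. The coroot of $\alpha - \beta$ is not simply $\alpha\che - \beta\che$ in general, so the identity encodes exactly how $(\alpha - \beta)\che$ decomposes. Using the inner product normalization, $(\alpha - \beta)\che = \frac{2(\alpha-\beta)}{(\alpha-\beta|\alpha-\beta)}$, and expanding $(\alpha-\beta|\alpha-\beta) = (\alpha|\alpha) + (\beta|\beta) - 2(\alpha|\beta)$. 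Combined with $\langle\alpha,\beta\che\rangle = \frac{2(\alpha|\beta)}{(\beta|\beta)}$ and $\langle\beta,\alpha\che\rangle = \frac{2(\alpha|\beta)}{(\alpha|\alpha)}$, one checks that $(\alpha-\beta|\alpha-\beta)$ simplifies — because in the $3$-graded setting with $\alpha \vdash \beta$ or $\alpha \top \beta$ etc., the three relevant norms and $(\alpha|\beta)$ are pinned down by the explicit values of the brackets listed in the Proposition — so that $\frac{2(\alpha - \beta)}{(\alpha-\beta|\alpha-\beta)} = \langle\alpha,\beta\che\rangle\,\alpha\che - \langle\beta,\alpha\che\rangle\,\beta\che$ as linear forms, which when paired with $\gamma$ gives \eqref{root_inner_formula}. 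This is a routine computation once the governing relation is fixed, so I would treat it by splitting into the finitely many cases ($\alpha \vdash \beta$, $\alpha \dashv \beta$, $\alpha \top \beta$) and checking the normalization in each; I would not grind through it in full but indicate the verification.
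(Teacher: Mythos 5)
Your proposal is essentially sound, but the spot you flag as ``the main obstacle'' --- ruling out the orthogonal case $\alpha\perp\beta$ --- is actually handled by a one-line observation that you miss, and your proposal of a case analysis over finite $3$-gradable subsystems is far more work than is needed. The point is the $\mathbb{Z}$-grading itself: if $\alpha,\beta\in R_1$ with $\alpha\neq\beta$ and $\langle\beta,\alpha\che\rangle=0$, suppose $\alpha-\beta$ were a root. The $\alpha$-root string through $\beta$ runs from $\beta-p\alpha$ to $\beta+q\alpha$ with $p-q=\langle\beta,\alpha\che\rangle=0$; since $\beta-\alpha\in R$ gives $p\ge1$, we get $q\ge1$, so $\alpha+\beta\in R$. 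But $\alpha+\beta\in R_1+R_1\subset R_2=\emptyset$, a contradiction. Hence any representation $\delta=\alpha-\beta$ with $\alpha,\beta\in R_1$ automatically has $\alpha\not\perp\beta$; no length bookkeeping and no reduction to finite types is required. (The paper simply asserts this in its proof; the root-string argument is the routine justification.) Your length-based heuristic, as written, is also not airtight: $\langle\delta,\delta\che\rangle=2$ for any root $\delta$, so that pairing carries no information, and the inner-product version $(\delta|\delta)=(\alpha|\alpha)+(\beta|\beta)$ alone does not immediately give a contradiction without a case discussion.

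For the identity itself your route is genuinely different from the paper's and works. You propose to compute $(\alpha-\beta)\che=2(\alpha-\beta)/(\alpha-\beta\mid\alpha-\beta)$ directly, expanding $(\alpha-\beta\mid\alpha-\beta)$ and checking the three cases $\alpha\top\beta$, $\alpha\vdash\beta$, $\alpha\dashv\beta$; a quick check confirms that in each case $(\alpha-\beta\mid\alpha-\beta)=\min\{(\alpha\mid\alpha),(\beta\mid\beta)\}$ and the claimed expression $(\alpha-\beta)\che=\langle\alpha,\beta\che\rangle\alpha\che-\langle\beta,\alpha\che\rangle\beta\che$ drops out. The paper instead normalizes (WLOG $\langle\alpha,\beta\che\rangle=1$, since swapping $\alpha,\beta$ flips both sides by $-1$), observes $\alpha-\beta=s_\beta(\alpha)$, and then uses Weyl-invariance of the pairing, $\langle\gamma,(s_\beta\alpha)\che\rangle=\langle s_\beta\gamma,\alpha\che\rangle$, to get the formula in a single line with no case split. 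The paper's route is shorter and avoids touching the normalized inner product; yours buys the explicit coroot identity $(\alpha-\beta)\che=\langle\alpha,\beta\che\rangle\alpha\che-\langle\beta,\alpha\che\rangle\beta\che$ as linear forms, which is slightly more than the statement requires but is a harmless bonus.
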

\begin{proof} Let $\gamma \in R_0$.  By (iii) of Definition~\ref{3_grad_def}, we can write $\gamma = \alpha - \beta$ for $\alpha, \beta \in R_1$ such that $\alpha \mathcal R  \beta$ and $\mathcal R \neq \perp.$  Therefore $\mathcal R = \vdash, \dashv$ or $\top.$ Hence $\langle \beta, \alpha\che \rangle  = 1$ or $\langle \alpha, \beta \che \rangle  = 1.$ Switching the roles of $\alpha$ and $\beta$ in (\ref{root_inner_formula}) amounts to multiplying both sides by $-1$, therefore without loss of generality  $\langle \alpha, \beta \che \rangle  = 1.$
So either $\alpha$ governs $\beta$ or $\alpha$ and $\beta$ are collinear. In this case
$s_\beta(\alpha) = \alpha - \langle \alpha, \beta \che \rangle \beta = \alpha - \beta$ and thus $s_{\alpha - \beta} = s_{s_\beta(\alpha)} = s_\beta s_\alpha s_\beta.$ With this formula for every $\gamma \in R:$ 
\begin{eqnarray*}\langle \gamma, (\alpha - \beta) \che \rangle = \langle \gamma, (s_\beta \alpha) \che \rangle = \langle s_\beta \gamma, \alpha  \che \rangle = \langle \gamma -\langle \gamma, \beta \che \rangle\beta , \alpha  \che \rangle \\ = 1\cdot  \langle \gamma, \alpha \che \rangle - \langle \beta, \alpha \che \rangle \langle \gamma, \beta \che \rangle = \langle \alpha, \beta \che \rangle \langle \gamma, \alpha \che \rangle - \langle \beta, \alpha \che \rangle \langle \gamma, \beta \che \rangle.
\end{eqnarray*}
\end{proof}
\begin{defi}\label{grid_definition} Let $(R, R_1)$ be a $3$-graded root system. A family $\mathcal E = \{e_\alpha : \alpha \in R_1\}$ is called a \emph{covering $(R, R_1)$-grid} if the following hold: 
\begin{itemize}
\item[\rm(i)]If $\alpha, \beta \in R_1,$ then  $e_\alpha \mathcal R e_\beta$ if and only if  $\alpha \mathcal R  \beta.$
In this case we define the \emph{joint Peirce spaces} as $V_\alpha = \bigcap_{\beta \in R_1} V_{\langle  \alpha , \beta \che\rangle }(e_\beta), \alpha \in R_1.$ 
\item[\rm(ii)] $V = \sum_{\alpha \in R_1}V_\alpha.$
\end{itemize}
\end{defi}
\begin{prop} The sum of the joint Peirce spaces is always direct, i.e.,  if $\mathcal E$ is a covering $(R, R_1)$ grid then 
$V = \bigoplus_{\alpha \in R_1}V_\alpha.$ Moreover for $\alpha, \beta, \gamma  \in R_\sigma$ 
$$ \{V_\alpha^\sigma, V_{\beta}^{-\sigma}, V_\gamma^\sigma\} \subset V_{\alpha - \beta + \gamma}^\sigma $$
with $\{V_\alpha^\sigma, V_{\beta}^{-\sigma}, V_\gamma^\sigma\}= \{0\}$ if $\alpha - \beta + \gamma  \notin R_\sigma.$
\end{prop}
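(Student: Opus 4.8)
The statement asserts two things about a covering $(R,R_1)$-grid $\mathcal E$ on a Jordan pair $V$: first, that the sum $V=\sum_{\alpha\in R_1}V_\alpha$ is direct, and second, a Peirce-type multiplication rule $\{V_\alpha^\sigma,V_\beta^{-\sigma},V_\gamma^\sigma\}\subset V_{\alpha-\beta+\gamma}^\sigma$ (with the product vanishing when $\alpha-\beta+\gamma\notin R_\sigma$). My plan is to reduce everything to the ordinary (two-idempotent) Peirce decomposition, since the joint Peirce spaces are by definition intersections $V_\alpha=\bigcap_{\beta\in R_1}V_{\langle\alpha,\beta\che\rangle}(e_\beta)$ over the single-idempotent Peirce spaces, and the latter satisfy well-known multiplication relations (see \cite{three_graded_triple_neher} or \cite{loosJP}).

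For directness: suppose $\sum_{\alpha\in S}x_\alpha=0$ with $x_\alpha\in V_\alpha^\sigma$, $S\subset R_1$ finite, and I want each $x_\alpha=0$. The key is that distinct $\alpha,\beta\in R_1$ are separated by some idempotent $e_\gamma$ in the grid, in the sense that $\langle\alpha,\gamma\che\rangle\neq\langle\beta,\gamma\che\rangle$ for a suitable $\gamma\in R_1$; indeed one may take $\gamma=\alpha$, since $\langle\alpha,\alpha\che\rangle=2$ whereas $\langle\beta,\alpha\che\rangle\in\{0,1\}$ for $\beta\neq\alpha$ by the classification of pairs of roots in $R_1$ recalled just above (they are $\perp$, $\vdash$, $\dashv$, or $\top$). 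So $V_\alpha^\sigma\subset V_2^\sigma(e_\alpha)$ while $V_\beta^\sigma\subset V_1^\sigma(e_\alpha)\oplus V_0^\sigma(e_\alpha)=\ker Q^\sigma_{e_\alpha^\sigma}$ for all $\beta\neq\alpha$. Applying $Q^\sigma_{e_\alpha^\sigma}$ (more precisely, using that $V^\sigma=V_2^\sigma(e_\alpha)\oplus V_1^\sigma(e_\alpha)\oplus V_0^\sigma(e_\alpha)$ is a genuine direct sum decomposition of $V^\sigma$ for a single idempotent) kills every term except $x_\alpha$, forcing $x_\alpha=0$. Running this for each $\alpha\in S$ gives directness.

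For the multiplication rule: fix an arbitrary $e_\delta\in\mathcal E$ and recall the single-idempotent Peirce multiplication relations, which state $\{V_i^\sigma(e_\delta),V_j^{-\sigma}(e_\delta),V_\ell^\sigma(e_\delta)\}\subset V_{i-j+\ell}^\sigma(e_\delta)$, with the convention that the right side is $\{0\}$ when $i-j+\ell\notin\{0,1,2\}$. For $x\in V_\alpha^\sigma$, $y\in V_\beta^{-\sigma}$, $z\in V_\gamma^\sigma$ we have $x\in V_{\langle\alpha,\delta\che\rangle}^\sigma(e_\delta)$, $y\in V_{\langle\beta,\delta\che\rangle}^{-\sigma}(e_\delta)$, $z\in V_{\langle\gamma,\delta\che\rangle}^\sigma(e_\delta)$, so $\{x,y,z\}\in V_{\langle\alpha,\delta\che\rangle-\langle\beta,\delta\che\rangle+\langle\gamma,\delta\che\rangle}^\sigma(e_\delta)=V_{\langle\alpha-\beta+\gamma,\delta\che\rangle}^\sigma(e_\delta)$ by linearity of $\langle-,\delta\che\rangle$. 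Since $\delta\in R_1$ was arbitrary, $\{x,y,z\}\in\bigcap_{\delta\in R_1}V_{\langle\alpha-\beta+\gamma,\delta\che\rangle}^\sigma(e_\delta)$. If $\alpha-\beta+\gamma\in R_\sigma=R_1$ (for $\sigma=+$; symmetrically for $\sigma=-$), this intersection is exactly $V_{\alpha-\beta+\gamma}^\sigma$ by Definition~\ref{grid_definition}, giving the inclusion. If $\alpha-\beta+\gamma\notin R_\sigma$, then I claim the intersection is $\{0\}$: one needs some $\delta\in R_1$ with $\langle\alpha-\beta+\gamma,\delta\che\rangle\notin\{0,1,2\}$, or more carefully, one combines several $\delta$'s to corner $\{x,y,z\}$ into an empty joint Peirce space. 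Here the formula $\langle\gamma',(\alpha_0-\beta_0)\che\rangle=\langle\alpha_0,\beta_0\che\rangle\langle\gamma',\alpha_0\che\rangle-\langle\beta_0,\alpha_0\che\rangle\langle\gamma',\beta_0\che\rangle$ from the lemma preceding Definition~\ref{grid_definition} is useful for controlling the relevant pairings.

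\textbf{Main obstacle.} The delicate point is the vanishing statement $\{V_\alpha^\sigma,V_\beta^{-\sigma},V_\gamma^\sigma\}=\{0\}$ when $\alpha-\beta+\gamma\notin R_\sigma$: the inclusion into a joint Peirce space is formal, but showing that joint Peirce space is actually zero — equivalently, that no root of $R_1$ has the required set of pairings with all grid-coroots — requires a root-combinatorial argument using the $3$-grading axioms and the classification of configurations of elements of $R_1$. I would handle this by the standard argument from \cite{three_graded_triple_neher}: reduce to a finite subsystem (using local finiteness, Lemma~\ref{subsystem_lem}) so that only finitely many idempotents are relevant, and then case-check using the possible relations ($\perp,\vdash,\dashv,\top$) among $\alpha,\beta,\gamma$, invoking that a covering grid faithfully reflects these relations. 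This is routine but the bookkeeping over all cases is the real work; the rest of the proof is essentially a translation between joint and single Peirce decompositions.
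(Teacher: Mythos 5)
There is a genuine gap in the directness argument, and it stems from a misreading of the classification you cite. You claim that $\langle\beta,\alpha\che\rangle\in\{0,1\}$ for every $\beta\in R_1$ with $\beta\neq\alpha$. But the proposition immediately preceding Definition~\ref{grid_definition} lists four possibilities, one of which is $\alpha\vdash\beta$ ($\alpha$ governs $\beta$), and in that case $\langle\beta,\alpha\che\rangle=2$. Such configurations really occur in $3$-graded root systems — hermitian ($C_I^{her}$) and quadratic-form ($B_I^{qf}$, $D_I^{qf}$) $3$-gradings all have governing pairs in $R_1$. So it is false that $V_\beta^\sigma\subset V_0^\sigma(e_\alpha)\oplus V_1^\sigma(e_\alpha)$ for all $\beta\neq\alpha$: whenever $\alpha\vdash\beta$, the joint Peirce space $V_\beta$ also sits inside $V_2(e_\alpha)$, and projecting onto $V_2(e_\alpha)$ does not isolate $x_\alpha$. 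The strategy of separating joint Peirce spaces by single-idempotent Peirce decompositions is the right one, but you need a genuinely joint separation: for $\alpha\neq\beta$ you can instead use the observation that the full signature $(\langle\alpha,\delta\che\rangle)_{\delta\in R_1}$ distinguishes $\alpha$ from $\beta$ — for instance, when $\alpha\vdash\beta$ the idempotent $e_\beta$ separates them since $\langle\alpha,\beta\che\rangle=1\neq 2=\langle\beta,\beta\che\rangle$ — and then argue that the commuting Peirce projections $E_i(e_\delta)$ (compatibility of the grid idempotents is what makes them commute) produce a simultaneous decomposition of $V$ in which $V_\alpha$ and $V_\beta$ lie in different joint components. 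Your argument as written proves directness only for the purely collinear and rectangular cases.

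The inclusion $\{V_\alpha^\sigma,V_\beta^{-\sigma},V_\gamma^\sigma\}\subset V_{\alpha-\beta+\gamma}^\sigma$ via additivity of $\langle-,\delta\che\rangle$ and the single-idempotent Peirce rules is correct. For the vanishing statement you acknowledge the proof is incomplete; the missing step is not just bookkeeping but the same separation property as above: given that $\mu:=\alpha-\beta+\gamma\notin R_1$, one needs $\{\delta\che:\delta\in R_1\}$ to separate $\mu$ from every $\nu\in R_1$, so that the joint eigenspace where the product lands is disjoint from every $V_\nu$ and hence, once directness is established, is zero. That separation can be deduced using the formula $\langle\gamma',(\alpha_0-\beta_0)\che\rangle=\langle\alpha_0,\beta_0\che\rangle\langle\gamma',\alpha_0\che\rangle-\langle\beta_0,\alpha_0\che\rangle\langle\gamma',\beta_0\che\rangle$ you mention, since it lets one propagate vanishing from coroots of $R_1$ to coroots of $R_0$ and hence all of $R$; you would need to say this explicitly. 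Note also that the paper itself gives no in-text proof of this proposition — it cites \cite{three_graded_triple_neher} — so you are being asked to reconstruct a nontrivial piece of Jordan theory, and the two gaps above are exactly where the work lies.
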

The following condition on $R$ will hold: 

\begin{center}
\textbf{$(R, R_1)$  is a $3$-graded root system.
}
\end{center}

Recall  the definition of a Lie algebra of Jordan type (Definition~\ref{jordan_type_Defi}), Definition~\ref{r_delta_graded_def},  the definition of $k_R$ from Definition~\ref{root_base_ring_Defi} and the definition  of the functor $\uTKK$ and its universal properties (see Corollary~\ref{uTKK_universal_prop}).
\begin{theo} 
\label{Root_graded_grid_theo} Let $L$ be a Lie algebra of Jordan type over $k= k_R$ and assume that the Jordan pair $V = (L_+, L_-)$ is covered by a grid $\mathcal E$ of type $(R, R_1)$. 
 Then \begin{itemize}
 \item[\rm{(i)}] 
There are uniquely defined central extensions
$f : \uTKK(V) \rightarrow L $ and $g: L \rightarrow \TKK(V)$ such that $f|_V = g|_V = \mathrm{id}|_V. $
If $L$ is perfect, then these central extensions are coverings. 

\item[\rm{(ii)}]If $[L_\alpha, L_{-\beta}] = 0$ whenever $\alpha, \beta \in R_1$ and $\alpha \perp \beta,$ then 
 $L$ is an $R$-graded Lie algebra with homogeneous components
 $$L_{\sigma \alpha} = V_\alpha^\sigma, \; for \; \alpha \in R_1,$$ 
 $$L_{\gamma} = [L_\alpha, L_{-\beta}], \quad \gamma = \alpha  - \beta; \; \alpha, \beta \in R_1, \, \alpha -\beta \in R_0,$$
 $$L_0 = \sum_{\alpha \in R_1}[L_\alpha, L_{-\alpha}]. $$
 \item[\rm(iii)] The Lie algebra $\TKK(V)$ is $R$-graded.  
\end{itemize} \label{rogra_la_to_grd_jp}
\end{theo}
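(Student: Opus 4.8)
The plan is to leverage the machinery already developed: the functor $\uTKK$, the universal property in Corollary~\ref{uTKK_universal_prop}, and the grid-theoretic structure of the Jordan pair $V$. I would prove the three parts in the order (i), (ii), (iii), since (ii) and (iii) will rely on the central extension produced in (i) together with the Peirce/joint-Peirce decomposition.

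\textbf{Part (i).} Since $L$ is a Lie algebra of Jordan type with $V=(L_+,L_-)$ the associated Jordan pair, $L$ is a $3$-graded Lie algebra of Jordan-Kantor type with trivial $M$-part. By Corollary~\ref{uTKK_universal_prop_JP}(i) applied to $L$ (viewing it as a $3$-graded Lie algebra of Jordan type via the remark following Definition~\ref{jordan_type_Defi}), there exist uniquely determined graded central extensions $f:\uTKK(V_L)\to L$ and $g:L\to\TKK(V_L)$ with $g\circ f=\hat\ud$ and $f|_{V_L}=\mathrm{id}$. Since $V_L=(L_1,L_{-1})=V$, these are the required maps. That they are central extensions is exactly the content of Corollary~\ref{uTKK_universal_prop_JP}; uniqueness follows because $V$ generates each of $\uTKK(V)$, $L$, $\TKK(V)$ as a Lie algebra, so a graded homomorphism fixing $V$ is determined. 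If in addition $L$ is perfect, then by Corollary~\ref{perfect_TKK_cor} the Jordan pair $V$ is perfect, hence $\uTKK(V)$ and $\TKK(V)$ are perfect, and a central extension with perfect total space is by definition a covering; so $f$ and $g$ are coverings.

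\textbf{Part (ii).} Assume $[L_\alpha,L_{-\beta}]=0$ whenever $\alpha\perp\beta$ in $R_1$. First I would record the grading: set $L_{\sigma\alpha}:=V_\alpha^\sigma$ for $\alpha\in R_1$, $L_\gamma:=[L_\alpha,L_{-\beta}]$ for $\gamma=\alpha-\beta\in R_0^\times$ (well-defined by the previous lemma, which shows any $0\neq\delta\in R_0$ is such a difference; independence of the representative needs the orthogonality hypothesis plus the joint-Peirce multiplication rule), and $L_0:=\sum_{\alpha\in R_1}[L_\alpha,L_{-\alpha}]$. The direct-sum decomposition $V=\bigoplus_{\alpha\in R_1}V_\alpha$ gives $L_{\pm}=\bigoplus_{\alpha\in R_1}L_{\pm\alpha}$, and $L^0=[L^1,L^{-1}]$ by Jordan type; together with the grid multiplication rule $\{V_\alpha^\sigma,V_\beta^{-\sigma},V_\gamma^\sigma\}\subset V_{\alpha-\beta+\gamma}^\sigma$ one checks $L=\bigoplus_{\mu\in R}L_\mu$ is a $\mathcal Q(R)$-grading supported in $R$; the bracket relations $[L_\mu,L_\nu]\subset L_{\mu+\nu}$ come from the same multiplication rule applied inside $L^0$ acting on $L^{\pm}$, using formula (\ref{root_inner_formula}) to match up coroot pairings, and the orthogonality hypothesis kills the spurious terms. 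Condition (iii) of Definition~\ref{r_delta_graded_def} is immediate from the definition of $L_0$. For condition (ii) I would exhibit, for each $\alpha\in R_1$, the $\mathfrak{sl}_2$-triple: pick the idempotent $e_\alpha=(e_\alpha^+,e_\alpha^-)\in V_\alpha$, put $e:=e_\alpha^+\in L_\alpha$, $f:=e_\alpha^-\in L_{-\alpha}$, $h:=[f,e]$; then $\ad h$ acts on $L_{\sigma\beta}=V_\beta^\sigma$ as multiplication by $\sigma\langle\sigma\beta,\alpha\che\rangle=\langle\beta,\alpha\che\rangle$ because $V_\beta^\sigma\subset V_{\langle\beta,\alpha\che\rangle}(e_\alpha)$ and the Peirce space $V_i(e_\alpha)$ is (for $i=1$, and for all $i$ when $1/2\in k$, which holds when $R$ has a $C$-component by the $k_R$ convention) exactly the $i$-eigenspace of $D(e_\alpha^\sigma,e_\alpha^{-\sigma})$; for $i=0,2$ with $R$ of type $A,B,D,E$ one needs the idempotent axiom $Q_{e_\alpha}e_\alpha=e_\alpha$ to see $\ad h$ acts as $0$ resp. $2$ on the nose rather than merely up to the Peirce inclusion — this is where a short extra argument is needed. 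Then $\ad h$ acts on $L_\gamma=[L_{\alpha'},L_{-\beta'}]$ by $\langle\gamma,\alpha\che\rangle$ by the derivation property, so $h$ is ad-diagonalizable with the required eigenvalues, and $e_\alpha=e$ is invertible.

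\textbf{Part (iii).} Apply (ii) to $L':=\TKK(V)$ in place of $L$: it is of Jordan type with Jordan pair $V$ covered by $\mathcal E$, and I must check the orthogonality hypothesis $[\TKK(V)_\alpha,\TKK(V)_{-\beta}]=0$ for $\alpha\perp\beta$. But $[\TKK(V)_\alpha,\TKK(V)_{-\beta}]=\delta(e_\alpha^{(\cdot)},\ldots)$-type elements, concretely the image of $V_\alpha^+\otimes V_\beta^-$ under $\tilde f$; orthogonality $e_\alpha\perp e_\beta$ gives $V_\alpha\subset V_0(e_\beta)$, so $D(V_\alpha^+,V_\beta^-)$ annihilates $e_\beta$ and by the grid multiplication rule annihilates all of $V$, hence the corresponding bracket in $\TKK(V)$ (which acts faithfully on $V$ in degrees $\pm$ and is determined there) vanishes. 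So (ii) applies and $\TKK(V)$ is $R$-graded.

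\textbf{Main obstacle.} The delicate point is Part (ii), specifically verifying that the bracket relations $[L_\mu,L_\nu]\subset L_{\mu+\nu}$ hold on the nose for all pairs $\mu,\nu\in R$ — including the mixed cases where one of $\mu,\nu$ lies in $R_0$ — and that the definition of $L_\gamma$ for $\gamma\in R_0^\times$ is independent of the chosen representation $\gamma=\alpha-\beta$. Both hinge on the grid multiplication rule together with the hypothesis that $[L_\alpha,L_{-\beta}]=0$ for orthogonal $\alpha,\beta$, and on the Peirce-eigenvalue bookkeeping via (\ref{root_inner_formula}); getting the case analysis over the cog relations $\perp,\top,\vdash,\dashv$ complete and consistent (together with the $1/2\in k$ subtlety in type $C$ for the Peirce $0$- and $2$-spaces) is the real work.
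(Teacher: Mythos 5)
Your handling of part (i) matches the paper: both invoke Corollary~\ref{uTKK_universal_prop_JP} for existence and uniqueness of the central extensions, and Corollary~\ref{perfect_TKK_cor} for the covering property when $L$ is perfect. (The paper actually runs the perfectness implication the other way too — once (iii) is established, $\TKK(V)$ is $R$-graded, hence perfect over $k_R$ by Proposition~\ref{K_R_turns_rogral_perfect}, so $V$, $\uTKK(V)$, and then $L$ are all automatically perfect; your version, taking perfectness of $L$ as given, is equally valid.)

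For parts (ii) and (iii), however, you take a genuinely different route: the paper simply cites \cite[Theorems 2.5 and 2.6]{Neh1996} and does not reprove them, whereas you sketch a direct argument from the joint-Peirce decomposition and the grid multiplication rule. Your sketch is in the right spirit, but two points are worth flagging. First, the ``extra argument'' you worry about for the Peirce eigenvalues $i=0,2$ is not actually needed: the proposition following Definition~\ref{idempotent_def} already gives the \emph{inclusion} $V_i^\sigma(e)\subset\{x:\{e^\sigma e^{-\sigma}x\}=ix\}$ unconditionally, and that inclusion (not the equality, which is the converse direction) is exactly what makes $\ad h$ act by $\langle\beta,\alpha\che\rangle$ on $V_\beta$; the $1/2\in k$ hypothesis governs the converse containment, which plays no role here. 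Second, your argument in (iii) that $\delta(V_\alpha^+,V_\beta^-)=0$ for $\alpha\perp\beta$ is compressed to the point of being incomplete: knowing $V_\alpha\subset V_0(e_\beta)$ controls $D(e_\beta^\pm,e_\beta^\mp)$ on $V_\alpha$, not directly $D(x_\alpha,y_\beta)$, and deducing the vanishing of the latter on all Peirce components requires running through the cog-relation cases and the grid multiplication rule with some care (which is what Neher's Theorem 2.5 actually does). None of this is a fatal flaw — the strategy is sound — but where the paper outsources the combinatorial verification to \cite{Neh1996}, your proposal is carrying that burden itself, and the sketch would need to be fleshed out to be a complete replacement for the citation.
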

\begin{proof}  
The existence and uniqueness of the central extensions in (i) are Corollary~\ref{uTKK_universal_prop_JP}. 
 Part (ii) is Theorem 2.6 in \cite{Neh1996} and part (iii) is  Theorem 2.5 \cite{Neh1996}. Thus by Corollary~\ref{perfect_TKK_cor}, $\uTKK(V)$ is also perfect and hence $L$ is perfect too. 
\end{proof}

\begin{lem}\cite[2.5]{Neh1996} \label{grid_space_iso_lem} Let $V$ be a Jordan pair over $k_R$ which is covered by an $(R, R_1)$-grid and $L$ of Jordan type with $(L_+, L_-) = V.$ If $\alpha \neq  \beta \in R_1,$ then 
\begin{eqnarray*} \langle \alpha, \beta \che \rangle = 1  & \implies & L_{\alpha - \beta} \cong L_\alpha, \\
\langle \beta, \alpha \che \rangle = 1  & \implies & L_{\alpha - \beta} \cong L_{-\beta}.
\end{eqnarray*}
  The isomorphisms are given by $\ad e_\beta^-$ and $\ad e_\alpha^+$ respectively. 
\label{iso_from_grid_lem}
\end{lem}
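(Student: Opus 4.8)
The statement to be established is Lemma~\ref{grid_space_iso_lem}: if $V$ is a Jordan pair over $k_R$ covered by an $(R,R_1)$-grid $\mathcal E = \{e_\alpha : \alpha \in R_1\}$, $L$ is of Jordan type with $(L_+, L_-) = V$, and $\alpha \neq \beta$ in $R_1$, then $\langle \alpha, \beta\che\rangle = 1$ forces $L_{\alpha - \beta} \cong L_\alpha$ via $\ad e_\beta^-$, while $\langle \beta, \alpha\che\rangle = 1$ forces $L_{\alpha - \beta} \cong L_{-\beta}$ via $\ad e_\alpha^+$. The two cases are symmetric under the flip $P \leftrightarrow P^{\mathrm{op}}$ (Proposition~\ref{opp_TKK_prop}), so I would treat only the first one. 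Since this is quoted from \cite[2.5]{Neh1996}, the cleanest honest route is to cite that reference directly; but assuming we want a self-contained argument, here is the strategy I would carry out.

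\textbf{Step 1: translate to the $\mathfrak{sl}_2$ attached to $e_\beta$.} By Theorem~\ref{Root_graded_grid_theo}(ii), $L$ is $R$-graded with $L_{\sigma\alpha} = V_\alpha^\sigma$ and $L_{\alpha-\beta} = [L_\alpha, L_{-\beta}]$. The idempotent $e_\beta = (e_\beta^+, e_\beta^-)$ gives an $\mathfrak{sl}_2$-triple inside $L$: set $e = e_\beta^+ \in L_\beta$, $f = e_\beta^- \in L_{-\beta}$, $h = [f,e]$. Since $e_\beta$ is an idempotent, $Q_{e_\beta}e_\beta = e_\beta$, and in the $\TKK$ bracket this means $\ad h$ acts on $L_\gamma$ by $\langle \gamma, \beta\che\rangle$ for all $\gamma \in R$ (this is exactly axiom (ii) of an $R$-grading, already available from Theorem~\ref{Root_graded_grid_theo}(ii), or can be derived from the Peirce relations in the Proposition after Definition~\ref{cog_relations}). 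The plan is then to show $\ad f : L_{\alpha} \to L_{\alpha-\beta}$ and $\ad e : L_{\alpha-\beta} \to L_{\alpha}$ are mutually inverse up to scalars that are units in $k_R$.

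\textbf{Step 2: run the $\mathfrak{sl}_2$ computation on the $\beta$-string through $\alpha$.} Since $\langle \alpha, \beta\che\rangle = 1$, the hypothesis $\alpha \vdash \beta$ or $\alpha \top \beta$ (from the Proposition after Definition~\ref{3_grad_def}) gives that the $\beta$-string through $\alpha$ is $\{\alpha, \alpha - \beta\}$: indeed $\alpha + \beta \notin R$ because both $\alpha, \alpha+\beta$ would lie in $R_1 + R_1$, forcing $\alpha + \beta \in R_2 = \emptyset$ by the $3$-grading axiom (ii), and $\alpha - 2\beta \notin R$ since $\langle \alpha - \beta, \beta\che\rangle = 1 - 2 = -1$ and strings have length $\le 4$ — more directly, $\alpha - 2\beta = (\alpha-\beta) - \beta$ with $\alpha - \beta \in R_0$, $-\beta \in R_{-1}$, so it would lie in $R_{-1}$, meaning $\alpha - 2\beta \in R_{-1}$; but then $s_\beta$ applied to $\alpha$ lands at $\alpha - \beta$ and applied again would give a contradiction with reducedness. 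Granting $(\alpha + \mathbb{Z}\beta) \cap R = \{\alpha, \alpha - \beta\}$, the usual $\mathfrak{sl}_2$-representation theory argument (compare the proof of Lemma~\ref{not_type_C_generation_lem}) gives: for $x \in L_\alpha$, $\ad e\, \ad f\, x = -\ad h\, x + \ad f\, \ad e\, x = -\langle\alpha,\beta\che\rangle x + 0 = -x$, because $\ad e\, x \in L_{\alpha+\beta} = 0$; similarly for $y \in L_{\alpha - \beta}$, $\ad f\, \ad e\, y = \ad e\, \ad f\, y - \ad h\, y$... I would arrange the bookkeeping so that one composite is $\pm\,\mathrm{id}$ and the other is $\pm\,\mathrm{id}$ (with signs $\langle \alpha,\beta\che\rangle = 1$ resp. $\langle \alpha-\beta,-\beta\che\rangle$, which one computes via formula~(\ref{root_inner_formula}) or directly: $\langle \alpha - \beta, \beta\che\rangle = -1$, so on $L_{\alpha-\beta}$ the relevant scalar is again a unit). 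Both scalars are $\pm 1 \in k_R^\times$, so $\ad e_\beta^-$ restricted to $L_\alpha$ is an isomorphism onto $L_{\alpha-\beta}$ with inverse $\pm\,\ad e_\beta^+$.

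\textbf{Main obstacle.} The genuinely delicate point is \emph{not} the $\mathfrak{sl}_2$ shuffle — that is routine once the string is pinned down — but rather making sure that $L_{\alpha - \beta}$ is \emph{exactly} $[L_\alpha, L_{-\beta}]$ and that $\ad e_\beta^-(L_\alpha) \subseteq L_{\alpha - \beta}$ with the reverse map landing back in $L_\alpha$, i.e.\ that no other homogeneous pieces contaminate these spaces. This is where one must invoke the grid structure honestly: the joint Peirce decomposition $V = \bigoplus_{\alpha \in R_1} V_\alpha$ and the multiplication rule $\{V_\alpha^\sigma, V_\beta^{-\sigma}, V_\gamma^\sigma\} \subseteq V_{\alpha-\beta+\gamma}^\sigma$ from the Proposition following Definition~\ref{grid_definition}, together with the $R$-grading of $\TKK(V)$ from Theorem~\ref{Root_graded_grid_theo}(iii). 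A secondary subtlety is that in type $C$ one needs $1/2 \in k_R$ so that $L$ is perfect and the $R$-grading is genuine (Definition~\ref{root_base_ring_Defi}); the base ring hypothesis $k = k_R$ handles this. In the write-up I would (a) invoke Theorem~\ref{Root_graded_grid_theo} to get the $R$-grading with the stated homogeneous components, (b) verify the $\beta$-string claim combinatorially from the list of cog-relations on $R_1$, (c) carry out the two-line $\mathfrak{sl}_2$ computation to get mutually inverse maps, and (d) note the $P^{\mathrm{op}}$ symmetry for the second half. Given that \cite{Neh1996} already contains this, I would in practice keep the argument short and reference it, expanding only the $\mathfrak{sl}_2$ step for the reader's convenience, exactly in the spirit of the surrounding text.
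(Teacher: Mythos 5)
Your proposal takes essentially the same route as the paper's three-line proof: pin down the $\beta$-string through $\alpha$ (so that $\alpha+\beta\notin R$ and $\alpha-2\beta\notin R$), then show $\ad e_\beta^-$ and $\ad e_\beta^+$ restrict to mutually inverse maps, up to a sign that is a unit, between $L_\alpha$ and $L_{\alpha-\beta}$, and dispose of the second claim by the opposite-pair symmetry. Two small remarks on your write-up. First, the sign: the paper obtains $\ad e_\beta^+\,\ad e_\beta^-\,x = x$ directly from the Peirce relation $D(e_\beta^+,e_\beta^-)|_{V_1(e_\beta)}=\mathrm{id}$, whereas your abstract $\mathfrak{sl}_2$ bookkeeping with $h=[f,e]$ yields $-x$; this is a harmless convention issue since one only needs a unit. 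Second, your ``more direct'' justification of $\alpha-2\beta\notin R$ is not sound: $s_\beta$ is an involution, so applying it twice always returns $\alpha$ and produces no conflict with reducedness, and the observation $\langle\alpha-\beta,\beta\che\rangle=-1$ together with ``strings have length $\le 4$'' does not by itself exclude $\alpha-2\beta$. The clean argument, which the paper merely asserts, is the root-string property combined with the $3$-grading: $\alpha+\beta\in R_1+R_1\subseteq R_2=\emptyset$, so the upper endpoint of the $\beta$-string through $\alpha$ is $\alpha$ itself, hence the lower endpoint is $\alpha-\langle\alpha,\beta\che\rangle\beta=\alpha-\beta$ and $\alpha-2\beta$ lies outside the string.
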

\begin{proof} 
We only consider the first case since both are proved in exactly  the same manner. Denote $e^\pm  = e_\beta^\pm$. Let $x \in L_{\alpha}$. Then 
$\ad( e^+)\ad(e^-).x = x$ so that $\ad (e^-) : L_\alpha \rightarrow L_{\alpha - \beta}$ is left invertible with left inverse $\ad (e^+)$. For the next step it is crucial that $\alpha - 2\beta \notin R$ since the $\beta$-string through $\alpha$ is only $\{\alpha, \alpha - \beta\}$. Thus if $z \in L_{\alpha - \beta}$ then $[e^-, z] = 0$ and
$$\ad e^-. \ad e^+. z  = - \ad[e^+, e^-].z = z$$ which shows that $\ad e^-$ is invertible. 
\end{proof}

 \begin{prop} Let $V$ be a Jordan pair which is covered by an $(R_1, R)$ grid and $L = \TKK(V).$
If $f: K \rightarrow L$ is a  $\mathcal Q(R)$-graded  central covering of $L$ and $\alpha, \beta \in R_1,$ then 
$$ \langle \alpha, \beta \che \rangle = 1  \implies K_{\alpha - \beta} \cong K_{\alpha} \cong L_\alpha. $$ 
$$\langle \beta, \alpha \che  \rangle = 1  \implies  K_{\alpha - \beta} \cong K_{-\beta} \cong L_{-\beta}. $$
\label{iso_from_grid_lem_ce}
\end{prop}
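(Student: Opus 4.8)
The statement is the ``covering'' analogue of Lemma~\ref{grid_space_iso_lem} (= Lemma~\ref{iso_from_grid_lem}), which already gives $L_{\alpha-\beta}\cong L_\alpha$ (resp. $L_{-\beta}$) for the Jordan-type Lie algebra $L=\TKK(V)$ via $\ad e_\beta^-$ (resp. $\ad e_\alpha^+$). The idea is to transport the same computation upstairs to the covering $K$, using the central trick (Lemma~\ref{central_trick}) to make sense of ``$\ad e_\beta^-$'' on $K$ even though $e_\beta^\pm$ are elements of $L$, not of $K$. First I would observe that, since $R_1$ is $3$-graded and $(L_+,L_-)=V$ is covered by an $(R,R_1)$-grid, $L=\TKK(V)$ is $R$-graded with $L_{\pm\alpha}=V_\alpha^\pm$ for $\alpha\in R_1$ (Theorem~\ref{Root_graded_grid_theo}(iii)); moreover $\alpha-2\beta\notin R$ when $\langle\alpha,\beta\che\rangle=1$ (the $\beta$-string through $\alpha$ is $\{\alpha,\alpha-\beta\}$), and symmetrically $\alpha+2\beta\notin R$ — these are exactly the combinatorial facts used in the proof of Lemma~\ref{grid_space_iso_lem}. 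Since $f\colon K\to L$ is a $\mathcal Q(R)$-graded central covering, Proposition~\ref{torsion_bijection_prop} (or directly Proposition~\ref{graded_cover_neh}/Lemma~\ref{generating_lemma}) gives that $f$ restricts to an isomorphism $K_\gamma\xrightarrow{\sim}L_\gamma$ for every $\gamma\in R^\times$; in particular $f$ is a bijection on $K_{\alpha}$, $K_{-\alpha}$, $K_{\beta}$, $K_{-\beta}$, $K_{\alpha-\beta}$ (each of $\alpha,-\alpha,\beta,-\beta,\alpha-\beta$ is a nonzero root here, using that $\alpha-\beta\in R_0$ and $\alpha\ne\beta$). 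This already settles the ``$\cong L_\alpha$'', resp. ``$\cong L_{-\beta}$'', half of each claimed isomorphism; what remains is $K_{\alpha-\beta}\cong K_\alpha$ (resp. $K_{-\beta}$).

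To get $K_{\alpha-\beta}\cong K_\alpha$ when $\langle\alpha,\beta\che\rangle=1$: pick $\tilde e^{\,-}\in K_{-\beta}$ with $f(\tilde e^{\,-})=e_\beta^-$ and $\tilde e^{\,+}\in K_\beta$ with $f(\tilde e^{\,+})=e_\beta^+$ (possible by surjectivity of $f$ on these root spaces). Consider the graded maps $\ad\tilde e^{\,-}\colon K_\alpha\to K_{\alpha-\beta}$ and $\ad\tilde e^{\,+}\colon K_{\alpha-\beta}\to K_\alpha$. For $x\in K_\alpha$, by the Jacobi identity $\ad\tilde e^{\,+}\,\ad\tilde e^{\,-}\,x=[[ \tilde e^{\,+},\tilde e^{\,-}],x]+[\tilde e^{\,-},[\tilde e^{\,+},x]]$; the second bracket lies in $K_{2\alpha+? }$... more precisely $[\tilde e^{\,+},x]\in K_{\alpha+\beta}$, and $\alpha+\beta\notin R$ (since $\langle\alpha,\beta\che\rangle=1$ forces $\alpha+\beta$ not to be a root in a $3$-graded system, as $R_1+R_1$ meets $R$ only in $R_2=\emptyset$), so $K_{\alpha+\beta}\subseteq\ker f\subseteq Z(K)$ and $[\tilde e^{\,-},[\tilde e^{\,+},x]]=0$. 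Hence $\ad\tilde e^{\,+}\,\ad\tilde e^{\,-}\,x=[[\tilde e^{\,+},\tilde e^{\,-}],x]$. Now $f([\tilde e^{\,+},\tilde e^{\,-}])=[e_\beta^+,e_\beta^-]=-h_\beta$ in $L_0$ (the inner derivation $-\delta(e_\beta^+,e_\beta^-)$), and $[[\tilde e^{\,+},\tilde e^{\,-}],x]$ computes the action of an element covering $-h_\beta$ on $x\in K_\alpha$; but $K_\alpha\to L_\alpha$ is bijective and $\ad[\tilde e^{\,+},\tilde e^{\,-}]$ on $K_\alpha$ is, via the central trick applied to $f(x)\in L_\alpha$ and $f([\tilde e^{\,+},\tilde e^{\,-}])=-h_\beta$, scalar multiplication by $-\langle\alpha,\beta\che\rangle=-1$ — wait, sign: $\ad(-h_\beta)$ acts on $L_\alpha$ by $-\langle\alpha,\beta\che\rangle=-1$, so $\ad\tilde e^{\,+}\,\ad\tilde e^{\,-}\,x=-x$. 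Likewise, for $z\in K_{\alpha-\beta}$, $[\tilde e^{\,-},z]\in K_{\alpha-2\beta}\subseteq\ker f\subseteq Z(K)$ since $\alpha-2\beta\notin R$, so $\ad\tilde e^{\,-}\,\ad\tilde e^{\,+}\,z=[[\tilde e^{\,-},\tilde e^{\,+}],z]=\ad(h_\beta)z$, and applying the central trick with $f(z)\in L_{\alpha-\beta}$ gives $\langle\alpha-\beta,\beta\che\rangle z=(1-2)z=-z$. Therefore $\ad\tilde e^{\,-}$ and $-\ad\tilde e^{\,+}$ are mutually inverse $k$-module isomorphisms $K_\alpha\leftrightarrow K_{\alpha-\beta}$, proving $K_{\alpha-\beta}\cong K_\alpha$.

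The case $\langle\beta,\alpha\che\rangle=1$ is symmetric: exchange the roles of $\alpha$ and $\beta$, use lifts $\tilde e^{\,\pm}\in K_{\pm\alpha}$ of $e_\alpha^{\pm}$, and run the same argument with $\ad\tilde e^{\,+}\colon K_{-\beta}\to K_{\alpha-\beta}$ and $\ad\tilde e^{\,-}\colon K_{\alpha-\beta}\to K_{-\beta}$, invoking $\beta-2\alpha\notin R$ and $-\alpha-\beta\notin R$ so that the stray brackets land in $\ker f\subseteq Z(K)$; one finds $K_{\alpha-\beta}\cong K_{-\beta}\cong L_{-\beta}$. Finally I would assemble the two halves, citing Lemma~\ref{grid_space_iso_lem} for $L_{\alpha-\beta}\cong L_\alpha$ (resp. $L_{-\beta}$) and Proposition~\ref{torsion_bijection_prop} for $K_\alpha\cong L_\alpha$ (resp. $K_{-\beta}\cong L_{-\beta}$). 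The only genuinely delicate point — and the one I would state carefully rather than wave at — is the repeated use of the central trick to evaluate $\ad[\tilde e^{\,+},\tilde e^{\,-}]$ on the relevant root spaces of $K$: it works precisely because $K_\gamma\to L_\gamma$ is injective for $\gamma\in R^\times$ (so two elements of $K_\gamma$ with the same image in $L$ coincide) and because the ``extra'' commutators are forced into $\ker f\subseteq Z(K)$ by the root-string computations $\alpha\pm2\beta\notin R$, $\alpha+\beta\notin R$. Everything else is bookkeeping with the $\mathcal Q(R)$-grading.
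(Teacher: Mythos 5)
Your proof is correct, and it establishes the stated isomorphisms, but it takes a noticeably longer route than the paper's, which is a one-liner: it simply chains
$K_{\alpha-\beta}\xrightarrow{\sim}L_{\alpha-\beta}\xrightarrow{\sim}L_\alpha\xrightarrow{\sim}K_\alpha$,
where the outer two isomorphisms are Proposition~\ref{torsion_bijection_prop} (bijectivity of $f$ on root spaces $K_\gamma\to L_\gamma$ for $\gamma\in R^\times$, valid since both $\alpha$ and $\alpha-\beta$ are roots here) and the middle one is Lemma~\ref{grid_space_iso_lem}. You instead re-do the Lemma~\ref{grid_space_iso_lem} computation upstairs in $K$: lift $e_\beta^\pm$ to $\tilde e^{\,\pm}\in K_{\mp\beta}$, check that the stray commutators $[\tilde e^{\,+},K_\alpha]\subset K_{\alpha+\beta}$ and $[\tilde e^{\,-},K_{\alpha-\beta}]\subset K_{\alpha-2\beta}$ vanish because $\alpha+\beta\notin R$ (since $R_2=\emptyset$) and $\alpha-2\beta\notin R$ (root string), hence these live in $\ker f\subset Z(K)$, and then evaluate $\ad[\tilde e^{\,+},\tilde e^{\,-}]$ on $K_\alpha$ and $K_{\alpha-\beta}$ by pushing down to $L$ and using the injectivity of $f$ on nonzero root spaces. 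That argument is sound — and you correctly flag that the ``central trick'' per se is not quite the point; what you actually need is that $f$ is injective on $K_\gamma$ for $\gamma\in R^\times$, so the diagonal action on $L_\gamma$ lifts to a diagonal action on $K_\gamma$. The net effect is that you build the inverse pair $\ad\tilde e^{\,-}$, $-\ad\tilde e^{\,+}$ explicitly, which is more information than existence of an isomorphism but is not needed for the statement; your closing ``assembly'' step, which cites Lemma~\ref{grid_space_iso_lem} after having already proved $K_{\alpha-\beta}\cong K_\alpha$ directly, is redundant. If you want the shorter argument, note that once you know $\alpha-\beta$ is a root (which follows because $\langle\alpha,\beta\che\rangle\neq 0$ forces $\alpha\not\perp\beta$, so $\alpha-\beta\in R_0^\times$), Proposition~\ref{torsion_bijection_prop} plus Lemma~\ref{grid_space_iso_lem} give everything immediately.
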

\begin{proof} 
This follows from Proposition~\ref{torsion_bijection_prop} and Lemma~\ref{iso_from_grid_lem}.
\end{proof}



\subsection{Degenerate sums revisited}
It now makes sense to ask what kind of additional structure an $(R_1, R)$-grid on a Jordan pair induces on its universal derivation algebra $\uider(V).$ 
Recall that for a Jordan pair $V$, $\uider(V)$ was defined as $V^+ \otimes V^-$ modulo the relations
\begin{eqnarray}
&\delta(x, y)(u \otimes v) + \delta(u, v)(x \otimes y),& \label{HCP1_op_version}\\
& \delta(x,y)(x \otimes y),& \label{HCP2_op_version}
\end{eqnarray} 
where $(x,y), (u,v) \in V$ and $\delta(x,y) = (D_{x,y}, -D_{y,x}).$
We will abbreviate the relation $\delta(x, y)(u \otimes v) + \delta(u, v)(x \otimes y)$ by $A(x,y,u,v)$ and the relation $\delta(x,y)(x,y)$ by $B(x,y).$ Then $2B(x,y) = A(x,y,x,y)$ and the symmetries  \\
\begin{equation}A(x,y,u,v) = A(u,v, x,y) = A(x,v, u,y)\end{equation}
hold. 
Linearize (\ref{HCP2_op_version}):
\begin{eqnarray}
B(x,y) &=& \{xyx\} \otimes y - x \otimes \{yxy\} \label{quadratic_B}\\
B'(x,u;y) &=& 2\{xyu\} \otimes y - x \otimes \{yuy\} - u \otimes \{yxy\} = A(x,y, u, y) \label{quadratic_linear_B} \\
B''(x,u, y, v) &=&  A(x,y, u, v)  + A(x,v, u, y) = 2A(x,y, u,v) \label{bilinear_B}
\end{eqnarray}

Assume now that $V = \bigoplus_{\alpha \in R_1} V_\alpha$ is covered by a grid. 
The root grading of $V$ induces a $\mathcal Q(R)$-grading on the tensor product 
$$V^+ \otimes V^- = \bigoplus_{\rho, \mu \in \mathcal Q(R)} V_\rho^+ \otimes V_\mu^- =  (V^+ \otimes V^-)_0 \oplus (\bigoplus_{\rho \neq \mu \in \mathcal Q(R)} V_\rho^+ \otimes V_\mu^-) ,$$ where 
$(V^+ \otimes V^-)_{\mu} = \sum_{\gamma \in R_1} V_\gamma^+ \otimes V_{\gamma - \mu}^-.$

By (\ref{bilinear_B}) and (\ref{quadratic_linear_B}) we only need to consider $B(x,y)$ for $x$ and $y$ homogeneous. 
Thus $\uider(V)$ is equal to $V^+\otimes V^-$ modulo
\begin{itemize}
\item[(1)] $A(x,y, u, v)$ for $x,y,u,v$ homogeneous,
\item[(2)] $B(x,y)$ for $x,y$ homogeneous. \label{B_hom_graded} We immediately discuss the possible cases for $B(x_{\alpha}, y_{\beta})$ with $x_{\alpha} \in V^+_{\alpha}, y_{\beta} \in V_{\beta}^{-}.$
Let $x_\alpha \in V_\alpha^\sigma,$ $y_\beta \in V_\beta^{-\sigma}.$ For a root system grading this gives $3$ cases:
\begin{itemize}
\item[(2a)] $\alpha = \beta:$ $B(x_\alpha, y_\alpha) \in V_{\alpha}^+ \otimes V_{\alpha}^-$
\item[(2b)] $\alpha \vdash \beta:$ $B(x_\alpha, y_\beta) = 2Q(x_\alpha)y_\beta \otimes y_\beta \in V_{2\alpha - \beta}^+ \otimes V_{\beta}^-$
\item[(2c)] $\alpha \dashv \beta:$ $B(x_\alpha, y_\beta) = - x_\alpha \otimes 2Q(y_\beta)x_\alpha \in V_{\alpha}^+ \otimes V_{2\beta - \alpha}^-$
\end{itemize}
There are no other cases since $Q(x_\alpha)(y_\beta) = 0$ if $\alpha \top \beta$ or $\alpha \perp \beta.$
\end{itemize}
Since $$\{V_\alpha^{\sigma},V_\beta^{-\sigma}, V_\gamma^\sigma \} = 0, \;\mbox{if} \; \alpha - \beta + \gamma \notin R_{\sigma}$$
we can assume  for the discussion of $A(x,y, u,v)$ with $x \in V_\alpha^\sigma, y \in V_\beta^{-\sigma}, z \in V_\gamma^\sigma, v \in V_\delta^{-\sigma}$ that at least one of the following elements of $\mathcal Q(R)$ lies in $R_{1}:$
\begin{equation} \epsilon_1 = \alpha - \beta + \gamma, \, \epsilon_2 = \beta - \alpha + \delta, \,\epsilon_3 = \gamma - \delta + \alpha, \, \epsilon_ 4 = \delta - \gamma + \beta. \label{epsilon_Defi}\end{equation}
We define 
$$A(\alpha, \beta, \gamma, \delta) := A(V_\alpha^+,V_\beta^-, V_\gamma^+ , V_\delta^-)$$
for all $\alpha, \beta, \gamma, \delta \in R_1$
and $$B(\alpha, \beta) := B(V_\alpha, V_\beta)$$ for all $\alpha, \beta \in R_1.$ For $\mu \in \mathcal Q(R)$ let $$A(\mu) := \sum_{\alpha - \beta + \gamma - \delta = \mu}A(\alpha, \beta, \gamma,\delta), \quad B(\mu) = \sum_{2 (\alpha - \beta) = \mu }B(\alpha, \beta). $$
Then the submodule $I$ generated by all $A(\alpha, \beta, \gamma, \delta)$ and $B(\alpha, \beta)$ is equal to the submodule generated by all $A(\mu)$  and $B(\mu) , \mu \in \mathcal Q(R).$ We have a canonical grading on this submodule, where the homogeneous component of degree $\mu \in \mathcal Q(R)$ is given by $A(\mu) + B(\mu).$
\begin{prop}
The Lie algebra $\uider(V)$ is $\mathcal Q(R)$-graded with homogeneous components
\begin{equation}\uider(V)_\mu = (V^+ \otimes V^-)_{\mu}/ (A(\mu) + B(\mu)) = \sum_{\alpha \in R_1}(V_{\alpha}^+ \diamond V_{\alpha - \mu}^-) \label{uider_graded_components}, \end{equation}
and the epimorphism $\ud: \uider(V) \rightarrow \instr(V)$ is $\mathcal Q(R)$-graded. 
\label{uider_QR_grading_lem}
\end{prop}
\begin{proof}
By definition, $\uider(V)$ is the quotient of $V^+ \otimes V^-$ by the submodule generated by all $A(x,y, u,v)$ and $B(x,y).$ We have seen above that this submodule is $Q(R)$-graded with homogeneous components $A(\mu) + B(\mu).$ This implies (\ref{uider_graded_components}). The homogeneous components of $\instr(V)$ are $\instr(V)_{\mu} = \sum_{\alpha \in R_1} \delta(V_\alpha^+, V_{\mu - \alpha}^-).$ Under the Lie algebra epimorphism $\ud : \uider(V) \rightarrow \instr(V),$ $\ud(x_\alpha \diamond y_{\beta}) = \delta(x_{\alpha}, y_{-\beta}) \in \instr(V)_{\alpha - \beta}.$ Thus $\ud$ is graded. 
\end{proof}

We will now describe the kernel of $\ud$, see Corollary \ref{ker_ud_supporr_cor}. So far, very little use has been made of the explicit classification of $3$-gradings which can for instance be found in \cite[17.8 and 17.9]{lfrs}, but for what follows we will really have to look at each $3$-grading individually.  The notation for $3$-graded root systems can also be found in \cite{lfrs}.

\label{deg_sum_in_defect}
Assume that $V$ is defined over $k_R.$
By the above, $ \ud : \uTKK(V) \rightarrow \TKK(V) $ is a $\mathcal Q(R)$-graded central covering with kernel $\ker \ud = \mathrm{HC}(V)$ and $\TKK(V)$ is $R$-graded (Theorem~\ref{Root_graded_grid_theo}). We know by Proposition~\ref{torsion_bijection_prop}, that with respect to the $\mathcal Q(R)$-grading
$\HF(V) =  \bigoplus_{\gamma \in \mathbb{DS}(R) } (\HF(V))_\gamma \oplus (\HF(V))_0.$
where $\mathbb{DS}(R)$ was defined in Definition~\ref{deg_sum_defi}.
In addition, $ \ud : \uTKK(V) \rightarrow \TKK(V) $ is a $3$-graded central extension with respect to the $3$-grading $(R, R_1).$ With respect to this $3$-grading, $\HF(V)$ has degree $0,$ so it follows that, if $ \alpha - \beta = \gamma \in \mathbb{DS}(R),$  and $\HF(V)_\gamma \neq \{0\},$ the   degenerate pair (see Definition~\ref{deg_sum_defi}) $(\alpha, -\beta)$ lies in $R_1 \times -R_1.$ \\
Degenerate sums occur either with divisor $2$ or $3.$ Assume that $\alpha - \beta = \gamma$ is a degenerate sum and $n_\gamma = 3.$ Then $R= A_2$ or $R = G_2.$ Since the root system $G_2$ is not $3$-graded, we can restrict to the case $n_\gamma = 3$ and $R = A_2.$ In this  case, the grid is of type $A_2^{coll}$ and  without loss of generality $R_1 =   \{\epsilon_i -\epsilon_j, \epsilon_i -\epsilon_k \}$  for  $\{i,j,k\} = \{1 ,2, 3\}.$ Moreover, $R_1 - R_1 =  \{\epsilon_k -\epsilon_j, 0, \epsilon_j -\epsilon_k \}$. Comparison with table \ref{tab:DegSums} tells us that none of these three elements is a degenerate sum. Thus  if $n_\gamma = 3,$ then $\gamma$ is not in the support of $\HF(V).$ We may therefore assume from now on that $n_\gamma = 2.$ 

In the following if $\gamma \in \mathcal Q(R)$ and $L$ a $\mathcal Q(R)$-graded Lie algebra, then $L_\gamma$ will mean the $\gamma$-component with the respect to the $\mathcal Q(R)$-grading and if $L$ is a $\mathbb Z$-graded Lie algebra, then $L^n,$ $n \in \mathbb Z,$ is the $n$-homogeneous component. In this sense $\uTKK(V)^0 = \uider(V) = \bigoplus_{\gamma \in \mathcal Q(R)} \uider(V)_\gamma$ and $(\uTKK(V))_\gamma = \uider(V)_\gamma$ for $\gamma \in  (R_1 + R_{-1}).$\\
By  Proposition~\ref{torsion_bijection_prop} the following holds: 
\begin{center}
If $1/2 \in k,$ then  $\ker \ud \subset \uider(V)_0.$ \label{deg_sum_in_defect_one_half}
\end{center}
Assume that $\mathrm{HC}(V)_\gamma \neq 0$ and $\gamma \neq 0.$
Since $n_\gamma = 2,$ we know that there must be a degenerate pair $(\alpha, -\beta),$ $\alpha, \beta \in R_1$, such that $\gamma = \alpha - \beta$ and $\alpha \perp \beta$ (see Lemma~\ref{vdK_2.6}). 
\begin{itemize}
\item[-] $A^{coll}.$ Here, any two distinct roots in $R_1$ are collinear (hence the name). Thus a degenerate sum $\gamma$ cannot lie in $R_1 + R_{-1}$ and we obtain
\begin{prop}
If $V$ is covered by a collinear grid $A^{coll},$ then $\ker \ud \subset \uider(V)_0. $
\label{deg_sum_in_defect_coll}
\end{prop}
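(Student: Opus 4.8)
The statement asserts that when the Jordan pair $V$ is covered by a collinear grid $A^{coll}$, the kernel of $\ud$ sits entirely in the degree-zero component $\uider(V)_0$. By the setup already established in this subsection, $\ud:\uTKK(V)\to\TKK(V)$ is a $\mathcal Q(R)$-graded central covering with $\ker\ud = \HF(V)$, and by Proposition~\ref{torsion_bijection_prop} the support of $\HF(V)$ (viewed inside $\mathcal Q(R)$) is contained in $\mathbb{DS}(R)\cup\{0\}$; moreover, since $\ud$ is also $3$-graded with $\HF(V)$ of degree $0$ in the $3$-grading, any nonzero degenerate sum $\gamma$ with $\HF(V)_\gamma\neq 0$ must be of the form $\gamma=\alpha-\beta$ with $\alpha,\beta\in R_1$. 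So the whole argument reduces to a purely combinatorial claim about the $3$-graded root system underlying an $A^{coll}$ grid: no difference $\alpha-\beta$ of two roots in $R_1$ is a degenerate sum.

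First I would recall (from \cite[17.8, 17.9]{lfrs} and the Proposition on $3$-graded root systems quoted above) that for an $A^{coll}$ grid any two distinct $\alpha,\beta\in R_1$ satisfy $\alpha\top\beta$, i.e., $\langle\alpha,\beta\che\rangle=\langle\beta,\alpha\che\rangle=1$. Then for $\gamma=\alpha-\beta$ with $\alpha\neq\beta$ one computes $\langle\gamma,\alpha\che\rangle = 2 - 1 = 1$, so $\gamma$ is a weight whose divisor $n_\gamma$ divides $1$; hence $n_\gamma=1$ and $\gamma$ is \emph{not} a degenerate sum (a degenerate sum requires divisor $>1$ by Definition~\ref{deg_sum_defi}). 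The case $\alpha=\beta$ gives $\gamma=0$. Therefore $(R_1+R_{-1})\cap\mathbb{DS}(R)=\emptyset$, and combined with the support statement this forces $\supp\HF(V)\subset\{0\}$, i.e., $\HF(V)=\HF(V)_0\subset\uider(V)_0$.

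A slightly cleaner alternative (which I would mention but not necessarily write out): one may invoke the classification of $3$-graded root systems of collinear type directly --- these are exactly the $A_I^{coll}$ gradings, whose $R_0$-part consists of roots $\varepsilon_i-\varepsilon_j$ that, by Corollary~\ref{tab:DegSums}, never appear in $\mathbb{DS}(R)$ for the relevant ranks. This just re-derives the same combinatorial fact by table lookup. Either way, the key input is entirely about root systems and uses nothing about the Jordan pair structure beyond the grading.

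\textbf{Main obstacle.} There is essentially no serious obstacle: the proof is short once one has Proposition~\ref{torsion_bijection_prop} and the $3$-graded analysis of the preceding paragraph in hand. The only point requiring a little care is making sure the reduction to the $\mathcal Q(R)$-grading is legitimate --- that is, that $\HF(V)$ really is a $\mathcal Q(R)$-graded submodule (this is Proposition~\ref{uider_QR_grading_lem}) and that the already-quoted ``$3$-graded, degree $0$'' observation correctly pins the degenerate pair into $R_1\times(-R_1)$. Once those are invoked, the collinearity computation $\langle\alpha-\beta,\alpha\che\rangle=1$ closes the argument immediately.
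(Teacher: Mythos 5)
Your proof is correct and lands on the same essential combinatorial fact (collinearity of distinct $R_1$ roots), but it takes a mildly more direct route than the paper. The paper first reduces to $n_\gamma = 2$ by an explicit table check of the $n_\gamma = 3$ case for $A_2^{coll}$, and then invokes Lemma~\ref{vdK_2.6}(iii) to say that a divisor-$2$ degenerate pair must be orthogonal, which collinearity forbids. You instead observe that collinearity of $\alpha \neq \beta$ in $R_1$ gives $\langle\alpha-\beta,\alpha\che\rangle = 2-1=1$, so $n_{\alpha-\beta}=1$ and $\alpha-\beta$ cannot be a degenerate sum of \emph{any} divisor. This unifies the divisor-$2$ and divisor-$3$ sub-cases and avoids both the $A_2$ table inspection and the appeal to vdK's orthogonality criterion; what it costs is nothing, and it is arguably the cleaner way to present the argument. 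Both routes rest on the same graded-covering machinery (Propositions~\ref{uider_QR_grading_lem} and~\ref{torsion_bijection_prop} and the $3$-grading reduction to $R_1+R_{-1}$), which you have correctly invoked.
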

 \item[-] $A_I^J.$ We know, see Table~\ref{tab:DegSums}, that only $A_1 = C_1$, $A_2$ and $A_3$ admit degenerate sums. The case $A_1$ falls under the case $C_1.$ For $A_2,$ the rectangular grid is isomorphic to a collinear grid and this case has been treated.  The root system $A_3$ admits $2$ non-isomorphic $3$-gradings, $A_3^{coll}$  and $ A_{3}^2$, The first one was already considered.  Assume therefore without loss of generality  that $\card K = 4$, $I = \{1, 3\}$ and $J = \{2, 4\}.$ Then 
 $R_1 = \{\epsilon_1 - \epsilon_2,  \epsilon_1 - \epsilon_4, \epsilon_3 - \epsilon_2,   \epsilon_3 - \epsilon_4\}. $
 Every element of $\mathbb{DS}(A_3)$  is of the form $\sum (-1)^{\nu_i} \epsilon_i$ where $\prod_{i= 1}^4 (-1)^{\nu_i} = 1.$
 If $\alpha - \beta \in R_1 + R_{-1},$ $\alpha \neq \beta,$ then $\alpha \perp \beta$ if and only if $\alpha = \epsilon_i - \epsilon_j$ and $ \beta = \epsilon_m - \epsilon_n$ with $i \neq m, \, j \neq n.$ Thus $\alpha - \beta = \epsilon_i - \epsilon_j - \epsilon_m + \epsilon_n$ is a degenerate sum (see Table~\ref{tab:DegSums}). We obtain every $\gamma,$ which is a degenerate sum, as such a difference except $\pm( \epsilon_1 - \epsilon_2 + \epsilon_3 - \epsilon_4).$ 
 \label{deg_sum_in_defect_rect}
\begin{prop}
If $V$ is covered by a rectangular grid $A_{3}^2,$ then $\ker \ud \subset \uider(V)_0 + \sum_{j, k \in J, j\neq k } \uider(V)_{\epsilon_1 - \epsilon_j -  \epsilon_3 + \epsilon_k} +  \sum_{j, k \in J, i\neq k } \uider(V)_{-\epsilon_1 + \epsilon_j +  \epsilon_3 - \epsilon_k}  .$ 
\end{prop}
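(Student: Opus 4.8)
The plan is to combine the general support result for central coverings of $R$-graded Lie algebras (Proposition~\ref{torsion_bijection_prop}) with the $3$-grading structure and the explicit classification of degenerate sums for $A_3$. First I would recall from the preceding discussion that $\ud : \uTKK(V) \to \TKK(V)$ is a $\mathcal Q(R)$-graded central covering with kernel $\HF(V)$, and that $\HF(V)$ sits in the degree-$0$ part of the $\mathbb Z$-grading, so $\HF(V) = \uider(V) \cap \HF(V)$ decomposes as $\bigoplus_{\gamma} \HF(V)_\gamma$ over the $\mathcal Q(R)$-grading. By Proposition~\ref{torsion_bijection_prop}, $\HF(V)_\gamma \neq \{0\}$ forces $\gamma$ to be $0$ or a degenerate sum; and since $\HF(V)$ has $\mathbb Z$-degree $0$, any nonzero contributing $\gamma$ must be expressible as $\alpha - \beta$ with $\alpha, \beta \in R_1$. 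So the entire task reduces to: determine which degenerate sums of $A_3$ lie in $R_1 - R_1$ for the rectangular $3$-grading $A_3^2$.

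Next I would set up the coordinates explicitly, as in the excerpt: take $\card K = 4$, $I = \{1,3\}$, $J = \{2,4\}$, so that $R_1 = \{\epsilon_i - \epsilon_j : i \in I, j \in J\} = \{\epsilon_1 - \epsilon_2, \epsilon_1 - \epsilon_4, \epsilon_3 - \epsilon_2, \epsilon_3 - \epsilon_4\}$. By Lemma~\ref{vdK_2.6}(iii), a degenerate sum of divisor $2$ written as $\alpha - \beta$ with $\alpha, \beta \in R_1$ requires $\langle \alpha, (-\beta)\che\rangle = 0$, i.e. $\alpha \perp \beta$ in the cog-relation sense; for two distinct roots $\epsilon_i - \epsilon_j$ and $\epsilon_m - \epsilon_n$ in $R_1$ this happens precisely when $i \neq m$ and $j \neq n$. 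Writing out the finitely many such pairs, the differences $\alpha - \beta = \epsilon_i - \epsilon_j - \epsilon_m + \epsilon_n$ that arise are exactly those of the form $\epsilon_1 - \epsilon_j - \epsilon_3 + \epsilon_k$ and $-\epsilon_1 + \epsilon_j + \epsilon_3 - \epsilon_k$ with $j, k \in J$, $j \neq k$ — these are genuine degenerate sums of $A_3$ by Table~\ref{tab:DegSums}. One checks that the degenerate sums $\pm(\epsilon_1 - \epsilon_2 + \epsilon_3 - \epsilon_4)$ are \emph{not} reachable as such a difference, since that would require $i = m$. Hence $\supp \HF(V) \setminus \{0\}$ is contained in $\{\epsilon_1 - \epsilon_j - \epsilon_3 + \epsilon_k, -\epsilon_1 + \epsilon_j + \epsilon_3 - \epsilon_k : j, k \in J,\ j \neq k\}$, and since $\HF(V) = \ker \ud$ and $\ud$ is $\mathcal Q(R)$-graded, this gives $\ker \ud \subset \uider(V)_0 + \sum_{j,k \in J, j \neq k} \uider(V)_{\epsilon_1 - \epsilon_j - \epsilon_3 + \epsilon_k} + \sum_{j,k \in J, j \neq k} \uider(V)_{-\epsilon_1 + \epsilon_j + \epsilon_3 - \epsilon_k}$, which is the claimed inclusion.

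The only genuinely delicate point — and I expect it to be the main obstacle — is confirming that the $3$-graded root system underlying a rectangular grid of size $2 \times 2$ really is $A_3$ with its grading $A_3^2$, rather than some coincidental isomorphism with $A_3^{coll}$ (as happens for $A_2$), and tracking the perpendicularity relations among elements of $R_1$ correctly under this identification. This is handled by the classification of $3$-gradings in \cite[17.8, 17.9]{lfrs} and the cog-relation dictionary, so it is bookkeeping rather than a real difficulty, but it is where an error would creep in. Everything else is an immediate consequence of the earlier propositions, so I would keep the proof short: cite Proposition~\ref{torsion_bijection_prop} and the degeneracy classification, spell out the $A_3^2$ coordinates, enumerate the $\perp$-pairs in $R_1$, and read off the support.
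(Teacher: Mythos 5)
Your proposal is correct and follows essentially the same route the paper takes: invoke Proposition~\ref{torsion_bijection_prop} to reduce the support of $\ker\ud$ to degenerate sums, restrict to the $\mathbb Z$-degree-$0$ part so that only differences $\alpha-\beta$ with $\alpha,\beta\in R_1$ and $\alpha\perp\beta$ can contribute, then enumerate these for $R_1=\{\epsilon_1-\epsilon_2,\epsilon_1-\epsilon_4,\epsilon_3-\epsilon_2,\epsilon_3-\epsilon_4\}$ and compare against Table~\ref{tab:DegSums}, observing that $\pm(\epsilon_1-\epsilon_2+\epsilon_3-\epsilon_4)$ cannot arise. Your worry about the $A_3^2$ identification is not an issue the paper belabors; it simply takes the explicit coordinate description of the rectangular $3$-grading as given, just as you do.
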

 \begin{prop}
If $V$ is covered by a rectangular grid $\dot A_{K}^J,$ $\card K > 4,$ then $\ker \ud \subset \uider(V)_0 .$ 
\end{prop}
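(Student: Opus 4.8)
The plan is to argue that no non-zero homogeneous component $\uider(V)_\gamma$ with $\gamma \neq 0$ can contribute to $\HF(V) = \ker\ud$. By the discussion preceding this statement we already know that $\ker\ud$ decomposes along the $\mathcal Q(R)$-grading as $\bigoplus_{\gamma\in\mathbb{DS}(R)}\HF(V)_\gamma \oplus \HF(V)_0$, and that the only relevant divisor is $n_\gamma = 2$ (the divisor-$3$ case was excluded, and the divisor-$2$ degenerate sums that could possibly appear in $R_1 + R_{-1}$ force $\alpha\perp\beta$ with $\gamma = \alpha-\beta$, $\alpha,\beta\in R_1$, by Lemma~\ref{vdK_2.6}). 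So the whole statement reduces to a combinatorial claim about the root system $R = \dot A_K$ together with its rectangular $3$-grading $R_1 = \{\varepsilon_i - \varepsilon_j : i \in \{1\}\cup(K\setminus\{1\}\setminus J)\text{-type indices}, j\in J\}$ — more precisely, for the grading $\dot A_K^J$ the set $R_1$ consists of the $\varepsilon_i - \varepsilon_j$ with $i$ in one block and $j$ in the complementary block $J$.

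The key step is: if $\card K > 4$, then $R_1 + R_{-1}$ contains \emph{no} degenerate sum of $R = \dot A_K$. First I would recall from Table~\ref{tab:DegSums} that $\dot A_K$ has degenerate sums only when $\card K = 3$ (divisor $3$) or $\card K = 4$ (divisor $2$), and for $\card K > 4$ there are simply \emph{no} degenerate sums in $\mathcal Q(R)$ at all. Hence $\mathbb{DS}(R) = \emptyset$, so the decomposition $\ker\ud = \bigoplus_{\gamma\in\mathbb{DS}(R)}\HF(V)_\gamma \oplus \HF(V)_0$ collapses to $\ker\ud = \HF(V)_0 \subset \uider(V)_0$, which is exactly the assertion. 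This is the same mechanism as in Proposition~\ref{deg_sum_in_defect_coll} and the analogous statements for the other grids: once $\mathbb{DS}(R)$ is empty (or disjoint from $R_1 + R_{-1}$), the support of $\HF(V)$ must be $\{0\}$.

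The one point that needs a little care — and which I expect to be the only mild obstacle — is justifying that ``$\card K > 4$ implies $\mathbb{DS}(\dot A_K) = \emptyset$'' in the possibly infinite-rank setting, i.e.\ when $K$ is an infinite index set. For finite $K$ this is read off Table~\ref{tab:DegSums} (or equivalently Table~\ref{tab:DegSumsA}, where degenerate sums appear only for $A_3$). For infinite $K$ one invokes Lemma~\ref{loc_deg_Sum_lem}: any degenerate sum $\gamma = \alpha+\beta$ of $\dot A_K$ would already be a degenerate sum of a finite subsystem of the same type $\dot A$ and of rank at least $5$, contradicting the finite classification. Therefore $\mathbb{DS}(\dot A_K) = \emptyset$ regardless of the cardinality of $K$, and the proof concludes: since $\ker\ud = \HF(V)$ is $\mathcal Q(R)$-graded with support contained in $\mathbb{DS}(R)\cup\{0\} = \{0\}$ by Proposition~\ref{torsion_bijection_prop}, we get $\HF(V) = \HF(V)_0 \subset \uider(V)_0$, which is the claim. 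I would phrase the final write-up to simply cite Corollary~\ref{tab:DegSums} (or Table~\ref{tab:DegSums}) for the emptiness of $\mathbb{DS}(\dot A_K)$ and Proposition~\ref{torsion_bijection_prop} for the graded decomposition of $\ker\ud$, making the proof no more than a few lines.
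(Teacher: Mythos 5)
Your proposal is correct and follows exactly the paper's (implicit) argument: the entire proposition reduces, via the graded decomposition of $\ker\ud$ from Proposition~\ref{torsion_bijection_prop}, to the observation that $\mathbb{DS}(\dot A_K) = \emptyset$ once $\card K > 4$, which the paper reads off Table~\ref{tab:DegSums} (whose proof already invokes Lemma~\ref{loc_deg_Sum_lem} to cover infinite $K$, so you are re-deriving a step that can simply be cited). Your extra care about the infinite-rank case is harmless and matches what the paper's Corollary already establishes.
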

 \item[-]\label{deg_sum_in_defect_odd_quad} $B_I^{qf}.$ The positive roots are $\{ \epsilon_{\infty} \pm  \epsilon_i\} \cup \{\epsilon_\infty\}$ and $\alpha, \beta \in R_1$ are orthogonal if and only if $\alpha = \epsilon_\infty \pm \epsilon_i$  and $\beta = \epsilon_\infty \mp
 \epsilon_i. $  Then $\alpha - \beta = \pm 2  \epsilon_i$  is a degenerate sum. 
 \begin{prop}
If $V$ is covered by an odd quadratic grid $B_{I}^{qf},$ then $\ker \ud \subset \uider(V)_0 + \sum_{i \in I}  \uider(V)_{\pm 2 \epsilon_i} .$ 
\end{prop}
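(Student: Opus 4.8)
The statement to prove is: if $V$ is covered by an odd quadratic grid $B_I^{qf}$, then $\ker\ud \subset \uider(V)_0 + \sum_{i\in I}\uider(V)_{\pm 2\epsilon_i}$.

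The plan is to apply the general machinery already established in this subsection, specialized to the $3$-grading $B_I^{qf}$. By Proposition~\ref{uider_QR_grading_lem}, $\uider(V)$ is $\mathcal Q(R)$-graded and $\ud$ is a $\mathcal Q(R)$-graded epimorphism, so $\ker\ud = \mathrm{HC}(V)$ is a graded submodule: $\mathrm{HC}(V) = \bigoplus_{\gamma}\mathrm{HC}(V)_\gamma$. By the discussion following Proposition~\ref{uider_QR_grading_lem} (using Proposition~\ref{torsion_bijection_prop} and the fact that $\ud$ is also a $3$-graded central covering of $\TKK(V)$, with $\mathrm{HC}(V)$ of $3$-degree $0$), a nonzero homogeneous component $\mathrm{HC}(V)_\gamma$ can only be supported on those $\gamma \in \mathbb{DS}(R)$ which additionally lie in $R_1 + R_{-1}$, i.e. $\gamma = \alpha - \beta$ with $\alpha,\beta \in R_1$. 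Moreover, as already noted, degenerate sums of divisor $3$ do not arise here ($R$ is of type $B_I$, which by Table~\ref{tab:DegSums} has only divisor-$2$ degenerate sums), so $n_\gamma = 2$, and by Lemma~\ref{vdK_2.6}(iii) we must have $\alpha \perp \beta$.

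So the crux reduces to a purely combinatorial computation inside the $3$-graded root system $B_I^{qf}$: determine which differences $\alpha - \beta$ with $\alpha,\beta\in R_1$ and $\alpha\perp\beta$ actually occur. First I would recall the explicit description of $R_1$ for the odd quadratic form grid: the positive roots (with respect to the grading) are $\{\epsilon_\infty \pm \epsilon_i : i\in I\} \cup \{\epsilon_\infty\}$. I would then check the orthogonality relation: for $\alpha,\beta\in R_1$ distinct, $\langle\alpha,\beta\che\rangle = 0$ precisely when $\alpha = \epsilon_\infty + \epsilon_i$ and $\beta = \epsilon_\infty - \epsilon_i$ for some $i\in I$ (or vice versa) — note that $\epsilon_\infty$ itself is never orthogonal to any $\epsilon_\infty \pm \epsilon_i$, and two roots $\epsilon_\infty \pm \epsilon_i$, $\epsilon_\infty \pm \epsilon_j$ with $i\neq j$ are collinear, not orthogonal. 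Then $\alpha - \beta = (\epsilon_\infty + \epsilon_i) - (\epsilon_\infty - \epsilon_i) = 2\epsilon_i$ (or $-2\epsilon_i$), which by Table~\ref{tab:DegSums} (type $B$) is indeed a degenerate sum. Hence the only possible nonzero graded pieces of $\mathrm{HC}(V)$ away from degree $0$ sit in degrees $\pm 2\epsilon_i$, $i\in I$, giving the claimed containment.

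I do not expect a serious obstacle here: the argument is a direct specialization of the preceding general results, and the only real work is the bookkeeping of the orthogonality relation on $R_1$ for the $B_I^{qf}$ grid, which is routine from the classification of $3$-gradings in \cite[17.8, 17.9]{lfrs}. The one point to be careful about is to confirm that $\epsilon_\infty$ contributes nothing — i.e. that $\epsilon_\infty$ is not orthogonal to any element of $R_1$, so no extra degenerate sum of the form $\epsilon_\infty - (\epsilon_\infty \pm \epsilon_i) = \mp\epsilon_i$ arises (and indeed $\mp\epsilon_i$ is a root, not a degenerate sum, so even if such a difference occurred it would be excluded by Proposition~\ref{torsion_bijection_prop}). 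With that verified, the proof is complete.
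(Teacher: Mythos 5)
Your proposal is correct and follows the paper's argument in essentially the same way: restrict the support of $\ker\ud$ via the preceding general discussion (gradedness, divisor constraints, and the $3$-degree-$0$ condition forcing $\gamma = \alpha - \beta$ with $\alpha,\beta \in R_1$ orthogonal), then enumerate the orthogonal pairs in $R_1$ for $B_I^{qf}$ and observe they yield exactly $\pm 2\epsilon_i$, $i \in I$. Your extra verification that $\epsilon_\infty$ pairs with nothing orthogonally (and that $\pm 2\epsilon_\infty$ cannot arise as a difference of elements of $R_1$) is a useful clarification, but the paper's argument is the same.
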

 
 \item[-]\label{deg_sum_in_defect_even_quad} $D_I^{qf}.$ The positive roots are $\{ \epsilon_{\infty} \pm  \epsilon_i\} $ and $\alpha, \beta \in R_1$ are orthogonal if and only if $\alpha = \epsilon_\infty \pm \epsilon_i$  and $\beta = \epsilon_\infty \mp
 \epsilon_i. $  Then $\alpha - \beta = \pm 2  \epsilon_i$  is a degenerate sum. 
 \begin{prop}
If $V$ is covered by an even quadratic grid $D_{I}^{qf},$ then $\ker \ud \subset \uider(V)_0 + \sum_{i \in I}  \uider(V)_{\pm 2 \epsilon_i} .$ 
\end{prop}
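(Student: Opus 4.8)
The plan is to run exactly the same argument that has already been used for the odd quadratic grid $B_I^{qf}$ case, since the combinatorics of the two cases are essentially identical. First I would recall the general principle established in this subsection: by Corollary~\ref{JKP_central_ext_kernel_cor} and Proposition~\ref{torsion_bijection_prop}, the kernel $\mathrm{HC}(V) = \ker \ud$ of the $\mathcal Q(R)$-graded central covering $\ud : \uTKK(V) \to \TKK(V)$ is supported on $\{0\} \cup \mathbb{DS}(R)$, and moreover a nonzero homogeneous component $\mathrm{HC}(V)_\gamma$ with $\gamma = \alpha - \beta$ can only be nonzero if $(\alpha, -\beta) \in R_1 \times (-R_1)$ (because $\mathrm{HC}(V)$ has degree $0$ with respect to the $\mathbb Z$-grading, so the degenerate pair must be split across the $\pm 1$ parts of the $3$-grading). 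Finally, since the grid is of type $D_I^{qf}$, the relevant divisor is $2$, and by Lemma~\ref{vdK_2.6}(iii) the two roots of the degenerate pair must be orthogonal.

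Next I would pin down which $\gamma \in R_1 + R_{-1}$ are actually degenerate sums. For the even quadratic grid the positive roots $R_1$ are exactly $\{\epsilon_\infty \pm \epsilon_i : i \in I\}$ (the notation and the explicit $3$-grading being those of \cite[17.8, 17.9]{lfrs}). Two such roots $\alpha = \epsilon_\infty + s\epsilon_i$ and $\beta = \epsilon_\infty + t\epsilon_j$ (with $s, t \in \{+,-\}$) satisfy $\langle \alpha, \beta\che\rangle = 0$ precisely when $i = j$ and $s = -t$; in every other case they are collinear or governing, never orthogonal. Hence the only orthogonal degenerate pairs in $R_1 \times (-R_1)$ give differences $\alpha - \beta = \pm 2\epsilon_i$ for $i \in I$, which are indeed in $\mathbb{DS}(D_I)$ by Table~\ref{tab:DegSums}. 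Combining this with the support statement above yields
$$\ker \ud \subset \uider(V)_0 + \sum_{i \in I} \uider(V)_{\pm 2\epsilon_i},$$
which is the assertion.

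There is no serious obstacle here; the only point that needs care is making sure the orthogonality analysis of pairs in $R_1$ is complete — i.e., that no degenerate sum of $D_I$ other than the $\pm 2\epsilon_i$ arises as a difference of two roots in $R_1$. This is immediate from Table~\ref{tab:DegSums} for $\card I > 4$ (where $\mathbb{DS}(D_I) = \{\pm 2\epsilon_i : i \in I\}$ consists only of these elements), and for $\card I = 4$ one must additionally observe that the extra degenerate sums $\pm(\epsilon_1 \pm \epsilon_2 \pm \epsilon_3 \pm \epsilon_4)$ of $D_4$ cannot be written as $\alpha - \beta$ with $\alpha, \beta \in R_1$ orthogonal — indeed any such difference has $\epsilon_\infty$-coefficient $0$ and involves at most the index set $\{i\}$, so it never has the shape of those four-term sums. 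Thus the inclusion holds uniformly, and the proof reduces to quoting Proposition~\ref{torsion_bijection_prop}, Lemma~\ref{vdK_2.6}, and the explicit description of $D_I^{qf}$.
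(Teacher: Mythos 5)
Your proof is correct and takes essentially the same approach as the paper: identify the positive roots of the $D_I^{qf}$ grading, observe that two roots in $R_1$ are orthogonal precisely when they are $\epsilon_\infty + \epsilon_i$ and $\epsilon_\infty - \epsilon_i$, and conclude that the only candidate degenerate sums in $(R_1 + R_{-1}) \cap \mathbb{DS}(R)$ are $\pm 2\epsilon_i$. Your extra check ruling out the additional $D_4$ degenerate sums $\pm\epsilon_1 \pm\epsilon_2 \pm\epsilon_3 \pm\epsilon_4$ is a sensible addition the paper leaves implicit; just note that with the paper's indexing convention (where $\infty \notin I$, so the ambient root system is $D_{I\cup\{\infty\}}$), this low-rank case occurs when $\card I = 3$, not $\card I = 4$ — the substance of your argument (such differences have $\epsilon_\infty$-coefficient $0$ while the extra degenerate sums do not) is unaffected.
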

\item[-] \label{deg_sum_in_defect_alt}  $D_I^{alt}.$ The positive roots are $\{ \epsilon_i + \epsilon_j, i, j \in I \} $ and $\alpha, \beta \in R_1$ are orthogonal if and only if $\alpha = \epsilon_i  +  \epsilon_j$  and $\beta = \epsilon_n  +  \epsilon_m,$ with $\{i,j\} \cap \{n,m\} = \emptyset.$ Comparing with Table~\ref{tab:DegSums}  yields that $\alpha - \beta = \epsilon_i  +  \epsilon_j - \epsilon_n  - \epsilon_m$ is a degenerate sum if and only if $\card I = 4.$ 
Thus
\begin{prop}
If $V$ is covered by an alternating  grid $D_{4}^{alt},$ then $\ker \ud \subset \uider(V)_0 + \sum_{i, j, m,n \neq, i<j, m <n}   \uider(V)_{\epsilon_i + \epsilon_j - \epsilon_n - \epsilon_m}.$ (where we choose any total order $<$ on $I$).
\end{prop}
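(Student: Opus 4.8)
The statement to be proved is the proposition asserting that if $V$ is a Jordan pair covered by an alternating grid $D_4^{alt}$, then $\ker\ud \subset \uider(V)_0 + \sum \uider(V)_{\epsilon_i+\epsilon_j-\epsilon_n-\epsilon_m}$, where the sum runs over the relevant indices. The strategy is to combine the general support theorem for central extensions (Proposition~\ref{torsion_bijection_prop}) with the identification $\ker\ud = \HF(V)$ (Corollary~\ref{JP_uider_cor}, and the discussion preceding Proposition~\ref{deg_sum_in_defect_coll}) and with the explicit combinatorics of the $3$-grading $D_I^{alt}$. Since $\ud : \uTKK(V) \to \TKK(V)$ is a $\mathcal Q(R)$-graded central covering and $\TKK(V)$ is $R$-graded by Theorem~\ref{Root_graded_grid_theo}, Proposition~\ref{torsion_bijection_prop} forces the support of $\ker\ud$ to lie in $\{0\}\cup\mathbb{DS}(R)$. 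Because $\ker\ud$ also has $\mathbb Z$-degree $0$ with respect to the $3$-grading, any nonzero $\gamma \in \supp(\ker\ud)$ with $\gamma \neq 0$ must be of the form $\gamma = \alpha - \beta$ with $\alpha,\beta \in R_1$ (the positive part of the $3$-grading), and moreover $\gamma$ is a degenerate sum of divisor $2$, so by Lemma~\ref{vdK_2.6}(iii) we have $\alpha \perp \beta$.

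First I would recall the explicit description of $R_1$ for the alternating grid: with $R = D_I$ in the standard realization, $R_1 = \{\epsilon_i + \epsilon_j : i,j \in I,\ i \neq j\}$ (this is the content of the $D_I^{alt}$ $3$-grading; see \cite[17.9]{lfrs}). Next I would determine exactly when two elements $\alpha = \epsilon_i+\epsilon_j$ and $\beta = \epsilon_n+\epsilon_m$ of $R_1$ are orthogonal: computing $\langle \alpha, \beta\che\rangle$ using the standard pairing on $D_I$, one gets $\langle \epsilon_i+\epsilon_j, (\epsilon_n+\epsilon_m)\che\rangle = \delta_{in}+\delta_{im}+\delta_{jn}+\delta_{jm}$, which vanishes precisely when $\{i,j\}\cap\{n,m\} = \emptyset$. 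In that case $\gamma = \alpha-\beta = \epsilon_i+\epsilon_j - \epsilon_n - \epsilon_m$ with all four indices distinct. Then I would check against Table~\ref{tab:DegSums} (the classification of degenerate sums) that such a $\gamma$ is a degenerate sum of $D_I$ if and only if $\card I = 4$: indeed the degenerate sums of $D_4$ are listed as $\pm\{2\epsilon_i\} \cup \{\pm\epsilon_1\pm\epsilon_2\pm\epsilon_3\pm\epsilon_4\}$, and our $\gamma$ has the shape $\pm\epsilon_1\pm\epsilon_2\pm\epsilon_3\pm\epsilon_4$ with two plus and two minus signs, hence lies in this set; while for $\card I > 4$ the only degenerate sums $\pm 2\epsilon_i$ cannot be written as a difference of two orthogonal roots in $R_1$ (they are not of the form $\epsilon_i+\epsilon_j-\epsilon_n-\epsilon_m$ with distinct indices).

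Putting this together: when $\card I = 4$, the only candidate nonzero degrees in $\supp(\ker\ud)$ are the $\gamma = \epsilon_i+\epsilon_j-\epsilon_n-\epsilon_m$ with $\{i,j\}\cap\{n,m\}=\emptyset$; choosing a total order $<$ on $I$ to pin down a set of representatives (each such $\gamma$ being $\pm$ the one with $i<j$, $m<n$) gives exactly the sum appearing in the statement, and the remaining degree is $0$. When $\card I \neq 4$ there are no such nonzero degrees, which is the content of the companion cases, so in particular the proposition as stated (for $D_4^{alt}$) records the $\card I = 4$ situation. I would close by invoking Proposition~\ref{uider_QR_grading_lem} to say that $\ker\ud$, being a graded submodule of $\uider(V)$, decomposes as the direct sum of its homogeneous pieces, each of which sits in one of the allowed degrees, yielding the claimed containment.

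\textbf{Main obstacle.} The only real work is the combinatorial bookkeeping in the third step: correctly matching the differences $\epsilon_i+\epsilon_j-\epsilon_n-\epsilon_m$ against the table of degenerate sums and verifying that the divisor-$2$ condition $\langle\gamma, R\che\rangle \subset 2\mathbb Z$ genuinely holds for these $\gamma$ in $D_4$ (so that they are \emph{not} excluded) while the $2\epsilon_i$ are excluded from the list of realizable degrees because they are not differences of orthogonal $R_1$-roots. Everything else is a direct appeal to Proposition~\ref{torsion_bijection_prop}, the identification $\ker\ud = \HF(V)$, and the fact that a graded module is the direct sum of its homogeneous components; no genuinely hard estimate or new idea is needed beyond what has already been set up in this subsection.
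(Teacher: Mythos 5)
Your proof is correct and follows essentially the same path the paper takes: invoke Proposition~\ref{torsion_bijection_prop} together with the $\mathbb Z$-grading to reduce to degenerate sums lying in $R_1 + R_{-1}$, identify the orthogonality condition $\{i,j\}\cap\{n,m\}=\emptyset$ for $\epsilon_i+\epsilon_j,\ \epsilon_n+\epsilon_m\in R_1$, and compare with Table~\ref{tab:DegSums} to see that the resulting differences are degenerate sums precisely when $\card I=4$. The only nitpick is the parenthetical "(each such $\gamma$ being $\pm$ the one with $i<j,\ m<n$)": since the displayed sum already ranges over all pairs with $i<j,\ m<n$ and disjoint supports, both $\gamma$ and $-\gamma$ appear directly and no sign bookkeeping is needed.
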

\begin{prop}
If $V$ is covered by an alternating grid $D_{I}^{alt},$ $\card I > 4$, then $\ker \ud \subset \uider(V)_0.$ 
\end{prop}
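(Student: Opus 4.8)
The plan is to argue, exactly as in the companion propositions of this subsection (for $B_I^{qf}$, $D_I^{qf}$, $D_4^{alt}$, etc.), that once $\card I > 4$ no degenerate sum of $R = D_I$ can be written as a difference $\alpha - \beta$ of two roots of $R_1$, so that the only nonzero homogeneous component of $\HF(V) = \ker\ud$ sits in degree $0$.

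First recall that $\ud \colon \uTKK(V) \to \TKK(V)$ is a $\mathcal Q(R)$-graded central covering with $\ker\ud = \HF(V)$, and that $\TKK(V)$ is $R$-graded by Theorem~\ref{Root_graded_grid_theo}. By Proposition~\ref{torsion_bijection_prop}, a homogeneous component $\HF(V)_\gamma$ with $\gamma \neq 0$ can be nonzero only when $\gamma$ is a degenerate sum. Moreover $\ud$ is also a $3$-graded central extension for the $3$-grading $(R, R_1)$, and $\HF(V)$ has $3$-degree $0$; hence such a $\gamma$ must be of the form $\gamma = \alpha - \beta$ with $\alpha \in R_1$ and $\beta \in R_1$. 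Since the root system $D$ admits no degenerate sums of divisor $3$ (Table~\ref{tab:DegSums}), we have $n_\gamma = 2$, and then Lemma~\ref{vdK_2.6}(iii) forces $\langle \alpha, \beta\che\rangle = 0$, i.e. $\alpha \perp \beta$.

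Now specialize to the alternating grid $D_I^{alt}$, for which $R_1 = \{\epsilon_i + \epsilon_j : i, j \in I\}$. By the classification of the cog relations on $R_1$, two elements $\alpha = \epsilon_i + \epsilon_j$ and $\beta = \epsilon_n + \epsilon_m$ of $R_1$ satisfy $\alpha \perp \beta$ precisely when $\{i,j\} \cap \{n,m\} = \emptyset$; in that case $\gamma = \alpha - \beta = \epsilon_i + \epsilon_j - \epsilon_n - \epsilon_m$ has four distinct nonzero coordinates, with coefficients $1, 1, -1, -1$. On the other hand, Table~\ref{tab:DegSums} gives $\mathbb{DS}(D_I) = \{\pm 2\epsilon_i : i \in I\}$ whenever $\card I > 4$, and no element of that set has more than one nonzero coordinate. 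Hence for $\card I > 4$ no difference $\alpha - \beta$ with $\alpha \perp \beta$ in $R_1$ is a degenerate sum, so $\HF(V)_\gamma = 0$ for every $\gamma \neq 0$. Therefore $\ker\ud = \HF(V) = \HF(V)_0 \subset \uider(V)_0$, as claimed.

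The argument is pure bookkeeping once the reduction of the second paragraph is in place; the only step that genuinely requires attention is that passage to the $3$-grading — namely the observation that $\HF(V)$ is concentrated in $3$-degree $0$, which confines any nonzero off-diagonal weight to a difference of two $R_1$-roots — after which everything reduces to reading off Table~\ref{tab:DegSums}. (The case $\card I = 4$ is different precisely because there $2\omega_4 = \epsilon_1 + \epsilon_2 + \epsilon_3 + \epsilon_4$ and its Weyl conjugates $\pm\epsilon_1 \pm \epsilon_2 \pm \epsilon_3 \pm \epsilon_4$, which do have four nonzero coordinates, belong to $\mathbb{DS}(D_4)$, explaining the extra summands in the preceding Proposition for $D_4^{alt}$.)
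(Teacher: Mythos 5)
Your proof is correct and follows the same route as the paper: reduce via the $\mathbb Z$-grading on $\uTKK(V)$ and Proposition~\ref{torsion_bijection_prop} to degenerate sums of the form $\alpha - \beta$ with $\alpha, \beta \in R_1$ orthogonal (using Lemma~\ref{vdK_2.6}(iii) since $n_\gamma = 2$ for type $D$), then observe that for $D_I^{alt}$ such a difference $\epsilon_i + \epsilon_j - \epsilon_n - \epsilon_m$ has four nonzero coordinates while $\mathbb{DS}(D_I) = \{\pm 2\epsilon_i\}$ when $\card I > 4$. The argument and the table lookup match the paper's treatment exactly.
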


\item[-] \label{deg_sum_in_defect_even_caley_albert}  Bi-Caley grid and Albert grid. The root systems of type $E$ do not admit any degenerate sums (Table~\ref{tab:DegSums}),  therefore
\begin{prop}
If $V$ is covered by a an Bi-Caley grid or by an Albert grid, then $\ker \ud \subset \uider(V)_0.$ 
\end{prop}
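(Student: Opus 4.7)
The plan is to mimic the strategy already used for the preceding grid-type propositions (collinear, rectangular, quadratic, alternating), where the key leverage comes from Proposition~\ref{torsion_bijection_prop} together with the classification of degenerate sums in Table~\ref{tab:DegSums}. Recall the setup: $\ud : \uTKK(V) \to \TKK(V)$ is a $\mathcal Q(R)$-graded central covering, $\TKK(V)$ is $R$-graded by Theorem~\ref{Root_graded_grid_theo}, and by Proposition~\ref{torsion_bijection_prop} every homogeneous component $\uTKK(V)_\gamma$ with $\gamma \in R^\times$ maps isomorphically onto $\TKK(V)_\gamma$. Consequently, $\ker \ud$ is concentrated in degrees $\gamma \in \mathcal Q(R)$ that are either $0$ or degenerate sums.

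The next observation is that $\ud$ is also $3$-graded, and since $\uider(V) = \uTKK(V)^0$, any $\gamma$ in the support of $\ker \ud$ belongs to $R_1 + (-R_1)$. Therefore a nonzero $\gamma$ appearing in $\ker \ud$ would have to be a degenerate sum realizable as $\alpha - \beta$ with $\alpha, \beta \in R_1$.

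For the Bi-Cayley grid the ambient root system is of type $E_6$, and for the Albert grid it is of type $E_7$. Inspecting Table~\ref{tab:DegSums} (or equivalently the argument given in the classification for type $E$, where the intersection $2\mathcal P(R)\cap \mathcal Q(R)$ contains no vector of length $\le 2\|\alpha'\|$ for a long root $\alpha'$), one sees that neither $E_6$ nor $E_7$ admits any degenerate sums at all: $\mathbb{DS}(E_6) = \mathbb{DS}(E_7) = \emptyset$. Hence the set of candidate nonzero degrees is empty, forcing $\ker \ud \subset \uider(V)_0$.

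There is no serious obstacle in this argument; it is a direct look-up in the degenerate-sum table combined with the general machinery already established. The only mildly delicate point is making sure one invokes the correct $3$-grading statement so that the support of $\ker \ud$ lies in $R_1 - R_1$ rather than in all of $\mathcal Q(R)$, but this is immediate from the $3$-graded structure of $\uTKK(V)$ with $\uider(V)$ being its degree-zero part.
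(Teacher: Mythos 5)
Your proposal is correct and follows essentially the same route as the paper: the support of $\ker \ud$ is confined to $\{0\}\cup\bigl(\mathbb{DS}(R)\cap(R_1+R_{-1})\bigr)$ by Proposition~\ref{torsion_bijection_prop} and the $3$-graded structure, and since the Bi-Cayley and Albert grids live on root systems of type $E_6$ and $E_7$, which by Table~\ref{tab:DegSums} admit no degenerate sums, the kernel lies in $\uider(V)_0$. This is exactly the paper's argument.
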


\item[-] $C_I^{her}.$ 
 In the case where $R =  C_I$, we always assume that $1/2 \in k_R$ therefore  
\begin{prop}
If $V$ is covered by a grid a hermitian grid $C_I^{her},$ then $\ker \ud \subset \uider(V)_0.$ \label{deg_sum_in_defect_herm}
\end{prop}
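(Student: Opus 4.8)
The statement to prove is Proposition~\ref{deg_sum_in_defect_herm}: if $V$ is a Jordan pair over $k_R$ covered by a hermitian grid $C_I^{her}$, then $\ker \ud \subset \uider(V)_0$. The key structural facts I would invoke are already assembled in the preceding discussion: first, $\ud : \uTKK(V) \to \TKK(V)$ is a $\mathcal Q(R)$-graded central covering with $\ker \ud = \HF(V)$ (by Definition~\ref{universal_TKK_ce_def}, Corollary~\ref{JP_uider_cor}, and the identification $\uTKK(V)^0 = \uider(V)$); second, $\TKK(V)$ is $R$-graded (Theorem~\ref{Root_graded_grid_theo}); and third, since $R = C_I$ is a component, the standing convention from Definition~\ref{root_base_ring_Defi} forces $1/2 \in k_R$.

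First I would observe that because $R$ is of type $C_I$ and $1/2 \in k_R$, we are exactly in the situation covered by the boxed statement on page of Subsection~\ref{deg_sum_in_defect}: namely, Proposition~\ref{torsion_bijection_prop} gives that $\ker \ud \subset \uider(V)_0$ whenever $1/2 \in k$. Concretely, $\HF(V)$ is a $\mathcal Q(R)$-graded submodule of $\uider(V)$ supported (by Proposition~\ref{torsion_bijection_prop}) only on $0$ and on degenerate sums $\gamma \in \mathbb{DS}(R)$, and for each such $\gamma$ one has $n_\gamma \HF(V)_\gamma = 0$. For $R = C_I$, Corollary~\ref{tab:DegSums} (or Lemma~\ref{root_deg_sums_intersection_lem} together with the $C_I$ table~\ref{tab:DegSumsC}) shows every degenerate sum has divisor $n_\gamma = 2$; hence $2\,\HF(V)_\gamma = 0$, and since $1/2 \in k_R$ this forces $\HF(V)_\gamma = 0$ for every degenerate sum $\gamma$. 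Therefore the only surviving homogeneous component of $\ker \ud = \HF(V)$ is the one in degree $0$, i.e. $\ker \ud \subset \uider(V)_0$, which is the claim.

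The one point I would make sure to spell out, so the proof is self-contained rather than merely a pointer to the boxed remark, is the passage from ``$2\,\HF(V)_\gamma = 0$'' to ``$\HF(V)_\gamma = 0$''. Since $\uider(V)$ is a $k_R$-module and $2$ is invertible in $k_R$, the map $x \mapsto 2x$ is an automorphism of $\HF(V)_\gamma$, so its kernel being all of $\HF(V)_\gamma$ forces $\HF(V)_\gamma = 0$. Combined with the $\mathcal Q(R)$-graded decomposition $\HF(V) = \bigoplus_{\gamma \in \mathbb{DS}(R)} \HF(V)_\gamma \oplus \HF(V)_0$ from Proposition~\ref{torsion_bijection_prop}, this finishes the argument.

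\textbf{Main obstacle.} There is essentially no obstacle here; the proposition is a direct corollary of results already in place, and the ``hard work'' (the classification of degenerate sums of type $C$, and the torsion statement of Proposition~\ref{torsion_bijection_prop}) has all been done earlier. The only thing to be careful about is bookkeeping: making sure the grid type $C_I^{her}$ really does give $R = C_I$ (so that the convention $1/2 \in k_R$ applies), and citing the correct table entry for the divisor of degenerate sums in type $C$. So I expect the proof to be just a couple of sentences assembling these references.
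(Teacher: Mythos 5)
Your proof is correct and follows the same reasoning as the paper: the standing convention $1/2 \in k_R$ for type $C$ plus the $2$-torsion statement in Proposition~\ref{torsion_bijection_prop} (all degenerate sums in $C_I$ having divisor $2$) immediately kill the homogeneous components $\HF(V)_\gamma$ for $\gamma$ a degenerate sum. You spell out the torsion-to-vanishing step a bit more explicitly than the paper, which simply cites the earlier boxed remark (labelled \ref{deg_sum_in_defect_one_half}), but the argument is the same.
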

However, it might be useful to compute the intersection $\mathbb{DS}(R) \cap (R_1 + R_{-1}).$
 If $V$ is covered  by hermitian grid, then two roots $\alpha, \beta \in R_1$ are orthogonal, if and only if $\alpha = \epsilon_i + \epsilon_j$, $\beta = \epsilon_m + \epsilon_n$ with $\{i,j\} \cap \{m,n\} = \emptyset.$ Thus
$\alpha - \beta = 2(\epsilon_i - \epsilon_m)$ or $\alpha - \beta = \epsilon_i + \epsilon_j - \epsilon_m - \epsilon_n$ for $i,j,m,n \neq.$ A look at Table~\ref{tab:DegSums}, reveals that only the first ones are  degenerate sum. Thus 
$$(R_1 + R_{-1}) \cap \mathbb{DS}(R) = \{2 (\epsilon_ i - \epsilon_j): i \neq j \}.$$

\end{itemize}

\begin{cor} Let $k =  k_R$ and $V$ a Jordan pair which is covered by an  $(R, R_1)$-grid. The kernel of $\ud : \uider(V) \rightarrow \instr(V)$, that is $\HF(V)$, is $\mathcal Q(R)$-graded and
$$\supp_{\mathcal Q(R)}(\HF(V)) \subset \big( \mathbb{DS}(R) \cap (R_1 + R_{-1}) \big)\cup \{0\}. $$
We have $\HF(V) = \HF(V)_0$ if
\begin{itemize}
\item[\rm(i)] $1/2 \in k,$  thus in particular for $R = C_I,$ or
\item[\rm(ii)] $\mathbb{DS}(R) \cap (R_1 + R_{-1}) = \emptyset$ and this happens if and only if
$R_1 = \dot A^{coll},$ or $R = \dot A_I$, $D_I$, $E_6$, $E_7,$ $E_8,$ $\card I > 4.$   
\end{itemize}
\label{ker_ud_supporr_cor}
\end{cor}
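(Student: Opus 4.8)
\textbf{Proof proposal for Corollary~\ref{ker_ud_supporr_cor}.}

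The plan is to assemble the statement from the series of propositions that were established case-by-case immediately before it, together with Proposition~\ref{torsion_bijection_prop} and the classification of $3$-gradings. First I would recall the setup: since $L = \TKK(V)$ is of Jordan type and $V$ is covered by an $(R,R_1)$-grid, Theorem~\ref{Root_graded_grid_theo} gives that $\TKK(V)$ is $R$-graded, and $\ud : \uTKK(V) \to \TKK(V)$ is a $\mathcal Q(R)$-graded central covering (the underlying map on degree-$0$ parts being $\ud : \uider(V) \to \instr(V)$, by Lemma~\ref{structure_of_uider} and Definition~\ref{universal_TKK_ce_def}). Thus $\HF(V) = \ker \ud$ is a $\mathcal Q(R)$-graded submodule of $\uider(V)$. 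By Proposition~\ref{torsion_bijection_prop}, any nonzero $\gamma$ in the support of a $\mathcal Q(R)$-graded central covering is either a root or a degenerate sum; since $\HF(V) \subset \uider(V) = \uTKK(V)^0$ has $\mathbb Z$-degree $0$, a nonzero homogeneous $\gamma \in \supp \HF(V)$ must lie in $R_1 + R_{-1}$ (the degree-$0$ part of the $\mathbb Z$-grading, excluding $\mathcal Q(R)$-degree $0$ itself), and hence $\gamma$ must be a degenerate sum lying in $R_1 + R_{-1}$. This already proves the inclusion $\supp_{\mathcal Q(R)}(\HF(V)) \subset (\mathbb{DS}(R) \cap (R_1 + R_{-1})) \cup \{0\}$, which is the first assertion.

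For the second assertion, part (i) is essentially contained in the displayed remark ``If $1/2 \in k$, then $\ker \ud \subset \uider(V)_0$'' established via Proposition~\ref{torsion_bijection_prop}: when $1/2 \in k$ every degenerate sum has divisor $2$ or $3$, and in either case the torsion conclusion $n_\gamma \HF(V)_\gamma = 0$ forces $\HF(V)_\gamma = 0$; for $R = C_I$ the hypothesis $1/2 \in k_R$ is automatic by Definition~\ref{root_base_ring_Defi}, so this specializes correctly. For part (ii), I would argue that $\HF(V) = \HF(V)_0$ holds whenever $\mathbb{DS}(R) \cap (R_1 + R_{-1}) = \emptyset$, which is immediate from the first assertion. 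It then remains to determine exactly which $3$-graded irreducible root systems satisfy $\mathbb{DS}(R) \cap (R_1 + R_{-1}) = \emptyset$. Here I would invoke the classification of $3$-gradings (\cite[17.8, 17.9]{lfrs}: the possibilities are $\dot A_I$, $B_I$, $C_I$, $D_I$, $E_6$, $E_7$) and simply cite the grid-by-grid computations carried out just above: the collinear case $\dot A^{coll}$ and the propositions for $\dot A_K^J$ with $\card K > 4$, $D_I^{alt}$ and $D_I^{qf}$ with $\card I > 4$, and the Bi-Caley and Albert grids all give $\ker\ud \subset \uider(V)_0$, whereas the rectangular grid $A_3^2$, the quadratic grids $B_I^{qf}$, $D_I^{qf}$, and the alternating grid $D_4^{alt}$ produce genuine degenerate sums in $R_1 + R_{-1}$. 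Collecting the positive cases gives precisely the list $\dot A^{coll}$, $\dot A_I$, $D_I$, $E_6$, $E_7$, $E_8$ with $\card I > 4$ (the inclusion $E_8$ being harmless since type $E_8$ is not $3$-graded, but it is consistent with the statement that $\mathbb{DS}(E_8) = \emptyset$).

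The only real content beyond bookkeeping is making sure the $3$-grading classification has been exhausted and that each listed grid type really does appear among the preceding propositions; the main obstacle, such as it is, is the combinatorial verification that for $R = \dot A_I$ with $\card I > 4$ no degenerate sum can be written as a difference of two roots in $R_1$, but this follows from Table~\ref{tab:DegSums} (no degenerate sums occur in type $A_n$ for $n \geq 5$), and for $D_I$, $\card I > 4$ the only degenerate sums are $\pm 2\varepsilon_i$, which are never of the form $\alpha - \beta$ with $\alpha, \beta \in R_1 = \{\varepsilon_i + \varepsilon_j\}$. Everything else is a direct citation of the results already proved, so the proof is short: state the general inclusion, then dispose of (i) and (ii) by reference.
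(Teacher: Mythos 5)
Your proposal follows the same route as the paper: first establish the containment $\supp_{\mathcal Q(R)}\HF(V) \subset (\mathbb{DS}(R) \cap (R_1 + R_{-1})) \cup \{0\}$ by combining Proposition~\ref{torsion_bijection_prop} (only roots and degenerate sums survive) with the fact that $\HF(V) \subset \uider(V)$ is supported in $R_1 + R_{-1}$, then dispose of (i) and (ii) by citation. That is essentially what the paper does, assembling the corollary from the preceding paragraph-by-paragraph discussion.

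There is one genuine, if small, gap in your treatment of (i). You write that ``when $1/2 \in k$ every degenerate sum has divisor $2$ or $3$, and in either case the torsion conclusion $n_\gamma \HF(V)_\gamma = 0$ forces $\HF(V)_\gamma = 0$.'' That last clause fails for $n_\gamma = 3$: having $3\HF(V)_\gamma = 0$ and $1/2 \in k$ does not kill $\HF(V)_\gamma$ unless $1/3$ is also invertible. The correct resolution, which the paper carries out in the paragraph beginning ``Degenerate sums occur either with divisor $2$ or $3$,'' is a separate combinatorial exclusion: the only root systems admitting divisor-$3$ degenerate sums are $A_2$ and $G_2$; $G_2$ is not $3$-graded, and for the collinear $3$-grading of $A_2$ the set $R_1 - R_1 = \{0, \pm(\varepsilon_2-\varepsilon_3)\}$ contains no degenerate sum. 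Hence any $\gamma \in \mathbb{DS}(R) \cap (R_1 + R_{-1})$ automatically has $n_\gamma = 2$, and only then does $1/2 \in k$ finish the argument. You need to make this exclusion explicit rather than appealing to the torsion conclusion ``in either case.'' A second, more cosmetic remark: identifying $R_1 + R_{-1}$ with the degree-$0$ part of the $\mathbb Z$-grading is not quite accurate as stated (the $\mathbb Z$-degree-$0$ sublattice of $\mathcal Q(R)$ is generally larger); what you actually want is that $\supp_{\mathcal Q(R)} \uider(V) \subset R_1 + R_{-1}$, which follows from the explicit description $\uider(V)_\mu = \sum_{\alpha \in R_1} V_\alpha^+ \diamond V_{\alpha - \mu}^-$ in Proposition~\ref{uider_QR_grading_lem}.
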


In  \cite[Lemma 2.5]{Neh1996}, the space $D =  \sum_{\alpha, \beta \in R_1, \alpha \perp \beta } \subset [L^1, L^{-1}]$ was called the defect set. In the same paper, the author proves that this space is central and has $2$-torsion. 
 In view of our Proposition~\ref{torsion_bijection_prop}, those facts are immediate (see Corollary~\ref{LemmaNeh1996}), since the weights  of homogeneous components of the defect are degenerate sums (and not just any difference of orthogonal positive roots). We have therefore obtained a more precise version of the cited result which we can prove knowing only that the central covering $L \rightarrow \TKK(L^1, L^{-1})$ is $\mathcal Q(R)$-graded. 
 \begin{cor}[Lemma 2.5 \cite{Neh1996}] Let $L$ be a Lie algebra of Jordan type over $k$, such that $(L^+, L^-)$ covered by an $(R_1, R)$-grid. If $D = \sum_{\alpha \perp \beta, \alpha, \beta \in R_1} [L_\alpha, L_{-\beta}]$ then 
 $$D \subset Z(L), \quad 2D = 0. $$
 Moreover, if $\alpha \perp \beta, \alpha, \beta \in R_1$ and $[L_\alpha, L_{-\beta}] \neq \{0\},$ then $\alpha- \beta$ is a degenerate sum of divisor $2.$
 \label{LemmaNeh1996}
 \end{cor}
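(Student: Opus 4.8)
The plan is to deduce Corollary~\ref{LemmaNeh1996} directly from the preceding results, essentially as a dictionary translation from statements about the central covering $\ud:\uTKK(V)\to\TKK(V)$ and about degenerate sums into statements about the defect $D$ in an arbitrary Lie algebra $L$ of Jordan type whose Jordan pair $(L^+,L^-)$ is covered by an $(R,R_1)$-grid. The only genuine input we need is that $D$, viewed inside $L_0$, matches the degree-$\gamma$ homogeneous components of $\HF(V)$ for $\gamma=\alpha-\beta$ with $\alpha\perp\beta$, together with Proposition~\ref{torsion_bijection_prop} (every nonzero homogeneous piece of a $\mathcal Q(R)$-graded central covering sits either over a root or over a degenerate sum, and in the latter case is killed by $n_\gamma$) and Lemma~\ref{vdK_2.6}(iii) (a degenerate sum of divisor $2$ is a sum of two \emph{orthogonal} roots).

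First I would set up the $R$-grading on $L$. By Theorem~\ref{Root_graded_grid_theo}(ii)--(iii), with $k=k_R$ the Lie algebra $L$ is $R$-graded with $L_{\sigma\alpha}=V_\alpha^\sigma$ for $\alpha\in R_1$, $L_\gamma=[L_\alpha,L_{-\beta}]$ for $\gamma=\alpha-\beta\in R_0$, and $L_0=\sum_{\alpha\in R_1}[L_\alpha,L_{-\alpha}]$; moreover by Corollary~\ref{perfect_TKK_cor} $L$ is perfect and, by Theorem~\ref{Root_graded_grid_theo}(i), there is a graded central covering $g:L\to\TKK(V)$ with $g|_V=\mathrm{id}$, hence also the universal covering $u:\uce L\to L$ is $\mathcal Q(R)$-graded (Proposition~\ref{graded_cover_neh}). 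Now fix $\alpha\perp\beta$ in $R_1$ and put $\gamma=\alpha-\beta$. The summand $[L_\alpha,L_{-\beta}]$ of $D$ lies in $L_\gamma$, so it is a homogeneous component of $L$ of degree $\gamma\in R_1+R_{-1}$. Since $\alpha$ and $\beta$ are distinct elements of $R_1$, they are linearly independent (any two distinct roots of $R_1$ are; or note $\alpha+(-\beta)$ cannot be $0$ as $\alpha\ne\beta$), so $\gamma$ is a sum of two linearly independent roots; and $\gamma\notin R$ because by Lemma~\ref{root_deg_sums_intersection_lem} together with the classification a difference of two orthogonal roots of $R_1$ that happens to be a root would force $R=C_I$, where $1/2\in k_R$ is assumed — I would simply invoke: either $\gamma\notin R$, or $\gamma\in R$ with $n_\gamma$ invertible in $k_R$, and in both degenerate-sum-relevant situations the conclusion below holds. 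The clean route: apply Proposition~\ref{torsion_bijection_prop} to the $\mathcal Q(R)$-graded central covering $g:L\to\TKK(V)$ (or $u:\uce L\to L$). If $[L_\alpha,L_{-\beta}]\ne\{0\}$ and $\gamma$ is not a root, then $\gamma$ must be a degenerate sum, $L_\gamma\subseteq\ker g\subseteq Z(L)$, and $n_\gamma L_\gamma=0$. By Lemma~\ref{vdK_2.6}(i), $n_\gamma\in\{2,3\}$; and by the discussion in Section~\ref{deg_sum_in_defect}, a degenerate sum lying in $R_1+R_{-1}$ has $n_\gamma=2$ (the divisor-$3$ case only occurs for $A_2$ and $G_2$, neither of which produces a degenerate sum in $R_1+R_{-1}$ — for $A_2$ one checks $R_1-R_1$ contains no degenerate sum, for $G_2$ there is no $3$-grading). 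Hence $n_\gamma=2$ and $2[L_\alpha,L_{-\beta}]=0$, and $[L_\alpha,L_{-\beta}]\subseteq Z(L)$. Summing over all orthogonal pairs $(\alpha,\beta)$ in $R_1$ gives $D\subseteq Z(L)$ and $2D=0$, with the extra conclusion that whenever $[L_\alpha,L_{-\beta}]\ne\{0\}$ then $\alpha-\beta$ is a degenerate sum of divisor $2$.

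I expect no serious obstacle here; the statement is by design a corollary. The one point demanding a little care is making rigorous the claim ``$\gamma=\alpha-\beta\notin R$ for $\alpha\perp\beta$ in $R_1$, unless $R=C_I$'' and handling the $C_I$ case (where $1/2\in k_R$, so $L_\gamma$ being $2$-torsion immediately forces $L_\gamma=0$ anyway, consistent with $2D=0$ and the claim that $\alpha-\beta$ is then a degenerate sum — indeed for $C_I^{her}$ one computes $(R_1+R_{-1})\cap\mathbb{DS}(R)=\{2(\epsilon_i-\epsilon_j):i\ne j\}$, all of divisor $2$). I would dispose of this by a one-line appeal to Lemma~\ref{root_deg_sums_intersection_lem} and the case analysis already carried out in Section~\ref{deg_sum_in_defect}, rather than re-proving it. Thus the whole proof reduces to: invoke Theorem~\ref{Root_graded_grid_theo} for the grading and the covering, invoke Proposition~\ref{torsion_bijection_prop} for the alternative (root vs.\ degenerate sum, torsion in the latter case), and invoke Lemma~\ref{vdK_2.6} plus the divisor-$2$-versus-$3$ discussion to pin down $n_\gamma=2$.

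\begin{proof}[Proof sketch to be expanded]
Since $k=k_R$, Theorem~\ref{Root_graded_grid_theo} shows that $L$ is a perfect $R$-graded Lie algebra with $L_{\sigma\alpha}=V_\alpha^\sigma$ for $\alpha\in R_1$ and $L_{\alpha-\beta}=[L_\alpha,L_{-\beta}]$, and that there is a $\mathcal Q(R)$-graded central covering $g:L\to\TKK(V)$ with $g|_V=\mathrm{id}_V$. Fix $\alpha,\beta\in R_1$ with $\alpha\perp\beta$ and set $\gamma=\alpha-\beta$, a sum of two linearly independent roots. If $[L_\alpha,L_{-\beta}]\neq\{0\}$ then $0\neq\gamma\in\supp_{\mathcal Q(R)}(L)$, and Proposition~\ref{torsion_bijection_prop} applied to $g$ gives that either $\gamma\in R^\times$ or $\gamma$ is a degenerate sum with $L_\gamma\subseteq\ker g\subseteq Z(L)$ and $n_\gamma L_\gamma=0$. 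By Lemma~\ref{root_deg_sums_intersection_lem} and the assumption $1/2\in k_R$ in type $C_I$, the first alternative is impossible (or has $n_\gamma$ invertible), so $\gamma\in\mathbb{DS}(R)\cap(R_1+R_{-1})$; by Lemma~\ref{vdK_2.6}(i) and the analysis of Section~\ref{deg_sum_in_defect} (the divisor-$3$ degenerate sums occur only in $A_2$ and $G_2$ and never in $R_1+R_{-1}$), we get $n_\gamma=2$. Hence $[L_\alpha,L_{-\beta}]\subseteq Z(L)$ and $2[L_\alpha,L_{-\beta}]=0$, and $\gamma=\alpha-\beta$ is a degenerate sum of divisor $2$. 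Summing over all orthogonal pairs in $R_1$ yields $D\subseteq Z(L)$ and $2D=0$.
\end{proof}
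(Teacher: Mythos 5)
Your proposal follows the same broad outline as the paper's proof (use the graded central covering $g\colon L\to\TKK(V)$ supplied by Theorem~\ref{Root_graded_grid_theo}(i), then invoke Proposition~\ref{torsion_bijection_prop}), but it is missing the one observation that makes the argument actually close, and along the way makes two claims that are false in the generality required.

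The crucial missing step is the paper's observation that $\alpha\perp\beta$ in $R_1$ forces $\TKK(V)_{\alpha-\beta}=\{0\}$, which follows from Theorem~\ref{Root_graded_grid_theo}(iii) (namely that $\TKK(V)$, unlike $L$, \emph{is} $R$-graded) together with the Peirce orthogonality relations: the inner derivation $\delta(V^+_\alpha,V^-_\beta)$ annihilates $V$ when $e_\alpha\perp e_\beta$. Once one knows $\TKK(V)_{\alpha-\beta}=0$, the inclusion $g([L_\alpha,L_{-\beta}])\subset\TKK(V)_{\alpha-\beta}=\{0\}$ gives $D\subset\ker g\subset Z(L)$ immediately, and it also disposes of the alternative $(i)$ of Proposition~\ref{torsion_bijection_prop} in which $\gamma=\alpha-\beta$ would be a root (because then $g\colon L_\gamma\to\TKK(V)_\gamma=\{0\}$ would be a bijection, forcing $L_\gamma=0$). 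Your attempt to rule out $\gamma\in R^\times$ by appeal to Lemma~\ref{root_deg_sums_intersection_lem} does not work: that lemma identifies the intersection $R\cap\mathbb{DS}(R)$, so it says nothing about the case in which $\gamma$ is a root of divisor $1$; Proposition~\ref{torsion_bijection_prop} alone, in alternative $(i)$, gives you only a \emph{bijection} $g\colon L_\gamma\to\TKK(V)_\gamma$, not centrality or torsion, and without the vanishing of the target there is no contradiction.

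Two further misstatements would need fixing. You assert, citing Theorem~\ref{Root_graded_grid_theo}(ii)--(iii), that $L$ itself is $R$-graded; but part (ii) has the explicit hypothesis $[L_\alpha,L_{-\beta}]=\{0\}$ for all $\alpha\perp\beta$ in $R_1$, i.e.\ $D=\{0\}$, which is essentially what the Corollary measures the failure of. If $L$ were $R$-graded, $D$ would vanish and there would be nothing to prove; the whole point is that $L$ has a possible defect $D$ in degrees outside $R$, while only $\TKK(V)$ is honestly $R$-graded. You also deduce perfectness of $L$ from Corollary~\ref{perfect_TKK_cor}, but that corollary asserts the equivalence "$V$ perfect $\Leftrightarrow$ there exists \emph{some} perfect Lie algebra $L$ of Jordan type with $(L^1,L^{-1})=V$"; it does not say that \emph{every} such $L$ is perfect.
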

\begin{proof} Let $V =(L^+, L^-).$ By Theorem~\ref{Root_graded_grid_theo}, there is a graded central covering  $g: L \rightarrow \TKK(V)$ and $\TKK(V)$ is root-graded. Thus, since $\alpha \perp \beta, \alpha, \beta \in R_1$ implies that $\TKK(V)_{\alpha - \beta} = 0,$ we have $D \subset \ker f \subset Z(L).$ Moreover, by Proposition~\ref{torsion_bijection_prop}, $2D = 0,$ since every $\gamma$ in the support of $D$ is a degenerate sum of divisor  $2.$
\end{proof}

\begin{expl}
We would like to state explicitly, that for $L = \TKK(V)$ 
the universal central extension $u:\mathfrak{uce}(L) \rightarrow L$ is in general $5$-graded and in addition $(\ker u)_{\pm 2}$ and even $(\ker u)_{\pm 1}$ can be non-trivial. 
Consider for instance a Jordan pair which is covered by  a grid of type $B_{\{1,2\}}^{qf}.$ The $3$-grading is given by $R_1 = \{ \epsilon_{3} \pm  \epsilon_i, i= 1,2\} \cup \{\epsilon_3\}.$ The root system $B_3$ has degenerate sums with respect to $2$ and the corresponding weights are 
$ \{2 \varepsilon_i : 1\leq i \leq 3\} \cup \{\pm \epsilon_1 \pm \epsilon_2 \pm \epsilon_3\}.$
 Thus the support of the universal central extension is contained in $S = BC_3 \cup \{\pm \epsilon_1 \pm \epsilon_2 \pm \epsilon_3\}$ with the following $5$-grading which is induced by the $3$-grading on $R:$
 \begin{eqnarray*}
 S_0 &=& R_0 \cup \{\pm 2 \epsilon_1, \pm 2 \epsilon_2 \},\\
 S_{1} &=&  R_{1} \cup  \left \{ \pm \epsilon_j + \epsilon_3 \pm \epsilon_i : \{i,j, 3\} = \{1, 2, 3\} \right\},  \\
 S_{-1} &=&  R_{-1} \cup \{ \pm \epsilon_j - \epsilon_3 \pm \epsilon_i : \{i,j, 3\} = \{1, 2, 3\} \}, \\
 S_2 &=& \{2 \epsilon_3 \}, \\
 S_{-2} &=& \{-2 \epsilon_3 \}.
 \end{eqnarray*}
 Let $L = \mathfrak{so}(3, \mathbb Z)$ with its canonical $Q(B_3)$-grading. Then $V = (L^-, L^+)$ is covered by a grid of type $B_{\{1,2\}}^{qf}.$ In \cite{vdK} it is proven that the support of $u : \mathfrak{uce}(L) \rightarrow \TKK(V)$ is all of $S$ and that the kernel of $u$ has support $\mathbb{DS}(B_3) \cup \{0\}.$\\
 \end{expl}
 It will turn out that the collinear grid has particularly nice properties. To see this we need the following result \cite[Prop. 18.9]{lfrs}: \\
\begin{prop}
\label{sum_of_three_roots_prop}
Suppose $\alpha, \beta, \gamma \in R$ and $\alpha \neq \beta \neq \gamma.$ Then the following are equivalent:
\begin{itemize}
\item[\rm(a)] $\alpha - \beta + \gamma \in R_1,$
\item[\rm(b)] One of the following holds
\begin{itemize}
\item[\rm(i)] $\alpha \not \perp \beta \not \perp \gamma \perp \alpha,$ or
\item[\rm(ii)] $\alpha \top \gamma,$ and $\alpha \vdash \beta \dashv \gamma,$ or
\item[\rm(iii)] $(\alpha , \beta, \gamma)$ is a collinear family and $\alpha - \beta + \gamma \in R_1,$ or
\item[\rm(iv)] $\alpha = \gamma \vdash \beta.$
\end{itemize}
\end{itemize}
\end{prop}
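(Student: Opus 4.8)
The statement is a combinatorial fact about $3$-graded root systems, so the plan is a case analysis based on which cog relations hold among $\alpha,\beta,\gamma$. The direction (b)$\implies$(a) is the easy one: in each of the four listed configurations I would verify $\alpha-\beta+\gamma\in R_1$ directly. Case (iii) already includes the conclusion as a hypothesis, so there is nothing to do. Case (iv) with $\alpha=\gamma\vdash\beta$ gives $\alpha-\beta+\gamma = 2\alpha-\beta = s_\beta(\alpha)\in R$ (using $\langle\alpha,\beta\che\rangle=1$), and since $s_\beta$ preserves the $3$-grading and fixes the $R_1$-component up to the governing relation, one checks $s_\beta(\alpha)\in R_1$; alternatively this is the formula already appearing in the proof of the lemma preceding Definition~\ref{grid_definition}. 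For (i) and (ii) I would use the Peirce-space description: $\alpha-\beta+\gamma\in R_1$ is equivalent to $\{V_\alpha^\sigma,V_\beta^{-\sigma},V_\gamma^\sigma\}\neq 0$ being \emph{forced}, but since this is purely about roots I would instead pass to the rank-$\leq 3$ subsystem $S = (\mathbb K\alpha+\mathbb K\beta+\mathbb K\gamma)\cap R$ (Lemma~\ref{subsystem_lem}) and check the finitely many possibilities there.

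For (a)$\implies$(b), the plan is: assume $\delta:=\alpha-\beta+\gamma\in R_1$ with $\alpha,\beta,\gamma$ distinct roots, and show one of (i)--(iv) must hold. First I would argue that $\alpha,\gamma\in R_1$ and $\beta\in R_1$ as well — this should follow from the $3$-grading axioms: writing everything in terms of the grading, $\delta\in R_1$ together with the constraint $(R_i+R_j)\cap R\subset R_{i+j}$ pins down the degrees of $\alpha,\beta,\gamma$ (one cannot have, e.g., $\alpha\in R_0$ and still land in $R_1$ after subtracting $\beta$ and adding $\gamma$ unless the other degrees compensate, and a short bookkeeping rules the bad combinations out, or reduces them to the listed cases). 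Then, with $\alpha,\beta,\gamma\in R_1$, I would invoke the classification of pairwise relations in $R_1$ from the proposition after Definition~\ref{3_grad_def}: each of $\{\alpha,\beta\}$, $\{\beta,\gamma\}$, $\{\alpha,\gamma\}$ is either $\perp$, $\vdash$, $\dashv$, $\top$, or an equality. This gives a finite (and symmetric) list of configurations to test. For each, I would compute $\langle\delta,\alpha\che\rangle$, $\langle\delta,\beta\che\rangle$, $\langle\delta,\gamma\che\rangle$ using the inner-product/reflection formulas and decide whether $\delta$ can be a root in $R_1$; the surviving configurations are exactly (i)--(iv). The key tool here is again reduction to a rank-$\leq 3$ subsystem $S$ containing $\alpha,\beta,\gamma$, where everything can be checked by the explicit $3$-gradings of $A_1,A_2,A_3,B_2,B_3,C_3,D_3$ listed in \cite[17.8, 17.9]{lfrs}.

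The main obstacle I anticipate is the bookkeeping in (a)$\implies$(b): showing cleanly that $\alpha,\beta,\gamma$ all lie in $R_1$ (rather than some mixed degrees), and then organizing the case analysis over unordered triples of cog relations without missing a configuration or double-counting the symmetric ones. A useful simplification is to note that the statement is invariant under the simultaneous sign flip $(\alpha,\beta,\gamma)\mapsto(-\alpha,-\beta,-\gamma)$ and under swapping $\alpha\leftrightarrow\gamma$, which cuts the number of genuinely distinct cases considerably. A second, minor subtlety is the boundary case where two of $\alpha,\beta,\gamma$ are equal or proportional; the hypothesis $\alpha\neq\beta\neq\gamma$ in Proposition~\ref{sum_of_three_roots_prop} does \emph{not} exclude $\alpha=\gamma$, which is precisely why case (iv) appears, so I would handle $\alpha=\gamma$ as a separate first step before assuming all three roots distinct. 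Given the reductions, no deep input beyond the rank-$3$ classification and the reflection formula~(\ref{root_inner_formula}) is needed, so the proof should be entirely elementary once the case list is set up.
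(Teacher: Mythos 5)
The paper does not prove this proposition at all; it is quoted directly from \cite[Prop.\ 18.9]{lfrs} and used as a black box, so there is no ``paper's proof'' to compare against.

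A preliminary remark: the hypothesis in the thesis is typeset as $\alpha,\beta,\gamma\in R$, but the cog relations $\perp,\top,\vdash,\dashv$ appearing in (b) are only defined on $R_1$ (see the proposition following Definition~\ref{3_grad_def}), so the intended hypothesis is $\alpha,\beta,\gamma\in R_1$. You spend a paragraph trying to deduce that $\alpha,\beta,\gamma$ lie in $R_1$ from the $3$-grading, and as you half-acknowledge, that deduction doesn't go through in general --- knowing $\alpha-\beta+\gamma\in R_1$ certainly does not pin down the individual degrees (e.g.\ degrees $(1,0,0)$, $(0,-1,0)$, $(-1,-1,1)$ all satisfy $1-(-1)\cdot 1+\cdots=1$). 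You should simply take $\alpha,\beta,\gamma\in R_1$ as part of the hypothesis rather than trying to prove it.

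With that fix, your strategy --- reduce to the rank-$\leq 3$ subsystem $(\mathbb K\alpha+\mathbb K\beta+\mathbb K\gamma)\cap R$ with its induced $3$-grading and run a finite case check over the unordered cog-type of $\{\alpha,\beta\},\{\beta,\gamma\},\{\alpha,\gamma\}$, using the two symmetries you note to cut the list --- is sound and is essentially how one proves such statements. One concrete slip: in case (iv), with $\alpha=\gamma\vdash\beta$ you have $\langle\alpha,\beta\che\rangle=1$, so $s_\beta(\alpha)=\alpha-\beta$, not $2\alpha-\beta$. The correct one-line argument is that $\langle\beta,\alpha\che\rangle=2$ forces $s_\alpha(\beta)=\beta-2\alpha\in R_{-1}$, hence $2\alpha-\beta=-s_\alpha(\beta)\in R_1$. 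Finally, one should say a word about why the rank-$\leq 3$ reduction is legitimate: the induced grading on the subsystem is again a $3$-grading, and (since the conclusion involves only the roots $\alpha,\beta,\gamma,\alpha-\beta+\gamma$ and the cog-relations between them, all of which are computed inside the subsystem) the statement holds in $R$ iff it holds in $S$; the subsystem may be reducible, which adds a couple of extra low-rank cases to the list from \cite[17.8--17.9]{lfrs} but nothing problematic.
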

\begin{rem} Case (b)(iii) can only occur if the covering grid is of type $C_I^{herm}.$
\label{C_is_impure_remark}
\end{rem}

The zero component of $\mathfrak{uider}(V)$ with respect to the root grading is the quotient of $(V^+ \otimes V^-)_0$ modulo the span of all relations 
$$A(\alpha, \beta, \gamma, \delta) \cap (V^+ \otimes V^-)_0, B(\alpha, \beta) \cap (V^+ \otimes V^-)_0. $$ 
For $B(\alpha, \beta)$, (\ref{B_hom_graded}) implies that we only need to consider $B(\alpha, \alpha).$ Next we discuss the $A$-relations: 
\begin{eqnarray*}
A(\alpha, \beta, \gamma, \delta) &\subset& V^{+}_{\epsilon_1} \otimes V^-_\delta + V^{+}_{\epsilon_3} \otimes V^-_\beta \\
 &+& V_{\alpha}^+ \otimes V_{\epsilon_4}^- + V_{\gamma}^+ \otimes V_{\epsilon_2}^-   
\end{eqnarray*}
where the elements $\epsilon_i$ are as defined in (\ref{epsilon_Defi}). This is homogeneous with respect to the $\mathcal \mathcal Q(R)$-grading. Suppose $\epsilon_1 = \alpha - \delta + \gamma = \delta.$ Then $\gamma = \beta - \alpha + \delta,$ $\delta - \gamma + \beta = \alpha.$ Hence $A(\alpha, \beta, \gamma, \delta)$ lies either in $\mathfrak{uider}(V)_0$ or in a homogeneous component outside of it. Therefore we need to determine 
$$A(\alpha, \beta, \gamma, \delta), \quad \alpha - \beta + \gamma = \delta. $$
This leads to three cases : (I) $\alpha = \beta, \gamma = \delta;$ (I') $\alpha = \delta, \beta = \gamma;$ (equivalent to (I) by symmetry) and (II) \begin{equation}\alpha \neq \beta \neq  \gamma, \;\alpha - \beta + \gamma = \delta. \label{case_2_sum_zero} \end{equation}


\begin{cor} If $V$ is covered by a grid of type $A^{coll},$ then $A(0) = \sum_{\alpha, \beta \in R_1}A(\alpha, \beta, \alpha, \beta)$ and $B(0) = \sum_{\alpha \in R_1}B(\alpha ,\alpha).$
\label{collinear_zero_hom_Cor}
\end{cor}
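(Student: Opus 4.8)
The statement to establish is Corollary~\ref{collinear_zero_hom_Cor}: for $V$ covered by a grid of type $A^{coll}$, the degree-zero homogeneous components of the defining relations of $\uider(V)$ reduce to $A(0) = \sum_{\alpha,\beta \in R_1} A(\alpha,\beta,\alpha,\beta)$ and $B(0) = \sum_{\alpha \in R_1} B(\alpha,\alpha)$. The strategy is to trace through the general case analysis that immediately precedes the corollary and simply eliminate the cases that cannot occur for a collinear grid. First I would recall from the discussion after Proposition~\ref{uider_QR_grading_lem} that $A(0) = \sum_{\alpha-\beta+\gamma-\delta = 0} A(\alpha,\beta,\gamma,\delta)$ and $B(0) = \sum_{2(\alpha-\beta)=0} B(\alpha,\beta)$, and that the condition $\{V_\alpha^\sigma, V_\beta^{-\sigma}, V_\gamma^\sigma\} = 0$ unless $\alpha-\beta+\gamma \in R_\sigma$ lets us assume at least one of the $\epsilon_i$ of (\ref{epsilon_Defi}) lies in $R_1$.

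\textbf{The $B$-relations.} For $B(\alpha,\beta)$ with $\alpha,\beta \in R_1$, the trichotomy (2a)--(2c) after (\ref{B_hom_graded}) shows that $B(\alpha,\beta) = 0$ unless $\alpha = \beta$, $\alpha \vdash \beta$, or $\alpha \dashv \beta$. In a collinear grid any two distinct elements of $R_1$ are collinear, so the relations $\vdash$ and $\dashv$ never hold; hence $B(\alpha,\beta)$ is nonzero only when $\alpha = \beta$. Moreover $2(\alpha-\beta) = 0$ in $\mathcal Q(R)$ forces $\alpha = \beta$ anyway (the root lattice is torsion-free), so $B(0) = \sum_{\alpha \in R_1} B(\alpha,\alpha)$ with no further constraint. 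This half is essentially immediate.

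\textbf{The $A$-relations.} For $A(0)$ one considers $A(\alpha,\beta,\gamma,\delta)$ with $\alpha - \beta + \gamma = \delta$ (so that the relation lands in degree $0$), which splits into the three cases listed just before the corollary: (I) $\alpha=\beta,\gamma=\delta$; (I$'$) $\alpha=\delta,\beta=\gamma$, which is equivalent to (I) by the symmetry $A(x,y,u,v) = A(u,v,x,y) = A(x,v,u,y)$; and (II) $\alpha \neq \beta \neq \gamma$ with $\alpha-\beta+\gamma=\delta$. The point is to show case (II) is vacuous for a collinear grid. Here I would invoke Proposition~\ref{sum_of_three_roots_prop}: since $\alpha-\beta+\gamma \in R_1$, one of (b)(i)--(b)(iv) must hold. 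Option (b)(i) ($\alpha \not\perp \beta \not\perp \gamma \perp \alpha$) requires $\gamma \perp \alpha$, impossible in a collinear grid; option (b)(ii) uses $\vdash$ and $\dashv$, impossible; option (b)(iv) is $\alpha = \gamma \vdash \beta$, again impossible; option (b)(iii) is exactly the collinear case $(\alpha,\beta,\gamma)$ collinear with $\alpha-\beta+\gamma \in R_1$ — but in this situation $\alpha - \beta + \gamma = \delta$ with all of $\alpha,\beta,\gamma$ pairwise distinct and collinear, and one checks (using that $R_1$ has the structure forced by Remark~\ref{C_is_impure_remark}, namely that a genuinely impure collinear triple forces a $C_I^{her}$ covering) that for the pure collinear grid $A^{coll}$ no such configuration with all indices distinct produces a new relation; in fact for $A^{coll}$ the only way $\alpha - \beta + \gamma \in R_1$ with distinct arguments already reduces to (I)/(I$'$). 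The main obstacle is precisely this last point: ruling out or absorbing case (b)(iii) cleanly, since collinearity does not a priori exclude triples $\alpha-\beta+\gamma=\delta$ with all four distinct; the resolution is to observe that for $A^{coll}$, $R_1 - R_1 \cap R_1 = \emptyset$ is too strong, so instead one argues directly from the description $R_1 = \{\varepsilon_i - \varepsilon_j\}$ (collinear case $\dot A^{coll}$) that $\alpha - \beta + \gamma \in R_1$ together with $\alpha,\beta,\gamma \in R_1$ distinct forces $\alpha - \beta + \gamma = \delta$ to coincide with one of the degenerate patterns (I) or (I$'$) after relabeling. Once case (II) is disposed of, what remains is exactly $A(0) = \sum_{\alpha,\beta} A(\alpha,\beta,\alpha,\beta)$ (case (I), with (I$'$) giving the same terms by symmetry), which is the claim.
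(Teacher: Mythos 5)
Your strategy for the $A$-relations---rule out case (II) by combining Proposition~\ref{sum_of_three_roots_prop} with Remark~\ref{C_is_impure_remark}---is essentially the same route the paper takes, just spelled out at greater length. The paper's proof simply notes that for $A^{coll}$ any triple $(\alpha,\beta,\gamma)$ in $R_1$ with $\alpha \neq \beta \neq \gamma$ is collinear, so Remark~\ref{C_is_impure_remark} forces $\alpha - \beta + \gamma \notin R_1$ and case (II) is empty; you reach the same conclusion by running through (b)(i)--(iv) and eliminating each on $\bot$, $\vdash$, $\dashv$ grounds (plus the remark for (b)(iii)), which is perfectly sound. Where the exposition goes astray is the ``resolution'' paragraph: you assert that direct inspection ``forces $\alpha - \beta + \gamma = \delta$ to coincide with one of the degenerate patterns (I) or (I') after relabeling.'' That is not what happens. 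For $A^{coll}$ with $\alpha,\beta,\gamma \in R_1$ satisfying $\alpha \neq \beta \neq \gamma$, one simply has $\alpha - \beta + \gamma \notin R_1$; there is no admissible $\delta$ at all, so case (II) contributes \emph{nothing}. It does not collapse onto (I) or (I'), which by definition require pairs of the four weights to coincide in a way incompatible with the hypothesis of (II). The second-guessing about Remark~\ref{C_is_impure_remark} is unwarranted --- the remark is exactly what you need, and once cited there is nothing further to resolve.

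Separately, and worth flagging: the formula you (and the corollary as printed) end with, $A(0) = \sum_{\alpha,\beta \in R_1} A(\alpha,\beta,\alpha,\beta)$, is not in degree zero when $\alpha \neq \beta$. By the definition $A(\mu) = \sum_{\alpha - \beta + \gamma - \delta = \mu} A(\alpha,\beta,\gamma,\delta)$, the term $A(\alpha,\beta,\alpha,\beta)$ has degree $2(\alpha - \beta)$. The surviving degree-zero quadruples are those of case (I), namely $A(\alpha,\alpha,\gamma,\gamma)$, or equivalently $A(\alpha,\gamma,\gamma,\alpha)$ under the symmetry $A(x,y,u,v) = A(x,v,u,y)$; this is also what the later Lemma~\ref{uider_gen_lem_rect} actually uses, with $x,y$ in $V_{\epsilon_1 - \epsilon_j}^\pm$ and $u,v$ in $V_{\epsilon_1 - \epsilon_i}^\pm$. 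You have inherited a labeling slip that already exists in the paper --- its own proof concludes ``$\alpha = \gamma$ and $\beta = \delta$'' where the case analysis it set up would give ``$\alpha = \beta$ and $\gamma = \delta$'' (case (I)) --- so this is not an error in your reasoning, but your closing sentence should be corrected to match the case labels rather than reproducing the printed formula.
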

\begin{proof}
We have to determine the span of all relations 
$$A(\alpha, \beta, \gamma, \delta) \cap (V^+ \otimes V^-)_0, B(\alpha, \beta) \cap (V^+ \otimes V^-)_0 $$
where $B(\alpha, \beta) := B(V_\alpha^+, V_\beta^-). $
For $B(\alpha, \beta)$, (\ref{B_hom_graded}) implies that we only get $B(\alpha, \alpha). $ If $\alpha \neq \beta \neq  \gamma,$
then $(\alpha, \beta, \gamma)$ is collinear, hence by Remark~\ref{C_is_impure_remark}, $\alpha - \beta + \gamma \notin R_1.$ Therefore,  if $A(\alpha, \beta, \gamma, \delta) \subset (V \otimes V)_0$ then without loss of generality $\alpha = \gamma$ and $\beta = \delta.$
\end{proof}

\chapter{Types A and C}
\label{AC_chapter}
In this chapter we will describe  the universal central extension of $\TKK(V)$ for two important examples, namely for the rectangular matrix Jordan pairs and for the hermitian matrix Jordan pairs. It is known that over a field of characteristic $\neq 2,3$ the Tits-Kantor-Koecher algebras of those pairs are precisely the centreless root-graded Lie algebras of types $A$ and $C$ respectively (see \cite{Neh1996}). Although over a general base ring, we can only prove that $\TKK(V)$ is $A$-graded resp. $C$-graded (and not that every such root-graded Lie algebra is of this form), this gives us some insight into a possible theory of root-graded Lie algebras over $k$. It is also a straightforward generalization of the approach which was chosen in \cite{vdK} where the author looks at Lie algebras of the form $k \otimes_{\mathbb Z} \mathfrak g_\mathbb{Z}$ for a simply connected Chevalley form $\mathfrak g_{\mathbb Z}$ of  a simple finite dimensional  Lie algebra.
\section{Derivations }
\label{derivations_section}
If $L$ is a root-graded Lie algebra, see Definition~\ref{r_delta_graded_def}, its structure can often be described by another algebra $D$, which is called the coordinate algebra. There is a close connection between derivations of $D$ and derivations of certain Jordan pairs which we explore in this chapter. A good reference for alternative algebras which are needed throughout is \cite{prebookalt}.
  In the paper \cite{lopera} the authors approach derivations of an alternative algebra $D$ in a way which is well suited to our setting.\\ 
Recall  Definition \ref{Mul_Der_Defi} for the definition of the Lie multiplication algebra of an alternative algebra $D$. 
 If $D$ is an alternative (possibly commutative or associative) unital algebra, it can be shown (see \cite{lopera}) that a multiplication derivation  of $D$ is an  element $\Delta \in \End_k(A)$ of the form
$$\Delta = L_a - R_a  + \sum_{i = 1}^n [L_{a_i}, R_{b_i}]$$ for some $a, a_i, b_i \in A$ such that
$$ 3a + \sum_{i = 1}^n[a_i, b_i] \in \mathrm{Nuc}(A).$$ 
This will be considered as the definition of a multiplication derivation whenever alternative algebras are concerned. We will define special types of multiplication derivations following \cite[2.5]{lopera}. There the reader can also find proofs that those concepts are well-defined.
\begin{defi}
An \emph{inner derivation} is a multiplication derivation  $L_a - R_a  + \sum_{i = 1}^n [L_{a_i}, R_{b_i}]$ such that $3a + \sum_{i = 1}^n[a_i, b_i] = 0. $ An \emph{associator derivation} is an inner derivation where $a = 0.$ The submodule of \emph{standard derivations} is spanned by elements of the form $SD(a,b) = L_{[a, b]} - R_{[a, b]} + 3 [L_a, R_b].$ Lastly, \emph{commutator derivations} are those inner derivations where $\sum [L_{a_i}, R_{b_i}] = 0.$ We denote, with obvious names, 
\begin{eqnarray*}
\mathrm{Der}(D) &:=& \{\Delta \in \End_k(D) :\Delta(ab) = \Delta(a)b + a\Delta(b) , a,b \in D \}, \\
\mathrm{MulDer}(D) &:=& \spa_k \{L_a - R_a  + \sum_{i = 1}^n [L_{a_i}, R_{b_i}]: 3a + \sum_{i = 1}^n[a_i, b_i] \in \mathrm{Nuc}(D)\},\\
\mathrm{IDer}(D) &:=& \spa_k \{L_a - R_a  + \sum_{i = 1}^n [L_{a_i}, R_{b_i}]: 3a + \sum_{i = 1}^n[a_i, b_i] = 0.\},\\
\mathrm{AssDer}(D) &:=& \spa_k \{ \sum_{i = 1}^n [L_{a_i}, R_{b_i}]:  \sum_{i = 1}^n[a_i, b_i] = 0.\},\\
\mathrm{StanDer}(D) &:=& \spa_k\{L_{[a, b]} - R_{[a, b]} + 3 [L_a, R_b]: a,b \in D\},\\
\mathrm{ComDer}(D)&:=& \spa_k \{L_a - R_a  + \sum_{i = 1}^n [L_{a_i}, R_{b_i}]: 3a =0, \sum_{i = 1}^n[a_i, b_i] = 0.\}.
\end{eqnarray*}
\end{defi}
The following proposition is part of  \cite[Prop. 3.7]{lopera}.
\begin{prop}
Let $D$ be an alternative $k$-algebra. 
\begin{itemize}
\item[\rm (i)] The submodules $\mathrm{MulDer}(D) , \mathrm{IDer}(D) ,\mathrm{StanDer}(D),\mathrm{ComDer}(D)$ are ideals of the Lie algebra $\mathrm{Der}(D)$.
\item[\rm (ii)] If $3 [D, D] = [D, D]$ (in particular if $D$ is commutative), then
$$\mathrm{IDer}(D) = \mathrm{AssDer}(D) + \mathrm{StanDer}(D) + \mathrm{ComDer}(D).$$ 
\item[\rm (iii)] If $3 D = D,$ then 
$$\mathrm{IDer}(D) =  \mathrm{StanDer}(D) + \mathrm{ComDer}(D).$$ 
\item[\rm (iv)] If $1/3 \in k,$ then 
$$\mathrm{IDer}(D) =  \mathrm{StanDer}(D), \;\mathrm{ComDer}(D) = 0.$$ 
\item[\rm (v)] If $3 D = 0,$ then
$$\mathrm{IDer}(D) = \mathrm{AssDer}(D) + \mathrm{ComDer}(D).$$
\end{itemize}
\label{lopera_der_proposition}
\end{prop}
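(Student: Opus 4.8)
\textbf{Proof proposal for Proposition~\ref{lopera_der_proposition}.}

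The plan is to deduce every part of the proposition from the characterization of multiplication derivations quoted just before the statement: a multiplication derivation is an operator $\Delta = L_a - R_a + \sum_{i=1}^n [L_{a_i}, R_{b_i}]$ with $3a + \sum_i [a_i,b_i] \in \mathrm{Nuc}(D)$, and it is inner exactly when this nuclear element is $0$. Since the statement is cited as part of \cite[Prop. 3.7]{lopera}, I would present a proof that reorganizes and records the arguments in a form matching our notation, rather than reproving the alternative-algebra identities \eqref{alt_id_1}--\eqref{alt_id_4} from scratch.

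First I would handle (i). That $\mathrm{Der}(D)$ is a Lie subalgebra of $\mathrm{End}_k(D)^-$ is the computation already given after Definition~\ref{Mul_Der_Defi}. For the ideal property, I would take $\Delta \in \mathrm{Der}(D)$ and a generator $L_a - R_a + \sum_i [L_{a_i},R_{b_i}]$ of one of the four submodules and compute the bracket. The key general fact is that for any derivation $\Delta$ one has $[\Delta, L_c] = L_{\Delta c}$ and $[\Delta, R_c] = R_{\Delta c}$; hence
\begin{equation*}
[\Delta,\; L_a - R_a + \textstyle\sum_i [L_{a_i},R_{b_i}]] = L_{\Delta a} - R_{\Delta a} + \textstyle\sum_i\big([L_{\Delta a_i},R_{b_i}] + [L_{a_i},R_{\Delta b_i}]\big),
\end{equation*}
which is again of multiplication type with nuclear defect $3\Delta a + \sum_i([\Delta a_i,b_i] + [a_i,\Delta b_i]) = \Delta\big(3a + \sum_i[a_i,b_i]\big)$ because $\Delta$ is a derivation and therefore preserves commutators; and $\Delta$ maps the nucleus into itself (a standard fact, obtained by applying $\Delta$ to the defining associator identities). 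This shows closure for $\mathrm{MulDer}$; setting the defect to $0$ gives $\mathrm{IDer}$; the computation also shows $\mathrm{StanDer}$ and $\mathrm{ComDer}$ are preserved, since $\Delta$ sends $SD(a,b)$ to $SD(\Delta a, b) + SD(a,\Delta b)$ and, for a commutator derivation, keeps the $3a$-part killed and the $[a_i,b_i]$-sum zero.

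For (ii)--(v) I would argue by decomposing a given inner derivation $\Delta = L_a - R_a + \sum_i [L_{a_i},R_{b_i}]$ with $3a + \sum_i[a_i,b_i] = 0$. The engine is the identity $SD(a,b) = L_{[a,b]} - R_{[a,b]} + 3[L_a,R_b]$, so that $\sum_j SD(a_j,b_j)$ accounts for the ``$L_{[\cdot]} - R_{[\cdot]}$ plus thrice the $[L,R]$'' part; subtracting a suitable standard derivation from $\Delta$ leaves an operator whose $L-R$ term has argument in $3D$-torsion and whose $[L_{a_i},R_{b_i}]$-sum has commutator-sum zero. Concretely: in case (iii), $3D = D$ lets one write $a = 3a'$ and absorb $L_a - R_a = L_{3a'} - R_{3a'}$ together with the associator part into $\sum SD(a',\cdot) + (\text{associator derivation})$, and since $3[D,D]=[D,D]$ (which follows from $3D=D$) the leftover associator derivation is itself expressible through standard and commutator derivations — here I would quote (ii). Case (ii) is the base reduction: given $3[D,D]=[D,D]$, rewrite $\sum_i[a_i,b_i]$ with a factor of $3$ pulled out, compare with $3a$ via the defect relation $3a = -\sum_i[a_i,b_i]$, and split $\Delta$ into its associator-derivation part (the $[L_{a_i},R_{b_i}]$ with commutator sum zero), its standard-derivation part, and a remainder which is a commutator derivation. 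Case (iv) is immediate: if $1/3 \in k$ then $3D = D$ and $3[D,D] = [D,D]$, while the condition $3a = 0$ forces $a = 0$, so $\mathrm{ComDer}(D) = 0$ and (iii) collapses to $\mathrm{IDer}(D) = \mathrm{StanDer}(D)$. Case (v), $3D = 0$: now $L_a - R_a$ is automatically a commutator derivation (its $3a$-part vanishes, its $[L,R]$-sum is empty) and $3[a,b] = 0$ makes every $SD(a,b) = 3[L_a,R_b]$ equal zero, so no standard derivations survive and $\mathrm{IDer}(D) = \mathrm{AssDer}(D) + \mathrm{ComDer}(D)$.

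The main obstacle I anticipate is \emph{well-definedness and the bookkeeping of nuclear defects}: the submodules are spanned by operators written in a non-unique way, so each claimed decomposition must be checked to be independent of the chosen representation, and the manipulations ``pull out a factor of $3$'', ``absorb into a standard derivation'' implicitly use the alternative identities \eqref{alt_id_1}--\eqref{alt_id_4} plus the fact (from \cite{lopera}) that $\mathrm{StanDer}$, $\mathrm{AssDer}$, $\mathrm{ComDer}$ are genuinely well-defined submodules and not just spans of ad hoc expressions. Since these well-definedness statements are exactly what is cited from \cite[2.5]{lopera} and \cite[Prop. 3.7]{lopera}, I would invoke them rather than redoing them, and keep the proof at the level of the additive decomposition just sketched.
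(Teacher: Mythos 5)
The paper does not prove this proposition: the sentence immediately before it reads ``The following proposition is part of \cite[Prop.\ 3.7]{lopera}'' and no argument is supplied, so there is no proof in the paper to compare yours against. You are therefore filling a gap rather than reproducing a given argument, and the review has to judge the sketch on its own merits.

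Your part~(i) is sound: $[\Delta,L_c]=L_{\Delta c}$, $[\Delta,R_c]=R_{\Delta c}$, $\Delta$ preserves the nucleus, and it sends commutators to sums of commutators, so the defect of the bracket is $\Delta(\text{defect})$; that proves $\mathrm{MulDer}$, $\mathrm{IDer}$, $\mathrm{StanDer}$ and $\mathrm{ComDer}$ are ideals. For (ii)--(v), your high-level plan (``subtract a suitable standard derivation, leave a commutator-plus-associator remainder'') is the right one, but as written it hides the single algebraic identity that makes it all work and which you should make explicit: if $\Delta=L_a-R_a+\sum_i[L_{a_i},R_{b_i}]$ has defect $0$, then $\sum_i SD(a_i,b_i)=-3\Delta$. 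This is immediate from $SD(c,d)=L_{[c,d]}-R_{[c,d]}-3[L_c,R_d]$ and $\sum_i[a_i,b_i]=-3a$. Note also that the paper's displayed formula for $SD$ carries a sign error (it writes $+3[L_a,R_b]$, whereas the form with a $-3$ is the one that is actually an inner derivation and is the one used later in the thesis, e.g.\ in Corollary~\ref{A_2_kernel_ud_0_cor}); your decomposition only works with the $-3$ convention, so you should say so. With that recorded, the decompositions go through concretely: for (iii), write $a_i=3c_i$ (using $3D=D$), put $\Sigma=\sum SD(c_i,b_i)$, and observe $\Delta+\Sigma = L_\alpha-R_\alpha$ with $\alpha=a+\sum[c_i,b_i]$ satisfying $3\alpha=0$, which is a commutator derivation; for (ii), write $\sum[a_i,b_i]=3\sigma$ with $\sigma=\sum[c_j,d_j]\in[D,D]$, put $\Sigma=\sum SD(c_j,d_j)$, and check that $\Delta+\Sigma$ splits as a commutator derivation plus an associator derivation; (iv) and (v) then fall out as you say.

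One logical point to repair: in your treatment of (iii) you propose to ``quote (ii)'' to dispose of the leftover associator derivation. That is a non sequitur as phrased, since (ii) only asserts $\mathrm{IDer}=\mathrm{AssDer}+\mathrm{StanDer}+\mathrm{ComDer}$ and does not by itself place $\mathrm{AssDer}$ inside $\mathrm{StanDer}+\mathrm{ComDer}$. The clean fix is the direct one above: apply $3D=D$ to the $a_i$'s appearing in the associator part, not to conclusion (ii). Similarly, be careful with the remark ``the manipulations \ldots implicitly use \eqref{alt_id_1}--\eqref{alt_id_4}'': the decompositions in (ii)--(v) actually use only the linearity of the commutator and of $[L_\cdot,R_\cdot]$, plus the quoted characterization of multiplication derivations; the identities \eqref{alt_id_1}--\eqref{alt_id_4} are what one needs to \emph{prove} that characterization and the well-definedness of the submodules, which you are correctly outsourcing to \cite{lopera}.
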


\subsubsection{Trialities}
\label{triality_subsubsection}
Throughout let $D$ be an alternative unital algebra, see Definition~\ref{alternative_algebra_definition}. 
\begin{defi}
 A triple of elements $(t_1, t_2, t_3)$, $t_i \in \End_k(D)$ is called an (additive) \emph{triality} of $D$ if:
$$t_1(ab) = t_2(a)b + at_3(b)$$
for $a, b \in D.$
The trialities form a Lie subalgebra of $\End_k(D)^3$ which is denoted by $\mathcal T(D).$
\end{defi}
\begin{rem}\label{der_is_trial_remark}
Every derivation $D \in \mathrm{Der}(D)$ allows us to define a triality by setting $t_i = D$ for all $1 \leq i\leq 3.$ Conversely, every triality with $t_1 = t_2  = t_3$ corresponds to a derivation $D = t_i.$
\end{rem}
\begin{rem} There exist multiplicative trialities, but since we are interested in Lie algebras and not in groups, the additive version is better adapted to our purposes. Sometimes the objects which we call trialities  are called (Lie) related triples in the literature and the expression (local, additive) triality is reserved for orthogonal trialities. See Definition~\ref{orth_trial_defi} and the paragraph following it for more information.  
\end{rem}

\begin{lem} Let $(t_1, t_2, t_3)$ be a triality. Then $t_1 = t_2 = t_3,$ if and only if $t_1(1) = t_2(1) = t_3(1)= 0.$

\label{inner_trial_char_zero}
\end{lem}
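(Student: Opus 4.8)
\textbf{Proof plan for Lemma~\ref{inner_trial_char_zero}.}

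The plan is to prove the equivalence in both directions, the forward implication being immediate and the reverse being the substantive one. First, if $t_1 = t_2 = t_3 =: \Delta$, then by Remark~\ref{der_is_trial_remark} $\Delta$ is a derivation of the unital algebra $D$, and every derivation kills the identity: from $1 = 1\cdot 1$ we get $\Delta(1) = \Delta(1)\cdot 1 + 1\cdot\Delta(1) = 2\Delta(1)$, so $\Delta(1) = 0$; hence $t_1(1) = t_2(1) = t_3(1) = 0$. This direction needs no hypothesis beyond unitality.

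For the reverse implication, assume $t_1(1) = t_2(1) = t_3(1) = 0$. The idea is to exploit the defining identity $t_1(ab) = t_2(a)b + a\,t_3(b)$ by specializing one of the arguments to $1$. Setting $b = 1$ gives $t_1(a) = t_2(a)\cdot 1 + a\,t_3(1) = t_2(a)$ for all $a \in D$, using $t_3(1) = 0$ and that $1$ is a two-sided unit. Symmetrically, setting $a = 1$ gives $t_1(b) = t_2(1)\cdot b + 1\cdot t_3(b) = t_3(b)$ for all $b \in D$, using $t_2(1) = 0$. Combining the two specializations yields $t_1 = t_2$ and $t_1 = t_3$, hence $t_1 = t_2 = t_3$, which is the claim.

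There is essentially no obstacle here — the only thing to be a little careful about is that the unital hull machinery is not needed, since $D$ is already assumed unital at the start of Subsection~\ref{triality_subsubsection} (``Throughout let $D$ be an alternative unital algebra''), so $1 = 1_D$ is a genuine element of $D$ and the substitutions $a = 1$, $b = 1$ are legitimate. One should also note that by Remark~\ref{der_is_trial_remark} the conclusion $t_1 = t_2 = t_3$ is exactly the statement that the triality arises from a derivation, so the lemma can be read as: a triality is a derivation precisely when all three components vanish on $1$. No computation beyond the two one-line substitutions above is required.
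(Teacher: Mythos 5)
Your proof is correct and follows essentially the same route as the paper: for the forward direction both invoke the remark that a triality with equal components is a derivation and that derivations annihilate the unit, and for the reverse direction both specialize the triality identity $t_1(ab)=t_2(a)b+at_3(b)$ at $a=1$ and at $b=1$. The paper merely compresses the two substitutions into the single display $t_1(x)=t_2(x)1+xt_3(1)=t_2(1)x+1t_3(x)$, whereas you carry them out in two separate lines; there is no mathematical difference.
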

\begin{proof} We already know that $t_1 = t_2 = t_3$ implies that $t_1$ is  a derivation. 
For every derivation $\Delta$, $\Delta(1) = 0$, so $\Rightarrow$ is clear. 
Assume $t_i(1) = 0$ for all $i.$
For all $x \in D$, we have $t_1(x) = t_2(x)1 + xt_3(1) = t_2(1)x + 1t_3(x)$ by triality. Thus
$t_1(x)= t_2(x) = t_3(x).$
\end{proof}

\begin{expl} The triples $\lambda(a) = (L_a , L_a + R_a, -L_a )$ , $\rho (b) = (R_b, -R_b, L_b + R_b)$  $a, b \in D.$
are trialities. Indeed,  in every alternative algebra 
$$-(a, x, y) = L_a(xy) - (L_ax)y = (R_ax)y  - x(L_ay) = (x,a, y)$$  and
$$( x, y, b) = R_b(xy) - x(R_by) = -(R_bx)y  + x(L_by) = -(x, b, y).$$
Since $\mathcal T(D)$ is a Lie algebra, $\sigma(a, b) := [\lambda(a), \rho(b)]$ is a triality. More explictly, it is of the form 
$(t_1, t_2,t_3 ) = [\lambda(a), \rho(b)],$ where
\begin{eqnarray*}
t_1 &=& [L_a, R_b], \\ 
t_2 &=& -[L_a, R_b] - [R_a, R_b] = -[L_a, R_b] + 2[L_a, R_b]+ R_{[a, b]} = R_{[a, b]} + [L_a, R_b],\\
t_3 &=& -[L_a, L_b] - [L_a, R_b] = 2[L_a, R_b] -[L_a, R_b] - L_{[a, b]} = -L_{[a, b]} + [L_a, R_b].
\end{eqnarray*}
Thus $$ \sigma(a, b) = ([L_a, R_b], [L_a, R_b],[L_a, R_b]) + (0, R_{[a, b]}, -L_{[a, b]}).$$
\label{rho_lambda_expls}
\end{expl}

\begin{defi} \label{inner_trial_def} 
A triality  $(t_1, t_2, t_3)$ is called an \emph{inner triality}, if it lies in the subalgebra  $\mathcal T_0$ generated by the trialities 
$\lambda(a) = (L_a , L_a + R_a, -L_a )$, $\rho (b) = (R_b, -R_b, L_b + R_b),$  $a, b \in D.$
\end{defi}
\begin{lem} A triality $(t_1, t_2, t_3)$ is inner if and only if there exist $a, b \in D$ and finitely many pairs $(a_i, b_i) \in D^2$ such that
$$(t_1, t_2, t_3) = \lambda(a) + \rho(b) + \sum_{i} \sigma(a_i, b_i).$$ 
Moreover,  \begin{subequations}
\begin{equation}
[\lambda(c), \sigma(a, b)] = \sigma([a, b], c) - \lambda((a, b,c )), \quad  [\rho(c), \sigma(a, b)] = \sigma(c, [a, b]) - \rho((a, b,c )), \label{lambda_action}
\end{equation}
\begin{equation} [\rho(a), \rho(b)] = -2 \sigma(a, b) - \rho([a,b]), \quad [\lambda(a), \lambda(b)] = -2\sigma(a,b) + \lambda([a,b])\label{rho_action}
\end{equation}
\end{subequations}
 \end{lem}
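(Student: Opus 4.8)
The plan is to prove the characterization of inner trialities by first establishing the generating formula, then deriving the bracket relations. First I would show that the span of all $\lambda(a)$, $\rho(b)$, and $\sigma(a_i,b_i)$ is already closed under the Lie bracket, which implies it coincides with the subalgebra $\mathcal T_0$ generated by the $\lambda$'s and $\rho$'s. By definition $\sigma(a,b) = [\lambda(a),\rho(b)]$ already lies in $\mathcal T_0$, so the span in question is contained in $\mathcal T_0$; for the reverse inclusion it suffices to check that the span is a subalgebra containing all the generators, which follows once the four bracket formulas \eqref{lambda_action} and \eqref{rho_action} are verified (together with the elementary $[\lambda(a),\lambda(b)]$ and $[\rho(a),\rho(b)]$ computations, which are listed, and the trivial observation that $[\lambda(a),\rho(b)]=\sigma(a,b)$ by definition).

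Next I would verify the bracket relations componentwise, working in $\End_k(D)^3$ with the componentwise bracket. For $[\lambda(c),\sigma(a,b)]$: I have $\sigma(a,b) = ([L_a,R_b],\,R_{[a,b]}+[L_a,R_b],\,-L_{[a,b]}+[L_a,R_b])$ from Example~\ref{rho_lambda_expls}, and $\lambda(c) = (L_c, L_c+R_c, -L_c)$. The first component of the bracket is $[L_c,[L_a,R_b]]$. Using the alternative identities \eqref{alt_id_1}--\eqref{alt_id_4}, in particular \eqref{alt_id_3} which reads $[[L_a,R_b],L_c] = L_{(a,b,c)} - [L_{[a,b]},R_c]$, one rewrites this as $-L_{(a,b,c)} + [L_{[a,b]},R_c]$. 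Then one checks this equals the first component of $\sigma([a,b],c) - \lambda((a,b,c))$, namely $[L_{[a,b]},R_c] - L_{(a,b,c)}$. The second and third components are handled the same way, invoking \eqref{alt_id_1}, \eqref{alt_id_2} to convert the extra $R_c$ (resp. $-L_c$) pieces, and \eqref{alt_id_4} for the terms involving $R_{[a,b]}$ and $L_{[a,b]}$. The formula for $[\rho(c),\sigma(a,b)]$ is obtained by the analogous computation (or by applying the opposite-algebra symmetry that interchanges $\lambda\leftrightarrow\rho$, $L\leftrightarrow R$, and negates the associator and the commutator), and the formulas for $[\rho(a),\rho(b)]$ and $[\lambda(a),\lambda(b)]$ follow directly from expanding the relevant components and applying \eqref{alt_id_1} and \eqref{alt_id_2}.

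Having the bracket formulas in hand, the closure argument is then immediate: brackets of $\lambda$'s with $\lambda$'s and $\rho$'s produce $\lambda$'s, $\rho$'s, and $\sigma$'s; brackets of $\lambda$'s or $\rho$'s with $\sigma$'s again produce $\lambda$'s, $\rho$'s, and $\sigma$'s (the $(a,b,c)$ associator terms being elements $\lambda((a,b,c))$ resp. $\rho((a,b,c))$); and brackets of $\sigma$'s with $\sigma$'s are reduced to the previous cases via the Jacobi identity, writing $\sigma(a,b) = [\lambda(a),\rho(b)]$ and expanding. Hence the linear span of all $\lambda(a)$, $\rho(b)$, $\sigma(a_i,b_i)$ is a Lie subalgebra of $\mathcal T(D)$ containing the generators of $\mathcal T_0$, so it equals $\mathcal T_0$, which gives the "if and only if" characterization.

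The main obstacle I anticipate is purely computational bookkeeping in the componentwise verification of \eqref{lambda_action}: the second component of $\sigma(a,b)$ carries both an $R_{[a,b]}$ term and a $[L_a,R_b]$ term, and $\lambda(c)$ acts there by $L_c + R_c$, so several applications of \eqref{alt_id_2} and \eqref{alt_id_4} get intertwined, and one must be careful with signs and with the fact that $[R_c, R_{[a,b]}] = -R_{[c,[a,b]]} - 2[L_c,R_{[a,b]}]$ versus the analogous $L$-identity. There is no conceptual difficulty, but the sign tracking is the part most likely to require care; exploiting the opposite-algebra symmetry to cut the work roughly in half is the natural way to keep it manageable.
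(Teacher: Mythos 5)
Your proposal is correct and follows essentially the same route as the paper: verify the bracket identities \eqref{lambda_action} and \eqref{rho_action} componentwise via the alternative-algebra identities \eqref{alt_id_1}--\eqref{alt_id_4}, then conclude that the span of the $\lambda(a)$, $\rho(b)$, $\sigma(a,b)$ is a Lie subalgebra of $\mathcal T(D)$ containing the generators of $\mathcal T_0$, hence equals $\mathcal T_0$. The only stylistic difference is that the paper computes just the first and third components of $[\lambda(c),\sigma(a,b)]$ and invokes the fact that two components of a triality determine the third, whereas you propose verifying all three directly (and using the opposite-algebra symmetry to shorten the dual cases), which works equally well.
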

\begin{proof} 
It is enough to show that the identities (\ref{lambda_action})  and (\ref{rho_action}) hold. It follows then that the submodule of $\mathcal T_0$ spanned by all $\lambda(a)$, $\rho(b)$ and $\sigma(a, b)$ is a subalgebra which contains the generators of $\mathcal T_0,$ i.e., this submodule is already all of $\mathcal T_0.$
Recall the identities (\ref{alt_id_1}) to (\ref{alt_id_4}) which will frequently be used in this proof. 
The components of 
$[\rho(a), \rho(b)] = [(R_a, -R_a, L_a + R_a), (R_b, -R_b, L_b + R_b)]$ are
\begin{eqnarray*}
t_1 = [R_a, R_b] &=& -2 [L_a, R_b]-R_{[a, b]}  \\ 
t_2 = -2 [L_a, R_b]-R_{[a, b]} &=& -2 [L_a, R_b]- 2 R_{[a, b]} + R_{[a, b]}\\
t_3 =  [L_a + R_a, L_b + R_b] &=&  -2 [L_a, R_b] + L_{[a, b]} - R_{[a, b]}
\end{eqnarray*}
which follows by (\ref{alt_id_1}) and (\ref{alt_id_2}). 
Thus $[\rho(a), \rho(b)] = -2 \sigma(a, b) - \rho([a, b])$
and similarly we show  $[\lambda(a),\lambda(b) ] = -2\sigma(a,b) + \lambda([a,b]).$\\
If  
$(s_1, s_2, s_3) = [\lambda(c), \sigma(a, b)] ,$
then by (\ref{alt_id_3}) and (\ref{alt_id_4}):
\begin{eqnarray*}
s_1 = [L_c, [L_a, R_b]] &=&  [L_{[a, b]}, R_c] - L_{(a,b, c)}\\ 
s_3 = -[L_c , -L_{[a, b]} + [L_a, R_b]] &=&  [L_c , L_{[a, b]}] - [L_c, [L_a, R_b]]\\
&=& L_{(a, b, c)} - [L_{[a, b]}, R_c] + L_{[[a, b]c]} + 2[L_{[a, b]}, R_c] \\
&=&  [L_{[a, b]}, R_c] + L_{[[a, b]c]} + L_{(a, b, c)}.
\end{eqnarray*}
Since the first and the third component of a triality uniquely determine the second component, it follows that $[\lambda(c), \sigma(a, b)] = \sigma([a, b], c) - \lambda((a, b,c ))$ and similarly for $\lambda(c)$ replaced by $\rho(c).$
\end{proof}

The following proposition should be known, but, lacking a reference, we include a proof. 
\begin{prop} A triality $(\Delta, \Delta, \Delta)$ is inner if and only if $\Delta$ is an inner derivation. The inner derivations embed into $\mathcal T_0$ using the map $\Delta \mapsto (\Delta, \Delta, \Delta).$ 
\label{inner_der_is_inner_trial_prop}
\end{prop}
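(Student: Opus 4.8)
The plan is to show the two implications of the stated equivalence and then observe that the embedding claim is an immediate consequence. Recall from Definition~\ref{inner_trial_def} that $\mathcal T_0$ is the subalgebra of $\mathcal T(D)$ generated by all $\lambda(a)$ and $\rho(b)$, and that by the preceding lemma every element of $\mathcal T_0$ has the normal form $\lambda(a)+\rho(b)+\sum_i\sigma(a_i,b_i)$. On the other side, by Definition following Proposition~\ref{lopera_der_proposition} and the discussion in Section~\ref{derivations_section}, an inner derivation of $D$ is an element of $\End_k(D)$ of the shape $L_a-R_a+\sum_i[L_{a_i},R_{b_i}]$ with $3a+\sum_i[a_i,b_i]=0$.

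First I would prove the ``only if'' direction: suppose $(\Delta,\Delta,\Delta)\in\mathcal T_0$. Write it in normal form $(\Delta,\Delta,\Delta)=\lambda(a)+\rho(b)+\sum_i\sigma(a_i,b_i)$ for suitable $a,b,a_i,b_i\in D$. Using the explicit component formulas from Example~\ref{rho_lambda_expls} (namely $\lambda(a)=(L_a,\,L_a+R_a,\,-L_a)$, $\rho(b)=(R_b,\,-R_b+R_{[\,]}\dots$, and $\sigma(a_i,b_i)=([L_{a_i},R_{b_i}],[L_{a_i},R_{b_i}],[L_{a_i},R_{b_i}])+(0,R_{[a_i,b_i]},-L_{[a_i,b_i]})$), I compare the first component with the second (or third). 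The first component reads $\Delta=L_a+R_b+\sum_i[L_{a_i},R_{b_i}]$ and the third reads $\Delta=-L_a+R_b+L_b+\sum_i[L_{a_i},R_{b_i}]-\sum_i L_{[a_i,b_i]}$. Subtracting, one gets a relation forcing $2L_a+L_b-\sum_iL_{[a_i,b_i]}=0$ as an operator, and a similar comparison with the second component gives $2R_b-R_a+\sum_i R_{[a_i,b_i]}=0$; plugging these back and using that $\lambda,\rho,\sigma$ are trialities lets me rewrite $\Delta$ in the form $L_c-R_c+\sum_i[L_{a_i},R_{b_i}]$ for a single element $c$ with $3c+\sum_i[a_i,b_i]=0$, i.e.\ $\Delta$ is inner. (Equivalently, one can use that $(\Delta,\Delta,\Delta)$ being a triality with $\Delta(1)=0$ by Lemma~\ref{inner_trial_char_zero}, and applying all three components to $1$, pins down the scalar relations directly.) The key bookkeeping is tracking the coefficient of $L$-operators versus $R$-operators versus commutators; the alternative identities \eqref{alt_id_1}--\eqref{alt_id_2} are used to convert $[R_{a_i},R_{b_i}]$ and $[L_{a_i},L_{b_i}]$ terms into the standard $[L,R]$ form.

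Next I would prove the ``if'' direction: given an inner derivation $\Delta=L_c-R_c+\sum_i[L_{a_i},R_{b_i}]$ with $3c+\sum_i[a_i,b_i]=0$, I must exhibit $(\Delta,\Delta,\Delta)$ as a combination of $\lambda$'s, $\rho$'s and $\sigma$'s. Consider $T:=\lambda(c)-\rho(c)+\sum_i\sigma(a_i,b_i)\in\mathcal T_0$. Its first component is $L_c-R_c+\sum_i[L_{a_i},R_{b_i}]=\Delta$. Its third component is $-L_c-R_c-L_c+R_c+\sum_i[L_{a_i},R_{b_i}]-\sum_iL_{[a_i,b_i]}$; wait—I recompute carefully using Example~\ref{rho_lambda_expls}: the third component of $\lambda(c)$ is $-L_c$, of $-\rho(c)$ is $-(L_c+R_c)$, and of $\sum_i\sigma(a_i,b_i)$ is $\sum_i[L_{a_i},R_{b_i}]-\sum_iL_{[a_i,b_i]}$, giving third component $-2L_c-R_c+\sum_i[L_{a_i},R_{b_i}]-\sum_iL_{[a_i,b_i]}$. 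For this to equal $\Delta=L_c-R_c+\sum_i[L_{a_i},R_{b_i}]$ I need $-2L_c-\sum_iL_{[a_i,b_i]}=L_c$, i.e.\ $L_{3c+\sum_i[a_i,b_i]}=0$, which holds by hypothesis. A symmetric check on the second component uses \eqref{alt_id_2} and the same relation. Hence $T=(\Delta,\Delta,\Delta)$, proving $(\Delta,\Delta,\Delta)\in\mathcal T_0$.

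Finally, the embedding statement: by Remark~\ref{der_is_trial_remark} the map $\Delta\mapsto(\Delta,\Delta,\Delta)$ is an injective Lie algebra homomorphism $\mathrm{Der}(D)\hookrightarrow\mathcal T(D)$, and the two implications just proved show that its restriction to $\mathrm{IDer}(D)$ has image exactly $\mathcal T_0\cap\{(\Delta,\Delta,\Delta)\}$, so $\mathrm{IDer}(D)$ embeds into $\mathcal T_0$ via this map. The main obstacle I anticipate is purely computational: getting the coefficients right in the normal-form comparison, especially keeping the $3c$ (not $2c$ or $c$) in the nucleus condition consistent with the $-2$ appearing in \eqref{rho_action} and the $+3$ in the definition of standard derivations; I would double-check by evaluating all components on the identity element $1$, where $L_x(1)=R_x(1)=x$ and commutators vanish, which reduces the operator identities to the single scalar identity $3c+\sum_i[a_i,b_i]=0$ and makes the correspondence transparent.
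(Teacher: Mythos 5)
Your proposal follows essentially the same route as the paper: for the ``only if'' direction you compare the three components of the normal form $\lambda(a)+\rho(b)+\sum_i\sigma(a_i,b_i)$ (and, as you observe, the cleanest way is to evaluate each on $1$, which is exactly what the paper does, yielding $a=-b$ and $3a+\sum_i[a_i,b_i]=0$), and for the converse you exhibit the same preimage $\lambda(c)-\rho(c)+\sum_i\sigma(a_i,b_i)$ and verify that its components all equal $\Delta$. The only quibble is a pair of sign slips in your stated intermediate operator relations (they should read $-2L_a+L_b-\sum_iL_{[a_i,b_i]}=0$ and $R_a-2R_b+\sum_iR_{[a_i,b_i]}=0$), but these do not affect the argument since the subsequent evaluation at $1$ produces the correct element equations.
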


\begin{proof}
Assume that $\Delta$ is a derivation such that $(\Delta, \Delta,\Delta)$ is an inner triality. 
Then there are  elements $a, b$, $a_i$,$b_i$, $1 \leq i \leq n$ in $D$ such that 
\begin{eqnarray*}
L_a + R_b +  \sum_{i = 1}^n [L_{a_i}, R_{b_i}] &=& \Delta\\
L_a + R_a -R_b + \sum_{i}  R_{[a_i,b_i]} + \sum_{i = 1}^n [L_{a_i}, R_{b_i}]&=& \Delta\\
-L_a + R_b + L_b + \sum_{i = 1}^n-L_{[a_i,b_i]} +  [L_{a_i}, R_{b_i}]   &=& \Delta
\end{eqnarray*}
Note that such a $\Delta$ is automatically a Lie multiplication derivation. 
Since $\Delta$ is a derivation, we get $\Delta(1) = 0.$ This implies 
\begin{eqnarray*}
a + b  &=& 0\\
2a -b + \sum_{i = 1}^n  [a_i,b_i]  &=& 0\\
-a + 2b  - \sum_{i = 1}^n [a_i,b_i]    &=& 0
\end{eqnarray*}
This gives $a = -b$
and the second and the third equation are equivalent to
\begin{eqnarray*}
3b -  \sum_{i}[a_i,b_i]&=& 0 \\	
\end{eqnarray*}
Thus
\begin{eqnarray*}
a   &=& -b\\
3a + \sum_{i}  [a_i,b_i]  &=& 0\\
\end{eqnarray*}
Therefore  $\Delta = L_a - R_a +  \sum[L_{a_i}, R_{b_i}])$ with $3a + \sum_{i}  [a_i,b_i]  = 0$ which means that $\Delta$ is inner. 
 Conversely, if $\Delta = L_a -R_a + \sum([L_{a_i}, R_{b_i}])$ is such that $3a + \sum[a_i, b_i] = 0,$ then  $\Delta$ can be obtained as any component of the inner triality
$$ (t_1, t_2 , t_3) = \lambda(a) -\rho(a) + \sum \sigma( a_i,  {b_i}). $$ 
Indeed, 
$t_1  = L_a -R_a + \sum([L_{a_i}, R_{b_i}])$, $t_2 = L_a + R_a + R_a + \sum L_{[a_i, b_i]} + \sum([L_{a_i}, R_{b_i}]) = 
L_a + R_{2a} - R_{3a} + \sum([L_{a_i}, R_{b_i}]) = \Delta$. Since a triality is uniquely determined by any two of its components, it follows that $t_3 = \Delta$ as well. 
Therefore $(t_1, t_2,t_3)$ is an inner triality.  The Lie product of two inner derivations is again an inner derivation. hence the map $\Delta \rightarrow (\Delta, \Delta,\Delta)$ is a Lie algebra monomorphism onto the submodule of diagonal elements in $\mathcal T_0.$
\end{proof}

\begin{defi}  Assume that $D$ is an alternative algebra with involution $\bar{\;}.$ Define a $k$-linear map on $\mathrm{End}_k( D)$ by 
$$\bar{T}(x) = \overline{T(\bar x)} $$
where $\bar{\;}$ is the involution on $D.$ 
\end{defi}
\begin{lem}\label{Lie_alg_auto_from_alt_involution} 
The map $T \mapsto \bar{T}$ is a Lie algebra automorphism $\mathrm{End}_k(D) \rightarrow \mathrm{End}_k(D)$. Moreover, if $\Delta$ is a derivation, then so is $\overline \Delta$ and if $(t_1, t_2, t_3)$ is a triality, then $(\bar t_1, \bar t_3, \bar t_2)$ is a triality as well. 
\end{lem}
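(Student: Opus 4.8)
The statement has two parts: first that $T \mapsto \bar T$ is a Lie algebra automorphism of $\End_k(D)$, and second that it carries derivations to derivations and trialities $(t_1,t_2,t_3)$ to trialities $(\bar t_1, \bar t_3, \bar t_2)$. The plan is to verify each part by direct computation, using only that $\bar{\;}$ is an involution on $D$, i.e. a $k$-linear antihomomorphism of order $2$: $\overline{xy} = \bar y\,\bar x$ and $\bar{\bar x} = x$.

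First I would check that $T \mapsto \bar T$ is $k$-linear, which is immediate from $k$-linearity of $\bar{\;}$ and of $T$. Next I would show it is an involution of $\End_k(D)$: for $x \in D$, $\overline{\overline{T}}(x) = \overline{\bar T(\bar x)} = \overline{\overline{T(\bar{\bar x})}} = \overline{\overline{T(x)}} = T(x)$, so $\overline{\overline{T}} = T$; in particular $T \mapsto \bar T$ is bijective. For multiplicativity with respect to composition, for $S, T \in \End_k(D)$ and $x \in D$ we have $\overline{S}\,\overline{T}(x) = \overline{S}(\overline{T(\bar x)}) = \overline{S(\overline{\overline{T(\bar x)}})} = \overline{S(T(\bar x))} = \overline{(ST)(\bar x)} = \overline{ST}(x)$, so $\overline{ST} = \bar S\,\bar T$. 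Since $\overline{S+T} = \bar S + \bar T$ as well, it follows that $\overline{[S,T]} = \overline{ST} - \overline{TS} = \bar S\bar T - \bar T\bar S = [\bar S, \bar T]$, so $T \mapsto \bar T$ is a Lie algebra automorphism of $\End_k(D)^-$, and simultaneously an associative algebra automorphism of $\End_k(D)$.

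For the statement about derivations, let $\Delta$ be a derivation of $D$. Then for $x, y \in D$,
\begin{align*}
\overline{\Delta}(xy) &= \overline{\Delta(\overline{xy})} = \overline{\Delta(\bar y\,\bar x)} = \overline{\Delta(\bar y)\bar x + \bar y\,\Delta(\bar x)} \\
&= \overline{\bar x}\,\overline{\Delta(\bar y)} + \overline{\Delta(\bar x)}\,\overline{\bar y} = x\,\overline{\Delta}(y) + \overline{\Delta}(x)\,y,
\end{align*}
so $\overline{\Delta}$ is again a derivation. For trialities, suppose $(t_1,t_2,t_3)$ is a triality, so $t_1(ab) = t_2(a)b + a t_3(b)$ for all $a, b \in D$. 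I would like to show $(\bar t_1, \bar t_3, \bar t_2)$ satisfies $\bar t_1(ab) = \bar t_3(a)b + a\,\bar t_2(b)$. Compute, for $a, b \in D$, putting $x = \bar b$, $y = \bar a$ so that $\overline{ab} = \bar b\,\bar a = xy$:
\begin{align*}
\bar t_1(ab) &= \overline{t_1(\overline{ab})} = \overline{t_1(\bar b\,\bar a)} = \overline{t_2(\bar b)\,\bar a + \bar b\,t_3(\bar a)} \\
&= \overline{\bar a}\,\overline{t_2(\bar b)} + \overline{t_3(\bar a)}\,\overline{\bar b} = a\,\bar t_2(b) + \bar t_3(a)\,b,
\end{align*}
which is exactly the triality identity for $(\bar t_1, \bar t_3, \bar t_2)$. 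I do not expect any real obstacle here: the whole argument is a bookkeeping exercise in moving the bar through products and reversing order, and the swap of the second and third components is forced precisely by the antihomomorphism property $\overline{xy} = \bar y\,\bar x$. The only point requiring a little care is keeping track of which variable plays which role after substituting $x = \bar b$, $y = \bar a$, so that the resulting identity is stated with the arguments in the original order $a, b$ rather than $b, a$.
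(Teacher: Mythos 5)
Your proof is correct and follows essentially the same route as the paper: a direct computation unwinding $\bar T(x)=\overline{T(\bar x)}$ through the antihomomorphism property of the involution, with the only cosmetic difference being that you first establish $\overline{ST}=\bar S\,\bar T$ and deduce bracket-compatibility from it, while the paper computes $[\bar T,\bar S]=\overline{[T,S]}$ in one go.
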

\begin{proof}
Let $T, S \in \mathrm{End}_k(D)$ and $a \in D.$ Then
\begin{eqnarray*}
& & [\bar T, \bar S](x) = \bar T(\bar S(x)) - \bar S (\bar T (x))
= \bar T (\overline{S(\bar x)}) - \bar S (\overline{T(\bar x)})  \\
&=& \overline{T(S(\bar x))} - \overline{S(T(\bar x))} 
= \overline{T(S(\bar x)) - S(T(\bar x))}
= \overline{[T, S]}(x).
\end{eqnarray*}
This proves that $\bar{\;}$ is a Lie algebra homomorphism. 
Moreover,  $\overline{\bar T}(x) = \overline{\overline {T(\overline{\bar x})}}= T(x)$ so that $\bar{\;}$ is of order $2$ and hence in particular invertible. 
If $\Delta$ is a derivation, then  \\
$\overline{\Delta}(xy) = \overline{\Delta(\bar y)\bar x + \bar y \Delta(\bar x)} = \overline{\Delta}(x) y + x \overline{\Delta}(y),$ thus $ \overline{\Delta}$ is a derivation.
If $(t_1, t_2, t_3)$ is a triality, then
\begin{eqnarray*}
\overline{t_1}(xy) &=& \overline{t_1(\bar y \bar x)} \\
&=& \overline{t_2(\bar y) \bar x +  \bar y t_3(\bar x)} \\
&= & x \overline{t_2}(y) +  \overline{t_3}(x)y
\end{eqnarray*}  
which proves that $(\bar t_1, \bar t_3, \bar t_2)$ is a triality. 
\end{proof}

\begin{lem} Assume that $1/3 \in k$ and let $\mathrm{Der}(D)$ be the Lie algebra of derivations of the alternative algebra $D$ and let $\mathcal T$ be the triality algebra of $D$. Then there is an isomorphism of $k$-modules
$$ h : \mathrm{Der}(D) \oplus D^2 \rightarrow \mathcal T, (\Delta,a,b) \mapsto (\Delta, \Delta, \Delta) + \lambda(a) - \rho(b) . $$
The inverse is given by $$ h^{-1} : (t_1, t_2, t_3) \mapsto (t_1 - L_a + R_b, a, b) $$
where $3a =2 t_2(1) + t_3(1)$, $3 b =  -t_2(1) - 2t_3(1).$\\
This map induces an isomorphism  $\mathrm{StanDer}(D) \oplus D^2 \rightarrow \mathcal T_0$ where $\mathcal T_0$ denotes the inner trialities. 
\label{trial_der_alt_lemma} \label{inner_Der_inner_trial}
\end{lem}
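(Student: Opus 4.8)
The statement asserts that, for an alternative unital algebra $D$ over a ring $k$ with $1/3 \in k$, the $k$-linear map
$$h : \mathrm{Der}(D) \oplus D^2 \to \mathcal T(D), \qquad (\Delta, a, b) \mapsto (\Delta, \Delta, \Delta) + \lambda(a) - \rho(b)$$
is an isomorphism of $k$-modules, with the prescribed inverse, and that it restricts to an isomorphism $\mathrm{StanDer}(D) \oplus D^2 \to \mathcal T_0$. The overall strategy is the standard one for maps of this kind: verify that $h$ is well-defined into $\mathcal T(D)$, write down an explicit candidate inverse, and check both composites are the identity; the work is then essentially bookkeeping with the formulas from Example~\ref{rho_lambda_expls} and Lemma~\ref{inner_trial_char_zero}.

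First I would confirm $h$ lands in $\mathcal T(D)$: by Remark~\ref{der_is_trial_remark} every $(\Delta,\Delta,\Delta)$ with $\Delta$ a derivation is a triality, and by Example~\ref{rho_lambda_expls} both $\lambda(a)$ and $\rho(b)$ are trialities, so the sum is a triality since $\mathcal T(D)$ is a submodule of $\mathrm{End}_k(D)^3$. Next, given an arbitrary triality $(t_1,t_2,t_3)$, I would compute the values at $1$. From the triality identity $t_1(xy) = t_2(x)y + x t_3(y)$ with $x = y = 1$ one gets $t_1(1) = t_2(1) + t_3(1)$; more usefully, applying the definitions of $\lambda, \rho$ to $1$ and using $L_1 = R_1 = \mathrm{id}$, the triality $(\Delta,\Delta,\Delta) + \lambda(a) - \rho(b)$ has second component evaluating at $1$ to $\Delta(1) + (L_a + R_a)(1) - (-R_b)(1) = 0 + 2a + b$ and third component at $1$ to $\Delta(1) + (-L_a)(1) - (L_b + R_b)(1) = -a - 2b$. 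Solving the linear system $2a + b = t_2(1)$, $a + 2b = -t_3(1)$ (whose determinant is $3$, invertible in $k$) yields $3a = 2t_2(1) + t_3(1)$ and $3b = -t_2(1) - 2t_3(1)$, exactly the formulas in the statement. Then I would set $\Delta := t_1 - L_a + R_b$ (equivalently the first component of $(t_1,t_2,t_3) - \lambda(a) + \rho(b)$) and argue $\Delta$ is a derivation: the triple $(t_1,t_2,t_3) - \lambda(a) + \rho(b)$ is a triality all of whose components vanish at $1$, because subtracting $\lambda(a)$ and adding $\rho(b)$ is precisely engineered to cancel the values computed above — one checks the first component at $1$ is $t_1(1) - a - b = (t_2(1)+t_3(1)) - a - b = 0$ using $3a+3b = (2t_2(1)+t_3(1)) + (-t_2(1)-2t_3(1)) = t_2(1) - t_3(1)$... more cleanly, from $2a+b = t_2(1)$ and $a+2b = -t_3(1)$ we get $a + b = \tfrac13(t_2(1) - t_3(1))$, hmm, let me instead just note $3(a+b) = (2t_2(1)+t_3(1)) + (-t_2(1)-2t_3(1)) = t_2(1) - t_3(1)$, whereas $t_1(1) = t_2(1)+t_3(1)$; so I must double-check signs, but in any event Lemma~\ref{inner_trial_char_zero} then forces all three components of the shifted triple to coincide and to be a derivation, which identifies the shifted triple as $(\Delta,\Delta,\Delta)$. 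This gives $h(h^{-1}(t_1,t_2,t_3)) = (t_1,t_2,t_3)$, and $h^{-1}(h(\Delta,a,b)) = (\Delta,a,b)$ follows by the same value-at-$1$ computation applied in reverse. Linearity of both maps is immediate.

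For the restriction to inner objects: by Definition~\ref{inner_trial_def} the inner triality algebra $\mathcal T_0$ is generated by all $\lambda(a)$ and $\rho(b)$, and by Proposition~\ref{inner_der_is_inner_trial_prop} together with the relations $[\lambda(a),\lambda(b)] = -2\sigma(a,b) + \lambda([a,b])$, $[\rho(a),\rho(b)] = -2\sigma(a,b) - \rho([a,b])$, and the $\lambda(c),\rho(c)$ actions on $\sigma(a,b)$, every element of $\mathcal T_0$ can be written as $\lambda(a) + \rho(b) + \sum_i \sigma(a_i, b_i)$. Since when $1/3 \in k$ Proposition~\ref{lopera_der_proposition}(iv) gives $\mathrm{IDer}(D) = \mathrm{StanDer}(D)$ and a standard derivation $SD(a,b) = L_{[a,b]} - R_{[a,b]} + 3[L_a,R_b]$ sits inside $\mathcal T_0$ as the diagonal part of $\sigma(a,b)$ scaled appropriately (from Example~\ref{rho_lambda_expls}, $\sigma(a,b) = ([L_a,R_b],[L_a,R_b],[L_a,R_b]) + (0, R_{[a,b]}, -L_{[a,b]})$, so $3\sigma(a,b) - \rho([a,b]) + \lambda([a,b])$ has diagonal part $SD(a,b)$ up to the nuclear correction), one sees that under $h^{-1}$ the diagonal derivation component of any inner triality lies in $\mathrm{StanDer}(D)$, and conversely $h$ maps $\mathrm{StanDer}(D) \oplus D^2$ into $\mathcal T_0$ because $(\Delta,\Delta,\Delta)$ for $\Delta \in \mathrm{StanDer}(D)$ is inner by Proposition~\ref{inner_der_is_inner_trial_prop}. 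Matching the two inclusions with the already-established bijectivity of $h$ gives the restricted isomorphism.

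\textbf{Main obstacle.} The genuinely delicate point is the sign bookkeeping in the value-at-$1$ computation and in identifying the diagonal part of $\sigma(a,b)$ with a standard derivation modulo the nucleus; one must be careful that the relation $3a = 2t_2(1) + t_3(1)$, $3b = -t_2(1) - 2t_3(1)$ is exactly compatible with the triality constraint $t_1(1) = t_2(1) + t_3(1)$ so that the shifted triple really does vanish at $1$ in all components (and not just in the second and third), which is what licenses the appeal to Lemma~\ref{inner_trial_char_zero}. If the linear algebra did not close up consistently the whole map would fail to be well-defined, so this compatibility check — rather than any deep structural fact — is where care is required.
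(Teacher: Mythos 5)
Your core bijectivity argument is correct and genuinely cleaner than the paper's. The paper establishes well-definedness of $h^{-1}$ by directly expanding $(t_1 - L_a + R_b)(xy)$ — a roughly ten-line alternative-identity calculation — whereas you observe that the shifted triple $(s_1, s_2, s_3) := (t_1, t_2, t_3) - \lambda(a) + \rho(b)$ satisfies $s_2(1) = s_3(1) = 0$ by the defining formulas for $a, b$, and $s_1(1) = s_2(1) + s_3(1) = 0$ by the triality constraint $t_1(1) = t_2(1) + t_3(1)$; Lemma~\ref{inner_trial_char_zero} then forces $s_1 = s_2 = s_3$, and a diagonal triality is a derivation by Remark~\ref{der_is_trial_remark}. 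Amusingly, the paper itself invokes exactly this device later when handling the inner case, so the long calculation is redundant; your version eliminates it. One arithmetic slip in your flow: $s_1(1) = t_1(1) - a + b$, not $t_1(1) - a - b$, and the relevant combination is $a - b$ rather than $a + b$; since $3(a - b) = (2t_2(1) + t_3(1)) - (-t_2(1) - 2t_3(1)) = 3(t_2(1) + t_3(1)) = 3t_1(1)$, one gets $a - b = t_1(1)$ and hence $s_1(1) = 0$ exactly. You correctly identify the conceptual reason (the compatibility constraint) in your obstacle discussion, so nothing is broken — but the sign bookkeeping should be redone carefully in a final version.

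The inner restriction is the one place to tighten. Tracking coefficients of $\sigma(a,b)$ and arguing ``up to the nuclear correction'' is imprecise and unnecessary. The clean route — which is also the paper's — is to observe that for an inner triality $(T_1, T_2, T_3)$ the shifted triple $(T_1, T_2, T_3) - \lambda(a) + \rho(b)$ still lies in $\mathcal T_0$, because $\mathcal T_0$ contains all $\lambda(a)$ and $\rho(b)$ by definition. Having shown it is diagonal $(\Delta, \Delta, \Delta)$ by the value-at-$1$ argument, Proposition~\ref{inner_der_is_inner_trial_prop} applies directly and gives that $\Delta$ is an inner derivation, hence standard since $1/3 \in k$ by Proposition~\ref{lopera_der_proposition}. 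Combined with your correct observation that $h(\mathrm{StanDer}(D) \oplus D^2) \subset \mathcal T_0$, this gives the restricted isomorphism without any coefficient chasing.
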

\begin{proof} 
By Remark~\ref{der_is_trial_remark} and Example~\ref{rho_lambda_expls}, the  map $h$ is well-defined.
The formulas for $a$ and $b$ are equivalent to $t_2(1) = 2a + b$ and $t_3(1) = -a - 2b.$ Also note that for every triality $(t_1, t_2, t_3)$ we have $t_1(x) = t_2(x) + xt_3(1)$ and $t_1(y) =  t_2(1)y + t_3(y)$ so that 
 \begin{equation}t_2 = t_1 + R_{a + 2b} \mbox{ and } t_3(y) = t_1 - L_{2a +b} \label{t_2_t_3in_terms_oft_1}.\end{equation}
\begin{eqnarray*} (t_1 - L_a + R_b)(xy) &=& t_2(x)y + xt_3(y) - L_a(xy) + R_b(xy)\\
&=&  (t_1(x) + R_{a + 2b}x)y + x(t_1(y) - L_{2a +b}y)- L_a(xy) + R_b(xy)\\
&=& t_1(x)y +xt_1(y ) +(R_bx)y -  x(L_a y) \\
&&+ (R_ax)y  + (R_bx)y - x(L_ay) - x(L_by)- L_a(xy) + R_b(xy)\\
&=& t_1(x)y +xt_1(y ) +(R_bx)y  - (L_ax)y  + x(R_by) - x(L_a y) \\
&&+ (R_ax)y  + (R_bx)y - x(L_ay) - x(L_by) \\&&- L_a(xy) + R_b(xy) - ( -(L_ax)y + x(R_by))\\
&=& t_1(x)y +xt_1(y ) +(R_bx)y  - (L_ax)y-  + x(R_by) + x(L_a y) \\
&&+ (R_ax)y - x(L_ay)  + (R_bx)y  - x(L_by) \\ &&- L_a(xy)+ (L_ax)y  + R_b(xy) -  x(R_by)\\
&=& t_1(x)y +xt_1(y ) +(R_bx)y  - (L_ax)y-  + x(R_by) - x(L_a y) \\
&&+ (x,a,y) +(x,b y)+ (a,x,y) + (x,y, b)\\
&=&(t_1(x)  - (L_ax)  +(R_bx))y  +x(t_1(y )   + (R_by) - (L_a y) )
\end{eqnarray*}  
Thus $t_1 - L_a + R_b$ is a derivation which proves that $h^{-1}$ is well-defined.  \\
Let $(t_1, t_2, t_3)$ be an inner triality. Then with $a$ and $b$ as in the formula for $h^{-1}$
\begin{eqnarray*}
hh^{-1}(t_1, t_2, t_3) &=& h(t_1 - L_a  + R_b,a, b) \\
&=& (t_1 - L_a + R_b + L_a + R_b,t_1 - L_a + R_b + L_a \\ &&+ R_{a+b}, t_1 - L_a + R_b - L_{a+b} - R_b ) \\
&=& (t_1, t_1 + R_{2a + b}, t_1 - L_{2a + b}) \\
&=& (t_1, t_2, t_3)
\end{eqnarray*}
by (\ref{t_2_t_3in_terms_oft_1}). \\
If $\Delta \in \mathrm{Der}(D)$ and $a, b \in D,$ then 
\begin{eqnarray*}
h^{-1}h(\Delta, a, b) &=& h^{-1}(\Delta + L_a - R_b, \Delta + L_a + R_{a+b}, \Delta - L_{a+b} - R_b).
\end{eqnarray*}
Since $(\Delta + L_a + R_{a+b})(1) = 2a + b$ and $(\Delta - L_{a+b} - R_b)(1) = - a - 2b,$ we obtain 
 $2(\Delta + L_a + R_{a+b})(1) + (\Delta - L_{a+b} - R_b)(1) = 3a$ and $ -(\Delta + L_a + R_{a+b})(1) - 2(\Delta - L_{a+b} - R_b)(1) = 3b$ and therefore
\begin{eqnarray*}
&& h^{-1}(\Delta + L_a - R_b, \Delta + L_a + R_{a+b}, \Delta - L_{a+b} - R_b)\\
&=& (\Delta + L_a -R_b - L_a + R_b, a, b) \\
&=& (\Delta, a, b) 
\end{eqnarray*}
Thus $h$ and $h^{-1}$ are mutually inverse to each other. We could define a Lie algebra structure on $\mathrm{Der}(D) \oplus D \oplus D$ by means of the isomorphism $h,$ but the formulas are not particularly useful for our purposes here. It is however obvious that this Lie bracket would be compatible with the Lie bracket on $\mathrm{Der}(D).$
Assume that $\Delta \in \mathrm{Der}(D)$ is an inner derivation. Since $1/3 \in k$, every inner derivation is a standard derivation and vice versa (Proposition~\ref{lopera_der_proposition}). Then the image of $\Delta$ in $\mathcal T_0$ is an inner triality, see Lemma~\ref{inner_Der_inner_trial}.
  Also,  for all $a, b \in D$, $\lambda(a) - \rho(b) =(L_a - R_b, L_a + R_{a + b}, - L_{a+b} - R_b)$ is an inner triality. Therefore, $h(\mathrm{IDer}(D) \oplus D^2) \subset \mathcal T_0.$ Let $(T_1, T_2, T_3)$ be an inner triality. Then $T_1 + L_a - R_b$ is a derivation (where, of course, $a$ and $b$ are as above) and we only have to show that it is inner.  Since $(T_1, T_2, T_3)$ is an inner triality, $(\Delta_1, \Delta_2, \Delta_3) = (T_1, T_2, T_3) - (L_a - R_b, L_a + R_{a + b}, - L_{a+b} - R_b)$ is also inner. It suffices to check, according to Lemma~\ref{inner_trial_char_zero}, that $\Delta_1(1)= \Delta_2(1)= \Delta_3(1) = 0.$ By triality, $\Delta_1(1)= \Delta_2(1) + \Delta_3(1)$ and $\Delta_2(1)= T_2(1) - 2a - b = 0$ as well as $\Delta_3(1) = T_3(1) +2b + a = 0 $ which proves that $\Delta_1 = T_1 - L_a + R_b$ is a derivation.  By Proposition~\ref{inner_der_is_inner_trial_prop}, $\Delta_1$ is an inner derivation. 
\end{proof}

\section{Lie algebras graded by $A_n$, $n \geq 2$}
\label{rectangular_coordinatization}

\begin{defi} \label{rect_defi}
Fix an index set $K$, $\card K \geq 3,$ a partition $I \cup J$ of $K$, and
a unital alternative $k$-algebra $D$, which is associative if $\card K \geq 4.$ The rectangular matrix Jordan pair $\mathbf M(I, J, D)$ of size $I \times J$ is the pair of modules
$$(V^+, V^-) = (\mathrm{Mat}(I, J, D),\mathrm{Mat}(J, I, D))$$
with quadratic operator given by $$Q_x(y) = xyx \label{quad_op_ass_pair}.$$ The triple products are
$$\{xyz\}  = x(yz) + z(yx), \quad \{yxu\}  = (yx)u + (ux)y, \label{triple_prod_ass_pair}$$
for $(x,y), (z,u) \in V.$
\end{defi}
The brackets are important for the case $\card K = 3.$
For the rest of the section $V =(V^+, V^-) := \mathbf{M}(I,J,D).$ \\
The root system $\dot A_{K}$ (see Example~\ref{type_A_expl} ) has a $3$-grading with
$$R_1 = \{\epsilon_i - \epsilon_j : i \in I, j \in J\}. $$

\begin{prop} \label{rect_grid_la_prop}
\begin{itemize}
\item[\rm(i)]The family $\mathcal E = \{(E_{ij}, E_{ji}) : i \in I, j \in J\}$ is an $(\dot A_{K}, R_1)$ covering grid.
\item[\rm(ii)] The Lie algebra $\TKK(V)$ is $\dot A_{K}$-graded.
\end{itemize}
\end{prop}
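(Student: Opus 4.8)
The plan is to verify the two assertions by checking the defining conditions against the known classification of $3$-graded root systems and the theory of grids developed in the preceding section. For part (i), I would first recall that $\dot A_K$ admits the $3$-grading with $R_1 = \{\epsilon_i - \epsilon_j : i \in I, j \in J\}$, and that for $\alpha = \epsilon_i - \epsilon_j$ and $\beta = \epsilon_m - \epsilon_n$ in $R_1$ the cog relation between $\alpha$ and $\beta$ is determined by the overlap of $\{i,j\}$ with $\{m,n\}$: they are orthogonal if $i \neq m$ and $j \neq n$, collinear if $i = m, j \neq n$ (or the symmetric case), and governs/governed-by does not occur here (this is the rectangular pattern $\dot A_K^J$ in the notation of \cite{lfrs}). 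I would then compute, for the concrete idempotents $e_{ij} = (E_{ij}, E_{ji})$, the Peirce decomposition of $V = \mathbf M(I,J,D)$: using $Q_x(y) = xyx$ one finds $Q_{E_{ij}}(E_{ji}) = E_{ij}$, so each pair is indeed an idempotent, and $D(E_{ij}, E_{ji})$ acts on a matrix unit $E_{mn}$ (with coefficient in $D$) as multiplication by $\delta_{im} + \delta_{jn}$. Hence $E_{mn}D \subset V^+_2(e_{ij})$ iff $i = m$ and $j = n$, lies in $V^+_1$ iff exactly one of $i=m$, $j=n$ holds, and lies in $V^+_0$ otherwise. This shows $e_{ij}\,\mathcal R\, e_{mn}$ (for the Jordan-pair cog relations $\bot, \top, \approx$) matches $\alpha\,\mathcal R\,\beta$ for the corresponding roots, giving condition (i) of Definition~\ref{grid_definition}. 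The joint Peirce space $V_\alpha = \bigcap_{\beta \in R_1} V_{\langle\alpha,\beta\che\rangle}(e_\beta)$ for $\alpha = \epsilon_i - \epsilon_j$ then works out to be exactly $E_{ij}D \cong D$, and since $V^\sigma = \mathrm{Mat}(I,J,D) = \bigoplus_{i\in I, j\in J} E_{ij}D$, condition (ii) $V = \sum_{\alpha\in R_1} V_\alpha$ holds. That establishes that $\mathcal E$ is a covering $(\dot A_K, R_1)$-grid.

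For part (ii), I would invoke Theorem~\ref{Root_graded_grid_theo}(iii): once $V$ is a Jordan pair covered by an $(R, R_1)$-grid, the Lie algebra $\TKK(V)$ is automatically $R$-graded. There is, however, a subtlety to address: Theorem~\ref{Root_graded_grid_theo} is stated over the base ring $k = k_R$, and for $R = \dot A_K$ of rank $\geq 2$ (which holds since $\card K \geq 3$) there is no irreducible component of type $C_I$, so $k_R = k$ is unrestricted and the theorem applies over any base ring. I would also need to confirm the hypothesis used in Theorem~\ref{Root_graded_grid_theo}(ii), namely that $[L_\alpha, L_{-\beta}] = 0$ whenever $\alpha \perp \beta$; but for part (iii) of that theorem, which is what we need, this follows from \cite[Theorem 2.5]{Neh1996} directly (that result gives $R$-gradedness of $\TKK(V)$ for any grid-covered Jordan pair without the orthogonality vanishing assumption). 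So the conclusion $\TKK(V)$ is $\dot A_K$-graded is immediate from the cited theorem applied to the grid constructed in part (i).

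The main obstacle I anticipate is purely bookkeeping: carefully matching the five cog relations of Definition~\ref{cog_relations} with the four possible relations between roots in $R_1$ given in the Proposition following Definition~\ref{3_grad_def}, and making sure the non-associativity of $D$ (in the case $\card K = 3$, where $D$ is merely alternative) does not disturb the Peirce computation. For $\card K \geq 4$ the algebra $D$ is associative and the identity $Q_x(y) = xyx$ behaves exactly as in the associative matrix case, so the Peirce arithmetic is standard. For $\card K = 3$ one has $|I| = 1$ or $|J| = 1$ (after possibly relabelling), so the matrices are row or column vectors and the products $x(yz) + z(yx)$ still make unambiguous sense; I would check that the alternative law suffices to compute $Q_{E_{1j}}(E_{j1}) = E_{1j}$ and the action of $D(E_{1j}, E_{j1})$ on the remaining components, which it does since all the relevant products involve at most two distinct matrix-unit "slots" and hence lie in an associative subalgebra generated by two elements of $D$. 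Beyond that, everything reduces to invoking \cite{Neh1996} via Theorem~\ref{Root_graded_grid_theo}, so no genuinely hard step remains.
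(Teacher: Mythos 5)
Your proposal is correct and follows essentially the same route as the paper: compute the Peirce decomposition explicitly for the idempotents $(E_{ij},E_{ji})$, match the cog relations to the root relations on $R_1$ (orthogonal or collinear, no governing), identify the joint Peirce spaces with $E_{ij}D$, and conclude the covering-grid property from the direct-sum decomposition $V^\pm=\bigoplus_{i,j}E_{ij}D$. One small refinement worth flagging: for the cases $i=2$ and $i=0$ the Peirce space is not simply the eigenspace of $D(E_{ij},E_{ji})$ unless $1/2\in k$ — one must also check $\im(Q_{E_{ij}})=E_{ij}D$ and $Q_{E_{ji}}(E_{mn}c)=0$ — but these conditions do hold for the matrix units, so the eigenvalue bookkeeping gives the right answer. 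For part (ii) you invoke Theorem~\ref{Root_graded_grid_theo}(iii), which requires only the grid and no hypothesis on orthogonal root spaces; the paper instead explicitly verifies $[V^+_\alpha,V^-_\beta]=0$ for $\alpha\perp\beta$ (the hypothesis of (ii) of that theorem), which is a valid but logically unnecessary extra step for the conclusion at hand, since for $L=\TKK(V)$ the $R$-gradedness asserted by (iii) already forces $L_{\alpha-\beta}=0$ when $\alpha-\beta\notin R$. So your route is slightly more economical while reaching the same place.
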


\begin{proof} This follows from \cite[3.3]{Neh1996} and \cite[2.7]{Neh1996}, but we include a proof for the convenience of the reader.  
Let $\alpha = \epsilon_i - \epsilon_j, \beta = \epsilon_k - \epsilon_l$ be distinct roots in $R_1.$ There are only two possible relations: Either $\alpha \perp \beta$ or $\alpha \top \beta.$
The relation $\alpha \perp \beta$ is equivalent to $i \neq k$ and $j \neq l.$ Thus by Definition \ref{quad_op_ass_pair}
\begin{eqnarray*}
\{E_{ij} E_{ji} E_{kl}\} = \{E_{kl} E_{lk} E_{ij}\} &=& 0,\\
\{E_{ji} E_{ij} E_{lk}\} = \{E_{lk} E_{kl} E_{ji}\} &=& 0,\\
Q_{E_{ij}}E_{lk} = Q_{E_{kl}}E_{ji} &=& 0,\\
Q_{E_{ji}}E_{kl} = Q_{E_{lk}}E_{ij} &=& 0.
\end{eqnarray*}
Thus, $e_\alpha = (E_{ij}, E_{ji})$ and $e_\beta = (E_{kl}, E_{lk})$ are orthogonal (by Definition~\ref{cog_relations}).\\
If $\alpha \top \beta,$ then, equivalently, either $i = k$ or $l =j.$
Thus \begin{eqnarray*}
\{E_{ij} E_{ji} E_{kl}\} &=& E_{kl}, \\
\{E_{ji} E_{ij} E_{lk}\} &=& E_{lk}, \\
\{E_{kl} E_{lk} E_{ij}\} &=& E_{ij}, \\
\{E_{lk} E_{kl} E_{ji}\} &=& E_{ji}. 
\end{eqnarray*}
Next, let $V_\alpha = \bigcap_{\beta \in R_1} V_{\langle \alpha, \beta \che \rangle }(e_\beta).$
We claim that $(E_{ij} D, E_{ji}D) \subset V_{\alpha} .$
Let $\beta = \epsilon_k - \epsilon_l \in R_1.$ If $\langle \alpha, \beta \che \rangle = 1,$ then $\alpha \top \beta.$ Therefore for all $c \in D$
\begin{eqnarray*}
\{E_{kl} E_{lk} E_{ij}c\} &=& E_{ij}c, \\
\{E_{lk} E_{kl} E_{ji}c\} &=& E_{ji}c, 
\end{eqnarray*}
and therefore $(E_{ij} D, E_{ji}D) \subset V_1(e_\beta).$\\
If $\langle \alpha, \beta \che \rangle = 2 ,$ then $\alpha =  \beta.$ For all $c \in D$
\begin{eqnarray*}
Q_{E_{ij}}E_{ji}c   &=& E_{ij}c,\\
Q_{E_{ij}}E_{ij}c   &=& E_{ji}c,
\end{eqnarray*}
and therefore $(E_{ij} D, E_{ji}D) \subset V_2(e_\alpha).$ The Peirce relations show in fact that $V_\alpha = (E_{ij}D, E_{ji}D)$. Thus $V= \sum_{\alpha  \in R_1}V_\alpha$ and hence by Definition~\ref{grid_definition}, the family $\mathcal E$ is a covering grid. 
Pick  $\alpha, \beta \in R_1$ such that $\alpha \perp \beta.$ Then $\alpha = \epsilon_i - \epsilon_j$, $\alpha = \epsilon_k - \epsilon_l$ with  $k \neq i$ and $j \neq l$. We denote by $e_\alpha a$ the elements $E_{ij}a$ and by $e_{-\alpha}b$ the element $E_{ji}b$ and similarly for $\beta.$\\  For $m \in I$, $n \in j$: 
$$\delta(e_\alpha a, e_{-\beta}b)(E_{mn}c) = \delta_{km}\delta_{jl}E_{in}abc + \delta_{ln}\delta_{ki}E_{mj} $$
which is $0$ since  $k \neq i$ and $j \neq l.$ Similarly on $V^-.$ Thus  $[V^{+}_\alpha, V^{-}_\beta] = \{0\},$ or, equivalently the inner derivations $\delta(e_\alpha a, e_{-\beta}b)$ are trivial. Theorem~\ref{Root_graded_grid_theo} now applies and gives that $\TKK(V)$ is $\dot A_K$-graded.  
\end{proof}

It is also known (see \cite[p.467]{Neh1996}) that the structure of $\TKK(V)$ and $\instr(V)$ only depends on the cardinality of $K$ and the algebra $D$. So from now on we can take \begin{equation} I = \{1\}, J = K\setminus \{1\}.\end{equation} 
In this case the grid covering the Jordan pair $V$ is the collinear grid and thus by \ref{deg_sum_in_defect_coll}, $\ker \ud \subset \uider(V)_0. $

By Lemma~\ref{iso_from_grid_lem},  the map $\ad e_{\beta}^{- \sigma}$ restricted to $V^{\sigma}_{\alpha}$ is an isomorphism onto $L_{\sigma(\alpha - \beta)}$ for all $\alpha, \beta \in R_1, \alpha \neq  \beta.$ Thus the following notation is well-defined.
\begin{defi} Let $\alpha  = \epsilon_1 - \epsilon_i \in R_1, \beta  = \epsilon_1 - \epsilon_j \in R_{1}$, $a, b \in D$, then for $\alpha - \beta  \neq 0$
\begin{eqnarray*}
E_{1i} (a) = x_{\alpha} (a) &:=& E_{1i} a ,\\
E_{j1}(b) = x_{-\beta}(b) &:=& E_{j1} b, \\
E_{ji} (a) = x_{\alpha - \beta}(a) & :=& -\delta (E_{1i}(a), E_{j1}(1)).
\end{eqnarray*}
\label{rectangular_matrix_notation}
\end{defi}
The multiplication in $\TKK(V)$ between elements of non-zero degree in $\mathcal Q(R)$  is given as follows for elements $i,j,k,l \in I $ with $i \neq j$, $ l \neq k$:
\begin{eqnarray*}
\left [E_{ij}a, E_{lk}b \right] &=& \delta_{j,l}E_{ik}(ab) - \delta_{ik}E_{lj}(ba), \quad \text{if} \;\card {\{i, j\} \cap \{k, l\}} \leq 1,\\
\left [E_{ij}a, E_{ij}b \right] &=& 0, \\
\left[E_{1j}a, E_{j1}b \right] & = & \delta(E_{1j}(a), E_{j1}(b)),\\
\left[E_{ij} b, E_{ji}a \right] &= & \delta(E_{1j}(b), E_{j1}(a) ) - \delta(E_{1i}(ba), E_{i1}(1) ),\\
\left [E_{1i}(a), E_{1j}(b) \right] &=& 0,\\
\left [E_{i1}(a), E_{j1}(b) \right] &=& 0.\\
\end{eqnarray*}
The Lie brackets listed above uniquely determine the bracket on all of $\TKK(V).$ 

\subsection*{Computing the kernel of $\uider(V)_0 \rightarrow \instr(V)_0$}

Since $L = \mathrm{TKK}(V)$ is $R$-graded, its universal central extension $u : \mathfrak{uce}(L) \rightarrow L$ is $\mathcal Q(R)$-graded and Proposition~\ref{torsion_bijection_prop} implies:
\begin{itemize}
\item[-] If $\card K = 3$, then 
\begin{equation} \label{A_2_in_description} \ker(u) = \bigoplus_{\alpha \in \mathbb{DS}(A_2, 3)} \mathfrak{uce}(L)_\alpha \oplus (\ker u)_0 \end{equation}
where $ \mathbb{DS}(A_2, 3)$ is the set of degenerate sums of divisor $3$ in $Q(A_2).$
\item[-] If $\card K = 4$, then \begin{equation} \label{A_3_in_description}
\ker(u) = \bigoplus_{\alpha \in \mathbb{DS}(A_3, 2)} \mathfrak{uce}(L)_\alpha \oplus (\ker u)_0 \end{equation}
where $ \mathbb{DS}(A_3, 2)$ is the set of degenerate sums of divisor $2$ in $Q(A_3).$
\item[-] If $\card K > 4$, then
$\ker u =  (\ker u)_0.$
\end{itemize}
Consider the central covering $\hat \ud : \uTKK(V) \rightarrow L.$ By the universal property of $\mathfrak{uce}(L),$ there is a unique epimorphism $ \hat u: \mathfrak{uce}(L) \rightarrow \uTKK(V)$ so that $u = \hat \ud \circ \hat u,$ and $\hat u: \mathfrak{uce}(L) \rightarrow \uTKK(V)$ is a universal central extension. The epimorphism $\hat u$ is automatically graded. 
 \begin{itemize}
\item[\rm{(i)}] $\hat u : \uce L \rightarrow \uTKK(V)$  is a $Q(\dot A_{K})$-graded central extension and a $\mathbb Z$-graded central extension,
 \item[\rm{(ii)}] $\hat u_\alpha$ is an isomorphism for all $\alpha \in \dot A_K$, in particular by Corollary~\ref{collinear_zero_hom_Cor}, Corollary~\ref{ker_ud_supporr_cor} and \ref{deg_sum_in_defect_coll}
 \begin{eqnarray*}
\uce L^0 & \cong & \uider(V)\\
(\ker u)_0 & \cong & \ker (\ud_{JP})   \label{zero_space_theo} 
\end{eqnarray*} 
under the isomorphism 
$$ f: \sum \langle x_i, y_i \rangle \mapsto \sum x_i \diamond y_i, \mbox{ for }(x_i, y_i)\in V, $$ 
where
$$\mathfrak{uider}(V)_0 = \spa_k\{ E_{1j}a \diamond E_{j1}b:  j \in J, a,b \in D \}  \label{collinear_zero_hom_eqn}$$ and
$\ud_{JP}(x \diamond y) = \delta(x,y).$\\

Here upper indices refer to the $\mathbb Z$-grading that is obtained from the $3$-grading on $R$, and lower indices refer to the $\mathcal Q(R)$-grading induced by the $R$-grading on $L.$
\end{itemize}
The result that $\hat u : \mathfrak{uce}(L)_0 \rightarrow \uider(V)_0 $ an isomorphism is part of Corollary~\ref{JKP_central_ext_kernel_cor}.

\begin{lem}  \label{uider_gen_lem_rect}
The module $\uider(V)_0$ is isomorphic to $\bigoplus_{j \in J}(V_{\epsilon_1 - \epsilon_j})^+ \otimes (V_{\epsilon_1 - \epsilon_j})^- $ modulo the relations (where $i \neq j \in  J$ and $a, b,c, d \in D$):
\begin{eqnarray}
&& E_{1j}(a(bc) + c(ba)) \otimes E_{j1}d + E_{1j}(c(da) + a(dc)) \otimes E_{j1}d \\
&&-  E_{1j}(c) \otimes E_{j1}((ba)d + (da)b) - E_{1j}(a) \otimes E_{j1}((bc)d + (dc)b), \nonumber \\
&& \nonumber \\
&& E_{1i}(a(bc)) \otimes E_{i1}d + E_{1j}(c(da)) \otimes E_{j1} b \\ 
&&- E_{1i}c \otimes E_{i1}((da)b) - E_{1j}a \otimes E_{j1}((bc)d), \nonumber \\
&& \nonumber \\
&& 2(E_{1j}(aba) \otimes E_{j1} b + E_{1j}a \otimes E_{j1}(bab)).
\end{eqnarray}
\end{lem}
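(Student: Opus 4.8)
The statement identifies $\uider(V)_0$ with an explicit quotient of $\bigoplus_{j\in J}(V^+_{\epsilon_1-\epsilon_j})\otimes(V^-_{\epsilon_1-\epsilon_j})$. The strategy is to start from the general description of $\uider(V)_\mu$ in Proposition~\ref{uider_QR_grading_lem}, namely $\uider(V)_0 = (V^+\otimes V^-)_0 / (A(0)+B(0))$, and then to use the fact that the Jordan pair $V=\mathbf M(\{1\},J,D)$ is covered by a \emph{collinear} grid (once we fix $I=\{1\}$). By Corollary~\ref{collinear_zero_hom_Cor}, in the collinear case $A(0)=\sum_{\alpha,\beta\in R_1}A(\alpha,\beta,\alpha,\beta)$ and $B(0)=\sum_{\alpha\in R_1}B(\alpha,\alpha)$, so the only relations surviving in degree $0$ are the ``diagonal'' ones $A(\alpha,\beta,\alpha,\beta)$ and $B(\alpha,\alpha)$. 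Also $(V^+\otimes V^-)_0=\bigoplus_{\alpha\in R_1}V^+_\alpha\otimes V^-_\alpha$ by the formula for $(V^+\otimes V^-)_\mu$, and with $\alpha=\epsilon_1-\epsilon_j$ the space $V^\sigma_\alpha$ is $E_{1j}D$ resp. $E_{j1}D$ by the computation in the proof of Proposition~\ref{rect_grid_la_prop}. This already gives the ambient module $\bigoplus_{j\in J}(V^+_{\epsilon_1-\epsilon_j})\otimes(V^-_{\epsilon_1-\epsilon_j})$; it remains to turn the abstract relations $A(\alpha,\beta,\alpha,\beta)$ and $B(\alpha,\alpha)$ into the three displayed families.

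First I would translate $B(\alpha,\alpha)$ for $\alpha=\epsilon_1-\epsilon_j$. By \eqref{quadratic_B}, $B(x,y)=\{xyx\}\otimes y - x\otimes\{yxy\}$; taking $x=E_{1j}a$, $y=E_{j1}b$ and using the triple product formulas of Definition~\ref{rect_defi} (so $\{xyx\}=x(yx)+x(yx)=2E_{1j}(aba)$, etc.) yields exactly the third family $2\bigl(E_{1j}(aba)\otimes E_{j1}b + E_{1j}a\otimes E_{j1}(bab)\bigr)$. Next I would treat $A(\alpha,\beta,\alpha,\beta)$. Recall $A(x,y,u,v)=\delta(x,y)(u\otimes v)+\delta(u,v)(x\otimes y)$ where $\delta(x,y)=(D_{x,y},-D_{y,x})$ acts on tensors by the Leibniz rule. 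Splitting into the two subcases $\alpha=\beta$ (i.e. $i=j$) and $\alpha\neq\beta$ (i.e. $i\neq j$, which by collinearity forces $\alpha\top\beta$), I would plug in $x=E_{1j}a$, $y=E_{j1}b$, $u=E_{1i}c$, $v=E_{i1}d$ and expand $D_{x,y}(u)=\{x,y,u\}$ via the triple-product formulas. The subcase $i=j$ produces the first displayed family (all four terms live in $(V^+_{\epsilon_1-\epsilon_j})\otimes(V^-_{\epsilon_1-\epsilon_j})$), and the subcase $i\neq j$ produces the second family. The key book-keeping point is that, by the Peirce/grid relations, $\{E_{1j}a,E_{j1}b,E_{1i}c\}$ and its companions land in the correct joint Peirce space, so no cross terms of degree $\neq 0$ appear — this is precisely what Corollary~\ref{collinear_zero_hom_Cor} guarantees.

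The main obstacle is bookkeeping rather than conceptual: one must verify carefully that the collinear grid really is the relevant one here (this uses the reduction $I=\{1\},\,J=K\setminus\{1\}$ together with the remark that $\TKK(V)$ and $\instr(V)$ depend only on $\card K$ and $D$, stated after Proposition~\ref{rect_grid_la_prop}), that the associator-sensitive triple products are handled correctly when $D$ is merely alternative (the case $\card K=3$), and that the linearizations \eqref{quadratic_linear_B} and \eqref{bilinear_B} do not contribute any relation beyond the span of the three families — equivalently, that $B'(x,u;y)$ and $B''$ are consequences of the listed relations in the collinear homogeneous component. Once these reductions are in place, the proof is a direct substitution: expand $A(\alpha,\beta,\alpha,\beta)$ and $B(\alpha,\alpha)$ using Definitions~\ref{rect_defi} and \ref{rectangular_matrix_notation}, match terms, and invoke Corollary~\ref{collinear_zero_hom_Cor} to conclude that these are \emph{all} the relations. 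I would close by noting that the resulting presentation is exactly the one in the statement, so the canonical map is an isomorphism.
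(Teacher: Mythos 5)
Your proposal follows essentially the same route as the paper's own proof: identify $(V^+\otimes V^-)_0$ with $\bigoplus_{j\in J}V^+_{\epsilon_1-\epsilon_j}\otimes V^-_{\epsilon_1-\epsilon_j}$, invoke Corollary~\ref{collinear_zero_hom_Cor} (via Proposition~\ref{uider_QR_grading_lem}) to see that only the diagonal relations $A(\alpha,\alpha,\gamma,\gamma)$ and $B(\alpha,\alpha)$ survive in degree~$0$ for the collinear grid, and then expand the triple products of Definition~\ref{rect_defi}, separating the cases $\gamma=\alpha$ and $\gamma\neq\alpha$ to get the three displayed families. The paper simply records the intermediate presentation in terms of unexpanded triple products and says the formulas follow by expansion, so the two arguments coincide in substance.
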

\begin{proof}
It follows from Corollary~\ref{collinear_zero_hom_Cor} that $\uider(V)_0$ is isomorphic to $\bigoplus_{j \in J}(V_{\epsilon_1 - \epsilon_j})^+ \otimes (V_{\epsilon_1 - \epsilon_j})^- $ modulo the relations:
\begin{eqnarray}
&& \{ E_{1j}a E_{j1}b E_{1i}c \}\otimes E_{i1}d + \{ E_{1i}c E_{i1}d E_{1j}a \}\otimes E_{j1}b \\
&&-E_{1i}c \otimes \{E_{i1}d E_{1j}a E_{j1}b\} - E_{1j}a \otimes \{E_{j1}b E_{1i}c E_{i1}d\}, \nonumber \\
&& \{ E_{1i}a E_{i1}b E_{1i}a \}\otimes E_{i1}b -  E_{1i}a \otimes \{E_{i1}b E_{1i}a E_{i1}b\}, 
\end{eqnarray} where $i$ and $j$ are in $J$ and $a, b, c, d \in D.$
The formulas in the claim are then simply obtained by expanding the triple products. 
\end{proof}
\begin{defi} \label{first_h_defi}
Define the following elements in $\uider(V)_0$ for $a, b \in D$ and distinct $j, j_0 \in J$
\begin{eqnarray*}
h_{j}(a, b) &=& E_{1, j}a \diamond E_{j,1}b,\\
H_{j}(a, b) &=& h_{j}(a, b) - h_{j}(1, ba),\\
T_{j_0, j}(a, b) &=& 3H_{j_0}(a, b) -h_{j_0}(1 ,[a, b]) - h_j(1, [a, b]).
\end{eqnarray*}
\end{defi}

\begin{lem}

\begin{itemize}
\item[\rm{(i)}]  The element $H_j(a, b)$ does not depend on the choice of $j$ (and will therefore be denoted by $H(a, b)$).
 \item[\rm{(ii)}] 
 The following relations hold between the elements $H(a, b)$ and $h_j(1, c)$, $j \neq j_0:$ 
\begin{eqnarray}
H(a, 1) = H(1,a) &=& 0, \label{h_1_a_rel}\\ 
H(a, b) + H(b,a) &=& 0, \label{h_2_a_rel}  \\
H(a b,c) + H(bc,a) + H(ca,b) - h_{j_0}(1 ,(a ,b,c)) - h_j(1, (a, b,c)) &= & 0 \label{cyclic_sum_h_rel},\\
T_{j_0, j}(a, b) + T_{j_0, j}(b,a) &=& 0, \label{t_1_a_rel}\\
T_{j_0, j}(a, bc) + T_{j_0, j}(b, ca) + T_{j_0, j}(c, ab) &=& 0. \label{cyclic_sum_t_rel}
\end{eqnarray}
\item[\rm(iii)] The elements $H(a, b)$ and  $h_j(c):= h_j(1, c)$, $j \in J, a, b , c \in D$  span $\uider(V)_0.$
\end{itemize}\label{identies_h_lem}
\end{lem}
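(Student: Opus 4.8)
The statement collects several identities among the generators $h_j(a,b)=E_{1j}a\diamond E_{j1}b$ of $\uider(V)_0$, and by Lemma~\ref{uider_gen_lem_rect} these must all be derived from the three families of defining relations of $\uider(V)_0$ displayed there. The plan is therefore to treat each claim as a consequence of specializing and combining those relations.

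First I would prove (i), that $H_j(a,b):=h_j(a,b)-h_j(1,ba)$ is independent of $j$. The natural tool is the second family of relations in Lemma~\ref{uider_gen_lem_rect}, which links the summand indexed by $i$ to the summand indexed by $j$: setting $c=1$ and $d=1$ there (and using that $D$ is unital and alternative) yields $E_{1i}(ab)\otimes E_{i1}1 + E_{1j}(a)\otimes E_{j1}b - E_{1i}1\otimes E_{i1}(ab) - E_{1j}a\otimes E_{j1}b$ vanishes up to the first-family relations; unwinding the definitions this is exactly $H_i(a,b)-H_j(a,b)=0$ after rewriting $h_i(ab,1)-h_i(1,ab)$ and $h_j(a,b)-h_j(1,ba)$. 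The bookkeeping must be done carefully, but it is purely the substitution $c=d=1$ into relation (4.x) plus the quadratic relation to handle the $h_i(1,\cdot)$ terms. Then I write $H(a,b)$ unambiguously.

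Next, for (ii): the relations $H(a,1)=H(1,a)=0$ follow immediately from the definition of $H$ and the unitality of $D$; $H(a,b)+H(b,a)=0$ follows from the third (quadratic/alternating) family of relations in Lemma~\ref{uider_gen_lem_rect} after full linearization in $a$ and $b$ — this is where the factor $2$ in that relation and the alternating property $E_{1j}a\diamond E_{j1}a$-type terms get used. The cyclic relation \eqref{cyclic_sum_h_rel} is the heart of the matter: it should come from the first family of relations in Lemma~\ref{uider_gen_lem_rect}, which is a linearized form of the Jordan identity, by choosing $d=1$ and symmetrizing over the cyclic permutations of $a,b,c$; the associator terms $(a,b,c)$ appear precisely because $D$ need not be associative, so $x(yz)$ and $(xy)z$ differ. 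Finally \eqref{t_1_a_rel} and \eqref{cyclic_sum_t_rel} are obtained by substituting the definition $T_{j_0,j}(a,b)=3H(a,b)-h_{j_0}(1,[a,b])-h_j(1,[a,b])$ into the already-established relations \eqref{h_2_a_rel} and \eqref{cyclic_sum_h_rel}: for \eqref{t_1_a_rel} use $H(a,b)+H(b,a)=0$ together with $[a,b]+[b,a]=0$; for \eqref{cyclic_sum_t_rel} add the three cyclic instances of \eqref{cyclic_sum_h_rel} obtained by permuting, using that $\sum_{\mathrm{cyc}}(a,b,c)$-type associator terms and the commutator terms reorganize correctly — here one needs the Moufang/alternative identities relating $[ab,c]+[bc,a]+[ca,b]$ to associators, which is available since $D$ is alternative.

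Lastly, for (iii): by Lemma~\ref{uider_gen_lem_rect} (or Corollary~\ref{collinear_zero_hom_Cor}) the module $\uider(V)_0$ is spanned by all $h_j(a,b)$, $j\in J$, $a,b\in D$. But $h_j(a,b)=H(a,b)+h_j(1,ba)=H(a,b)+h_j(ba)$ by definition of $H$ and the $j$-independence from part (i), so every generator is a sum of an $H(a,b)$ and an $h_j(c)$; hence these elements span. I expect the main obstacle to be \eqref{cyclic_sum_h_rel} and its consequence \eqref{cyclic_sum_t_rel}: getting the associator correction terms to land with exactly the coefficients stated requires a careful, fully linearized expansion of the triple-product relations in Lemma~\ref{uider_gen_lem_rect} and repeated use of the alternative-algebra identities \eqref{alt_id_1}--\eqref{alt_id_4}, and it is easy to drop or mis-sign a term. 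Everything else is routine substitution.
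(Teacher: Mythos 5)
Your high-level plan — treat every claim as a consequence of specializing the defining relations of $\uider(V)_0$ from Lemma~\ref{uider_gen_lem_rect} — does match the paper's approach. But the specific attributions you make are wrong in ways that would break the proof if carried out literally, so there is a genuine gap rather than just bookkeeping to tidy.

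The workhorse in the paper is a single consequence of the cross-index ($i\neq j$) relation in Lemma~\ref{uider_gen_lem_rect}, namely
\[
h_j(a(bc),\,d) - h_j(c,\,(da)b) \;=\; h_{j_0}(a,\,(bc)d) - h_{j_0}(c(da),\,b),
\]
and essentially everything in (i) and (ii) except the $T$-statements comes from specializing this. Your choice $c=d=1$ does not give $H_i(a,b)-H_j(a,b)=0$: in the cross-index relation the $j$-terms then cancel identically and you are left with $h_i(ab,1)-h_i(1,ab)=0$, i.e.\ only the shift identity $h_i(x,1)=h_i(1,x)$. You actually need two substitutions in sequence: first $a=b=d=1$ to establish that shift identity, then $b=c=1$ to get $H_j(a,d)=H_{j_0}(a,d)$ using it. Similarly, $H(a,1)=0$ is not ``immediate from the definition'' — it is exactly the shift identity in disguise; only $H(1,a)=0$ is tautological. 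Your proposal to get $H(a,b)+H(b,a)=0$ from the third (quadratic) family fails: full linearization of $2(h_j(aba,b)+h_j(a,bab))$ in $a$ and then $b$, after setting $a=1$, yields $4(h_j(b,c)+h_j(c,b))=0$, which is useless over a base ring with $2$-torsion and moreover has the wrong sign pattern compared with $H(b,c)+H(c,b) = h_j(b,c)+h_j(c,b)-h_j(1,cb)-h_j(1,bc)$. The paper instead gets \eqref{h_2_a_rel} by putting $a=c=1$ in the cross-index relation, which produces exactly $H(b,d)+H(d,b)=0$. Finally, \eqref{cyclic_sum_h_rel} cannot come from the ``first family'' (the single-index relation) as you suggest, because that relation lives entirely in one Peirce slot $j$, while \eqref{cyclic_sum_h_rel} mixes the two indices $j_0$ and $j$; it must come from the cross-index relation with $d=1$, together with the already-established \eqref{h_1_a_rel}, \eqref{h_2_a_rel}, and the shift identity. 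Your treatment of (iii) and of the $T$-identities \eqref{t_1_a_rel}, \eqref{cyclic_sum_t_rel} is fine: once \eqref{h_2_a_rel} and \eqref{cyclic_sum_h_rel} are in hand, substituting the definition of $T_{j_0,j}$ and using the alternative-algebra identity $[ab,c]+[bc,a]+[ca,b]=3(a,b,c)$ does close the argument.
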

\begin{proof}
Combining Lemma~\ref{uider_gen_lem_rect} with the notation introduced in Definition~\ref{first_h_defi}, we obtain
$$h_j(a(bc), d) - h_{j}(c, (da)b) - h_{j_0}(a, (bc)d) + h_{j_0}(c (da),b) = 0, $$
or equivalently
$$h_j(a(bc), d) - h_{j}(c, (da)b) =  h_{j_0}(a, (bc)d) - h_{j_0}(c (da), b). $$
Let $a = b = d = 1$ then 
$$ 0 = h_j(c, 1) - h_{j}(c, 1) =  h_{j_0}(1, c) - h_{j_0}(c, 1), $$
thus \begin{equation} h_{j_0}(1, c) = h_{j_0}(c, 1), \label{1_d_shift_h_id} \end{equation}
and also $H_{j_0}(1, d)= H_{j_0}(d, 1) = 0$ whence (\ref{h_1_a_rel}).\\
Next set $ c= 1 = d.$ This yields 
$$ 0 = h_j(a, d) - h_{j}(1, da) =  h_{j_0}(a, d) - h_{j_0}(da,1), $$
and so, combining this with (\ref{1_d_shift_h_id}), it is indeed  true that $H_{j_0}(a, d) = H_{j}(a, d)$ and we can simply write $H(a, d)$ from now on. \\
For (\ref{h_2_a_rel}) consider the case $a = c = 1.$ Then 
$$h_j(b, d) - h_{j}(1, db) =  h_{j_0}(1, bd) - h_{j_0}(d, b) $$
which is equivalent to
$$H(b,d) = -H(d,b). $$ 

Set $d= 1$ in $h_j(a(bc), d) - h_{j}(c, (da)b) =  h_{j_0}(a, (bc)d) - h_{j_0}(c (da), b). $
Then using (\ref{h_2_a_rel}) and (\ref{1_d_shift_h_id}): 
\begin{eqnarray*}
h_j(a(bc), 1) - h_{j}(c, ab) &=&  h_{j_0}(a, bc) - h_{j_0}(c a, b) \\
 h_j(a(bc), 1) - H(c,ab) - h_j((ab)c,1)  &=& H(a, bc) + h_{j_0}(1, (bc)a) - H(ca,b)  \\ && - h_{j_0}(1, b(ca)) \\
H(ab,c) + H(bc,a) + H(ca,b) &=& - h_j ((a, b, c)) - h_{j_0} ((b,c,a))
\end{eqnarray*}
Thus 
$$ H(a, bc) + H(b, ca) + H(c, ab) = h_{j_0}(1 ,(a ,b,c)) + h_{j}(1, (a, b,c)).$$
It is clear that the elements as described in (iii) span $\uider(V)_0$ since every element $E_{1j}a \diamond E_{1j}b$ can be written as
$H(a, b) + h_j(1, ba).$\\
The anti-commutativity of $T(a, b)$ is obvious from the definition as $[a, b] = -[a,b]$ and $H(a, b) = - H(b,a).$\\
In an alternative algebra
$(s(a), s(b), s(c)) = \sign(s)(a,b,c)$ for any element $s$ in  the symmetric group on $\{a, b, c\}$. Thus
\begin{eqnarray*}
\lefteqn{T_{j_0, j}(ab, c) + T_{j_0, j}(bc,a) + T_{j_0, j}(ca,b)} \\
 &=& 3h_{j_0}(1, (a,b ,c)) + 3h_{j}(1, (a,b,c))\\
						&&- \sum_{cyc.}h_{j_0}(1, (ab)c - c(ab)) - \sum_{cyc.}h_{j}(1, (ab)c - c(ab))\\
						&=& 3h_{j_0}(1, (a,b ,c)) + 3h_{j}(1, (a,b,c)) \\ &&- 3h_{j_0}(1, (a,b ,c)) + 3h_{j}(1, (a,b,c))\\
						&=& 0
	\end{eqnarray*}

	\end{proof}

\begin{defi} Let $D$ be any $k$-algebra. 
\label{cyc_hom_sans_one_half_defi}
The module $\tilde \bigwedge(D)$ is defined as the quotient of
$(D \otimes D) \oplus D \oplus D'$ (where $D'$ is a copy of $D$) modulo the submodule generated by
\begin{eqnarray}
& a \otimes b + b\otimes a,&\\
& ab\otimes c + bc \otimes a + ca \otimes b - (a, b,c) - (a, b,c)' & \label{cyc_sum_3_alt}
\end{eqnarray}

We will denote the image of $a \otimes b$ by $a \tilde \wedge b. $ Define also 
$ \mathrm{HC}_1(D) = \{ \sum a_i  \tw  b_i : \sum[a_i, b_i] = 0\}.$
\end{defi}
\begin{rem} For an associative algebra, one can define ``higher cyclic homology'', but this requires the machinery of mixed bicomplexes. For this approach see \cite{Lo}. For our purposes we are content with the definition given above. 
\end{rem}
First it is necessary to establish a number of identities in $\tilde \bigwedge(D)$. 
\begin{lem} The following identities hold in $\tilde \bigwedge(D)$ for all $a, b, c ,d \in D:$

\begin{eqnarray}
2 (a \tw a) &=& 0 \\
1 \tw a &=& 0\\
 \lefteqn{a(bc) \tw d + (bc)d \tw a + c(da) \tw b + (da)b \tw c} \nonumber \\ &=&((da)b)c  - d(a(bc)) \nonumber \\ &&+ (((da)b)c  - d(a(bc)))'. \label{cyc_sum_4_alt}
\end{eqnarray}
\end{lem}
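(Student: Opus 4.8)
The plan is to derive all three identities directly from the two families of defining relations of $\tilde\bigwedge(D)$ in Definition~\ref{cyc_hom_sans_one_half_defi}: the anticommutativity relation $a\tw b=-b\tw a$, and the relation (\ref{cyc_sum_3_alt}), which rewritten says that $ab\tw c+bc\tw a+ca\tw b=(a,b,c)+(a,b,c)'$ holds in $\tilde\bigwedge(D)$. For the third identity I will additionally use the classical fact (see \cite{prebookalt}) that the associator of an alternative algebra is totally skew-symmetric, in particular invariant under cyclic permutation of its three arguments.

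First, $2(a\tw a)=0$ is just the anticommutativity relation with both entries equal to $a$. For $1\tw a=0$ I would substitute $a=b=1$ into the cyclic relation; since $(1,1,c)=c-c=0$ the right-hand side vanishes, leaving $1\tw c+2(c\tw 1)=0$, and anticommutativity turns this into $1\tw c-2(1\tw c)=0$, i.e.\ $1\tw c=0$.

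The substantive claim is (\ref{cyc_sum_4_alt}). I would add the cyclic relation evaluated at the triple $(a,bc,d)$ to the cyclic relation evaluated at the triple $(c,da,b)$. On the left this produces six $\tw$-terms, four of which are exactly the terms on the left of (\ref{cyc_sum_4_alt}), while the remaining two are $(da)\tw(bc)$ and $(bc)\tw(da)$ and cancel by anticommutativity. On the right one obtains $(a,bc,d)+(c,da,b)$ in the $D$-slot and the same expression, primed, in the $D'$-slot. It then remains to verify, purely inside the alternative algebra $D$, that $(a,bc,d)+(c,da,b)=((da)b)c-d(a(bc))$: using cyclic invariance of the associator, $(a,bc,d)=(d,a,bc)=(da)(bc)-d(a(bc))$ and $(c,da,b)=(da,b,c)=((da)b)c-(da)(bc)$, and these add up to $((da)b)c-d(a(bc))$ since the terms $(da)(bc)$ cancel. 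The identical manipulation in the copy $D'$ supplies the primed summand, completing the proof.

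The only delicate point is this last one: recognizing that the seemingly asymmetric right-hand side $((da)b)c-d(a(bc))$ is in fact precisely $(a,bc,d)+(c,da,b)$, which is what makes two instances of the cyclic relation suffice. One should also note in passing that, whereas the first two identities hold in any unital $k$-algebra, the third genuinely uses alternativity of $D$ (via skew-symmetry of the associator), consistent with the standing hypothesis on $D$ in this section.
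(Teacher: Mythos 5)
Your proposal is correct and follows essentially the same route as the paper: specialize the cyclic relation with two of the three entries equal to $1$ to obtain $1\tw a = 0$, and combine two instances of the cyclic relation (at $(a,bc,d)$ and $(c,da,b)$) so that the cross terms cancel by anticommutativity, then identify $(a,bc,d)+(c,da,b)$ with $((da)b)c-d(a(bc))$. The only cosmetic difference is that the paper adds and subtracts the cancelling terms inside a single display rather than literally summing two separate instances of the relation, and it cites the alternative-algebra associator identity without spelling out the cyclic-invariance computation that you supply.
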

\begin{proof}
Set $b = c = 1$ in (\ref{cyc_sum_3_alt}). Then the associators in  \ref{cyc_sum_3_alt} vanish and the remaining part is equivalent to
$$a \tw 1 + 1 \tw a + a \tw 1  = 0.$$ 
Thus $a \tw 1 = 1 \tw a = 0.$
Next 
\begin{eqnarray*}
&& a(bc) \tw d + (bc)d \tw a + c(da) \tw b + (da)b \tw c \\
& = & a(bc) \tw d + da \tw bc + (bc)d \tw a  -da \tw bc  \\
&&+ c(da)\tw b + (da)b \tw c + bc \tw da - bc \tw da  \nonumber \\
&=& (a, bc, d) + (c,da,b) +  (a, bc, d)' + (c,da,b)' \nonumber
\end{eqnarray*}
In every alternative algebra $(a, bc, d) + (c,da,b) = ((da)b)c  - d(a(bc)) $ which proves the claim. 
\end{proof}

\begin{prop} The $k$-module $\uider(V)_0$ is isomorphic to $$ \frac{(D \otimes D)  \oplus \bigoplus_{j \in J} (D)_{j}}{U}$$  where $D_{j}$, $j \in J$ are copies of $D$ and $U$ is the submodule generated by
\begin{eqnarray}
& a \otimes b - b\otimes a, &\\
& ab\otimes c + bc \otimes a + ca \otimes b - (a, b,c)_{j_0} -(a, b,c)_{j}, & \label{cyc_sum_3_alt_in_prop}
\end{eqnarray} for distinct $j, j_0$ in $J.$
\label{uider_alt_decomp_prop}
\end{prop}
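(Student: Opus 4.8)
The plan is to produce the isomorphism by comparing the presentation of $\uider(V)_0$ obtained in Lemma~\ref{uider_gen_lem_rect} (and repackaged via the generators $H(a,b)$ and $h_j(c)=h_j(1,c)$ in Lemma~\ref{identies_h_lem}) with the explicit presentation on the right-hand side. First I would fix two distinct indices $j_0\neq j$ in $J$ — note $\card K\geq 3$ guarantees $\card J\geq 2$ — and define a candidate map
\[
\varphi : (D\otimes D)\oplus\bigoplus_{j\in J}D_j \longrightarrow \uider(V)_0
\]
by sending $a\otimes b\mapsto H(a,b)$ and, for the copy $D_j$, sending $c_j\mapsto h_j(c):=h_j(1,c)$. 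By Lemma~\ref{identies_h_lem}(iii) the images span $\uider(V)_0$, so $\varphi$ is surjective. Next I would check that $\varphi$ kills the submodule $U$: the generator $a\otimes b-b\otimes a$ goes to $H(a,b)+H(b,a)=0$ by~\eqref{h_2_a_rel}, and the generator $ab\otimes c+bc\otimes a+ca\otimes b-(a,b,c)_{j_0}-(a,b,c)_{j}$ goes to
$H(ab,c)+H(bc,a)+H(ca,b)-h_{j_0}(1,(a,b,c))-h_j(1,(a,b,c))$, which is exactly relation~\eqref{cyclic_sum_h_rel}. Hence $\varphi$ factors through a surjection $\bar\varphi$ from the right-hand side onto $\uider(V)_0$.

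The substance is then to construct an inverse, i.e.\ to show $\bar\varphi$ is injective, equivalently that every defining relation of $\uider(V)_0$ (the three families in Lemma~\ref{uider_gen_lem_rect}) maps, under $E_{1j}a\diamond E_{j1}b\mapsto$ the class of $(a\otimes b)+\text{(correction terms in the }D_j\text{'s)}$, into $U$. Concretely I would define $\psi:\uider(V)_0\to\bigl((D\otimes D)\oplus\bigoplus_j D_j\bigr)/U$ on the spanning elements $E_{1j}a\diamond E_{j1}b$ by $\psi(E_{1j}a\diamond E_{j1}b) = \overline{a\otimes b} + \overline{(ba)_j}$, using the decomposition $E_{1j}a\diamond E_{j1}b = H(a,b)+h_j(1,ba)$; one must verify this is well-defined, i.e.\ respects the three relation families of Lemma~\ref{uider_gen_lem_rect}. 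The first family (the "long" relation linking index $j$ to $j_0$) should collapse, after expanding everything into $a\otimes b$-terms and $D_j$/$D_{j_0}$-terms, to the cyclic associator relation~\eqref{cyc_sum_3_alt_in_prop} together with the antisymmetry relation, by exactly the alternative-algebra associator identity $(a,bc,d)+(c,da,b)=((da)b)c-d(a(bc))$ used in the lemma preceding the Proposition. The second family should reduce to a $k$-linear combination of instances of the same two relations plus consequences already derived in Lemma~\ref{identies_h_lem}. The third family, $2\bigl(E_{1j}(aba)\diamond E_{j1}b + E_{1j}a\diamond E_{j1}(bab)\bigr)$, maps to $2\bigl(\overline{aba\otimes b}+\overline{a\otimes bab}\bigr)$ plus $D_j$-terms that should cancel or reduce to $2(a\otimes a)$-type relations — this is the linearization $2B(x,y)=A(x,y,x,y)$ made explicit, so it should already be a consequence of the antisymmetry relation once one observes $2(a\tw a)=0$ holds in the target (as in the auxiliary lemma). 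Finally I would check $\psi\circ\bar\varphi=\mathrm{id}$ and $\bar\varphi\circ\psi=\mathrm{id}$ on generators, which is then routine.

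The main obstacle I anticipate is the bookkeeping in verifying that the first relation family of Lemma~\ref{uider_gen_lem_rect} maps into $U$: this requires carefully tracking how a relation phrased in terms of two arbitrary indices $i,j\in J$ (with the $H$-part index-independent but the $h$-part index-dependent) decomposes, and repeatedly invoking the identity $H(ab,c)+H(bc,a)+H(ca,b) = h_{j_0}(1,(a,b,c))+h_j(1,(a,b,c))$ to eliminate $H$-terms in favour of the $D_j$'s, or vice versa, arriving at a relation involving only one fixed pair $(j_0,j)$. One subtlety is that the target presentation privileges a single unordered pair $\{j_0,j\}$, so one must show that the analogous relation for any other pair $\{j_0',j'\}$ is a consequence — this follows by using~\eqref{cyclic_sum_h_rel} twice (for $\{j_0,j\}$ and $\{j_0',j'\}$) to see $h_{j_0}(1,(a,b,c))+h_j(1,(a,b,c))$ is symmetric enough, or more cleanly by noting that once $\uider(V)_0$ is identified, the two presentations have the same generators and the passage between "choice of $j_0,j$" presentations is transparent. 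With the identities of Lemma~\ref{identies_h_lem} already in hand, none of the remaining steps is conceptually hard, but the algebra in the associative/alternative split (associativity for $\card K\geq 4$, genuine alternativity allowing non-trivial associators for $\card K = 3$) must be handled uniformly, and I would organize it so that the associator terms are isolated into the $D_j$ components from the outset.
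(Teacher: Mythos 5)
Your proposal is correct and follows essentially the same strategy as the paper: both define the forward map $a\otimes b\mapsto H(a,b)$, $c_j\mapsto h_j(c)$ (the paper's $\rho$) and the reverse map $E_{1j}a\diamond E_{j1}b\mapsto \overline{a\otimes b}+\overline{(ba)_j}$ (the paper's $\pi$), check each kills the appropriate relations using Lemma~\ref{identies_h_lem} and the alternative-algebra identities, and verify they are mutually inverse. The one small point to tidy up is your worry about a ``privileged pair'' $\{j_0,j\}$: the statement quantifies the second generating relation over \emph{all} distinct $j,j_0\in J$, so $U$ already contains every instance needed when tracking $\psi$ on the cross-index relation of Lemma~\ref{uider_gen_lem_rect}.
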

\begin{proof} 
By the universal property of the tensor product there is a well-defined linear  epimorphism  
\begin{eqnarray*}
 \tilde \rho : (D \otimes D)  \oplus \bigoplus_{j \in J} (D)_{j} & \rightarrow & H(D, D) +\sum_{j \in J}h_j(D)\\
 a \otimes b & \mapsto & H(a, b), \\
  c_j& \mapsto & h_j(c).
\end{eqnarray*}
By Lemma~\ref{identies_h_lem} and (\ref{h_2_a_rel}),  $\tilde \rho(U) = 0.$ Hence we also have that

\begin{eqnarray*}
 \rho : \frac{(D \otimes D)  \oplus \bigoplus_{j \in J} (D)_{j}}{U}&\rightarrow & H(D, D) +\sum_{j \in J}h_j(D),\\
 a \otimes b  + U& \mapsto & H(a, b), \\
  (c_j) + U & \mapsto & h_j(c_j).
\end{eqnarray*}
is well-defined, linear and surjective as well.

 We have seen in Lemma~\ref{uider_gen_lem_rect} that $\uider(V)_0$ is isomorphic to $\bigoplus_{j \in J}(V_{\epsilon_1 - \epsilon_j})^+ \otimes (V_{\epsilon_1 - \epsilon_j})^- $ modulo the module $U'$ generated by
\begin{eqnarray}
&& E_{1j}(a(bc) + c(ba)) \otimes E_{j1}d + E_{1j}(c(da) + a(dc)) \otimes E_{j1}d  \label{rect_0_uider_id_2} \\
&& -  E_{1j}(c) \otimes E_{j1}((ba)d + (da)b) - E_{1j}(a) \otimes E_{j1}((bc)d + (dc)b),  \nonumber \\
&& \nonumber \\ 
&& E_{1i}(a(bc)) \otimes E_{i1}d + E_{1j}(c(da)) \otimes E_{j1} b  \label{rect_0_uider_id_1}\\ 
&& -  E_{1i}c \otimes E_{i1}((da)b) - E_{1j}a \otimes E_{j1}((bc)a), \nonumber \\
&& \nonumber \\ 
&& 2(E_{1j}(aba) \otimes E_{j1} b + E_{1j}a \otimes E_{j1}(bab)). \label{rect_0_uider_id_3}
\end{eqnarray}

Consider the linear map $ \tilde \pi : \bigoplus_{j \in J}(V^{\epsilon_1 - \epsilon_j})^+ \otimes (V^{\epsilon_1 - \epsilon_j})^-  \rightarrow D \otimes D  \oplus \bigoplus_{j \in J} (D)_{j}$  given by $E_{1j}a \otimes E_{j1}b \rightarrow a \otimes b + (ba)_{j}$. The map $\tilde \pi$ is clearly well-defined by the universal property of the tensor product. 
We consider the images of the generators of $U'$ under $\pi$. The element (\ref{rect_0_uider_id_1}) is mapped to 
\begin{eqnarray*}
&& a(bc) \otimes d + c(da)\otimes b + (bc)d \otimes a + (da)b \otimes c  \\ &+& (d(a(bc)))_{i} - (((bc)d)a)_j + (b(c(da))_j - ((da)b)c)_{i}. 
\end{eqnarray*}
By (\ref{cyc_sum_4_alt}) this is contained in $U$.\\
The image of (\ref{rect_0_uider_id_2}) is
\begin{eqnarray*}
&&a(bc) \otimes d + c(da)\otimes b + (bc)d \otimes a + (da)b \otimes c \\ &&-\big ((d(a(bc)) - ((bc)d)a) + (b(c(da))) - ((da)b)c \big)_{j_0}\\
&&+ c(ba) \otimes d + a(dc)\otimes b + (dc)b \otimes a + (ba)d \otimes c  \\ &&-\big( (b(c(da)) - ((da)b)c) + (d(a(bc))) - ((bc)d)a \big)_{j_0}.
\end{eqnarray*}

The first line is in the same coset of $U$  as the following (by~\ref{cyc_sum_4_alt}) 
\begin{eqnarray*}
&&a(bc) \otimes d + c(da)\otimes b + (bc)d \otimes a + (da)b \otimes c \\ 
&=& - da \otimes bc -  bc \otimes da + (a, bc ,d)_{j_0} + (a, bc, d)_j \\
 &&+ (da, b, c)_{j_0} + (da, b, c)_j \\
 &=& \big ( (da, b, c)  + (a, bc ,d) \big)_{j_0} + \big( (da, b, c)  + (a, bc ,d)\big)_j
\end{eqnarray*}
and likewise for the third line
\begin{eqnarray*}
&&c(ba) \otimes d + a(dc)\otimes b + (dc)b \otimes a + (ba)d \otimes c \\ 
&=&  - ba \otimes dc -  dc \otimes ba + (c, ba ,d)_{j_0} + (c, ba, d)_j \\
 &&+ (a, dc, b)_{j_0} + (a, dc, b)_j \\
 &=& \big ( (c, ba, d)  + (a, dc ,b) \big)_{j_0} + \big( (c, ba, d)  + (a, dc ,b)\big)_j.
\end{eqnarray*}
If we piece all this together, the  image of (\ref{rect_0_uider_id_2}) is in the same coset as
\begin{eqnarray*}
&&\big ( (da, b, c)  + (a, bc ,d) \big)_{j_0} + \big ( (c, ba, d)  + (a, dc ,b) \big)_{j_0}  \\
&&+\big( (da, b, c)  + (a, bc ,d)\big)_j  + \big( (c, ba, d)  + (a, dc ,b)\big)_j \\
&&-\big ((d(a(bc)) - ((bc)d)a) + (b(c(da))) - ((da)b)c \big)_{j_0}\\
&&-\big( (b(c(da)) - ((da)b)c) + (d(a(bc))) - ((bc)d)a \big)_{j_0}.
\end{eqnarray*}

The first two lines are zero, because of the so-called ``right-bumping identity,'' which holds in all alternative algebras, that 
$$(c, ba, d)  + (a, dc ,b)  + (da, b, c)  + (a, bc ,d) = 0. $$ Secondly, the last two lines vanish since in every alternative algebra $((da)b)c - (d(a(bc)))  + ((bc)d)a) - (b(c(da))) = 0.$ 
Consider the image of the element (\ref{rect_0_uider_id_3}). It is sent to
$$2( aba \otimes b - a \otimes bab - (baba)_i + (baba)_i) = 2( (aba) \otimes b + (bab) \otimes  a). $$
Since the subalgebra of $D$ generated by $\{a, b\}$ is associative we obtain
$$ 2(aba \otimes  b + bab \otimes a ) + U = - 2( ab  \otimes ab)  + U= U$$
Therefore $\tilde \pi(U') \subset U$ and by factorization $\pi$, given by  $\pi \big  (\sum H(a_i ,b_i) + \sum_{j \in J} h_j(1, a_j) \big) \mapsto \sum a_i \tilde \wedge b_i + (a_j)_{j \in J}$, is well-defined.

It remains to show that $\pi$ and $\rho$ are inverse to each other. 
For an element $\sum a_i \tilde \wedge b_i + \sum_{j \in J}(c_j)_{j \in J}$ we obtain $ \pi \circ \rho( \sum a_i \tilde \wedge b_i + \sum_{j \in J}(c_j)_{j \in J}) =  \pi( \sum H(a_i ,b_i) + \sum_{j \in J}h_{j}(1, c_j))  = \sum (a_i \tw b_i) + \sum_{j \in J}(c_j)_{j \in J}$ and likewise for $\rho \circ \pi.$ 
\end{proof}
\begin{rem}
 If $D$ is associative, we have therefore proved that $\uider(V)_0$ contains a free $D$-module of rank $\card J.$  
\end{rem}

It will be important not only to know the action by derivations of $\ud(H(a, b))$ and $\ud(h_j(a))$ on $(E_{1j}D, E_{j1}D),$ but also on the homogeneous components of the $\TKK(V)$ whose degree are roots in $R^0$ (with respect to the $3$-grading on $R$).
This is summarized in the next lemma:

\begin{lem} Let $i,j, m,n \in J$ and $i\neq j$ With the notation as in Definition~\ref{rectangular_matrix_notation} we have \label{H_Action_alt_lem} \label{uider_to_Liemult_lem}
\begin{eqnarray}
\ud(H(a, b))(E_{m,n}(c)) &=& -E_{m,n}([L_a, R_b]c), m \neq n  \label{H_Action_alt}\\
\ud(H(a, b))(E_{1,j}(c)) &=& -E_{1,j}(L_{[a, b]}c -[L_a, R_b]c),  \\
\ud(H(a, b))(E_{j,1}(c)) &=& -E_{j,1}(-R_{[a, b]} +[L_a, R_b])c,  \\
\ud(h_j(a))E_{j,n}(c) &= &  -E_{j,n}(L_a c), j\neq n,   \\
\ud(h_j(a))E_{m,j}(c) &= & E_{m,j}(R_a c), j \neq m,  \\
 \ud(h_j(a))E_{m,n}(c) &= & 0, \; m \neq j, n \neq j, m\neq n\\ 
\ud(h_j)(a)E_{1j}(c) & =& E_{1j}((L_a + R_a)c)\\
\ud(h_j)(a)E_{1i}(c) & =& E_{1j}((L_a)c)\\
\ud(h_j)(a)E_{j1}(c) & =& -E_{1j}((L_a + R_a)c)\\
\ud(h_j)(a)E_{i1}(c) & =& -E_{1j}((R_a)c)
\end{eqnarray}
\end{lem}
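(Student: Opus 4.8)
The statement is a collection of explicit formulas describing how the inner derivations $\ud(H(a,b))$ and $\ud(h_j(a))$ act on the various homogeneous spaces $E_{m,n}(c)$ of $\TKK(V)$. My plan is to compute everything directly from the definitions, reducing each formula to identities in the alternative algebra $D$. Recall that $H(a,b) = h_j(a,b) - h_j(1,ba) = E_{1j}a \diamond E_{j1}b - E_{1j}1 \diamond E_{j1}(ba)$, and that $\ud$ sends $x \diamond y$ to $\delta(x,y) = (D_{x,y}, -D_{y,x})$, which acts on the Jordan pair components via the triple products $\{xyz\}$. So the whole computation is powered by the explicit triple product formulas $\{xyz\} = x(yz) + z(yx)$ and $\{yxu\} = (yx)u + (ux)y$ from Definition~\ref{rect_defi}, together with the bracket table listed just before Lemma~\ref{uider_gen_lem_rect} and the notation $E_{ji}(a) = x_{\alpha-\beta}(a) := -\delta(E_{1i}(a), E_{j1}(1))$ of Definition~\ref{rectangular_matrix_notation}.

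\textbf{Key steps.} First I would handle the action of $\ud(h_j(a)) = \delta(E_{1j}a, E_{j1}1) + \delta(E_{1j}1, E_{j1}a) - \ldots$; actually more cleanly, $\ud(h_j(a)) = \ud(h_j(1,a)) = \delta(E_{1j}1, E_{j1}a)$, which is $\ad[E_{1j}1, E_{j1}a]$ restricted appropriately. So I would simply compute the Lie brackets $[\,[E_{1j}1, E_{j1}a],\, E_{m,n}(c)\,]$ in $\TKK(V)$ using the bracket table, distinguishing the cases according to which of $m,n$ equal $j$ or $1$, and whether $\{m,n\} = \{1,j\}$, $m=j\neq n$, $n=j\neq m$, etc. Each case collapses via $\delta_{j,l}$-type Kronecker deltas to a single matrix unit $E_{m,n}$ with a modified $D$-entry, giving the formulas $\ud(h_j(a))E_{j,n}(c) = -E_{j,n}(L_a c)$, $\ud(h_j(a))E_{m,j}(c) = E_{m,j}(R_a c)$, $\ud(h_j(a))E_{m,n}(c) = 0$, and the four formulas for the $3$-graded components $E_{1j}, E_{1i}, E_{j1}, E_{i1}$. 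Then for $\ud(H(a,b))$: since $H(a,b) = h_j(a,b) - h_j(1,ba)$ independently of $j$ (Lemma~\ref{identies_h_lem}(i)), I compute $\ad[E_{1j}a, E_{j1}b] - \ad[E_{1j}1, E_{j1}(ba)]$ acting on $E_{m,n}(c)$. Using the bracket $[E_{1j}a, E_{j1}b] = \delta(E_{1j}a, E_{j1}b)$ and the action on the appropriate Peirce/root spaces, the $D$-action reduces to $L_{ab} - R_{ba}$-type operators; subtracting the $h_j(1,ba)$ term and invoking the alternative identities (\ref{alt_id_1})--(\ref{alt_id_4}) — in particular $[R_a, R_b] = -R_{[a,b]} - 2[L_a, R_b]$ and $[L_a, L_b] = L_{[a,b]} - 2[L_a, R_b]$ — rewrites the resulting endomorphism of the $D$-entry as exactly $-[L_a, R_b]$ on an off-diagonal space, as $-(L_{[a,b]} - [L_a, R_b])$ on the $E_{1j}$ space, and as $-(-R_{[a,b]} + [L_a, R_b])$ on the $E_{j1}$ space. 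Here I would lean on Example~\ref{rho_lambda_expls} and Proposition~\ref{inner_der_is_inner_trial_prop}, which already identify the triality $\sigma(a,b) = ([L_a,R_b],[L_a,R_b],[L_a,R_b]) + (0, R_{[a,b]}, -L_{[a,b]})$; the three displayed formulas for $\ud(H(a,b))$ are precisely (up to sign and the involution flip $E_{1j}\leftrightarrow E_{j1}$) the three components $t_1, t_2, t_3$ of an inner triality, so recognizing this pattern is the conceptual shortcut that avoids re-deriving each alternative-algebra identity by hand.

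\textbf{Main obstacle.} The genuine bookkeeping difficulty is keeping the index cases straight for $\ud(H(a,b))$ acting on $E_{m,n}(c)$ when $\{m,n\}$ overlaps $\{1,j\}$ in exactly one place versus not at all, since the bracket table has the proviso $\card{\{i,j\}\cap\{k,l\}} \le 1$ and the definition $E_{ji}(a) = -\delta(E_{1i}(a), E_{j1}(1))$ forces one to unwind a nested $\delta$ before the bracket can even be evaluated. The other subtle point is that $\TKK(V)$ is only $\dot A_K$-graded and the elements $H(a,b)$ live in degree $0$, so their adjoint action is well-defined on each graded piece, but the formulas genuinely differ between the "$\pm 1$" components $E_{1j}, E_{j1}$ and the "$0$" components $E_{m,n}$ ($m,n\neq 1$) — this is the manifestation of the triality structure and is exactly why both $L_{[a,b]}$ and $R_{[a,b]}$ terms appear asymmetrically. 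I expect the proof to be a page of careful case-checking with no deep new idea beyond the alternative identities already collected in the excerpt; if $D$ is associative all associators vanish and several formulas simplify, but I would write the argument for general alternative $D$ since that is the stated hypothesis when $\card K = 3$.
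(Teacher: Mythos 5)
Your proposal is sound and would produce a complete proof, but the route you take for $\ud(H(a,b))$ is genuinely different from the paper's. You propose to compute $\ud(H(a,b)) = \delta(E_{1j}a,E_{j1}b) - \delta(E_{1j}1,E_{j1}(ba))$ directly as a difference of two inner derivations, evaluate each term via the triple-product formulas, and then invoke (\ref{alt_id_1})--(\ref{alt_id_4}) to massage the resulting operators on $D$ into the stated form. The paper instead first verifies the single Lie-algebra identity $[h_2(a), h_3(b)] = H(a,b)$ in $\uider(V)$ (after reducing to $\card J = 2$, which is harmless because $h_j$ and $H(a,b)$ act trivially on $E_{mn}$ with $m,n$ disjoint from $\{1,2,3\}$), and then exploits the fact that $\ud$ is a Lie algebra homomorphism to conclude $\ud(H(a,b)) = [\ud(h_2(a)), \ud(h_3(b))]$. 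That commutator trick is what the paper buys: once $\ud(h_j(a))$ is tabulated as a simple left/right multiplication on each root space, $\ud(H(a,b))$ falls out by computing brackets like $[L_a + R_a, L_b]$ directly in $\mathrm{End}_k(D)$, which is exactly what (\ref{alt_id_1})--(\ref{alt_id_2}) are built for, and one never has to unwind the diamonds a second time. Your difference route works too, but is more bookkeeping-intensive since you must evaluate the triple products twice before you can even start simplifying. Your observation that the three resulting endomorphisms on $E_{1j}D$, $E_{j1}D$, $E_{mn}D$ are exactly the three components of an inner triality is correct and conceptually valuable (it is made precise in Proposition~\ref{Lieisoidertrial}), though the paper's proof of this lemma does not lean on it; as stated, it is a helpful sanity check rather than a shortcut. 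One small caution: when you "unwind" $E_{mn}(c) = -\delta(E_{1n}(c), E_{m1}(1))$, you are really using the derivation property $[T,\delta(x,y)] = \delta(T.x,y) + \delta(x,T.y)$ from (\ref{str11}) to propagate the action from degree $\pm 1$ to degree $0$, which is the same mechanism the paper uses for $E_{23}c = [E_{21}, E_{13}c]$; you should state that explicitly rather than leave it implicit in "unwinding a nested $\delta$."
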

\begin{proof}
We may restrict to the case $\card J = 2,$ since for all indices $i,j,m,n$ which are pairwise non-equal, it always holds true that $[\delta(E_{m,n}a,E_{n,m}b ), E_{ij}c] = 0$ in $\TKK(V).$ So from now on $ J =\{2,3\}.$ Then
\begin{eqnarray*}[h_2(a), h_3(b)] &=& \{E_{12}a E_{21} E_{13}b\} \diamond E_{31} \\
&&-  E_{13}b \diamond \{E_{21} E_{12}a E_{31}\}  \\ 
&=& h_3(ab) - h_3(b ,a)\\
&= & H(a,b).
\end{eqnarray*}
Thus, since $\ud: \uider(V) \rightarrow \instr(V)$ is a Lie algebra homomorphism, it suffices to compute the action of $\ad h_2(a)$ and $\ad h_3(b).$ It will then follow that $\ud(H(a, b)) = [\ud(h_2(a)), \ud(h_3(b))].$ The formulas for the action of $\ud(h_j(a)) =  \delta(E_{1i}a, E_{i1}1)$ on $E_{1j}c$ or $E_{j1}c$ follow from (\ref{triple_prod_ass_pair}) and the action of $\ud(H(a, b))$ on these elements is obtained from  the basic identities in alternative algebras, see (\ref{alt_id_1}) and (\ref{alt_id_2}). 
It remains to consider the action on $E_{23}c$, the case of $E_{32}c$ being completely analoguous. 
Then 
$\ud h_2(a). E_{23}c = \ud h_2(a). [E_{21}, E_{13}c] = - [E_{21}(a + a), E_{13} c ] + [E_{21} ,E_{13}(ac)] = -E_{23}(ac)  $ 
and $\ud h_3(a). E_{23}c = \ud h_3(a). [E_{21}, E_{13}c] = - [E_{21}(a) , E_{13} c )] + [E_{21} ,E_{13}(ac + ca)] = -E_{23}(ca) .$
 The remaining formulas can be obtained easily now. 
\end{proof}



\begin{cor} If $D$ is associative or $1/3 
\in k$, then
$$\tilde \bigwedge D \cong \langle D, D\rangle \oplus D_\alpha \oplus D_\beta$$
where $\langle D, D \rangle $ is the quotient of $D \otimes D$ modulo the submodule $U''$ generated by 
$$a \otimes b + b \otimes a, $$
$$ ab \otimes c + bc \otimes a + ca \otimes b. $$
\label{assoc_coord_A_2_cor}
\end{cor}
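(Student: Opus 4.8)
The plan is to prove Corollary~\ref{assoc_coord_A_2_cor} directly from Definition~\ref{cyc_hom_sans_one_half_defi}, by showing that under the stated hypothesis the ``correction terms'' in relation~(\ref{cyc_sum_3_alt}) all vanish, so that the defining relation decouples the $D \otimes D$ summand from the two copies $D$ and $D'$. Recall that $\tilde\bigwedge(D)$ is the quotient of $(D \otimes D) \oplus D \oplus D'$ by the submodule generated by $a \otimes b + b \otimes a$ and by $ab \otimes c + bc \otimes a + ca \otimes b - (a,b,c) - (a,b,c)'$, where $(a,b,c) \in D$ and $(a,b,c)' \in D'$ are the images of the associator. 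The point is that if $D$ is associative then $(a,b,c) = 0$ identically, so the second family of relations becomes $ab \otimes c + bc \otimes a + ca \otimes b$, which involves only the $D \otimes D$ summand; and if $1/3 \in k$ then, although the associator need not vanish, it lies in the range of a linear combination that can be absorbed. Actually the cleanest way to handle the $1/3 \in k$ case is to observe that $(a,b,c)$ spans the same submodule of $D$ as the image of the second family of relations projected to the $D$-component — let me spell this out below.

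First I would dispose of the associative case, which is the main content. When $D$ is associative the generators of the relation submodule are exactly $a \otimes b + b \otimes a$ (living in $D\otimes D$) and $ab \otimes c + bc \otimes a + ca \otimes b$ (living in $D\otimes D$, since $(a,b,c)=0$ and $(a,b,c)'=0$). Hence the relation submodule is contained in $D\otimes D$, and the quotient splits as $\big((D\otimes D)/U''\big) \oplus D \oplus D'$, where $U''$ is exactly the submodule named in the statement; identifying $D$ with $D_\alpha$ and $D'$ with $D_\beta$ gives the claimed isomorphism. For the case $1/3 \in k$ (with $D$ possibly only alternative), I would argue as follows: first, setting $b=c=1$ in the cyclic relation (\ref{cyc_sum_3_alt}) — as is done in the lemma preceding Proposition~\ref{uider_alt_decomp_prop} — shows $a \tw 1 = 1 \tw a = 0$; next, in an alternative algebra the associator is alternating, so $(a,a,c)=0$ and the relation (\ref{cyc_sum_3_alt}) linearized appropriately shows that the $D$-component of any relation is a sum of associators. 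The key is that $3(a,b,c)$ itself arises: taking the cyclic relation for the triple $(a,b,c)$ and comparing with the ones obtained by permuting, one gets that the submodule of relations, projected to $D \oplus D'$, is spanned by elements of the form $((a,b,c), (a,b,c))$ (``diagonal''), together with the image of $3(a,b,c)$ coming from antisymmetry of the associator. Since $1/3 \in k$, from a diagonal relation $((a,b,c),(a,b,c))$ and the element $(3(a,b,c),0)$ we can solve for $((a,b,c),0)$ and $(0,(a,b,c))$ separately; so modulo the relations, the associator terms can be ``moved into'' pure $D$ and pure $D'$ pieces, which is precisely what is needed to split off $D_\alpha$ and $D_\beta$ and leave $D\otimes D$ modulo the purely multilinear relation $ab\otimes c + bc\otimes a + ca\otimes b$.

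Concretely I would define a candidate map $\Phi : (D\otimes D)/U'' \oplus D \oplus D \to \tilde\bigwedge(D)$ by sending the class of $a\otimes b$ to $a\tw b$, the first copy of $D$ into the $D$-summand and the second into the $D'$-summand (pre-composing with $\mathrm{id} - \tfrac13(\text{associator correction})$ if one wants to be careful about which representative is used), and an inverse map $\Psi$ reading off the three components of a representative in $(D\otimes D)\oplus D\oplus D'$. One checks $\Psi$ kills the two defining relations: $a\otimes b + b\otimes a \mapsto$ the class of $a\otimes b + b\otimes a$ in $(D\otimes D)/U''$, which is $0$ since $U''$ contains it; and $ab\otimes c + bc\otimes a + ca\otimes b - (a,b,c) - (a,b,c)' \mapsto$ (class of the purely multilinear part, which is in $U''$) $+ ((a,b,c),(a,b,c))$ mapped appropriately, which one arranges to be $0$ either trivially (associative case) or using $1/3\in k$ as above. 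Then $\Phi$ and $\Psi$ are mutually inverse $k$-module homomorphisms, proving the corollary.

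The main obstacle is the bookkeeping in the $1/3\in k$, genuinely alternative case: one must verify that the only relations linking the $D\otimes D$ part to the $D \oplus D'$ part are consequences of the diagonal associator relation and of antisymmetry of the associator, so that invertibility of $3$ really does let the summands decouple; in the associative case there is nothing to do beyond observing that the relation submodule sits inside $D\otimes D$. I expect the associative case to be a two-line argument and the alternative-with-$1/3$ case to need the explicit identities already recorded in the lemma before Proposition~\ref{uider_alt_decomp_prop} (namely $a\tw 1 = 0$ and the ``right-bumping'' type identity $(a,bc,d)+(c,da,b)+(da,b,c)+(a,bc,d)=0$ used there), which makes the verification routine but slightly delicate.
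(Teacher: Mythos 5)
Your handling of the associative case is correct and matches the paper: once $(a,b,c)=0$, every generator of the relation submodule lies in $D\otimes D$, so the quotient splits off $D\oplus D'$ untouched and $\tilde\bigwedge D \cong \langle D,D\rangle \oplus D_\alpha \oplus D_\beta$ immediately. That part is fine.

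For the $1/3\in k$, genuinely alternative case, however, there is a real gap. Your candidate map $\Phi$ sending the class of $a\otimes b$ in $(D\otimes D)/U''$ to $a\tw b$ is not well-defined: the element $ab\otimes c + bc\otimes a + ca\otimes b$ lies in $U''$, but its image under $a\otimes b\mapsto a\tw b$ is $(a,b,c)+(a,b,c)'$, which is not zero in $\tilde\bigwedge D$. Your parenthetical hedge about pre-composing with ``$\mathrm{id} - \tfrac13(\text{associator correction})$'' points in the wrong direction — the correction that makes things work is a \emph{commutator}, not an associator. The paper's $\tilde\eta$ actually sends $a\otimes b$ to $T(a,b) = 3(a\tw b) - ([a,b])_\alpha - ([a,b])_\beta$, and well-definedness then reduces to the cyclic sum identity for $T$ (equation (\ref{cyclic_sum_t_rel})), whose proof ultimately rests on $[ab,c]+[bc,a]+[ca,b]=3(a,b,c)$, an identity you never invoke. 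Similarly, your inverse $\Psi$ ``reading off the three components'' cannot factor through the relations unless each $D$-summand also absorbs a $\frac13[a,b]$ term, exactly as the paper's $\tilde\vartheta$ does.

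The other specific claim that fails is that ``the submodule of relations, projected to $D\oplus D'$, is spanned by diagonals together with $3(a,b,c)$ coming from antisymmetry.'' Every generator of the relation submodule of type (\ref{cyc_sum_3_alt}) projects to the diagonal element $(-(a,b,c),-(a,b,c))$, and the symmetric generators project to $0$; nothing of the form $(3(a,b,c),0)$ appears in the projection, so there is nothing to ``solve for.'' The decoupling you want is not obtained by isolating associator relations; it is obtained by building the commutator correction directly into the isomorphism. To repair the argument, drop the attempt to analyze the projected relation submodule, write down the maps
$$\tilde\eta(\textstyle\sum a_i\otimes b_i + a + b) = \sum T(a_i,b_i) + \tfrac13 h_2(a) + \tfrac13 h_3(b), \qquad
\tilde\vartheta(\textstyle\sum a_i\otimes b_i + a + b) = \tfrac13\sum\langle a_i,b_i\rangle + (\tfrac13\sum[a_i,b_i]+a)_\alpha + (\tfrac13\sum[a_i,b_i]+b)_\beta,$$
verify that each kills the appropriate relation submodule using $[ab,c]+[bc,a]+[ca,b]=3(a,b,c)$, and check they are mutually inverse via $T_{2,3}(a,b) = 3H(a,b) - h_2(1,[a,b]) - h_3(1,[a,b])$.
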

\begin{proof}If $D$ is associative, then $(a, b, c)_\alpha + (a, b, c)_\beta = 0$, so that there are no relations between elements in $D \otimes D$ and $D \oplus D$. Therefore the sum is direct and the presentation reduces to the claimed one.\\
We assume from now on that $D$ is alternative, $\card J = 2$  and $1/3 \in k.$ Then all elements can be written as a linear combinations 
$\sum_{i = 1}^n T(a_i, b_i) + h_2(a) + h_3(b)$ for $a_i, b_i, a, b \in D.$ Define
$  \tilde \eta : D \otimes D \oplus D \oplus D \rightarrow \tilde \bigwedge D $ by $ \sum a_i \otimes b_i + a + b \mapsto \sum T(a_i, b_i) + 1/3 h_2(a) +  1/ 3 h_3(b).$ This map is well-defined by the universal property of the tensor product and it factors through $U''$ by \ref{t_1_a_rel} and \ref{cyclic_sum_t_rel}. Thus the factor map $\eta$ is well-defined. Conversely define, $\tilde \vartheta : D \otimes D \oplus D \oplus D \rightarrow \langle D, D\rangle \oplus D_\alpha \oplus D_\beta $ by 
$\tilde \vartheta (\sum_{i = 1}^n a_i \otimes b_i + a + b) = 1/3( \sum \langle a_i, b_i \rangle) + (1/3 \sum_{i = 1}^n [a_i, b_i] +  a)_{\alpha} + (1/3\sum_{i = 1}^n [a_i, b_i] + b)_{\beta} .$ Again, the properties of the tensor product ensure that this is well-defined. We claim that $\tilde \vartheta$ factors through $U$. \\
\begin{eqnarray*}
\tilde \vartheta (a \otimes b + b \otimes a) &=& \langle a, b \rangle + \langle a, b \rangle + ([a, b] + [b,a])_{\alpha} + ([a, b] + [b,a])_{\beta} \\
&=&  \langle a, b \rangle + \langle a, b \rangle = 0
\end{eqnarray*}
\begin{eqnarray*}
 \lefteqn{ 3 \tilde \vartheta (ab \otimes c + bc \otimes a + ca\otimes b - (a, b, c )_2 - (a, b, c)_3)} \\
  &=& \langle ab, c \rangle + \langle bc , a \rangle + \langle ca, b \rangle  \\
&&+ ([ab,c] + [bc, a] + [ca, b] - 3 (a, b, c) )_\alpha + ([ab,c] + [bc, a] + [ca, b] - 3 (a, b, c) )_\beta \\
&=& 0 
\end{eqnarray*}
That $[ab,c] + [bc, a] + [ca, b] - 3(a, b, c) = 0$ is a well-known identity which holds in all alternative algebras. 
The indices $2$ and $3$ were introduced to distinguish the two $D$-components. 
Thus we also obtain an epimorphism $\vartheta :  \tilde \bigwedge D :  \langle D, D\rangle \oplus D_\alpha \oplus D_\beta.$ It is now easy to check that $\eta$ and $\vartheta$ are mutually inverse, if one uses the identity $T_{2,3}(a, b) = 3H (a, b) -h_{2}(1 ,[a, b]) - h_3(1, [a, b])$. 
\end{proof}
\begin{rem} For $J$ with $\card J > 2,$ the definition of $\tilde \bigwedge D$ depends on $\alpha$ and $\beta.$ However, we will only use it in the case $\card J = 2,$ so that this ambiguity does not affect the future applications. 
\end{rem}

Before proceeding to our main examples, we record the following fact which in fact holds for any $k$-algebra:

\begin{lem} Let $D$ be an arbitrary unital $k$-algebra. \label{base_change_at_hom} Assume that  $ k \rightarrow K$ is a flat base change. Then
$$ \langle D \otimes_k K,D \otimes_k K \rangle = \langle D, D \rangle\otimes_k K.$$\label{alt_cycle_base_change_lem}
\label{JA_cycle_base_change_lem}
\end{lem}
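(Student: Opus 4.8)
The module $\langle D, D\rangle$ is defined as a quotient of $D\otimes_k D$ by a submodule $U''$ with an explicit finite set of generators, namely $a\otimes b + b\otimes a$ and $ab\otimes c + bc\otimes a + ca\otimes b$. The statement is therefore a soft statement about how this construction interacts with a flat base change, and the plan is to exhibit $\langle D,D\rangle$ as the cokernel of a map of $k$-modules and then use flatness to commute cokernels with $-\otimes_k K$. First I would set up the exact sequence of $k$-modules
\begin{equation*}
F \xrightarrow{\;\phi\;} D\otimes_k D \longrightarrow \langle D, D\rangle \longrightarrow 0,
\end{equation*}
where $F$ is a free $k$-module on a formal index set parametrizing the generators of $U''$ (indices $(a,b)\in D\times D$ for the first family, indices $(a,b,c)\in D^3$ for the second), and $\phi$ sends these basis vectors to the corresponding elements $a\otimes b + b\otimes a$ and $ab\otimes c + bc\otimes a + ca\otimes b$ of $D\otimes_k D$. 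By construction $\operatorname{im}\phi = U''$, so the sequence is exact and $\langle D,D\rangle = \operatorname{coker}\phi$.

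Next I would apply $-\otimes_k K$, which is right exact for any base change, to obtain an exact sequence
\begin{equation*}
F\otimes_k K \xrightarrow{\;\phi\otimes_k K\;} (D\otimes_k D)\otimes_k K \longrightarrow \langle D,D\rangle\otimes_k K \longrightarrow 0.
\end{equation*}
Since $K$ is flat over $k$ we may identify $(D\otimes_k D)\otimes_k K$ with $(D\otimes_k K)\otimes_K(D\otimes_k K)$ via the canonical isomorphism $(M\otimes_k N)\otimes_k K \cong (M\otimes_k K)\otimes_K(N\otimes_k K)$, and this identification carries the element $a\otimes b$ to $(a\otimes 1)\otimes(b\otimes 1)$ and is $K$-linear; similarly $F\otimes_k K$ is free over $K$ on the same index set. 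Under these identifications $\phi\otimes_k K$ becomes precisely the map whose image is the submodule $U''_K$ of $(D\otimes_k K)\otimes_K(D\otimes_k K)$ generated (over $K$) by $(a\otimes 1)\otimes(b\otimes 1) + (b\otimes 1)\otimes(a\otimes 1)$ and $((a\otimes 1)(b\otimes 1))\otimes(c\otimes 1) + \cdots$, and these generators run over a $K$-spanning set of the analogous families for all of $D\otimes_k K$ (because the elements of the form $a\otimes 1$ generate $D\otimes_k K$ as a $K$-module and the defining relations are $K$-bilinear). Hence the cokernel of $\phi\otimes_k K$ is exactly $\langle D\otimes_k K, D\otimes_k K\rangle$, giving the desired isomorphism $\langle D\otimes_k K, D\otimes_k K\rangle \cong \langle D,D\rangle\otimes_k K$, and one checks it is induced by the natural map $a\tilde\wedge b \mapsto (a\otimes 1)\tilde\wedge(b\otimes 1)$.

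The only point requiring a little care — and the one I would treat as the ``main obstacle,'' though it is genuinely routine — is the verification that the $K$-submodule generated by the ``rank-one'' generators $(a\otimes 1)\otimes(b\otimes 1)+(b\otimes 1)\otimes(a\otimes 1)$ (with $a,b\in D$) coincides with the full submodule generated by $u\otimes v + v\otimes u$ for all $u,v\in D\otimes_k K$; this follows by writing arbitrary $u=\sum a_i\otimes\lambda_i$, $v=\sum b_j\otimes\mu_j$, expanding bilinearly, and absorbing the scalars $\lambda_i\mu_j\in K$, and similarly for the three-term cyclic relation using that multiplication in $D\otimes_k K$ is $K$-bilinear and extends that of $D$. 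I note that flatness is used exactly once, to preserve the kernel/image when identifying the source of $\phi\otimes_k K$ with a free $K$-module and its image with $U''_K$; right-exactness alone handles the cokernel. The same argument, mutatis mutandis, applies to the Jordan-algebra variant $J*J$ and would prove the companion statement referenced as Lemma~\ref{JA_cycle_base_change_lem}.
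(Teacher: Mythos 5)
Your proof is correct, and it is a genuine (if closely related) variant of the one in the paper. The paper works with the short exact sequence
\begin{equation*}
0 \longrightarrow C_k \longrightarrow D\otimes_k D \longrightarrow \langle D,D\rangle \longrightarrow 0,
\end{equation*}
where $C_k$ is the submodule of relations, then invokes flatness to preserve injectivity of the left-hand map after tensoring with $K$, identifies $C_k\otimes_k K$ with the corresponding submodule $C_K$ of $(D\otimes_k K)\otimes_K(D\otimes_k K)$, and finishes with a morphism of short exact sequences. You instead take a free presentation $F\to D\otimes_k D \to \langle D,D\rangle \to 0$, tensor with $K$ using only right-exactness, transport the image of $\phi\otimes_k K$ across the canonical isomorphism $(D\otimes_k D)\otimes_k K\cong (D\otimes_k K)\otimes_K(D\otimes_k K)$, and observe that it is the full relation submodule over $K$ by bilinearity of the defining relations. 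One consequence worth noting: your version actually does not use flatness at all — $F$ is free, so $F\otimes_k K$ is free over $K$ unconditionally, and the image of $\phi\otimes_k K$ is by definition the $K$-span of the pushed-forward generators, so right-exactness alone carries the cokernel. Your remark that ``flatness is used exactly once, to preserve the kernel/image'' is therefore a small misdiagnosis of your own argument: there is no kernel to preserve, only an image to compute. So while the paper's hypothesis and argument genuinely require flatness (to keep $C_k\otimes_k K \to (D\otimes D)\otimes_k K$ injective), your route establishes the identity $\langle D\otimes_k K, D\otimes_k K\rangle \cong \langle D,D\rangle\otimes_k K$ for an arbitrary base change $k\to K$, which is a mild but real strengthening.
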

\begin{proof}
Denote by $C_k$ the $k$-span of all elements $\{ab \otimes c + bc \otimes a + ca \otimes b, a \otimes b + b \otimes a : a,b, c \in D\}$ and analogously define $C_K$ for the algebra $D_K=  D \otimes_k K.$ 
Then the sequence
$$\{0\} \rightarrow C_k \rightarrow D \otimes D \to \langle D ,D \rangle \to \{0\} $$
is exact in the category of $k$-modules. 
Since the base change is flat, we obtain another exact sequence: 
$$\{0\} \rightarrow C_k \otimes_k K \rightarrow (D \otimes D) \otimes_k K \to \langle D ,D \rangle  \otimes_k K \to \{0\}. $$
It is well known that the map 
\begin{eqnarray*}
\Phi : (D \otimes D) \otimes_k K & \rightarrow & (D \otimes_k K)  \otimes_K (D \otimes_k K), \\
(a \otimes b) \otimes \alpha  & \mapsto & (a \otimes_k \alpha) \otimes (b \otimes_k 1)
\end{eqnarray*}
for $a, b \in D$ and $\alpha \in K$ is an isomorphism. Its inverse map is
\begin{eqnarray*}
\Psi :    (D \otimes_k K)  \otimes_K (D \otimes_k K)& \rightarrow & (D \otimes D) \otimes_k K, \\
(a \otimes_k \alpha) \otimes (b \otimes_k \beta)  & \mapsto & (a \otimes b) \otimes \alpha \beta.  
\end{eqnarray*}

It is a straightforward exercise to show that $\Phi(C_k \otimes K) \subset C_K$ and that $\Psi(C_K) \subset C_k \otimes K. $
Hence the $\Phi$ restricted to $C_k \otimes K$ is an isomorphism onto $C_K.$ 
Hence we obtain a morphism of exact sequences
$$
\xymatrix{ & \{0\}  \ar[r] \ar[d] & C_k \otimes_k K \ar[r]\ar[d]^{\Phi}& (D \otimes D) \otimes_k K \ar[r] \ar[d] & \langle D ,D \rangle \otimes_k K \ar[r] \ar[d]& \{0\} \ar[d]\\
 & \{0\}  \ar[r] & C_K  \ar[r] &(D \otimes_k K)  \otimes (D \otimes_k K )  \ar[r] & \langle D \otimes_k K ,D \otimes_k K\rangle   \ar[r] &\{0\} 
 } 
 $$
where the middle and left vertical arrows are isomorphisms. Therefore, so is the map on the right. 
\end{proof}

\section{$A_2$-graded Lie algebras}
\label{alternative_coordinatization}
We fix some notation for the rest of the section: 
\begin{itemize}
\item $R$ is the root system $A_2,$ with the collinear $3$-grading given  by $R_1 = \{\epsilon_1 - \epsilon_2, \epsilon_1 - \epsilon_3\},$
\item  $\mathbb{DS}:= \mathbb{DS}(A_2),$ $\mathbb{DP}:= \mathbb{DP}(A_2),$
\item  $D$ is an alternative unital algebra,
\item $V  = \mathbf{M}(1 ,2, D )$ is the matrix Jordan pair  with coordinates in $D,$ 
\item $L = \TKK(V).$
\end{itemize}
Then $L$ has a $3$-grading and  an $R$-grading which are compatible in the following way: 
$$L = L^{-1} \oplus L^ 0 \oplus L^1, \quad L^{i} = \bigoplus_{\alpha \in R_{i}} L_{\alpha}, \; i \in \{-1, 0, 1\}.$$
Recall that if $u: \mathfrak{uce}(L) \rightarrow L$ is the universal central extension then 
\begin{equation} \label{A_2_in_description_recal;} \ker(u) = \bigoplus_{\alpha \in \mathbb{DS}} \mathfrak{uce}(L)_\alpha \oplus (\ker u)_0. \end{equation}
\subsection{Degenerate sums $A_2$}
The main goal of this subsection is to prove a description of the $k$-module $\mathfrak{uce}(L)_\gamma$ where $\gamma$ is not a root. Since the structure of $L$ only depends on the alternative algebra $D$ there should be a way to find a ``coordinatization'' of $\mathfrak{uce}(L)$ solely in terms of $D$ and its algebraic properties. 

\begin{lem} 
 Let $D$ be a linear \textbf{unital} algebra. 
 The $k$-submodule $D[D, D] + D(D,D, D)$ equals the two-sided ideal $\mathfrak m$ of $D$ which is generated by $[D, D] + (D,D,D)$. \end{lem}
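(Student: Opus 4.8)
Write $N := D[D,D] + D(D,D,D)$ and let $\mathfrak m$ be the two-sided ideal of $D$ generated by the submodule $S := [D,D] + (D,D,D)$. The plan is to prove the two inclusions $N \subseteq \mathfrak m$ and $\mathfrak m \subseteq N$ separately. The inclusion $N \subseteq \mathfrak m$ is essentially immediate: since $D$ is unital, $[D,D] \subseteq S \subseteq \mathfrak m$ and $(D,D,D) \subseteq S \subseteq \mathfrak m$, and $\mathfrak m$ is in particular a left ideal, so $D[D,D] \subseteq \mathfrak m$ and $D(D,D,D) \subseteq \mathfrak m$, whence $N \subseteq \mathfrak m$.

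For the reverse inclusion $\mathfrak m \subseteq N$, the key observation is that $\mathfrak m$ is the smallest two-sided ideal containing $S$, so it suffices to show that $N$ itself is a two-sided ideal of $D$ containing $S$. Containment of $S$ in $N$ follows from unitality: $[a,b] = 1\cdot[a,b] \in D[D,D]$ and $(a,b,c) = 1\cdot(a,b,c) \in D(D,D,D)$. That $N$ is a \emph{left} ideal is clear from its very definition as $D[D,D] + D(D,D,D)$ (left multiplications of $D$ visibly preserve it, using that $D$ is a $k$-module closed under its own multiplication). The real content is showing $N$ is a \emph{right} ideal, i.e. $Nd \subseteq N$ for all $d \in D$; this is where the algebra identities enter and where I expect the main obstacle to lie.

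To show $N d \subseteq N$, I would handle the two summands in turn. For $D(D,D,D)d$: expand $(a,b,c)d$ using a standard alternative-algebra identity expressing $(a,b,c)d$ in terms of associators (e.g. the Teichm\"uller-type identity, or the identity $(ab,c,d)-(a,bc,d)+(a,b,cd)=a(b,c,d)+(a,b,c)d$, which rearranges to write $(a,b,c)d$ as a $k$-combination of terms of the shape $x(y,z,w)$ plus associators of products). After rearrangement every term lands in $D(D,D,D) \subseteq N$. For $D[D,D]d$, i.e. terms $x[a,b]d$ with $x\in D$: use the identity $[a,b]d = [a,bd] - [a,d]b + (\text{associator terms})$ — more precisely, in any algebra $[ab,c] = a[b,c] + [a,c]b$ and its companion — to push the right factor $d$ inside a commutator at the cost of associators; alternatively note $[a,b]d = d[a,b] + [[a,b],d]$ reduces the problem to $d[a,b] \in D[D,D] \subseteq N$ plus $[[a,b],d]$, and iterate/expand the double commutator via the alternative identities \eqref{alt_id_1}--\eqref{alt_id_4} to land back in $N$. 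The bookkeeping for the alternative (non-associative) case — keeping careful track of which associators and commutators arise and checking each sits in $D[D,D]+D(D,D,D)$ — is the main obstacle; in the associative case it collapses since all associators vanish and one only needs $[a,b]d = d[a,b]+[[a,b],d]$ together with $[[a,b],d]\in[D,D]\subseteq D[D,D]$. Once both summands are shown stable under right multiplication, $N$ is a two-sided ideal containing $S$, hence $\mathfrak m \subseteq N$, and combined with the first paragraph this gives $N = \mathfrak m$.
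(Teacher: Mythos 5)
Your overall strategy — show $N := D[D,D]+D(D,D,D)$ is a two-sided ideal containing $[D,D]+(D,D,D)$, hence equal to $\mathfrak m$ — is exactly the paper's strategy. But there are two genuine problems in how you execute it.

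First, and most seriously, your claim that left-ideal stability of $N$ ``is clear from its very definition'' is false. The statement is about an arbitrary linear unital algebra $D$, with no associativity. For $d,x,a,b\in D$ the element $d\bigl(x[a,b]\bigr)$ is \emph{not} automatically of the form $y[a,b]$; one has only
$$d\bigl(x[a,b]\bigr)=(dx)[a,b]-\bigl(d,x,[a,b]\bigr),$$
so establishing $D\cdot N\subseteq N$ requires the same kind of associator bookkeeping as $N\cdot D\subseteq N$. Indeed, in the paper the containment $D(D[D,D])\subseteq(D,D,D)+D[D,D]$ is one of the displayed steps, not an observation that comes for free. By writing ``left multiplications visibly preserve $N$'' you have implicitly assumed $D(DM)\subseteq DM$ — i.e. associativity — which is precisely what is not available.

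Second, you invoke the alternative-algebra identities \eqref{alt_id_1}--\eqref{alt_id_4} to expand double commutators. Those identities hold only in alternative algebras, whereas the lemma is stated and needed for general linear unital algebras (and $\mathcal H_I(D,\bar{\ })$, to which it is later applied, is a Jordan algebra, not alternative). What the proof actually uses is: the definition of the associator to move factors in and out of products, the identity $[ab,c]=a[b,c]+[a,c]b$ (available in any linear algebra), unitality to see $[D,D]\subseteq D[D,D]$ and $(D,D,D)\subseteq D(D,D,D)$, and the Teichm\"uller identity — which you do correctly cite — to control $(D,D,D)D$ and $D(D,D,D)$. If you replace the appeal to the alternative identities by these general ones and fill in the left-ideal computation symmetrically to the right-ideal one, the argument closes and matches the paper's. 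A cleaner route, which the paper takes, is to first prove the symmetric equalities $D[D,D]=[D,D]D$ and $D(D,D,D)=(D,D,D)D$ (using unitality and Teichm\"uller), which then lets you treat the two-sidedness more uniformly.
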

 \begin{proof}
First, using the unit of the algebra 
$$[D, D]D \subset [[D, D], D] + D[D, D] \subset [D,D] + D[D, D] \subset D[D, D]$$
and also  $$D[D, D] \subset [D, [D, D]] + [D, D]D \subset [D,D] + [D, D]D \subset [D, D]D$$
hence \begin{equation}
D[D, D] = [D, D]D. \label{comm_teich_id} 
\end{equation}
The Teichmuller identity (see \cite[p.336]{TasteOfJA}) which holds in every linear algebra states that for any $4$ elements $x,y, z,w$ in a linear algebra
 $$(xy, z, w) - (x, yz, w) + (x,y, zw)  = (x,y,z)w + x(y,z,w).$$
 Therefore
 we have 
 \begin{eqnarray*}
 D(D,D, D) & \subset & (D, D, D) + (D, D, D)D = (D, D, D)D  \\ (D, D, D)D & \subset & D(D, D, D) + (D, D, D) = D(D,D,D). 
 \end{eqnarray*}
Hence \begin{equation} D(D,D, D) = (D, D, D)D.  \label{teich_id} \end{equation}
Denote the ideal in $D$ which is generated by $[D, D] + (D,D,D)$ by $\mathfrak m.$ 
Clearly, $D[D, D] + D(D,D, D) \subset \mathfrak  m.$ We need to show that $D[D, D] + D(D,D, D)$ is an ideal, then it will follow that $D[D, D] + D(D,D, D) =  \mathfrak m.$ \\
By definition of the associator, 
\begin{eqnarray}
D(D[D, D]) & \subset &  (D, D, [D, D]) + D^2[D, D]  \\ & \subset &  (D, D, D) + D[D, D]. \label{sec_2_id_2}
\end{eqnarray}
Since $D$ is unital, $$(D, D, D) + D[D, D] \subset D(D, D,D) + D[D, D].$$ Using (\ref{comm_teich_id}), $(D[D, D])D = ([D, D]D)D.$ By an analogous argument it follows that  $$([D, D]D)D \subset (D, D, D)D + [D, D]D.$$ The two identities (\ref{comm_teich_id}) and (\ref{teich_id}) now imply that $$(D[D, D])D \subset D(D, D, D) + D[D, D].$$ 
So far we have 
\begin{equation} D(D[D, D]) + (D[D, D])D \subset D(D, D ,D) + D[D, D].  \label{commutators_id} \end{equation}
  Furthermore, 
  \begin{eqnarray*}
  D(D(D, D, D)) & \subset & (D, D , (D, D, D)) + D^2(D, D,D) \\ & \subset & (D, D,D) + D(D, D, D) \subset D(D, D,D) \end{eqnarray*}
  which is using again that $D$ is unital. \\
  Lastly, by (\ref{teich_id}):
  \begin{eqnarray*}
  (D(D, D, D))D &= &  ((D, D,D)D)D \subset ((D, D,D), D, D) + (D, D, D)D^2 \\ &\subset & (D, D, D) + (D, D, D)D \\ &=& (D, D, D) + D(D, D, D) = D(D, D, D). \end{eqnarray*}
  Thus
  $$D(D(D, D, D)) + (D(D, D, D))D \subset D(D, D,D) .$$
  Together with (\ref{commutators_id}), this proves that $D[D, D] + D(D,D, D)$ is an ideal.
\end{proof}

\begin{lem} Let $D$ be  a unital algebra.  Then 
$$D([D, D]D) \subset D[D, D] + (D,D,D). \label{sec_2_id_3} $$
If $D$ is associative, then $[D, D]D$ is an ideal and ${D_3}$ is a an algebra. 
\label{lemma_sec_3}
\end{lem}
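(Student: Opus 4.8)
The statement to prove is Lemma~\ref{lemma_sec_3}: for a unital $k$-algebra $D$ one has $D([D,D]D) \subseteq D[D,D] + (D,D,D)$, and if $D$ is associative then $[D,D]D$ is an ideal (and $D_3$, in the notation of the excerpt, is an algebra). The natural approach is to reduce everything to bracket-and-associator manipulations, using the unital hypothesis to absorb stray factors, exactly in the style of the preceding lemma. I would first recall the two identities established just above, namely $D[D,D] = [D,D]D$ (equation~\eqref{comm_teich_id}) and $D(D,D,D) = (D,D,D)D$ (equation~\eqref{teich_id}), together with the Teichm\"uller identity $(xy,z,w) - (x,yz,w) + (x,y,zw) = (x,y,z)w + x(y,z,w)$; these are the only tools needed.

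For the first inclusion, I would write a typical element of $D([D,D]D)$ as $d\big([a,b]c\big)$ and expand it via the associator: $d\big([a,b]c\big) = \big(d[a,b]\big)c - \big(d,[a,b],c\big)$. The first summand lies in $\big(D[D,D]\big)D = \big([D,D]D\big)D$ by \eqref{comm_teich_id}, and one more associator expansion plus \eqref{comm_teich_id} and \eqref{teich_id} pushes it into $D[D,D] + (D,D,D)$, just as in the displayed computation for $(D[D,D])D$ in the previous lemma; the second summand $\big(d,[a,b],c\big)$ already sits in $(D,D,D)$. Hence $D([D,D]D) \subseteq D[D,D] + (D,D,D)$ as claimed. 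The bookkeeping is routine because every term is either visibly a product of $D$ with a commutator, or visibly an associator, after at most two expansions and an application of \eqref{comm_teich_id}/\eqref{teich_id}.

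For the associative case, the associator term vanishes, so the computation above collapses to showing $[D,D]D$ is closed under left and right multiplication by $D$. Right multiplication is immediate. For left multiplication, $d\big([a,b]c\big) = \big(d[a,b]\big)c$ by associativity, and $d[a,b] = d(ab) - d(ba) = [d,ab] + (ab)d - [d,ba] - (ba)d \in [D,D] + [D,D]\cdot D$; since $D$ is unital and $[D,D]D$ absorbs $D$ on the right, this lands in $[D,D]D$. (Alternatively, invoke the standard identity $[ab,c] = a[b,c] + [a,c]b$ valid in any associative algebra to see $D[D,D] \subseteq [D,D]D$ directly.) Thus $[D,D]D$ is a two-sided ideal of $D$, and consequently the quotient $D_3 := D/([D,D]D)$ inherits an associative unital $k$-algebra structure. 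The main obstacle — really the only one — is keeping the associator expansions organized in the non-associative first part; there is no conceptual difficulty, and the unital hypothesis is what makes every "dangling" factor reabsorbable.
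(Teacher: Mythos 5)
Your proof is correct but takes a more roundabout path than the paper's, and one step is cited a bit loosely. The paper first rewrites $D([D,D]D) = D(D[D,D])$ at the module level using (\ref{comm_teich_id}), and then quotes the already-proven inclusion $D(D[D,D]) \subset (D,D,D) + D[D,D]$ from (\ref{sec_2_id_2}) verbatim — two one-line steps, no new computation. You instead expand the associator on elements first, $d([a,b]c) = (d[a,b])c - (d,[a,b],c)$, which is fine, but then you must bound $(D[D,D])c$. Your appeal to ``the displayed computation for $(D[D,D])D$ in the previous lemma'' does not quite deliver what you need: that chain ends with $(D[D,D])D \subset D(D,D,D) + D[D,D]$, which is not on its face inside the target $D[D,D] + (D,D,D)$, and it is not established in the paper that $D(D,D,D) \subset D[D,D] + (D,D,D)$. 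To close the gap you should instead use the direct expansion $([a,b]c)d = [a,b](cd) + ([a,b],c,d)$, which yields the sharper $([D,D]D)D \subset [D,D]D + (D,D,D)$, and combine with (\ref{comm_teich_id}); with that substitution your argument is complete, but you cannot simply cite the earlier displayed inclusion. For the associative case your reasoning and the paper's are essentially equivalent — the associators vanish and $D[D,D] = [D,D]D$ becomes a two-sided ideal by unitality; the paper uses the one-liner $D^2[D,D]=D[D,D]$, you use $[ab,c] = a[b,c] + [a,c]b$, and both are fine.
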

\begin{proof}
For all unital algebra, we have $D[D, D] = [D, D]D$ as seen above in (\ref{comm_teich_id}),then we can apply (\ref{sec_2_id_2}) to conclude that
$D([D, D]D) = D(D[D, D]) \subset D[D, D] + (D,D,D).$ 
If $D$ is associative and unital, then $(D,D,D) = \{0\}$ and hence 
$D(D[D,D]) + (D[D,D]D)  = D^2[D,D] = D[D, D].$ Therefore  $D[D,D]$ is a two sided ideal. 
\end{proof}

We introduce the following notation: for $a, b, c \in D$, $D$ an algebra 
$$ab.c := (ab)c. $$
This saves some parentheses.

\begin{defi}
If $D$ is an alternative unital $k$-algebra then define $D_3$ to be the  quotient of $D$ modulo the $k$-module spanned by 
$$3D, D[D, D], (D, D, D),   \{(ad.c + a.dc + a.cd) b : a, b, c, d \in D \}.$$

 The coset of $a \in D$ in  $D_3$ is denoted by $\bar a$.
\end{defi}
We note some useful identities which hold in $D_3:$
\begin{eqnarray}
\overline{c.(ab - ba)} &=& 0  \label{D_3_1}\\
\overline{(a, b,c)} &=& 0 \label{D_3_2}\\
\overline{(ab - ba).c} &=& 0  \label{D_3_1_2}\\
\overline{(c.da)b} &=& \overline{(cb.d)a} \label{D_3_3}\\
\overline{a(c.db)} &=& \overline{(c.db)a}  \label{D_3_4} \\
\overline{a(b.dc)} &=& \overline{(cd.b)a}  \label{D_3_5} \\
\overline{ab.c - bc.a} &= & 0  \label{D_3_6}\\
\overline{(c.da)b +  a(c.db) + a(b.dc)} &= & 0 \label{D_3_7} 
\end{eqnarray}
Identities (\ref{D_3_1}) and (\ref{D_3_2}) follow immediately from the definition. The equality in (\ref{D_3_1_2}) is a consequence of $[D, D]D \subset D[D, D] + (D, D,D)$, see Lemma~\ref{lemma_sec_3}. \\
Likewise, $ab.c - bc.a = ab.c - a.bc  = 0$ by (\ref{D_3_1}) and (\ref{D_3_2}).
We show (\ref{D_3_3}):  $\overline{(c.da)b} = c(da.b) = \overline{c(b.da)} = \overline{cb.da} = \overline{(cb.d).a}.$
The equality in  (\ref{D_3_4}) is immediate from the fact that $\overline{[D,D]} = 0.$ Further, in order to show (\ref{D_3_5}) observe that $\overline {(D, D,D)} = 0,$ so $ \overline{a(b.dc)} = \overline{ab.dc} $. Since also $ \overline{D[D, D]} = 0,$ this is equal to $ \overline{ab.cd}.$ Then we use again $\overline {(D, D,D)} = 0$ and  $\overline{D[D, D]} = 0$ for the following two equalities respectively $\overline{ab.cd} = \overline{a(b.cd)} = \overline{(b.cd)a}.$
 Identity (\ref{D_3_7}) follows from (\ref{D_3_3}), (\ref{D_3_4}) and (\ref{D_3_5}) and the assumption that $(cb.d + c.db + cb.d) a = 0$ in $D_3$ by definition. 
We show (\ref{D_3_3}):  $\overline{(c.da)b} = c(da.b) = \overline{c(b.da)} = \overline{cb.da} = \overline{(cb.d).a}.$
The calculations for (\ref{D_3_4}) and (\ref{D_3_5}) are: 
$$ \overline{a(b.dc)} = \overline{(b.dc)a} $$
and $$ \overline{a(b.dc)} = \overline{ab.dc} = \overline{ab.cd} = \overline{a(b.cd)} = \overline{(b.cd)a}.$$

\begin{rem} If $D$ is associative and unital, then $D_3$ is a commutative algebra with product $ \bar  a \bar b = \overline{ab}.$
\end{rem}
\begin{proof} In view of Lemma~\ref{lemma_sec_3}, it is enough to show that for all $a, b, c, d \in D$ the element $(ad.c + a.dc + a.cd) b$ lies in $D[D, D] + 3D.$  We have $(ad.c + a.dc + a.cd).b = (2ad.c + acd).b \in (acd - adc).b + 3D = a.cdb - adcb + 3D \in(cdb - dcb)a + 3D + D[D,D] $
and $(cdb - dcb)a  =  (cd)(ba) - (dcb)a  \in (dc)(ba) - (dcb)a + D[D,D] =  (dc b  - dcb)a + D[D,D] = D[D,D]$, thus $(ad.c + a.dc + a.cd) b \in 3D + D[D, D].$ Therefore, $D_3$ equals the quotient of $D$ modulo the ideal $D[D, D] + 3D.$ This proves the remark. 
\end{proof}

\begin{cor} \label{root_space_iso_lem}Let $u: \mathfrak{uce}(L) \rightarrow L$ be the universal central extension. Then
 $u: \mathfrak{uce}(L)_\alpha \rightarrow L_\alpha$ is a bijection for $ \alpha \in R^\times$. 
\end{cor}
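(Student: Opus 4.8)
The statement to prove is Corollary~\ref{root_space_iso_lem}: if $u:\mathfrak{uce}(L)\to L$ is the universal central extension of $L=\TKK(V)$ with $V=\mathbf M(1,2,D)$, then $u:\mathfrak{uce}(L)_\alpha\to L_\alpha$ is bijective for every $\alpha\in R^\times$. The plan is to invoke Proposition~\ref{torsion_bijection_prop}, which was proven for an arbitrary $R$-graded Lie algebra over $k_R$ and a $\mathcal Q(R)$-graded central covering $f\colon K\to L$. So the two things to check are: (a) $L$ is an $R$-graded Lie algebra over $k_R$ for $R=A_2$, and (b) $u:\mathfrak{uce}(L)\to L$ is a $\mathcal Q(R)$-graded central \emph{covering}.

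For (a): By Proposition~\ref{rect_grid_la_prop}(ii), $L=\TKK(V)$ is $\dot A_K$-graded, hence $A_2$-graded since $\card K=3$. The base-ring hypothesis is automatic: $R=A_2$ has no irreducible component of type $C_I$ (recall $C_1=A_1$, $C_2=B_2$, and $A_2\neq C_n$ for any $n$), so $k_R=k$ is an arbitrary commutative associative unital ring, and indeed $L$ is perfect by Proposition~\ref{K_R_turns_rogral_perfect}(i). For (b): since $L$ is perfect, the universal central extension exists (by the Theorem following Proposition~\ref{graded_cover_neh}) and $u:\mathfrak{uce}(L)\to L$ is a covering because $\mathfrak{uce}(L)$ is perfect by the Proposition preceding that Theorem. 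Moreover, by Proposition~\ref{graded_cover_neh}, since $L$ is $\mathcal Q(R)$-graded, $\mathfrak{uce}(L)$ inherits a $\mathcal Q(R)$-grading making $u$ a graded Lie algebra morphism, i.e.\ a $\mathcal Q(R)$-graded central covering.

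With (a) and (b) in hand, Proposition~\ref{torsion_bijection_prop} applies to $f=u$: for any $0\neq\gamma\in\supp_{\mathcal Q(R)}(\mathfrak{uce}(L))$, either $\gamma\in R^\times$ and $u:\mathfrak{uce}(L)_\gamma\to L_\gamma$ is a bijection, or $\gamma$ is a degenerate sum. In particular, restricting to $\gamma=\alpha\in R^\times$, alternative (i) of that proposition gives precisely that $u:\mathfrak{uce}(L)_\alpha\to L_\alpha$ is a bijection, which is the assertion of the corollary. (If $\alpha\notin\supp_{\mathcal Q(R)}(\mathfrak{uce}(L))$ then $\mathfrak{uce}(L)_\alpha=0$; but $\supp L\subset\supp\mathfrak{uce}(L)$ by the remark after Proposition~\ref{graded_cover_neh}, and $\alpha\in R^\times$ forces $L_\alpha\ni e_\alpha\neq 0$, so in fact $\alpha$ is always in the support and this degenerate case does not arise.)

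There is essentially no obstacle here: the corollary is a direct specialization of the already-established Proposition~\ref{torsion_bijection_prop} to the rectangular matrix Jordan pair of size $1\times 2$. The only minor point to spell out is the verification that the hypotheses of Proposition~\ref{torsion_bijection_prop} — that $L$ is $R$-graded over $k_R$ and that the covering is $\mathcal Q(R)$-graded — are met, which follows from Propositions~\ref{rect_grid_la_prop}, \ref{K_R_turns_rogral_perfect} and \ref{graded_cover_neh} as indicated above. One could alternatively derive the statement from Proposition~\ref{iso_from_grid_lem_ce} for roots of the form $\alpha-\beta$ with $\alpha,\beta\in R_1$ collinear, but going through Proposition~\ref{torsion_bijection_prop} directly covers all of $R^\times$ uniformly and is cleaner.
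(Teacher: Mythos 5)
Your argument is correct and follows exactly the paper's route: the paper proves this corollary in one line by appealing directly to Proposition~\ref{torsion_bijection_prop}, which you do as well. The extra work you put in — verifying $k_R=k$ since $A_2$ is not of type $C$, citing Proposition~\ref{K_R_turns_rogral_perfect} for perfectness, Proposition~\ref{graded_cover_neh} for the graded covering structure, and handling the case $\alpha\notin\supp\mathfrak{uce}(L)$ — just makes explicit the hypotheses that the paper leaves to the reader to check.
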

\begin{proof}
This is an immediate consequence of Proposition~\ref{torsion_bijection_prop}. 
\end{proof} 
\begin{defi}Let $1 \leq i \neq j \leq 3$, $a \in D$ and $g : K \rightarrow L$ a $\mathcal Q(R)-$graded covering. By Corollary~\ref{root_space_iso_lem} there are well-defined elements
$$x_{ij}(a) :  = g^{-1}(E_{ij}a) \in K_{\epsilon_i - \epsilon_j}.$$ 
$$x_{ji}(a) : = g^{-1}(E_{ji}a) \in  K_{\epsilon_j - \epsilon_i}.$$

Also, (\ref{collinear_zero_hom_eqn})
$L_0 = \sum_{j \in J }[E_{1j}D, E_{j1}D]$ and $K_0 = \sum_{j \in J}[x_{1j}(D), x_{j1}(D) ].$

\end{defi}

\begin{lem} \label{alt_on _deg_pairs_lem} 
There are $2^6$ maps $s : R \times R \rightarrow \{-1, 0, 1\}$ with the following two properties
\begin{itemize}
\item[\rm (i)] $ s(\alpha, \beta) \neq 0 \iff (\alpha, \beta) \in \mathbb{DP},  $
\item[\rm (ii)]$s(\alpha, \beta) = - s(\beta, \alpha) $ for all $(\alpha, \beta) \in R \times R.$
\end{itemize}
\end{lem}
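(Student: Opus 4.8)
The statement counts maps $s : R \times R \to \{-1,0,1\}$ satisfying (i) the support condition ($s(\alpha,\beta) \neq 0$ exactly on degenerate pairs) and (ii) antisymmetry $s(\alpha,\beta) = -s(\beta,\alpha)$. The plan is first to pin down the degenerate pairs of $A_2$ explicitly and then to count the sign assignments. From Table~\eqref{tab:DegSums} and the earlier analysis (Lemma~\ref{vdK_2.6}(i) gives divisor $3$ for $A_2$), the degenerate sums are $\pm\{\varepsilon_1 - 2\varepsilon_2 + \varepsilon_3,\ \varepsilon_1 + \varepsilon_2 - 2\varepsilon_3,\ 2\varepsilon_1 - \varepsilon_2 - \varepsilon_3\}$, each of which arises (up to order) as a sum of two long roots meeting at an angle $\pi/3$. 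So the first step is: each degenerate sum $\gamma$ has a representation $\gamma = \alpha + \beta$ with $\alpha,\beta$ linearly independent long roots, and I would check that for $A_2$ this representation is unique up to swapping $\alpha$ and $\beta$. Hence the set $\mathbb{DP}(A_2)$ consists of $6$ unordered pairs, i.e. $12$ ordered pairs, grouped into $6$ two-element orbits under the swap $(\alpha,\beta) \leftrightarrow (\beta,\alpha)$.

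The second step is the counting itself. Property (i) forces $s$ to vanish off $\mathbb{DP}(A_2)$, so $s$ is determined by its values on the $12$ ordered degenerate pairs, and each such value must lie in $\{-1,+1\}$ (it cannot be $0$ by (i)). Property (ii) says that on each swap-orbit $\{(\alpha,\beta),(\beta,\alpha)\}$ the two values are negatives of each other, so the orbit is determined by a single choice in $\{-1,+1\}$. Since there are $6$ orbits and the choices are independent, the number of admissible $s$ is $2^6$. I would present this as: a map $s$ satisfying (i) and (ii) corresponds bijectively to a function from the set of $6$ unordered degenerate pairs to $\{-1,+1\}$ (fix an orientation of each unordered pair, read off $s$ there, and extend by antisymmetry); conversely any such function yields a valid $s$, and one must verify (i) holds — which it does because $\pm 1 \neq 0$ — and (ii) holds by construction.

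The main obstacle, such as it is, is the bookkeeping in the first step: confirming that $A_2$ has exactly $6$ degenerate pairs and that no degenerate sum admits an essentially different decomposition into two independent roots. This follows from the explicit root description $A_2 = \{\varepsilon_i - \varepsilon_j : 1 \le i \ne j \le 3\} \cup \{0\}$ and the fact that $\langle\gamma,\beta\che\rangle = 3$ for a divisor-$3$ degenerate sum (Lemma~\ref{vdK_2.6}(i)); one checks directly, say for $\gamma = 2\varepsilon_1 - \varepsilon_2 - \varepsilon_3$, that the only ways to write $\gamma = \alpha + \beta$ with $\alpha,\beta \in A_2^\times$ independent are $(\varepsilon_1 - \varepsilon_2) + (\varepsilon_1 - \varepsilon_3)$ and its swap. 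Once this is in hand the count $2^6$ is immediate, and no genuinely hard argument is needed; the lemma is essentially a normalization/sign-convention statement preparing the subsequent construction of the central extension of the degenerate-sum components.
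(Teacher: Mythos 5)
Your proposal is correct and follows essentially the same route as the paper's proof: there are six degenerate sums in $A_2$, each admitting a unique decomposition $\gamma = \alpha + \beta$ up to swapping, and antisymmetry together with the support condition then gives exactly $2^6$ sign assignments. The paper's proof is simply a terser version of the same counting argument.
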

\begin{proof}
There are $6$ degenerate sums. Each can be written in exactly two ways $\gamma = \alpha + \beta = \beta + \alpha$ for $\alpha, \beta \in R^\times$ and we get in total $2^6$ different ways to assign a value to $s(\alpha,\beta)$, $\alpha + \beta$ a degenerate sum and each of them fulfills the conditions set out in the lemma.
\end{proof}

\begin{defi} With the notation as in Lemma~\ref{alt_on _deg_pairs_lem}, define
 $$s(ij, kl) := s(\alpha, \beta) \;  \mbox{ for } \alpha  = \epsilon_i - \epsilon_j, \beta =\epsilon_k - \epsilon_l \in A_2. $$
\end{defi}
\begin{defi}Let $\mathcal Z = \bigoplus_{\gamma \in \mathbb{DS}}(D_3)_\gamma$ be the direct sum of six copies of $D_3$, viewed as $\mathcal Q(R)$ -graded module  with $\supp \mathcal Z = \mathbb{DS}$. For $s: R \times R  \rightarrow \{\pm 1, 0\}$ a map as given in Lemma~\ref{alt_on _deg_pairs_lem}, 
define a bilinear map $\psi : L \times L \rightarrow \mathcal Z $ by 
 \begin{eqnarray}
\psi(E_{ij} a, E_{kl} b) &=& \begin{cases} s(ij, kl)(\overline {ab})_{ \epsilon_i - \epsilon_j+   \epsilon_k - \epsilon_l} & \text{if }(\epsilon_i - \epsilon_j,  \epsilon_k - \epsilon_l) \in \mathbb{DP},\\
0&\text{else. } \end{cases} \label{cocyc_A_2_def} ,\\
\psi(L_0, L) &=& \psi(L, L_0) = \{0\} . \label{cocyc_A_2_def2}
\end{eqnarray}
\label{cocyc_recall}
\end{defi}


\begin{lem}\label{Lemma_1_for_A_2} 
\begin{itemize} \item[\rm (i)]
The bilinear map $\psi$ is a $\mathcal{Q}(R)$-graded $2$-cocycle on $L$ with coefficients in $\mathcal Z$. 
\item[\rm (ii)] The  central extension $L \oplus_{\psi} \mathcal Z \rightarrow L$ (see Definition~\ref{central_ext_by_cocyle}) is a $\mathcal{Q}(R)$-graded covering.
\item[\rm (iii)] By the universal property of $\uce L $ there is a a unique graded Lie algebra epimorphism $\pi : \uce L \rightarrow L \oplus_{\psi} \mathcal Z$  such that $$\pi(\uce {L}_\gamma) = \mathcal Z_\gamma \cong D_3$$ for all $\gamma \in \mathbb{DS}.$
\end{itemize}
\end{lem}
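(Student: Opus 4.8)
\textbf{Proof plan for Lemma~\ref{Lemma_1_for_A_2}.}

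For part (i) the plan is to verify the two defining properties of a $2$-cocycle directly from Definition~\ref{cocyc_recall}. That $\psi$ is alternating is immediate from property (ii) of Lemma~\ref{alt_on _deg_pairs_lem}, namely $s(ij,kl) = -s(kl,ij)$, together with the observation that $\psi(E_{ij}a, E_{ij}b) = 0$ since $(\alpha,\alpha)$ is never a degenerate pair. That $\psi$ is $\mathcal Q(R)$-graded is clear from the formula, since $\psi(E_{ij}a, E_{kl}b)$ lands in the homogeneous component of degree $(\epsilon_i - \epsilon_j) + (\epsilon_k - \epsilon_l)$. The cocycle identity $\psi(x,[y,z]) + \psi(y,[z,x]) + \psi(z,[x,y]) = 0$ must be checked on homogeneous elements; by $\mathcal Q(R)$-gradedness and the vanishing on $L_0$, the only nontrivial cases occur when $x = E_{ij}a$, $y = E_{jk}b$, $z = E_{kl}c$ (and permutations) are such that the relevant brackets are nonzero and the degrees of all three terms are degenerate sums or zero. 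Here I would invoke the explicit multiplication table of $\TKK(V)$ listed before Definition~\ref{cocyc_recall}, the definition of the $D_3$-relations (in particular $\overline{D[D,D]} = 0$, $\overline{(D,D,D)} = 0$, and relation (\ref{D_3_7})), and the sign bookkeeping provided by $s$. The key computational point is that the Jacobi-type sum of the three terms, when pushed into $D_3$, becomes a combination of associators, commutator-multiples, and the defining relation $(ad.c + a.dc + a.cd)b$, all of which vanish in $D_3$ — this is precisely why $D_3$ was defined the way it was.

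For part (ii) I would argue as follows: the central extension $L \oplus_\psi \mathcal Z \to L$ is $\mathcal Q(R)$-graded by construction, and its kernel $\mathcal Z$ sits in degrees $\mathbb{DS}$, which are sums of two linearly independent roots. Since $L = \TKK(V)$ is $R$-graded (Proposition~\ref{rect_grid_la_prop}) hence perfect (Proposition~\ref{K_R_turns_rogral_perfect}, as $A_2$ has no component of type $C_I$), and since $L$ is generated by its root spaces $L_\alpha$, $\alpha \in R^\times$, I want to show $L \oplus_\psi \mathcal Z$ is perfect. It suffices to show each degenerate-sum component $\mathcal Z_\gamma \cong D_3$ lies in the derived algebra. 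Writing $\gamma = \alpha + \beta$ for a degenerate pair and using that $L_\alpha$, $L_\beta$ are already inside the derived algebra (being root spaces of a perfect Lie algebra), the bracket $[(E_{ij}a, 0), (E_{kl}b, 0)] = ([E_{ij}a, E_{kl}b], \psi(E_{ij}a, E_{kl}b))$ has second component $\pm(\overline{ab})_\gamma$; since $[E_{ij}a, E_{kl}b] = 0$ for a degenerate pair (the sum of two roots that is not a root), this produces exactly the elements $(\overline{ab})_\gamma$, which span $(D_3)_\gamma$ as $a,b$ range over $D$ (taking $b = 1$ already suffices). Hence $\mathcal Z \subset [L\oplus_\psi\mathcal Z, L\oplus_\psi\mathcal Z]$, and combined with surjectivity onto the perfect algebra $L$ we get perfectness, so the extension is a covering.

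For part (iii), by the universal property of $\mathfrak{uce}(L)$ (Proposition on $\mathfrak{uce}$ being universal when $L$ is perfect), there is a unique homomorphism of central extensions $\pi : \mathfrak{uce}(L) \to L \oplus_\psi \mathcal Z$; it is graded because the construction of $\mathfrak{uce}$ respects gradings (Proposition~\ref{graded_cover_neh}) and $\pi$ is determined by its values on generators, which are homogeneous. Surjectivity of $\pi$ follows since $L \oplus_\psi \mathcal Z$ is perfect, hence a quotient of its universal cover, and the composite $\mathfrak{uce}(L) \to L\oplus_\psi\mathcal Z \to L$ is the universal cover, forcing $\pi$ to be onto. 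Finally, to see $\pi(\mathfrak{uce}(L)_\gamma) = \mathcal Z_\gamma$ for $\gamma \in \mathbb{DS}$: by Proposition~\ref{torsion_bijection_prop}(ii), for a degenerate sum $\gamma$ we have $\mathfrak{uce}(L)_\gamma \subset \ker u$, and since $\pi$ covers the identity on $L$, $\pi$ maps $\mathfrak{uce}(L)_\gamma$ into $\ker(L\oplus_\psi\mathcal Z \to L)_\gamma = \mathcal Z_\gamma$; the computation in part (ii) showing $\mathcal Z_\gamma$ is hit by brackets of root-space elements, whose preimages in $\mathfrak{uce}(L)$ lie in $\mathfrak{uce}(L)_\gamma$, gives the reverse inclusion. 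The main obstacle I anticipate is the cocycle identity in part (i): organizing the finitely many nonzero cases (which triples of indices $i,j,k,l$ give nonvanishing contributions) and checking that the resulting element of $D_3$ is zero using exactly the defining relations of $D_3$ — this is where the somewhat unmotivated-looking generator $(ad.c + a.dc + a.cd)b$ of the defining ideal of $D_3$ must do its work, and getting the signs $s(ij,kl)$ consistent across the three cyclic terms requires care.
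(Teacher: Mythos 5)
Your plan is largely the same route the paper takes, and parts (ii)–(iii) are correct (your argument for perfectness in (ii) is in fact slightly streamlined compared to the paper's: you deduce $(L,0)\subset[L',L']$ automatically from surjectivity onto the perfect quotient $L$, whereas the paper shows it directly by exhibiting $D_\alpha\in L_0$ with $[D_\alpha,L_\alpha]=L_\alpha$ and $\psi(D_\alpha,L_\alpha)=0$; both are fine).

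However, your case analysis for the cocycle identity in (i) has a genuine gap. You claim that ``by $\mathcal Q(R)$-gradedness and the vanishing on $L_0$, the only nontrivial cases occur when $x = E_{ij}a$, $y = E_{jk}b$, $z = E_{kl}c$'' — that is, you restrict to all three arguments having nonzero degree. This is incorrect. The relation $\psi(L_0, L) = \psi(L, L_0) = 0$ only kills the \emph{one} term $\psi([x,y],z)$ when, say, $z \in L_0$; the other two terms $\psi([y,z],x)$ and $\psi([z,x],y)$ need not vanish, since $[y,z]$ and $[z,x]$ are typically not in $L_0$. So the case where one argument lies in $L_0$ (say $z = [E_{13}c, E_{31}d]$, $x = E_{12}a$, $y = E_{13}b$ with total degree a degenerate sum) is a nontrivial case requiring verification. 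In fact the paper treats it as the \emph{first} and main case, and it is precisely this case where the defining relation $(ad.c + a.dc + a.cd)b$ of $D_3$ (relation (\ref{D_3_7})) does its work, reducing the sum to $\overline{a(c.db + b.dc) + (c.da)b} = 0$. The all-nonzero-degree case that you focus on only needs the simpler relation $\overline{ab.c - bc.a} = 0$ (relation (\ref{D_3_6})). So while you correctly anticipate that the $(ad.c + a.dc + a.cd)b$ generator is the key ingredient, you have attached it to the wrong case and omitted from your enumeration the very case it is needed for.
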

\begin{proof}
The map $\psi$ is a well-defined $k$-bilinear map. If $\gamma$  is  a degenerate sum then there is (up to switching the summands) only one way of expressing $\gamma$ as sum $\alpha + \beta$ where $\alpha, \beta \in R^\times$. Furthermore, it is obvious that $\psi$ is graded.\\
For $\psi$ to be a $2$-cocycle it is necessary  and sufficient to show that $\psi(x,x) = 0$ and $\psi([x,y], z) + \psi([y,z], x) + \psi([z,x], y) = 0$ for all $x,y, z  \in L.$ \\
Pick elements $x = E_{ij}a$ and $y = E_{kl}b$. Then $\psi(x, x) = 0$ as $2R \cap \mathbb{DS}(A_2) = 0.$ Moreover,  $$\psi(E_{ij}a, E_{kl}b) = s(ij, kl)\overline{ab} = -s(kl, ij)\overline{ab} = -s(kl, ij)\overline{ba} =-\psi(E_{kl}b,E_{ij}a),$$
hence $\psi(x,y) = - \psi(y,x)$ for $x = E_{ij}a $ and $y = E_{kl}b.$ Hence $\psi$ is alternating. \\
For the second identity it suffices to consider  homogeneous elements whose degrees sum up to a degenerate sum. Assume first that $z  \in L_0$, w.l.o.g. $z =[E_{13}c, E_{31}d]$ and $x = E_{12}a$ and $y = E_{13}b$ with $ \gamma = 2 \epsilon_1 - \epsilon_2 -\epsilon_3$ a degenerate sum. All other cases where only one of the elements is of degree $0$ can be obtained from this one by permuting the indices. Then for any $x,y \in L$ the element $\psi([x,y], z)$ equals zero by (\ref{cocyc_A_2_def2}). We compute the remaining two summands: 
 $$[y,z] = - E_{13}(c(db) + b(dc)), \quad \psi([y,z], x) = - s(13, 12) \overline{a(c(db) + b(dc))},$$ 
$$[z,x] =  E_{12}(c(da)), \quad \psi([z,x], y) = s(12, 13) \overline{(c(da))b}.$$
For $\psi$ to be a cocycle, we need to show that 
\begin{eqnarray*} 0 &=&  s(12, 13) \overline{(c(da))b} - s(13, 12) \overline{a(c(db) + b(dc))}  \\ &=& s(12, 13) \overline{(c(da))b} + s(12, 13) \overline{a(c(db) + b(dc))}.
\end{eqnarray*}
 Observe that by definition $D_3$ is a module, so 
 $$  \overline{(c(da))b} +  \overline{a(c(db) + b(dc))} = \overline{(c.da)b + a(c(db) + b(dc))}. $$
By  (\ref{D_3_5}) and (\ref{D_3_7})
$$ \overline{a(c.db + b.dc) +(c.da)b} = \overline{(c.db + cd.b + cb.d)a}  = 0.$$
This gives us
$$ s(12, 13) (\overline{a(c.db + b.dc)} + s(12, 13) \overline{(c(da))b})  = 0. $$
Hence for $x, y, z$  as above $\psi([y,z], x) +  \psi([z,x], y)  + \psi([x, y], z) = 0.$
 Let $\alpha = \deg (x)$, $\beta = \deg(y)$, $\gamma = \deg(z)$ be all non-zero. After possible reordering of the summands we may also assume that $\psi(x, [y,z]) \neq 0$ and therefore that $(\alpha, \beta + \gamma)$ is a degenerate pair. Then at most one of $\psi([y,z], x)$ and $\psi([z,x], y)$ is non-zero which we see as follows: if $\psi([y,z], x) \neq 0$ then $(\beta + \gamma, \alpha)$ is a degenerate pair
 and since there is up to switching summands only one way to express $\alpha + \beta + \gamma$ as degenerate sum, it follows that $\alpha = \gamma$ or $\alpha = \alpha + \beta$. The latter cannot occur, since $\beta \neq 0$. Therefore $\alpha = \gamma$ and $[x, z] = 0.$  A similar argument proves that if $\psi([z,x], y) \neq  0$ then $\psi([y,z], x) = 0.$ Therefore at most $2$ of the summands can be non-trivial and it suffices to check that $\psi([E_{ij} a, E_{jk} b], E_{ij} c) + \psi([E_{jk} b, E_{ij} c], E_{ij} a) = 0.$
 By (\ref{D_3_6}) we indeed have
\begin{eqnarray*}
\lefteqn{ \psi([E_{ij}a, E_{jk} b], E_{ij} c) + \psi([E_{jk} b, E_{ij} c], E_{ij} a) }\\
&=& \pm s(ik, ij) \overline{(ab)c} + \mp s(ik, ij)\overline{(bc)a} \\
&=&\pm s(ik, ij)(\overline{0})\\
\end{eqnarray*}
Thus $\psi$ is a $2$-cocycle and 
we conclude that there is a graded central extension $L' = L \oplus_{\psi} \mathcal Z$ of $L$ by $(\mathcal Z, \psi).$ It remains to prove that $L'$ is perfect. We have that $(L ,0)\subset [L', L']$: if $L_\alpha$, $0 \neq \alpha \in R$ is the $\alpha$-component of $L$ then by Lemma~\ref{perfect_TKK_lem} there is $D_\alpha \in L_0$ such that $[D_\alpha,L_\alpha] = L_\alpha $ and moreover $\psi(D_\alpha, L_\alpha)  = \{0\}$. Thus 
$(L_\alpha, 0) \subset [L',L'].$ Since $L_0 =  \sum_{\alpha \in R^\times}[L_{\alpha}, L_{-\alpha}]$ and $\psi(L_\alpha, L_{-\alpha}) = \{0\}$ it follows that $(L_0, 0) \subset [L', L'].$ Thus $(L, 0) \subset [L', L'].$ Let $z \in \mathcal Z$ be a homogeneous element of degree $\gamma = \epsilon_i - \epsilon_j + \epsilon_k - \epsilon_l $. Then $(0, z) = [E_{ij}a, E_{kl}] \in [L', L']$ for some $a \in D$ and thus $(0, \mathcal Z) \subset [L', L'].$ This proves that $L'$ is perfect. 
\end{proof}

\begin{lem} 
\label{Lemma_2_for_A_2}
Let $g : K \rightarrow L$ be a graded covering and $\gamma \in S := \supp_{\mathcal{Q}(R)}K\setminus R.$ Then \begin{itemize}
\item[\rm (i)] $\gamma = \alpha+ \beta$ is a degenerate sum.
\item[\rm (ii)] The map $D_3 \rightarrow K_\gamma$ given by
  $\overline{a} \rightarrow [ x_{\alpha}(a), x_{\beta}(1) ] $  where $\gamma = \alpha + \beta$, $a\in D$ is a $k$-module epimorphism. 
  \item[\rm (iii)] If $ u: \uce L \rightarrow L$ is the graded universal central extension, then there is a $k$-module epimorphism $\rho : D_3 \rightarrow \mathfrak{uce}(L)_\gamma$ given by $\overline{a} \rightarrow [ x_{\alpha}(a), x_{\beta}(1) ] $ . 
\end{itemize}
\end{lem}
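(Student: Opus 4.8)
The plan is to prove parts (i), (ii), (iii) in sequence, deriving (iii) as a formal consequence of (ii) applied to the universal central extension. Part (i) is immediate: since $\gamma \in \supp_{\mathcal Q(R)} K \setminus R$ and $g\colon K \to L$ is a $\mathcal Q(R)$-graded covering (hence, because $R = A_2$ forces the base ring to contain $1/3$ only if necessary, but here $R=A_2$ with no type-$C$ component so $L$ is always perfect), Proposition~\ref{torsion_bijection_prop} applies and gives that $\gamma$ must be a degenerate sum. So I would just cite that proposition.

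For part (ii), first I would use that $K$ is generated by $\bigoplus_{\alpha \in R^\times} K_\alpha$ (Lemma~\ref{generating_lemma}) together with the fact, established in the proof of Proposition~\ref{torsion_bijection_prop}, that $K_\gamma$ is central and equals $\sum_{\alpha + \beta = \gamma}[K_\alpha, K_\beta]$ where the sum runs over $\alpha, \beta \in R^\times$. Since $\gamma = \alpha + \beta$ is the unique way (up to order) of writing a degenerate sum of $A_2$ as a sum of two nonzero roots, we get $K_\gamma = [K_\alpha, K_\beta]$. Next, using the explicit grid structure: $\gamma = \epsilon_i - \epsilon_j + \epsilon_k - \epsilon_l$ with the two roots orthogonal, and via $\ad e_\beta^{\pm}$ isomorphisms (Lemma~\ref{iso_from_grid_lem}, Proposition~\ref{iso_from_grid_lem_ce}) every element $x_\alpha(a) = g^{-1}(E_{ij}a)$ is well-defined, and $[x_\alpha(a), x_\beta(b)]$ depends only on $a,b$. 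One then checks that $[x_\alpha(a), x_\beta(b)] = [x_\alpha(ab), x_\beta(1)]$ (or the appropriate variant, using the central trick, Lemma~\ref{central_trick}, and the multiplication formulas in $\TKK(V)$ of degree-$0$ components acting on $K_\alpha$), so the map factors through $a \mapsto [x_\alpha(a), x_\beta(1)]$. The real content is that this map $D \to K_\gamma$ kills the submodule defining $D_3$, i.e. kills $3D$, $D[D,D]$, $(D,D,D)$, and $\{(ad.c + a.dc + a.cd)b\}$. This is exactly the computation already carried out in the proof of Lemma~\ref{Lemma_1_for_A_2}: the cocycle identity $\psi([y,z],x) + \psi([z,x],y) + \psi([x,y],z) = 0$, when unwound using the Jacobi identity in $K$ and the degree-$0$ action formulas, forces precisely the relations (\ref{D_3_5}), (\ref{D_3_6}), (\ref{D_3_7}) and the divisibility-by-$3$ statement $3K_\gamma = 0$ (from Proposition~\ref{torsion_bijection_prop}(ii), since $n_\gamma = 3$). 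Surjectivity is clear because $K_\gamma = [K_\alpha, K_\beta]$ is spanned by brackets $[x_\alpha(a), x_\beta(b)] = [x_\alpha(ab), x_\beta(1)]$.

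Part (iii) is then immediate: $u\colon \uce L \to L$ is a $\mathcal Q(R)$-graded covering by Proposition~\ref{graded_cover_neh} (recall $L \in \gradla$ and is perfect), so (ii) applies verbatim with $K = \uce L$, yielding the epimorphism $\rho\colon D_3 \to \mathfrak{uce}(L)_\gamma$, $\overline a \mapsto [x_\alpha(a), x_\beta(1)]$.

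The main obstacle is part (ii), and specifically verifying that the last relator $\{(ad.c + a.dc + a.cd)b : a,b,c,d \in D\}$ lies in the kernel. Unlike $3D$, $D[D,D]$, $(D,D,D)$ — which come directly from Proposition~\ref{torsion_bijection_prop} and from the $A(\mu)$, $B(\mu)$ relations in $\uider(V)$ — this quartic relator only emerges after expanding a triple Jacobi identity among three elements of degrees $\alpha$, $\beta$, and a degree-$0$ element $[x_{\delta}(c), x_{-\delta}(d)]$, and carefully tracking how the degree-$0$ derivations act on the grid components via Lemma~\ref{H_Action_alt_lem}. The bookkeeping of signs $s(ij,kl)$ and of which roots are orthogonal versus collinear is delicate, but it is precisely the computation done inside the proof of Lemma~\ref{Lemma_1_for_A_2}, so I would organize the argument to quote that computation rather than repeat it.
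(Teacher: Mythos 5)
Your plan for parts (i) and (iii), and your choice to prove (ii) directly for an arbitrary graded covering and then specialize, are all sound and consistent with the paper's structure (the paper proves (iii) first and deduces (ii) by universality; your reversal is equally valid since the Jacobi arguments live in $K$ and apply uniformly).

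The gap is in part (ii), specifically in the claim that the verification that $a\mapsto [x_\alpha(a),x_\beta(1)]$ kills the $D_3$-relators ``is exactly the computation already carried out in the proof of Lemma~\ref{Lemma_1_for_A_2}'' and so can be quoted rather than redone. This conflates two logically independent computations running in opposite directions. In Lemma~\ref{Lemma_1_for_A_2} one \emph{assumes} the relations defining $D_3$ and uses the identities (\ref{D_3_5})--(\ref{D_3_7}), which hold in $D_3$ by construction, to verify that $\psi$ is an alternating cocycle with values in $\mathcal Z=\bigoplus_\gamma (D_3)_\gamma$. In Lemma~\ref{Lemma_2_for_A_2} one must instead \emph{derive}, from the Jacobi identity inside $K$ (or $\uce L$) together with the central trick (Lemma~\ref{central_trick}) and the fact that $K_\gamma\subset Z(K)$ with $3K_\gamma=0$, that the bilinear map $z_\gamma(a,b)=[x_\alpha(a),x_\beta(b)]$ satisfies $z_\gamma(ab,1)=z_\gamma(a,b)=z_\gamma(b,a)$, $z_\gamma(ab,c)=z_\gamma(ba,c)$, $z_\gamma((a,b,c),1)=0$, and $z_\gamma(ad.c+a.dc+a.cd,b)=0$. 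These are computations \emph{inside} the covering, built up sequentially (the factorization $z_\gamma(a,bc)=z_\gamma(ac,b)$ is proved first, then the symmetry statements, then the associator and the quartic relator each by a further Jacobi manipulation using centrality of $K_\gamma$). The cocycle identity for $\psi$ does not supply any of these, and Lemma~\ref{Lemma_1_for_A_2} yields only the opposite arrow, the epimorphism $\uce L_\gamma\to D_3$. Using it to conclude that $D\to K_\gamma$ factors through $D_3$ would give the wrong inclusion of kernels. To close the gap you need to actually carry out the Jacobi-identity calculations in $K$, as the paper does; the bookkeeping is of the same flavor as in Lemma~\ref{Lemma_1_for_A_2}, but it is a separate argument, not a corollary.

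One smaller remark: the bijectivity of $g|_{K_\alpha}$ for $\alpha\in R^\times$, which makes $x_\alpha(a)=g^{-1}(E_{ij}a)$ well-defined, comes directly from Proposition~\ref{torsion_bijection_prop}(i); the $\ad e_\beta^\pm$ isomorphisms of Lemma~\ref{iso_from_grid_lem} are not what is needed at that point.
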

\begin{proof} 

	Part (i) follows from Proposition~\ref{torsion_bijection_prop}. The universal property of $\uce$ allows us to conclude (ii) from (iii), so from now on $K = \uce{L}.$
Let $\gamma$ be a degenerate sum. Then there are unique (up to order) $\alpha, \beta \in R$ such that $\alpha + \beta = \gamma$ and it follows that $\mathfrak{uce}(L)_\gamma = [\mathfrak{uce}(L)_\alpha,\mathfrak{uce}(L)_\beta ].$ Without loss of generality $\alpha = \epsilon_i -\epsilon_j$ and $\beta = \epsilon_i - \epsilon_k$ with $j < k$. 
We denote the elements of $\mathfrak{uce}(L)_\alpha$ by $x_{ij}(a)$ and those of $\mathfrak{uce}(L)_\beta$ by $x_{ik}(b)$ where $a, b \in D$. The assignments $a \rightarrow x_{ij}(a)$ and $b \rightarrow x_{ik}(b)$ are $k$-module isomorphisms with domain $D$ by Lemma~\ref{root_space_iso_lem}.
The map $z_\gamma : D \times D \rightarrow \mathfrak{uce}(L)_\gamma$ given by 
$$ z_\gamma(a, b) := [x_{ij}(a), x_{ik}(b)]$$
is bilinear and surjective because $[\uce{L}_\alpha, \uce{L}_\beta] = \uce{L}_\gamma$.
For part (ii) we will show the following 
\begin{enumerate}
\item[(1)] $3 z_{\gamma}(D,D)  = 0 ,$
\item[(2)] If $a, b \in D$ , then $z_\gamma(ab, 1) = z_\gamma(a, b) = z_\gamma(b,a),$
\item[(3)] If $a, b, c \in D,$ then $z_\gamma(ab, c) =  z_\gamma(ba,c),$
\item[(4)] If $a, b, c \in D,$ then $z_\gamma((a, b, c), 1) = 0.$
\item[(5)] If $a, b, c, d \in D,$ then $z_\gamma(ad.c + a.dc + a.cd, b) = 0.$
\end{enumerate}

Since $n_\gamma = 3$ by Proposition~5.2.18, $3(z_\gamma(D,D)) = 0$,\\
 and therefore \begin{equation} 3z_\gamma(a, b) = 0 \quad \forall a, b \in D \label{3_torsion}.\end{equation} 
 This proves (1).  \\
  The image of $(a, b)$ only depends on the product $ba$: 
Pick $a, b, c \in D$ and consider the element
$$[x_{ij}(a), x_{ik}(bc)] =-[x_{ij}(a), [[x_{kj}(c), x_{jk}(1)], x_{ik}(b)]].$$ By the Jacobi identity this equals 
 \begin{eqnarray*}
z_{\gamma}(a, bc) &=& - [x_{ij}(a), [[x_{kj}(c), x_{jk}(1)]x_{ik}(b)]]\\
 &=& - [[[x_{kj}(c), x_{jk}(1)], x_{ij}(a)], x_{ik}(b)]  - [[x_{kj}(c), x_{jk}(1)], \underbrace{[x_{ij}(a), x_{ik}(b)]}_{\text{central}}] \\  &=& [x_{ij}(ac),x_{ik}(b) ] = z_\gamma(ac, b).\end{eqnarray*}   Thus 
\begin{equation} z_{\gamma}(a, bc) = z_{\gamma}(ac, b).\label{almost_assoc} \end{equation}
From this we get (2). \\
Setting $b= 1$ yields:
\begin{equation} z_\gamma(ac, 1) =  z_\gamma(a,c),\label{almost_comm} \end{equation}
and if $a = 1,$ then we obtain 
$$z_\gamma(1, bc) = z_\gamma(c,b) = z_\gamma(cb, 1). \label{z_gamma_is_mult_compatible}$$

\
The next calculation uses again only the Jacobi identity. 

\begin{eqnarray*}
z_\gamma((ab)c, 1) + z_\gamma((cb)a, 1)
&=&  [[[x_{ij}(a), x_{ji}(b)] x_{ij}(c)], x_{ik}(1)]\\
&=& -\left [x_{ij}(c), [[x_{ij}(a), x_{ji}(b)], x_{ik}(1)] \right]\\
&=& -[x_{ij}(c), x_{ik}(ab)]\\
&=&  -z_\gamma(c,ab)\\
&=& -z_\gamma(ab,c).
\end{eqnarray*}
By (\ref{z_gamma_is_mult_compatible}), we have that $z_\gamma(ab, c) = z_\gamma(ab.c, 1).$ So if we subtract this elements on both sides of the equation
$$z_\gamma((ab)c, 1) + z_\gamma((cb)a, 1) = - z_\gamma(ab , c),  $$ this yields
 that $z_\gamma(cb,a) = - 2z_\gamma(ab,c) .$ 
 We showed before that $3 z_\gamma(ab , c) = 0$ ($3$-torsion), thus $z_\gamma(cb,a) = z_\gamma(ab,c).$  
Hence, by (\ref{almost_assoc}) and (\ref{z_gamma_is_mult_compatible}), $z_\gamma(a, cb) = z_\gamma(ac,b).$ 
We also have
 \begin{equation}
z_\gamma(ab, c) =  z_\gamma(a, bc) = z_\gamma(ac, b) = z_\gamma(b, ac) = z_\gamma(ba, c). \label{commutative_assoc}
 \end{equation}
 This proves (2) and (3) and (4).
 Next consider the element 
 $$z_\gamma((cd)a, b) + z_\gamma((ad)c, b) = \left[[[x_{ij}(c), x_{ji}d ], x_{ij} a], x_{ik}b  ] \right].$$ By the Jacobi identity, this is equal to  
 $$ \left[[x_{ij}(c), x_{ji}d ], \underbrace{[z_{ij} a, z_{ik}b]}_{\text{central}}  ] \right]  - \left[ x_{ij} a, [ [x_{ij}(c), x_{ji}d ], x_{ik}b  ]] \right]  = -z_\gamma(a, c(db)).$$
Since $z_\gamma(a, c(db)) = z_\gamma(ca, db) = z_\gamma(ca.d, b)$ this allows to conclude that
$$ z_\gamma((cd)a, b) + z_\gamma((ad)c, b) + z_\gamma(ac.d, b) = z_\gamma(cd.a + c.da + ca.d, b) = 0. $$
   Therefore, we have proven (5). 
  Together with  (2) and (3) this shows that we have a well-defined epimorphism $D_3 \rightarrow z_\gamma(D, D)$ which is given by
  $ \overline {a } \mapsto z_\gamma(a, 1).$ This proves (iii).  

\end{proof}

\begin{theo}Let $\gamma \in \supp_{\mathcal{Q}(R)}\mathfrak{uce}(L)\setminus R.$
\begin{itemize} 
\item[\rm{(i)}]The element $\gamma$ is a degenerate sum $\gamma = \alpha + \beta$, i.e., $\alpha$ and $\beta$ are non-zero distinct roots and their sum is not a root. 
\item[\rm(ii)] For every degenerate sum $\gamma =\alpha + \beta$, there is a $k$-module isomorphism
$$\mathfrak{uce}(L)_\gamma \cong D_3, $$
given by 
$[x_\alpha(a), x_\beta(b)] \rightarrow \overline{ab}.$
\end{itemize}\label{non_roots_theo}
\label{deg_sum_theo}
\end{theo}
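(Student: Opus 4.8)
Part (i) is nothing but Proposition~\ref{torsion_bijection_prop}: since $L = \TKK(V)$ is $A_2$-graded and $u\colon \mathfrak{uce}(L)\to L$ is a $\mathcal Q(R)$-graded covering, any $\gamma\in\supp_{\mathcal Q(R)}\mathfrak{uce}(L)$ which is not a root must be a degenerate sum $\gamma=\alpha+\beta$ with $\alpha,\beta\in R^\times$ linearly independent. For part (ii) the plan is to show that the canonical $k$-module epimorphism $\rho\colon D_3\to\mathfrak{uce}(L)_\gamma$, $\bar a\mapsto[x_\alpha(a),x_\beta(1)]$, furnished by Lemma~\ref{Lemma_2_for_A_2}(iii), is injective, by exhibiting a left inverse built from the cocycle $\psi$ of Definition~\ref{cocyc_recall}.

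Concretely: by Lemma~\ref{Lemma_1_for_A_2}(iii) the universal property of $\mathfrak{uce}(L)$ yields a graded Lie algebra epimorphism $\pi\colon\mathfrak{uce}(L)\to L\oplus_\psi\mathcal Z$ with $\pi(\mathfrak{uce}(L)_\gamma)=\mathcal Z_\gamma\cong D_3$ for every $\gamma\in\mathbb{DS}$, and such that composing with the projection $L\oplus_\psi\mathcal Z\to L$ gives $u$. For a root $\alpha=\epsilon_i-\epsilon_j\in R^\times$ one has $\mathcal Z_\alpha=\{0\}$ (roots are not degenerate sums), so $\pi(x_\alpha(a))=(u(x_\alpha(a)),0)=(E_{ij}a,0)$. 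Writing $\gamma=\alpha+\beta$ with $\alpha=\epsilon_i-\epsilon_j$, $\beta=\epsilon_k-\epsilon_l$ and $(\alpha,\beta)\in\mathbb{DP}$, I would then compute, using that $\pi$ is a homomorphism and that $[E_{ij}a,E_{kl}1]_L=0$ because $\gamma\notin R$,
\[
\pi(\rho(\bar a))=\big[(E_{ij}a,0),(E_{kl}1,0)\big]_{L\oplus_\psi\mathcal Z}=\big(0,\psi(E_{ij}a,E_{kl}1)\big)=\big(0,s(ij,kl)\,\bar a\big).
\]
Since $(\alpha,\beta)$ is a degenerate pair, $s(ij,kl)\in\{\pm1\}$, so under the identification $\mathcal Z_\gamma\cong D_3$ the composite $\pi|_{\mathfrak{uce}(L)_\gamma}\circ\rho$ is $\pm\mathrm{id}_{D_3}$, an automorphism. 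Hence $\rho$ is injective, and being surjective by Lemma~\ref{Lemma_2_for_A_2}(iii) it is a $k$-module isomorphism $D_3\xrightarrow{\ \sim\ }\mathfrak{uce}(L)_\gamma$. Finally, the computations inside the proof of Lemma~\ref{Lemma_2_for_A_2} (in particular $z_\gamma(a,b)=z_\gamma(ab,1)$) give $[x_\alpha(a),x_\beta(b)]=\rho(\overline{ab})$, so $\rho^{-1}$ is exactly the map $[x_\alpha(a),x_\beta(b)]\mapsto\overline{ab}$ claimed in the statement; in particular this map is well defined.

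The genuinely hard parts are already discharged in the two preceding lemmas: Lemma~\ref{Lemma_2_for_A_2} checks that all defining relations of $D_3$ (the $3$-torsion, commutativity-up-to-$[D,D]$, the associator relation, and the relation $z_\gamma(ad.c+a.dc+a.cd,b)=0$) are forced in $\mathfrak{uce}(L)_\gamma$ purely by the Jacobi identity and the centrality of $\mathfrak{uce}(L)_\gamma$, while Lemma~\ref{Lemma_1_for_A_2} checks that $\psi$ is a graded $2$-cocycle with values in $\bigoplus_{\gamma\in\mathbb{DS}}(D_3)_\gamma$ and that $L\oplus_\psi\mathcal Z$ is perfect, using the identities \eqref{D_3_1}--\eqref{D_3_7} in $D_3$. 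So the present theorem is essentially a bookkeeping step that glues these two one-sided surjections together. The one point that still requires care is verifying that $\pi$ restricted to root spaces is literally $(u(\cdot),0)$ and tracking the sign $s(ij,kl)$ through the bracket, so that $\pi\circ\rho$ is recognised as an automorphism of $D_3$ rather than merely an endomorphism; this is where one must be careful not to confuse the $5$-grading with the $\mathcal Q(R)$-grading when locating $\mathcal Z_\gamma$ inside $L\oplus_\psi\mathcal Z$.
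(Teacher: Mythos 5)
Your proposal is correct and takes essentially the same route as the paper: combine the two surjections $\pi|_{\mathfrak{uce}(L)_\gamma}$ and $\rho$ from Lemmas~\ref{Lemma_1_for_A_2} and \ref{Lemma_2_for_A_2} and show they compose to an isomorphism. The only differences are cosmetic: you verify only that $\pi\circ\rho$ is an automorphism (which suffices, since $\rho$ is already known to be onto), whereas the paper also writes out $\rho\circ\pi$; and you carefully track the sign $s(ij,kl)$, observing that $\pm\mathrm{id}_{D_3}$ is an automorphism — the paper tacitly takes the sign to be $+1$, which is harmless but less explicit.
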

\begin{proof}

Putting Lemma~\ref{Lemma_1_for_A_2} and Lemma~\ref{Lemma_2_for_A_2} together gives us a $k$-module epimorphism 
$\pi : \mathfrak{uce}(L)_\gamma \rightarrow D_3$ for every degenerate sum $\gamma$ (Lemma~\ref{Lemma_1_for_A_2}) and also a $k$-module epimorphism $\rho : D_3 \rightarrow \mathfrak{uce}(L)_\gamma$ (Lemma~\ref{Lemma_2_for_A_2}).  It remains to show that they are inverse to each other. Let $[x_{ij}(a), x_{kl}(b)] \in \mathfrak{uce}(L)_\gamma$, $\gamma = \epsilon_i - \epsilon_j + \epsilon_k - \epsilon_l$, a degenerate sum. Then $\rho \circ \pi([x_{ij}(a), x_{kl}(b)]) = \rho((\overline{ab})_\gamma) = z_\gamma(a, b) = [x_{ij}(1), x_{kl}(ab)] = [x_{ij}(a), x_{kl}(b)]$. If $\bar a \in D_3$ then $\pi \circ \rho (\bar a ) = \pi ([x_{ij}(1), x_{kl}(a)]) = 1 \bar a = \bar a.$ Thus
$$\mathfrak{uce}(L)_\gamma \cong D_3. $$ 
\end{proof}
\begin{cor} Let $L$ and $D$ be as above. Then: 
\begin{itemize}
\item[\rm (i)] If $1/3 \in k$ then $\supp_{\mathcal Q (R)}\mathfrak{uce}(L) = R$ and $\ker u \subset \mathfrak{uce}(L)_0.$
\item[\rm (ii)] If $D$ is associative then $\mathfrak{uce}(L)_\gamma \cong  D/(D[D,D], 3D)$ for every degenerate sum $\gamma.$
\end{itemize}
\end{cor}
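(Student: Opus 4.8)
The plan is to deduce both parts of the corollary from the two preceding results, Theorem~\ref{deg_sum_theo} and the corollary on root spaces (Corollary~\ref{root_space_iso_lem}), together with the description of degenerate sums of $A_2$ (the $A_2$ row of Table~\ref{tab:DegSums}). The essential point in both cases is to analyze the quotient algebra $D_3$ under the respective hypothesis on $k$ or $D$, since Theorem~\ref{deg_sum_theo} already tells us that $\mathfrak{uce}(L)_\gamma \cong D_3$ for each degenerate sum $\gamma$, while Corollary~\ref{root_space_iso_lem} gives $u\colon \mathfrak{uce}(L)_\alpha \to L_\alpha$ bijective for $\alpha \in R^\times$.

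For part~(i): recall from the discussion around \eqref{A_2_in_description} that for $\card K = 3$ the kernel decomposes as $\ker(u) = \bigoplus_{\gamma \in \mathbb{DS}(A_2,3)}\mathfrak{uce}(L)_\gamma \oplus (\ker u)_0$, and that the only degenerate sums of $A_2$ have divisor $n_\gamma = 3$ (Table~\ref{tab:DegSums}). Thus if $1/3 \in k$, then $3\,\mathfrak{uce}(L)_\gamma = 0$ forces $\mathfrak{uce}(L)_\gamma = 0$ for every degenerate sum $\gamma$ — equivalently, $D_3 = 0$ because the defining submodule of $D_3$ contains $3D$ and $3D = D$ when $1/3 \in k$. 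Combined with Proposition~\ref{torsion_bijection_prop} (or directly with Corollary~\ref{root_space_iso_lem}), this gives $\supp_{\mathcal{Q}(R)}\mathfrak{uce}(L) = R$ and $\ker u \subseteq \mathfrak{uce}(L)_0$.

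For part~(ii): when $D$ is associative, all associators vanish, so the defining relations of $D_3$ collapse. I would spell this out: $(D,D,D) = 0$ kills the associator generator, and the generator $\{(ad.c + a.dc + a.cd)b\}$ becomes $\{(2ad.c + acd)b\}$, which one checks lies in $3D + D[D,D]$ — this is exactly the computation already carried out in the remark following the definition of $D_3$ (the one proving that for associative unital $D$ the module $D_3$ is the quotient of $D$ by the ideal $D[D,D] + 3D$). Hence $D_3 \cong D/(D[D,D] + 3D)$, and plugging this into the isomorphism of Theorem~\ref{deg_sum_theo}(ii) gives $\mathfrak{uce}(L)_\gamma \cong D/(D[D,D],3D)$ for every degenerate sum $\gamma$.

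The only mild obstacle is bookkeeping: one must be careful to invoke the associative case of the $D_3$-remark correctly (it identifies $D_3$ with a quotient by a genuine two-sided ideal, using the Teichm\"uller identity and \eqref{comm_teich_id}), and to note that in part~(i) the vanishing of $\mathfrak{uce}(L)_\gamma$ for \emph{all} degenerate sums, not just one, yields the support statement. Neither step requires new ideas beyond citing the established results, so the proof is short.
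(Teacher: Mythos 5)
Your proposal is correct and follows essentially the same route as the paper, which simply cites Theorem~\ref{deg_sum_theo} and Proposition~\ref{torsion_bijection_prop}; you have merely spelled out the intermediate steps (that degenerate sums in $A_2$ have divisor $3$, that $D_3=0$ when $1/3\in k$, and that the paper's remark following the definition of $D_3$ identifies $D_3$ with $D/(D[D,D]+3D)$ in the associative case). No gaps.
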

\begin{proof}This is an easy consequence of Theorem~\ref{deg_sum_theo} and Proposition~\ref{torsion_bijection_prop}.
\end{proof}
\begin{rem} 
Part (ii) was obtained in \cite{GS05} under the assumption that $k$ is a field. Our proof uses some of their ideas.
\end{rem}
\subsubsection*{The kernel of $\uider(V) \rightarrow \instr(V)$}
Recall that we denote by $u: \mathfrak{uce}(L) \rightarrow L$ a $\mathcal Q(R)$-graded central extension of $L = \TKK(V).$ In the case where $V$ is covered by a collinear grid, we saw that $(\ker u)_0 = \ker \ud = (\ker \ud)_0.$
\subsubsection{Known cases}
There are a number of results concerning the structure of $(\ker u)_0$, most of them for the special case where $D$ is associative. We briefly recall some of the known facts. 
Let $\mathcal B$ be an associative unital algebra and $\mathfrak{gl}_3(\mathcal B) = \mathfrak{gl}_3(k) \otimes_k \mathcal B$ the Lie algebra of $3 \times 3$ matrices with entries in $\mathcal B$. Define
$$ \mathfrak{sl}_3(\mathcal B) := \{M \in \mathfrak{gl}_3(\mathcal B) : \mathrm{trace}(M) \in [\mathcal B, \mathcal B]\}$$
where the trace is the usual matrix trace, i.e., the sum of the diagonal elements. 
Then $ \mathfrak{sl}_3(\mathcal B)$ is a perfect Lie algebra. It can also be described as the subalgebra of $\mathfrak{gl}_3(\mathcal B)$ which is generated by $\{E_{ij} \otimes a: i \neq j, a \in \mathcal B\}.$ The centre of $\mathfrak{sl}_3(\mathcal B)$ consists of the matrices $Z(\mathfrak{sl}_3(\mathcal{B})) = \{a I_3: a \in Z(\mathcal B), 3a \in [\mathcal B, \mathcal B]\}.$ By definition $\mathfrak{psl}_3(\mathcal B ) = \mathfrak{sl}_3(\mathcal B)/Z(\mathfrak{sl}_3(\mathcal{B})).$ 
\begin{rem} We always have $\mathfrak{psl}_3(\mathcal B ) \cong \TKK (V)$ if $V$ is defined as above with $D$ replaced by the associative algebra $\mathcal B$. But if $D$ is non-associative there is no known embedding of $\TKK(V)$ into a matrix algebra $\mathfrak{gl}{I}(D).$ 
\end{rem}
\begin{rem}Unfortunately the centre of $\mathfrak{sl}_3(D)$ is not always a direct summand, so that already the module structure of the quotient can be fairly complicated. 
\end{rem}
Over a field the kernel of $u: \mathfrak{uce}(L) \rightarrow L$ coincides with the second homology group of the Lie algebra $L,$ denote by $H_2(L).$
In \cite{KasLo}, the authors give a description of $H_2(\mathfrak{sl}_3(\mathcal B))$ for an associative unital $k$-algebra $\mathcal B$ over a field $k$: $H_{2}(\mathfrak{sl}_3(\mathcal{B})) \cong HC_1(\mathcal B)$. Note that this  is not a description of $H_2(\mathfrak{psl}_3(\mathcal B)).$ However, if for all $c \in Z(\mathcal B)$ the element $3c$ is not contained in $[\mathcal B, \mathcal B] = \{0\}$ then 
$H_2(\mathfrak{psl}_3(\mathcal B)) = H_2(\mathfrak{sl}_3(\mathcal B)).$
We give a short proof for this result: 
 Since $\mathfrak{psl}_3(\mathcal B)$ is centreless, equality holds if and only if $\mathfrak{sl}_3(\mathcal B)$ is also centreless. Let $z \in Z(\mathfrak{sl}_3(\mathcal B))$. Then $[z, E_{ij}a] = 0$ for all $1 \leq i \neq  j\leq 3$ and $a \in \mathcal B$, thus $z = c I_3$ for some $c \in Z(D)$. Since $\mathrm{trace}(z) = 3c \in [\mathcal B, \mathcal B]$ we see that 
$$ z \in Z(\mathfrak{sl}_3(\mathcal B)) \iff z = cI_3, c \in Z(\mathcal B) \cap \{ c: 3c \in [\mathcal B, \mathcal B]\}.$$
\\

To better understand the action of $\instr(V)_0$ we go back to Lemma~\ref{H_Action_alt_lem}. The action of the spanning elements of $\uider(V)_0$ on some of the homogeneous spaces  of non-zero degree is described by the table below. We only include in the table what is needed for the proofs in the present section. 
\begin{center}
\begin{table*}[h]
\centering
	\begin{tabular}[h]{l|l|l|l|l}
	& $E_{12}D$ & $E_{13}D$ & $E_{32}D$ & $E_{23}D$  \\
	\hline
	$ h_{2}(a)$ & $L_a + R_a$ & $L_a$ & $R_a$ & $-L_a$ \\
	\hline
	$h_3(b)$ & $L_b$ & $L_b + R_b$ & $- L_b$ & $R_b$\\
	\hline
		$H(a, b)$ & $L_{[a, b]} - [L_a, R_b]$ & $L_{[a, b]} - [L_a, R_b]$ & $-[L_a, R_b]$ & $[L_a, R_b]$\\
		\hline
		$T(a, b)$ & $-SD(a, b)$  & $-SD(a, b)$  & $-SD(a, b)$ & $SD(a, b)$
	\end{tabular}
\end{table*}
\end{center}

\begin{prop} If $\card J = 2,$ then  $ \tau : \instr(V)_0 \rightarrow \mathcal T_0(D)$ given by 
$ \tau  : \ud(h_3(a)) \mapsto \lambda(a)$ and $\tau: \ud(h_2(a)) \mapsto \rho(a) + \lambda(a)$
is a Lie algebra isomorphism. 
\label{Lieisoidertrial}
\end{prop}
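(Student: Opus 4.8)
The statement to prove is that, when $\card J = 2$, the map $\tau : \instr(V)_0 \to \mathcal T_0(D)$ sending $\ud(h_3(a)) \mapsto \lambda(a)$ and $\ud(h_2(a)) \mapsto \rho(a) + \lambda(a)$ is a Lie algebra isomorphism. The plan is to build $\tau$ as a composite of two identifications already available in the excerpt: first the description of $\uider(V)_0$ from Corollary~\ref{collinear_zero_hom_Cor} and Proposition~\ref{uider_alt_decomp_prop} (spanning elements $h_j(a)$ and $H(a,b)$, with the relations of Lemma~\ref{identies_h_lem}), and then the explicit action of these elements by derivations given in Lemma~\ref{H_Action_alt_lem} and summarized in the table just preceding the proposition. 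The key point is that $\ud : \uider(V) \to \instr(V)$ is surjective (Corollary~\ref{JP_uider_cor}), so $\instr(V)_0$ is spanned by $\ud(h_2(a))$, $\ud(h_3(a))$, and consequently $\ud(H(a,b)) = [\ud(h_2(a)), \ud(h_3(b))]$, because in the proof of Lemma~\ref{H_Action_alt_lem} it is shown that $[h_2(a), h_3(b)] = H(a,b)$.

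First I would check that $\tau$ is \emph{well-defined} as a $k$-linear map on $\instr(V)_0$. The cleanest route is to define $\tau$ on $\uider(V)_0$ by $h_j(a) \mapsto$ the appropriate inner triality and to verify it kills $\ker \ud \cap \uider(V)_0 = \HF(V)_0$; but since both source and target are concretely presented, it is more efficient to note that $\instr(V)_0$ is faithfully represented on $V \oplus V$ (indeed on $\TKK(V)$), so it suffices to identify $\tau$ with the map "act on the $\pm1$-graded pieces". Concretely, $\ud(h_3(a))$ acts on $(V^+_{\epsilon_1-\epsilon_2}, V^-) = (E_{12}D, \ldots)$ etc.\ exactly by the operators in the table: on $E_{12}D$ by $L_a$, on $E_{13}D$ by $L_a + R_a$, and so on. Comparing with Example~\ref{rho_lambda_expls}, where $\lambda(a) = (L_a, L_a + R_a, -L_a)$ and $\rho(b) = (R_b, -R_b, L_b + R_b)$, one sees that the triple of operators by which $\ud(h_3(a))$ acts on the three homogeneous slots $E_{12}D$, $E_{13}D$, $E_{32}D$ (equivalently $V^+_\alpha$, $V^+_\beta$, and the degree-$(\alpha-\beta)$ space) \emph{is} $\lambda(a)$, and similarly $\ud(h_2(a))$ acts as $\lambda(a) + \rho(a)$; this is a finite check against the table, using the identities (\ref{alt_id_1})--(\ref{alt_id_2}) to rewrite $[L_a,R_b]$-type entries. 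Because $\instr(V)_0$ acts faithfully, this exhibits $\tau$ as the restriction of the faithful action, so $\tau$ is a well-defined $k$-linear map and is moreover \emph{injective}. Surjectivity is immediate: $\mathcal T_0(D)$ is generated by all $\lambda(a)$ and $\rho(b)$ (Definition~\ref{inner_trial_def}), and $\lambda(a) = \tau(\ud(h_3(a)))$, $\rho(b) = \tau(\ud(h_2(b)) - \ud(h_3(b)))$, both in the image.

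It remains to check that $\tau$ is a \emph{Lie algebra homomorphism}. Since $\ud$ is a Lie homomorphism and $\instr(V)_0$ is spanned by $\ud(h_2(a))$ and $\ud(h_3(a))$, it suffices to verify $\tau[\ud(h_i(a)), \ud(h_j(b))] = [\tau\ud(h_i(a)), \tau\ud(h_j(b))]$ for $i,j \in \{2,3\}$. For $i=2,j=3$: the left side is $\tau(\ud[h_2(a),h_3(b)]) = \tau(\ud H(a,b))$, which by the table acts on the relevant slots by $(L_{[a,b]}-[L_a,R_b],\, L_{[a,b]}-[L_a,R_b],\, -[L_a,R_b])$; the right side is $[\lambda(a)+\rho(a), \lambda(b)] = [\lambda(a),\lambda(b)] + [\rho(a),\lambda(b)]$, which by the formulas (\ref{lambda_action})--(\ref{rho_action}) equals $-2\sigma(a,b) + \lambda([a,b]) - \sigma(b,[a,b]) + \rho((a,b,?))$ — here I must be careful with the associator terms, but for $\card J = 2$ the algebra $D$ is alternative (and the relevant subalgebra generated by two elements is associative), so many associators vanish on the nose; one then checks the two triples agree using Example~\ref{rho_lambda_expls}. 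The cases $i=j=2$ and $i=j=3$ similarly reduce to (\ref{rho_action}) via the table entries for $H$. \textbf{The main obstacle} I anticipate is precisely this last bracket computation: matching $\ud H(a,b)$ and $\ud T(a,b)$ against the inner trialities $-2\sigma(a,b) \pm \lambda([a,b])$ (and the associator corrections) requires careful, repeated use of the alternative identities (\ref{alt_id_1})--(\ref{alt_id_4}) and bookkeeping of which homogeneous slot corresponds to which component of a triality under the identification $V^+_\alpha \leftrightarrow$ first component, $V^+_\beta \leftrightarrow$ second, $L_{\alpha-\beta}\leftrightarrow$ third. Once the dictionary between "triple of operators on the three slots" and "triality" is nailed down, each verification is routine but must be done component by component.
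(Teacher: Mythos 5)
Your overall strategy — read off $\tau$ from the adjoint action of $\instr(V)_0$ on the slots $E_{12}D$, $E_{13}D$, $E_{32}D$ and compare with the table and Example~\ref{rho_lambda_expls} — is correct and is in the same spirit as the paper's proof (which simply names $\Lambda(a)=\ud(h_3(a))$, $P(b)=\ud(h_2(b)-h_3(b))$ and cites Neher for the details). Your identification $\tau(\ud(h_3(a)))=\lambda(a)$, $\tau(\ud(h_2(a)))=\lambda(a)+\rho(a)$ against the table is exactly right.

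However, the step you flag as ``the main obstacle'' is not actually an obstacle, while a step you pass over quickly \emph{is} a genuine gap. First, once $\tau$ is realized as the restriction of the adjoint representation of $L^0$ on those three homogeneous slots, the Lie homomorphism property is automatic — there is no bracket computation to perform; the table check on the spanning set $\{\ud(h_2(a)),\ud(h_3(a))\}$ already shows that $\lambda(a)$ and $\rho(a)$ lie in the image, hence by closure under brackets the image is all of $\mathcal T_0(D)$. Second, and this is the real gap: you assert injectivity ``because $\instr(V)_0$ acts faithfully.'' What is known is faithfulness of $\instr(V)$ on $V = V^+\oplus V^-$. Your $\tau$ records the action only on $E_{12}D$, $E_{13}D$, $E_{32}D$ — that is, on $V^+$ together with one degree-zero slot — and says nothing a priori about $V^- = E_{21}D \oplus E_{31}D$. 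To close the gap you must argue that triviality on $V^+$ forces triviality on $V^-$. This does hold, but needs to be said: for instance, using $[E_{13}a, E_{21}b] = -E_{23}(ba)$ together with $[E_{12}a, E_{23}b]=E_{13}(ab)$ (and the unit of $D$) one first deduces $X|_{E_{23}D}=0$ from $X|_{V^+}=0$, and then $X|_{E_{21}D}=0$; similarly for $E_{31}D$. Only then does faithfulness on $V$ give $X=0$. Without this step, the injectivity argument as written does not follow.
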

\begin{proof} See Definition~\ref{inner_trial_def} for notation.  Define the following elements in $\instr(V)_0:$
$\Lambda(a) = \ud(h_3(a)), $ $P(b) =\ud(h_2(b) - h_3(b)).$ Then it is not difficult to show (see for example \cite[p. 466]{Neh1996}) that there is  a unique Lie algebra homomorphism $\tau : \instr(V)_0 \rightarrow \mathcal T_0$ given by  $\Lambda(a) \mapsto \lambda(a)$ and $P(b) \mapsto \rho(b)$ and that $\tau$ is an isomorphism. 
\end{proof}

\begin{rem} This can also be found in \cite{Faulkner89}. 
\end{rem}

\begin{rem}
If $1/3 \in k,$ then we have proven in Corollary~\ref{assoc_coord_A_2_cor} that $\uider(V)_0 \cong \langle D, D \rangle \oplus D \oplus D.$ It follows from the proof of Corollary~\ref{assoc_coord_A_2_cor} that under $\ud,$ the corresponding decomposition of $\mathcal T_0 \cong \instr(V)_0$ is given  by $\mathcal T_0  \cong \mathrm{StanDer}(D) \oplus \Lambda(D) \oplus P(D)$ where $\tau(\ud(\langle a, b \rangle )) = SD(a, b).$ 
\label{one_third_instr_decomp_remark}
\end{rem}

\begin{prop}
\label{A_2_uider_kernel}
An element $h_2(a) + h_3( b) + \sum H(a_i, b_i) $ is in $\ker \ud$ if and only if $a= b$, $3a + \sum [a_i, b_i] =0$ and the inner derivation $L_a - R_a +  \sum [L_{a_i}, R_{b_i}]$ is trivial.
\end{prop}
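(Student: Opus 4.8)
\textbf{Proof strategy for Proposition~\ref{A_2_uider_kernel}.}

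The plan is to use the explicit action table for the generators of $\uider(V)_0$ on the four homogeneous spaces $E_{12}D$, $E_{13}D$, $E_{32}D$, $E_{23}D$ that was established in Lemma~\ref{H_Action_alt_lem} (and reproduced just above the statement). An element $X = h_2(a) + h_3(b) + \sum_i H(a_i, b_i)$ lies in $\ker \ud$ if and only if $\ud(X)$, as an element of $\instr(V)_0$, annihilates all of $V = \TKK(V)^{\pm 1}$; since $\instr(V)$ acts faithfully on $V$ (and even on the homogeneous pieces listed), $\ud(X) = 0$ is equivalent to $\ud(X)$ acting as zero on each of $E_{12}D$, $E_{13}D$, $E_{32}D$ and $E_{23}D$. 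I would first record the four corresponding operator equations obtained by summing the table entries: writing $\Delta = L_a - R_a + \sum_i[L_{a_i}, R_{b_i}]$ will be convenient, but I want to be careful to expand everything in terms of $L$'s and $R$'s before collecting.

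\textbf{Key steps.} First, reading off the action on $E_{12}D$ gives the condition $L_a + R_a + L_b + L_{\sum[a_i,b_i]} - \sum[L_{a_i},R_{b_i}] = 0$ (using that $H(a_i,b_i)$ acts by $L_{[a_i,b_i]} - [L_{a_i},R_{b_i}]$ there and $h_2(a)$ by $L_a + R_a$, $h_3(b)$ by $L_b$); the action on $E_{13}D$ gives $L_a + L_b + R_b + L_{\sum[a_i,b_i]} - \sum[L_{a_i},R_{b_i}] = 0$; on $E_{32}D$ it gives $R_a - L_b - \sum[L_{a_i},R_{b_i}] = 0$; and on $E_{23}D$ it gives $-L_a + R_b + \sum[L_{a_i},R_{b_i}] = 0$. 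Subtracting the $E_{12}D$ equation from the $E_{13}D$ equation yields $R_a = R_b$, hence (applying to $1$) $a = b$. With $a = b$, adding the last two equations ($E_{32}D$ plus $E_{23}D$) gives $R_a - L_a = 0$ on all of $D$, which applied to $1$ forces $a = 0$… so instead I should be more careful: adding the $E_{32}D$ and $E_{23}D$ equations gives $R_a - L_a + R_a - L_a \cdot(\text{signs})$ — I must redo this bookkeeping honestly, but the upshot, which mirrors the computation in Proposition~\ref{inner_der_is_inner_trial_prop}, is that the diagonal conditions force $a = b$ and $3a + \sum_i[a_i,b_i] = 0$, after which the remaining content of the four equations collapses to the single operator identity $L_a - R_a + \sum_i[L_{a_i},R_{b_i}] = 0$, i.e.\ $\Delta$ is the trivial inner derivation.

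\textbf{Converse and conclusion.} Conversely, if $a = b$, $3a + \sum_i[a_i,b_i] = 0$ and $\Delta = L_a - R_a + \sum_i[L_{a_i},R_{b_i}] = 0$, I would plug these relations back into the four operator expressions and check each vanishes, using the alternative-algebra identities (\ref{alt_id_1}) and (\ref{alt_id_2}) to rewrite $\sum_i[L_{a_i},R_{b_i}]$, $\sum_i R_{[a_i,b_i]}$, $\sum_i L_{[a_i,b_i]}$ consistently; the relation $3a + \sum_i[a_i,b_i] = 0$ is exactly what is needed to cancel the $L_{\sum[a_i,b_i]}$ and $R_{\sum[a_i,b_i]}$ terms against multiples of $L_a$, $R_a$.

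\textbf{Main obstacle.} The genuine difficulty is purely the bookkeeping of signs and of the $L$/$R$ expansions of the commutator terms across the four homogeneous spaces — in particular making sure the four scalar equations, which look independent, really do reduce to just ``$a=b$, $3a+\sum[a_i,b_i]=0$, $\Delta=0$'' and nothing more. There is a mild subtlety in that $H(a,b)$ acts differently on $E_{12}D$/$E_{13}D$ (by $L_{[a,b]} - [L_a,R_b]$) than on $E_{32}D$/$E_{23}D$ (by $\mp[L_a,R_b]$), so I need all four equations and not merely a symmetric pair; this is the step where I would slow down and verify each line against Lemma~\ref{H_Action_alt_lem} and against the parallel derivation-versus-triality computation already carried out in Proposition~\ref{inner_der_is_inner_trial_prop}, which this proposition is essentially the $\instr(V)_0$-shadow of.
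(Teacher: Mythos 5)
Your approach is the same as the paper's: read off the four operator equations from the action table, deduce $a=b$, then reduce the remaining equations to the two asserted conditions. Your derivation of $a=b$ by subtracting the $E_{12}D$ and $E_{13}D$ equations is clean and correct. However, the proposal has a genuine gap: after acknowledging that adding the $E_{32}D$ and $E_{23}D$ equations leads to a dead end, you write that ``the upshot is'' $a=b$, $3a+\sum[a_i,b_i]=0$, and $\Delta=0$ without ever actually producing that conclusion. In particular you never explain where the factor of $3$ comes from, and that is the content-bearing step of the proof.

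The source of your confusion is worth naming: the printed table just above the proposition has a sign error in the $H(a,b)$ row. It lists the action of $H(a,b)$ on $E_{23}D$ as $+[L_a,R_b]$, but Lemma~\ref{H_Action_alt_lem} states $\ud(H(a,b))(E_{m,n}(c)) = -E_{m,n}([L_a,R_b]c)$ for $m \neq n$, so the entry should be $-[L_a,R_b]$. If you take the table literally, your $E_{32}D$ and $E_{23}D$ equations become $R_a-L_a=\mp\sum[L_{a_i},R_{b_i}]$, which are mutually inconsistent unless $2$ is a zero divisor — that is the wall you hit. Using the Lemma's sign instead, once $a=b$ is in hand the $E_{32}D$ and $E_{23}D$ equations coincide as $-L_a+R_a-\sum[L_{a_i},R_{b_i}]=0$, which is exactly ``$\Delta$ is trivial.'' The $E_{12}D$ equation becomes $2L_a+R_a+L_{\sum[a_i,b_i]}-\sum[L_{a_i},R_{b_i}]=0$; subtracting the $E_{23}D$ equation from it gives $3L_a+L_{\sum[a_i,b_i]}=0$, and applying this to $1$ yields $3a+\sum[a_i,b_i]=0$. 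You should redo the argument from the Lemma rather than the table, carry out this subtraction explicitly, and also state the converse direction (that these three conditions annihilate all four homogeneous pieces, which is immediate once the two operator identities above are known).
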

	\begin{proof} Let $h_1(1, a) + h_2(1, b) - \sum H(a_i, b_i)$ be an  element of $\uider(V).$
	Using the table above the element is central if and only if 
	\begin{eqnarray*}
	L_a + R_a + L_b + \sum (- L_{[a_i, b_i]} + [L_{a_i}, R_{b_i}]) &=& 0,\\
	L_a + L_b + R_b + \sum (- L_{[a_i, b_i]} + [L_{a_i}, R_{b_i}]) &=& 0,\\
	-R_a + L_b - \sum  [L_{a_i}, R_{b_i}] &=& 0,\\
	-L_a + R_b + \sum  [L_{b_i}, R_{a_i}] &=& 0.\\
	\end{eqnarray*}
	By  applying either of the last two lines to $1$ we obtain $a = b$ and the equations can be rewritten as 
	\begin{eqnarray*}
	2L_a + R_a  + \sum (-L_{[a_i, b_i]} + [L_{a_i}, R_{b_i}]) &=& 0,\\
   -L_a + R_a - \sum  [L_{a_i}, R_{b_i}] &=& 0.\\
	\end{eqnarray*}
	Subtracting the last from the first line leaves the following system: 
		\begin{eqnarray*}
	 3L_a  + \sum L_{[a_i, b_i]} &=& 0,\\
   -L_a + R_a - \sum  [L_{a_i}, R_{b_i}] &=& 0.\\
	\end{eqnarray*}
	The first line is equivalent to $3a + \sum [a_i, b_i] = 0$ thus we have shown
\begin{eqnarray*}
	a&=& b,\\
	3a + \sum [a_i, b_i] &=& 0,\\
	L_a  - R_a + \sum  [L_{a_i}, R_{b_i}] &=& 0.
	\end{eqnarray*}

For the sake of completeness we will prove that such an element  $Z$ is central. 	With the conditions $3a + \sum [a_i, b_i] = 0$  and $a =b$ we know that 
$L_a + R_a + L_b + \sum (L_{[a_i, b_i]} + [L_{a_i}, R_{b_i}]) = -L_a + R_a +  \sum  [L_{a_i}, R_{b_i}] = 0. $ Therefore the element under consideration annihilates all of $e_{12}D$ and $e_{13}D.$ Similarly one proves (by passing to the opposite algebra) that also $e_{21}D$ and $e_{31}D$ are annihilated. Since those submodules generate the Lie algebra $\mathfrak{uce}(L)$ it follows that $Z \in  \ker u.$
	\end{proof}
	
	Before we prove the next result, we would like to recall the crucial fact that $\ker \ud = \ker (\ud)_0 = (\ker u)_0$ where $(\ker u)_0$ and $(\ker \ud)_0$ are the $0$-components with respect to the $\dot A_I$-grading which is induced by the collinear grid of idempotents( Proposition~\ref{rect_grid_la_prop}).
\begin{cor}
\label{A_2_kernel_ud_0_cor}
\begin{itemize}
\item[\rm{(i)}]
If $1/3  \in k,$ then $\uider(V)_0$ is isomorphic to $\langle D,D  \rangle  \oplus D \oplus D$ and
$$(\ker u)_0 = \left \{\sum \langle  a ,b \rangle : \sum SD(a_i, b_i) = 0 \right\} \label{zero_space_one_third} $$
where $SD(a,_i, b_i) = L_{[a_i, b_i]} - 3[R_{a_i}, L_{b_i}] - R_{[a_i, b_i]}$ is the standard inner derivation of $D$ determined by $(a_i, b_i).$
\item[\rm{(ii)}] If  $3 D = 0$, then 
\begin{equation} (\ker u)_0 = \big \{ \sum H(a_i, b_i) + h_2(1, c) + h_3(1, c): \sum [L_{a_i}, R_{b_i}] = 0, \sum [a_i, b_i] =0 \big \}.\label{zero_space_3_tor}\end{equation} 
\item[\rm{(iii)}] If $D$ is commutative, then 
\begin{equation} (\ker u)_0 = \{ \sum H(a_i, b_i) + h_2(1, c) + h_3(1, c):\sum [L_{a_i}, R_{b_i}] = 0, 3c = 0 \}.\end{equation}
If, in addition, $D$ does not have $3$-torsion, then 
 \begin{equation}(\ker u)_0 = \{ \sum H(a_i, b_i) : a_i, b_i \in D \}.\end{equation}
\item[\rm (iv)] If $D$ is associative, then 
$$(\ker u)_0 = \big \{ \sum H(a_i, b_i) + h_2(1, a) + h_3(1, a), 3a + \sum [a_i, b_i] =0, a \in \mathrm{Cent}(D) \big \}.$$
\end{itemize}
\end{cor}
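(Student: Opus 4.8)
The plan is to specialize the general description of $(\ker u)_0 = \ker\ud$ obtained in Proposition~\ref{A_2_uider_kernel} to each of the four hypotheses on $D$, using in each case the corresponding part of Proposition~\ref{lopera_der_proposition} on the structure of $\mathrm{IDer}(D)$ together with the identification $(\ker u)_0 \cong \ker\ud$ coming from Corollary~\ref{JKP_central_ext_kernel_cor} and the fact (Proposition~\ref{deg_sum_in_defect_coll}) that for a collinear grid the kernel is concentrated in degree $0$. Recall from Proposition~\ref{A_2_uider_kernel} that an element $h_2(1,a)+h_3(1,b)+\sum H(a_i,b_i)$ of $\uider(V)_0$ lies in $\ker\ud$ if and only if $a=b$, $3a+\sum[a_i,b_i]=0$, and the inner derivation $\Delta := L_a-R_a+\sum[L_{a_i},R_{b_i}]$ of $D$ is trivial (i.e.\ zero as an element of $\End_k(D)$). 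So in every case the task is to re-express ``$\Delta=0$, $3a+\sum[a_i,b_i]=0$'' in the language appropriate to the hypothesis, and to match the spanning elements of $\uider(V)_0$ against the decompositions of $\mathrm{IDer}(D)$ in Proposition~\ref{lopera_der_proposition}.

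For part (i), when $1/3\in k$: by Corollary~\ref{assoc_coord_A_2_cor} we have $\uider(V)_0 \cong \langle D,D\rangle\oplus D\oplus D$, and by Remark~\ref{one_third_instr_decomp_remark} the image decomposition under $\ud$ is $\mathcal T_0\cong\mathrm{StanDer}(D)\oplus\Lambda(D)\oplus P(D)$ with $\tau(\ud(\langle a,b\rangle))=SD(a,b)$. Since $1/3\in k$, Proposition~\ref{lopera_der_proposition}(iv) gives $\mathrm{IDer}(D)=\mathrm{StanDer}(D)$ and $\mathrm{ComDer}(D)=0$, so the $\Lambda(D)$- and $P(D)$-parts of $\uider(V)_0$ map injectively onto their images, and the kernel of $\ud$ is exactly the kernel of $\ud$ restricted to the $\langle D,D\rangle$-summand, i.e.\ $\{\sum\langle a_i,b_i\rangle : \sum SD(a_i,b_i)=0\}$. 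I would write out briefly why $SD(a,b)=L_{[a,b]}-R_{[a,b]}-3[R_a,L_b]$ is the standard derivation (matching the sign conventions of the paper), and note $[L_a,R_b]=-[R_b,L_a]$ to reconcile signs. For part (ii), when $3D=0$: then $h_2(1,a)+h_3(1,a)$ with $3a=0$ is automatically satisfied by any $a$, the condition $3a+\sum[a_i,b_i]=0$ reduces to $\sum[a_i,b_i]=0$, and ``$\Delta=0$'' with $3a=0$ forces, using Proposition~\ref{lopera_der_proposition}(v) ($\mathrm{IDer}(D)=\mathrm{AssDer}(D)+\mathrm{ComDer}(D)$), that $L_a-R_a=0$ and $\sum[L_{a_i},R_{b_i}]=0$ separately — one must check that the ``$L_a-R_a$'' part and the ``$\sum[L_{a_i},R_{b_i}]$'' part of a trivial inner derivation vanish independently when $3D=0$, which is exactly the content of the $\mathrm{AssDer}+\mathrm{ComDer}$ decomposition together with the action table. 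This yields formula (\ref{zero_space_3_tor}).

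For part (iii), when $D$ is commutative: here $[L_a,R_b]=L_aL_b-L_bL_a=0$ is \emph{not} generally zero, but $[a,b]=0$ for all $a,b$, so the condition $3a+\sum[a_i,b_i]=0$ becomes $3a=0$, and ``$\Delta=L_a-R_a+\sum[L_{a_i},R_{b_i}]=0$'' becomes ``$\sum[L_{a_i},R_{b_i}]=0$'' since $L_a=R_a$ by commutativity; this gives the first displayed formula. If moreover $D$ has no $3$-torsion then $3a=0$ forces $a=0$, so the $h$-terms drop out, giving $(\ker u)_0=\{\sum H(a_i,b_i):\sum[L_{a_i},R_{b_i}]=0\}$ — and here one can use Proposition~\ref{lopera_der_proposition}(ii) to see that $\mathrm{IDer}(D)=\mathrm{AssDer}(D)+\mathrm{StanDer}(D)+\mathrm{ComDer}(D)$ simplifies, but actually the cleanest route is just direct specialization of Proposition~\ref{A_2_uider_kernel}. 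For part (iv), when $D$ is associative: then $(D,D,D)=0$, a trivial inner derivation $L_a-R_a+\sum[L_{a_i},R_{b_i}]=0$ applied to $1$ is automatic, and the real constraints are $3a+\sum[a_i,b_i]=0$ together with $L_a-R_a+\sum[L_{a_i},R_{b_i}]=0$; one shows the latter forces $a\in\mathrm{Cent}(D)$ by evaluating $[L_a-R_a+\sum[L_{a_i},R_{b_i}],L_c]$ or by using that $\mathrm{AssDer}$-part and the $L_a-R_a$ part interact via the associative identities — concretely, $\sum[L_{a_i},R_{b_i}]$ is a derivation only when $\sum[a_i,b_i]$ is central, and combined with the first relation this puts $3a$, hence $a$ (up to the appropriate interpretation), in $\mathrm{Cent}(D)$. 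I expect the main obstacle to be part (iv): correctly arguing that the associativity of $D$ forces $a\in\mathrm{Cent}(D)$ from the triviality of $L_a-R_a+\sum[L_{a_i},R_{b_i}]$, since this requires a small computation with the associative commutator identities rather than a direct appeal to Proposition~\ref{lopera_der_proposition}; the other three parts are essentially bookkeeping, matching the action table for $h_2,h_3,H$ against the ideal decompositions of $\mathrm{IDer}(D)$.
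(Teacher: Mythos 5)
Your overall strategy — specialize Proposition~\ref{A_2_uider_kernel} to each hypothesis — is exactly the paper's, and part (i) matches the paper (triality decomposition via Remark~\ref{one_third_instr_decomp_remark} and Lemma~\ref{inner_Der_inner_trial}). But three of the four parts have gaps relative to what the paper does.

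In part (iv) you are missing the one observation that makes the whole thing trivial: in an \emph{associative} algebra left and right multiplications commute, $[L_a,R_b]=0$ for all $a,b$. So $\sum[L_{a_i},R_{b_i}]=0$ is automatic, and the condition ``$L_a-R_a+\sum[L_{a_i},R_{b_i}]=0$'' collapses at once to $L_a=R_a$, i.e.\ $a\in\mathrm{Cent}(D)$. You instead propose bracketing with $L_c$ or appealing to ``$\sum[L_{a_i},R_{b_i}]$ is a derivation only when $\sum[a_i,b_i]$ is central,'' which is a detour that never arrives cleanly at $a\in\mathrm{Cent}(D)$; the associativity identity $[L_a,R_b]=0$ is the intended (and much shorter) step.

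In part (iii), second display: you stop at $(\ker u)_0=\{\sum H(a_i,b_i):\sum[L_{a_i},R_{b_i}]=0\}$, but the corollary asserts there is \emph{no} constraint. The missing ingredient is that a commutative alternative algebra with no $3$-torsion is automatically associative (from the alternative-algebra identity $[ab,c]+[bc,a]+[ca,b]=3(a,b,c)$ and commutativity, $3(a,b,c)=0$, hence $(a,b,c)=0$), so by the observation above $[L_{a_i},R_{b_i}]=0$ always; the constraint is vacuous. The paper's proof invokes exactly this fact.

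In part (ii) your claim that ``the $L_a-R_a$ part and the $\sum[L_{a_i},R_{b_i}]$ part vanish independently'' via the decomposition $\mathrm{IDer}(D)=\mathrm{AssDer}(D)+\mathrm{ComDer}(D)$ is not justified: Proposition~\ref{lopera_der_proposition}(v) gives a sum of ideals, not a direct sum, so $L_a-R_a=-\sum[L_{a_i},R_{b_i}]$ need not force both sides to be zero. (The paper's own proof of (ii) is equally terse, but your attempted patch does not close the gap.)
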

\begin{proof} Throughout assume $Z =h_2(1, a) + h_3(1, a) + \sum H(a_i, b_i)$ is in the kernel of $\ud : \uider(V) \rightarrow \instr(V)$. \\
(i) If $1/3$ in $k,$ then the image of $\sum \langle a_i, b_i \rangle + a_\alpha + b_\beta$ in the triality algebra (which is isomorphic to $\instr(V)_0$) is 
$\sum (SD(a_i, b_i), SD(a_i, b_i), SD(a_i, b_i)) + \lambda(a) + \rho(b)$, see Remark \ref{one_third_instr_decomp_remark}. 
Assume that this is zero. Since $\mathcal T_{0} \cong \mathrm{StanDer}(D) \oplus (\lambda(a) \oplus \rho(b))$ by Lemma~\ref{inner_Der_inner_trial}, this is equivalent to  $a = b = 0$ and  $\sum SD(a_i. b_i) = 0.$ \\ 
(ii) If $3 D = 0$ then $3a = 0$ for all $a$ and the condition is $a = b$, $\sum [L_{a_i}, R_{b_i}] = 0$ and  $\sum [a_i, b_i] =0$. \\
(iii) Clearly,  if $D$ is commutative, then any sum over commutators is zero. Moreover, in the absence of $3$-torsion, every alternative, commutative algebra is automatically associative, so that the conditions are all void with the exception of $a = 0$. Thus the elements of the kernel are of the stated form. \\
(iv) If $D$ is associative then $\sum [L_{a_i}, R_{b_i}] = 0$ and we need $L_a = R_a$, i.e., $a \in \mathrm{Cent}(D)$ and $3a + \sum [a_i, b_i] =0.$
	\end{proof}
\begin{rem}
Case (i) was shown in  \cite{BGKN} for $k$ a field, case (iii) is well-known, see for instance \cite{loopgroups}. The associative case is treated in \cite{Neh1996} and also in \cite{KasLo}.  
\end{rem}


\begin{rem} Proposition~\ref{A_2_uider_kernel} and  Theorem~\ref{deg_sum_theo} completely describe the kernel of 
$\mathfrak{uce}(L_k(D)) \rightarrow L_k(D)$ for all base rings $k$ and all alternative $k$-algebras $D.$
\end{rem}

 \

\subsection{Example: Octonion algebras}
\label{Octonion_alg}
\begin{center}
\textbf{Until the end of this subsection let $k$ be a ring containing $1/3$.}
\end{center}
 Following Corollary~\ref{A_2_kernel_ud_0_cor}, the task of computing the kernel of $\ud : \uider(V) \rightarrow \instr(V)$ is reduced to describing the kernel of the map 
$$f: \langle D, D \rangle \rightarrow \mathrm{StanDer}(D), \quad \langle a, b  \rangle  \mapsto SD(a, b) $$
where $\langle D, D \rangle$ is the quotient of $D \otimes D$ modulo the submodule $U''$ generated by $a \otimes b + b \otimes a$ and $ab \otimes c + bc \otimes a + ca \otimes b. $

\begin{defi} For the remaining part, if $D$ is an alternative $k$-algebra and $k \rightarrow K$ an extension, then
$$L_k(D) := \mathrm{TKK}(\mathbf{M}(1, 2, D)), $$ 
$$D_K = D \otimes_k K .$$
\end{defi}

\begin{rem} Proposition~\ref{A_2_uider_kernel} and  Theorem~\ref{deg_sum_theo} completely describe the kernel of 
$\mathfrak{uce}(L_k(D)) \rightarrow L_k(D)$ for all base rings $k$ and all alternative $k$-algebras $D.$
\end{rem}

Moreover, since $1/3 \in k,$  we know from Proposition~\ref{torsion_bijection_prop} and (\ref{A_2_in_description}) that, if $ u : \mathfrak{uce}(L_k(D)) \rightarrow L_k(D)$, then $$\ker u = \ker \ud = \ker f.  $$ 
\begin{rem}
We always have $\langle 1, D \rangle = \langle D, 1 \rangle  = 0. $ 
\label{remark_1_d_is_trivial}
\end{rem} 

By \cite[Prop 2.9]{lopera} and \cite[Cor. 5.2]{lopera}, if $D$ is an octonion algebra and $1/3 \in k$ then for every flat base change $k \rightarrow K$ 
 \begin{equation} \mathrm{Der}_k( D) \otimes_k K=  \mathrm{Der}_K( D \otimes_k K), \label{Der_base_change}\end{equation}
and all the derivations are standard.

\subsubsection{The smallest example: $\mathfrak{sl}_3(k)$ }

\begin{lem} Assume $1/3 \in k$, then $L(k)$ is isomorphic to the Lie algebra of traceless $3 \times 3$ matrices with entries in $k.$ Moreover, $L(k)$ is centrally closed. \label{sl_3_baby_lem}
\end{lem}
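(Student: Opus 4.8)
The statement has two parts: first, that $L(k) = \TKK(\mathbf M(1,2,k))$ is isomorphic to $\mathfrak{sl}_3(k)$ (the traceless $3\times 3$ matrices over $k$), and second, that $L(k)$ is centrally closed, i.e.\ simply connected. The plan is to handle the isomorphism first by an explicit matrix realization, and then deduce central closure from the general machinery already developed in Chapters~3 and~4, specialized to $D = k$.

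For the first part I would set $V = \mathbf M(1,2,k) = (\mathrm{Mat}(1,2,k), \mathrm{Mat}(2,1,k))$, a Jordan pair over $k$ coordinatized by the (commutative, associative, in particular alternative) algebra $D = k$. The natural guess is to embed $\TKK(V)$ into $\mathfrak{gl}_3(k)$ via $E_{1j}a \mapsto a\,e_{1j}$ and $E_{j1}b \mapsto b\,e_{j1}$ for $j \in \{2,3\}$ (where $e_{pq}$ are the matrix units), sending the degree-zero part $\instr(V)_0$ into the diagonal-plus-off-diagonal block generated by the commutators $[e_{1j}, e_{j1}]$, etc. By Proposition~\ref{rect_grid_la_prop} the family $\{(E_{1j},E_{j1}): j = 2,3\}$ is a collinear covering grid and $\TKK(V)$ is $\dot A_{\{1,2,3\}} \cong A_2$-graded; the off-diagonal matrix units $e_{pq}$, $p\neq q$, likewise give $\mathfrak{sl}_3(k)$ an $A_2$-grading. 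I would check that the explicit bracket formulas listed just before the subsection ``Computing the kernel'' (the $[E_{ij}a, E_{lk}b]$ relations, which for $D = k$ reduce to $[a e_{ij}, b e_{lk}] = \delta_{jl} ab\, e_{ik} - \delta_{ik} ba\, e_{lj}$) match the matrix commutators in $\mathfrak{gl}_3(k)$ on root spaces, and that the induced map on the Cartan part $\instr(V)_0 = [L^1,L^{-1}]$ is a bijection onto the traceless diagonal matrices. Since both algebras are generated by their root spaces ($\mathfrak{sl}_3(k)$ is generated by the $e_{pq}$, $p\neq q$, and $\TKK(V)$ by $V$), a graded homomorphism agreeing on root spaces and on the (finite-dimensional, easily matched) zero component is an isomorphism. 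The one subtlety is that $\instr(V)$ for a Jordan pair over $k$ might a priori be bigger than the span of the $\delta(E_{1j}a, E_{j1}b)$; but that span \emph{is} the inner derivation algebra, which by definition of $\TKK$ is all of $\instr(V)$, so this causes no trouble.

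For the second part, ``centrally closed'' means $u : \mathfrak{uce}(L(k)) \to L(k)$ is an isomorphism, equivalently $L(k)$ is perfect and $\ker u = 0$. Perfectness is immediate: $R = A_2$ has no component of type $C_I$, so Proposition~\ref{K_R_turns_rogral_perfect} applies (here $k = k_R$ with no restriction needed). Now apply Proposition~\ref{torsion_bijection_prop} to the $\mathcal Q(A_2)$-graded universal central extension: $\ker u$ decomposes as $\bigoplus_{\gamma \in \mathbb{DS}(A_2)} \mathfrak{uce}(L(k))_\gamma \oplus (\ker u)_0$, and by that proposition every nonzero $\gamma$ in the support is a degenerate sum of divisor $n_\gamma = 3$ with $3\,\mathfrak{uce}(L(k))_\gamma = 0$. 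By Theorem~\ref{deg_sum_theo}, $\mathfrak{uce}(L(k))_\gamma \cong D_3$ for a degenerate sum $\gamma$; since $D = k$ is associative, the earlier corollary gives $D_3 \cong k/(k[k,k],\, 3k) = k/3k$, which vanishes because $1/3 \in k$. So all the degenerate-sum components vanish. For the zero component, since $V$ is covered by a \emph{collinear} grid, Corollary~\ref{ker_ud_supporr_cor}(i) (or directly the fact $1/3\in k$ together with Proposition~\ref{torsion_bijection_prop}) gives $(\ker u)_0 = \ker\ud = \ker f$ where $f : \langle D,D\rangle \to \mathrm{StanDer}(D)$. For $D = k$ commutative without $3$-torsion, Corollary~\ref{A_2_kernel_ud_0_cor}(iii) shows $(\ker u)_0 = \{\sum H(a_i,b_i)\}$; but over a commutative $D$ every $H(a,b) = 0$ in $\uider(V)_0$ — indeed by Remark~\ref{remark_1_d_is_trivial} and the identities $H(a,b) = -H(b,a)$, $H(ab,c)+H(bc,a)+H(ca,b) = h_{j_0}(1,(a,b,c)) + h_j(1,(a,b,c))$, which for commutative associative $D$ force $H \equiv 0$. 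Hence $(\ker u)_0 = 0$ and $\ker u = 0$.

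The main obstacle I anticipate is not conceptual but bookkeeping: verifying carefully that the explicit Lie bracket on $\TKK(\mathbf M(1,2,k))$ transported through the matrix embedding really does coincide with the bracket in $\mathfrak{sl}_3(k)$ on all pairs of homogeneous components — in particular getting the signs and the $\delta$-terms in the ``long'' relations $[E_{ij}a, E_{jk}b]$ right, and confirming that the map on $\instr(V)_0$ is an isomorphism onto the trace-zero diagonal matrices rather than merely a surjection with kernel. Once the isomorphism $L(k) \cong \mathfrak{sl}_3(k)$ is established, central closure follows cleanly from the cited results with essentially no further computation, using only $1/3 \in k$.
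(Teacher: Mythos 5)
Your proposal is correct in substance, but it takes a genuinely different route from the paper, and in the opposite order.

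For the isomorphism $L(k) \cong \mathfrak{sl}_3(k)$, you propose an explicit matrix embedding $E_{1j}a \mapsto a\,e_{1j}$, $E_{j1}b \mapsto b\,e_{j1}$ and a direct verification of the bracket formulas on all homogeneous components. That works, but it is exactly the bookkeeping you flag as the obstacle. The paper avoids it entirely: it observes that $\mathfrak{sl}_3(k)$ is a perfect, centreless, Jordan $3$-graded Lie algebra whose associated Jordan pair is $\mathbf M(1,2,k)$, so Corollary~\ref{uTKK_universal_prop_JP} hands us a canonical central extension $\mathfrak{sl}_3(k) \to \TKK(\mathbf M(1,2,k)) = L(k)$; since $\mathfrak{sl}_3(k)$ is centreless (which uses $1/3 \in k$, since $Z(\mathfrak{sl}_3(k)) = \{aI_3 : 3a = 0\}$), this map has trivial kernel and is therefore an isomorphism. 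No brackets need checking. The paper also does the centrally-closed half \emph{first} and then deduces the iso via Lemma~\ref{uni_cov}, whereas you do the iso first; both orderings work, but the paper's reduces the iso to a one-line consequence of work already done.

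For centrally closed, your argument through Corollary~\ref{A_2_kernel_ud_0_cor}(iii) and the vanishing of $H(a,b)$ for commutative $D$ is correct, but the paper's is shorter: since $1/3 \in k$ kills the degenerate-sum contributions and gives $\ker u = \ker f$ with $f : \langle D,D\rangle \to \mathrm{StanDer}(D)$, one just notes $\langle \alpha, \beta\rangle = \alpha\beta\,\langle 1,1\rangle = 0$ by $k$-bilinearity and Remark~\ref{remark_1_d_is_trivial}, so $\langle k,k\rangle = 0$ and hence $\ker u = 0$. One small citation slip in your version: the collinear-grid concentration of $\HF(V)$ in degree $0$ is Corollary~\ref{ker_ud_supporr_cor}(ii), not (i); part (i) concerns $1/2 \in k$, which is irrelevant to the $A_2$ degenerate sums of divisor $3$. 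This doesn't affect your conclusion since $1/3 \in k$ also suffices, but the reference as written points to the wrong case.
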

\begin{proof} We will first show that $L(k)$ is centrally closed. We know that $L(k)$ is perfect. Let $u : \mathfrak{uce}(L(k)) \rightarrow L(k)$ be a universal central extension. Since $1/3 \in k$, $\ker u =\ker  f$ and it easy to see that $\langle k,k \rangle  = 0.$ This is evident, since $\langle \alpha, \beta \rangle  = \alpha\beta \langle 1, 1\rangle = 0$ for all $\alpha, \beta \in k.$ Let $ L = \mathfrak{sl}_3(k).$ Then $L$ is centreless and of Jordan type with associated Jordan pair $\mathbf{M}(1, 2, k).$ Hence the central extension $\mathfrak{sl}_3(k) \rightarrow L(k)$ which exists by Corollary~\ref{uTKK_universal_prop_JP} is an isomorphism.
\end{proof}
\begin{rem}Of course, Lemma \ref{sl_3_baby_lem} is well known, see for instance \cite[Cor 3.14]{vdK}. 
\end{rem}

\subsubsection{The classical setting}
Let $\mathbb O$ be an octonion algebra over a field containing $1/6$. 
The reader is referred to \cite[Section 2.4]{TasteOfJA} to check the finer details of the following claims:  All associators and all commutators are skew with respect to the involution which is central and the centre is spanned by $1_{\mathbb O}.$ 
Using for instance the argument in \cite[p. 158]{TasteOfJA} one can easily show that: 
If $1/3 \in k,$ then
\begin{equation}(\mathbb O, \mathbb O, \mathbb O) = [\mathbb O, \mathbb O] = \frac{\mathbb O}{k1_{\mathbb O} } \label{octonion_field_comm_ass_equation} \end{equation} so that the commutator and associator space coincide and both have dimension $7.$

We will first prove that over a field $k$ not of characteristic $2, 3$, the map $\langle a, b \rangle \mapsto SD(a, b)$ is an isomorphism. 
Recall the definition of $\langle D,  D \rangle$ for an alternative algebra $D$ from Corollary~\ref{assoc_coord_A_2_cor}. 

\begin{prop} Let $\mathbb O$ be an octonion algebra over a field $k$, $1/6 \in k$. 
Then $\dim(\langle \mathbb O , \mathbb O\rangle ) = 14$. 
\end{prop}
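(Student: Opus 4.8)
The statement to be proved is that for an octonion algebra $\mathbb{O}$ over a field $k$ with $1/6 \in k$, the module $\langle \mathbb{O}, \mathbb{O} \rangle$ has dimension $14$. My plan is to sandwich this dimension between $14$ from above and $14$ from below. For the lower bound I will use the map $f : \langle \mathbb{O}, \mathbb{O} \rangle \to \mathrm{StanDer}(\mathbb{O})$, $\langle a, b \rangle \mapsto SD(a,b)$ from Corollary~\ref{assoc_coord_A_2_cor} and the subsequent discussion. Since $1/6 \in k$ we have $1/3 \in k$, so by Proposition~\ref{lopera_der_proposition}(iv) every inner derivation of $\mathbb{O}$ is standard, hence $\mathrm{StanDer}(\mathbb{O}) = \mathrm{IDer}(\mathbb{O})$; and by the classical fact (invoked via \cite{lopera}, equation~(\ref{Der_base_change})) that over such a field $\mathrm{Der}(\mathbb{O})$ is a $14$-dimensional simple Lie algebra of type $G_2$, which is perfect and hence consists entirely of inner derivations, we get $\mathrm{StanDer}(\mathbb{O}) = \mathrm{Der}(\mathbb{O})$ of dimension $14$. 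Surjectivity of $f$ is clear from the definition of $SD$, so $\dim \langle \mathbb{O}, \mathbb{O} \rangle \geq 14$.

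For the upper bound I would produce an explicit spanning set of size $14$ for $\langle \mathbb{O}, \mathbb{O} \rangle$. First, by Remark~\ref{remark_1_d_is_trivial} (and the relation $a \otimes b + b \otimes a = 0$ in $U''$) the elements $\langle 1, b\rangle$ all vanish, so only $\langle a, b\rangle$ with $a, b$ in a complement of $k1_{\mathbb{O}}$ matter; write $\mathbb{O} = k1 \oplus \mathbb{O}_0$ where $\mathbb{O}_0$ is the $7$-dimensional space of trace-zero octonions. A priori $\langle \mathbb{O}_0, \mathbb{O}_0 \rangle$ is a quotient of $\Lambda^2 \mathbb{O}_0$, which has dimension $21$. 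The relation $ab \otimes c + bc \otimes a + ca \otimes b = 0$ in $U''$ cuts this down: I would show that modulo $U''$ one can rewrite $\langle a, b \rangle$ for $a, b \in \mathbb{O}_0$ in terms of a $14$-dimensional family, using that $ab = \frac{1}{2}[a,b] + \frac{1}{2}(ab + ba)$ and that for trace-zero octonions $ab + ba \in k1$, so the symmetric part contributes nothing to $\langle \cdot, \cdot\rangle$; thus $\langle a, b \rangle$ depends only on $[a,b]$ (up to the cyclic relation), and since $[\mathbb{O}, \mathbb{O}] = \mathbb{O}_0$ has dimension $7$ by~(\ref{octonion_field_comm_ass_equation}), the cyclic relation becomes a bilinear identity among the $\langle a, b\rangle$ that I expect to force $\dim \leq 14$. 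Concretely I would pick the standard basis $1, e_1, \dots, e_7$ with the octonion multiplication table, and either directly compute the rank of the relation matrix or exhibit $14$ explicit classes that span.

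Alternatively, and perhaps more cleanly, I would avoid the basis computation entirely: since $f$ is surjective onto a $14$-dimensional space, it suffices to prove $f$ is injective, i.e.\ $\ker f = 0$. By Corollary~\ref{A_2_kernel_ud_0_cor}(i), $\ker f = \{\sum \langle a_i, b_i\rangle : \sum SD(a_i, b_i) = 0\}$, so injectivity of $f$ is the statement that $SD(a,b)$-relations lift to relations already present in $\langle \mathbb{O}, \mathbb{O}\rangle$. This reduces to a representation-theoretic input: over a field of characteristic $0$ (or $\neq 2,3$) the second cyclic homology / the kernel of $\mathbb{O} \wedge \mathbb{O} \to \mathrm{Der}(\mathbb{O})$ vanishes for the octonions — this is essentially the statement that $H_1$ of the relevant complex is trivial, which one can check by noting both sides are $G_2$-modules and comparing. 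So the concrete plan is: (1) establish $\dim \mathrm{StanDer}(\mathbb{O}) = 14$ via type $G_2$ and $1/3 \in k$; (2) observe $f$ surjective; (3) prove $f$ injective by showing the cyclic relations in $U''$ already account for all derivation relations, using the $G_2$-module structure of $\mathbb{O}_0$ and its exterior square.

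The main obstacle will be step (3) — proving injectivity of $f$, equivalently that $\langle \mathbb{O}, \mathbb{O}\rangle$ is not bigger than $\mathrm{StanDer}(\mathbb{O})$. The surjectivity and the dimension count of $G_2$ are standard; but ruling out a nontrivial kernel requires genuinely using the structure of the octonions: one must show that the defining relations $ab\otimes c + bc\otimes a + ca\otimes b = 0$ are \emph{exactly} as strong as the Jacobi-type relations defining derivations, with no slack. I expect to handle this either by the $G_2$-decomposition of $\Lambda^2 \mathbb{O}_0 \cong \mathfrak{g}_2 \oplus V_{7}$ (the $7$-dimensional natural module) and showing the $V_7$-component is killed by the cyclic relation while $\mathfrak{g}_2$ survives and maps isomorphically under $f$, or by a direct but careful basis computation. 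The characteristic restriction $1/6 \in k$ is what makes the $G_2$-module decomposition completely reducible and keeps all the structure constants invertible.
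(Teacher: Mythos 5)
Your overall plan — sandwich $\dim\langle \mathbb O, \mathbb O\rangle$ between $14$ from below (via the surjection onto the $14$-dimensional $\mathrm{Der}(\mathbb O) = \mathrm{StanDer}(\mathbb O)$) and $14$ from above — is exactly the paper's strategy, and your lower bound is word-for-word the paper's. Where you diverge is the upper bound. You propose either a direct basis/rank computation or the $G_2$-module decomposition $\Lambda^2\mathbb O_0 \cong \mathfrak g_2 \oplus V_7$, showing the cyclic relation kills $V_7$. Both would work but are heavier than necessary. The paper instead makes a soft rank-nullity argument: write $\{\mathbb O, \mathbb O\} := \mathbb O \wedge \mathbb O/(1 \wedge \mathbb O)$, which has dimension $21$, and let $\tilde U$ be the image in $\{\mathbb O, \mathbb O\}$ of the cyclic relations, so that $\langle\mathbb O, \mathbb O\rangle = \{\mathbb O, \mathbb O\}/\tilde U$. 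Postcomposing with $a \wedge b \mapsto [a,b]$, each generator $\{ab,c\} + \{bc,a\} + \{ca,b\}$ of $\tilde U$ maps to $[ab,c]+[bc,a]+[ca,b] = 3(a,b,c)$, and $(\mathbb O,\mathbb O,\mathbb O) = [\mathbb O,\mathbb O]$ is $7$-dimensional by~(\ref{octonion_field_comm_ass_equation}). So $\tilde U$ surjects onto a $7$-dimensional space, hence $\dim \tilde U \geq 7$ and $\dim\langle\mathbb O,\mathbb O\rangle \leq 21 - 7 = 14$. No module theory or multiplication table is needed. Your approach buys a finer structural picture (the precise $G_2$-decomposition), while the paper's buys a two-line argument that needs nothing beyond the identity $\sum_{\mathrm{cyc}}[ab,c] = 3(a,b,c)$.

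One caveat on your write-up: the sentence asserting that modulo $U''$ ``$\langle a, b\rangle$ depends only on $[a,b]$'' is not correct as stated — if it were, the whole space would be at most $7$-dimensional, contradicting the target $14$. What is true (and what you evidently mean, given you then invoke the $\mathfrak g_2\oplus V_7$ decomposition) is that for $a,b,c \in \mathbb O_0$ the cyclic relation $\langle ab,c\rangle + \langle bc,a\rangle + \langle ca,b\rangle = 0$ reduces, via $ab+ba \in k1$ and $\langle 1,\cdot\rangle = 0$, to the Jacobi-type identity $\langle [a,b],c\rangle + \langle [b,c],a\rangle + \langle [c,a],b\rangle = 0$ on the \emph{first} argument — which is indeed the relation cutting out $V_7$. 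Tighten that phrasing before filling in the details.
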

\begin{proof} By Remark~\ref{remark_1_d_is_trivial}, $\langle 1, \mathbb O \rangle = \langle \mathbb O, 1 \rangle = 0$.  
If $\mathbb O$ is an octonion algebra over a field containing $1/3$, then  it is easy to verify (see above) that $[\mathbb O, \mathbb O]$ is $7$-dimensional and $[\mathbb O, \mathbb O] = (\mathbb O, \mathbb O, \mathbb O).$ \\
Let $\{\mathbb O, \mathbb O\} = \mathbb O \wedge \mathbb O/(1 \wedge \mathbb O).$ Then $\langle \mathbb O , \mathbb O\rangle $ is a quotient of $\{ \mathbb O , \mathbb O\}$ and $\dim \{ \mathbb O , \mathbb O\} = 21$. Define $\mathbb O \wedge \mathbb O  \rightarrow \mathbb O$ by 
$a \wedge b \mapsto [a, b].$

The image of this map has dimension $7.$ Clearly, the element $1$ commutes with all elements in $\mathbb O$ and thus the image of the composition map from $\{ \mathbb O , \mathbb O\}$ to $\mathbb O$ is also $7$-dimensional : 
$$\{ \mathbb O, \mathbb O \} = \mathbb O \wedge \mathbb O/(1 \wedge \mathbb O) \rightarrow \langle  \mathbb O, \mathbb O \rangle \rightarrow \mathbb{O}$$ since the map on the left is surjective.
Denote by $\tilde U$ the image of $U''$ in $\{\mathbb O, \mathbb O\}.$ Note that $\{\mathbb O, \mathbb O\}/\tilde U = \langle \mathbb O, \mathbb O \rangle.$
Then  $\tilde U$ is spanned by  the elements $\{ab,  c \}  + \{bc, a\} + \{ca, b\}$.

 The image of such an element is $[ab, c] + [bc, a] + [ca, b] = 3(a, b,c).$ Since $(\mathbb O, \mathbb O, \mathbb O) = [\mathbb O, \mathbb O],$  the submodule $\tilde U \subset \{\mathbb O, \mathbb O\}$ maps onto a $7$-dimensional space and must therefore have dimension at least $7.$  By the rank theorem $\dim \langle \mathbb O, \mathbb O \rangle \leq \dim \{\mathbb O, \mathbb O\} - 7 = 14.$
We also have a map  from  $\langle \mathbb O, \mathbb O \rangle$ onto $\mathrm{Der}(\mathbb O)$ by mapping $\langle a, b \rangle \mapsto SD(a,b)$ since in the presence of $1/3$ all derivations are standard. Therefore $\dim \langle \mathbb O, \mathbb O \rangle \geq 14 = \dim (\mathrm{Der}(\mathbb O))$ (see \cite{lopera} or also \cite{sprinveldbook}).
Hence $\dim \langle \mathbb O, \mathbb O \rangle = 14.$  
\end{proof}
\begin{cor}
\label{cor_classical_octonion_iso} Over a field $k$ not of characteristic $2, 3$, the map $f: \langle a, b \rangle \mapsto SD(a, b)$ is an isomorphism
$$f: \langle \mathbb O, \mathbb O \rangle \rightarrow \mathrm{StanDer}(\mathbb O). $$
\end{cor}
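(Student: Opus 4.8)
The plan is to show the map $f:\langle\mathbb O,\mathbb O\rangle\to\mathrm{StanDer}(\mathbb O)$ is both injective and surjective by a dimension count, using the preceding proposition which gives $\dim\langle\mathbb O,\mathbb O\rangle=14$. Since $1/3\in k$, Proposition~\ref{lopera_der_proposition}(iv) yields $\mathrm{IDer}(\mathbb O)=\mathrm{StanDer}(\mathbb O)$, and by the cited results of \cite{lopera} (and \cite{sprinveldbook}) the full derivation algebra $\mathrm{Der}(\mathbb O)$ has dimension $14$ and consists entirely of standard derivations, so $\mathrm{StanDer}(\mathbb O)=\mathrm{Der}(\mathbb O)$ is $14$-dimensional. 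Thus $f$ is a linear surjection between two $14$-dimensional $k$-vector spaces (surjectivity being immediate from the spanning property $\mathrm{StanDer}(\mathbb O)=\mathrm{span}\{SD(a,b)\}$), hence an isomorphism by the rank-nullity theorem.

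The only point that requires a little care is extracting the surjectivity and the equality of dimensions cleanly from the earlier material. Surjectivity is essentially definitional: $\mathrm{StanDer}(D)=\mathrm{span}_k\{SD(a,b):a,b\in D\}$ and $f(\langle a,b\rangle)=SD(a,b)$, so the image of $f$ is all of $\mathrm{StanDer}(\mathbb O)$. For the dimension of $\mathrm{StanDer}(\mathbb O)$: over a field containing $1/6$, in particular $1/3$, every derivation of $\mathbb O$ is inner (this is standard; alternatively invoke \cite[Prop.~2.9]{lopera} together with Proposition~\ref{lopera_der_proposition}(iv) which identifies $\mathrm{IDer}$ and $\mathrm{StanDer}$), and $\dim\mathrm{Der}(\mathbb O)=14$. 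Combining, $\dim\mathrm{StanDer}(\mathbb O)=14=\dim\langle\mathbb O,\mathbb O\rangle$.

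I would therefore write: by the preceding proposition $\dim_k\langle\mathbb O,\mathbb O\rangle=14$. The map $f:\langle a,b\rangle\mapsto SD(a,b)$ is well-defined (it is the map $\ud_{JP}$-component appearing in Corollary~\ref{A_2_kernel_ud_0_cor} restricted to $\langle D,D\rangle$) and surjective onto $\mathrm{StanDer}(\mathbb O)$ by definition of the latter. Since $1/6\in k$, all derivations of $\mathbb O$ are standard and $\dim_k\mathrm{StanDer}(\mathbb O)=\dim_k\mathrm{Der}(\mathbb O)=14$. A surjective $k$-linear map between finite-dimensional vector spaces of equal dimension is bijective, so $f$ is an isomorphism.

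The main (and essentially only) obstacle is bookkeeping of hypotheses: the preceding proposition is stated for $\mathbb O$ over a field with $1/6\in k$, whereas the subsection opens with merely $1/3\in k$; the corollary statement itself restricts to characteristic $\neq 2,3$, which is exactly $1/6\in k$ for a field, so the hypotheses are consistent and no extra work is needed. Beyond that, there is no genuine difficulty — the proof is a two-line dimension argument once the input facts ($\dim\langle\mathbb O,\mathbb O\rangle=14$, $\dim\mathrm{Der}(\mathbb O)=14$, all derivations standard) are in hand, all of which are available from earlier in the excerpt or the cited literature.
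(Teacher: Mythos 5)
Your proposal is correct and follows essentially the same route as the paper's own proof: surjectivity because $\mathrm{StanDer}(\mathbb O)$ is by definition the span of the $SD(a,b)$ and every derivation of $\mathbb O$ is standard, then bijectivity by the dimension count $\dim\langle\mathbb O,\mathbb O\rangle=14=\dim\mathrm{Der}(\mathbb O)$ and rank–nullity. Your bookkeeping of the $1/3$ versus $1/6$ hypotheses is a reasonable extra clarification but does not change the argument.
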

\begin{proof} Since every derivation of $\mathbb O$ is a standard derivation, this is obviously a surjection. Moreover, $\dim (\mathrm{Der}(\mathbb O)) = 14$, so that by the rank theorem for finite dimensional vector spaces, it follows that it is also bijective. 
\end{proof}

\begin{cor} If $k$ is a field containing $1/6$, then $L_k(\mathbb O)$ is simply connected. \label{octon_simple_field_cor}
\end{cor}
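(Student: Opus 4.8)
The plan is to deduce simple connectedness of $L_k(\mathbb O)$ from the general machinery already in place, namely from the decomposition of $\ker u$ recorded after Proposition~\ref{torsion_bijection_prop} together with the identification of the degree-zero part of the kernel with $\ker \ud = \ker f$. Since $1/3 \in k$ by the standing hypothesis of this subsection, Proposition~\ref{torsion_bijection_prop} and the displayed decomposition (\ref{A_2_in_description}) give that no degenerate sums of divisor $3$ can occur once $1/3 \in k$; more precisely, for $\gamma \in \mathbb{DS}(A_2,3)$ we have $3\,\mathfrak{uce}(L)_\gamma = 0$ and $3 \in k^\times$, forcing $\mathfrak{uce}(L)_\gamma = 0$. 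Hence $\ker u = (\ker u)_0 \cong \ker \ud = \ker f$, where $f\colon \langle \mathbb O,\mathbb O\rangle \to \mathrm{StanDer}(\mathbb O)$, $\langle a,b\rangle \mapsto SD(a,b)$, is the map studied just above. Therefore $L_k(\mathbb O)$ is simply connected if and only if $f$ is injective.

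First I would record that $L_k(\mathbb O)$ is perfect: it is the Tits--Kantor--Koecher algebra of the perfect Jordan pair $\mathbf M(1,2,\mathbb O)$ (perfectness of the pair is immediate since $V^\sigma = \{V^\sigma V^{-\sigma} V^\sigma\}$, using that $\mathbb O$ is unital), so Corollary~\ref{perfect_TKK_cor} applies. Next, since $1/3\in k$, every derivation of $\mathbb O$ is standard, so $\mathrm{StanDer}(\mathbb O) = \mathrm{Der}(\mathbb O)$ and $f$ is surjective onto $\mathrm{Der}(\mathbb O)$. The substantive point is then the injectivity of $f\colon \langle\mathbb O,\mathbb O\rangle \to \mathrm{Der}(\mathbb O)$, which over a field of characteristic $\neq 2,3$ is exactly Corollary~\ref{cor_classical_octonion_iso}. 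So over such a field the statement is already done; what remains to be thought about is whether the subsection as stated only asserts the field case (Corollary~\ref{octon_simple_field_cor} literally says ``$k$ is a field containing $1/6$''), in which case the proof is a two-line assembly: $\ker u = \ker f = 0$ by Corollary~\ref{cor_classical_octonion_iso}, hence $u$ is an isomorphism, hence $L_k(\mathbb O)$ is simply connected.

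Concretely the steps are: (1) invoke Proposition~\ref{K_R_turns_rogral_perfect} or Corollary~\ref{perfect_TKK_cor} to see $L_k(\mathbb O)$ is perfect, so that a universal central extension $u\colon \mathfrak{uce}(L_k(\mathbb O)) \to L_k(\mathbb O)$ exists; (2) use $1/3\in k$ and Proposition~\ref{torsion_bijection_prop} (as summarized in (\ref{A_2_in_description})) to get $\ker u = (\ker u)_0$; (3) use the isomorphism $(\ker u)_0 \cong \ker\ud$ from Corollary~\ref{JKP_central_ext_kernel_cor} together with the collinear-grid fact $\ker\ud \subset \uider(V)_0$ and the reduction $\ker \ud = \ker f$ for the rectangular Jordan pair over $\mathbb O$; (4) invoke Corollary~\ref{cor_classical_octonion_iso} to conclude $\ker f = 0$; (5) conclude $u$ is injective, hence an isomorphism, so $L_k(\mathbb O)$ is simply connected. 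The main obstacle is step (3)--(4): one must be careful that the identification of $(\ker u)_0$ with $\ker f$ uses the explicit description of $\uider(V)_0$ for the octonion-coordinatized rectangular pair (Proposition~\ref{A_2_uider_kernel} and Corollary~\ref{A_2_kernel_ud_0_cor}(i)), and that the dimension count behind Corollary~\ref{cor_classical_octonion_iso} genuinely requires $\mathrm{char}\,k \neq 2,3$ (hence the hypothesis $1/6\in k$, i.e.\ $1/2,1/3\in k$, is what is actually needed, and that $\dim_k \mathrm{Der}(\mathbb O) = 14$ and $[\mathbb O,\mathbb O] = (\mathbb O,\mathbb O,\mathbb O)$ has dimension $7$). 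If instead the theorem is meant for a general ring $k$ with $1/3 \in k$ (cf.\ Theorem~\ref{simply_Connected_oct_theo} referenced in the introduction), the hard part becomes proving $f$ is injective over an arbitrary such $k$; the natural route there is a faithfully flat descent argument, reducing to algebraically closed fields via the base-change statements (\ref{Der_base_change}) for $\mathrm{Der}(\mathbb O)$ and Lemma~\ref{alt_cycle_base_change_lem} for $\langle\mathbb O,\mathbb O\rangle$, and then quoting the field case.
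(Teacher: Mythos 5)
Your proposal is correct and follows essentially the same route as the paper's proof: reduce $\ker u$ to $\ker f$ for $f\colon\langle\mathbb O,\mathbb O\rangle\to\mathrm{StanDer}(\mathbb O)$ (using $1/3\in k$, the collinear-grid structure, and Corollary~\ref{JKP_central_ext_kernel_cor}), then invoke Corollary~\ref{cor_classical_octonion_iso} to get $\ker f = 0$. You spell out the intermediate identifications more explicitly than the paper does (the paper quotes $\ker u=\ker f$ directly from the preamble of the subsection), and your closing remarks about the general-ring version correctly identify what Theorem~\ref{simply_Connected_oct_theo} adds, but the argument for this corollary is the same.
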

\begin{proof}  For $L  = L_k(\mathbb O)$ let  let $u : \mathfrak{uce}(L) \rightarrow  L$ be the universal central extension. Then $\ker u = \ker f$ where
$f : \langle \mathbb{O}, \mathbb O \rangle \to \mathrm{StanDer}(\mathbb O), \langle a, b \rangle \mapsto SD(a, b).$  The Lie algebra $L_k(Z)$ is perfect and by Corollary~\ref{cor_classical_octonion_iso}, $\ker u = \ker f = \{0\}.$ Hence $L$ is perfect and centrally closed, i.e., simply connected.
\end{proof}
\begin{rem}
Of course, this is well-known and not our own result. Usually, one shows that $\mathrm{Der}_k(\mathbb O)$ is isomorphic to a Lie algebra of type $G_2$ and then simple connectedness follows. 
\end{rem}
 \begin{defi} 
 \label{octonion_alge_Defi}
 Let $k$ be an arbitrary base ring and $M^+ = M$ a finitely generated projective module of rank $3$ over $k$, $\theta : \bigwedge^3 M^* \rightarrow k$ an isomorphism, referred to as  \emph{volume element}. We say that $\mathbb O$ is a \emph{(reduced) octonion algebra} over $k$ if $\mathbb O$ is a $k$-algebra and isomorphic to the alternative algebra
$$Z := Z(k, \theta) = \left (\begin{array}{cc} k & M^+ \\ M^- & k \end{array} \right ) $$
with multiplication given by 
$$ \left ( \begin{array}{cc} \alpha_1 & u \\ x & \alpha_2 \end{array} \right )\left ( \begin{array}{cc} \beta_1 & v \\ y & \beta_2 \end{array} \right ) = \left ( \begin{array}{cc} \alpha_1 \beta_1 -  u \ast y  & \alpha_1 v + \beta_2 u + x \times y \\ \beta_1 x + \alpha_2 y + u \times v & \alpha_2\beta_2 -  x \ast v  \end{array} \right ),$$
where $M^- = M^*$ and $\ast: M^\sigma \times M^{-\sigma} \to k$ are the natural pairings induced by $\theta$ and $\theta^{-1}$. Likewise $M^\sigma \times M^\sigma \rightarrow M^{-\sigma}$ is the ``vector product '' induced by those pairings. More explicitly, for $x,y \in M^+$ the element $ z = x \times y$ is uniquely determined by the condition 
$$\theta( x \wedge y \wedge z ) = (x \times y) \ast z $$ and for $u, v \in M^{\sigma}$, $w = u \times v$ is likewise determined by $$\theta^{-1}(w \wedge u \wedge v) = w \ast(u \times v) .$$
If $M$ is free over $k$ and $\theta$ is the determinant, the algebra $Z$ is a \emph{split octonion algebra.} An \emph{octonion algebra} is an algebra  $Z$ over $k$ such that under some faithfully flat extension $k \to K$, $Z \otimes_k K$ is isomorphic to a split octonion algebra over $K.$  
\end{defi}
\begin{rem} Usually we define an octonion algebra as a unital non-associative algebra whose underlying $k$-module is finitely generated and projective of constant rank $8$ and which admits a norm (or composition). By \cite[Cor 4.11]{lopera}, equivalently $\mathbb O$ is an octonion algebra if and only if there is a faithfully flat base change  $ k \rightarrow K$ such that $\mathbb O \otimes_k K$ is isomorphic to a split octonion algebra over $K$. This characterization is absolutely crucial for the main result of this section, see Theorem~\ref{simply_Connected_oct_theo}. 
\end{rem}


\begin{defi}
\label{type_g_defi}
 \label{G_2_ring} 
 
 If $k$ is an algebraically closed field, a Lie $k$-algebra is said to be of \emph{type $G_2$} if it is isomorphic to the derivation algebra of an octonion algebra $\mathbb O$ over $k$ (which is necessarily split). \\
 In general, if $k$ is a ring, we define a \emph{Lie algebra of type $G_2$} as a  Lie $k$-algebra $L$ such that for every $\mathfrak{p} \in \mathrm{Spec}(k),$ the prime spectrum of $k,$ the Lie algebra $L \otimes_{\overline{Q(k_{\mathfrak p})}} {\overline{Q(k_{\mathfrak p})}}$ is of type $G_2,$ where $Q(k_{\mathfrak p})$ is the quotient field of $k_{\mathfrak p}$ and $\overline{Q(k_{\mathfrak p})}$ its algebraic closure. 
 \end{defi}

\begin{prop} If $\mathbb O$ is an octonion algebra, then $\mathrm{Der}_k(\mathbb O)$ is of type $G_2.$   \label{oct_der_G_2_prop}
\end{prop}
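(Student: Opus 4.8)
The statement to prove is: if $\mathbb{O}$ is an octonion algebra over an arbitrary base ring $k$, then $\mathrm{Der}_k(\mathbb{O})$ is a Lie algebra of type $G_2$ in the sense of Definition~\ref{type_g_defi}. By that definition, I must show that for every prime $\mathfrak{p} \in \mathrm{Spec}(k)$, the Lie algebra $\mathrm{Der}_k(\mathbb{O}) \otimes_k \overline{Q(k_{\mathfrak{p}})}$ is isomorphic to the derivation algebra of a (necessarily split) octonion algebra over the algebraically closed field $\overline{Q(k_{\mathfrak{p}})}$. So the plan is to reduce, via base change, to the classical case already recorded in this excerpt (essentially Corollary~\ref{cor_classical_octonion_iso} and the remark after Corollary~\ref{octon_simple_field_cor}).

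First I would fix $\mathfrak{p} \in \mathrm{Spec}(k)$ and set $F = \overline{Q(k_{\mathfrak{p}})}$, the algebraic closure of the residue field's fraction field; note the composite $k \to k_{\mathfrak{p}} \to Q(k_{\mathfrak{p}}) \to F$ is a base change. The key point is that derivation algebras of octonion algebras commute with flat base change: by \cite[Prop 2.9]{lopera} and \cite[Cor. 5.2]{lopera} (the facts invoked around equation~(\ref{Der_base_change})), for any flat base change $k \to K$ one has $\mathrm{Der}_k(\mathbb{O}) \otimes_k K = \mathrm{Der}_K(\mathbb{O} \otimes_k K)$. Since $k \to F$ factors through fields (hence is flat — a field extension, preceded by localization), I may apply this twice, or simply once with $K = F$, provided the hypotheses of the cited results hold over $k$; I would quote \cite[Cor 4.11]{lopera} to ensure $\mathbb{O}$ stays an octonion algebra after base change (it becomes, over a suitable faithfully flat extension, split, but more directly $\mathbb{O} \otimes_k F$ is an octonion algebra over the algebraically closed field $F$). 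Thus $\mathrm{Der}_k(\mathbb{O}) \otimes_k F \cong \mathrm{Der}_F(\mathbb{O} \otimes_k F)$.

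Second, over the algebraically closed field $F$, every octonion algebra is split (this is classical, and is recorded implicitly in Definition~\ref{octonion_alge_Defi} together with the remark following it). Hence $\mathbb{O} \otimes_k F$ is a split octonion algebra over $F$, and by Definition~\ref{type_g_defi} (the algebraically-closed-field case) $\mathrm{Der}_F(\mathbb{O} \otimes_k F)$ is by definition of type $G_2$. Combining the two displays, $\mathrm{Der}_k(\mathbb{O}) \otimes_k F$ is of type $G_2$ for every $\mathfrak{p}$, which is exactly the condition in Definition~\ref{type_g_defi} for $\mathrm{Der}_k(\mathbb{O})$ to be of type $G_2$ over the ring $k$. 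This completes the proof.

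The main obstacle, and the only place where real care is needed, is the base-change step: one must be sure that the cited results from \cite{lopera} genuinely apply over an arbitrary base ring $k$ and not merely when $1/3 \in k$. In this subsection the standing hypothesis $1/3 \in k$ is in force (stated at the start of Section~\ref{Octonion_alg}), which is precisely the hypothesis under which \cite[Prop 2.9]{lopera} and \cite[Cor. 5.2]{lopera} guarantee $\mathrm{Der}$ commutes with flat base change and all derivations are standard; so the argument goes through with the cited inputs. A secondary subtlety is purely set-theoretic bookkeeping — checking that $k \to \overline{Q(k_{\mathfrak{p}})}$ really is a flat ring map (localization followed by a field extension, both flat) so that \cite[Prop 2.9]{lopera} is applicable — but this is routine. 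Everything else is just unwinding Definition~\ref{type_g_defi}.
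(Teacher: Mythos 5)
Your proposal is correct and follows essentially the same route as the paper's own proof: fix a prime $\mathfrak{p}$, pass to $F = \overline{Q(k_{\mathfrak{p}})}$ via a flat base change, invoke base-change compatibility of derivations for the finitely presented module $\mathbb{O}$ (which you source to \cite[Prop 2.9, Cor. 5.2]{lopera}, where the paper reaches the same conclusion from finite presentation plus flatness), note that $\mathbb{O}\otimes_k F$ is a split octonion algebra over the algebraically closed field $F$ of characteristic $\neq 3$, and read off the conclusion from Definition~\ref{type_g_defi}. Your closing remark about the standing hypothesis $1/3 \in k$ correctly identifies the same point the paper handles implicitly via $\mathrm{char}\,F \neq 3$.
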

\begin{proof}Let $\mathfrak p \in \mathrm{Spec}(k)$ By  \cite[Ch.2 Thm 1]{AC}, we know that $Q(k_{\mathfrak p})$ is flat over $k_{\mathfrak p}$ and $k_{\mathfrak p}$ is flat over $k$ for all $\mathfrak p \in \mathrm{Spec}(k).$  Let $F = \overline{Q(k_{\mathfrak p})}.$ Since $Z$ is finitely presented, this implies that $\mathrm{Der}_F(\mathbb O \otimes_k F) \cong \mathrm{Der}_F(\mathbb O) \otimes_k F.$ By $\cite[4.1]{lopera},$ octonion algebras are invariant under base changes. Hence $\mathbb O \otimes_k F$ is an octonion algebra over the algebraically closed field $F$, $\mathrm{char} F \neq 3$ and therefore split. Now the claim follows from Definition~\ref{G_2_ring}. 
\end{proof}

\begin{lem}Let $\mathbb O$ be an octonion algebra over $k$.  The following are equivalent
\begin{itemize}
\item[\rm (i)] $L_k(\mathbb O)$ is centrally closed.
\item[\rm (ii)] $L_K(Z)$ is centrally closed for some split octonion algebra $Z= \mathbb O \otimes_k K$ and $k \rightarrow K$ a faithfully flat extension.
\end{itemize}
  \label{octonionfundobservatationlem} 
\end{lem}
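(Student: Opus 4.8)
The statement asserts the equivalence of two properties: $L_k(\mathbb O)$ is centrally closed if and only if $L_K(Z)$ is centrally closed, where $Z = \mathbb O \otimes_k K$ is a split octonion algebra obtained from a faithfully flat extension $k \to K$. The engine behind the whole argument is the base change result for universal central extensions in the $A_2$-graded setting. Since $1/3 \in k$ (the running hypothesis of this subsection), Proposition~\ref{torsion_bijection_prop} and the identification in Corollary~\ref{A_2_kernel_ud_0_cor}(i) tell us that the kernel of the universal central extension $u : \mathfrak{uce}(L_k(\mathbb O)) \to L_k(\mathbb O)$ is precisely $\ker f$, where $f : \langle \mathbb O, \mathbb O\rangle \to \mathrm{StanDer}(\mathbb O)$ sends $\langle a, b\rangle \mapsto SD(a,b)$. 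Thus $L_k(\mathbb O)$ is centrally closed $\iff$ $\ker f = \{0\}$, and the same holds verbatim over $K$ for $Z$.

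\textbf{Key steps.} First I would reduce the claim to a statement about the map $f$. So the goal becomes: $f_k : \langle \mathbb O, \mathbb O\rangle \to \mathrm{StanDer}(\mathbb O)$ is injective $\iff$ $f_K : \langle Z, Z\rangle \to \mathrm{StanDer}(Z)$ is injective. Second, I would invoke the two base-change compatibilities already available in the excerpt: Lemma~\ref{JA_cycle_base_change_lem} (or its alternative-algebra version Lemma~\ref{alt_cycle_base_change_lem}) gives $\langle \mathbb O, \mathbb O\rangle \otimes_k K \cong \langle \mathbb O \otimes_k K, \mathbb O \otimes_k K\rangle = \langle Z, Z\rangle$ since the extension is flat; and equation~(\ref{Der_base_change}) together with the fact that all derivations of an octonion algebra are standard gives $\mathrm{StanDer}(\mathbb O) \otimes_k K \cong \mathrm{StanDer}(Z)$. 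Third, one checks that under these identifications $f_k \otimes_k \mathrm{id}_K$ corresponds to $f_K$ — this is immediate from the formula $SD(a,b) = L_{[a,b]} - R_{[a,b]} + 3[L_a, R_b]$, which is preserved by scalar extension. Fourth, I would run the flatness argument on the short exact sequence
\[
0 \to \ker f_k \to \langle \mathbb O, \mathbb O\rangle \to \mathrm{im}\, f_k \to 0.
\]
Tensoring with the flat module $K$ keeps this exact, so $\ker(f_k) \otimes_k K \cong \ker(f_k \otimes_k \mathrm{id}_K) = \ker f_K$. If $\ker f_k = 0$ then $\ker f_K = 0$ directly. Conversely, if $\ker f_K = 0$, then $\ker(f_k) \otimes_k K = 0$; since $k \to K$ is \emph{faithfully} flat, this forces $\ker f_k = 0$. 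This is where faithful flatness (rather than mere flatness) is essential, and it is the only place it is used.

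\textbf{Main obstacle.} The delicate point is not the flatness bookkeeping but verifying that the identifications are genuinely compatible with $f$, i.e.\ that the diagram
\[
\xymatrix{
\langle \mathbb O, \mathbb O\rangle \otimes_k K \ar[r]^{f_k \otimes \mathrm{id}} \ar[d]_{\cong} & \mathrm{StanDer}(\mathbb O) \otimes_k K \ar[d]^{\cong} \\
\langle Z, Z\rangle \ar[r]^{f_K} & \mathrm{StanDer}(Z)
}
\]
commutes. The left isomorphism comes from Lemma~\ref{JA_cycle_base_change_lem} via $(a \otimes b)\otimes \alpha \mapsto (a \otimes_k \alpha)\otimes (b \otimes_k 1)$; the right one comes from~(\ref{Der_base_change}) via $\Delta \otimes \alpha \mapsto \alpha_K \Delta_K$. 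One must trace an element $\langle a, b\rangle \otimes \alpha$ around both paths: along the top-right it becomes $SD(a,b) \otimes \alpha \mapsto \alpha_K\, SD(a,b)_K$, and along the left-bottom it becomes $\langle a \otimes \alpha, b \otimes 1\rangle \mapsto SD(a\otimes\alpha,\, b\otimes 1)$; these agree because $SD$ is bilinear in its two arguments and $[a\otimes\alpha, b\otimes 1] = [a,b]\otimes\alpha$ etc. This is a routine but slightly fussy check with the explicit multiplication formulas, and it is the only real content beyond citing the earlier base-change lemmas. I expect the write-up to be short once this compatibility is pinned down.
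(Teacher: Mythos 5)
Your proof is correct, but it takes a genuinely different route from the paper's. The paper's argument is short and modular: it first shows that $L_K(\mathbb O_K) = L_k(\mathbb O) \otimes_k K$ (using finite generation and projectivity of the underlying module so that $\instr$ commutes with flat base change), and then simply cites a general result from \cite{prep2009EN} that for any $k$-Lie algebra $L$, being centrally closed is invariant under some (equivalently all) faithfully flat base changes. Both directions then drop out immediately. Your proof instead unpacks what central closure means concretely in this situation: you use the identity $\ker u = \ker \ud = \ker f$ for $f : \langle \mathbb O, \mathbb O\rangle \to \mathrm{StanDer}(\mathbb O)$ (available because $1/3 \in k$), the base-change isomorphisms from Lemma~\ref{JA_cycle_base_change_lem} and equation~(\ref{Der_base_change}), and then the standard flat/faithfully-flat exactness argument on $f$ itself. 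This buys you a self-contained argument that does not rely on the unpublished reference \cite{prep2009EN}; what it costs is that you must explicitly verify the compatibility of $f_k \otimes \mathrm{id}_K$ with $f_K$ under the two identifications — which, as you note, is routine since $SD$ is bilinear and commutes with scalars. Both proofs are sound; yours is more explicit and trades one citation for a short diagram chase.
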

\begin{proof}
Let $\mathbb O$ be an octonion algebra over $k$ and $k/K$ flat. Then the Jordan pair $\mathbf{M}(1, 2, \mathbb O)$ is a finitely generated $k$-module and by flatness, $\instr(\mathbf{M}(1, 2, \mathbb O_K)) = \instr(\mathbf{M}(1, 2, \mathbb O)) \otimes_k K, $ therefore $L_K(\mathbb O_K) = L_k(\mathbb O) \otimes_k K.$ \\
It known (\cite{prep2009EN}) that the following are equivalent for any $k$-Lie algebra $L$:
\begin{itemize}
\item[(a)] $L$ is centrally closed,
\item[(b)] $L_k \otimes _k K$ is centrally closed for some $K/k$ faithfully flat, 
\item[(c)] $L_k \otimes _k K$ is centrally closed for all $K/k$ faithfully flat.
\end{itemize}
Therefore, (i) $\implies$ (ii) follows from the equivalence of (a) and (c). Also, (ii) $\implies$ (i)  by (a) $\iff$ (b). 
\end{proof}

If $Z$ is a reduced octonion algebra, then $e := e_{{1}} := e_{11}$ and $1- e = e_{{2}}= e_{22}$ are complementary idempotents and we obtain a Peirce decomposition with respect to $e$: 
$$Z_{{21}} = M^+, \quad Z_{{12}} = M^-, Z_0 : =  Z_{11} + Z_{22} =  k(e_{{1}}) + k (e_{{2}}), $$
with the usual Peirce multiplication rules. 
We already know that $\langle e_{1} + e_{2}, Z\rangle = \langle 1_Z, Z\rangle \{0\}$ by Remark~\ref{remark_1_d_is_trivial}. 
\begin{lem}The module  $\langle Z, Z \rangle $ is spanned by 
$$\langle e_{1}, Z_0 \rangle \cup  \langle  e_{2}, Z_0 \rangle \cup \langle Z_{{12}}, Z_{{21}}\rangle . $$  
\end{lem}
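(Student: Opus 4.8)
The goal is to show that $\langle Z, Z\rangle$ is spanned by the three families $\langle e_1, Z_0\rangle$, $\langle e_2, Z_0\rangle$ and $\langle Z_{12}, Z_{21}\rangle$, where $Z$ is a reduced octonion algebra with complementary idempotents $e_1, e_2$ and Peirce decomposition $Z = Z_0 \oplus Z_{12} \oplus Z_{21}$ (with $Z_0 = ke_1 \oplus ke_2$). Since $\langle Z, Z\rangle$ is by definition the image of $Z\otimes Z$ under the natural quotient, it is spanned by the cosets $\langle a, b\rangle$ for $a, b$ ranging over the Peirce components. So the strategy is simply to go through the nine types of pairs $\langle Z_i, Z_j\rangle$ (for $i, j \in \{0, 12, 21\}$) and show that each one is contained in the span of the three listed families, using the defining relations of $\langle D, D\rangle$ from Corollary~\ref{assoc_coord_A_2_cor} (namely $a\otimes b + b\otimes a$ and $ab\otimes c + bc\otimes a + ca\otimes b$), the Peirce multiplication rules, and Remark~\ref{remark_1_d_is_trivial} which says $\langle 1_Z, Z\rangle = \langle e_1 + e_2, Z\rangle = 0$.

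\textbf{Key steps.} First I would dispatch the easy cases: $\langle Z_0, Z_0\rangle$ is already of the listed form (it is spanned by $\langle e_1, Z_0\rangle$ and $\langle e_2, Z_0\rangle$, noting $\langle e_i, e_i\rangle$ has $2$-torsion but is still in the span). For $\langle Z_{12}, Z_{21}\rangle$ there is nothing to prove; and $\langle Z_{21}, Z_{12}\rangle = -\langle Z_{12}, Z_{21}\rangle$ by anticommutativity. Next, the mixed cases $\langle Z_0, Z_{12}\rangle$ and $\langle Z_0, Z_{21}\rangle$: here I would use the relation with $c = e_1$ or $c = e_2$ applied to $a \in Z_{12}$ (or $Z_{21}$), $b \in Z_0$, exploiting that $Z_{12} Z_0 \subseteq Z_{12}$, $Z_0 Z_{21} \subseteq Z_{21}$, etc., to rewrite $\langle e_i, Z_{12}\rangle$ in terms of $\langle Z_{12}, Z_{21}\rangle$-type elements or to show such elements vanish/reduce. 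The cases $\langle Z_{12}, Z_{12}\rangle$ and $\langle Z_{21}, Z_{21}\rangle$ are the interesting ones: since $Z_{12} Z_{12} \subseteq Z_{21}$ and $Z_{21} Z_{21} \subseteq Z_{12}$, applying the three-term relation to $a, b, c$ all in $Z_{12}$ produces terms of the form $\langle Z_{21}, Z_{12}\rangle$ together with an associator term $(a,b,c)$, and one has to track that $(Z_{12}, Z_{12}, Z_{12}) \subseteq Z_{12}$ so the associator contributes a $\langle \cdot, Z_{12}\rangle$ term that must itself be reduced — an induction or a dimension/generator count on $Z_{12}$ (free of rank $3$ over $k$ with the vector-product structure) handles this. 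I would use that $x \times y = -y \times x$ for $x, y \in M^+$ so that $\langle M^+, M^+\rangle$ under the product map collapses.

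\textbf{Main obstacle.} The delicate step is $\langle Z_{12}, Z_{12}\rangle$ (and symmetrically $\langle Z_{21}, Z_{21}\rangle$): one must show every $\langle x, y\rangle$ with $x, y \in M^+ = Z_{21}$ already lies in $\langle Z_{12}, Z_{21}\rangle + \langle e_i, Z_0\rangle$, and the three-term cyclic relation only directly relates triple products, so I expect to need the explicit octonion multiplication from Definition~\ref{octonion_alge_Defi} — in particular the identities $x\ast(x\times y) = 0$ and the interplay of $\ast$ and $\times$ — to show that the relevant associators and the element $x \times y \in M^-$ pair back appropriately. A clean way is to observe $x \times y \in Z_{12}$ and that multiplying an element of $Z_{21}$ by $e_1$ on one side reproduces it, so $\langle x, y\rangle$ with $x,y \in Z_{21}$ can be written via the relation applied to $(e_1, x, y)$ or $(x, e_1, y)$ as a combination of $\langle x, y e_1\rangle$-type (which is $\langle Z_{21}, \cdot\rangle$ again but with one factor now in $Z_0$ or $Z_{12}$) plus an associator $(e_1, x, y)$ lying in $Z_{21}$; iterating and using the $2$-torsion and anticommutativity relations should terminate. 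I would present this reduction carefully but leave the bookkeeping of the associator identities to a short explicit computation.
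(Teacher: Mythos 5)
Your plan is the same as the paper's in outline: reduce $\langle Z, Z\rangle$ to pairs of Peirce components, using $\langle 1_Z, Z\rangle = 0$, the cyclic relation, and the Peirce multiplication rules. You even cite the correct presentation of $\langle D, D\rangle$ at the top: the relations in $U''$ are just $a\otimes b + b\otimes a$ and $ab\otimes c + bc\otimes a + ca\otimes b$, with \emph{no} associator term (Corollary~\ref{assoc_coord_A_2_cor}; the associators only appear in $\tilde\bigwedge D$, Definition~\ref{cyc_hom_sans_one_half_defi}, of which $\langle D,D\rangle$ is a further quotient). But by the time you reach your ``main obstacle'' you have silently switched to a relation that \emph{does} contain an associator $(a,b,c)$, and it is that imagined term that makes you expect to need an iteration, a dimension count, or the explicit $\ast$/$\times$ identities of Definition~\ref{octonion_alge_Defi}. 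None of that is needed.

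Once the associator is removed, the step you flag as delicate is a one-line computation. For $a, b \in Z_{ij}$ apply the cyclic relation to the triple $(e_i, a, b)$:
$$0 = \langle e_i a, b\rangle + \langle ab, e_i\rangle + \langle b e_i, a\rangle.$$
By the Peirce rules $e_i a = a$, $b e_i = 0$, and $ab \in Z_{ji}$, so this collapses to $\langle a, b\rangle = -\langle ab, e_i\rangle$, already an element of the claimed span. You wrote down exactly this application (``applied to $(e_1, x, y)$'') and then abandoned it because you expected an associator to survive and force an open-ended reduction; delete the associator and the argument terminates immediately. The mixed cases and $\langle Z_0, Z_0\rangle$ are then handled exactly as you sketch, using $\langle e_1, z\rangle = -\langle e_2, z\rangle$ and the cyclic relation applied to pairs of idempotents.

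One further caution: the spanning set printed in the lemma, $\langle e_1, Z_0\rangle \cup \langle e_2, Z_0\rangle \cup \langle Z_{12}, Z_{21}\rangle$, cannot be what is meant, since the proof shows $\langle Z_0, Z_0\rangle = 0$, which would leave nothing to absorb the cross terms $\langle e_i, Z_{12}\rangle$, $\langle e_i, Z_{21}\rangle$. The proof in the paper actually concludes that $\langle e_1, Z_{12}\rangle \cup \langle e_2, Z_{21}\rangle \cup \langle Z_{12}, Z_{21}\rangle$ spans, consistent with the following lemma's use of $\langle e_1, M\rangle$, $\langle e_2, M^*\rangle$, $\langle M, M^*\rangle$; the $Z_0$ in the statement is evidently a typo. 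You should make sure you are proving the spanning statement that the reduction actually yields.
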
 
\begin{proof} First observe that for $i \neq j:$
$$ 0 = \langle e_i^2, e_j\rangle + \langle e_i e_j, e_i\rangle + \langle e_je_i, e_i \rangle = \langle e_i, e_j \rangle.$$
Since $\langle e_{{1}} + e_{{2}}, z \rangle = \langle e_{{1}}, z \rangle + \langle e_{{2}}, z \rangle = 0$ we have
$ 0 = \langle e_i + e_j, e_i \rangle = \langle e_i, e_i \rangle$ and therefore $\langle k e_1 + k e_2, k e_1 + k e_2  \rangle = 0.$
Let $a, b \in Z_{ij}$, then
$$0= \langle e_i a, b\rangle + \langle ab , e_i\rangle + \langle be_i , a\rangle  = \langle a, b\rangle + \langle ab , e_i\rangle. $$ Since $ab \in Z_{ji}$ it follows that $\langle Z_{ij}, Z_{ij}\rangle \in \langle e_i, Z_0 \rangle.$
We showed that that $\langle e_1, z \rangle = -\langle e_2, z \rangle $, so
consequentially $$\langle e_{1}, Z_{12} \rangle \cup  \langle  e_{2}, Z_{21} \rangle \cup \langle Z_{{12}}, Z_{{21}}\rangle  $$  
spans all of $\langle Z, Z \rangle.$
\end{proof}
Our ultimate goal is to establish for $1/3 \in k$, that the map
$$ f : \langle Z, Z\rangle \to \mathrm{Der}_k(Z) $$ is an isomorphism. In \cite{lopera} the authors established a decomposition
$\mathrm{Der}_k(Z) = \mathfrak g_0 \oplus \mathfrak g_1 \oplus \mathfrak{g}_2$
where $\mathfrak g_i$, $i = 1,2$ are projective of rank $3$ and isomorphic to $M$ and $\mathfrak{g}_0 = \{D  \in \mathrm{Der}_k(Z) : De = 0\}$ is isomorphic as Lie algebra to $\mathfrak{sl}(M)$ and thus projective of rank $8$. We will imitate this approach.
\begin{lem} Assume that $Z$ is a reduced octonion algebra.
Under the map $ f: \langle Z, Z \rangle \to \mathrm{Der}(Z)$  given by $\langle a, b\rangle \mapsto SD(a, b),$ we have for $u, v \in M, x, y \in M^*$
$$f(\langle e_{1}, u\rangle) = L_{u} - R_{u},$$
$$f(\langle e_{2}, x \rangle) = L_{- x} - R_{-x},$$
$$f(\langle u, x\rangle)v  = (u \ast x)v - 3(v \ast x)u, $$
$$f(\langle u, x\rangle)y  = - (x \ast u)y + 3(y \ast u)x, $$
$$f(\langle u, x\rangle)e = 0, $$
and the following induced maps
\begin{eqnarray}
f: \langle e_1, M \rangle & \stackrel{\cong}{\to}& \mathfrak{g}_1, \\
f: \langle e_2, M^* \rangle &\stackrel{\cong}{\to}& \mathfrak{g}_2, \\
f :\langle M, M^* \rangle & \stackrel{epi}{\rightarrow} & \mathfrak{g}_0 \cong \mathfrak{sl}(M) \label{iso_onto_sl_m}.
\end{eqnarray}
\end{lem}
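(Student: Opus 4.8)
The statement is a concrete computation of the map $f\colon \langle Z,Z\rangle \to \mathrm{Der}(Z)$ on the spanning set $\langle e_1, Z_0\rangle \cup \langle e_2, Z_0\rangle \cup \langle Z_{12}, Z_{21}\rangle$ of $\langle Z,Z\rangle$ provided by the previous lemma, followed by the identification of the three induced maps. I would organize the proof around the explicit multiplication table of the reduced octonion algebra $Z = Z(k,\theta)$ given in Definition~\ref{octonion_alge_Defi} and around the definition $SD(a,b) = L_{[a,b]} - R_{[a,b]} + 3[L_a,R_b]$, since $1/3 \in k$ guarantees every derivation is standard (Proposition~\ref{lopera_der_proposition}(iv)) and that $\ker u = \ker f$ as already recorded. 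First I would compute $[e_1, u]$ and $[L_{e_1}, R_u]$ for $u \in M^+ = Z_{21}$: from the product formula, $e_1 u = u$ and $u e_1 = 0$ (or vice versa depending on the Peirce conventions fixed above), so $[e_1,u] = \pm u$ and $L_{e_1}, R_{e_1}$ act as Peirce projectors; plugging into $SD$ and using $L_{e_1}^2 = L_{e_1}$ etc.\ collapses the $3[L_{e_1},R_u]$ term against $L_{[e_1,u]} - R_{[e_1,u]}$ to leave exactly $L_u - R_u$. The computation for $\langle e_2, x\rangle$ with $x \in M^- = Z_{12}$ is the mirror image and gives $L_{-x} - R_{-x}$ (the sign coming from $e_2 = 1 - e_1$ and $\langle e_1, z\rangle = -\langle e_2, z\rangle$, which the previous lemma already established).

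Next I would treat $f(\langle u, x\rangle)$ for $u \in M^+$, $x \in M^-$. Here $[u,x] = ux - xu \in k e_1 \oplus k e_2$ with coefficients read off as $\pm(u \ast x)$ from the multiplication rule $u \ast y$ appearing in the $(1,1)$ and $(2,2)$ entries; and $[L_u, R_x]$ acting on $v \in M^+$ produces the triple products $u(xv) - (ux)v$, which unwind via the vector-product and pairing identities $\theta(u\wedge v\wedge w) = (u\times v)\ast w$ into a $k$-multiple of $v$ plus a multiple of $u$. Collecting, $f(\langle u,x\rangle)v = (u \ast x)v - 3(v\ast x)u$ and dually $f(\langle u,x\rangle)y = -(x\ast u)y + 3(y\ast u)x$ on $y \in M^-$; that $f(\langle u,x\rangle)e = 0$ follows because $[u,x]$ is central over $Z_0$ and $[L_u,R_x]e = u(xe) - (ux)e = 0$ by the Peirce rules. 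Throughout, the recurring identities (\ref{alt_id_1})--(\ref{alt_id_4}) of an alternative algebra, together with the explicit $\ast$ and $\times$ formulas, are the only tools needed, so these are genuinely routine once the Peirce conventions are pinned down.

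Finally, for the three induced maps: the formulas just obtained show $f$ sends $\langle e_1, M\rangle$ into the submodule of derivations acting on $Z$ by $L_u - R_u$ (which is precisely $\mathfrak{g}_1$ in the Loos--Petersson--Racine decomposition $\mathrm{Der}(Z) = \mathfrak g_0 \oplus \mathfrak g_1 \oplus \mathfrak g_2$ cited from \cite{lopera}), and similarly $\langle e_2, M^*\rangle \to \mathfrak g_2$; since $u \mapsto L_u - R_u$ and $x \mapsto L_{-x} - R_{-x}$ are visibly injective (evaluate at $e$ to recover $u$, resp.\ $x$) and $\mathfrak g_1 \cong M$, $\mathfrak g_2 \cong M^*$ as modules, these are isomorphisms. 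For $\langle M, M^*\rangle$, the formula $f(\langle u,x\rangle)e = 0$ places the image inside $\mathfrak g_0 = \{D : De = 0\} \cong \mathfrak{sl}(M)$, and surjectivity onto $\mathfrak{sl}(M)$ comes from noting that the operators $v \mapsto (u\ast x)v - 3(v\ast x)u$, as $(u,x)$ ranges over $M^+ \times M^-$, span all trace-zero endomorphisms of $M$ — the map $(u,x)\mapsto$ ``$v\mapsto (v\ast x)u$'' already surjects onto $\mathrm{End}(M)$, and subtracting off the scalar part lands in and spans $\mathfrak{sl}(M)$ (this is exactly the computation LPR do for $\mathfrak g_0$, so I would cite it or reproduce it in one line). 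The main obstacle, and the only place demanding care, is bookkeeping the Peirce/orientation sign conventions consistently between the multiplication formula in Definition~\ref{octonion_alge_Defi} and the idempotents $e_1 = e_{11}$, $e_2 = e_{22}$, so that the signs in the displayed formulas come out exactly as stated; everything else is a direct unwinding of definitions.
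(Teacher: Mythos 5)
Your overall approach matches the paper: compute $SD(a,b) = L_{[a,b]} - R_{[a,b]} - 3[L_a,R_b]$ (the sign $+3$ appearing in the paper's Definition is apparently a typo; the $-3$ version is the one satisfying $3[a,b]+3[a,-b]=0$ and the one the paper's own proof uses) on the three spanning families from the preceding lemma, then match the pieces with the LPR decomposition $\mathrm{Der}(Z) = \mathfrak g_0 \oplus \mathfrak g_1 \oplus \mathfrak g_2$. Your calculation of $SD(u,x)$ on each Peirce space, including the BAC--CAB reduction, is essentially what the paper writes out.

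However, your claim that the $3[L_{e_1},R_u]$ summand of $SD(e_1,u)$ "collapses against $L_{[e_1,u]} - R_{[e_1,u]}$ to leave exactly $L_u-R_u$" does not hold up: the commutator $[L_{e_1},R_u]$ does \emph{not} vanish (on $z \in M$ it produces $\pm zu$), and no cancellation with $L_{[e_1,u]}-R_{[e_1,u]}=L_u-R_u$ occurs. In fact $L_u-R_u$ by itself fails the Leibniz rule on products $vw$, $v,w\in M$, by a term $3\theta(u\wedge v\wedge w)(e_2-e_1)$, so it is not even a derivation of $Z$; it is precisely the $-3[L_{e_1},R_u]$ correction that repairs Leibniz. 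The paper's own proof does not compute this either --- it merely asserts "$SD(e_1,x)=L_x-R_x$" and defers to \cite[Prop.~5.5]{lopera} for the description of $\mathfrak g_1$ --- so that displayed equality has to be read as shorthand for the LPR generator of $\mathfrak g_1$ rather than as a literal operator identity. If you carry out the computation you propose, you will have to confront and resolve this discrepancy rather than expect the extra term to disappear.

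On surjectivity onto $\mathfrak g_0$ you also take a genuinely different route. The paper argues indirectly: because $1/3\in k$ makes every derivation of $Z$ standard, $f$ is onto, and since $f(\langle e_1,Z_0\rangle+\langle e_2,Z_0\rangle)=\mathfrak g_1\oplus\mathfrak g_2$, the remaining piece $f(\langle M,M^*\rangle)$ is forced to fill $\mathfrak g_0$. You instead verify directly that the operators $v\mapsto(u\ast x)v-3(v\ast x)u$ span $\mathfrak{sl}(M)$ as $(u,x)$ varies. Both are correct; yours is more self-contained (it does not lean on knowing $f$ is surjective) but essentially re-derives LPR's identification of $\mathfrak g_0$, which the paper's indirect route is designed to avoid.
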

\begin{proof}
The first two isomorphisms can be proven in the same way. So let without loss of generality $x \in M.$ The image of $\langle e_1, x \rangle $ is the standard derivation $SD(e_1,x)$ which is equal to  $L_{x} - R_{x}$. Since $M \oplus M^*$ does not contain  non-zero  central elements of $Z$, the restriction of $f$ to $\langle e_1, M \rangle$ is therefore injective. 
 By \cite[Proposition 5.5]{lopera}, $\mathfrak{g}_1$ is spanned by standard derivations of the form $- SD(e_1, x) = SD(e_2, x)$ with $x \in M.$ Therefore $f(\langle e_1, M \rangle)= \mathfrak{g}_1 $ is an isomorphism.\\
We compute the action of $f(\langle u, x \rangle ) = SD(u,x) = L_{[u,x]} - R_{[u,x]} - 3[L_u, R_x].$
First note $[u,x] = u \ast x(e_2 - e_1)$ and this element commutes by Peirce rules with $e = e_1.$ In addition $(ue)x - u(ex) = ux -ux = 0$, thus $SD(u,x)e = 0= f(\langle u, x\rangle)e = 0.$\\
  Let $v\in M$, then $SD(u,x)v = u\ast x((e_2 - e_1)v - v(e_2 - e_1))- 3(u(vx) - (uv)x) = -2(u\ast x)v + 3(u \times v)\times x = (u \ast x)v - 3(v \ast x)u. $ The computation for $y \in M^*$ is analogous (or dual if one wishes).\\
For every $u \in M, x \in M^*$, $f(\langle u,x \rangle)$ annihilates $e$. Thus $f(\langle M, M^* \rangle ) \subset \mathfrak{g}_0.$  Since $f$ is surjective and $f(\langle k e_1 \oplus ke_2, M \oplus M^* \langle) = \mathfrak{g}_1 \oplus \mathfrak{g}_2$, it follows that $f(\langle M, M^* \rangle ) = \mathfrak{g}_0.$   
\end{proof}
\subsubsection{The Lie algebra structure on $\langle M, M^*\rangle $}
As observed above in (\ref{iso_onto_sl_m}), $f$ induces an epimorphism of Lie algebra $\langle  M, M^* \rangle$ onto $\mathfrak{sl}(M)$.

\begin{lem} Let $M = k^3$ be free of rank $3$. Let $i, j, k, l \in \{1,2,3\},$ then 
\begin{equation}[\langle x_i, x^j \rangle, \langle x_k, x^l \rangle]  = 3(\delta_{il} \langle x_k, x^j \rangle - \delta_{jk}\langle x_i, x^l \rangle) \label{MMstarLiebracket} \end{equation}
where $\{x_1, x_2, x_3\}$ and $\{x^1, x^2, x^3\}$ are ordered dual bases of $M$ and $M^*$ respectively. 
Therefore, $\langle M, M^* \rangle$ is perfect as Lie algebra.  \label{MMstarsplitperfectlem}
\end{lem}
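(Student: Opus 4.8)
The plan is to compute the commutator bracket on $\langle M, M^* \rangle$ directly from the structure of the map $f$ and the already-established action formulas. Recall that the Lie bracket on $\uider(V)_0 \cong \mathcal{T}_0(\mathbb{O})$ is transported to $\langle M, M^* \rangle$ via the epimorphism onto $\mathfrak{g}_0 \cong \mathfrak{sl}(M)$; more precisely, the bracket on $\langle \mathbb{O}, \mathbb{O} \rangle$ is defined so that $f([\,p,q\,]) = [f(p), f(q)]$ in $\mathrm{Der}(\mathbb{O})$, and in fact the bracket on $\langle \mathbb{O}, \mathbb{O}\rangle$ is given by $[p,q] = \ud(p).q$ (the action of the inner derivation algebra on the tensor-product quotient, as in Corollary~\ref{JP_uider_cor} and the discussion preceding Lemma~\ref{uider_gen_lem_rect}). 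So first I would write $[\langle x_i, x^j\rangle, \langle x_k, x^l\rangle] = SD(x_i, x^j).(x_k \diamond x^l) = SD(x_i,x^j)(x_k) \diamond x^l + x_k \diamond SD(x_i,x^j)(x^l)$, using that the derivation-algebra action on $\langle \mathbb{O}, \mathbb{O}\rangle$ acts as a derivation on each tensor slot.

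Next I would substitute the explicit formulas from the preceding lemma: $f(\langle u, x\rangle)v = (u \ast x)v - 3(v\ast x)u$ and $f(\langle u,x\rangle)y = -(x\ast u)y + 3(y\ast u)x$ for $v \in M$, $y \in M^*$. With dual bases $x_i, x^j$ we have $x_i \ast x^j = \delta_{ij}$, hence $SD(x_i, x^j)(x_k) = \delta_{ij} x_k - 3\delta_{kj} x_i$ and $SD(x_i, x^j)(x^l) = -\delta_{ij} x^l + 3\delta_{il} x^j$. Plugging these in gives
$$
[\langle x_i, x^j\rangle, \langle x_k, x^l\rangle] = (\delta_{ij} x_k - 3\delta_{kj}x_i)\diamond x^l + x_k \diamond(-\delta_{ij}x^l + 3\delta_{il}x^j).
$$
The two $\delta_{ij}$-terms cancel, leaving $-3\delta_{kj}\langle x_i, x^l\rangle + 3\delta_{il}\langle x_k, x^j\rangle$, which is exactly (\ref{MMstarLiebracket}). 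The only subtlety here is making sure the identification of the bracket on $\langle M, M^*\rangle$ with $SD(\cdot)$-action is correct — that is, that the relevant component of $\instr(V)_0$ acting on the $\langle M, M^* \rangle$ piece really does act through $SD$; this follows from the table in the present section (the $H(a,b)$ and $T(a,b)$ rows, combined with the isomorphism $\tau$ of Proposition~\ref{Lieisoidertrial} and Remark~\ref{one_third_instr_decomp_remark}).

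Finally, for perfectness: from (\ref{MMstarLiebracket}) with, say, $j = k$ and $i \neq l$ (choosing indices so $\delta_{il} = 0$), one gets $[\langle x_i, x^j\rangle, \langle x_j, x^l\rangle] = -3\langle x_i, x^l\rangle$, and $3$ is invertible in $k$. Ranging over all choices this shows every basis element $\langle x_i, x^l\rangle$ with $i \neq l$ lies in $[\langle M, M^*\rangle, \langle M, M^*\rangle]$; for the "diagonal" elements $\langle x_i, x^i\rangle$ one uses a bracket like $[\langle x_i, x^m\rangle, \langle x_m, x^i\rangle]$ with $m \neq i$, which by (\ref{MMstarLiebracket}) equals $3(\langle x_i, x^i\rangle - \langle x_m, x^m\rangle)$, together with the relation $\sum_i \langle x_i, x^i \rangle = 0$ in $\langle M, M^* \rangle$ (coming from $\langle e_1 + e_2, Z\rangle = \langle 1_Z, Z\rangle = 0$ and the Peirce decomposition, or directly from the cyclic relation $ab\otimes c + bc \otimes a + ca \otimes b$ applied suitably) to solve for each $\langle x_i, x^i\rangle$ individually. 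Hence $\langle M, M^*\rangle = [\langle M, M^*\rangle, \langle M, M^*\rangle]$.

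\textbf{Main obstacle.} The genuinely delicate point is not the bracket computation itself — which is a short substitution — but justifying that the transported Lie structure on $\langle M, M^* \rangle$ is computed by the $SD$-action as claimed, i.e. pinning down exactly which inner derivations act and with what signs, and checking that the relation $\sum_i \langle x_i, x^i\rangle = 0$ holds so that the diagonal elements are recovered. Once those bookkeeping facts are in place the rest is immediate from the already-proven action formulas and the invertibility of $3$.
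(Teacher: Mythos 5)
Your proof matches the paper's almost line for line: the bracket computation is identical (plug the dual‐basis formulas for $SD(x_i,x^j)$ into the tensor‐slot action), and the perfectness argument is the same (first recover off‐diagonal $\langle x_i,x^l\rangle$ by bracketing, then recover diagonals from the differences $\langle x_i,x^i\rangle-\langle x_m,x^m\rangle$ together with $\sum_i\langle x_i,x^i\rangle=0$ and invertibility of $3$). Two small points: you have a harmless sign slip in $[\langle x_i,x^m\rangle,\langle x_m,x^i\rangle]=3(\langle x_m,x^m\rangle-\langle x_i,x^i\rangle)$, and the paper establishes $\sum_i\langle x_i,x^i\rangle=0$ concretely via the vector‐product identity $x^ix^j=x_k$ substituted into the cyclic relation, which is exactly the ``apply the cyclic relation suitably'' route you gesture at but do not carry out.
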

\begin{proof}
By definition \begin{eqnarray*}
&& [\langle x_i, x^j \rangle, \langle x_k, x^l \rangle] \\ 
&=& \langle SD(x_i, x^j)x_k, x^l \rangle + \langle x_k, SD(x_i, x^j)x^l \rangle\\
&=& \langle \delta_{ij}x_k - 3 \delta_{jk}x_i, x^l \rangle  + \langle x_k, -\delta_{ij}e^l + 3 \delta_{il}x^j \rangle \\
&=& 3(\delta_{il}\langle x_k, x^j \rangle - \delta_{jk} \langle x_i, x^l \rangle).
\end{eqnarray*}
We first show that the element $\sum \langle x_i, x^i \rangle$ is $0$. Observe that for pairwise distinct (cyclically ordered) indices $i,j$ and $k$:  $x^ix^j = x_k.$
Therefore  $$0 = \sum_{cyc} \langle x^i x^j, x^k\rangle = \langle x_k, x^k \rangle + \langle x_i, x^i \rangle + \langle x_j, x^j \rangle, $$ which is the asserted identity.  \\
Claim: $\langle  M, M^*\rangle $ is finitely generated (as module) by the set $\{ \langle x_i , x^j \rangle ,  \langle  x_i, x^i \rangle - \langle x_j, x^j \rangle, 1 \leq i < j \leq 3\}.$   
It suffices to show that every element in $ M \wedge M^*/ \sum_{i = 1}^3  x_i \wedge x^i$ can be expressed as linear combination of those elements. This is only a question for images of $x_i \wedge x^i.$ Since $ \sum_{i = 1}^3  x_i \wedge x^i$ lies in the kernel, the image of $x_i \wedge x^i$ is in the same coset as the image of $1/3 (2 x_i \wedge x^i - x_j \wedge x^j  -x_k \wedge x^k )$ for $i,j,k$ pairwise distinct.  Note that we used $1/3$ here. \\
The formula (\ref{MMstarLiebracket}) shows  that $\{ \langle x_i, x^j \rangle: i\neq j \}$ generates $\langle M, M^* \rangle$ as Lie algebra and that $\langle M, M^* \rangle$ is perfect: 
$$[\langle x_i, x^j \rangle, \langle x_j, x^i\rangle] = 3( \langle x_j, x^j \rangle - \langle x_i, x^i \rangle ), $$ hence all module generators lie in the Lie algebra generated by $\{ \langle x_i, x^j \rangle : i\neq j \}$. Moreover, every generator $\langle x_i, x^k \rangle$ can be expressed as
$$\langle x_i, x^k \rangle = 1/3[\langle x_i, x^j \rangle, \langle x_k, x^i \rangle] $$
for $\{i,j,k\} = \{1,2,3\}.$ 
\end{proof}
\begin{prop} Assume that $Z$ is a split octonion algebra. The map $f$ induces an isomorphism $\langle M, M^* \rangle \rightarrow \mathfrak{sl}(M). $ \label{MMstarslisoprop}
\end{prop}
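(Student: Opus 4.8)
The goal is to upgrade the surjection $f:\langle M,M^*\rangle\twoheadrightarrow\mathfrak{sl}(M)$ from Lemma~\ref{MMstarsplitperfectlem} to an isomorphism, in the split case with $1/3\in k$. My plan is to exhibit an explicit $k$-module inverse, or at least to show that $\langle M,M^*\rangle$ is a finitely generated projective $k$-module of rank $8$, matching $\dim\mathfrak{sl}(M)=8$, so that a surjection between such modules (of the same constant rank) must be an isomorphism. Concretely, take $M=k^3$ free with dual bases $\{x_i\}$, $\{x^i\}$. From Lemma~\ref{MMstarsplitperfectlem} we already know $\langle M,M^*\rangle$ is spanned by the $9$ elements $\langle x_i,x^j\rangle$ subject to the single relation $\sum_i\langle x_i,x^i\rangle=0$; I would first check that these $8$ spanning elements (say $\langle x_i,x^j\rangle$ with $i\neq j$, together with $\langle x_i,x^i\rangle-\langle x_j,x^j\rangle$ for $i<j$, using $1/3$ to recover each $\langle x_i,x^i\rangle$) are $k$-linearly independent, hence a basis. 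This is the computational heart: one shows there are no further relations in $M\otimes M^*$ modulo the submodule $U''$ generated by $a\otimes b+b\otimes a$ (after descending to $\Lambda$) and $ab\otimes c+bc\otimes a+ca\otimes b$.

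The cleanest way to see there are no further relations is to use the map $f$ itself together with a complementary observation. Since $f$ sends the $8$ claimed basis elements to elements of $\mathfrak{sl}(M)$, and by the bracket formula \eqref{MMstarLiebracket} one computes these images explicitly: $f(\langle x_i,x^j\rangle)$ for $i\neq j$ maps to (a scalar multiple of) the elementary matrix direction, and $f(\langle x_i,x^i\rangle-\langle x_j,x^j\rangle)$ maps to a diagonal traceless matrix; these $8$ images are visibly a $k$-basis of $\mathfrak{sl}(M)$. Therefore the $8$ elements of $\langle M,M^*\rangle$ are $k$-linearly independent (a $k$-linear relation among them would push forward to one in $\mathfrak{sl}(M)$), so they form a basis and $f$ carries a basis to a basis, hence is an isomorphism of $k$-modules; since it is already known to be a Lie algebra homomorphism (Lemma~\ref{MMstarsplitperfectlem} and the definition of $f$), it is a Lie algebra isomorphism onto $\mathfrak{g}_0\cong\mathfrak{sl}(M)$.

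So the outline of the write-up is: (1) recall $M=k^3$ free with dual bases, and that $\langle M,M^*\rangle$ is spanned by $\{\langle x_i,x^j\rangle\}$ with the relation $\sum_i\langle x_i,x^i\rangle=0$, and that using $1/3$ the $8$ elements $B=\{\langle x_i,x^j\rangle:i\neq j\}\cup\{\langle x_i,x^i\rangle-\langle x_j,x^j\rangle:i<j\}$ span; (2) apply $f$ and compute $f(B)$ via \eqref{MMstarLiebracket}, observing $f(B)$ is a $k$-basis of $\mathfrak{sl}(M)$; (3) conclude $B$ is $k$-linearly independent, hence a basis of $\langle M,M^*\rangle$, and that $f|_{\langle M,M^*\rangle}$ is a bijection, i.e.\ a Lie algebra isomorphism onto $\mathfrak{g}_0\cong\mathfrak{sl}(M)$. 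I expect the main obstacle to be purely bookkeeping: tracking the factors of $3$ correctly through \eqref{MMstarLiebracket} and through the elimination of the diagonal elements $\langle x_i,x^i\rangle$ (which is exactly where the hypothesis $1/3\in k$ is used), and making sure the relation set $U''$ does not secretly force more collapse than $\sum_i\langle x_i,x^i\rangle=0$ — but this last point is precisely what surjectivity of $f$ onto the $8$-dimensional $\mathfrak{sl}(M)$ rules out, since the $8$ generators cannot satisfy any nontrivial relation without their images doing so.

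\begin{rem}
Combined with the decomposition $\mathrm{Der}_k(Z)=\mathfrak g_0\oplus\mathfrak g_1\oplus\mathfrak g_2$ and the isomorphisms $f:\langle e_1,M\rangle\xrightarrow{\cong}\mathfrak g_1$, $f:\langle e_2,M^*\rangle\xrightarrow{\cong}\mathfrak g_2$ already established, Proposition~\ref{MMstarslisoprop} yields that $f:\langle Z,Z\rangle\to\mathrm{Der}_k(Z)$ is an isomorphism for a split octonion algebra $Z$ over a ring containing $1/3$; by faithfully flat descent (Lemma~\ref{octonionfundobservatationlem}) and base change for $\langle-,-\rangle$ (Lemma~\ref{alt_cycle_base_change_lem}) and for derivations \eqref{Der_base_change}, the same holds for an arbitrary octonion algebra, which is what feeds into Theorem~\ref{simply_Connected_oct_theo}.
\end{rem}
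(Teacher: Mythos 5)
Your proposal is correct, but it takes a genuinely different route from the paper. The paper observes that the restriction $f\colon\langle M,M^*\rangle\to\mathfrak{sl}(M)$ is a covering (a perfect central extension, by Lemma~\ref{MMstarsplitperfectlem} and the fact that the full map $f\colon\langle Z,Z\rangle\to\mathrm{Der}_k(Z)$ is a central extension), then invokes the simple connectedness of $\mathfrak{sl}_3(k)$ — either via van der Kallen or via the paper's own Lemma~\ref{sl_3_baby_lem}, where $1/3\in k$ reduces it to $\langle k,k\rangle=0$ — and concludes because a covering of a simply connected Lie algebra is an isomorphism. You instead argue at the level of module generators: a $k$-linear map that sends a spanning set to a $k$-basis of a free module is necessarily an isomorphism, and one computes directly that $f(\langle x_i,x^j\rangle)|_M=-3E_{ij}$ ($i\neq j$) and $f(\langle x_i,x^i\rangle-\langle x_j,x^j\rangle)|_M=-3(E_{ii}-E_{jj})$, which (using $1/3\in k$) gives a basis of $\mathfrak{sl}_3(k)$. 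This is more elementary — you avoid the machinery of central extensions entirely for the final step, and only need Lemma~\ref{MMstarsplitperfectlem} for the spanning claim, not for perfectness. The paper's version is arguably cleaner once Lemma~\ref{sl_3_baby_lem} is in hand, and would more readily generalize; yours makes completely explicit where $1/3$ enters the computation.

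One bookkeeping slip you should fix: the set you describe, $\{\langle x_i,x^j\rangle:i\neq j\}\cup\{\langle x_i,x^i\rangle-\langle x_j,x^j\rangle:i<j\}$, has $6+3=9$ elements, not $8$, since the three diagonal differences are not independent ($(\langle x_1,x^1\rangle-\langle x_2,x^2\rangle)+(\langle x_2,x^2\rangle-\langle x_3,x^3\rangle)=\langle x_1,x^1\rangle-\langle x_3,x^3\rangle$). Replace the second family with, say, $\{\langle x_1,x^1\rangle-\langle x_2,x^2\rangle,\;\langle x_2,x^2\rangle-\langle x_3,x^3\rangle\}$ to get an honest $8$-element spanning set $B$; with $1/3\in k$ and the relation $\sum_i\langle x_i,x^i\rangle=0$ this still spans, and $f(B)$ is then genuinely a $k$-basis of $\mathfrak{sl}_3(k)$, after which your injectivity argument (a relation $\sum\lambda_b b=0$ pushes forward to $\sum\lambda_b f(b)=0$, forcing all $\lambda_b=0$) closes the proof. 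Also note that the phrase ``subject to the single relation $\sum_i\langle x_i,x^i\rangle=0$'' is not something Lemma~\ref{MMstarsplitperfectlem} gives you in advance — that there are no further relations is precisely what the basis argument establishes a posteriori — so you should phrase it as you did at the end, rather than as a known fact.
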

\begin{proof}
Since $f: \langle Z, Z \rangle \to \mathrm{Der}_k(Z)$ is a central extension, it follows that the restriction
$$f : \langle M, M^* \rangle \rightarrow \mathfrak{sl}(M) $$ must be a central extension as well. If $M = k^3$ is free, then 
by a theorem of van der Kallen (see for instance \cite[Cor 3.14]{vdK}) or by Lemma \ref{sl_3_baby_lem}, $\mathfrak{sl}(M)= \mathfrak{sl}_3(k)$ is simply connected if $1/3 \in k.$  \\
Hence, (by Lemma~\ref{MMstarsplitperfectlem}) $f: \langle M, M^* \rangle \rightarrow \mathfrak{sl}(M)$ is a covering of a simply connected Lie algebra and thus an isomorphism. 
\end{proof}
\begin{cor} If $Z$ is a split octonion algebra over a ring containing $1/3$, then
$$ f: \langle Z, Z \rangle \to \mathrm{Der}_k(Z) $$
is an isomorphism and $L(Z)$ is simply connected.\label{splitsimplyconncor}
\end{cor}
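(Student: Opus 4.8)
\textbf{Proof plan for Corollary~\ref{splitsimplyconncor}.}
The plan is to assemble the pieces already established for a \emph{split} octonion algebra $Z$ over a ring $k$ with $1/3 \in k$. The map in question is $f : \langle Z, Z\rangle \to \mathrm{Der}_k(Z)$, $\langle a, b\rangle \mapsto SD(a,b)$, and we already know (from Corollary~\ref{A_2_kernel_ud_0_cor}(i), since $1/3\in k$) that $\ker f = (\ker u)_0 = \ker u$ for the universal central extension $u : \mathfrak{uce}(L_k(Z)) \to L_k(Z)$; moreover, for octonion algebras all derivations are standard, so $f$ is surjective onto $\mathrm{Der}_k(Z) = \mathrm{StanDer}(Z)$. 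Thus it suffices to prove $\ker f = \{0\}$, equivalently that $f$ is injective. First I would fix a volume element so that $M = k^3$ is free with dual bases $\{x_i\}$, $\{x^i\}$, and recall the decomposition $\langle Z, Z\rangle = \langle e_1, M\rangle + \langle e_2, M^*\rangle + \langle M, M^*\rangle$ together with the matching Lie algebra decomposition $\mathrm{Der}_k(Z) = \mathfrak g_0 \oplus \mathfrak g_1 \oplus \mathfrak g_2$ from \cite{lopera}, under which $f$ maps $\langle e_1, M\rangle \xrightarrow{\cong} \mathfrak g_1$, $\langle e_2, M^*\rangle \xrightarrow{\cong} \mathfrak g_2$, and $\langle M, M^*\rangle \twoheadrightarrow \mathfrak g_0 \cong \mathfrak{sl}(M)$, the first two being isomorphisms by the preceding lemma.

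Next I would invoke Proposition~\ref{MMstarslisoprop}: since $f$ restricted to $\langle M, M^*\rangle$ is a central extension onto $\mathfrak{sl}(M) = \mathfrak{sl}_3(k)$, $\langle M, M^*\rangle$ is perfect (Lemma~\ref{MMstarsplitperfectlem}), and $\mathfrak{sl}_3(k)$ is simply connected when $1/3\in k$ (Lemma~\ref{sl_3_baby_lem} or \cite[Cor 3.14]{vdK}), the restriction $f|_{\langle M,M^*\rangle}$ is a covering of a simply connected Lie algebra, hence an isomorphism. At this point all three summands of $\langle Z, Z\rangle$ are mapped isomorphically onto the corresponding summands of $\mathrm{Der}_k(Z)$. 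Because the decomposition $\langle Z,Z\rangle = \langle e_1,M\rangle \oplus \langle e_2, M^*\rangle \oplus \langle M, M^*\rangle$ is (by the lemmas) a direct sum refining the grading on $\mathrm{Der}_k(Z)$, and $f$ respects this grading, an element of $\ker f$ has all three graded components in the kernel of the corresponding restriction, hence all zero; so $f$ is injective, thus an isomorphism. (If one is worried about whether the sum in $\langle Z,Z\rangle$ is genuinely direct rather than just spanning, this is forced: the three restrictions of $f$ are isomorphisms onto complementary summands, so any relation among the summands would land in the intersection of complementary spaces, which is zero.)

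Finally, to conclude simple connectedness of $L = L_k(Z)$: $L$ is perfect (it is $\TKK$ of the rectangular matrix Jordan pair $\mathbf M(1,2,Z)$ with $Z$ alternative unital, hence perfect by Corollary~\ref{perfect_TKK_cor}), and $\ker u = \ker f = \{0\}$ by the above, so $u : \mathfrak{uce}(L) \to L$ is an isomorphism; that is, $L$ is simply connected.

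I do not expect a serious obstacle here, since Corollary~\ref{splitsimplyconncor} is essentially a bookkeeping corollary of the three lemmas immediately preceding it. The only point requiring mild care is matching the internal decomposition of $\langle Z, Z\rangle$ with the $\mathfrak g_0\oplus\mathfrak g_1\oplus\mathfrak g_2$ decomposition of $\mathrm{Der}_k(Z)$ and checking that $f$ is graded with respect to this splitting, so that injectivity on each piece gives global injectivity; this is where one must be sure the spanning decomposition of $\langle Z,Z\rangle$ is actually a direct sum, which, as noted, follows from the fact that $f$ is already known to be an isomorphism on $\langle e_1,M\rangle$ and $\langle e_2,M^*\rangle$ and the target splitting is direct.
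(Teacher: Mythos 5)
Your proof is correct and follows the route the paper intends: the paper offers no explicit argument for this corollary, leaving it as a direct consequence of the preceding decomposition lemma, Proposition~\ref{MMstarslisoprop}, and the identification $\ker f = \ker u$ for the perfect $A_2$-graded Lie algebra $L_k(Z)$. The one step you make explicit that the paper leaves tacit is the observation that the spanning decomposition $\langle Z,Z\rangle = \langle e_1,M\rangle + \langle e_2,M^*\rangle + \langle M,M^*\rangle$ is in fact direct because the three restrictions of $f$ are injections onto the complementary summands $\mathfrak g_1$, $\mathfrak g_2$, $\mathfrak g_0$; this is a genuine (if small) point of care, and your argument for it is sound.
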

\begin{theo} Let $Z$ be an octonion algebra over a ring $k$ containing  $1/3.$ Then:
\begin{itemize}
\item[\rm(i)]
\label{simply_Connected_oct_theo}The map
$$f: \langle Z , Z \rangle \stackrel{\cong}\longrightarrow \mathrm{Der}_k(Z), \quad \langle a, b\rangle \mapsto SD(a,b),$$
is an isomorphism. 
In particular $\langle M, M^* \rangle \cong \mathfrak{sl}(M).$ 
\item[\rm(ii)] The Lie algebra $L_k(Z)$ is simply connected.
\end{itemize}
\end{theo}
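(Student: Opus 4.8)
\textbf{Proof strategy for Theorem \ref{simply_Connected_oct_theo}.} The plan is to reduce the general case to the split case, which has already been handled in Corollary~\ref{splitsimplyconncor}, by means of faithfully flat descent. First I would recall the input we are allowed to use freely: by Corollary~\ref{A_2_kernel_ud_0_cor}(i) (valid since $1/3 \in k$) together with Proposition~\ref{torsion_bijection_prop} and the identification $\ker u = \ker \ud = \ker f$ recorded after Remark~\ref{remark_1_d_is_trivial}, the Lie algebra $L_k(Z)$ is simply connected if and only if the map $f : \langle Z, Z \rangle \to \mathrm{StanDer}(Z)$, $\langle a, b \rangle \mapsto SD(a,b)$, is injective; and since $1/3 \in k$ all derivations of an octonion algebra are standard (by Proposition~\ref{lopera_der_proposition}(iv)), so $\mathrm{StanDer}(Z) = \mathrm{Der}_k(Z)$ and $f$ is automatically surjective onto $\mathrm{Der}_k(Z)$. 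Thus it suffices to prove that $f$ is an isomorphism, and by Corollary~\ref{splitsimplyconncor} this holds whenever $Z$ is split.

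The core of the argument is the following chain of base-change identities, for $k \to K$ a faithfully flat extension such that $Z_K := Z \otimes_k K$ is a split octonion algebra (such a $K$ exists by Definition~\ref{octonion_alge_Defi} and the characterization in the remark following it). By Lemma~\ref{alt_cycle_base_change_lem} applied to the alternative algebra $Z$, we have $\langle Z, Z \rangle \otimes_k K \cong \langle Z_K, Z_K \rangle$; by \eqref{Der_base_change} (equivalently, the flatness statement in Section~\ref{fundamental_concepts_chapter} combined with $Z$ being finitely generated projective, hence finitely presented), $\mathrm{Der}_k(Z) \otimes_k K \cong \mathrm{Der}_K(Z_K)$. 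Moreover these two isomorphisms intertwine $f \otimes_k \mathrm{id}_K$ with the split-case map $f_K : \langle Z_K, Z_K \rangle \to \mathrm{Der}_K(Z_K)$, since $SD(a,b) \otimes \alpha$ corresponds to $SD(a \otimes \alpha, b \otimes 1)$ and $SD$ is bilinear — this compatibility needs a one-line check. By Corollary~\ref{splitsimplyconncor}, $f_K$ is an isomorphism, hence $f \otimes_k \mathrm{id}_K$ is an isomorphism; since $k \to K$ is faithfully flat, $f$ itself is an isomorphism. This proves (i), and the last assertion $\langle M, M^* \rangle \cong \mathfrak{sl}(M)$ follows because $f$ restricts (by the decomposition $\mathrm{Der}_k(Z) = \mathfrak g_0 \oplus \mathfrak g_1 \oplus \mathfrak g_2$ of \cite{lopera} and the companion grading of $\langle Z, Z\rangle$ established in the lemmas above, with $\mathfrak g_0 \cong \mathfrak{sl}(M)$) to an isomorphism on the $\mathfrak g_0$-component.

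For part (ii): $L_k(Z)$ is perfect because it is $R$-graded with $R = A_2$ and $1/3 \in k$ forces perfectness (Proposition~\ref{K_R_turns_rogral_perfect}, or directly Lemma~\ref{not_type_C_generation_lem} since $A_2 \neq C_I$). Its universal central extension $u : \mathfrak{uce}(L_k(Z)) \to L_k(Z)$ therefore exists, and as noted $\ker u = \ker f = \{0\}$ by part (i). Hence $u$ is an isomorphism, i.e.\ $L_k(Z)$ is simply connected.

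\textbf{Main obstacle.} The genuinely delicate point is the compatibility of the base-change isomorphisms with $f$ — one must verify that the square relating $f \otimes \mathrm{id}_K$ and $f_K$ commutes under the canonical identifications of Lemma~\ref{alt_cycle_base_change_lem} and \eqref{Der_base_change}, and in particular that the map $\mathrm{Der}_k(Z) \otimes_k K \to \mathrm{Der}_K(Z_K)$ really is an isomorphism here, which uses that $Z$ is finitely generated \emph{projective} (so finitely presented) over $k$ and hence that the canonical map $\mathrm{End}_k(Z) \otimes_k K \to \mathrm{End}_K(Z_K)$ is bijective for the flat base change, restricting to the derivation submodules; everything else is bookkeeping. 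There is also a minor subtlety in ensuring the faithfully flat $K$ can be chosen so that $Z_K$ is actually split (not merely so that $\mathrm{Der}$ behaves well), but this is exactly the content of the cited characterization of octonion algebras, \cite[Cor.~4.11]{lopera}.
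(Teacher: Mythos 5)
Your proposal is correct and reaches the right conclusion, but it organizes the descent argument differently from the paper. The paper's proof (one line: cite Corollary~\ref{splitsimplyconncor}, Proposition~\ref{MMstarslisoprop}, Lemma~\ref{octonionfundobservatationlem}) descends the property \emph{``centrally closed''} from $L_K(Z_K)$ to $L_k(Z)$: Lemma~\ref{octonionfundobservatationlem} records that central closedness ascends and descends along faithfully flat base change (invoking the cited external result \cite{prep2009EN}), so simple connectedness of $L_k(Z)$ is established first and the isomorphism $f$ is then read off from $\ker u = \ker f = 0$. You instead descend the \emph{morphism} $f$ itself: you identify $f \otimes_k \mathrm{id}_K$ with $f_K$ via the compatibility of $\langle \cdot,\cdot\rangle$ and $\mathrm{Der}$ with flat base change (Lemma~\ref{alt_cycle_base_change_lem} and \eqref{Der_base_change}), use that $f_K$ is an isomorphism by Corollary~\ref{splitsimplyconncor}, and then invoke the elementary fact that faithfully flat base change detects isomorphisms of $k$-modules; simple connectedness then follows from $\ker u = \ker f = 0$. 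The two routes are logically equivalent, but yours buys a bit more self-containedness — it replaces the appeal to \cite{prep2009EN} with ordinary faithfully flat descent of module maps — at the cost of the compatibility check you flag (the commutative square relating $f\otimes\mathrm{id}_K$ and $f_K$), which is indeed routine given that $SD$ is $k$-bilinear and both identifications are the canonical ones. One small point worth tightening: the ``in particular $\langle M, M^*\rangle \cong \mathfrak{sl}(M)$'' clause presupposes $Z$ is reduced (so that $M$ is defined); the paper settles it via Proposition~\ref{MMstarslisoprop} in the split case, and your gloss about $f$ restricting to the $\mathfrak{g}_0$-component of the decomposition in \cite{lopera} amounts to the same thing once one fixes an idempotent.
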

\begin{proof} This follows from Corollary \ref{splitsimplyconncor}, Proposition \ref{MMstarslisoprop} and Lemma \ref{octonionfundobservatationlem}.
\end{proof}

\section{Lie algebras graded by $A_n$, $n \geq 3$}
\label{associative_coordinatization}
Let $D$ be an associative unital $k$-algebra.


We fix some notation for the rest of the section: 
\begin{itemize}
\item $K$ is a set, $\card K \geq 4$ and we fix a partition of $K = I \dot \cup J$ where $I = \{1\}$ is  a singleton, 
\item $R$ is the root system $\dot A_K,$ together with the collinear $3$-grading given  by 
$R_1 = \{\epsilon_1 - \epsilon_j : j \in J\}.$
 \item  $\mathbb{DS}:= \mathbb{DS}(A_3),$ $\mathbb{DP}:= \mathbb{DP}(A_3),$
\item  $D$ is an associative unital algebra,
\item $V  = \mathbf{Mat}(I ,J , D)$ is an associative matrix Jordan pair  with coordinates in $D,$
\item $L = \TKK(V).$
\end{itemize}

All of the followig has already been established at the beginning of the section:

If $u: \mathfrak{uce}(L) \rightarrow L$ is the universal central extension then

\begin{equation} \label{A_3_in_description_recal;} \ker(u) = \bigoplus_{\alpha \in \mathbb{DS}(A)} \mathfrak{uce}(L)_\alpha \oplus (\ker u)_0 \end{equation}

Therefore: 
\begin{itemize}
\item[-] If $\card J = 3$, then 
$\ker(u) = \bigoplus_{\alpha \in \mathbb{DS}(A_3, 2)} \mathfrak{uce}(L)_\alpha \oplus (\ker u)_0$
where $ \mathbb{DS}(A_3, 2)$ is the set of degenerate sums of divisor $2$ in $Q(A_3).$
\item[-] If $\card J > 3$, then
$\ker u =  (\ker u)_0.$
\end{itemize}

\begin{cor} \label{root_space_iso_lem_A_3}Let $u: \mathfrak{uce}(L) \rightarrow L$ be a universal central extension.
 Then $u: \mathfrak{uce}(L)_\alpha \rightarrow L_\alpha$ is a bijection for $ \alpha \in R^\times$. 
\end{cor}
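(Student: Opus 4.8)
The statement to prove is Corollary~\ref{root_space_iso_lem_A_3}: for $L = \TKK(V)$ with $V = \mathbf{Mat}(\{1\}, J, D)$ and $D$ associative unital, the universal central extension $u : \mathfrak{uce}(L) \rightarrow L$ restricts to a bijection $u : \mathfrak{uce}(L)_\alpha \rightarrow L_\alpha$ for every $\alpha \in R^\times$.

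The plan is to reduce this directly to Proposition~\ref{torsion_bijection_prop}, exactly as was done in Corollary~\ref{root_space_iso_lem} for the $A_2$ case. First I would note that by Proposition~\ref{rect_grid_la_prop}, $L = \TKK(V)$ is $\dot A_K$-graded, hence an $R$-graded Lie algebra in the sense of Definition~\ref{r_delta_graded_def}. Since $D$ is associative and $\card K \geq 4$, the root system $R = \dot A_K$ is irreducible, reduced, and has no irreducible component of type $C_I$; therefore by Proposition~\ref{K_R_turns_rogral_perfect}(i) the Lie algebra $L$ is perfect, so its universal central extension $u : \mathfrak{uce}(L) \rightarrow L$ exists and is a $\mathcal Q(R)$-graded central covering by Proposition~\ref{graded_cover_neh}. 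In particular $k = k_R$ is automatic here since $R$ has no component of type $C$.

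Next I would invoke Proposition~\ref{torsion_bijection_prop} with $K = \mathfrak{uce}(L)$ and $f = u$: for $R$ irreducible reduced and $f : K \rightarrow L$ a $\mathcal Q(R)$-graded central covering, any $0 \neq \gamma \in \supp_{\mathcal Q(R)}(K)$ is either a root $\gamma \in R^\times$, in which case $f : K_\gamma \rightarrow L_\gamma$ is a bijection, or $\gamma$ is a degenerate sum with $K_\gamma \subset \ker f$. Applying alternative (i) of that proposition directly to each $\alpha \in R^\times$ gives that $u : \mathfrak{uce}(L)_\alpha \rightarrow L_\alpha$ is a bijection, which is exactly the claim. (If $\alpha \in R^\times$ is not in the support of $\mathfrak{uce}(L)$, then $\mathfrak{uce}(L)_\alpha = 0$; but $u$ is surjective and $\mathcal Q(R)$-graded with $L_\alpha \neq 0$, which forces $\alpha \in \supp \mathfrak{uce}(L)$, so this case does not arise — alternatively one observes $\supp L \subset \supp \mathfrak{uce}(L)$ by Proposition~\ref{graded_cover_neh}.)

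There is essentially no obstacle: the corollary is a verbatim transcription of Corollary~\ref{root_space_iso_lem} from the $A_2$ setting to the $A_n$, $n \geq 3$ setting, and all the ingredients — $R$-gradedness of $\TKK(V)$, perfectness, and Proposition~\ref{torsion_bijection_prop} — have already been established in full generality. The only thing worth a sentence is checking that the hypotheses of Proposition~\ref{torsion_bijection_prop} genuinely apply, i.e.\ that $\dot A_K$ is irreducible and reduced (true for any index set $K$ with $\card K \geq 2$ by Example~\ref{type_A_expl} and Theorem~\ref{loc_fin_irred_class_thm}) and that the base ring condition involving $k_R$ is vacuous for type $A$. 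So the proof is the one-line proof already given: \emph{This is an immediate consequence of Proposition~\ref{torsion_bijection_prop}.}
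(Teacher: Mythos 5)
Your proposal is correct and takes exactly the same route as the paper: the paper's entire proof reads ``This is an immediate consequence of Proposition~\ref{torsion_bijection_prop},'' and you have simply supplied the routine verifications (that $\dot A_K$ is irreducible reduced with no type $C$ component, that $L$ is perfect, and that $u$ is a $\mathcal Q(R)$-graded covering) needed to see that the cited proposition applies. Nothing to add.
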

\begin{proof}
This is an immediate consequence of Proposition~\ref{torsion_bijection_prop}. 
\end{proof} 
\begin{defi}Let $i \neq j \in K$, $a \in D$ and $g : K \rightarrow L$ a  $\mathcal Q(R)$-graded covering. By Corollary~\ref{root_space_iso_lem_A_3} the following is a well-defined map
$$x_{ij}(a) :  = g^{-1}(E_{ij}a).$$ 
$$x_{ji}(a) : = g^{-1}(E_{ji}a).$$
Also, by Corollary~\ref{collinear_zero_hom_Cor}
$L_0 = \sum_{j \in J }[E_{1j}D, E_{j1}D]$ and $K_0 = \sum_{j \in J}[x_{1j}(D), x_{j1}(D) ].$
\end{defi}

\subsection{Degenerate sums $A_3$}

There are three root systems of type $A$ which admit degenerate sums: $A_1,$ $A_2$ and $A_3.$ The root system $A_1$ can be considered a special case of type $C$ and will be dealt with in Section \ref{Section C_n_coordinatization}. For type $A_2$ see Section \ref{alternative_coordinatization}. The upcoming section~\ref{associative_coordinatization_sec} applies to type $A_3$ since the coordinates of a typical $A_3$-graded Lie algebra  are associative. However, we yet have to describe $\mathfrak{uce}(L)_{\alpha + \beta}$ when $\alpha + \beta$ is a degenerate sum. By \ref{tab:DegSums}, $\mathbb{DS}(A_3)=  \pm \{\varepsilon_1 - \varepsilon_2 + \varepsilon_3 - \varepsilon_4, \varepsilon_1 - \varepsilon_2 - \varepsilon_3 + \varepsilon_4, \varepsilon_1 + \varepsilon_2 - \varepsilon_3 - \varepsilon_4 \}.$ We also note that every degenerate sum  can be expressed (up to switching the summands) in two ways as sum of two roots and that with respect to any $3$-grading on $A_3$, the degenerate sums have degree $\pm 1$.

\begin{defi}
If $D$ is an associative  $k$-algebra, then define $D_2$ to be the following quotient in the category of $k$-algebras
$$D_2 = D /(2 D,  [D, D]) .$$  The coset of $a \in D$ in  $D_2$ is denoted by $\bar a$.
\end{defi}
\begin{lem} $D[D, D] = [D, D]D = D[D, D]D$
\end{lem}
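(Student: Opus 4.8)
The statement to prove is $D[D,D] = [D,D]D = D[D,D]D$ for an associative unital $k$-algebra $D$. The plan is to prove the two-sided inclusions $D[D,D] \subseteq [D,D]D$ and $[D,D]D \subseteq D[D,D]$, which together give $D[D,D] = [D,D]D$, and then to observe that this common submodule coincides with $D[D,D]D$.

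First I would establish $D[D,D] \subseteq [D,D]D$. The key identity is $a[b,c] = [a,[b,c]] + [b,c]a$ for all $a,b,c \in D$. The term $[b,c]a$ obviously lies in $[D,D]D$. For the term $[a,[b,c]]$, note that it is itself a commutator, hence lies in $[D,D]$; but since $D$ is unital, $[D,D] = [D,D]\cdot 1 \subseteq [D,D]D$. Therefore $a[b,c] \in [D,D]D$, and since such elements $a[b,c]$ span $D[D,D]$ as a $k$-module, we get $D[D,D] \subseteq [D,D]D$. (This is exactly the argument already carried out in identity~(\ref{comm_teich_id}) in the excerpt, so one could also simply cite that.)

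Next I would establish the reverse inclusion $[D,D]D \subseteq D[D,D]$ by the symmetric argument: $[b,c]a = [[b,c],a] + a[b,c]$, where $[[b,c],a] \in [D,D] = 1\cdot[D,D] \subseteq D[D,D]$ because $D$ is unital, and $a[b,c] \in D[D,D]$ trivially. Since elements $[b,c]a$ span $[D,D]D$, this gives $[D,D]D \subseteq D[D,D]$. Combining the two inclusions yields $D[D,D] = [D,D]D$; denote this common submodule by $\mathfrak{m}$.

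Finally, for the equality with $D[D,D]D$: clearly $D[D,D]D \subseteq D([D,D]D) = D\mathfrak{m} = D(D[D,D]) \subseteq D[D,D]$ using associativity and the fact that $D\cdot D \subseteq D$ (so $D(D[D,D]) = (DD)[D,D] \subseteq D[D,D]$). Conversely $D[D,D] = D[D,D]\cdot 1 \subseteq D[D,D]D$ since $D$ is unital. Hence $D[D,D] = D[D,D]D$, completing the chain of equalities. There is no serious obstacle here: the only point requiring a little care is the repeated use of the unit element to absorb a bare commutator into one of the one-sided products, and the use of associativity to collapse $D(D[D,D])$ to $D[D,D]$; both are routine.
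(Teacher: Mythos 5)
Your proof is correct and takes essentially the same approach as the paper: both rely on the commutator expansion $a[b,c] = [a,[b,c]] + [b,c]a$ together with the unit to absorb the bare commutator $[a,[b,c]] \in [D,D]$ into either side, giving $D[D,D] = [D,D]D$, and then use associativity and the unit to identify this with $D[D,D]D$.
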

\begin{proof}
First it is to show that $[D, D]D \subset D[D, D].$ In all associative unital algebras $D$
$[D,D]D = [[D, D],D] + D[D, D] = D[D, D], $. 
With the same argument $D[D, D] = [D, D]D$ and $[D,D] D \subset D[D, D]D \subset [D, D]D^2 \subset [D,D]D$ whence equality everywhere. 
\end{proof}
We assume from now on $\card J = 2.$
\begin{defi}Let $\mathcal Z = \bigoplus_{\gamma \in \mathbb{DS}}(D_2)_\gamma$ be a direct sum of six copies of $D_2$, labeled by pairwise distinct degenerate sums. 
Define a bilinear map $\psi : L \times L \rightarrow \mathcal Z $ by 
 \begin{eqnarray}
\psi(E_{ij} a, E_{kl} b) &=& \begin{cases} (\overline {ab})_\gamma & \text{ if }  (\epsilon_i - \epsilon_j,  \epsilon_k - \epsilon_l) \in \mathbb{DP}(A_3, 2),\\
0& \text{ else. } \end{cases} \label{cocyc_A_3_def} \\
\psi(L_0, L) &=& \psi(L, L_0) = \{0\} , \label{cocyc_A_3_def2}
\end{eqnarray}
\label{A_3_cocyle_Defi}. 
\end{defi}
\begin{lem}
\label{Lemma_1_for_A_3}
\begin{itemize} 
\item[\rm (i)]
The bilinear map $\psi$ is a $\mathcal{Q}(R)$-graded $2$-cocycle on $L$ with coefficients in $\mathcal Z$. 
\item[\rm (ii)] With the definition of $L \oplus_{\psi} \mathcal Z $ as in Definition~\ref{central_ext_by_cocyle}, the  central extension $L \oplus_{\psi} \mathcal Z \rightarrow L$ is a $\mathcal{Q}(R)$-graded covering.
\item[\rm (iii)] By the universal property of $\uce L $ there is a a unique graded Lie algebra epimorphism $\pi : \uce L \rightarrow L \oplus_{\psi} \mathcal Z$  such that $$\pi(\uce {L}_\gamma) = \mathcal Z_\gamma \cong D_2$$ for all $\gamma \in \mathbb{DS}.$
\end{itemize}
\end{lem}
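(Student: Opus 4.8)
The statement is the exact analogue of Lemma~\ref{Lemma_1_for_A_2} for the $A_3$-case, so the proof follows the same three-step pattern, with the role of $D_3$ (quotient by $3D$ and associators) replaced by $D_2 = D/(2D,[D,D])$, and with the sign-function $s$ no longer needed because every degenerate pair of divisor $2$ has orthogonal summands (Lemma~\ref{vdK_2.6}(iii)), so the two orderings of a degenerate sum can be assigned the same value. First I would verify that $\psi$ as defined in Definition~\ref{A_3_cocyle_Defi} is a well-defined, $\mathcal Q(R)$-graded, $k$-bilinear map: each degenerate sum $\gamma \in \mathbb{DS}(A_3,2)$ is, up to order, a unique sum of two roots, so the case distinction in (\ref{cocyc_A_3_def}) is unambiguous; well-definedness of the value in $D_2$ uses that $\overline{ab} = \overline{ba}$ in $D_2$ since $[D,D] \subset 2D + [D,D]$, which also gives $\psi(x,y) = -\psi(y,x)$ on homogeneous generators (the minus coming from $2D = 0$ in $D_2$). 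Then $\psi(x,x) = 0$ since $2R \cap \mathbb{DS}(A_3) = \emptyset$, so $\psi$ is alternating.

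The heart of (i) is the cocycle identity $\psi([x,y],z) + \psi([y,z],x) + \psi([z,x],y) = 0$, which by grading I would check only for homogeneous $x,y,z$ whose degrees sum to a degenerate sum. There are two cases. If one of the three elements lies in $L_0$, say $z = [E_{kl}c,E_{lk}d]$, then $\psi([x,y],z) = 0$ by (\ref{cocyc_A_3_def2}) and one must show the other two summands cancel; here I would use the explicit brackets from the $A_3$-section (multiplication of matrix units) together with associativity of $D$ to reduce everything to an identity of the shape $\overline{a(cdb)} = \overline{(cda)b}$ or similar, which holds in $D_2$ because $D$ is associative and $D_2$ is commutative. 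If all three degrees are nonzero, exactly as in the proof of Lemma~\ref{Lemma_1_for_A_2} at most two of the three terms can be nonzero (using uniqueness of the expression of a degenerate sum as a sum of roots and $\beta \neq 0$), so it suffices to check $\psi([E_{ij}a,E_{jk}b],E_{ij}c) + \psi([E_{jk}b,E_{ij}c],E_{ij}a) = 0$; expanding, this is $\overline{(ab)c} + \overline{(bc)a}$ up to signs, which vanishes in $D_2$ since $D$ is associative and $\overline{abc} = \overline{bca}$ (cyclic invariance modulo $[D,D]$). I expect this bookkeeping — tracking which matrix-unit products are nonzero and which degenerate pairs arise — to be the main obstacle, though it is entirely mechanical.

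For (ii) I would invoke Definition~\ref{central_ext_by_cocyle} to get that $L \oplus_\psi \mathcal Z \to L$ is a $\mathcal Q(R)$-graded central extension, and then show $L' := L \oplus_\psi \mathcal Z$ is perfect exactly as in Lemma~\ref{Lemma_1_for_A_2}: for each root $\alpha \in R^\times$ pick (via Lemma~\ref{perfect_TKK_lem}) an element $D_\alpha \in L_0$ with $[D_\alpha, L_\alpha] = L_\alpha$ and $\psi(D_\alpha, L_\alpha) = 0$, so $(L_\alpha,0) \subset [L',L']$; then $(L_0,0) \subset [L',L']$ since $L_0 = \sum_{\alpha}[L_\alpha,L_{-\alpha}]$ and $\psi(L_\alpha,L_{-\alpha}) = 0$; and finally $(0,\mathcal Z_\gamma) = [(E_{ij}1,0),(E_{kl}1,0)] \subset [L',L']$ for each degenerate sum $\gamma = \epsilon_i - \epsilon_j + \epsilon_k - \epsilon_l$, so $L'$ is perfect. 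For (iii), since $L'$ is a $\mathcal Q(R)$-graded covering of $L$ and $u : \mathfrak{uce}(L) \to L$ is the universal central extension, the universal property (together with Proposition~\ref{graded_cover_neh}, which makes the induced map graded) gives a unique graded Lie algebra epimorphism $\pi : \mathfrak{uce}(L) \to L'$ over $L$. Because $\pi$ is graded and $u_\alpha$ is an isomorphism for each $\alpha \in R^\times$, $\pi$ maps $\mathfrak{uce}(L)_\gamma$ onto $\mathcal Z_\gamma \cong D_2$ for every degenerate sum $\gamma$, establishing the last claim.
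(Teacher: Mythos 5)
Your overall scaffolding (mimic the $A_2$ proof, then invoke the cocycle construction, perfectness, and the universal property) is correct, and parts (ii) and (iii) as you sketch them match the paper. However, your treatment of the cocycle identity in (i) rests on a false premise. You assert that ``each degenerate sum $\gamma \in \mathbb{DS}(A_3,2)$ is, up to order, a unique sum of two roots'' and build your argument on this, concluding that ``at most two of the three terms can be nonzero''. This is wrong for $A_3$: each $\gamma \in \mathbb{DS}(A_3,2)$ has \emph{two} distinct unordered expressions as a sum of roots, e.g. $\varepsilon_1-\varepsilon_2+\varepsilon_3-\varepsilon_4 = (\varepsilon_1-\varepsilon_2)+(\varepsilon_3-\varepsilon_4) = (\varepsilon_1-\varepsilon_4)+(\varepsilon_3-\varepsilon_2)$, a fact the paper records explicitly just before Definition~\ref{A_3_cocyle_Defi}. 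The cocycle identity in the all-nonzero case is precisely where these two decompositions interact: in the paper one takes $x = E_{ij}a$, $y = E_{kj}b$, $z = E_{jl}c$ with $i,j,k,l$ pairwise distinct; then $[x,y]=0$ for matrix-unit reasons, and $\psi(x,[y,z])$ and $\psi(y,[z,x])$ are \emph{both} nonzero, corresponding to the two decompositions of $\gamma$, and cancel because $\overline{abc}=\overline{bca}$ in $D_2$. Your proposed check $\psi([E_{ij}a,E_{jk}b],E_{ij}c)+\psi([E_{jk}b,E_{ij}c],E_{ij}a)=0$, lifted verbatim from the $A_2$ pattern, is vacuous here: the degree involved is $2\varepsilon_i-\varepsilon_j-\varepsilon_k$, which is not a degenerate sum of $A_3$ (every $\gamma\in\mathbb{DS}(A_3,2)$ has coefficient $\pm1$ at all four indices), so both $\psi$-values are zero by definition and the equality carries no content.

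To repair this you need to redo the case analysis for the Jacobi sum in a genuinely $A_3$-specific way. For three homogeneous elements with nonzero degrees summing to a degenerate sum, the relevant configurations force four distinct indices $i,j,k,l$ with one shared index among the three factors (e.g. $E_{ij}a,\ E_{kj}b,\ E_{jl}c$, or the transpose); one bracket vanishes by matrix-unit combinatorics, and the remaining two give the two decompositions of $\gamma$ and cancel using associativity, cyclicity of $\overline{\cdot}$ mod $[D,D]$, and $2=0$ in $D_2$. Similarly, for the case with one factor in $L_0$ you should carry out the explicit bracket computation (the paper distinguishes $x=[E_{ij}c,E_{ji}d]$ from $x=[E_{il}c,E_{li}d]$), landing on expressions like $2\overline{abcd}_\gamma = 0$; your phrase ``an identity of the shape $\overline{a(cdb)}=\overline{(cda)b}$ or similar'' is too loose to count as a proof and should be replaced by the actual matrix-unit bookkeeping. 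Once the cocycle identity is established this way, your arguments for (ii) and (iii) go through as written.
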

\begin{proof}
It is clear that $\psi$ is well-defined and bilinear.
\begin{itemize}
\item[\rm(i)]  
\begin{itemize}
\item
Let $y \in L$. Then $y = \sum_{ 1 \leq i \neq j \leq 4}E_{ij}(a) + y_0$ with $y_0 \in L_0,$ We need to show $\psi(y,y) = 0.$ Since we have (\ref{cocyc_A_3_def2}) we may assume that $y_0 = 0.$  
Since $(\epsilon_i - \epsilon_j, \epsilon_i - \epsilon_j ) \notin \mathbb{DP},$ the element $\psi(E_{ij}a, E_{ij}a)$ equals zero. Further, if $\{i,j,k,l\} = \{1,2,3,4\},$ then $(\epsilon_i - \epsilon_j, \epsilon_k - \epsilon_l )$ is  degenerate pair and 
$$\psi(E_{ij}a, E_{kl}b) = \overline{ab}_{(\epsilon_i - \epsilon_j + \epsilon_k - \epsilon_l )} = \overline{ba}_{(\epsilon_i - \epsilon_j + \epsilon_k - \epsilon_l )} = -\overline{ba}_{(\epsilon_i - \epsilon_j +\epsilon_k - \epsilon_l )} = -\psi( E_{kl}b, E_{ij}a).$$
If  $\{i,j,k,l\} \neq \{1,2,3,4\}$, then $(\epsilon_i - \epsilon_j, \epsilon_k - \epsilon_l ) \notin \mathbb{DP}(A_3, 2),$ thus $\psi(E_{ij}a, E_{kl}b) = 0.$ We have shown that $\psi$ is alternating.  
\item  For  three homogeneous elements $x,y,z$  in $L$ consider the expression $\psi(x, [y,z]) + \psi(y, [z,x]) +  \psi(z, [x,y]).$ First assume that none of the three elements has degree $0$. Assume that  $x = E_{ij} a$, $y = E_{kj}b$ and $z = E_{jl}c$ for $i,j,k,l$ pairwise distinct. By assumption $[x, y] = 0.$ It remains to look at 
$$ \psi(x, [y,z]) + \psi(y, [z,x]) = \psi(E_{ij}a, E_{kl}(bc)) - \psi(E_{kj}b, E_{il}ac).$$
Since $\gamma =  \epsilon_i - \epsilon_j  + \epsilon_k - \epsilon_l = \epsilon_k - \epsilon_j + \epsilon_i - \epsilon_l,$ this
is equal to $(\overline{abc - bca})_\gamma= (\overline{abc - abc})_\gamma = 0.$ This shows that $\psi(x, [y,z]) + \psi(y, [z,x]) +  \psi(z, [x,y]) = 0.$ Since this expression is invariant under cyclically permuting the arguments, we may next assume that 
$x = E_{ji} a$, $y = E_{kj}b$ and $z = E_{jl}c$ for $i,j,k,l \neq.$ Then $[x, z] = 0,$ we are left with the claim
$$  0 = \psi(x, [y,z]) + \psi(z, [x,y]) = \psi(E_{ij}a, E_{kl}(bc)) - \psi(E_{jl}c, E_{kj}ba)$$
Since $\gamma =  \epsilon_i - \epsilon_j  + \epsilon_k - \epsilon_l = \epsilon_j - \epsilon_l + \epsilon_k - \epsilon_j$ the same argument as above gives $\psi(x, [y,z]) + \psi(y, [z,x]) +  \psi(z, [x,y]) = 0.$ 
All other cases where all elements have non-zero degree, are equivalent to one of the two that we have just considered. 
If $2$ of the $3$ arguments  have degree $0,$ then by (\ref{cocyc_A_3_def2}), $\psi(x, [y,z]) + \psi(y, [z,x]) +  \psi(z, [x,y]) = 0.$ \\
If one of the three arguments is of degree $0$, say $x \in L_0,$ then without loss of generality $ y = E_{ij}a$ and 
$ z = E_{kl}b$ and $\{i,j,k,l\} = \{1,2,3,4\}.$ It is enough to consider the two  cases $x = [E_{ij}c, E_{ji}d]$ or $x = [E_{il}c, E_{li}d].$ Clearly, in all these cases $\psi(x, [y,z]) = 0.$ 
In the first case, $[x,y] = \{ E_{ij}c, E_{ji}d ,E_{ij}a\} = E_{ij}(cda + adc)$ and $[z,x] = - \{ E_{ij}c, E_{ji}d, E_{kl}b\} = 0.$ Then, $\psi(z,[x,y]) = (\overline{bcda + badc})_{\gamma} = 2(\overline{abcd})_\gamma = 0$ where $\gamma = \epsilon_i - \epsilon_j  + \epsilon_k - \epsilon_l. $ \\
In the second case, $[x,y] = \{ E_{il}c, E_{li}d, E_{ij}a\} = E_{ij}(cda)$ and $[z, x] = - \{ E_{il}c, E_{li}d, E_{kl}b\} = -E_{kl}(bdc).$ Then $\psi(z, [x,y]) + \psi(y, [z,x]) = (\overline{bcda + abdc})_{\gamma} = 2(\overline{abcd})_{\gamma} = 0$ with $\gamma$ as above.
\end{itemize}
\item[\rm(ii)] The Lie algebra $L$ is perfect and generated by its homogeneous components of non-zero degree. Let $x= \sum [x_i, y_i] \in L$ where all $x_i, y_i$ are non-zero and homogeneous, $\sum [(x_i, 0), (y_i, 0)] = (x, 0) \in L \oplus_{\psi} \mathcal Z.$ So $(L, 0) \subset [ L \oplus_{\psi} \mathcal Z,  L \oplus_{\psi} \mathcal Z].$ Let $(a)_\gamma \in \mathcal Z$ be homogeneous of degree $\gamma.$ By definition of $\psi$, there is  a non degenerate pair $(\alpha, \beta)$ with $[(x_\alpha(a), 0), (x_\beta(1), 0)] = \psi(x_\alpha(a), x_\beta(1)) = (a)_\gamma$ and therefore $\mathcal Z \subset [ L \oplus_{\psi} \mathcal Z, L \oplus_{\psi} \mathcal Z].$ Thus proves that $L \oplus_{\psi} \mathcal Z$ is perfect.  
\item[(iii)] The last part follows from the universal property of $\mathfrak{uce}(L).$ 
\end{itemize}
\end{proof}
\begin{lem}Let $ f: L' \rightarrow L$ be a $\mathcal Q(R)$-graded covering and $\gamma =\alpha + \beta  \in (R+R)\setminus R$.\label{lemma_deg_sums}
\begin{itemize}
\item[\rm{(i)}] There is a $k$-module epimorphism $D_2\rightarrow [L'_\alpha, L'_\beta]$ given by $\bar{a} \mapsto [x_\alpha(a), x_\beta(1)].$
\item[\rm(ii)]If $\gamma = \alpha + \beta$ is not a degenerate sum, then $[L'_\alpha, L'_\beta] = \{0\}$. If $\gamma = \alpha + \beta$ is  a degenerate sum, then $L'_\gamma = [L'_\alpha, L'_\beta].$ In particular: 
\begin{itemize}
\item[\rm(1)]$[L'_\alpha, L'_\alpha] = \{0\}$ for all $\alpha \in R$,
\item[\rm(2)] $R \subset \supp L' \subset R \cup \mathbb{DS} \subset R+R. $
\end{itemize}
\item[\rm (iii)] If $1/2 \in k,$ then $R = \supp L'.$ 
\end{itemize}
\label{Lemma_2_for_A_3}
\label{module_structure}
\end{lem}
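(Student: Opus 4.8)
\textbf{Proof plan for Lemma~\ref{module_structure}.}

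The plan is to imitate the argument already carried out for type $A_2$ in Lemma~\ref{Lemma_2_for_A_2}, but now exploiting that $D$ is associative. First I would establish (ii)(1) and (ii)(2): these are immediate from Proposition~\ref{torsion_bijection_prop} applied to the graded covering $f\colon L'\to L$ of the $R$-graded Lie algebra $L=\TKK(V)$. Indeed Proposition~\ref{torsion_bijection_prop} says that any $0\neq\gamma\in\supp_{\mathcal Q(R)}(L')$ is either a root (in which case $f$ restricts to a bijection $L'_\gamma\to L_\gamma$, hence $f$ is injective on $L'_\alpha$ for roots $\alpha$) or a degenerate sum with $L'_\gamma\subset\ker f\subset Z(L')$ and $n_\gamma L'_\gamma=0$. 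Since $L$ is generated by $\{L_\alpha:\alpha\in R^\times\}$ and hence $L'$ is generated by $\{L'_\alpha:\alpha\in R^\times\}$ (Lemma~\ref{generating_lemma}), we get $L'_\gamma=\sum_{\alpha+\beta=\gamma,\ \alpha,\beta\in R}[L'_\alpha,L'_\beta]$ whenever $\gamma\in\supp L'$; if $\gamma\notin R$ and is not a degenerate sum then there are no such pairs with $\alpha,\beta$ linearly independent and $\alpha=\beta$ would force $[L'_\alpha,L'_\alpha]$, which I treat next, so (ii)(2) and the inclusion $R\subset\supp L'\subset R\cup\mathbb{DS}\subset R+R$ follow. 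For (ii)(1), since $2\alpha\notin\mathbb{DS}(A_3)$ (inspect Table~\ref{tab:DegSums}: every element of $\mathbb{DS}(A_3)$ is a sum of two \emph{linearly independent} roots and none equals $2\alpha$) and $2\alpha\notin R$, Proposition~\ref{torsion_bijection_prop} gives $L'_{2\alpha}=\{0\}$; but $[L'_\alpha,L'_\alpha]\subset L'_{2\alpha}$, so $[L'_\alpha,L'_\alpha]=\{0\}$.

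Next I would prove (i), the existence of the $k$-module epimorphism $D_2\to[L'_\alpha,L'_\beta]$ for a degenerate sum $\gamma=\alpha+\beta$. Using Table~\ref{tab:DegSums} we may, after applying a Weyl group element (which permutes indices), write $\alpha=\epsilon_i-\epsilon_j$, $\beta=\epsilon_k-\epsilon_l$ with $\{i,j,k,l\}=\{1,2,3,4\}$, so that $\gamma=\epsilon_i-\epsilon_j+\epsilon_k-\epsilon_l$. Define $z_\gamma\colon D\times D\to[L'_\alpha,L'_\beta]$, $z_\gamma(a,b)=[x_\alpha(a),x_\beta(b)]$; this is bilinear and surjective onto $[L'_\alpha,L'_\beta]=L'_\gamma$ by what was just shown, using Corollary~\ref{root_space_iso_lem_A_3} to identify $L'_\alpha\cong D\cong L'_\beta$ via $x_\alpha,x_\beta$. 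I then verify the relations defining $D_2$: (a) $2z_\gamma(D,D)=0$, which is $n_\gamma=2$ from Proposition~\ref{torsion_bijection_prop}; (b) $z_\gamma(ab,1)=z_\gamma(a,b)$, obtained exactly as in Lemma~\ref{Lemma_2_for_A_2} by writing $x_\beta(b)=[[x_{k j}(\cdot),x_{j k}(1)],x_{kl}(\cdot)]$ appropriately, applying the Jacobi identity, and using that $[x_\alpha(a),x_\beta(b)]$ is central so the outer bracket with it vanishes — associativity of $D$ makes the triple products collapse to single products $ab$; (c) $z_\gamma(a,b)=z_\gamma(b,a)$ modulo $[D,D]$, i.e. $z_\gamma([a,b],1)=0$, again by a Jacobi-identity computation of $[[x_\alpha(a),x_{-\alpha}(b)],\ \cdot\ ]$ bracketed into $x$-elements, together with the $2$-torsion and the already-established multiplicativity. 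Collecting (a)--(c), the map factors through $D_2=D/(2D,[D,D])$, giving the desired $k$-module epimorphism $D_2\to[L'_\alpha,L'_\beta]$, $\bar a\mapsto[x_\alpha(a),x_\beta(1)]$.

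For (iii): if $1/2\in k$, then for a degenerate sum $\gamma$ with $n_\gamma=2$ and $L'_\gamma\subset\ker f$ we have $L'_\gamma=n_\gamma^{-1}(n_\gamma L'_\gamma)=\{0\}$ by part of Proposition~\ref{torsion_bijection_prop}; hence $\supp L'\subset R$, and since $R\subset\supp L'$ always, $\supp L'=R$. Finally I should remark that (i) combined with (ii) gives, once one proves (in the next results, e.g. Theorem~\ref{deg_sum_theo_A_3}) that the composite $D_2\to[L'_\alpha,L'_\beta]\to(\mathcal Z_\gamma\cong D_2)$ built from Lemma~\ref{Lemma_1_for_A_3} is the identity, that $\mathfrak{uce}(L)_\gamma\cong D_2$; but that is beyond the present statement, which only asks for the epimorphism. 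The main obstacle is purely bookkeeping: in step (b)--(c) one must pick the \emph{correct} way to express each root vector as a bracket $[[x_{\cdot\cdot},x_{\cdot\cdot}],x_{\cdot\cdot}]$ so that the intermediate products stay inside $\supp L'$ and the Jacobi manipulation produces exactly $abc\pm bca\in D[D,D]$; associativity of $D$ (in contrast with the $A_2$ case, where one had to descend to $D_3$ to kill associators) is what keeps this manageable, and no genuinely new idea beyond Lemma~\ref{Lemma_2_for_A_2} is needed.
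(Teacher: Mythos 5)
Your plan follows the paper's approach (Jacobi identity plus $2$-torsion from Proposition~\ref{torsion_bijection_prop}, modeled on Lemma~\ref{Lemma_2_for_A_2}), and parts (ii)(1), (ii)(2) and (iii) are handled correctly. But there is a genuine gap in part (ii): you assert ``surjective onto $[L'_\alpha,L'_\beta]=L'_\gamma$ by what was just shown,'' yet what you actually established is only $L'_\gamma=\sum_{\alpha'+\beta'=\gamma}[L'_{\alpha'},L'_{\beta'}]$, a sum over \emph{all} decompositions of $\gamma$ into two roots. In $A_3$ each degenerate sum has two essentially distinct decompositions (e.g.\ $\varepsilon_1+\varepsilon_2-\varepsilon_3-\varepsilon_4=(\varepsilon_1-\varepsilon_3)+(\varepsilon_2-\varepsilon_4)=(\varepsilon_1-\varepsilon_4)+(\varepsilon_2-\varepsilon_3)$), and you must show that the brackets $[L'_\alpha,L'_\beta]$ and $[L'_\delta,L'_\epsilon]$ coming from the two decompositions coincide. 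The paper does this with a separate Jacobi-identity computation involving structure constants $N_{\delta-\beta,\beta}\equiv 1\pmod 2$, and this step is not bookkeeping that the $A_2$ template carries over automatically --- it is a distinct argument you need to supply.

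A smaller caution on (i): to show $z_\gamma(\cdot,1)$ factors through $D_2=D/\langle 2D,[D,D]\rangle_{\mathrm{ideal}}$ you must kill the \emph{ideal} generated by $[D,D]$, i.e.\ $D[D,D]$, and this does not follow from $z_\gamma(ab,1)=z_\gamma(a,b)=z_\gamma(b,a)$ alone (those relations kill only the $k$-submodule $[D,D]$; try to derive $z_\gamma(c[a,b],1)=0$ from them and you will find yourself going in circles). You need the Jacobi manipulation with a general element $c$ in the second slot, giving $[x_\alpha(ab),x_\beta(c)]=[x_\alpha(b),x_\beta(ac)]=[x_\alpha(b),x_\beta(ca)]$, whence $z_\gamma(b,[a,c])=0$ and then $z_\gamma(D[D,D],1)=0$. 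The Jacobi step is the same --- the first term $[t_{im}(a),[x_\alpha(b),x_\beta(c)]]$ vanishes because the inner bracket is central, and nowhere did one need $c=1$ --- so the fix is mechanical, but the version with $c=1$ alone is not enough, and your plan should state the stronger relation. (To be fair, the paper's own write-up of (5.4.15)--(5.4.16) also records only the $c=1$ case; the $A_2$ proof in Lemma~\ref{Lemma_2_for_A_2} is more careful and explicitly records the $3$-variable relation, so ``exactly as in Lemma~\ref{Lemma_2_for_A_2}'' is the right instinct --- just make sure you actually carry it out.)
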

\begin{proof}
Note that every $\gamma \in 
\mathbb{DS}(A_3)$ is of the form $\gamma = (\epsilon_i - \epsilon_j) + (\epsilon_k - \epsilon_l)$ with $\{i, j\} \cap \{k,l\} = \emptyset$. Fix such a $\gamma.$
For convenience we will use the notations $x_{ij}(a)$ and $x_{\epsilon_i- \epsilon_j}(a),$ $i \neq j$  for the same elements. Also, we set $t_{ij}(a) = [x_{ij}(a), x_{ji}(1)]$ where $i, j \in \{1,2,3,4\}$ and $a\in D.$ For $i \neq j, $ 
$$\ad t_{ij}(a).x_{kl}(b)  = (\delta_{ik} - \delta_{kj})x_{kl}(ab) + (\delta_{jl}- \delta_{il})x_{kl}(ba).$$ 
Choose  $m \in \{k,l\}$; then the element $[x_{ij}(b), x_{kl}(1)]$ lies in the centre of $L'$ and applying the Jacobi identity yields:
\begin{eqnarray*}
\left [x_{ij}(ab), x_{kl}(1)\right] &=& \big[[t_{im}(a), x_{ij}(b)], x_{kl}(1)\big] \\
&=& \big[t_{im}(a), [x_{ij}(b), x_{kl}(1)]\big] - \big[ x_{ij}(b), [t_{im}(a),x_{kl}(1)]\big]\\
&=& -\langle( \epsilon_{k}- \epsilon_l), (\epsilon_i - \epsilon_m)\che \rangle  [x_{ij}(b), x_{kl}(a)].
\end{eqnarray*}
The calculation uses that $[x_{ij}(b), x_{kl}(1)]$ is central in $L'.$ 
The value of $-\langle( \epsilon_{k}- \epsilon_l), (\epsilon_i - \epsilon_m)\che \rangle$ is $-\delta_{km} + \delta_{ml} $ is congruent to $1 \mod 2k$. The module $L'_\gamma$ has $2$-torsion, hence 
this proves 
\begin{equation}[x_{ij}(ab), x_{kl}(1)] = [x_{ij}(b), x_{kl}(a)], \quad k\neq i \label{eqn_A_3_lem11}.\end{equation}
We obtain with the same argument 
\begin{equation}[x_{ij}(ab), x_{kl}(1)] = [x_{ij}(a), x_{kl}(b)] \label{eqn_A_3_lem12}.\end{equation}
It follows  from (\ref{eqn_A_3_lem11}) and (\ref{eqn_A_3_lem12}) that the $k$-module $[L'_\alpha, L'_\beta]$ is spanned by elements of the form 
$[x_{\alpha}(a), x_\beta(1)]$ where $ a \in D$ and that there is a well-defined  $k$-module epimorphism $D_2  \rightarrow [L'_\alpha, L'_\beta]$ given by mapping $\bar a$ to $[x_\alpha(a), x_\beta(1)].$ This proves (i).\\
Consider two different ways to write $\gamma$ as sum of two roots, $\gamma = \alpha + \beta = \delta + \varepsilon$
where the roots $\alpha, \beta, \delta$ and $\varepsilon$ are all distinct. After possibly re-ordering the roots  $N_{\delta - \beta, \beta} \equiv N_{\alpha -\varepsilon , \varepsilon} \equiv N_{\beta, \varepsilon - \beta} \equiv 1 \mod 2$ as well as  $\alpha + \delta, \beta + \varepsilon \notin A_2$ as shown in \cite[3.7(16)]{vdK}  .
We want to conclude that $[L'_\alpha, L'_\beta] = [L'_\delta, L'_\epsilon].$ It is clearly sufficient to show for all $a \in D$ that
$$[x_\alpha(a), x_\beta(1)] = [x_\delta(a), x_\epsilon(1) ]. $$
 Using the Jacobi identity and $\alpha = \delta + \varepsilon - \beta$ and $N_{\delta- \beta, \beta} \equiv 1 \mod 2k,$ we calculate
\begin{eqnarray*}
[x_\varepsilon(1), x_\delta(a)] &=&  N_{\delta- \beta, \beta} \left[ x_\varepsilon(1), [x_{\delta - \beta}(a),x_\beta(1) ]\right ] \\
&\equiv & [x_\beta(1), [x_{\varepsilon}(1), x_{\delta - \beta}(a)]] +[x_{\delta - \beta}(a),[x_\beta(1), x_\varepsilon(1)]] \mod 2k\\
&=  & N_{\varepsilon, \delta - \beta} [x_\beta(1), x_\alpha(a)] \mod 2k.  
\end{eqnarray*}
The second equality holds because $[x_\varepsilon(1), x_\beta(1)]$ is central in $L'$. Now $- N_{\varepsilon, \delta - \beta} \equiv 1 \mod 2k,$
thus $[x_\delta(a), x_\varepsilon(1)] \equiv [x_\alpha(a), x_\beta(1)] \mod 2k $. \\
Since for $\alpha \in R^\times$ there is no way to express $2\alpha$ as a degenerate sum, part(ii) follows immediately from Proposition~\ref{torsion_bijection_prop}. 
\end{proof}

\begin{theo}Let $\gamma \in \supp_{\mathcal{Q}(R)}\mathfrak{uce}(L)\setminus R.$
\begin{itemize} 
\item[\rm{(i)}]The element $\gamma$ is a degenerate sum $\gamma = \alpha + \beta$, i.e., $\alpha$ and $\beta$ are non-zero distinct roots and their sum is not a root. 
\item[\rm(ii)] For every degenerate sum $\gamma =\alpha + \beta$, there is a $k$-module isomorphism
$$\mathfrak{uce}(L)_\gamma \cong D_2, $$
given by 
$[x_\alpha(a), x_\beta(b)] \rightarrow \overline{ab}.$
\end{itemize}\label{non_roots_theo_A_3}
\label{deg_sum_theo_A_3}
\end{theo}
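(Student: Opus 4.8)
The plan is to prove Theorem~\ref{deg_sum_theo_A_3} by combining the two auxiliary results of this subsection, exactly as was done in the $A_2$ case (Theorem~\ref{deg_sum_theo}), but now with $D_3$ replaced by $D_2$ and with the cocycle from Definition~\ref{A_3_cocyle_Defi} in place of the divisor-$3$ cocycle. First I would dispose of (i): by Proposition~\ref{torsion_bijection_prop}, if $\gamma\in\supp_{\mathcal Q(R)}\mathfrak{uce}(L)$ is not a root, then $\gamma$ is a degenerate sum, i.e. $\gamma=\alpha+\beta$ for linearly independent non-zero roots $\alpha,\beta$ with $\alpha+\beta\notin R$. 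This is immediate and requires no further work. So the content is part (ii).

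For (ii), the strategy is to produce two mutually inverse $k$-module maps between $\mathfrak{uce}(L)_\gamma$ and $D_2$. The first map $\pi:\mathfrak{uce}(L)_\gamma\to D_2$ comes from Lemma~\ref{Lemma_1_for_A_3}: the bilinear form $\psi$ of Definition~\ref{A_3_cocyle_Defi} is a $\mathcal Q(R)$-graded $2$-cocycle, the central extension $L\oplus_\psi\mathcal Z\to L$ is a graded covering, and hence by the universal property of $\mathfrak{uce}(L)$ there is a unique graded epimorphism $\uce L\to L\oplus_\psi\mathcal Z$ which on the $\gamma$-component is an epimorphism onto $\mathcal Z_\gamma\cong D_2$ sending $[x_\alpha(a),x_\beta(b)]\mapsto\overline{ab}$. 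The second map $\rho:D_2\to\mathfrak{uce}(L)_\gamma$ comes from Lemma~\ref{Lemma_2_for_A_3}(i) (equivalently Lemma~\ref{module_structure}): there is a well-defined $k$-module epimorphism $D_2\to[\uce L_\alpha,\uce L_\beta]=\uce L_\gamma$, $\bar a\mapsto[x_\alpha(a),x_\beta(1)]$, where the last equality uses Lemma~\ref{Lemma_2_for_A_3}(ii) (the component $\uce L_\gamma$ is spanned by brackets of root spaces whose degrees sum to $\gamma$, since the kernel spaces of non-root degree are central and $\gamma$ is the \emph{only} essentially unique representation relevant). Then the verification that $\pi\circ\rho=\mathrm{id}_{D_2}$ and $\rho\circ\pi=\mathrm{id}$ on generators is a one-line computation identical in form to the one at the end of the proof of Theorem~\ref{deg_sum_theo}: on a generator $[x_{ij}(a),x_{kl}(b)]$ of $\mathfrak{uce}(L)_\gamma$ with $\gamma=\epsilon_i-\epsilon_j+\epsilon_k-\epsilon_l$ a degenerate sum, $\rho\circ\pi([x_{ij}(a),x_{kl}(b)])=\rho((\overline{ab})_\gamma)=[x_{ij}(1),x_{kl}(ab)]=[x_{ij}(a),x_{kl}(b)]$ by identities~(\ref{eqn_A_3_lem11})--(\ref{eqn_A_3_lem12}), and $\pi\circ\rho(\bar a)=\pi([x_{ij}(1),x_{kl}(a)])=\overline{1\cdot a}=\bar a$; hence $\mathfrak{uce}(L)_\gamma\cong D_2$.

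I expect the routine but slightly delicate point — the one genuine obstacle — to be making sure the two lemmas are legitimately applicable \emph{at the relevant cardinality}: the cocycle $\psi$ and the relations $D[D,D]=[D,D]D=D[D,D]D$ used to define $D_2$ were set up in the excerpt under the running hypothesis $\card J=2$ (equivalently $\card K=4$), whereas the theorem as stated allows $\card J\geq 2$. So I would first reduce to $\card J=2$: by the remark (following Proposition~\ref{rect_grid_la_prop}) that the structure of $\TKK(V)$ and $\instr(V)$ depends only on $\card K$ and $D$, and because a degenerate sum $\gamma=\alpha+\beta$ with $\alpha,\beta\in R_1\cup(-R_1)$ forces $\alpha,\beta$ (hence the whole computation of $\mathfrak{uce}(L)_\gamma$, which by Lemma~\ref{Lemma_2_for_A_3} is generated inside the subalgebra generated by the root spaces indexed by the four indices appearing in $\alpha,\beta$) to live in an $A_3$-subconfiguration, one passes to the corresponding rank-$3$ subsystem; the graded covering restricts, the root-space isomorphisms of Corollary~\ref{root_space_iso_lem_A_3} are preserved, and $\mathfrak{uce}(L)_\gamma$ is computed there. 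Once that reduction is in place, everything else is bookkeeping. I would present the proof as: (1) quote Proposition~\ref{torsion_bijection_prop} for (i); (2) reduce to $\card J=2$ via the structure-independence remark and the fact that degenerate sums of $A_3$ involve exactly four indices; (3) invoke Lemma~\ref{Lemma_1_for_A_3}(iii) to get $\pi$; (4) invoke Lemma~\ref{Lemma_2_for_A_3}(i)--(ii) to get $\rho$ with $\uce L_\gamma=[\uce L_\alpha,\uce L_\beta]$; (5) check $\pi$ and $\rho$ are mutually inverse on generators using (\ref{eqn_A_3_lem11})--(\ref{eqn_A_3_lem12}), concluding $\mathfrak{uce}(L)_\gamma\cong D_2$.
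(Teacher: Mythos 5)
Your proof is correct and follows the paper's own proof essentially verbatim: part (i) is Proposition~\ref{torsion_bijection_prop}, and for (ii) one combines the epimorphism $\pi$ from Lemma~\ref{Lemma_1_for_A_3} with the epimorphism $\rho$ from Lemma~\ref{Lemma_2_for_A_3} and checks on generators via (\ref{eqn_A_3_lem11})--(\ref{eqn_A_3_lem12}) that they are mutually inverse. The cardinality reduction in your step (2) is harmless but superfluous: for $\card{K}>4$ the root system $\dot A_K$ has no degenerate sums (Table~\ref{tab:DegSums}), so by Proposition~\ref{torsion_bijection_prop} the set $\supp_{\mathcal{Q}(R)}\mathfrak{uce}(L)\setminus R$ is empty and the theorem is vacuously true; no passage to an $A_3$-subsystem is needed, and the paper simply works in the $\card{K}=4$ setting that prevails throughout the subsection.
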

\begin{proof}

Putting the two lemmas together gives us a $k$-module epimorphism 
$\pi : \mathfrak{uce}(L)_\gamma \rightarrow D_2$ for every degenerate sum $\gamma$ (Lemma~\ref{Lemma_1_for_A_3}) and also a $k$-module epimorphism $\rho : D_2 \rightarrow \mathfrak{uce}(L)_\gamma$ (Lemma~\ref{Lemma_2_for_A_3}).  It remains to show that they are inverse to each other. Let $[x_{ij}(a), x_{kl}(b)] \in \mathfrak{uce}(L)_\gamma$, $\gamma = \epsilon_i - \epsilon_j + \epsilon_k - \epsilon_l$, a degenerate sum. Then $\rho \circ \pi([x_{ij}(a), x_{kl}(b)]) = \rho((\overline{ab})_\gamma) = z_\gamma(a, b) = [x_{ij}(1), x_{kl}(ab)] = [x_{ij}(a), x_{kl}(b)]$. If $\bar a \in D_2$ then $\pi \circ \rho (\bar a ) = \pi ([x_{ij}(1), x_{kl}(a)]) = 1 \bar a = \bar a.$ Thus
$$\mathfrak{uce}(L)_\gamma \cong D_2. $$ 
\end{proof}

\begin{cor} Let $L$ and $D$ be as above. Then: 
\begin{itemize}
\item[\rm(i)] If $1/2 \in k$ then $\supp_{\mathcal Q (R)}\mathfrak{uce}(L) = R$ and $\ker u \subset \mathfrak{uce}(L)_0.$
\item[\rm (ii)] If $D$ is commutative  then $\mathfrak{uce}(L)_\gamma \cong  D/2 D$ for every degenerate sum $\gamma.$
\end{itemize}
\end{cor}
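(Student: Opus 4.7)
Both parts are essentially immediate consequences of Theorem~\ref{deg_sum_theo_A_3} together with Proposition~\ref{torsion_bijection_prop}, so the plan is to unpack these and read off what happens when $1/2 \in k$ or when $D$ is commutative.

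First I would recall the setup: since $\card K \geq 4$ we are in the associative case, and by (\ref{A_3_in_description_recal;}) the kernel $\ker u$ of the universal central extension $u : \mathfrak{uce}(L) \to L$ decomposes with respect to the $\mathcal Q(R)$-grading as
\[
\ker u \;=\; \bigoplus_{\gamma \in \mathbb{DS}(A_3,2)} \mathfrak{uce}(L)_\gamma \;\oplus\; (\ker u)_0,
\]
where the first sum is empty unless $\card K = 4$. Moreover Proposition~\ref{torsion_bijection_prop} tells us that for $\gamma \in R^\times$ the restriction $u : \mathfrak{uce}(L)_\gamma \to L_\gamma$ is a bijection, so in particular $\supp_{\mathcal Q(R)} \mathfrak{uce}(L) \subset R \cup \mathbb{DS}(A_3,2)$ (with equality on $R$ by perfectness of $L$).

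For (i), assume $1/2 \in k$. If $\card K > 4$ there is nothing new to show since $\mathbb{DS}(A_3,2) = \emptyset$ in that case and $\ker u = (\ker u)_0$ is already part of (\ref{A_3_in_description_recal;}). If $\card K = 4$, then by Theorem~\ref{deg_sum_theo_A_3}(ii) any degenerate sum component satisfies $\mathfrak{uce}(L)_\gamma \cong D_2 = D/(2D, [D,D])$. Since $1/2 \in k$, we have $2D = D$ and hence $D_2 = 0$, so $\mathfrak{uce}(L)_\gamma = 0$ for every $\gamma \in \mathbb{DS}(A_3,2)$. This gives $\supp_{\mathcal Q(R)} \mathfrak{uce}(L) = R$, and combined with the bijectivity from Proposition~\ref{torsion_bijection_prop} on each non-zero root space, we conclude $\ker u \subset \mathfrak{uce}(L)_0$.

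For (ii), assume $D$ is commutative, so that $[D,D] = 0$. Then $D_2 = D/(2D, [D,D]) = D/2D$, and Theorem~\ref{deg_sum_theo_A_3}(ii) gives directly
\[
\mathfrak{uce}(L)_\gamma \;\cong\; D_2 \;=\; D/2D
\]
for every degenerate sum $\gamma$. There is no real obstacle here: the whole corollary is a specialization of the already-proved Theorem~\ref{deg_sum_theo_A_3} to two standard hypotheses on the coordinate algebra, and the only thing to verify is the quotient $D_2$ in each case.
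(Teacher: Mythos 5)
Your proposal is correct and follows the same route as the paper, which proves this corollary precisely by citing the isomorphism $\mathfrak{uce}(L)_\gamma \cong D_2$ from Theorem~\ref{deg_sum_theo_A_3} together with Proposition~\ref{torsion_bijection_prop}; your unpacking of $D_2 = D/(2D,[D,D])$ in the two special cases ($2D = D$ when $1/2 \in k$, and $[D,D]=0$ when $D$ is commutative) is exactly the intended argument.
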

\begin{proof}This is an easy consequence of the fact that $\mathfrak{uce}(L)_\gamma \cong D_2 $ and Proposition~\ref{torsion_bijection_prop}.
\end{proof}

\subsection{The kernel of $\uider(V) \rightarrow \instr(V)$}
By Lemma~\ref{identies_h_lem}, the Lie algebra $\uider(V)_0$ has the following decomposition
$$\uider(V)_0 = H(D,D) \oplus  \sum_{j \in J}(h_j(D)) $$
where $\ud(H(a, b))$ and $\ud(h_j(c))$ are given by Lemma~\ref{uider_to_Liemult_lem}. Recall also that
$\ker \ud = \ker \ud \cap \uider(V)_0 = (\ker u)_0$
where $u  : \mathfrak{uce}(L) \rightarrow L$ is the $\mathcal Q(R)$-graded central extension and $(\ker u)_0 = \ker u \cap \mathfrak{uce}(L)_0$ where the grading is the $Q(R)$-grading.

\begin{prop} \label{A_n_assoc_coord_uni_centre_prop}
If $K$ is a finite set then
\begin{eqnarray*}
 &\sum_{k \in J}h_{k}(a_{k}) + \sum_i H(a_i, b_i) \in \ker \ud & \\
 &\iff& \\
  & \exists A \in \mathrm{Cent}(D), \quad a_k = A \forall \; k \in J, \quad \card K \cdot A + \sum [a_i, b_i] = 0.&
  \end{eqnarray*}

  If $K$ is infinite then 
  \begin{eqnarray*}
 &\sum_{k \in J}h_{k}(a_{k}) + \sum_i H(a_i, b_i) \in \ker \ud & \\
 &\iff& \\
  & a_k = 0 \; \forall k \in J, \quad  \sum [a_i, b_i] = 0.&
  \end{eqnarray*}

\end{prop}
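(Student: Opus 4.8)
The plan is to compute the kernel of $\ud:\uider(V)_0\to\instr(V)_0$ directly using the explicit action formulas from Lemma~\ref{uider_to_Liemult_lem} (equivalently, the table following Proposition~\ref{A_2_uider_kernel}, generalized to arbitrary finite or infinite $J$). Since the structure of $L=\TKK(V)$ only depends on $\card K$ and on $D$, and since by Lemma~\ref{identies_h_lem} the module $\uider(V)_0$ is spanned by $H(a,b)$, $a,b\in D$, together with $h_k(c)$, $k\in J$, $c\in D$, a general element of $\uider(V)_0$ can be written as $Z=\sum_{k\in J}h_k(a_k)+\sum_i H(a_i,b_i)$ with all but finitely many $a_k$ equal to zero. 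I would then write down the conditions for $\ud(Z)$ to annihilate each homogeneous space $E_{1j}D$, $E_{j1}D$ ($j\in J$), $E_{m n}D$ ($m\neq n$ in $J$), and use that these spaces generate $\TKK(V)$ (Proposition~\ref{rect_grid_la_prop} and Lemma~\ref{perfect_TKK_lem}); so $Z\in\ker\ud$ iff $\ud(Z)$ kills all of them.

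First I would analyze the action of $\ud(Z)$ on $E_{mn}D$ for $m,n\in J$, $m\neq n$. By Lemma~\ref{uider_to_Liemult_lem}, $\ud(h_k(a))$ acts on $E_{mn}c$ as $-L_a$ if $k=m$, as $R_a$ if $k=n$, and as $0$ otherwise, while $\ud(H(a_i,b_i))$ acts as $-[L_{a_i},R_{b_i}]$. Requiring the total to vanish on $E_{mn}D$ for every ordered pair $m\neq n$ forces, for each such pair, $L_{a_m}-R_{a_n}+\sum_i[L_{a_i},R_{b_i}]=0$ (as operators on $D$, after applying to $1$ this gives $a_m=a_n+(\text{element coming from }\sum[a_i,b_i]\text{-type terms}))$ — in fact evaluating at $1$ yields $a_m-a_n=0$ if $\sum[L_{a_i},R_{b_i}]1=\sum[a_i,b_i]$; care is needed here with $D$ possibly noncommutative but the bracket terms evaluated at $1$ give $\sum[a_i,b_i]$, and subtracting two such relations for pairs $(m,n)$ and $(m',n)$ gives $L_{a_m}=L_{a_{m'}}$, hence $a_m=a_{m'}$). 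Thus all $a_k$, $k\in J$, are equal to a common element $A$. Next, acting on $E_{1j}D$ and $E_{j1}D$ and using the formulas $\ud(h_j)(a)E_{1j}(c)=E_{1j}((L_a+R_a)c)$, $\ud(h_j)(a)E_{1i}(c)=E_{1j}(L_a c)$ for $i\neq j$, together with $\ud(H(a,b))E_{1j}(c)=-E_{1j}((L_{[a,b]}-[L_a,R_b])c)$, and similarly on $E_{j1}D$, and combining with the first batch of equations, I would deduce (exactly as in Proposition~\ref{A_2_uider_kernel}) that $A\in\mathrm{Cent}(D)$ and that $(\card K)\,A+\sum_i[a_i,b_i]=0$ when $J$ is finite (note $\card K=\card J+1$, accounting for the index $1$). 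When $J$ is infinite only finitely many $a_k$ are nonzero, so $a_k=A$ for all $k$ forces $A=0$, whence the condition reduces to $\sum_i[a_i,b_i]=0$ and all $a_k=0$.

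The two directions are: the necessity follows from the linear-algebra argument above; the sufficiency follows by checking (as in the last paragraph of the proof of Proposition~\ref{A_2_uider_kernel}, passing to the opposite algebra for the $E_{j1}D$ part) that under the stated conditions $\ud(Z)$ annihilates all generating homogeneous spaces, hence $Z\in\ker\ud\subset\uider(V)_0$ by Lemma~\ref{central_ext_lem} (recalling $\ker\ud=(\ker u)_0$ since the covering grid is collinear, \ref{deg_sum_in_defect_coll}). The main obstacle I anticipate is bookkeeping the finitely-vs-infinitely-many-indices distinction cleanly and handling the noncommutativity of $D$ when evaluating operator identities at $1$ — in particular making sure that subtracting the equations for distinct index pairs isolates $a_m=a_{m'}$ without circularity, and that the coefficient in front of $A$ comes out as $\card K$ (not $\card J$) because the repeated index $1$ also contributes a copy of $L_A-R_A$ through the $H$- and $h$-relations when $\ud(Z)$ acts on $E_{1j}D$. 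Once that coefficient is pinned down, the result follows; this also recovers Corollary~\ref{A_2_kernel_ud_0_cor}(iv) in the case $\card K=3$.
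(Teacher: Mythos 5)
Your proposal follows essentially the same strategy as the paper: act by $\ud(Z)$ on the homogeneous spaces, specialize at $c=1$ to extract linear constraints, and use faithfulness of $\instr(V)$ on the Jordan pair to identify the kernel. Two small remarks. First, the extra computations on the spaces $E_{mn}D$ with $m,n\in J$ are redundant: since $\instr(V)$ acts faithfully on $V^\pm=\bigoplus_j(E_{1j}D,E_{j1}D)$, the action on $E_{1n}D$ alone already determines $\ker\ud$, and the paper indeed derives $a_m=a_n$ just by putting $c=1$ in that single system rather than invoking $E_{mn}D$. Second, the asserted identity $\sum[L_{a_i},R_{b_i}]\cdot 1=\sum[a_i,b_i]$ is false --- one has $[L_a,R_b]1=ab-ab=0$, and for associative $D$ (the present setting) the operator $[L_a,R_b]$ vanishes identically --- but your subtraction of the equations for the pairs $(m,n)$ and $(m',n)$ sidesteps this and still yields $a_m=a_{m'}$, so the conclusion stands. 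The bookkeeping giving coefficient $\card K=\card J+1$ (one extra $A$ coming from the $R_{a_n}$ term in $h_n(a_n)$ acting on $E_{1n}D$) matches the paper, as does the dichotomy finite/infinite.
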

\begin{proof} Let $x = E_{1n} c \in V^{\sigma}_\gamma$, $\gamma = \epsilon_1 - \epsilon_n \in R_1$ and $X \in \uider(V)_0$. We have to determine when the image of $X$ under $\ud$ annihilates all such $x$, since $\ker \ud = Z(\uTKK(V)). $ There is no restriction in assuming that $\sigma = +,$ since the calculations are ``symmetric'' in $\sigma.$ Recall that by
Lemma~\ref{uider_to_Liemult_lem}

\begin{eqnarray*}
[h_{\alpha_k}(a_{k}) ,E_{1n} c] &=& \sum_{k \in J}h_{\alpha_k}(a_{k}).e_\gamma c  \\ &=& E_{1n} \left( \left (\sum_{k \in J}a_k  \right) c + c a_n \right)
 \end{eqnarray*}
where the coefficient of $E_{1n}$ equals
$$C_{n}= \left( \sum_{k \in J}a_k  \right) c + c a_n$$ 
The action of $H(a, b)$ is less complicated: 
$$[H(a, b), x]  =E_{1n} ([a, b]c).$$
Since $\ud(X)$ must annihilate all $E_{1,n}c$, regardless of the choice of $c,$ this leads to the following system of linear equations: For all $n \in J$ and $c \in D$
\begin{eqnarray*}
0 &=& \sum_{k \in J}a_k  c + c a_n + \sum[a_i, b_i]c . 
\end{eqnarray*}
First setting $c = 1$, yields that in particular $\sum_{j \in J} a_k + \sum[a_, b_i] = - a_n$ for all $n \in J.$ Thus there is $A \in D$ such that $A = a_n$ for all $n \in J.$  At this point we can conclude that $A = 0$ for $\card J = \infty.$ The linear systems can be re-written as
\begin{eqnarray*}
0 &=& \sum_{k \in J}A  c + c A + \sum[a_i, b_i]c . 
\end{eqnarray*}
 Then $R_A = L_{-\sum_{k \in J}A  c - \sum[a_i, b_i]},$ and since a right multiplication $L_B$ agrees with a left multiplication  $R_A$ if  and only if $A = B \in \mathrm{Cent}(A),$ this yields $A \in \mathrm{Cent}(A).$ Thus
 \begin{eqnarray*}
0 &=& \sum_{k \in J}A  c + c A + \sum[a_i, b_i]c  \\
& \iff & \\
0 &=& \sum_{k \in K}A   + \sum[a_i, b_i].  
\end{eqnarray*}
Therefore if, $\card K < \infty$, 
$\sum[a_i, b_i]  = - \card K  A$  and $A$ is arbitrary. If $\card K$ is infinite, then  $(\card K - 1)$ is infinite. But then it follows that $A = 0,$ since otherwise the sums are not well-defined and thus $a_{k'} = 0$ for all $k' \in K'$ and $ 0  = \sum[a_i, b_i]$ is necessary for every element in the kernel of $\ud.$\\ 
It remains to be checked that those conditions are sufficient. The only operation applied to the linear system which was not a priori an equivalence transformation was setting $c$ equal to $1$. Going back we see that, if  $a_n$ had been central, this would have yielded an equivalent system. However, those elements are indeed central (even zero if $\card K = \infty$), thus the condition is also sufficient. 
 \end{proof} 

We will now look at the connection between the Lie algebras $\uTKK(V)$, $\mathrm{TKK}(V)$ and the matrix Lie algebra $\mathfrak{sl}_{K}(D).$ 
\begin{defi} Let $K$ be a set. The special linear Lie algebra  $\mathfrak{sl}_K(D)$ with coefficients in $D$ of size $\card K$ is the subalgebra of  $\mathrm{gl}_K(D)$ which is generated by
$$\{ E_{ij}a : i \neq j \in K, a \in D\}.$$
It is well-known (see for example \cite{KasLo} for the case $\card K < \infty$), that $L = \mathfrak{sl}_K(D)$ is $\dot A_K$ graded with root spaces
\begin{eqnarray*}
L_\alpha &=& E_{ij}D, \alpha = \epsilon_i - \epsilon_j \\
L_0 &=&  \left \{\sum E_{ii} a_i : \sum a_i \in [D, D] \right \}. 
\end{eqnarray*} 
\label{special_lin_alg_ass_Coord_defi}
\end{defi}

\begin{lem} There are central extensions $\hat f : \uTKK(V) \rightarrow \mathfrak{sl}_K(D)$ and $\hat g: \mathfrak{sl}_K(D) \rightarrow \TKK(V)$ such that $ f \circ d = \hat \ud$ (see Definition~\ref{universal_TKK_ce_def}). 
\end{lem}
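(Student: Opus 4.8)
The statement to prove is: there exist central extensions $\hat f : \uTKK(V) \to \mathfrak{sl}_K(D)$ and $\hat g : \mathfrak{sl}_K(D) \to \TKK(V)$ with $\hat g \circ \hat f = \hat\ud$. The natural strategy is to recognize $\mathfrak{sl}_K(D)$ as a $5$-graded (here $3$-graded) Lie algebra of Jordan type whose associated Jordan pair is exactly $V = \mathbf{Mat}(I,J,D)$, and then invoke the universal property established in Corollary~\ref{uTKK_universal_prop_JP} (and, for the Jordan-Kantor version, Corollary~\ref{uTKK_universal_prop}). More precisely, I would first put the $3$-grading on $L := \mathfrak{sl}_K(D)$ as in Definition~\ref{special_lin_alg_ass_Coord_defi}, with $L^{\pm 1} = \bigoplus_{\alpha \in R_{\pm 1}} L_\alpha$ and $L^0 = \bigoplus_{\alpha \in R_0} L_\alpha \oplus (\mathfrak{sl}_K(D))_0$ relative to the collinear $3$-grading $R_1 = \{\epsilon_1 - \epsilon_j : j \in J\}$ on $\dot A_K$. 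I then have to check that $L$ is of Jordan type in the sense of Definition~\ref{jordan_type_Defi}: that the pair $(L^1, L^{-1})$ carries a Jordan pair structure with $D^\sigma(x,y) = \ad[x,y]|_{L^\sigma}$ and $L^0 = [L^1, L^{-1}]$, and that this Jordan pair is isomorphic to $V$.

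The identification $(L^1, L^{-1}) \cong V$ is the heart of the matter. Here $L^1 = \bigoplus_{j \in J} E_{1j}D$ and $L^{-1} = \bigoplus_{j \in J} E_{j1}D$, and the quadratic operator on $L^1$ is $Q_x y = -\tfrac12(\ad x)^2 y$ when $1/2 \in k$; in general one uses that $\mathfrak{sl}_K(D) \subset \mathrm{gl}_K(D)$ so that for $x = \sum E_{1j}a_j \in L^1$ and $y = \sum E_{j1}b_j \in L^{-1}$ the triple bracket $[[x,y],x]$ computes, by matrix multiplication, to the matrix-Jordan-pair triple product $\{xyx\} = x(yx) + x(yx)$ reorganized as $xyx$ on $L^1$. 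I would verify directly, using the explicit matrix multiplication in $\mathrm{gl}_K(D)$, that $[E_{1i}a, E_{j1}b] = \delta_{ij} E_{11}(ab) - E_{ji}(ba)$ landing in the correct graded piece, and that iterated brackets reproduce $Q_x y = xyx$ and $\{xyu\}$ exactly as in Definition~\ref{rect_defi}. Also $L^0 = [L^1, L^{-1}]$ holds because $\mathfrak{sl}_K(D)$ is generated by its off-diagonal entries $E_{ij}a$, $i \neq j$, hence is perfect and in particular $(\mathfrak{sl}_K(D))_0 = \sum_{j}[E_{1j}D, E_{j1}D]$; more carefully, one notes $[E_{ij}a, E_{ji}b] = E_{ii}(ab) - E_{jj}(ba)$ whose trace $ab - ba \in [D,D]$, and these span $L_0$ by the defining description of $L_0$ in Definition~\ref{special_lin_alg_ass_Coord_defi}. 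Since $D$ is associative ($\card K \geq 4$), $V$ is an associative matrix Jordan pair, so no Kantor part appears and the $3$-graded picture suffices.

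Once $L = \mathfrak{sl}_K(D)$ is known to be of Jordan type with $V_L = V$, I apply Corollary~\ref{uTKK_universal_prop_JP}(i): since $L$ is $3$-graded of Jordan type with $V_L = (L_1, L_{-1}) = V$, there are uniquely defined graded central extensions $\hat f : \uTKK(V) \to L$ and $\hat g : L \to \TKK(V)$ with $\hat g \circ \hat f = \hat\ud$ (the notation $\hat\ud$ being exactly the map $\hat\ud : \uTKK(V) \to \TKK(V)$ of Definition~\ref{universal_TKK_ce_def}, whose kernel is $\HF(V)$). This is precisely the assertion of the lemma, with $\hat f \circ \hat g$ replaced by the compositional identity $\hat g \circ \hat f = \hat\ud$ — the typo ``$f \circ d = \hat\ud$'' in the statement should read $\hat g \circ \hat f = \hat\ud$.

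\textbf{Main obstacle.} The only genuine work is the verification that $\mathfrak{sl}_K(D)$ is of Jordan type with associated Jordan pair isomorphic to $V$, i.e.\ matching the matrix commutator structure on $\mathrm{gl}_K(D)$ against the abstract rectangular matrix Jordan pair $\mathbf{Mat}(I,J,D)$ and its triple products from Definition~\ref{rect_defi}. This is a finite, elementary but slightly fiddly calculation with matrix units and associativity of $D$; the delicate point is that one must exhibit the Jordan pair structure \emph{without} assuming $1/2 \in k$ (so the quadratic operator cannot simply be defined as $-\tfrac12(\ad x)^2$ but must be read off from the ambient associative algebra as $Q_x y = xyx$ and checked to satisfy the Jordan pair identities \eqref{JP1}--\eqref{JP3}, which is automatic for associative matrix pairs). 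After that, everything is a direct citation of Corollary~\ref{uTKK_universal_prop_JP}. Alternatively, one could observe that $\TKK(V)$ is $\dot A_K$-graded and centreless by Proposition~\ref{rect_grid_la_prop} together with $\mathfrak{sl}_K(D)$ being a $\mathcal Q(R)$-graded covering of it (via the quotient by the centre $Z(\mathfrak{sl}_K(D))$), and then use Proposition~\ref{graded_cover_neh} and Theorem~\ref{uni_central_cov_theo} to factor through $\uTKK(V)$; but the Jordan-type route via Corollary~\ref{uTKK_universal_prop_JP} is the cleanest and is the one I would write up.
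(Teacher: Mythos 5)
Your proposal is correct and takes essentially the same approach as the paper: both recognize $\mathfrak{sl}_K(D)$ as Jordan $3$-graded with $L^\pm = \sum_{j\in J} E_{1j}D$, $\sum_{j\in J} E_{j1}D$, verify that the associated Jordan pair is $V = \mathbf{Mat}(\{1\},J,D)$ with $\ad[x,y]|_{L^+\oplus L^-} = \delta(x,y)$, and then quote the universal property (Corollary~\ref{uTKK_universal_prop}/\ref{uTKK_universal_prop_JP}). You are somewhat more explicit than the paper about the matrix-unit computations and the perfectness of $\mathfrak{sl}_K(D)$, and you correctly flag that the statement's ``$f\circ d = \hat\ud$'' should read $\hat g\circ\hat f = \hat\ud$.
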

\begin{proof} Pick an element $1 \in K.$
 Set $L^+ = \sum_{j \in J}E_{1j}D$, $L^{-} =  \sum_{j \in J} E_{j1}D$ and $L^0 = \sum_{i \neq 1, j \neq 1}E_{ij}D.$ Then $L^0 = [L^+, L^-],$ since $E_{ij}D = [E_{i1}, E_{1j}D]$ for any $i,j \in J.$
We can define a quadratic operator on $(L^+, L^-)$ by the same formulas as in Definition~\ref{rect_defi} and then $\ad[x, y]|_{L^+ \oplus L^{-}} = \delta(x, y)$ for $(x,y) \in (L^+, L^-).$ Therefore, $\mathfrak{sl}_K(D)$ is Jordan $3$-graded and the associated Jordan pair is isomorphic to $V = \mathbf{Mat}(\{1\}, J).$ The remaining part of the statement follows from Proposition~\ref{uTKK_universal_prop}. 

\end{proof}


\begin{cor} Consider the diagram in the category of central extensions
$$\uTKK(V) \stackrel{\hat f}{\rightarrow} \mathfrak{sl}_K(D)  \stackrel{\hat g}{\rightarrow} \mathrm{TKK}(V) \rightarrow 0 .$$ 
For $X = \sum_{j \in J}h_j(A) + \sum_{i = 1}^n H(a_i, b_i) \in \uider(V) $ the following holds:
\begin{itemize}
\item[\rm(i)]
Let $K$ be a set of finite cardinality: 
\begin{enumerate}
 \item $X \in \ker(\hat f) \iff ( A = 0,  \sum[a_i, b_i] = 0)$  and $\ker \hat f \cong \mathrm{HC}_1(D)$.
 \item The centre of $\mathfrak{sl}_K(D)$ is $\{ A \cdot \mathrm{id}_{\card K}: A \in \mathrm{Cent}(D), \card K A  \in [D,D]\}.$ 
\end{enumerate}
\item[\rm(ii)]
\begin{enumerate}
\item 
If $\card K \geq \infty$ then 
$$X \in \ker(\hat f) \iff (A= 0, X = \sum H(a_i, b_i), \sum[a_i, b_i] = 0).$$ 
\item
The map $\hat g : \mathfrak{sl}_K(D) \rightarrow \mathrm{TKK}(V)$ is an isomorphism and $\ker (\hat f) = \mathrm{HC}_1(D)$.
\end{enumerate}

\end{itemize}
\end{cor}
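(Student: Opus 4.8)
The plan is to prove the corollary by combining the preceding lemma (the existence of the central extensions $\hat f : \uTKK(V) \to \mathfrak{sl}_K(D)$ and $\hat g : \mathfrak{sl}_K(D) \to \TKK(V)$ with $\hat g \circ \hat f = \hat\ud$) with the explicit computation of $\ker \ud$ in Proposition~\ref{A_n_assoc_coord_uni_centre_prop}. First I would recall that $\uTKK(V) = V \oplus \uider(V)$, that $\hat\ud$ restricted to $V$ is the identity, and that $\hat\ud = \ud$ on the zero component $\uider(V)$. Since $\hat g \circ \hat f = \hat\ud$ and $\hat g$ is a central extension (hence has kernel inside the zero component of $\mathfrak{sl}_K(D)$), one sees that $\ker \hat f \subseteq \ker \hat\ud = \ker \ud \subseteq \uider(V)_0 = \uider(V)$. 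So the problem reduces to identifying which elements $X = \sum_{j \in J} h_j(A_j) + \sum_i H(a_i,b_i)$ lie in $\ker \hat f$, and this is a subset of $\ker \ud$.

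For part (i)(1), with $\card K < \infty$: by Proposition~\ref{A_n_assoc_coord_uni_centre_prop}, membership in $\ker \ud$ already forces $A_j = A$ for all $j$, with $A \in \mathrm{Cent}(D)$ and $\card K \cdot A + \sum[a_i,b_i] = 0$. I would then compute $\hat f(X)$ directly as a matrix: $\hat f(h_j(A))$ and $\hat f(H(a_i,b_i))$ are explicit diagonal-plus-commutator matrices in $\mathfrak{sl}_K(D)$ (using the formulas for the action on root spaces from Lemma~\ref{uider_to_Liemult_lem}, transported through $\hat f$), and one checks that $\hat f(X) = 0$ forces the scalar matrix contribution to vanish, i.e. $A = 0$ and then $\sum[a_i,b_i] = 0$. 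Conversely if $A=0$ and $\sum[a_i,b_i]=0$ then $X$ is a sum of $H(a_i,b_i)$ with $\sum[a_i,b_i]=0$, which maps to zero. The identification $\ker \hat f \cong \mathrm{HC}_1(D)$ then follows because, by Corollary~\ref{JP_uider_cor} (applied to $V = \mathbf{Mat}(\{1\},J,D)$), $\ker \hat\ud = \mathrm{HC}(V)$, and after quotienting out the scalar part one is left exactly with $\{\sum a_i \tw b_i : \sum[a_i,b_i]=0\} = \mathrm{HC}_1(D)$ in the sense of Definition~\ref{cyc_hom_sans_one_half_defi}; I would spell out the isomorphism $\uider(V)_0 \to \langle D,D\rangle \oplus (\text{copies of }D)$ from Proposition~\ref{uider_alt_decomp_prop} and note that $\ker \hat f$ sits inside the $\langle D,D\rangle$ summand as the cycles. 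Part (i)(2) is the standard computation of $Z(\mathfrak{sl}_K(D))$: an element commuting with all $E_{ij}a$, $i\neq j$, must be a scalar matrix $A\cdot \mathrm{id}$ with $A \in \mathrm{Cent}(D)$, and for it to actually lie in $\mathfrak{sl}_K(D)$ one needs $\mathrm{trace} = \card K\cdot A \in [D,D]$; this is identical in spirit to the $3\times 3$ discussion given in the text before Corollary~\ref{A_2_kernel_ud_0_cor}.

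For part (ii), with $\card K$ infinite: now Proposition~\ref{A_n_assoc_coord_uni_centre_prop} already gives $A_j = 0$ for all $j$ and $\sum[a_i,b_i]=0$ for any element of $\ker \ud$, so in particular for $\ker \hat f$; conversely such $X = \sum H(a_i,b_i)$ with $\sum[a_i,b_i]=0$ is seen to map to zero under $\hat f$ by the same matrix computation as before (no scalar-matrix obstruction even needs to be removed). For (ii)(2) I would observe that when $\card K$ is infinite there is no scalar matrix in $\mathfrak{sl}_K(D)$ with all-but-finitely-many entries equal and nonzero, so $Z(\mathfrak{sl}_K(D)) = 0$; since $\hat g : \mathfrak{sl}_K(D) \to \TKK(V)$ is a central extension onto the centreless Lie algebra $\TKK(V)$ with trivial kernel, it is an isomorphism; and then $\hat f = \hat g^{-1}\circ \hat\ud$, so $\ker\hat f = \ker\hat\ud = \mathrm{HC}(V)$, which under the infinite-rank hypothesis reduces to $\mathrm{HC}_1(D) = \{\sum a_i \tw b_i : \sum[a_i,b_i]=0\}$ because the extra $D$-summands of $\uider(V)_0$ contribute nothing to the kernel.

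The main obstacle I expect is bookkeeping: keeping straight the three slightly different incarnations of the "kernel" ($\ker\hat f$, $\ker\hat\ud = \mathrm{HC}(V)$, and $\mathrm{HC}_1(D)$) and the identifications among them, together with making the matrix computation of $\hat f(h_j(A))$ and $\hat f(H(a,b))$ precise enough to pin down exactly the scalar-matrix obstruction in the finite case and confirm its absence in the infinite case. None of the individual steps is deep — the hard part is organizing the passage between $\uider(V)_0$, $\langle D, D\rangle$, and $\mathfrak{sl}_K(D)$ cleanly, and making sure the centre computation of $\mathfrak{sl}_K(D)$ correctly reflects the trace condition $\card K \cdot A \in [D,D]$ (and its vacuity/failure when $\card K$ is infinite).
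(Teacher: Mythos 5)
Your proposal is essentially the paper's argument: compute the explicit matrix images $\hat f(h_j(A))=E_{11}A-E_{jj}A$ and $\hat f(H(a,b))=E_{11}[a,b]$, use $\hat g\circ\hat f=\hat\ud$ to tie $\ker\hat f\subset\ker\hat\ud$ to the description of $\ker\ud$ from Proposition~\ref{A_n_assoc_coord_uni_centre_prop}, and read off the centre of $\mathfrak{sl}_K(D)$ and the kernel. The one genuine divergence is how you obtain $Z(\mathfrak{sl}_K(D))$: you do it directly (a scalar matrix $cI$ with $c\in\mathrm{Cent}(D)$, constrained by the trace condition $\card K\cdot c\in[D,D]$, and vacuously impossible for $\card K$ infinite), generalizing the $\mathfrak{sl}_3$ discussion already in the text, while the paper instead uses $Z(\mathfrak{sl}_K(D))=\ker\hat g=\hat f(\ker\hat\ud)$ and then evaluates $\hat f$ on a typical element of $\ker\hat\ud$, obtaining the scalar matrix $-A\,I_{\card K}$. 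Both work, and yours is arguably more transparent, though it quietly relies on the fact that $Z(\mathfrak{sl}_K(D))$ sits in the degree-zero component of the root-lattice grading (so that one can assume $z$ is diagonal before commuting against the $E_{ij}a$). Two places where your write-up is a little loose but the underlying idea is right: $\hat f(h_j(A))$ and $\hat f(H(a,b))$ are not ``diagonal-plus-commutator'' but simply diagonal matrices, obtained from $x\diamond y\mapsto[x,y]$; and ``quotienting out the scalar part'' should really be ``restricting to the $A=0$ slice of $\ker\hat\ud$,'' which, under the Proposition~\ref{uider_alt_decomp_prop} decomposition of $\uider(V)_0$ with $D$ associative, is precisely the cycles $\{\sum a_i\tw b_i:\sum[a_i,b_i]=0\}=\mathrm{HC}_1(D)$ inside the $\langle D,D\rangle$ summand.
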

\begin{proof}
 The image of $h_j(A)$ under $\hat f$ in $\mathfrak{sl}_K(D)$ is $E_{1, 1}A - E_{j,j}A$ and the image of $H(a, b)$  under $\hat f$ is $E_{1,1}[a,b].$ 
 Since $\uTKK(V) \stackrel{\hat f}{\rightarrow} \mathfrak{sl}_K(D)  \stackrel{\hat g}{\rightarrow} \mathrm{TKK}(V) \rightarrow 0$ is a  diagram in the category of central extensions and $\hat g \circ \hat f = \hat \ud,$  the centre of $\mathfrak{sl}_k(D)$ has the following descriptions
 $$Z(\mathfrak{sl}_K(D))  = \ker \hat g = f(\mathrm{HC}_1(V)). $$
 
 We have already obtained a description of $\mathrm{HC}_1(V)$ in Proposition~\ref{A_n_assoc_coord_uni_centre_prop}. 
 First assume that $\card K $ is finite. Then by Proposition~\ref{A_n_assoc_coord_uni_centre_prop}, an element $X$ in the centre of $\uTKK(V)$ is of the form $X = \sum h_j(A) + \sum H(a_i, b_i), A \in \mathrm{Cent}(D), \card K \cdot A  + \sum[a_i,  b_i] = 0$. The image of $X$ in $\mathfrak{sl}_K(D)$ is 
 $$ -\sum_{j \in J}E_{jj}A + E_{11}(\card J \cdot A + \sum[a_i, b_i]). $$
 Note that since $\card K  A = (\card J + 1)A = 0$ we have $\card J A = -A$. 
 By assumption, $ (\card J + 1) \cdot A + \sum[a_i, b_i] = \card K \cdot A + \sum[a_i, b_i] = 0$ and thus the $(1,1)$ entry of the matrix $\hat f(X)$ is equal to $-A.$ Therefore, an element in the centre of $\mathfrak{sl}_K(A)$ is of the form
  $$- A \cdot \mathrm{I}_{\card K}, \quad  A \in \mathrm{Cent} (A), \card K \cdot A \in[D,D] .$$
  where $\mathrm{I}_{\card K}$ is the identity matrix of size $\card K.$
  It follows that $f(X) = 0$ if and only if $A = 0$ and $X \in Z(\uTKK(V))$, i.e., $X = \sum H(a_i, b_i)$ with $\sum[a_i, b_i] = 0.$ \\
If $\card K  = \infty,$ then the elements in the centre of $\uTKK(V)$ are of the form 
$X  = \sum H(a_i, b_i), \sum[a_i, b_i] = 0.$ In this case $f(X) = E_{ii}(\sum [a_i, b_i]) = 0 ,$ hence $\mathfrak{sl}_k(D)$ is centreless. It follows that $\hat g:  \mathfrak{sl}_K(D) \rightarrow \mathrm{TKK}(V)$ is an isomorphism. \\
Combining  Proposition~\ref{uider_alt_decomp_prop}, (i) and (ii) of this corollary, we see that the kernel of $\hat f : \uTKK(V) \rightarrow \mathfrak{sl}_K(D)$ equals $\mathrm{HC}_1(D).$ 
\end{proof}

\begin{cor} \begin{itemize}
 \item[\rm (i)] If $\card K \geq 5,$ then $\hat f: \uTKK(V) \rightarrow \mathfrak{sl}_K(D)$ is the universal central extension and $\ker \hat f = \mathrm{HC}_1(D).$
\item[\rm (ii)]  If $\card K = 4$ and $v: \mathfrak{uce}(\mathfrak{sl}_4(D)) \rightarrow \mathfrak{sl}_4(D)$ is  a universal central extension, then 
$\ker v \cong \mathrm{HC}_1(D) \oplus (D_2)^6$ 
\end{itemize}
\end{cor}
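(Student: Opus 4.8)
The statement to prove has two parts. For (i), $\card K \ge 5$: the plan is to combine the structural facts already assembled. By the previous corollary, $\hat f : \uTKK(V) \to \mathfrak{sl}_K(D)$ is a central extension with kernel $\HF_1(D)$ (using that $\hat g$ is an isomorphism when $\card K$ is infinite, and for finite $\card K\ge 5$ that the kernel description $A=0$, $\sum[a_i,b_i]=0$ gives $\ker\hat f \cong \HF_1(D)$ via Proposition~\ref{uider_alt_decomp_prop}). To upgrade this to \emph{universal}, I would invoke Proposition~\ref{torsion_bijection_prop} and Corollary~\ref{ker_ud_supporr_cor}: when $\card K \ge 5$ there are no degenerate sums in $\dot A_K$, so the universal central extension $u : \mathfrak{uce}(L) \to L$ has $\ker u = (\ker u)_0$, and $\hat u : \mathfrak{uce}(L) \to \uTKK(V)$ is a covering that is an isomorphism on all root spaces and on the zero component (Corollary~\ref{JKP_central_ext_kernel_cor}), hence $\hat u$ is an isomorphism by Lemma~\ref{uni_cov}. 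Since $\uTKK(V) \cong \mathfrak{uce}(L)$ and $L = \TKK(V) \cong \mathfrak{sl}_K(D)$ (the isomorphism $\hat g$), the composite $\hat f$ is then identified with the universal central extension of $\mathfrak{sl}_K(D)$, with kernel $\HF_1(D)$. One should double-check that $\hat g : \mathfrak{sl}_K(D) \to \TKK(V)$ is an isomorphism for finite $\card K\ge 5$ as well; this follows because $\mathfrak{sl}_K(D)$ is then centreless (the centre being $\{A\cdot I_{\card K} : A\in\mathrm{Cent}(D),\ \card K\, A\in[D,D]\}$, and $\card K \ge 5 > 3$ — more precisely one argues directly as in the previous corollary that $\hat g$ has trivial kernel when there are no degenerate sums).

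For (ii), $\card K = 4$: here $\dot A_4 \cong A_3$, so $\mathbb{DS}(A_3, 2)$ is non-empty and consists of $6$ degenerate sums. The starting point is equation~(\ref{A_3_in_description_recal;}): for $u : \mathfrak{uce}(\mathfrak{sl}_4(D)) \to \mathfrak{sl}_4(D)$ (noting $\mathfrak{sl}_4(D)\cong\TKK(V)$ via $\hat g$, so $\mathfrak{uce}(\mathfrak{sl}_4(D)) \cong \mathfrak{uce}(L)$), we have
\[
\ker v \;=\; \bigoplus_{\gamma \in \mathbb{DS}(A_3,2)} \mathfrak{uce}(L)_\gamma \;\oplus\; (\ker v)_0 .
\]
By Theorem~\ref{deg_sum_theo_A_3}, each summand $\mathfrak{uce}(L)_\gamma \cong D_2$, and there are exactly $6$ degenerate sums, contributing $(D_2)^6$. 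For the zero component, $(\ker v)_0 \cong \ker(\ud : \uider(V) \to \instr(V)) = \HF(V)$ by Corollary~\ref{JKP_central_ext_kernel_cor}, and the computation of $\ker\hat f$ in the previous corollary identifies this kernel with $\HF_1(D)$: indeed $\hat u : \mathfrak{uce}(L) \to \uTKK(V)$ is an isomorphism on root spaces, so on the zero component $(\ker v)_0 \cong \ker\hat\ud|_0 = \ker\ud$, and the previous corollary's final line states $\ker\hat f = \HF_1(D)$, which is precisely $\ker\ud$ sitting inside $\uider(V)_0$ (using Proposition~\ref{uider_alt_decomp_prop}). Assembling these: $\ker v \cong \HF_1(D) \oplus (D_2)^6$.

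The main obstacle I anticipate is \emph{bookkeeping the identifications correctly} rather than any deep difficulty: one must carefully track the chain $\mathfrak{uce}(\mathfrak{sl}_4(D)) \cong \mathfrak{uce}(\TKK(V)) = \mathfrak{uce}(L) \cong \uTKK(V)$ (the last via $\hat u$, justified because $\uTKK(V)$ is a covering of $L$ and $\hat u$ restricts to an isomorphism on every root space by Proposition~\ref{iso_from_grid_lem_ce} and on the zero component by Corollary~\ref{JKP_central_ext_kernel_cor}, hence is an isomorphism by Lemma~\ref{uni_cov}), and then read off $\ker v$ as the kernel of the composite $\mathfrak{uce}(\mathfrak{sl}_4(D)) \to \mathfrak{sl}_4(D)$. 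A second point requiring care is that the six copies of $D_2$ in $(D_2)^6$ really are \emph{all} of the non-zero-degree part of $\ker v$ — this is exactly the content of equation~(\ref{A_3_in_description_recal;}) together with the fact (Lemma~\ref{root_deg_sums_intersection_lem}(i) and Table~\ref{tab:DegSums}) that $A_3$ has no degenerate sums of divisor $3$ and exactly these six of divisor $2$, each of which does lie in $\supp \mathfrak{uce}(L)$ because Lemma~\ref{Lemma_2_for_A_3}(ii) shows $\mathfrak{uce}(L)_\gamma = [\mathfrak{uce}(L)_\alpha, \mathfrak{uce}(L)_\beta]$ is non-zero whenever $\gamma=\alpha+\beta$ is a degenerate sum (at least when $D_2 \ne 0$; if $D_2 = 0$ the formula $(D_2)^6 = 0$ still holds trivially). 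No new calculation beyond what is already in the excerpt should be needed.
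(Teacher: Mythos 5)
Your overall strategy matches the paper's (no degenerate sums for $\card K \ge 5$ plus Theorem~\ref{deg_sum_theo_A_3} for the $\card K = 4$ case), and your final formulas are correct, but two of your intermediate claims are false, and both fail for the same reason: you implicitly assume $\hat g : \mathfrak{sl}_K(D) \to \TKK(V)$ is an isomorphism for finite $\card K$, which it need not be.

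\textbf{Gap in (i).} You assert that $\mathfrak{sl}_K(D)$ is centreless when $\card K \ge 5$. This is not true. By the preceding corollary the centre is $\{A\cdot I_{\card K} : A \in \mathrm{Cent}(D),\, \card K\, A \in [D,D]\}$, which is non-zero for example when $D = k = \mathbb Z/p\mathbb Z$ and $p \mid \card K$ (with $\card K = 5, D = \mathbb Z/5\mathbb Z$, say, \emph{every} scalar matrix is central). The absence of degenerate sums has nothing to do with centrelessness. Fortunately the conclusion does not need $\hat g$ to be injective: since $\mathfrak{sl}_K(D)$ is a perfect central cover of $L = \TKK(V)$, it is centrally isogeneous to $L$, so $\mathfrak{uce}(\mathfrak{sl}_K(D)) \cong \mathfrak{uce}(L)$. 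Your argument correctly shows $\mathfrak{uce}(L) \cong \uTKK(V)$ when $\card K \ge 5$, and that is enough to conclude that $\hat f$ is universal; you should drop the detour through "$\hat g$ is an isomorphism."

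\textbf{Gap in (ii).} You write $(\ker v)_0 \cong \ker(\ud_{JP}) = \HF(V)$ "by Corollary~\ref{JKP_central_ext_kernel_cor}" and then claim $\ker\hat f = \HF_1(D)$ "is precisely $\ker\ud$." Both steps are wrong. Corollary~\ref{JKP_central_ext_kernel_cor} applies to $u : \mathfrak{uce}(L) \to L$ with $L = \TKK(V)$, not to $v : \mathfrak{uce}(\mathfrak{sl}_4(D)) \to \mathfrak{sl}_4(D)$; since $\ker\hat g = Z(\mathfrak{sl}_4(D))$ sits in degree $0$, one has $(\ker u)_0/(\ker v)_0 \cong Z(\mathfrak{sl}_4(D))$, so $(\ker v)_0$ is a \emph{proper} submodule of $(\ker u)_0 \cong \HF(V)$ whenever the centre is non-trivial. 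Likewise $\HF_1(D) = \ker\hat f$ is in general a proper submodule of $\ker\hat\ud = \HF(V)$; they differ precisely by $Z(\mathfrak{sl}_4(D))$. Your two errors happen to cancel, giving the right answer, but the reasoning is not sound. The correct route is the paper's: the unique morphism $\mathfrak{uce}(\mathfrak{sl}_4(D)) \to \uTKK(V)$ over $\mathfrak{sl}_4(D)$ is bijective in $\mathbb Z$-degree $0$ (by the universal property of $\uTKK(V)$ and Corollary~\ref{JKP_central_ext_kernel_cor}), so $(\ker v)_0 \cong \ker\hat f$, and the previous corollary then directly gives $\ker\hat f \cong \HF_1(D)$ — no appeal to $\HF(V)$ is needed or correct. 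The treatment of the degenerate sum components $(\ker v)_\gamma \cong D_2$ via Theorem~\ref{deg_sum_theo_A_3} is fine.
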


\begin{proof}
If $\card K \geq 5,$ then $\dot A_{K}$ does not admit any degenerate sums and thus $\hat f: \uTKK(V) \rightarrow \mathfrak{sl}_K(D)$ is universal. By the previous proposition, $\ker \hat f = \mathrm{HC}_1(D).$\\
If $\card K = 4,$ then $\ker v = \sum_{\gamma \in \mathbb{DS}} (\ker v)_{\gamma} + (\ker v)_0.$ By the universal property of $\uTKK(V),$ $(\ker v)_0 \cong \mathrm{HC}_1(D)$ and by Theorem~\ref{deg_sum_theo_A_3}, $(\ker v)_\gamma \cong D_2$ for $\gamma \in \mathbb{DS}.$
\end{proof}
\begin{rem}
Part(i) was shown in \cite{KasLo} for the case where $\card K < \infty.$ 
Part (ii) is also in \cite{GS05} for $k$ a field.  
\end{rem}
\label{associative_coordinatization_sec}

\section{$C_n$- Coordinatization}
 \label{Section C_n_coordinatization}
 \subsection{Jordan algebra homology and Jordan pair homology}
 \label{Jordan_hom_Subsection}
 Let $V = (V^+, V^-)$ be a Jordan pair.  Recall that in Definition~\ref{uider_JkP_Def}  the Lie algebra $V  \diamond V = \uider(V)$ was defined as the quotient of $ V^+ \otimes  V^- $ modulo the submodule $I(P)$ generated by 
 \begin{eqnarray}
&x \otimes \{yxy\} - \{xyx\} \otimes y,& \label{HCPA11}\\ 
&\{xyu\} \otimes w - u \otimes \{yxw\} - x\otimes \{wuy\} + \{uwx\} \otimes y,& \label{HCPA12}  
\end{eqnarray}
where $x, u \in  V^+$ and $y, w \in V^-$ 
and the Lie bracket was given by $[x \diamond y, u \diamond v] = \delta(x,y)u \diamond  v + u \diamond \delta(x,y)v. $
We also defined in the same section 
$$ \ud_{JP} : \uider(V) \rightarrow \instr(V), \;x \diamond y \rightarrow \delta(x, y). $$

In the following we will study the central extensions of the usual models of $C_n$-graded Lie algebras when these are defined over a ring $k$. In order to guarantee that the arising Lie algebras are perfect and thus admit a universal central covering, we require that $$1/2 \in k.$$
  Let $J$ be a unital quadratic Jordan algebra with quadratic map $U$. Then the $k$-module $ \mathbf J = (J, J)$ is a Jordan pair with quadratic map defined by $Q_x = U_x$.   We will refer to $\mathbf J$  as the Jordan pair of $J.$ Note that this defines a functor from the category of Jordan algebras into the category of Jordan pairs. However, the functor ``forgets'' the unit of $J$  and going back we can only recover an isotope of $J.$ From now on \begin{center}
  $\mathbf J $ is the Jordan pair associated to a unital Jordan algebra $J.$
  \end{center}
  Since $1/2 \in k$, the Jordan product is completely determined by the linearization of the quadratic operator: By \cite[p.147] {TasteOfJA}
\begin{equation}\{abc\} = 2(a(bc) + c(ba) - b(ca))  \label{circ_prod_Defi}\end{equation} and specializing $b= 1$, this gives $\{a1c\} = 2ac.$ \\
  By Corollary~\ref{JP_uider_cor}, the module $\mathfrak{uider}(\mathbf J) = \mathbf J^+ \otimes \mathbf J^{-}/I(\mathbf J) $ is always  a Lie algebra  and $\ud: \mathfrak{uider}(\mathbf J) \rightarrow \mathfrak{ider}(\mathbf J)$ becomes a central extension by linearly extending $\ud: a \diamond b \mapsto D(a, b)$, as we also saw in  Lemma~\ref{instr_aleg_is Lie_lage_lem}. As noted in Remark~\ref{one_half_remark}, $I(\mathbf J)$ is already generated by 
  \begin{eqnarray}x \otimes \{yxy\} - \{xyx\} \otimes y \label{HCPA11onehalf} 
\end{eqnarray}
since $1/2 \in k.$
  
  \begin{defi}  \label{Jordan_homology_defi_one_half} Let $\mathbf C$ be a category of algebras  with homomorphisms as morphisms and let $D$ be an object of $\mathbf C.$
  Define  $D * D$ to be the quotient of $D \otimes D$ modulo the submodule $M$ generated by 
  \begin{eqnarray}
  &a \otimes (b  c) + b \otimes (c  a) + c \otimes (a  b),& \label{HCJ1}\\
  &a \otimes b + b \otimes a.& \label{HCJ2}
  \end{eqnarray}
  Denote $a * b = a\otimes b + M$, $a, b \in D.$ This obviously defines a functor from $C$ to the category of $k$-modules.  
   The following observation is very easy to make, but will be quite important:
   Linearization yields that (\ref{HCJ2}) is equivalent to $a * a$ in $D*D.$
   Let $c = b = 1$ in (\ref{HCJ1}), then $a * 1 + 1 * a + a * 1 = 0$ and (\ref{HCJ2}) permits to cancel $1 * a + a * 1 = 0,$ thus
   \begin{equation} a* 1 = 1 * a = 0, \quad\forall a \in D. \label{one_kills_allunider} \end{equation}
   Similarly, if $D$ is commutative,  setting $a = c$ in (\ref{HCJ1}) gives:
    \begin{equation} a^2 * b - 2a * (ba) = 0, \quad\forall a,b \in D. \label{unider_square_shift} \end{equation}
   
   \end{defi}
   
    \begin{rem} We obviously have that $D*D = \langle D, D \rangle$ as defined in Definition~\ref{cyc_hom_sans_one_half_defi}. But we would like to have a different notation for Jordan algebras at hand. 
    \end{rem}
    \begin{lem}  The linear map
 $ \ud_{JA} : (J * J) \rightarrow \mathrm{IDer}_k(J)$ given by 
 $\ud_{JA}: a * b \mapsto 2[L_a, L_b]$ or $(a * b).c  = 2(a(bc) - b(ac))$ is well-defined. 
 Moreover, the derivation algebra of $J$ acts on $J*J$ by 
 $\Delta.(a* b) = \Delta(a)  * b + a * \Delta(b).$ \label{well_Def_JA_der_act}
    \end{lem}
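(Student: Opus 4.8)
The statement to prove is Lemma~\ref{well_Def_JA_der_act}: the map $a*b \mapsto 2[L_a,L_b]$ is a well-defined linear map $J*J \to \mathrm{IDer}_k(J)$, and $\mathrm{Der}_k(J)$ acts on $J*J$ by the Leibniz rule. The approach is entirely computational and splits into three independent verifications: (1) the target actually lands in $\mathrm{IDer}_k(J)$; (2) the assignment kills the defining relations (\ref{HCJ1}) and (\ref{HCJ2}) of $M$, hence factors through $J*J$; (3) the derivation-algebra action is compatible with $M$.

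For step (1), the plan is to recall the standard fact for a (linear, unital) Jordan algebra with $1/2 \in k$ that $[L_a,L_b]$ is a derivation of $J$ — this is the classical identity that inner derivations of a Jordan algebra have the form $\sum [L_{a_i},L_{b_i}]$. Concretely one checks $[L_a,L_b](cd) = [L_a,L_b](c)\,d + c\,[L_a,L_b](d)$ using the linearized Jordan identity; since all of this is being done over $k$ with $1/2$ invertible, the quadratic and linear formulations agree via (\ref{circ_prod_Defi}), and the identity $D(a,b) = 2[L_a,L_b]$ on the associated Jordan pair $\mathbf J = (J,J)$ is the bridge to the general theory already set up in Corollary~\ref{JP_uider_cor}. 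So in fact step (1) can be deduced from the earlier material: $\mathfrak{ider}(\mathbf J)$ is spanned by the $\delta(a,b)$, whose restriction to one copy of $J$ is $D(a,b) = 2[L_a,L_b]$.

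For step (2), I would exhibit the two relations as specializations of the relation (\ref{HCPA11onehalf}) (equivalently (\ref{HCP11})) generating $I(\mathbf J)$, using $1/2 \in k$. First, (\ref{HCJ2}), $a*b + b*a \mapsto 0$, is just the antisymmetry $[L_a,L_b] = -[L_b,L_a]$, so it is immediate. The relation (\ref{HCJ1}) is the subtle one; here the plan is to linearize the relation (\ref{HCPA11onehalf}) $x\otimes\{yxy\} - \{xyx\}\otimes y$ appropriately and set suitable arguments to $1$, exploiting $\{a1c\} = 2ac$. The cleanest route: note $\langle D,D\rangle = D*D$ (Remark after Definition~\ref{Jordan_homology_defi_one_half}), and that the relation $a\otimes(bc) + b\otimes(ca) + c\otimes(ab)$ is, under $\ud_{JA}$, the statement $[L_a,L_{bc}] + [L_b,L_{ca}] + [L_c,L_{ab}] = 0$ — a known identity in any Jordan algebra with $1/2\in k$ (it is the Jordan-algebra analogue of the alternative-algebra identity $[ab,c]+[bc,a]+[ca,b] = 3(a,b,c)$ used in Corollary~\ref{assoc_coord_A_2_cor}; for Jordan algebras the associator term vanishes because of commutativity combined with the Jordan identity). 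I would either cite this or derive it from the linearized Jordan identity in two or three lines. Then $\ud_{JA}$ is well-defined, and the explicit formula $(a*b).c = 2(a(bc) - b(ac))$ follows by expanding $2[L_a,L_b]c$.

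For step (3), the plan is: since $\mathrm{Der}_k(J)$ acts on $J\otimes J$ by $\Delta(a\otimes b) = \Delta(a)\otimes b + a\otimes\Delta(b)$, it suffices to check this action preserves the submodule $M$, i.e. sends each generator of the form (\ref{HCJ1}) or (\ref{HCJ2}) back into $M$. For (\ref{HCJ2}) this is trivial by symmetry of the formula. For (\ref{HCJ1}), apply $\Delta$ and use that $\Delta$ is a derivation, so $\Delta(bc) = \Delta(b)c + b\Delta(c)$ etc.; after distributing, the resulting six-term sum regroups as a sum of three generators of type (\ref{HCJ1}) (with $\Delta$ applied to one of the three slots in each), hence lies in $M$. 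The main obstacle is purely bookkeeping: making sure the linearization of (\ref{HCPA11onehalf}) and the substitution of $1$'s reproduces exactly (\ref{HCJ1}) with the right coefficients, and confirming the ancillary Jordan-algebra identity $\sum_{\mathrm{cyc}} [L_a, L_{bc}] = 0$; neither is deep, but both require care with the factor $2$ coming from $\{a1c\} = 2ac$ and the hypothesis $1/2\in k$.
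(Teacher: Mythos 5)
Your plan matches the paper's proof essentially step for step: the paper likewise deduces well-definedness from the operator identity $[L_a,L_{bc}]+[L_b,L_{ca}]+[L_c,L_{ab}]=0$ (citing McCrimmon rather than rederiving it), and establishes the $\mathrm{Der}(J)$-action by applying the Leibniz rule to a generator of type (\ref{HCJ1}) and regrouping the nine resulting terms into a sum of three cyclic generators. One small caution: your parenthetical that "for Jordan algebras the associator term vanishes because of commutativity combined with the Jordan identity" is a red herring — the alternative identity $[ab,c]+[bc,a]+[ca,b]=3(a,b,c)$ is an element-level statement and is trivially $0=0$ in any commutative algebra, whereas the Jordan identity you need lives at the operator level on $\End_k(J)$ and has nothing to do with associators vanishing (Jordan algebras are not associative); since you hedge by offering to cite or derive it, this does not damage the proof, but the analogy as stated is incorrect.
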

    \begin{proof} 
    The map $\ud : J \otimes J \rightarrow \mathrm{IDer}(J),$ sending $a \otimes b$ to $2[L_a, L_b]$ is well-defined. Since $\ud(a \otimes a) = 0,$ well-definedness of $\mathfrak{ud}_{JA}$ will follow from $\ud(a \otimes bc+ b \otimes ca + c \otimes ab) = 0,$ i.e, $[L_a, L_{bc} ] + [L_b, L_{ca}] + [L_c, L_{ab}] =0,$ but this is a well-known identity for Jordan algebras (e.g. view \cite[ (JAX2''), p.148]{TasteOfJA} as linear transformation applied to the middle variable).\\
    Next we show that the derivation algebra acts on $J*J.$ Let $\Delta \in \mathrm{Der}(J)$ and consider the canonical action of $J \otimes J,$ then 
        $$\Delta.(a \otimes (b  c)) = \Delta(a) \otimes bc + a \otimes \Delta(bc) = \Delta(a) \otimes bc + a \otimes \Delta(b)c + a \otimes b\Delta(a). $$
    Cyclically permuting $a, b , c$ and summing up gives
   \begin{eqnarray*}
   && \Delta(a) \otimes bc + a \otimes \Delta(b)c + a \otimes b\Delta(c) \\
   &&+ \Delta(b) \otimes ca + b \otimes \Delta(c)a + b \otimes c\Delta(a)\\
   &&+ \Delta(c) \otimes ab + c \otimes \Delta(a)b + c \otimes a\Delta(b) \\
   &=& \sum_{cyc.}\left(\Delta(a) \otimes bc + b \otimes c\Delta(a) + c \otimes \Delta(a)b \right)  \in M.\\ 
   \end{eqnarray*}
   This proves that the action factors through the relation (\ref{HCJ1}). Lastly, observe that
   $\Delta(a \otimes a) = \Delta (a) \otimes a + a \otimes \Delta(a).$
    and this is equal to $(\Delta(a) + a) \otimes (\Delta(a) + a) - (\Delta(a) \otimes \Delta(a)) - a \otimes a$
    and therefore an element of $M.$
    It follows that $\mathrm{Der}(J)$ acts on $J*J. $ 
   \end{proof}
   \begin{rem} The last calculation in the proof works also for $J$ replaced by an arbitrary (linear) algebra.  
   \end{rem}
   \begin{defi}
   We define
 $$\mathrm{HC}(J) = \ker( \ud_{JA}) = \{\sum a_k * b_k |  \sum [L_{a_k}, L_{b_k} ] = 0\}.$$ 
  We will also refer to $J*J$ as $\mathfrak{uider}_{JA}(J).$ The justification for this will become apparent later on. 
   \end{defi}
   
    \begin{lem} Let $\mathbf J$ be the Jordan pair of a unital Jordan algebra $J.$ The map 
    \begin{eqnarray*}
    \alpha : \mathbf J \diamond  \mathbf J & \rightarrow & J * J\\
    a \diamond b &\mapsto& a * b
    \end{eqnarray*}
    is an epimorphism. A $k$-linear section of $\alpha$ is given by $\beta : a* b \rightarrow a \diamond b - 1 \diamond ab,$ so that we obtain a direct sum decomposition $\mathbf J \diamond \mathbf J = \beta (J*J) \oplus 1 \diamond \mathbf J.$ 
    \label{jordan_der_splitting_off_lemma}
    \end{lem}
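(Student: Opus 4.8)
The claim asserts that the natural map $\alpha : \mathbf{J}\diamond\mathbf{J}\to J*J$, $a\diamond b\mapsto a*b$, is a well-defined epimorphism, that $\beta : a*b\mapsto a\diamond b - 1\diamond ab$ is a well-defined $k$-linear section ($\alpha\circ\beta=\mathrm{id}$), and that consequently $\mathbf{J}\diamond\mathbf{J}=\beta(J*J)\oplus (1\diamond\mathbf{J})$. I will prove the three parts in that order, since the decomposition is a formal consequence of the first two together with the identification of the complementary summand.

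First, \emph{well-definedness of $\alpha$.} By the universal property of the tensor product there is a linear map $J^+\otimes J^-\to J*J$, $a\otimes b\mapsto a*b$; I must check it kills the defining relations $I(\mathbf{J})$ of $\mathbf{J}\diamond\mathbf{J}$. Since $1/2\in k$, by Remark~\ref{one_half_remark} it suffices to check the single generator $x\otimes\{yxy\}-\{xyx\}\otimes y$ (relation (\ref{HCPA11onehalf})). Using the identity $\{abc\}=2(a(bc)+c(ba)-b(ca))$ from (\ref{circ_prod_Defi}), I expand $\{xyx\}=2(2x(yx)-y(xx))=2(2x(xy)-x^2y)$ and similarly $\{yxy\}=2(2y(xy)-x y^2)$, reducing everything to a relation among elements of the commutative Jordan algebra $J$; then I apply (\ref{HCJ1}), (\ref{HCJ2}), (\ref{one_kills_allunider}), (\ref{unider_square_shift}) to show $x*\{yxy\}-\{xyx\}*y=0$ in $J*J$. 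This is the routine-calculation step and I would not grind through it in the plan. (It also follows more structurally from Lemma~\ref{well_Def_JA_der_act}: the relation is $v(x,y)(x\otimes y)=\delta(x,y)(x\otimes y)$ and $\mathrm{Der}(J)$ acts on $J*J$, so its image of $x\otimes y$ under $\delta(x,y)=2[L_x,L_y]$, i.e.\ $\ud_{JA}(x*y)$ applied appropriately, lands in $M$.) Surjectivity of $\alpha$ is immediate since the $a*b$ span $J*J$.

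Second, \emph{well-definedness of $\beta$ and $\alpha\circ\beta=\mathrm{id}$.} Again by the universal property I get a linear map $J\otimes J\to \mathbf{J}\diamond\mathbf{J}$, $a\otimes b\mapsto a\diamond b-1\diamond(ab)$, and I must check it kills the submodule $M$ generated by (\ref{HCJ1}) and (\ref{HCJ2}). For (\ref{HCJ2}): compute $a\diamond b-1\diamond ab+b\diamond a-1\diamond ba=(a\diamond b+b\diamond a)-2(1\diamond ab)$ using commutativity of $J$; now $a\diamond b+b\diamond a$ is the linearization of $a\diamond a$, which by (\ref{HCPA11onehalf}) with $x=y=a$ (or directly from relation (\ref{HCP12}) specialized) rewrites so that the total is zero — here I use the identity $a\diamond a = 1\diamond\{aa a\}/(\cdots)$ coming from (\ref{HCPA11onehalf}) and $\{a1a\}=2a^2$. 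For (\ref{HCJ1}): the sum $\sum_{\mathrm{cyc}}(a\diamond bc - 1\diamond a(bc))$ must be shown to vanish in $\mathbf{J}\diamond\mathbf{J}$; this is exactly the content of the Jordan-algebra identity $[L_a,L_{bc}]+[L_b,L_{ca}]+[L_c,L_{ab}]=0$ lifted through the defining relation (\ref{HCP12}) of $\mathbf{J}\diamond\mathbf{J}$, combined with $\{a1c\}=2ac$. Once $\beta$ is well-defined, $\alpha(\beta(a*b))=\alpha(a\diamond b)-\alpha(1\diamond ab)=a*b-1*(ab)=a*b$ since $1*(ab)=0$ by (\ref{one_kills_allunider}); hence $\alpha\circ\beta=\mathrm{id}$.

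Third, \emph{the direct sum decomposition.} Since $\alpha\circ\beta=\mathrm{id}$, $\beta$ is injective and $\mathbf{J}\diamond\mathbf{J}=\beta(J*J)\oplus\ker\alpha$. It remains to identify $\ker\alpha=1\diamond\mathbf{J}$. The inclusion $1\diamond\mathbf{J}\subseteq\ker\alpha$ holds since $\alpha(1\diamond b)=1*b=0$ by (\ref{one_kills_allunider}). For the reverse, given $\xi\in\ker\alpha$ write $\xi=\sum a_i\diamond b_i$; then $\beta\alpha(\xi)=0$ gives $\sum(a_i\diamond b_i-1\diamond a_ib_i)=0$, i.e.\ $\xi=1\diamond(\sum a_ib_i)\in 1\diamond\mathbf{J}$. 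Thus $\ker\alpha=1\diamond\mathbf{J}$ and $\mathbf{J}\diamond\mathbf{J}=\beta(J*J)\oplus(1\diamond\mathbf{J})$ as claimed. The main obstacle is purely bookkeeping: verifying that $\alpha$ and $\beta$ respect the respective defining relations, which requires carefully translating the Jordan-\emph{pair} triple-product identities into Jordan-\emph{algebra} bilinear-product identities via (\ref{circ_prod_Defi}) and the standard linearized Jordan identities — no conceptual difficulty, but one must be attentive to the factors of $2$ and to which specialization of each relation is used.
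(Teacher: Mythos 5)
Your main line of argument mirrors the paper's proof: pass to the tensor products, verify by direct computation (using (\ref{circ_prod_Defi}) and the four-variable relation (\ref{HCPA12})) that $\overline\alpha$ carries the defining relations of $\mathbf J\diamond\mathbf J$ into $M$ and that $\overline\beta$ carries the defining relations of $J*J$ into $I(\mathbf J)$, compose to get $\alpha\circ\beta=\mathrm{id}$, and finally identify $\ker\alpha=1\diamond\mathbf J$ via $\beta\circ\alpha$. That is precisely the structure of the paper's proof, and your treatment of the third step (the direct-sum decomposition) is correct.

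Two points deserve attention. First, the aside offering a ``more structural'' well-definedness argument for $\alpha$ does not work as written. The generator of $I(\mathbf J)$ is $\delta(x,y)(x\otimes y)$, but $\delta(x,y)$ is \emph{not} the derivation $2[L_x,L_y]$: on the Jordan pair $(J,J)$ it acts as $(D_{x,y},-D_{y,x})=\bigl(2[L_x,L_y]+2L_{xy},\ 2[L_x,L_y]-2L_{xy}\bigr)$, so $\delta(x,y)(x\otimes y)$ equals the derivation action of $2[L_x,L_y]$ on $x\otimes y$ \emph{plus} the extra term $2L_{xy}x\otimes y - x\otimes 2L_{xy}y$, and killing that extra term already requires (\ref{HCJ1}) and $xy*xy=0$. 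Moreover, even for the pure derivation piece, ``$\mathrm{Der}(J)$ acts on $J*J$'' only says $\Delta(M)\subseteq M$; it does not give $\Delta(x\otimes y)\in M$. What one actually needs is $\ud_{JA}(x*y).(x*y)=0$ in $J*J$, and that identity is established only in Corollary~\ref{ud_JA_action_ce_cor}, whose proof \emph{uses the section $\beta$ of the present lemma}, so invoking it here would be circular. Second, a small slip: to obtain $a\diamond a=1\diamond a^2$ (and hence, after linearizing, $a\diamond b+b\diamond a=2(1\diamond ab)$) the correct specialization of (\ref{HCPA11onehalf}) is $x=1$, $y=a$ (giving $1\otimes\{a1a\}-\{1a1\}\otimes a$, with $\{a1a\}=2a^2$ and $\{1a1\}=2a$), not $x=y=a$; your subsequent invocation of $\{a1a\}=2a^2$ shows you had the right computation in mind, but the written index choice would not produce the relation you need.
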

\begin{proof}
By the universal property of the tensor product, we may consider the maps 
\begin{eqnarray*}
\overline{\alpha} : J \otimes J \rightarrow J \otimes J\\
a \otimes b \mapsto a \otimes b
\end{eqnarray*}
and
\begin{eqnarray*}
\overline{\beta} : J \otimes J \rightarrow J \otimes J\\
a \otimes b \mapsto a \otimes b - 1 \otimes ab 
\end{eqnarray*}
We show that $\bar {\alpha}(N)\subset M.$ Of course, $\overline{\alpha}$ is just the identity on the tensor product. First use identity  (\ref{HCJ2}), then identity (\ref{unider_square_shift}) on both terms,  then again (\ref{HCJ2}) (this time adding a term $xy \otimes xy$),(\ref{HCJ2}), and identity (\ref{unider_square_shift}):
\begin{eqnarray*}
\overline{\alpha}(x\otimes \{yxy\} - \{xyx\}\otimes y) & \equiv & x \otimes \{yxy\} + y\otimes \{xyx\} \mod  M\\
& \equiv & 2 x\otimes (2 y(xy) - y^2x) + 2 y\otimes (2 x(yx) - x^2y)  \mod  M\\
&\equiv &- 2(x\otimes y^2x + y\otimes x^2y ) \\ &&+ 4(x \otimes y(xy) +  y \otimes x(yx))  \mod M\\
& \equiv &- 2(x\otimes y^2x + y\otimes x^2y )  + 4(x \otimes y(xy))\\ &&+  4(y \otimes xyx + xy \otimes xy) - 4(xy \otimes xy)  \mod  M	\\
  & \equiv  & -x^2 \otimes y^2 - y^2 \otimes x^2   \mod  M  \equiv 0 \mod M.
  \end{eqnarray*}
Because  of (\ref{HCPA11onehalf}), $\alpha$ is a well-defined homomorphism and onto.\\
We next show that $\beta$ is a well-defined homomorphism of $k$-modules, i.e., $\overline{\beta}$ factors through the submodule generated by $a \otimes a$  and (\ref{HCJ1}): 
\begin{eqnarray*}
\overline{\beta}(a \otimes a)
									&= & (1/2)(\{1a1\} \otimes a + a \otimes\{a11\})  \\
									& \equiv& a \otimes a + a \otimes a \mod N\\
									& \equiv & 0 \mod N,
\end{eqnarray*}
and
\begin{eqnarray*}
\overline{\beta}(ab \otimes c + bc \otimes a - b\otimes ac) & = & ab \otimes c + bc \otimes a -  b \otimes ac \\ &&- 1\otimes (ab)c -1\otimes (bc)a + 1 \otimes b(ac) \\
									& =  & (1/2)(\{ab1\} \otimes c + \{bc1\} \otimes a \\ &&- b \otimes \{ac1\} - 1 \otimes \{abc\}) \\
									& \equiv & 0   \mod N ,
\end{eqnarray*}
which follows from (\ref{HCPA12}) with $x= b$, $y = a$, $u = 1$ and $w= c.$
Hence $\beta$ is well-defined. 
Composing $\beta$ with $\alpha$ yields
$$\alpha(\beta(a * b)) = \alpha(a \diamond b - 1 \diamond ab) = a* b - 1 * ab = a*b.$$
Thus the map $\alpha \circ \beta$ is the identity on $J*J$ or, equivalently, $\beta$ is a section of the epimorphism $\alpha$ and
$$\mathbf J \diamond \mathbf J = \ker \alpha \oplus \beta (J *J). $$ 

It remains to show that $\ker \alpha = 1 \diamond \mathbf J.$ The inclusion $\supset$ follows from (\ref{one_kills_allunider}). For the other inclusion, suppose $x = \sum a_i \diamond b_i \in \ker \alpha.$ Then $0 = \alpha(x) = \sum_i a_i \star b_i,$ whence also $ 0 = \beta(\alpha(x)) = (\sum_i a_i \diamond b_i) - 1 \diamond (\sum_i a_ib_i)$, which proves $x \in 1 \diamond J.$
\end{proof}
\begin{cor} The module $J*J$ is a Lie algebra with bracket defined by 
$$[a* b, x*y] = D(a, b)x*y + x*D(a, b)y, $$ where $D(a, b) = 2[L_a, L_b],$
and the map 
\begin{eqnarray*}
\ud_{JA} : \uider_{JA}(J) & \rightarrow & \mathrm{IDer}(J)\\
a* b & \mapsto & D(a, b)
\end{eqnarray*} \label{ud_JA_action_ce_cor}
is a  central extension of Lie algebras. 
\end{cor}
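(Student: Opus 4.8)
The plan is to transport the Lie algebra structure and the central extension property from the already-established pair-theoretic object $\mathbf J \diamond \mathbf J = \uider(\mathbf J)$ to the quotient $J*J = \uider_{JA}(J)$, using the splitting obtained in Lemma~\ref{jordan_der_splitting_off_lemma}. Recall from Corollary~\ref{JP_uider_cor} that $\mathbf J \diamond \mathbf J$ is a Lie algebra with bracket $[a\diamond b, x\diamond y] = \delta(a,b)(x\diamond y)$ and that $\ud_{JP}\colon \mathbf J \diamond \mathbf J \to \mathfrak{ider}(\mathbf J)$ is a central extension with kernel $\mathrm{HC}(\mathbf J)$. The key structural input is that $\mathbf J \diamond \mathbf J = \ker\alpha \oplus \beta(J*J)$ with $\ker\alpha = 1\diamond\mathbf J$, and — crucially — that $1\diamond\mathbf J$ is an \emph{ideal} (in fact a central ideal, or at least an ideal) of $\mathbf J \diamond \mathbf J$. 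I would verify this: for $c\in J$ and a homogeneous pair $(a,b)$, $[a\diamond b, 1\diamond c] = \delta(a,b)(1\diamond c) = (\delta(a,b)1)\diamond c + 1\diamond(\delta(a,b)c) = D(a,b)1\diamond c + 1\diamond D(a,b)c$; since $D(a,b)1 = \{ab1\} - \cdots$ can be rewritten using $\{a1b\}=2ab$ and the Jordan identities so that $D(a,b)1 = [L_a,L_b]1\cdot(\text{const}) = 0$ (as $[L_a,L_b]$ is a derivation it kills the unit), this bracket lies again in $1\diamond\mathbf J$. Hence $1\diamond\mathbf J$ is an ideal, and the quotient $\mathbf J\diamond\mathbf J/(1\diamond\mathbf J)$ inherits a Lie algebra structure; under the isomorphism $\alpha$ (with inverse-up-to-the-ideal given by $\beta$), this transported bracket is exactly $[a*b,x*y] = D(a,b)(x*y)$, which establishes that $J*J$ is a Lie algebra and that $\alpha$ is a Lie algebra epimorphism.

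Next I would identify $\ud_{JA}$ with the composite of $\beta$ followed by $\ud_{JP}$ followed by the canonical identification $\mathfrak{ider}(\mathbf J) \cong \mathrm{IDer}(J)$ on the relevant piece — more precisely, $\ud_{JA}(a*b) = D(a,b) = 2[L_a,L_b]$ where $D(a,b)$ is read as an operator on $J$ via the Jordan pair structure $\mathbf J = (J,J)$, using $D(a,b)c = \{abc\}$ and the identity $\{abc\} = 2(a(bc)+c(ba)-b(ca))$ from (\ref{circ_prod_Defi}), which gives $D(a,b)c - D(b,a)c = 2(a(bc)-b(ac))\cdot 2 = \ldots$; I would just check directly that $D(a,b) - D(b,a)$ restricted appropriately equals $2[L_a,L_b]$ up to the sign convention of $\delta$. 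Well-definedness of $\ud_{JA}$ on $J*J$ is already Lemma~\ref{well_Def_JA_der_act}. The remaining point is that $\ud_{JA}$ is a \emph{central} extension of Lie algebras: I would invoke Lemma~\ref{Lie_alg_from_mod} exactly as in the proof of Lemma~\ref{structure_of_uider}/Corollary~\ref{JP_uider_cor}, checking (i) $\ud_{JA}(a*b).(a*b) = D(a,b)(a*b) = D(a,b)a * b + a * D(a,b)b = 0$ because $\{aba\}*b - \ldots$ vanishes by relation (\ref{HCPA11onehalf})/the defining relations of $J*J$, and (ii) $[\ud_{JA}(p),\ud_{JA}(q)] = \ud_{JA}(\ud_{JA}(p).q)$, which reduces to the operator identity $[D(a,b),D(x,y)] = D(D(a,b)x, y) + D(x, D(a,b)y)$ — this is the linearized Jordan pair identity (\ref{derid1}) (i.e. JP2), which holds for $\mathbf J$. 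Then surjectivity of $\ud_{JA}$ onto $\mathrm{IDer}(J)$ is immediate from the definition of $\mathrm{IDer}(J)$ as spanned by the $2[L_a,L_b]$, and $\ker\ud_{JA} = \mathrm{HC}(J)$ is the definition; so Lemma~\ref{Lie_alg_from_mod}(ii) applies and gives that $\ud_{JA}$ is a central extension.

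I expect the main obstacle to be the bookkeeping around sign and normalization conventions — reconciling $\delta(a,b) = (D_{a,b}, -D_{b,a})$ acting on the Jordan \emph{pair} $(J,J)$ with the single operator $2[L_a,L_b]$ acting on the Jordan \emph{algebra} $J$, and making sure the $\frac12$'s coming from $\{a1c\}=2ac$ land correctly — together with the verification that $1\diamond\mathbf J$ is genuinely an ideal (not just a submodule), since the whole transport of structure hinges on that. Once the ideal claim is in place, everything else is either a direct reference to Corollary~\ref{JP_uider_cor}/Lemma~\ref{Lie_alg_from_mod} or a short computation with the Jordan identities already quoted in the excerpt (in particular (\ref{circ_prod_Defi}), (\ref{HCPA11onehalf}), (\ref{HCPA12}), and the derivation-kills-unit fact). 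An alternative, slightly cleaner route would be to avoid the quotient language entirely and argue directly: use $\beta\colon J*J \to \mathbf J\diamond\mathbf J$ to define the bracket on $J*J$ by $[p,q] := \alpha([\beta(p),\beta(q)])$, check this is well-defined and Lie (using that $\alpha\circ\beta = \mathrm{id}$ and $\ker\alpha$ is an ideal so the choice of preimage is immaterial), and then observe $\ud_{JP}\circ\beta$ factors through the identification with $\mathrm{IDer}(J)$; I would probably present this direct version since it sidesteps proving centrality of the ideal and only needs that it is an ideal.
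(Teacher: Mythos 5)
Your main route does not work: $1\diamond\mathbf J$ is \emph{not} an ideal of $\mathbf J\diamond\mathbf J$. The computation you sketch confuses the Jordan \emph{pair} operator $D_{a,b}c = \{abc\}$ (which is what $\delta(a,b)$ acts by on the positive part) with the Jordan \emph{algebra} operator $D(a,b)=2[L_a,L_b]$ of the corollary's statement. The former does not kill the unit: since $\{ab1\} = 2(a(b\cdot 1) + 1\cdot(ba) - b(1\cdot a)) = 2ab$, one has $D_{a,b}1 = 2ab \neq 0$ in general, so $[a\diamond b, 1\diamond c] = (2ab)\diamond c - 1\diamond\{bac\}$, which is not in $1\diamond\mathbf J$. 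You can also see this structurally: $\ud_{JP}(1\diamond a) = 2L_a$, so $\ud_{JP}(1\diamond\mathbf J) = L_J$; since $\instr(\mathbf J) = L_J\oplus[L_J,L_J]$ is a direct sum and $[L_J,L_J]$ is nonzero, $L_J$ is not a subalgebra (let alone an ideal), so its preimage $1\diamond\mathbf J$ cannot be an ideal either. This defeats both of your quotient-style arguments (the ``ideal plus central'' route and the $\alpha([\beta(p),\beta(q)])$ route), since both require $\ker\alpha$ to be an ideal.

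Your third sketch --- invoke Lemma~\ref{Lie_alg_from_mod} directly for the $\mathrm{IDer}(J)$-action on $J*J$ --- is the approach the paper actually takes, but you underestimate the verification of condition (i). You assert that $\ud_{JA}(a*b).(a*b)=0$ ``because $\{aba\}*b - \cdots$ vanishes by relation (\ref{HCPA11onehalf})/the defining relations of $J*J$,'' but (\ref{HCPA11onehalf}) is a relation in $\mathbf J\diamond\mathbf J$, not in $J*J$, and the expression does not obviously vanish from (\ref{HCJ1}) and (\ref{HCJ2}) alone. The paper first rewrites $\ud(a*b)(a*b)$ as $\{aba\}*b - a*\{bab\} - 2(a^2b*b - a*b^2a)$ using $\{aba\} = D(a,b)a + 2a^2 b$, shows the last parenthesis vanishes by two applications of (\ref{HCJ1}) plus (\ref{HCJ2}), and then --- precisely because $J*J$ has no relation directly saying $\{aba\}*b = a*\{bab\}$ --- pushes what remains forward along the \emph{injective} section $\beta$ into $\mathbf J\diamond\mathbf J$, where it splits into a term killed by (\ref{HCPA11onehalf}) and a term in $1\diamond\mathbf J$ killed by the Jordan identity $\{aba\}b - a\{bab\} = 0$ (FFIIe with $z=1$). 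That chain of substitutions is the real content of the proof, and your proposal skips it.
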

\begin{proof}
By Lemma~\ref{well_Def_JA_der_act}, $\mathrm{IDer}(J)$ acts on $J*J$ by $D(a, b)(x*y) = D(a, b)x * y + x * D(a, b)y$ and $\ud_{JA} : J*J \rightarrow \mathrm{IDer}(J)$ is a well-defined $k$-module epimorphism.  
The product defined in the statement evidently fulfills that $\ud_{JA}(\ud_{JA}(a*b).(x*y)) = [D(a* b), D(x*y)].$ We calculate $\ud(a*b)(a*b) = \ud(a*b)a*b + a*\ud(a*b)b $ using that $\{aba\} = D(a,b)a  + 2 (aa)b.$ Thus
\begin{eqnarray*}
 \ud(a*b)(a*b) &=& \{aba\} * b - a * \{bab\}  - 2\left((a^2)b *b  - a*  (b^2)a \right).\\
 \end{eqnarray*}
 By (\ref{HCJ1}), 
 $(a^2)b *b  + b^2 * a^2 + (a^2)b *b  = 0$ and  $a*  (b^2)a + b^2 * a^2 + a * b^2a  = 0 . $
 Thus $-a^2 b * b = 1/2(a^2 * b^2) $ and  $b^2 a* a = 1/2(b^2 * a^2)$ and  $(a^2)b *b  - a*  (b^2)a = 0$ by $(\ref{HCJ2}).$ It follows that 
 $\ud(a*b)(a*b) = \{aba\} * b - a * \{bab\}.$
 Applying $\beta$ to this, which is an injective map yields
\begin{eqnarray*}
&&(\{aba\} \diamond b - a \diamond \{bab\} ) - 1 \diamond(\{aba\}b - a\{bab\} ).
\end{eqnarray*}
The first summand is $0$ by (\ref{HCPA11onehalf}) and the second one is $0$ by applying a well-known identity in Jordan algebras, see for example \cite[p.202, FFIIe]{TasteOfJA} with $z = 1.$ It follows that  $ \ud(a*b)(a*b) = 0$ and, linearizing this identity first in $a$ and then in $b$, gives us $\ud(a*b)(b*c) + \ud(b*c)(a*b) = 0$. Therefore $\ud(X).X= 0$ for all $X \in J*J$. Since $\ud_{JA}$ is surjective, it follows from Lemma~\ref{Lie_alg_from_mod} that $J*J$ is a  Lie algebra and $\ud_{JA}$ is a central extension. 
\end{proof}
\begin{prop}\label{one_half_jordan_hom_H_2_prop}If $L = \mathrm{TKK}(\mathbf J)$ then
$$\ker(\ud_{JP}) = \mathrm{HC}(J) = \ker{\ud_{JA}} = \ker (u ),$$
where $u : \mathfrak{uce}(L) \rightarrow L$ is the universal central extension. 
\end{prop}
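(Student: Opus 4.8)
The goal is to identify four subspaces: $\ker(\ud_{JP})\subset\uider(\mathbf J)$, $\mathrm{HC}(J)\subset J*J$, $\ker(\ud_{JA})\subset J*J$, and $(\ker u)\subset\mathfrak{uce}(L)$, where $L=\TKK(\mathbf J)$. The equality $\mathrm{HC}(J)=\ker(\ud_{JA})$ is a definition (see the definition of $\mathrm{HC}(J)$ immediately preceding), so nothing is to be done there. The real content is the chain $\ker(\ud_{JP})\cong\ker(\ud_{JA})\cong(\ker u)$, and the first step is to transport $\ker(\ud_{JP})$ across the isomorphism of Lemma~\ref{jordan_der_splitting_off_lemma}.

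First I would exploit the decomposition $\mathbf J\diamond\mathbf J=\beta(J*J)\oplus 1\diamond\mathbf J$ from Lemma~\ref{jordan_der_splitting_off_lemma}. Since $\ud_{JP}(1\diamond a)=\delta(1,a)$ and $\delta(1,a)$ acts on $\mathbf J^\pm=J$ as $\pm D(1,a)=\pm 2L_a$ (using $\{a1c\}=2ac$, equation~(\ref{circ_prod_Defi})), the restriction of $\ud_{JP}$ to $1\diamond\mathbf J$ is injective: if $2L_a=0$ then $a=2(a\cdot 1)\cdot\frac12=0$ since $1/2\in k$. Hence $\ker(\ud_{JP})\cap(1\diamond\mathbf J)=\{0\}$, and since $\ker(\ud_{JP})$ is not automatically contained in the complement $\beta(J*J)$, I would argue instead that $\alpha:\mathbf J\diamond\mathbf J\to J*J$ restricts to an isomorphism $\ker(\ud_{JP})\xrightarrow{\ \sim\ }\ker(\ud_{JA})$. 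To see this: $\alpha$ is a Lie algebra map intertwining $\ud_{JP}$ and $\ud_{JA}$ up to the obvious identification $\mathfrak{ider}(\mathbf J)\cong\mathrm{IDer}(J)$, $\delta(a,b)\mapsto D(a,b)=2[L_a,L_b]$ (the ``$L$'' here being Jordan multiplication, not the Lie algebra); indeed $\ud_{JA}\circ\alpha(a\diamond b)=\ud_{JA}(a*b)=2[L_a,L_b]$ and $\delta(a,b)|_J=D(a,b)=2[L_a,L_b]$ by the standard linearization of the Jordan triple product. Therefore $\alpha(\ker\ud_{JP})\subset\ker\ud_{JA}$. For the reverse inclusion I use that $\beta$ is a section: given $X\in\ker\ud_{JA}$, the element $\beta(X)\in\mathbf J\diamond\mathbf J$ satisfies $\alpha(\beta(X))=X$, and $\ud_{JP}(\beta(X))$ maps under the identification to $\ud_{JA}(X)=0$ — but $\ud_{JP}(\beta(X))=\delta$ of a difference of terms; since $\ker\alpha=1\diamond\mathbf J$ on which $\ud_{JP}$ is injective, and $\alpha$ kills exactly $1\diamond\mathbf J$, a short diagram chase gives $\beta(X)\in\ker\ud_{JP}$. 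Thus $\alpha$ and $\beta$ restrict to mutually inverse isomorphisms $\ker\ud_{JP}\cong\ker\ud_{JA}=\mathrm{HC}(J)$.

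Next I would identify $\ker(\ud_{JP})$ with $(\ker u)$. Here I invoke Corollary~\ref{JKP_central_ext_kernel_cor} (with $P=\mathbf J$ viewed as a Jordan pair, hence a perfect Jordan-Kantor pair once we know $L$ is perfect): it gives a Lie algebra isomorphism $\uce{L}_0\cong\uider(\mathbf J)$ under which $(\ker u)_0\cong\HF(\mathbf J)=\ker(\ud_{JP})$. It then remains to show $\ker u=(\ker u)_0$, i.e.\ the kernel of the universal central extension is concentrated in degree $0$ of the $\mathcal Q(R)$-grading. This is exactly where $1/2\in k$ enters decisively: $L=\TKK(\mathbf J)$ is a Lie algebra of Jordan type, the associated Jordan pair carries a grid (the hermitian, or more generally the relevant, grid), and by Proposition~\ref{torsion_bijection_prop} / Corollary~\ref{ker_ud_supporr_cor} the support of $\ker u$ lies in $\big(\mathbb{DS}(R)\cap(R_1+R_{-1})\big)\cup\{0\}$, and with $1/2\in k$ every degenerate sum has divisor $2$, hence $(\ker u)_\gamma$ is killed by $2$ and therefore trivial; so $\ker u=(\ker u)_0$. (For the bare statement, perfectness of $L$ follows from Corollary~\ref{perfect_TKK_cor} together with $1/2\in k$, or one simply notes $\mathbf J$ is a perfect Jordan pair because $J$ is unital.) Combining, $\ker u=(\ker u)_0\cong\ker\ud_{JP}\cong\mathrm{HC}(J)=\ker\ud_{JA}$, which is the asserted chain of equalities.

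\textbf{Main obstacle.} The routine-looking but genuinely load-bearing point is the compatibility of $\alpha$ with the two ``universal derivation'' maps, i.e.\ that $\delta(a,b)|_J$ really equals $D(a,b)=2[L_a,L_b]$ as operators on $J$ under the identification $\mathbf J^\pm=J$; one must be careful that $\delta(a,b)=(D_{a,b},-D_{b,a})$ and that passing from the Jordan pair $\mathbf J$ back to the Jordan algebra only recovers an isotope, so the identification of $\mathfrak{ider}(\mathbf J)$ with $\mathrm{IDer}(J)$ uses the unit $1\in J$ in an essential way (this is where $\{a1c\}=2ac$ is repeatedly needed). The other delicate point is confirming $\ker u=(\ker u)_0$ without circularity — it must be extracted purely from Proposition~\ref{torsion_bijection_prop} and the standing hypothesis $1/2\in k$, exactly as packaged in Corollary~\ref{ker_ud_supporr_cor}(i).
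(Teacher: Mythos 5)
Your overall strategy matches the paper: split the claim into two halves — (a) $\ker\ud_{JP}\cong\ker\ud_{JA}=\mathrm{HC}(J)$ via the decomposition $\mathbf J\diamond\mathbf J=\beta(J*J)\oplus 1\diamond\mathbf J$ of Lemma~\ref{jordan_der_splitting_off_lemma}, and (b) $\ker\ud_{JP}\cong\ker u$ via Corollary~\ref{JKP_central_ext_kernel_cor} together with the observation that $1/2\in k$ forces $\ker u=(\ker u)_0$. Part (b) is fine and is exactly the paper's first sentence. But part (a) contains a genuine error, and it sits exactly at the point you yourself flag as the ``genuinely load-bearing'' step.

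The claim that $\delta(a,b)|_J=D(a,b)=2[L_a,L_b]$ is false, and the notational coincidence $D(a,b)$ versus $D_{a,b}$ is where you slip. For the Jordan pair $\mathbf J=(J,J)$ of a unital Jordan algebra, the pair operator $D_{a,b}$ is the full linearized triple product; using equation~(\ref{circ_prod_Defi}) and commutativity of $J$ one has
$$D_{a,b}c=\{abc\}=2\bigl(a(bc)+c(ba)-b(ca)\bigr)=2\bigl([L_a,L_b]+L_{ab}\bigr)c,$$
so $\delta(a,b)|_{J^+}=2[L_a,L_b]+2L_{ab}$, which is generally \emph{not} an inner derivation of $J$; it lives in the inner structure algebra $L_J\oplus[L_J,L_J]$. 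Consequently the ``obvious identification $\mathfrak{ider}(\mathbf J)\cong\mathrm{IDer}(J)$'' does not exist — $\instr(\mathbf J)|_J=L_J\oplus[L_J,L_J]$ strictly contains $\mathrm{IDer}(J)=[L_J,L_J]$ — and $\alpha$ does not intertwine $\ud_{JP}$ with $\ud_{JA}$ in the way you assert. The ``short diagram chase'' for the reverse inclusion is therefore not justified by the reasons given.

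The correct route, and the one the paper takes, is the complementary one: $\ud_{JP}$ carries $1\diamond a$ to $\delta(1,a)$, whose restriction is $D_{1,a}=2L_a$, and carries $\beta(a*b)=a\diamond b-1\diamond ab$ to $\delta(a,b)-\delta(1,ab)$, whose restriction is $2[L_a,L_b]+2L_{ab}-2L_{ab}=2[L_a,L_b]$. So it is $\ud_{JP}\circ\beta$ that matches $\ud_{JA}$, not $\ud_{JA}\circ\alpha$ matching $\ud_{JP}$. Combined with the directness of $L_J\oplus[L_J,L_J]$ in $\End_k(J)$ and the injectivity of $a\mapsto L_a$ (since $J$ is unital, $L_a(1)=a$), this gives that an element $\beta\bigl(\sum a_i*b_i\bigr)+1\diamond a$ lies in $\ker\ud_{JP}$ if and only if $a=0$ and $\sum[L_{a_i},L_{b_i}]=0$, which is the desired identification $\ker\ud_{JP}=\beta(\ker\ud_{JA})$. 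You have the right target and the right lemmas in view, but the key operator identity you write down is wrong, and as stated the argument for (a) does not go through.
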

\begin{proof}
First note that in the presence of $1/2$ the universal central extension of the $A_1$-graded algebra $L$ is also $A_1$-graded and thus by Corollary \ref{JKP_central_ext_kernel_cor}, $\ker(\ud_{JP}) = \ker (u)$.\\
By Lemma~\ref{jordan_der_splitting_off_lemma}, $\ud_{JP}(\mathbf J \diamond \mathbf J)  = \ud_{JA}(J * J) + \ud_{JP}(1 \diamond \mathbf J)$.  
Under $\ud_{JP},$ the image of $1 \diamond a \in 1 \diamond \mathbf J$ is $2L_a$
 and the image of $ a * b $ is $[L_a, L_b].$ For any  linear Jordan algebra, the sum $L_J \oplus [L_J,  L_J] \subset \mathrm{End}_k(J) $ is direct, thus $\ud_{JP}(\mathbf J \diamond \mathbf J)  = \ud_{JA}(J * J) \oplus \ud_{JP}(1 \diamond \mathbf J)$ . Therefore, $\sum a_i * b_i + 1 \diamond a \in \ker \ud_{JP}$, if and only if $ \sum [L_{a_i}, L_{b_i}] + L_a = 0$ if and only if $ \sum [L_{a_i}, L_{b_i}]  = 0$ and $L_a = 0$, if and only if $a = 0$ and $\sum [L_{a_i}, L_{b_i}]  = 0.$ So $\ker \ud_{JP} \subset J*J$ and
 the calculation above then shows that
$\sum a_i * b_i \in \ker \ud_{JP} $ if and only if  $\sum [L_{a_i}, L_{b_i}] = 0$ if and only if $\sum a_i * b_i \in \mathrm{HC}(J) = \ker(\ud_{JA})$ (per Definition~\ref{Jordan_homology_defi_one_half}).
\end{proof}
Before we continue, the result above needs some interpretation: If $J$ is a linear Jordan algebra then the module $J*J$ is the analogue of $\uider(V)$ where $V$ is a pair. This interpretation is supported by the fact that for $\mathbf J$ the pair of the Jordan algebra $J$, we have an isomorphism of kernels
$$\ker (\ud_{JP} ) =  \ker( \ud_{JA} ).$$
This justifies in particular why we called the map on the left hand side $\ud$ and used the name $\uider_{JA}(J)$ for $J*J.$

\begin{prop} Let $J$ be a unital Jordan algebra over $k$ and  let $k \rightarrow K$ be a flat base change. Then 
$$\mathrm{IDer}_k(J) \otimes_k K = \mathrm{IDer}(J \otimes_k K). $$ \label{herm_jord_der_flat_base_change_cor_rev}
\end{prop}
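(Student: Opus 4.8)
The statement is that inner derivations of a unital Jordan algebra commute with flat base change. The natural strategy is to realize $\mathrm{IDer}_k(J)$ as the image of a construction that is visibly compatible with base change, and then transport the base-change isomorphism through. Concretely, I would use the functor $J \mapsto J*J = \uider_{JA}(J)$ together with the central extension $\ud_{JA}\colon J*J \to \mathrm{IDer}_k(J)$ from Corollary~\ref{ud_JA_action_ce_cor}. The point is that $J*J$ is defined as a cokernel ($J\otimes_k J$ modulo the submodule $M$ generated by the elements \eqref{HCJ1} and \eqref{HCJ2}), and cokernels commute with \emph{arbitrary} base change, while the further kernel $\mathrm{HC}(J) = \ker \ud_{JA}$ is where flatness enters.

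First I would record the base-change behaviour of $J*J$ itself. Let $k\to K$ be any ring homomorphism. The defining exact sequence
\begin{equation*}
M \longrightarrow J\otimes_k J \longrightarrow J*J \longrightarrow 0
\end{equation*}
stays exact after applying $-\otimes_k K$ (right exactness of tensor product), and under the canonical isomorphism $(J\otimes_k J)\otimes_k K \cong (J\otimes_k K)\otimes_K (J\otimes_k K)$ the image of $M\otimes_k K$ is exactly the submodule $M_K$ generated by the corresponding elements \eqref{HCJ1}, \eqref{HCJ2} for $J\otimes_k K$ — here I use that the Jordan product, and hence the relations, extend to scalar extensions, which is built into the definition of a quadratic Jordan algebra. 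This gives a natural isomorphism $(J*J)\otimes_k K \cong (J\otimes_k K)*(J\otimes_k K)$, i.e. $\uider_{JA}$ commutes with \emph{all} base changes, exactly as in the proof of Lemma~\ref{JA_cycle_base_change_lem} (which is literally the same computation since $J*J = \langle J, J\rangle$ as modules). I would cite Lemma~\ref{JA_cycle_base_change_lem}/\ref{alt_cycle_base_change_lem} here rather than redo it, but note that those require flatness only for the kernel statement; the cokernel statement holds unconditionally.

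Next, consider the two central extensions $\ud_{JA}\colon J*J \to \mathrm{IDer}_k(J)$ and $\ud_{JA}'\colon (J\otimes_k K)*(J\otimes_k K) \to \mathrm{IDer}_K(J\otimes_k K)$. Both sit in short exact sequences of $k$- (resp. $K$-) modules
\begin{equation*}
0 \longrightarrow \mathrm{HC}(J) \longrightarrow J*J \stackrel{\ud_{JA}}{\longrightarrow} \mathrm{IDer}_k(J) \longrightarrow 0,
\end{equation*}
and similarly over $K$. Tensoring the first with $K$ over $k$ and using flatness of $k\to K$ preserves exactness, so $\mathrm{IDer}_k(J)\otimes_k K$ is the quotient of $(J*J)\otimes_k K$ by $\mathrm{HC}(J)\otimes_k K$. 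Under the isomorphism of the previous paragraph, $(J*J)\otimes_k K \cong (J\otimes_k K)*(J\otimes_k K)$, and one checks that the map induced by $\ud_{JA}\otimes\mathrm{id}_K$ agrees with $\ud_{JA}'$: on generators both send $(a\otimes\alpha)*(b\otimes\beta)$ to the operator $2[L_{a\otimes\alpha}, L_{b\otimes\beta}]$, because left multiplications extend to scalar extensions via $L_{a\otimes\alpha} = (L_a)_K\cdot(\alpha\cdot)$ and the Jordan identity defining $\ud_{JA}$ is scalar-extension stable. Hence $\ker(\ud_{JA}')$ corresponds to the image of $\mathrm{HC}(J)\otimes_k K$, and we get a natural isomorphism $\mathrm{IDer}_k(J)\otimes_k K \cong \mathrm{IDer}_K(J\otimes_k K)$ identifying $\ud_{JA}\otimes\mathrm{id}$ with $\ud_{JA}'$.

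\textbf{Main obstacle.} The genuinely delicate point is the identification, over $K$, of the submodule generated by the relations \eqref{HCJ1}–\eqref{HCJ2} for $J\otimes_k K$ with the $K$-span of (the image of) the relations for $J$ — in other words, making sure no \emph{new} relations appear over $K$ that are not $K$-linear combinations of old ones. This is exactly the statement that the relation submodule $M$ is "functorial for base change", and it is true precisely because each relation is a fixed bilinear (in fact degree-$\le 2$) expression in the arguments and the Jordan structure constants, which are stable under scalar extension; flatness is then what promotes this to an isomorphism at the level of the quotient $J*J$ and, together with exactness of $-\otimes_k K$, at the level of $\mathrm{HC}(J)$ and $\mathrm{IDer}$. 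I would handle this by invoking the already-proven Lemma~\ref{JA_cycle_base_change_lem} verbatim and then only needing the short diagram chase with the flat base change applied to the exact sequence defining $\mathrm{HC}(J)$.
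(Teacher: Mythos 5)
Your strategy is the same one the paper uses: pass through the base-change lemma for $J*J$ (Lemma~\ref{JA_cycle_base_change_lem}), tensor the short exact sequence $0 \to \mathrm{HC}(J) \to J*J \stackrel{\ud_{JA}}{\to} \mathrm{IDer}_k(J) \to 0$ with $K$, and compare with $\ud_{JA}'$ for $J\otimes_k K$. That much is fine and your exposition is more careful than the paper's own two-line proof. But the inference ``Hence $\ker(\ud_{JA}')$ corresponds to the image of $\mathrm{HC}(J)\otimes_k K$'' does not follow from the computation you give. What you actually verify is that $\ud_{JA}\otimes\mathrm{id}_K$ and $\ud_{JA}'$ agree \emph{after composing} with the canonical map $\omega\colon \mathrm{IDer}_k(J)\otimes_k K \to \mathrm{End}_K(J\otimes_k K)$. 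This only yields the inclusion $\mathrm{HC}(J)\otimes_k K \subseteq \ker(\ud_{JA}')$. The reverse inclusion is equivalent to injectivity of $\omega$, which is exactly the content of the Proposition --- so as written the argument is circular at that point. (The paper's proof hides the same point in the phrase ``by flatness $\mathrm{Im}(\ud_K) = \mathrm{Im}(\ud)\otimes_k K$''; and note the paper's own discussion of base ring extensions explicitly says that injectivity of $\mathrm{End}_k(M)\otimes_k K \to \mathrm{End}_K(M\otimes_k K)$ needs $M$ finitely generated even when $K$ is flat, so this is not a triviality.)

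Relatedly, you have misidentified where the delicacy lies. The base-change stability of the relation submodule $M$, and hence of the cokernel $J*J$, is not delicate at all --- as you yourself note, cokernels commute with \emph{all} base change and the relations are polynomial in the Jordan product; no flatness is used there. Flatness enters to keep $\mathrm{HC}(J)\otimes_k K \hookrightarrow (J*J)\otimes_k K$ injective, and the genuine issue is the injectivity of $\omega$, i.e.\ identifying $\mathrm{IDer}_k(J)\otimes_k K$ inside the endomorphisms of $J\otimes_k K$. To close the argument you need an additional reason for that injectivity (for instance a finiteness hypothesis on $J$, which covers the paper's applications such as $\mathcal{H}_n(D,\bar{\,})$ with $D$ finitely generated); simply citing flatness is not enough.
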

\begin{proof}
Define $J_K := J_k \otimes K.$ Then by Lemma~\ref{JA_cycle_base_change_lem}, $ J_K *  J_K  = ( J, J ) \otimes_k K.$ By Lemma~\ref{well_Def_JA_der_act} there are well-defined epimorphisms $ \ud_K :  J_K *  J_K  \rightarrow \mathrm{IDer}(J_K)$ and $\ud \otimes K : ( J * J ) \otimes_k K \rightarrow \mathrm{IDer}(J) \otimes_k K.$ By flatness of the base change $\mathrm{Im}(\ud_k) = \mathrm{Im}(\ud) \otimes_k K.$ This proves the Lemma.  
\end{proof}

\begin{prop}[Need reference here]
Let $J$ be a Jordan algebra which is finitely presented as a $k$-module. If $k \rightarrow K$ is a flat base change, then 
$$\mathrm{Der}_k(J) \otimes K = \mathrm{Der}_K(J \otimes_k K) .$$
In particular, if $J$ is finitely generated projective and  if $k \rightarrow K$ is a flat basechange, then 
$$\mathrm{Der}_k(J) \otimes K = \mathrm{Der}_K(J \otimes_k K). $$
\label{flat_basechange_Jordan_Der}
\end{prop}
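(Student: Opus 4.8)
The plan is to reduce the statement about $\mathrm{Der}_k(J)$ to a general fact about derivations of finitely presented algebras under flat base change, and then deduce the projective case as a corollary. First I would set up the basic diagram. Write $\mu : J \otimes_k J \to J$ for the (quadratic, but we only need its full linearization) Jordan multiplication. Since $J$ is finitely presented as a $k$-module, there is an exact sequence $k^{m} \to k^{n} \to J \to 0$; tensoring with $J$ and using that $\mathrm{Hom}_k(J, -)$ interacts well with finite presentations, one gets that $\mathrm{Hom}_k(J, J)$ is the kernel of a map between finitely presented modules, and more importantly that $\mathrm{Der}_k(J)$ is the kernel of the $k$-linear map
\begin{equation*}
\Phi : \mathrm{End}_k(J) \longrightarrow \mathrm{Hom}_k(J \otimes_k J,\, J), \qquad \Phi(T)(a \otimes b) = T(\mu(a,b)) - \mu(T(a),b) - \mu(a, T(b)).
\end{equation*}
So $0 \to \mathrm{Der}_k(J) \to \mathrm{End}_k(J) \xrightarrow{\Phi} \mathrm{Hom}_k(J \otimes_k J, J)$ is exact.

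Next I would tensor this exact sequence with $K$ over $k$. Flatness of $k \to K$ gives an exact sequence
\begin{equation*}
0 \to \mathrm{Der}_k(J) \otimes_k K \to \mathrm{End}_k(J) \otimes_k K \xrightarrow{\Phi \otimes K} \mathrm{Hom}_k(J \otimes_k J, J) \otimes_k K.
\end{equation*}
On the other hand, the analogous sequence over $K$ for $J_K := J \otimes_k K$ reads $0 \to \mathrm{Der}_K(J_K) \to \mathrm{End}_K(J_K) \xrightarrow{\Phi_K} \mathrm{Hom}_K(J_K \otimes_K J_K, J_K)$. The task is then to identify the two sequences via canonical comparison maps. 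For the left term of the target we already have the canonical map $\mathrm{End}_k(J) \otimes_k K \to \mathrm{End}_K(J_K)$ which is an isomorphism when $J$ is finitely presented and $k \to K$ is flat (this is recorded in the excerpt's ``Base ring extension'' paragraph in Chapter~\ref{fundamental_concepts_chapter}); similarly, since $J \otimes_k J$ is finitely presented, $\mathrm{Hom}_k(J \otimes_k J, J) \otimes_k K \to \mathrm{Hom}_K(J_K \otimes_K J_K, J_K)$ is an isomorphism, using also $(J \otimes_k J) \otimes_k K \cong J_K \otimes_K J_K$. One checks these isomorphisms are compatible with $\Phi \otimes K$ and $\Phi_K$ — i.e. the square commutes — which is a direct computation on elementary tensors $T \otimes \alpha$ applied to $(a \otimes_k 1) \otimes_K (b \otimes_k 1)$, unwinding the definition of $\Phi$ and the canonical maps. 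Exactness on the left of both sequences then forces $\mathrm{Der}_k(J) \otimes_k K \xrightarrow{\ \sim\ } \mathrm{Der}_K(J_K)$.

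For the ``in particular'' clause: if $J$ is finitely generated projective then it is finitely presented, so the first part applies verbatim; no extra argument is needed beyond noting ``finitely generated projective $\Rightarrow$ finitely presented''. (One could alternatively remark that for $J$ finitely generated projective the isomorphisms $\mathrm{End}_k(J) \otimes_k K \cong \mathrm{End}_K(J_K)$ hold for \emph{all} base changes, not just flat ones, which slightly strengthens the intermediate step, but the conclusion about derivations still uses flatness through the exactness of $- \otimes_k K$ on the defining sequence.)

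The main obstacle I anticipate is purely bookkeeping: making sure the comparison square between the base-changed sequence and the intrinsic sequence over $K$ genuinely commutes, i.e. that the canonical maps $\mathrm{End}_k(J) \otimes_k K \to \mathrm{End}_K(J_K)$ and $\mathrm{Hom}_k(J\otimes_k J, J) \otimes_k K \to \mathrm{Hom}_K(J_K \otimes_K J_K, J_K)$ intertwine $\Phi \otimes K$ with $\Phi_K$. This is routine but must be done on generators because $\Phi$ mixes $T$ with both arguments of the multiplication; once it is in place, the result is immediate from the snake/left-exactness. A secondary point worth a sentence is confirming that $J \otimes_k J$ is finitely presented when $J$ is — this follows from finite presentation being stable under tensor products of modules — so that the Hom-module comparison map is indeed an isomorphism under a flat (here even arbitrary, after reducing to the finitely generated projective factor) base change.
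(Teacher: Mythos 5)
The paper gives no proof of this proposition at all; it is stated with the annotation ``Need reference here'' and treated as a cited fact, so there is nothing to compare your argument against except that annotation. Your argument is the standard one and it is correct. The core of it is the observation that $\mathrm{Der}_k(J)$ is the kernel of the $k$-linear map $\Phi : \mathrm{End}_k(J) \to \mathrm{Hom}_k(J\otimes_k J,\,J)$ encoding the Leibniz rule, that $-\otimes_k K$ preserves the resulting left-exact sequence because $K$ is flat, and that both comparison maps $\mathrm{End}_k(J)\otimes_k K \to \mathrm{End}_K(J_K)$ and $\mathrm{Hom}_k(J\otimes_k J, J)\otimes_k K \to \mathrm{Hom}_K(J_K\otimes_K J_K, J_K)$ are isomorphisms because $J$, and hence $J\otimes_k J$, is finitely presented and $K$ is flat. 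The paper already records the $\mathrm{End}$-comparison in the ``Base ring extension'' paragraph of Chapter~\ref{fundamental_concepts_chapter}, and its proof of Proposition~\ref{TKKalghalfthirddfunc} (inner derivations of Jordan--Kantor pairs) uses a cousin of the same strategy but only needs injectivity of the comparison map, since surjectivity onto \emph{inner} derivations is built in; your proof correctly supplies the extra ingredient --- the $\mathrm{Hom}$-comparison for the target of $\Phi$ --- that is needed to capture \emph{all} derivations. Your auxiliary remarks are also sound: a tensor product of finitely presented modules is finitely presented (it is a cokernel of $k^{ad+bc}\to k^{bd}$ built from the two presentations), the identification $(J\otimes_k J)\otimes_k K \cong J_K\otimes_K J_K$ is the standard base-change isomorphism, and the ``in particular'' clause is indeed vacuous once one notes that finitely generated projective implies finitely presented. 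The only blemish is the opening sentence about ``$\mathrm{Hom}_k(J,J)$ is the kernel of a map between finitely presented modules,'' which is not used and slightly obscures the cleaner statement that $\mathrm{Der}_k(J)=\ker\Phi$ holds purely by definition of a derivation; this is a matter of exposition, not correctness.
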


\subsubsection{Idempotents and Jordan homology}
\label{idempotent_homology}
\begin{defi} Let $e \in J$ be an idempotent. Hence, putting
$$ J_2 =  Q_e(J), \quad J_1 = D_{e, e-1} (J), \quad  J_0 = Q_{1-e}(J)  $$
we have a direct sum decomposition
$$J = J_0 \oplus J_1 \oplus J_2.$$
\end{defi}

\begin{defi}For $\mathcal E = \{e_i : i \in I\}$ a set of idempotents, we call $\mathcal E$ \emph{orthogonal}, if $e_ie_j = 0$ for all $i \neq j.$ 
If $\{e_i\}_{i \in I}$ is a set of orthogonal  idempotents, we put
$$J_{ij} :=  D_{e_i, e_j}(J), J_{ii} := Q_{e_i}(J),\quad (i\neq j). $$
Note that always $J_{ij} = J_{ji}.$
\end{defi}

Let $ \{e_i\}_{i \in I}$ be a set of pairwise orthogonal idempotents and let $i, j , k, l$ different elements of $I$. We defined $a \circ b = \{a1b\} = 2ab$ for $a, b \in J$ (see (\ref{circ_prod_Defi})). Then the following multiplication rules hold:   
\begin{eqnarray}
J_{ii } \circ J_{ij} & \subset & J_{ij}, i \neq j \label{Peirce_rule_1},\\
J_{ii }  \circ J_{ii} & \subset & J_{ii} \label{Peirce_rule_2},\\
J_{ij} \circ J_{jk}  & \subset & J_{ik}, i,j,k \neq \label{Peirce_rule_3}, \\
J_{ij } \circ  J_{kl} &=& \{0\}, i, j, k, l \neq  \label{Peirce_rule_4},\\
J_{ij} \circ J_{ij} &\subset & J_{ii} + J_{jj}, i, j \neq . \label{Peirce_rule_5}
\end{eqnarray}
From now on, $\mathcal E = \{e_i: i \in I\}$ is an orthogonal family in a Jordan algebra $J$. We choose a total ordering $< $ on $I$ and in addition we assume
\begin{equation} J = \bigoplus_{i \leq j;i,j \in I} J_{ij} .  \label{supplementary_cond} \end{equation}
\begin{rem} If $I$ is finite and complete, i.e., $\sum_{i \in I}e_i = 1_J,$ then (\ref{supplementary_cond}) always holds and in this case the Jordan algebra is automatically unital with unit $1_J = \sum_{i \in I}e_i,$ but if $I$ is infinite, we cannot conclude that $J$ is unital, as the following example shows.
\end{rem}

\begin{expl}
Let $J = \mathcal H_I(k)$ be the Jordan algebra of symmetric matrices over $k$ of size $\card I$ (where the Jordan product is $1/2(AB + BA)$). If $I$ is infinite, we assume that the elements of $\mathcal H_I(k)$ have only finitely many non-zero entries. Then the set $\{E_{ii} : i \in I\}$ is a set of orthogonal idempotents and $J = \bigoplus_{i \leq j}J_{ij}$ with $J_{ij} = k(E_{ij} + E_{ji}).$ This Jordan algebra is unital, if and only if $\card I < \infty.$
\end{expl}

\begin{lem} The Peirce multiplication rules imply: \label{Peirce_mult_rules_hom}
\begin{equation} \label{Peirce_mult_rules_hom_eq_1}
J_{ik} * J_{kj} =  e_{i} * J_{ik}J_{kj} \subset e_i * J_{ij}, i, j, k \neq  
\end{equation}
\begin{equation} \label{Peirce_mult_rules_hom_eq_2}
J_{jj} *J_{ij} \subset e_i * J_{ij} \supset  J_{ii} * J_{ij}, i\neq j
\end{equation}
If $\{ij\} \cap \{kl\}$ is empty then
\begin{equation} J_{ij} * J_{kl} = 0.  \label{Peirce_mult_rules_hom_eq_3}\end{equation}
If    $J_{ii} \subset \sum_{i \neq k}J_{ik}^2, $ then $ J* J = \sum_{i <j } J_{ij}  * J_{ij} +  \sum_{i < j}e_i * J_{ij}.$ 

 \end{lem}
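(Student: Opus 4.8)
\textbf{Proof plan for Lemma~\ref{Peirce_mult_rules_hom}.}
The plan is to establish each of the three displayed identities by pushing the Peirce multiplication rules (\ref{Peirce_rule_1})--(\ref{Peirce_rule_5}) through the two defining relations of $J*J$, namely $a*a = 0$ (equivalently $a*b + b*a = 0$) and the cyclic relation $a\otimes(bc) + b\otimes(ca) + c\otimes(ab) \equiv 0$. For (\ref{Peirce_mult_rules_hom_eq_1}), I would take $x \in J_{ik}$, $y \in J_{kj}$ with $i,j,k$ distinct, and use the cyclic relation with the triple $(e_i, x, y)$; since $e_i \circ x = 2x \in J_{ik}$ while $e_i \circ y = 0$ by (\ref{Peirce_rule_4}) (as $\{i,i\}\cap\{k,j\}=\emptyset$) and $x\circ y \in J_{ij}$ by (\ref{Peirce_rule_3}), the relation $e_i * (x\circ y) + x*(y\circ e_i) + y*(e_i\circ x) = 0$ collapses (after dividing by $2$ and using anti-commutativity) to $x*y = e_i * (xy)$ up to a nonzero scalar, and then $xy \in J_{ij}$ gives the inclusion $J_{ik}*J_{kj} \subset e_i * J_{ij}$. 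For (\ref{Peirce_mult_rules_hom_eq_2}), the argument is the same with the triple $(e_i, e_j, x)$ for $x \in J_{ij}$: here $e_i\circ e_j = 0$, $e_j\circ x = 2x$, $e_i\circ x = 2x$, so the cyclic relation becomes $e_i * (e_j\circ x)/2 + e_j * x + x * e_i$-type terms; careful bookkeeping with the relation applied to $(e_j, x, e_i)$ (to bring in $J_{jj}$) and $(e_i, x, e_j)$ (to bring in $J_{ii}$) yields $J_{jj}*J_{ij} \subset e_i*J_{ij}$ and $J_{ii}*J_{ij}\subset e_i*J_{ij}$. Identity (\ref{Peirce_mult_rules_hom_eq_3}) is the most direct: for $x\in J_{ij}$, $y\in J_{kl}$ with $\{i,j\}\cap\{k,l\}=\emptyset$, one wants $x*y = 0$; I would produce $x$ as $x = e_i\circ x'/2$ (or $x \in J_{ii}\circ J_{ij}$, using $J_{ii}\ni e_i$) and run the cyclic relation on $(e_i, x', y)$, where now $e_i\circ y = 0$ and $x'\circ y = 0$ by (\ref{Peirce_rule_4}), so the relation reduces to $y * (e_i\circ x') = 0$, i.e. $y*x = 0$.

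For the final assertion, assuming $J_{ii} \subset \sum_{i\neq k} J_{ik}^2$, the goal is to show every element of $J*J$ lies in $\sum_{i<j} J_{ij}*J_{ij} + \sum_{i<j} e_i * J_{ij}$. By (\ref{supplementary_cond}) the module $J$ decomposes as $\bigoplus_{i\le j} J_{ij}$, so $J*J$ is spanned by products $J_{ij}*J_{kl}$ ranging over all pairs. I would organize the spanning products by the overlap pattern of $\{i,j\}$ and $\{k,l\}$: disjoint pairs give zero by (\ref{Peirce_mult_rules_hom_eq_3}); pairs sharing exactly one index are handled by (\ref{Peirce_mult_rules_hom_eq_1}) (rewriting $J_{ik}*J_{kj}$ as $e_i*J_{ij}$, then using $e_i*J_{ij} = e_i*J_{ij}$ directly, with the convention $J_{ii}$-diagonal terms absorbed via (\ref{Peirce_mult_rules_hom_eq_2})); pairs with $\{i,j\}=\{k,l\}$ where both are off-diagonal give the $J_{ij}*J_{ij}$ terms; and the remaining diagonal contributions $J_{ii}*J_{jj}$, $J_{ii}*J_{ij}$, $J_{ii}*J_{ii}$ must be rewritten. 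Here is where the hypothesis $J_{ii}\subset\sum_{k\neq i}J_{ik}^2$ enters: it lets me replace any occurrence of a diagonal element $z\in J_{ii}$ by a sum of elements of the form $u\circ v$ with $u,v \in J_{ik}$, so that $z * w$ becomes a sum of triple-product expressions to which the cyclic relation applies, reducing them to terms already in the claimed spanning set. One also needs $J_{ii}*J_{jj} = 0$ or $J_{ii}*J_{jj}\subset e_\bullet * J_{\bullet\bullet}$-type terms, again via rewriting both diagonal factors using the hypothesis and applying (\ref{Peirce_mult_rules_hom_eq_3}).

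I expect the main obstacle to be the careful scalar and sign bookkeeping in the cyclic relation: because $a\circ b = 2ab$ and the relation (\ref{HCJ1}) is written in terms of the algebra product $bc$ rather than $b\circ c$, every application introduces factors of $2$, and combining the relation applied to several permutations of a triple (to isolate a single term) requires $1/2 \in k$, which is available by hypothesis. A secondary subtlety is that for the last assertion one must be sure the rewriting of diagonal elements terminates --- i.e. that replacing $J_{ii}$ by $\sum J_{ik}^2$ does not reintroduce uncontrolled diagonal terms; this works because $J_{ik}^2 = J_{ik}\circ J_{ik} \subset J_{ii} + J_{kk}$ by (\ref{Peirce_rule_5}), but the products $J_{ik}*J_{ik}$ appearing are exactly of the allowed form $J_{ij}*J_{ij}$ with off-diagonal indices, so no infinite regress occurs. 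Throughout I would only need the identities already derived in the first part of the lemma plus the two relations defining $J*J$ and (\ref{supplementary_cond}); no new Jordan-algebraic identity beyond the Peirce rules is required.
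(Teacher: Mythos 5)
Your overall strategy — push the Peirce rules through the cyclic relation $a*(bc)+b*(ca)+c*(ab)=0$, with one entry of the triple an idempotent that can be absorbed into the factor it multiplies nontrivially, and then use $J_{ii}\subset\sum_{k\neq i}J_{ik}^2$ to rewrite diagonal factors — is the same as the paper's, and your treatment of \eqref{Peirce_mult_rules_hom_eq_1}, \eqref{Peirce_mult_rules_hom_eq_3} and the final decomposition is essentially correct modulo scalar bookkeeping.

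However, there is a genuine gap in your argument for \eqref{Peirce_mult_rules_hom_eq_2}. The assertion to be proved is $J_{jj}*J_{ij}\subset e_i*J_{ij}$ and $J_{ii}*J_{ij}\subset e_i*J_{ij}$ for \emph{arbitrary} diagonal elements $x_{jj}\in J_{jj}$, $x_{ii}\in J_{ii}$. Your proposed triples $(e_i,e_j,x)$ and their permutations involve only the idempotents themselves, not a general element of $J_{ii}$ or $J_{jj}$, so the cyclic relation applied to them can only tell you about $e_i*x$ and $e_j*x$; it says nothing about $x_{ii}*x$ for $x_{ii}\neq e_i$. The triple must carry the general diagonal element as one of its slots, combined with the idempotent that acts as a scalar on the off-diagonal factor. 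Concretely, following the paper: for $x_{ii}*x_{ij}$, use the triple $(x_{ij},e_i,x_{ii})$, noting $e_ix_{ii}=x_{ii}$ and $x_{ij}e_i=\tfrac12 x_{ij}$, which after cancellation gives $\tfrac12 x_{ii}*x_{ij}=e_i*(x_{ii}x_{ij})\in e_i*J_{ij}$; for $x_{jj}*x_{ij}$, use $(x_{jj},e_i,x_{ij})$, noting $x_{jj}e_i=0$, which gives $x_{jj}*(e_ix_{ij})=-e_i*(x_{ij}x_{jj})$, and then the surjectivity of $e_i\cdot$ on $J_{ij}$ (since $1/2\in k$) finishes it. The phrase "careful bookkeeping" is hiding exactly this missing substitution.

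Two smaller points. First, your identity $e_i\circ x=2x$ for $x\in J_{ij}$, $i\neq j$, is off by a factor: with $a\circ b=2ab$ and $e_ix=\tfrac12 x$ on $J_1(e_i)$, one has $e_i\circ x = x$. This does not break the argument (every scalar appearing is a power of $2$, invertible by hypothesis) but it will silently corrupt the bookkeeping you already acknowledged as the delicate part. Second, for $J_{ii}*J_{jj}$ with $i\neq j$ you do not need to rewrite anything via the hypothesis $J_{ii}\subset\sum_{k\neq i}J_{ik}^2$: this is already the case $\{i\}\cap\{j\}=\emptyset$ of \eqref{Peirce_mult_rules_hom_eq_3}, so $J_{ii}*J_{jj}=0$ directly.
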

 \begin{proof}
 We will repeatedly use $J_{ij} = J_{ji}$ for all $i,j \in I$
and also $J_{ij} * J_{kl} = J_{kl} * J_{ij} $ for any two Peirce spaces. \\
Let $x_{ik} \in J_{ik}$ and $x_{kj} \in J_{kj}$  where $i,j,k$ are pairwise distinct then 
$$1/2x_{ik} * x_{kj} = e_i x_{ik} * x_{kj} = - x_{ik}x_{kj} * e_i - x_{kj}e_i * x_{ik} = e_i * x_{ik}x_{kj} \in e_i * J_{ik}J_{kj} \subset e_i * J_{ij},$$
whence   (\ref{Peirce_mult_rules_hom_eq_1}). 
If $x_{ii} \in J_{ii}$ and $x_{ij} \in J_{ij},$ then 
$$  x_{ii} * x_{ij} =  e_i x_{ii} * x_{ij} = - x_{ii}x_{ij} * e_i - x_{ij}e_i * x_{ii} = - x_{ii}x_{ij} * e_i + (1/2)(x_{ii}*x_{ij}) $$ $$ \iff (1/2) x_{ii} * x_{ij} = e_i * x_{ii}x_{ij} \in e_i * J_{ij}. $$
For the inclusion on the left of (\ref{Peirce_mult_rules_hom_eq_2}), 
$$ x_{jj} * e_i x_{ij} = - e_i * x_{ij}x_{jj} \in e_i * J_{ij} $$
and since $e_i J_{ij} = J_{ij}$ this implies (\ref{Peirce_mult_rules_hom_eq_2}).\\ 
Let $x_{ij} \in J_{ij} $ and $x_{kl} \in J_{kl}$ where $\{i,j\} \cap \{k,l\} = \emptyset.$
$$e_i x_{ij} * x_{kl}  = - x_{ij}x_{kl} * e_i - x_{kl}e_i * x_{ij} = 0   + 0  = 0$$
hence $J_{ij} * J_{kl} =\{0\}.$ In particular $J_{ii} * J_{kk} = \{0\}$ if $i \neq k.$
By \eqref{Peirce_rule_3} and \eqref{Peirce_mult_rules_hom_eq_3}, 
$$\sum_{i \neq k} J_{ik}^2 * J_{ii} \subset J_{ii} * J_{ii} $$
and we have equality if $\sum_{k \neq i}J_{ik}^2 \supset  J_{ii}.$
In this case, by \eqref{Peirce_mult_rules_hom_eq_2}, $J_{ii} * J_{ii} \subset \sum_{i \neq k} J_{ik}^2 * J_{ii} \subset \sum_{i < k} e_i * J_{ik} + \sum_{i> k}e_k *J_{ki}.$

Let 
\begin{eqnarray*} J* J &=& \left (\sum_{i, j \neq} J_{ij} * J_{ij}  + (J_{ii} * J_{jj}) 
 + (J_{ii} * J_{ij})\right) \\ &&+  \left ( \sum_{i}(J_{ii} * J_{ii})\right) +  \left( \sum_{i,j,k \neq }(J_{ij} * J_{jk} ) \right)  + \left( \sum_{i,j,k,l \neq}(J_{ij} * J_{kl}) \right) \end{eqnarray*} be the decomposition of $J* J$ induced by the Peirce decomposition of $J.$
In general, we have just proven that in fact,  
$$J* J = \left (\sum_{i< j} J_{ij} * J_{ij} \right) +  \left ( \sum_{i< j }(e_i * J_{ij})\right) + \left ( \sum_{i}(J_{ii} * J_{ii})\right) $$
and  
$$J* J = \left (\sum_{i< j} J_{ij} * J_{ij} \right) +  \left ( \sum_{i< j }(e_i * J_{ij})\right)  $$ 
if $\sum_{k \neq i}J_{ik}^2 \supset J_{ii}.$

 \end{proof}
{ \begin{equation} \label{non_degeneracy_assumption_peirce_decomp}\sum_{i\neq k }J_{ik}^2 = \sum J_{ii}  , \quad \card{I} \geq 3.   \quad  \quad  \quad \end{equation} }

\begin{prop} \label{Uni_der_Decomp_prop_ja} Let $\mathcal D_0 = \{X \in J*J : \ud(X).e_i = 0 \; \forall i \in I\}$ and $\mathcal D = \sum_{i< j}e_i * J_{ij},$
 Then $$J* J = \mathcal D_0 \oplus \mathcal D $$ and 
 $$\mathcal D \cong \bigoplus_{i< j}J_{ij}, \; \mathcal D_0 = \sum_{i< j} J_{ij}* J_{ij}. $$
    Moreover, with $\ud :(J *J) \rightarrow \mathrm{IDer}(J)$ as in Definition~\ref{Jordan_homology_defi_one_half} 
    $$\ker (\ud ) \subset \mathcal D_0 \mbox{ and } \ud(\mathcal D) \cong \bigoplus_{i< j}J_{ij}. $$
 \end{prop}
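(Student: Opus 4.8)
\textbf{Plan for the proof of Proposition~\ref{Uni_der_Decomp_prop_ja}.}
The strategy is to exploit the Peirce machinery of Lemma~\ref{Peirce_mult_rules_hom} together with the standing assumption~\eqref{non_degeneracy_assumption_peirce_decomp}. First I would record the action of $\ud(X)$ on the idempotents. For $X = a * b$ we have $\ud(a*b)e_i = 2(a(be_i) - b(ae_i))$, and one computes, using the Peirce rules \eqref{Peirce_rule_1}--\eqref{Peirce_rule_5} and $a\circ b = 2ab$, that $\ud(e_i * x_{ij})e_i = -\tfrac12 x_{ij} \neq 0$ in general while $\ud(x_{ij}*x_{ij})e_k = 0$ for all $k$. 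More precisely I would verify the finer bookkeeping: for $e_i * x_{ij}$ with $i<j$ the only nonzero value among the $\ud(-)e_k$ is on $e_i$ (and on $e_j$, since $\ud(e_i*x_{ij})e_j$ lands back in $J_{ij}$ up to the relation $e_i*x_{ij} = -e_j*x_{ij} \bmod M$, which follows from \eqref{HCJ1} with $(a,b,c)=(e_i,e_j,x_{ij})$ using $e_ie_j=0$), so that the map $\bigoplus_{i<j}J_{ij} \to J*J$, $x_{ij}\mapsto e_i*x_{ij}$, is well defined and its composite with $\sum_k \ud(-)e_k$ is injective. This gives both $\mathcal D \cong \bigoplus_{i<j}J_{ij}$ and $\mathcal D \cap \mathcal D_0 = \{0\}$.

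Next I would establish $J*J = \mathcal D_0 + \mathcal D$. By Lemma~\ref{Peirce_mult_rules_hom}, under the hypothesis \eqref{non_degeneracy_assumption_peirce_decomp} we already know
$$ J*J = \sum_{i<j} J_{ij}*J_{ij} + \sum_{i<j} e_i * J_{ij}. $$
So it remains to show $\sum_{i<j}J_{ij}*J_{ij} \subseteq \mathcal D_0$, i.e. that $\ud(x_{ij}*y_{ij})e_k = 0$ for every $k\in I$ and every $x_{ij},y_{ij}\in J_{ij}$. This is a direct Peirce computation: $\ud(x_{ij}*y_{ij})e_k = 2\big(x_{ij}(y_{ij}e_k) - y_{ij}(x_{ij}e_k)\big)$; for $k\notin\{i,j\}$ both products vanish by \eqref{Peirce_rule_1}, and for $k\in\{i,j\}$, say $k=i$, one gets $x_{ij}(y_{ij}e_i) - y_{ij}(x_{ij}e_i) = \tfrac14\big(x_{ij}\circ(y_{ij}\circ e_i) - y_{ij}\circ(x_{ij}\circ e_i)\big)$, and using $x_{ij}\circ e_i = x_{ij} = y_{ij}\circ e_i$ this collapses to $\tfrac14(x_{ij}\circ y_{ij} - y_{ij}\circ x_{ij}) = 0$ by commutativity of $\circ$. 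Hence $\mathcal D_0 \supseteq \sum_{i<j}J_{ij}*J_{ij}$, and combined with the decomposition above and $\mathcal D\cap\mathcal D_0=\{0\}$ we get $J*J = \mathcal D_0 \oplus \mathcal D$ with $\mathcal D_0 = \sum_{i<j}J_{ij}*J_{ij}$ exactly.

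Finally, the statements about $\ud$: since $\ker(\ud)\subseteq\{X : \ud(X)e_i=0\ \forall i\} = \mathcal D_0$ by definition of $\mathcal D_0$, the inclusion $\ker(\ud)\subseteq\mathcal D_0$ is immediate. For $\ud(\mathcal D)\cong\bigoplus_{i<j}J_{ij}$, I would argue that $\ud$ restricted to $\mathcal D$ is injective: an element $\sum_{i<j}e_i*x_{ij}$ in $\ker\ud$ lies in $\ker\ud\subseteq\mathcal D_0$, hence in $\mathcal D\cap\mathcal D_0 = \{0\}$, forcing all $x_{ij}=0$ by the isomorphism $\mathcal D\cong\bigoplus_{i<j}J_{ij}$ from the first paragraph; therefore $\ud|_{\mathcal D}$ is injective and $\ud(\mathcal D)\cong\mathcal D\cong\bigoplus_{i<j}J_{ij}$.

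\textbf{Expected main obstacle.} The delicate point is the well-definedness and injectivity claim for the map $x_{ij}\mapsto e_i*x_{ij}$, i.e. showing that $\mathcal D\cong\bigoplus_{i<j}J_{ij}$ genuinely has no collapsing among the summands. One must be careful that the relations \eqref{HCJ1}--\eqref{HCJ2} defining $J*J$ do not impose extra identities among the $e_i*x_{ij}$; the clean way is to use the ``separating'' map $\sum_k\ud(-)e_k$ into $\mathrm{End}_k(J)$ (or directly into $\bigoplus_k J$), check on that side that the images of distinct summands are independent (which reduces to the computation $\ud(e_i*x_{ij})e_i = -\tfrac12 x_{ij}$, with $x_{ij}$ ranging over $J_{ij}$), and then read back injectivity. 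All the remaining computations are routine applications of the Peirce rules \eqref{Peirce_rule_1}--\eqref{Peirce_rule_5} and the identities \eqref{one_kills_allunider}, \eqref{unider_square_shift}.
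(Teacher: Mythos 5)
Your proposal is correct and follows essentially the same route as the paper: reduce to Lemma~\ref{Peirce_mult_rules_hom} via~\eqref{non_degeneracy_assumption_peirce_decomp}, compute $\ud(e_i*x_{ij})e_k = \tfrac12(\delta_{jk}-\delta_{ik})x_{ij}$ to separate the summands of $\mathcal D$ and get $\mathcal D\cap\mathcal D_0=\{0\}$, show $J_{ij}*J_{ij}\subseteq\mathcal D_0$ by commutativity, and then read off the direct sum and the kernel statements. One small caution: the phrase ``$\sum_k\ud(-)e_k$'' is the zero map if taken literally as a sum in $J$ (each $e_i*x_{ij}$ contributes $-\tfrac12 x_{ij}$ at $k=i$ and $+\tfrac12 x_{ij}$ at $k=j$, which cancel); what you actually want, and clearly intend given your parenthetical about $\bigoplus_k J$, is the tuple map $X\mapsto(\ud(X)e_k)_{k\in I}$, on which the injectivity argument goes through exactly as in the paper.
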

 \begin{proof}
 \begin{enumerate}
 \item Since we assume \eqref{non_degeneracy_assumption_peirce_decomp}, Lemma~\ref{Peirce_mult_rules_hom} implies that 
 $$J* J =  \sum_{i< j} (e_i * J_{ij}) + \sum_{i< j}(J_{ij} * J_{ij}). $$
 In particular $J * J = \mathcal D + \mathcal D_0' $, where $\mathcal D_0' =  \sum_{i< j}J_{ij} * J_{ij}. $
 \item Let $x_{ij} \in J_{ij},$ then $\frac{1}{2}\ud(e_i * x_{ij}).e_k = [L_{e_i}, L_{x_{ij}}].e_k= e_i(x_{ij}e_k) - x_{ij}(e_i e_k) = (1/2)e_i(\delta_{ik} + \delta_{jk})x_{ij} - \delta_{ik}x_{ij}(e_i) = 1/4(\delta_{ik} + \delta_{jk})x_{ij} - (1/2)\delta_{ik}x_{ij} = (1/4)(\delta_{jk} - \delta_{ik})x_{ij}. $
 
  Now assume that $ \sum_{i < j} e_i * x_{ij}  \in \mathcal D_0.$ Applying this to an arbitrary idempotent yields
  $$2\ud \left (\sum_{i < j} e_i * x_{ij}\right).e_k = \sum_{i < j} (\delta_{jk} - \delta_{ik})x_{ij}= \sum_{i< k}x_{ik} - \sum_{k< j}x_{kj} = 0.$$ 
  Since the decomposition into Peirce spaces is direct, this implies that $x_{ik} = x_{jk} = 0$ for all $i,j, k \neq.$ Varying $k$ over $I$ yields that $x_{ij} = 0$ for all $i < j.$ It also implies that $\mathcal D_0 \cap \mathcal D = \{0\},$ by definition of $\mathcal D_0$ as the pre-image of all those which annihilate all idempotents. \\
  As a special case,  $\ud(e_i * x_{ij}) = 0$ if and only if $x_{ij} = 0.$ This proves that $e_i * J_{ij} \cong \ud(e_i * J_{ij}) \cong J_{ij},$ for all $i < j.$
   Thus, in particular, 
  $ \sum(e_i * x_{ij}) = 0$ if and only if $x_{ij} = 0$ for all $i \neq j,$ and therefore we have a direct sum decomposition $\bigoplus_{i< j} \ud(e_i * J_{ij}).$ 
  So $\ud$ induces an isomorphism
  $$ \mathcal D = \ud \left( \sum_{i< j}e_i* J_{ij} \right) = \bigoplus_{i< j} \ud(e_i * J_{ij}) \cong \bigoplus_{i< j} J_{ij}.$$
   
   \item In 2. we saw that $\ud(X) \in \mathcal \ud(\mathcal D)$ annihilates all idempotents if and only if $X = 0.$ Thus by definition of $\mathcal D_0$ 
   $$\ud(\mathcal D) \cap \mathcal \ud (\mathcal D_0) = \{0\}.$$
   \item Let $X = x_{ij} * y_{ij} \in J_{ij} * J_{ij}$. Clearly, elements of this form span $\mathcal D_0'.$
   Since $J$ is commutative
   $$2 \ud(X).e_k = 4 x_{ij}(y_{ij}e_k) - 4 y_{ij}(x_{ij}e_k) = (\delta_{ij} + \delta_{jk})(x_{ij}y_{ij} - y_{ij}x_{ij}) =0 .$$
   Hence $\mathcal D_0' \subset \mathcal D_0.$
  
   \item From 4. we have $\mathcal D \cap \mathcal D_0 ' \subset \mathcal D \cap \mathcal D_0 = \{0\}$ and therefore 
   $$J* J =  \mathcal D + \mathcal D_0' = \mathcal D \oplus \mathcal D_0'. $$
   Let $X \in \mathcal D_0 \subset J*J.$ Then there are unique elements $X_{\mathcal D} \in \mathcal \mathcal D$ and $X_{\mathcal D_0'} \in \mathcal D_0'$ such that $X = X_{\mathcal D} + X_{\mathcal D_0'}.$ Since $X \in \mathcal D_0$, 
   $ \ud(X).e_k = 0$ for all idempotents $e_k$. Thus $\ud( X_{\mathcal D} + X_{\mathcal D_0'}).e_k = \ud(X_{\mathcal D}).e_k + \ud(X_{\mathcal D_0'}).e_k = \ud(X_{\mathcal D}).e_k  = 0.$  Thus $ X_{\mathcal D} \in \mathcal D_0\cap \mathcal D = \{0\}.$ Hence
   $X = X_{\mathcal D_0'}$ and it follows that $\mathcal D_0 = \mathcal D_0'.$
   \item Repeating the argument in the previous step shows that if $X = X_{\mathcal D} + X_{\mathcal D_0} \in \ker \ud$, then 
   $X_{\mathcal D} \in \mathcal D_0 \cap \mathcal D = \{0\}$ and $X_{\mathcal D_0} \in \ker \ud.$ Thus $\ker \ud \subset \mathcal D_0.$  
 \end{enumerate}
 \end{proof}

By Proposition~\ref{Uni_der_Decomp_prop_ja} the task of computing the kernel of $\uider(V) \rightarrow \instr(V)$ has in the special case under consideration been reduced to study the map
\begin{equation} \label{ud_0_def} \ud_0 : \mathcal D_0 \rightarrow \sum_{i < j}[L_{J_{ij}}, L_{J_{ij}}] .\end{equation}

\begin{lem}
The map $\ud_0 : \mathcal D_0 \rightarrow \sum_{i < j}[L_{J_{ij}}, L_{J_{ij}}]$ given by $a_{ij} * b_{ij} \mapsto  2[L_{a_{ij}} , L_{b_{ij}}]$ is a Lie algebra epimorphism.
\end{lem}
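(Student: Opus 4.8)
The statement asserts that $\ud_0 : \mathcal D_0 \to \sum_{i<j}[L_{J_{ij}}, L_{J_{ij}}]$, defined by $a_{ij} * b_{ij} \mapsto 2[L_{a_{ij}}, L_{b_{ij}}]$, is a Lie algebra epimorphism. The plan is to obtain this essentially for free by restricting the map $\ud_{JA} : J*J \to \mathrm{IDer}(J)$ from Corollary~\ref{ud_JA_action_ce_cor}, which we already know is a (surjective) central extension of Lie algebras, and then identifying the image of $\mathcal D_0$ under this restriction. By Proposition~\ref{Uni_der_Decomp_prop_ja}, $\mathcal D_0 = \sum_{i<j} J_{ij} * J_{ij}$ is a submodule of $J*J$, so the first task is to check that $\mathcal D_0$ is in fact a Lie \emph{subalgebra} of $J*J$. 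For this I would use the bracket formula $[a*b, x*y] = D(a,b)x * y + x * D(a,b)y$ together with the fact that $D(a_{ij}, b_{ij}) = 2[L_{a_{ij}}, L_{b_{ij}}]$ preserves each Peirce space $J_{kl}$: indeed $D(a_{ij}, b_{ij})$ is a derivation annihilating $e_i$ and $e_j$ (as computed in the proof of Proposition~\ref{Uni_der_Decomp_prop_ja}, step 4, where $\ud(X).e_k = 0$ for $X \in \mathcal D_0$), hence it fixes all idempotents $e_k$ and therefore maps $J_{kl} = D_{e_k, e_l}(J)$ into itself. Consequently $D(a_{ij}, b_{ij}) x_{kl} \in J_{kl}$ for $x_{kl} \in J_{kl}$, so $[a_{ij}*b_{ij}, x_{kl}*y_{kl}]$ is again a sum of terms of the form $(\text{element of } J_{kl}) * (\text{element of } J_{kl})$, i.e.\ lies in $\mathcal D_0$. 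Thus $\mathcal D_0$ is a subalgebra and $\ud_0 = \ud_{JA}|_{\mathcal D_0}$ is a Lie algebra homomorphism.

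Next I would pin down the image. Since $\ud_{JA}(a*b) = D(a,b) = 2[L_a, L_b]$, we have $\ud_0(J_{ij}*J_{ij}) = 2[L_{J_{ij}}, L_{J_{ij}}] = [L_{J_{ij}}, L_{J_{ij}}]$ (the factor of $2$ is a unit since $1/2 \in k$), so by $k$-linearity $\ud_0(\mathcal D_0) = \ud_0\big(\sum_{i<j} J_{ij}*J_{ij}\big) = \sum_{i<j}[L_{J_{ij}}, L_{J_{ij}}]$, which is exactly the claimed codomain. Hence $\ud_0$ is surjective onto $\sum_{i<j}[L_{J_{ij}}, L_{J_{ij}}]$ by construction. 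It only remains to note that $\sum_{i<j}[L_{J_{ij}}, L_{J_{ij}}]$ is itself closed under the bracket of $\mathrm{IDer}(J)$, but this is automatic: it is the homomorphic image under $\ud_0$ of the subalgebra $\mathcal D_0$, so it inherits the Lie algebra structure, and the bracket on it agrees with the restriction of the bracket on $\mathrm{IDer}(J)$ because $\ud_{JA}$ is a Lie algebra homomorphism (Corollary~\ref{ud_JA_action_ce_cor}).

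The only genuine point requiring care — and the one I expect to be the main obstacle — is the verification that $\mathcal D_0$ is closed under the bracket, i.e.\ that $D(a_{ij}, b_{ij})$ stabilizes each Peirce space $J_{kl}$. The cleanest route is the one indicated above: $D(a_{ij}, b_{ij})$ is an inner derivation of $J$ fixing $e_i$ and $e_j$, and since $\{e_m\}_{m\in I}$ are orthogonal with $J = \bigoplus_{m\le n} J_{mn}$, one checks (using the Peirce multiplication rules \eqref{Peirce_rule_1}--\eqref{Peirce_rule_5} and $a \circ b = 2ab$) that $D(a_{ij}, b_{ij})$ fixes \emph{every} $e_m$; then derivation-invariance of $J_{mn} = D_{e_m, e_n}(J)$ is immediate from $\Delta D_{e_m,e_n}(J) \subseteq D_{\Delta e_m, e_n}(J) + D_{e_m, \Delta e_n}(J) + D_{e_m, e_n}(\Delta J) = D_{e_m, e_n}(J)$. (Alternatively, one can cite that $\mathcal D_0$ is precisely the kernel of the $\mathrm{IDer}(J)$-module map $X \mapsto (\ud(X).e_i)_{i\in I}$ — essentially the content of the decomposition in Proposition~\ref{Uni_der_Decomp_prop_ja} — and invoke that the action of $\mathrm{IDer}(J)$ on $J*J$ preserves this kernel because $\mathrm{IDer}(J)$ acts on the family of idempotents in the obvious equivariant way, so $\mathcal D_0$ is an ideal of the $\mathrm{IDer}(J)$-module $J*J$ that is simultaneously a subalgebra.) With that in hand the proof is a one-line assembly: $\ud_0 = \ud_{JA}|_{\mathcal D_0}$ is a restriction of a Lie algebra homomorphism to a subalgebra, with image computed directly to be $\sum_{i<j}[L_{J_{ij}}, L_{J_{ij}}]$.
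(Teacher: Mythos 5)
Your proposal is correct and follows essentially the same route as the paper: restrict $\ud_{JA}$ (already known to be a Lie algebra epimorphism by Corollary~\ref{ud_JA_action_ce_cor}) to the subalgebra $\mathcal D_0$, and identify the image directly. The paper's own proof is very terse — it simply invokes "by definition, $\mathcal D_0$ is a Lie algebra" (the subalgebra property is immediate from $\mathcal D_0 = \{X : \ud(X).e_i = 0 \;\forall i\}$ together with $\ud$ being a Lie homomorphism, exactly the alternative argument you sketch in your final parenthetical), whereas you supply the Peirce-space-invariance verification explicitly; both are valid, and yours makes the closure argument more self-contained.
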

\begin{proof} By definition, $\mathcal D_0$ is  a Lie algebra and $\ud_{JA}$ is a Lie algebra morphism into $\mathrm{IDer}(J) \subset \mathrm{End}(J).$ Thus the restriction of $\ud_{JA}$ to $\mathcal D_0$ is a Lie algebra epimorphism onto the image. It is easy to check that the image is $\sum_{i < j}[L_{J_{ij}}, L_{J_{ij}}].$ 
\end{proof}

\begin{cor} The Lie algebra $\mathrm{IDer}(J)$ is generated by $[L_{e_i}, L_{J_{ij}}]$, $i < j$ and $J*J$ is generated by $e_i * J_{ij}, i \neq j.$ \label{matrix_jord_der_generation_cor} In particular

 $$[e_i * x_{ij}, e_i * y_{ij}] = -1/2(x_{ij} * y_{ij}). $$
\end{cor}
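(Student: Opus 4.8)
```latex
The plan is to prove the two assertions in Corollary~\ref{matrix_jord_der_generation_cor} by exploiting the explicit bracket formula on $J*J$ from Corollary~\ref{ud_JA_action_ce_cor}, together with the Peirce identities collected in Lemma~\ref{Peirce_mult_rules_hom} and the decomposition of Proposition~\ref{Uni_der_Decomp_prop_ja}. First I would establish the displayed identity $[e_i * x_{ij}, e_i * y_{ij}] = -\tfrac12(x_{ij}*y_{ij})$ by a direct computation. Recall that the bracket on $J*J$ is $[a*b, u*v] = D(a,b)u * v + u * D(a,b)v$ with $D(a,b) = 2[L_a, L_b]$, so that
\[
[e_i * x_{ij}, e_i * y_{ij}] = D(e_i, x_{ij}) e_i * y_{ij} + e_i * D(e_i, x_{ij}) y_{ij}.
\]
Here $D(e_i, x_{ij}) e_i = 2(e_i(x_{ij}e_i) - x_{ij}(e_i e_i)) = 2(\tfrac12 x_{ij} - x_{ij}) = -x_{ij}$, using the Peirce rule $e_i \circ J_{ij} \subset J_{ij}$ written out as $2 e_i(x_{ij} e_i) = x_{ij}$ (equivalently $x_{ij} e_i = \tfrac12 x_{ij}$), and $e_i e_i = e_i$, $e_i x_{ij} = \tfrac12 x_{ij}$. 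Similarly $D(e_i, x_{ij}) y_{ij} = 2(e_i(x_{ij} y_{ij}) - x_{ij}(e_i y_{ij})) = 2(e_i(x_{ij}y_{ij}) - \tfrac12 x_{ij}y_{ij})$, and since $x_{ij}y_{ij} \in J_{ii} + J_{jj}$ by Peirce rule~\eqref{Peirce_rule_5}, its $J_{ii}$-component is fixed by $L_{e_i}$ up to the usual Peirce scalars; the two contributions combine so that, after substituting back and using $e_i * x_{ij}y_{ij} \in \mathcal D$ cancels appropriately against $x_{ij} * y_{ij}$, one lands on $-\tfrac12(x_{ij}*y_{ij})$. I would carry this out carefully, keeping track of which Peirce component each product lands in, and the anticommutativity relation $a*b + b*a = 0$ of Definition~\ref{Jordan_homology_defi_one_half}.

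Next, the generation statements. For $J*J$: by Proposition~\ref{Uni_der_Decomp_prop_ja} we have $J*J = \mathcal D \oplus \mathcal D_0$ with $\mathcal D = \sum_{i<j} e_i * J_{ij}$ and $\mathcal D_0 = \sum_{i<j} J_{ij}*J_{ij}$. The elements $e_i * J_{ij}$ for $i \ne j$ already span $\mathcal D$ (taking $e_j * J_{ij}$ into account via $e_j * x_{ij} \in \mathcal D$ as well, noting $J_{ij} = J_{ji}$), and the displayed identity just proved shows that every spanning element $x_{ij} * y_{ij}$ of $\mathcal D_0$ lies in the Lie subalgebra generated by the $e_i * J_{ij}$. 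Hence these elements generate $J*J$ as a Lie algebra. For $\mathrm{IDer}(J)$: apply the Lie algebra epimorphism $\ud_{JA} : J*J \to \mathrm{IDer}(J)$ of Corollary~\ref{ud_JA_action_ce_cor} (which is surjective since $\mathrm{IDer}(J)$ is by definition spanned by the $[L_a, L_b] = \tfrac12 D(a,b)$ and every $a, b$ decomposes into Peirce components). Since $\ud_{JA}$ is a Lie algebra homomorphism and $J*J$ is generated by the $e_i * J_{ij}$, the image $\mathrm{IDer}(J)$ is generated by $\ud_{JA}(e_i * x_{ij}) = 2[L_{e_i}, L_{x_{ij}}]$, i.e.\ by $[L_{e_i}, L_{J_{ij}}]$ for $i < j$ (again using $J_{ij} = J_{ji}$ to cover $i > j$).

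The main obstacle I anticipate is the bookkeeping in the direct computation of $[e_i * x_{ij}, e_i * y_{ij}]$: one must correctly apply the Peirce multiplication rules \eqref{Peirce_rule_1}--\eqref{Peirce_rule_5} to determine $D(e_i, x_{ij}) y_{ij}$ — in particular the $J_{ii}$ versus $J_{jj}$ splitting of $x_{ij}y_{ij}$ and the precise Peirce eigenvalue of $L_{e_i}$ on each piece — and then reconcile the result with the defining relations \eqref{HCJ1}, \eqref{HCJ2} of $J*J$ (especially the consequence \eqref{one_kills_allunider} that $e_i * (\text{anything in } J_{ii}) $ interacts with $1 * (\cdots)$ only up to the relations) to extract exactly the coefficient $-\tfrac12$. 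Everything else is formal: the generation statements follow mechanically once the bracket identity is in hand, since $\ud_{JA}$ is a surjective Lie homomorphism and Proposition~\ref{Uni_der_Decomp_prop_ja} already pins down the module structure.
```
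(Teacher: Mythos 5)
Your plan matches the paper's and the generation argument (via Proposition~\ref{Uni_der_Decomp_prop_ja} and the surjection $\ud_{JA}$) is fine, but the bracket computation --- the step you yourself flag as the crux --- has a concrete arithmetic error and rests on the wrong mechanism. You claim $D(e_i,x_{ij})e_i = 2\bigl(\tfrac12 x_{ij}-x_{ij}\bigr)=-x_{ij}$, but with the conventions actually used (see step 2 in the proof of Proposition~\ref{Uni_der_Decomp_prop_ja}: $e_ie_i=e_i$ and $e_ix_{ij}=\tfrac12 x_{ij}$), one has $e_i(x_{ij}e_i)=\tfrac14 x_{ij}$ (not $\tfrac12 x_{ij}$) and $x_{ij}(e_ie_i)=x_{ij}e_i=\tfrac12 x_{ij}$ (not $x_{ij}$), so $D(e_i,x_{ij})e_i=2\bigl(\tfrac14 x_{ij}-\tfrac12 x_{ij}\bigr)=-\tfrac12 x_{ij}$. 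The corrected first summand $D(e_i,x_{ij})e_i * y_{ij}$ is therefore already equal to $-\tfrac12(x_{ij}*y_{ij})$, the entire right-hand side. Consequently the second summand $e_i*D(e_i,x_{ij})y_{ij}$ must vanish outright; nothing \emph{cancels appropriately against} $x_{ij}*y_{ij}$, as your sketch imagines.

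That vanishing is the one non-formal step, and your proposal never names the fact that makes it happen. Writing $x_{ij}y_{ij}=z_i+z_j$ with $z_i\in J_{ii}$, $z_j\in J_{jj}$ (Peirce rule~\eqref{Peirce_rule_5}), one computes $D(e_i,x_{ij})y_{ij}=z_i-z_j$, so the second summand is $e_i*z_i - e_i*z_j$. The piece $e_i*z_j\in J_{ii}*J_{jj}$ dies by~\eqref{Peirce_mult_rules_hom_eq_3}, but $e_i*z_i\in J_{ii}*J_{ii}$ does \emph{not} die by Peirce disjointness: you need the relation~\eqref{unider_square_shift}, $a^2*b-2a*(ba)=0$, specialised to $a=e_i$, $b=z_i$. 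Since $e_i^2=e_i$ and $z_ie_i=z_i$ it gives $e_i*z_i-2e_i*z_i=0$, i.e.\ $e_i*J_{ii}=0$. Without this observation the displayed identity is not established; with it, the rest of your argument (push forward along the Lie epimorphism $\ud_{JA}$, use $J*J=\mathcal D\oplus\mathcal D_0$) closes the proof exactly as in the paper.
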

\begin{proof} Since $\ud$ is an epimorphism and $\ud(e_i * J_{ij}) = 2[L_{e_i}, L_{J_{ij}}]$ it suffices according to Lemma~\ref{Peirce_mult_rules_hom} to show that  $J_{ij} * J_{ij} = [e_i * J_{ij}, e_i * J_{ij}]$ for all $i< j.$
Pick $x_{ij}, y_{ij} \in J_{ij}.$
Then $[e_i * x_{ij}, e_i * y_{ij}] = \ud(e_i * x_{ij})e_i * y_{ij} + e_{i} *\ud(e_i * x_{ij})y_{ij}.$
By 2. in the proof of Proposition~\ref{Uni_der_Decomp_prop_ja}, $\ud(e_i * x_{ij}).e_i= -1/2 x_{ij}.$ 
Thus the first summand equals $-(1/2)(x_{ij} * y_{ij}).$ 
The second summand is  equal to $e_i * 2[L_{e_i}, L_{x_{ij}}].y_{ij} = 2e_i* (e_i(x_{ij} y_{ij}) - (1/2)x_{ij}y_{ij }).$

Note that $x_{ij}y_{ij}= 2(e_i + e_j)(x_{ij}y_{ij})$. With this observation, $e_i(x_{ij} y_{ij}) - (1/2)x_{ij}y_{ij } = e_j(x_{ij}y_{ij}) \in J_{jj}$
and it follows that $e_i * 2[L_{e_i}, L_{x_{ij}}].y_{ij} \in e_i * J_{jj} = \{0\}.$
Therefore, 
$ [e_i * x_{ij}, e_i * y_{ij}] = -(1/2)(x_{ij} * y_{ij}), $ 
which proves the claim.  
\end{proof}
\label{general_peirce_result_homology}
\subsection{Hermitian matrix Jordan algebras}
The results of Section~\ref{general_peirce_result_homology} can be applied to the case where $J$ is the Jordan algebra of hermitian matrices. 
\begin{defi} \label{hermitian_matrix_jord_alg}
Fix an index set $I$, $\card I \geq 3$ and
a unital alternative $k$-algebra $D$, which is associative if $\card I \geq 4$ and which has a nuclear involution $\bar{\;}$, that is $d= \bar d$ implies $d \in \mathrm{Nuc}(D)$ (see Definition~\ref{nucleus_defi}). If $\mathbb M_I(D) = \bigoplus_{i, j \in I}DE_{ij}$ is the set of $I \times I$ matrices with finitely many non-zero entries in $D$, then the involution $\bar{\;}$ can be extended  to a self-inverse linear  map on  $\mathbb M_I(D):$
$$\bar{\;} : \mathbb M_I(D) \rightarrow \mathbb M_I(D)$$ given by $\overline{dE_{ij}} = \bar d E_{ji}.$
With these settings, the fixed point set of $\bar{\;} : \mathbb M_I(D) \rightarrow \mathbb M_I(D)$ is spanned by the 
\emph{elementary symmetric matrices}
$$d[ij] = dE_{ij} + \bar dE_{ji}, \;d \in D,  i \neq j \in I \mbox{  and  } d[ii] = d E_{ii}, \, \bar d = d, \; i \in I .$$
Note that with these definitions $\bar d [kl] = d[lk]$ for all $k, l \in I.$\\
We denote the set of fixed points  by $\mathcal H_{I}(D, \bar{\;})$, called the \emph{hermitian matrices} with entries in $D$. With the conditions as above, 
$J = \mathcal H_{I}(D, \bar{\;})$ is a Jordan algebra with circle  product (see (\ref{circ_prod_Defi})): 
\begin{eqnarray}
 a[ij] \circ b[jk] &=& (ab)[ik], i, j, k \neq, \\
 a[ii] \circ b[ik] &=& (ab)[ik], i \neq k, \\
  d[ii] \circ d'[ii] &=& (dd' + d'd)[ii],\\
  a[ij] \circ b[ji] &=& (ab +  \bar b \bar a )[ii] + ( b a+    \bar a \bar b)[jj], i\neq j, \\
  a[ij] \circ b[kl] &=& d[ii] \circ d'[kk] = 0, \{i,j\} \cap \{kl\} = 0.
\end{eqnarray}
Also, the elements $E_{ii}, i \in I$ are a set of orthogonal idempotents and 
$$J_{ij} = (E_{ij}D + E_{ji}D)\cap J, \; J  =  \bigoplus_{i \leq j}J_{ij} $$
for any total ordering on $I.$ 
Also, to avoid occurrences of $1/2$, we will  mostly use the $\circ$ product as the product on $J$. 
\end{defi}
 \begin{rem} It is convenient to use the circle product as product on the Jordan algebra. So for this section, when we write $ab$ we actually mean $a \circ b.$ Also, when we write $D(a, b).c$ this shall be equal to $a \circ ( b \circ c) - b \circ (a \circ c).$  Since $1/2$ is invertible, these substitution are harmless. 
 \end{rem}
 \begin{defi} Let $J = \mathcal H_I(D, \bar{\;})$. 
Define a $\mathrm{Fix}(D)$-valued \emph{trace} on $D$ by 
 \begin{eqnarray*}
 t: D &\rightarrow& \mathrm{Fix}(D)\\
 t(z)1 &=& z + \bar z,
 \end{eqnarray*}
 and a \emph{norm} form by 
 \begin{eqnarray*}
 N: D &\rightarrow & \mathrm{Fix}(D) \\
 N(z)1 &=& z \cdot \bar z.
 \end{eqnarray*}
 By linearization we obtain  a bilinear form $$N(a, b) = a \bar b + b \bar a$$
 which is symmetric
 \begin{equation} 
 N(a, b) = N(b,a).  
 \label{bil_from_trace_is_symm}
 \end{equation} 
 \label{alt_norm_trace_Defi}
   \end{defi} 
 \begin{lem} \label{der_alt_explicit_form_lem}
 Recall (see Corollary~\ref{ud_JA_action_ce_cor}) that we have a central extension $ \ud: J* J \rightarrow \mathrm{IDer}(J)$ which induces an action by derivations $(x * y).z = 4 D(x,y).z$ for all $x,y, z \in J.$ Under this action,  for $a,b,c \in D$,
  \begin{eqnarray}
   (a[ij] * b[ij]).c[ij] &=& (2(a, b,c) + L_{a \bar b - b \bar a}c - R_{\bar a b  -  \bar b a}c)[ij],  \label{der_alt_explicit_form_lem1}\\
  (a[ij] * b[ij]).c[jl]  &=& (\bar a(bc) - \bar b(ac))[jl] \quad i,j, l \neq, \label{der_alt_explicit_form_lem2} \\
   (a[ij] * b[ij]).c[li]  &=& ((cb)\bar  a - (ca)\bar b)[li] \quad i,j, l \neq,  \label{der_alt_explicit_form_lem3}\\
   (a[ij] * b[ij]).c[kl]  &=& 0\quad i,j,k,l \text{ pairwise distinct}.
   \end{eqnarray} 
  If the involution is not only nuclear, but also central, then 
  \begin{equation}
  (a[ij] * b[ij]).c[ij] =2 (N(a, c) b - N(b,c)a )[ij].
  \label{diagonal_action_1}
  \end{equation}
  \end{lem}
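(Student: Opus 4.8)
The plan is to prove Lemma~\ref{der_alt_explicit_form_lem} by direct computation, starting from the explicit action of $\ud(x*y)$ on $J = \mathcal H_I(D, \bar{\;})$ that was established in Corollary~\ref{ud_JA_action_ce_cor}: namely $(x*y).z = 4D(x,y).z = 4\big(x\circ(y\circ z) - y\circ(x\circ z)\big)$, where everything is now written with the circle product. So the entire task reduces to expanding $4\big(a[ij]\circ(b[ij]\circ c[kl]) - b[ij]\circ(a[ij]\circ c[kl])\big)$ using the circle-multiplication rules for $\mathcal H_I(D,-)$ listed in Definition~\ref{hermitian_matrix_jord_alg}, and then simplifying the resulting expression in $D$ using only the facts that $D$ is alternative with a nuclear involution (and associative when $\card I \ge 4$).

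First I would dispose of the easy cases. For $i,j,k,l$ pairwise distinct, each inner product $b[ij]\circ c[kl]$ and $a[ij]\circ c[kl]$ vanishes by the last multiplication rule, so $(a[ij]*b[ij]).c[kl] = 0$. For the ``one shared index'' cases (\ref{der_alt_explicit_form_lem2}) and (\ref{der_alt_explicit_form_lem3}), I would use $b[ij]\circ c[jl] = (bc)[il]$ and then $a[ij]\circ (bc)[il] = 0$ since $\{i,j\}\cap\{i,l\}$ has exactly one element so the product lands in $J_{jl}$ via the rule $a[ij]\circ d[il]$... more carefully: $a[ij]\circ d[il]$ is a product of the form $a[ji]\circ d[il] = (ad)[jl]$ when reading indices in the right order. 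So $a[ij]\circ(b[ij]\circ c[jl]) = a[ij]\circ (bc)[il]$; writing $a[ij] = \bar a[ji]$, this is $\bar a[ji]\circ (bc)[il] = (\bar a(bc))[jl]$. Subtracting the term with $a,b$ swapped gives $(\bar a(bc) - \bar b(ac))[jl]$, which is (\ref{der_alt_explicit_form_lem2}); (\ref{der_alt_explicit_form_lem3}) is entirely analogous (or follows by applying the involution-induced automorphism of Lemma~\ref{Lie_alg_auto_from_alt_involution}).

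The main case, and the main obstacle, is the diagonal one (\ref{der_alt_explicit_form_lem1}): computing $(a[ij]*b[ij]).c[ij]$. Here the inner product $b[ij]\circ c[ij] = (bc + \bar c\bar b)[ii] + (cb + \bar b\bar c)[jj]$ lands in $J_{ii} + J_{jj}$, and then one must apply $a[ij]\circ(-)$ to this, using $a[ij]\circ e[ii] = (\text{something})[ij]$ and $a[ij]\circ e[jj] = (\text{something})[ij]$ with the appropriate ordering of multiplications. The resulting expression will be a sum of several terms of the form $(a(bc))[ij]$, $((\bar c\bar b)a)[ij]$, etc.; after subtracting the $a\leftrightarrow b$ swapped version and collecting, one needs the alternative-algebra identities and the nuclearity of the involution (so that elements fixed by $\bar{\;}$, such as $t(z)$, are central enough to move past associators) to massage the answer into the claimed form $2(a,b,c) + L_{a\bar b - b\bar a}c - R_{\bar a b - \bar b a}c$. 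I would track the associator terms carefully since they are exactly where alternativity (as opposed to associativity) matters, and this bookkeeping — together with checking that all the ``wrong'' terms cancel — is the part that requires genuine attention rather than routine expansion. Finally, for the last claim, assuming the involution is central, every $z + \bar z$ and $z\bar z$ is a central scalar, so $a\bar b - b\bar a$ and $\bar a b - \bar b a$ simplify: using $a\bar b + b\bar a = N(a,b)$ (central) we get $a\bar b - b\bar a = N(a,b) - 2b\bar a$ and the associator $(a,b,c)$ vanishes by associativity of the (now associative, since central involution forces enough commutativity in the relevant subalgebra) computation — more precisely one substitutes $N(a,c), N(b,c)$ in and the $L$-, $R$-terms collapse to $2(N(a,c)b - N(b,c)a)[ij]$, giving (\ref{diagonal_action_1}). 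I would verify this last simplification by plugging the definitions of $N(a,b) = a\bar b + b\bar a$ from Definition~\ref{alt_norm_trace_Defi} directly into the formula (\ref{der_alt_explicit_form_lem1}) and using centrality to rearrange.
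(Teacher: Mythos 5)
Your high-level strategy — expand $(x*y).z = 4D(x,y).z$ using the circle-multiplication rules of $\mathcal H_I(D,\bar{\;})$ and then simplify inside $D$ — is indeed the paper's approach, and your handling of the ``pairwise distinct'' and ``one shared index'' cases is correct and matches the paper's computation.

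However, your treatment of the central-involution case (\ref{diagonal_action_1}) has a genuine gap. You claim that ``the associator $(a,b,c)$ vanishes by associativity of the (now associative, since central involution forces enough commutativity in the relevant subalgebra) computation.'' This is false. Centrality of the involution does \emph{not} make $D$ associative: the split octonion algebra, which is the main application of this very lemma later in the paper, has a central involution (the hermitian elements are exactly the scalar multiples of $1$) yet is far from associative, so $(a,b,c)$ does not vanish there. The actual route is entirely different: the associator term $2(a,b,c)$ does \emph{not} disappear — it gets absorbed into the norm expression. Concretely, the paper first uses centrality to derive the operator identity $L_{a\bar b - b\bar a} - R_{\bar a b - \bar b a} = 2\bigl(L_{a\bar b} - R_{\bar a b}\bigr) = -2\bigl(L_{b\bar a} - R_{\bar b a}\bigr)$ (here one uses that $L_{a\bar b + b\bar a} = R_{a\bar b + b\bar a}$ since $a\bar b + b\bar a$ is central, and that $\bar a b - \bar b a + a\bar b + b\bar a = 2\bar a b$ because commutators are skew under a central involution). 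It then performs a separate careful expansion of $N(b,c)a - N(a,c)b$, using alternative identities and the fact that an associator changes sign under conjugation of any argument, to show $N(b,c)a - N(a,c)b = (a,b,c) + R_{\bar b a}c - L_{b\bar a}c$. Combining these two identities is what produces the formula. Without this two-step argument — which is exactly where the nuclearity/centrality assumptions do real work — your ``plug in $N$ and collapse'' step does not go through: the $L$- and $R$-terms alone do not account for $2(a,b,c)$, and trying to kill the associator outright, as you propose, would make the lemma fail for octonion coordinates.
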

  \begin{proof}
 Since the involution is nuclear, an element $x + \bar x$ associates with all other elements $y,z \in D.$ Thus
 $$ 0 = (x + \bar x, y, x) = (x, y, z) + (\bar x, y, z) $$
 or equivalently \begin{equation}(\bar x, y, z) = - (x, y, z). \label{conj_associatior} \end{equation} Combined with the alternative law, this means that replacing an arbitrary  element in an associator by its conjugate changes the sign of the associator. Also, for all choices of $a, b$ and $c$, $\overline{(a, b, c)} = -(a, b, c).$\\
The coefficient of $(a[ij] * b[ij]).c[ij] = (a[ij](b[ij] c[ij] ) - b[ij](a[ij] c[ij] ))$ is (using for example the multiplication rules on p.174 in \cite{TasteOfJA}) 
 \begin{eqnarray*} \lefteqn{
 (c \bar b )a + (b \bar  c)a - (c \bar a)b - (a \bar c) b  + a(\bar b c) + a(\bar c b) - b (\bar a c) - b (\bar c a) } \\
 &=& (b \bar  c)a  - b (\bar c a)  - (a \bar c) b + a(\bar c b) + (c \bar b )a   - (c \bar a)b   + a(\bar b c)  - b (\bar a c)  \\
  &=& (b, \bar c, a) - (a, \bar c, b)  + (c, \bar b ,a) - (c, \bar a, b) - (a, \bar b, c) +(b, \bar a, c)\\
 &&+ c(\bar  b a ) - c(\bar a b) + (a \bar b )c-  (b \bar  a)c \\
 &=& -(a, b, c) - (a, b, c)  + (a,  b ,c) + (a, b, c) + (a,  b, c) +(a, b, c)\\
 &&+ c(\bar  b a ) - c(\bar a b) + (a \bar b )c-  (b \bar  a)c \\
 &=&2(a, b , c) +  L_{a \bar b - b \bar a}c - R_{a \bar b - b \bar a}c .
 \end{eqnarray*}
In the second last step, (\ref{conj_associatior}) was used. \\
The coefficient of $(a[ij] * b[ij])c[jl] = a[ij](b[ij] c[jl]) - b[ij](a[ij] c[jl])$  $= \bar a[ji]  (b[ij] c[jl]) - \bar b[ji]  (a[ij] c[jl])$ is
$\bar a( bc) - \bar b(ac)$ which proves (\ref{der_alt_explicit_form_lem2}), and similarly, the coefficient of $(a[ij] * b[ij])c[li] = a[ij](b[ij] c[li]) - b[ij](a[ij] c[li]) =  (c[li] b[ij] )\bar a[ji] - ( c[li]  a[ij]) \bar b[ji]  $ is
$ (cb)\bar  a - (ca)\bar b$ which proves (\ref{der_alt_explicit_form_lem3}). 
Under the assumption that the involution is central, the commutator has the following property: Let $x, y \in D,$ then
$[x + \bar x , y] = 0$ thus $[x, y] = -[\bar x, y] = [y, \bar  x].$ 
The first observation is $\overline{[x, y]} =-[x, y] $, so all commutators are skew. Next 
$[x, \bar y] = -[x,y] = [\bar x, y] $, which implies that \begin{equation} x \bar y + y \bar x = \bar x y + \bar y x  \label{in_proof_pre-triality_id_2}\end{equation}for all $x, y \in D.$
By definition of the norm, this is also equivalent to $N(x,y) = N(\bar x, \bar y),$ whence also to
$$N(\bar x, y) = N(x, \bar y). \label{invariant_in_one argument_if _central_inv}$$
Since  $a \bar b + b\bar a \in Z(D),$ the left and right multiplication operators of these elements agree:
 $L_{a \bar b + b\bar a} - R_{a \bar b + b \bar a} = 0.$
 Therefore, $L_{a \bar b - b \bar a} - R_{\bar a b  -  \bar b a} = L_{a \bar b - b \bar a} - R_{\bar a b  -  \bar b a} + L_{a \bar b + b\bar a} - R_{a \bar b + b \bar a} = 2 L_{a \bar b } - R_{\bar a b  -  \bar b a + a \bar b + b \bar a}.$ Use (\ref{in_proof_pre-triality_id_2}) to obtain $\bar a b  -  \bar b a + a \bar b + b \bar a  = \bar a b \  - \bar b a + \bar b a + \bar a b = 2(\bar a b)$, so that  
 \begin{equation}L_{a \bar b - b \bar a} - R_{\bar a b  -  \bar b a} =  2(L_{a \bar b } - R_{\bar a b}) \label{in_proof_pre-triality_id_3}. \end{equation}
 This in particular implies that 
 \begin{equation} L_{a \bar b } - R_{\bar a b}  = - ( L_{b \bar a } - R_{\bar b a}) \label{switching_signs_inin_proof_pre-triality} \end{equation} since the left hand side changes the sign when the roles of $a$ and $b$ are interchanged. 
 Now, 
 \begin{eqnarray*} N(b,c)a  - N(a, c) b &=& N(b,c)a - N(c,a) b \\
 &=& N(c,b)a = N(\bar a, \bar c) b\\
 &= & (c \bar b + b \bar c)a - ( \bar a c + \bar c a)b \\
  &= & (c \bar b + b \bar c)a - b( \bar a c + \bar c a) \\
  &= & (c \bar b)a + (b \bar c)a - b(\bar a c) - b(\bar c a) \\
  &=& (c, \bar b, a) + c(\bar ba ) + (b \bar c)a + (b, \bar a, c) - (b \bar a)c - b(\bar c a) \\
  &=& (c, \bar b, a)  + (b \bar c)a + (b, \bar a, c)  - b(\bar c a) \\   
  && + c(\bar ba ) - (b \bar a)c \\
  &= & (a, b, c) + (a, b,c) + (b, \bar c, a) + R_{ b \bar a}c - L_{b \bar a}c \\
  &= & (a, b, c) + (a, b,c) - (a, b, c) + R_{ b \bar a}c - L_{b \bar a}c \\
  &= & (a, b, c) + R_{\bar b  a}c - L_{b \bar a}c \\
 \end{eqnarray*}
 from which it follows by (\ref{in_proof_pre-triality_id_3}) that
 $$2(a, b,c) + L_{a \bar b - b \bar a}c - R_{\bar a b  -  \bar b a}c =  2N(b,c)a  -2 N(a, c) b .$$
 Thus (\ref{diagonal_action_1}).
  \end{proof}
  
 \begin{defi} Define an  operator $\mathbf g(a \otimes b)$ from $D \otimes D$ into $\End{D} \times \End{D} \times \End{D}$ where the components $(g_1(a \otimes b),g_2(a \otimes b), g_3(a \otimes b) )$ are given by
 \begin{eqnarray*} g_1 (a\otimes b)(c) &=&  2(a, b,c) + L_{a \bar b - b \bar a}c - R_{\bar a b  -  \bar b a}c\\
  g_2  (a \otimes b)(c) &=& \bar a(bc) - \bar b(ac)\\
  g_3  (a \otimes b)(c) &=& (cb)\bar  a - (ca)\bar b
  \end{eqnarray*}
  for $a, b \in D$, $c \in D.$
  \label{g_i_operators_def}
  \end{defi} 
  
  \begin{lem} Let $\Delta \in \mathrm{Der}(J)$ such that $\Delta(J_{ij}) \subset J_{ij}$ for $i \neq j.$   Fix three distinct elements  $i,j, k \in I$ denote $\Delta|_{J_{ij}}$ by $g_k$, $\Delta|_{J_{jk}}$ by $g_i$, $\Delta|_{J_{ki}}$ by $g_j$.  Then
\begin{itemize}
 \item[\rm(i)] The triple $$\mathbf{g}_\Delta(k,i,j) = (\overline{g_k}, g_i, g_j)$$ is a triality of the alternative algebra $D.$ 
  \item[ \rm(ii)]  Every cyclic permutation $\pi$ of the indices yields a triality  $(\overline {g_{\pi k}}, g_{\pi i}, g_{\pi j}). $ 
  \item[\rm (iii)] If $\Delta \in \ud(\mathcal D_0)$ is an inner derivation of $J$ and $\mathbf{g}_\Delta(k,i,j)$ the corresponding triality, then there are inner derivations $\Delta'$ and $\Delta''$ of $J$ such that 
  \begin{eqnarray*}
  \mathbf{g}_{\Delta'}(k,i,j) &=& (g_k, \overline{g_i}, \overline{g_j}),\\
   \mathbf{g}_{\Delta''}(k,i,j) &=& (\overline{g_{\sigma k}}, g_{\sigma i}, g_{\sigma j})\\
  \end{eqnarray*}
  where $\sigma$ is any cyclic permutation of $(k, i, j).$
 \end{itemize}
 \label{general_trial_lemma}
   \end{lem}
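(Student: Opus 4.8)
The goal is to show that if $\Delta$ is a derivation of $J = \mathcal H_I(D,\bar{\;})$ preserving each off-diagonal Peirce space $J_{ij}$, then restricting $\Delta$ to the three spaces $J_{ij}$, $J_{jk}$, $J_{ki}$ attached to a triangle $\{i,j,k\}$ of indices produces a triality of $D$ (after conjugating the first component), and that this construction is cyclically equivariant and, for inner $\Delta$, lands in the inner trialities. The natural approach is a direct verification: translate the Leibniz rule $\Delta(x\circ y) = \Delta(x)\circ y + x\circ\Delta(y)$ applied to products of elementary symmetric matrices into the triality identity $t_1(ab) = t_2(a)b + at_3(b)$, using the Peirce multiplication rules from Definition~\ref{hermitian_matrix_jord_alg}.

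First I would fix distinct $i,j,k \in I$ and write $\Delta|_{J_{ij}}$, $\Delta|_{J_{jk}}$, $\Delta|_{J_{ki}}$ through the obvious identifications $J_{ij}\cong D$, $a[ij]\leftrightarrow a$ (and similarly for the others), calling these restrictions $g_k, g_i, g_j$ respectively, so that e.g. $\Delta(a[jk]) = (g_i a)[jk]$. The key computation for (i) is to apply the Leibniz rule to $a[ki]\circ b[ij] = (ab)[kj]$. Expanding $\Delta((ab)[kj])$ gives $(g_j(ab))[kj]$, while $\Delta(a[ki])\circ b[ij] + a[ki]\circ\Delta(b[ij])$ equals $((g_j a)b)[kj] + (a\,g_k(b))[kj]$ — but note that $\Delta(b[ij]) = (g_k b)[ij]$ and since $b[ij] = \bar b[ji]$ we must be careful which slot and which bar convention applies. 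Working this out against the multiplication rule $a[ki]\circ b[ij]=(ab)[kj]$ and using $\bar d[kl] = d[lk]$, the three components line up exactly so that $(\overline{g_k}, g_i, g_j)$ satisfies the triality identity; here the bar on $g_k$ appears precisely because $J_{ij}$ carries the involution relating its $[ij]$- and $[ji]$-descriptions. For (ii), cyclically permuting $(i,j,k)$ permutes the three Peirce spaces among themselves, and the same computation with the rotated product $b[ij]\circ c[jk]=(bc)[ik]$ (etc.) yields the triality $(\overline{g_{\pi k}}, g_{\pi i}, g_{\pi j})$; alternatively one invokes Lemma~\ref{Lie_alg_auto_from_alt_involution} and its description of how trialities transform, combined with the fact that $(\bar t_1, \bar t_3, \bar t_2)$ is again a triality, to pass between the two orderings. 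For (iii), when $\Delta = \ud_0(X)$ with $X \in \mathcal D_0$, I would use the explicit formulas of Lemma~\ref{der_alt_explicit_form_lem}: an element $a[ij]*b[ij]$ acts on $J_{ij}$ by $g_1(a\otimes b)$, on $J_{jl}$ by $g_2(a\otimes b)$, on $J_{li}$ by $g_3(a\otimes b)$ (Definition~\ref{g_i_operators_def}), and comparing these with the operators $\lambda,\rho,\sigma$ of Example~\ref{rho_lambda_expls} exhibits the associated triality as an inner triality in the sense of Definition~\ref{inner_trial_def}; the two further trialities $(g_k,\overline{g_i},\overline{g_j})$ and the $\sigma$-rotated one are then obtained by applying the involutive automorphism of Lemma~\ref{Lie_alg_auto_from_alt_involution} and cyclic permutation, both of which preserve the subalgebra $\mathcal T_0$ of inner trialities because they permute/conjugate the generators $\lambda(a),\rho(b),\sigma(a,b)$ among themselves.

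The main obstacle I anticipate is purely bookkeeping: keeping the bar conventions straight across the six descriptions $J_{ij}\cong J_{ji}$, and verifying that the various Peirce products $a[ij]\circ b[jk]$ — which come with implicit identifications $d[ij]=\bar d[ji]$ — are matched to the correct one of $t_2(a)b$ versus $at_3(b)$ so that the sign and the placement of the bar on $g_k$ come out as stated. There is a genuine (small) subtlety in that the triality identity is not symmetric in its three slots, so the choice of which index labels which $g$ and in which order the triple is written is forced by the computation and must be pinned down once and then used consistently; an additional check is that the definition of $g_k$ as $\Delta|_{J_{ij}}$ (rather than $\Delta|_{J_{ji}}$) together with the chosen total order plays no role, since $J_{ij}=J_{ji}$ as sets and the bar on the first component absorbs the ambiguity. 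Once the index conventions are nailed down, parts (i) and (ii) are a one-line expansion each, and (iii) follows formally from the earlier lemmas on inner trialities.
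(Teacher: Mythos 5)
Your sketch for parts (i) and (ii) follows essentially the same strategy as the paper: apply the Leibniz rule to Peirce products $J_{nl}\circ J_{lm}\subset J_{mn}$ and read off the triality identity, using the identification $\bar d[mn]=d[nm]$ to account for the bar on the first component; then observe that the choice of which pair plays the role of $(mn)$ was arbitrary. The paper is a bit more systematic about index conventions (choosing $(mnl)$ to be a cyclic permutation of $(ijk)$ once and for all and working with that), but your computation, once carried out carefully, would land in the same place.

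Part (iii) is where there is a genuine gap. The statement does not ask you to show that $\mathbf g_\Delta(k,i,j)$, its bar, or its cyclic rotate lie in the subalgebra $\mathcal T_0$ of inner trialities of $D$ (that is the content of the later Proposition~\ref{inner_trial_prop}); it asks you to exhibit \emph{inner derivations $\Delta'$ and $\Delta''$ of the Jordan algebra $J$} whose associated triple is the barred or cyclically rotated one. Membership of a triple in $\mathcal T_0(D)$ does not by itself produce such a $\Delta'$: you would additionally need to know that the triple lies in the image of the map $\Delta\mapsto\mathbf g_\Delta(k,i,j)$ from $\ud(\mathcal D_0)$, and that is precisely what is to be established. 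The paper's proof instead takes a spanning element $\Delta=[L_{a[ij]},L_{b[ij]}]\in\ud(\mathcal D_0)$, explicitly sets $\Delta'=[L_{\bar a[ij]},L_{\bar b[ij]}]$ and $\Delta''=[L_{a[jl]},L_{b[jl]}]$, and verifies directly from the formulas of Lemma~\ref{der_alt_explicit_form_lem} that these have the required associated triples. There is also a smaller issue with your invocation of Lemma~\ref{Lie_alg_auto_from_alt_involution}: that automorphism sends $(t_1,t_2,t_3)$ to $(\bar t_1,\bar t_3,\bar t_2)$, so applied to $(\bar g_k,g_i,g_j)$ it produces $(g_k,\bar g_j,\bar g_i)$, with the second and third slots transposed, which is not literally the target $(g_k,\bar g_i,\bar g_j)$ and requires a further argument to reconcile.
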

   
  \begin{proof} (i) For $i,j, k \neq$, $x[ij]y[jk] =(xy)[ik],$ (with the product of $x$ and $y$ taken in $D$). Since $D$ is a unital  algebra,  $J_{ik} = J_{ij}J_{kj}.$ 
Choose an index pair $(mn)$ among $(ij)$, $(jk)$ and $(ki)$ and denote the third index by $l$ so that $(mnl)$ is a cyclic permutation of $(ijk).$ Define, $z[mn]$ by $x[nl] y[lm] = \bar z [mn] \in J_{mn} $ for $x[nl] \in J_{nl}$, $y[lm] \in J_{lm}$ and  $\Delta_l = \Delta |_{J_{mn}}, \Delta_m = \Delta |_{J_{nl}}, \Delta_n = \Delta |_{J_{lm}}.$ Then, since $\Delta$ is a derivation $J$: 
\begin{eqnarray*}  \Delta(z[mn]) &=& \Delta_l(( \bar y \bar x)[mn]) = {\Delta }( (\bar y)[ml] \bar x[ln]) \\ &=&  
 \Delta( y[lm] )x[nl] + y[lm] \Delta(x[nl]) \\ &=& (x \Delta_3(y))[nm] + (\Delta_2(x) y)[nm]  
 \end{eqnarray*}
and thus $(\overline {\Delta_l}, \Delta_m, \Delta_n )$ is a triality.  \\ 
(ii) Since the index pair $(mn)$ was arbitrary, we can choose another one and this will result in a cyclic permutation of $(mnl).$ \\
(iii) We may without loss of generality assume that $\Delta = [L_{a[ij]}, L_{b[ij]}]$ since that statement, once proven, will allow us to cyclically permute the indices. Then by Lemma~\ref{der_alt_explicit_form_lem}
$\mathbf{g}_{[L_{\bar a[ij]}, L_{\bar b[ij]}]} = \overline{\mathbf{g}}_{\Delta}$ and if $\sigma = ( j,l, i)$ is a cyclic permutation then $\Delta '' = [L_{a[jl]}, L_{b[jl]}]$ has the property that 
$ \mathbf{g}_{\Delta''} = (\overline{g_{\sigma 1}}, g_{\sigma 2}, g_{\sigma 3}).$
\end{proof}

\begin{lem} \label{matrix_jord_alg_lem} For $J = \mathcal H_I(D, \bar{\;}),$ $ \card I \geq 3$ with idempotents $e_i = E_{ii}$: 
 $\sum_{i\neq k }J_{ik}^2 = \sum J_{ii} .$
\end{lem}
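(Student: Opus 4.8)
The plan is to show the two inclusions $\sum_{i \neq k} J_{ik}^2 \subset \sum_i J_{ii}$ and $\sum_i J_{ii} \subset \sum_{i \neq k} J_{ik}^2$ by direct computation with elementary symmetric matrices, using the multiplication rules recorded in Definition~\ref{hermitian_matrix_jord_alg} together with the hypothesis $\card I \geq 3$ (which guarantees, for any index $i$, the existence of at least one further index $k \neq i$). Throughout I would work with the circle product, as the remark following Definition~\ref{hermitian_matrix_jord_alg} permits.

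First I would establish $\subset$. Pick $i \neq k$ and $a, b \in D$. The product rule $a[ik] \circ b[ki] = (ab + \bar b \bar a)[ii] + (ba + \bar a \bar b)[kk]$ shows immediately that $J_{ik}^2 \subset J_{ii} + J_{kk} \subset \sum_i J_{ii}$. Summing over all pairs $i \neq k$ gives $\sum_{i \neq k} J_{ik}^2 \subset \sum_i J_{ii}$.

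The substantive direction is $\supset$, i.e. showing each diagonal Peirce space $J_{ii}$ lies in $\sum_{i \neq k} J_{ik}^2$. Fix $i$ and choose $k \neq i$ (possible since $\card I \geq 3$, indeed $\geq 2$ suffices here). The generic diagonal element of $J_{ii}$ has the form $d[ii]$ with $\bar d = d$. I would hit it using the $1 \in D$ entry: compute $d[ik] \circ 1[ki] = (d\cdot 1 + \bar 1 \cdot \bar d)[ii] + (1 \cdot d + \bar d \cdot \bar 1)[kk] = (d + \bar d)[ii] + (d + \bar d)[kk] = 2d[ii] + 2d[kk]$, using $\bar d = d$. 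Then, switching the roles and doing the analogous product with indices centered at $k$ using some third index $l$ (here $\card I \geq 3$ is genuinely needed to have $l \neq i, k$) — namely compute $d[kl] \circ 1[lk] = 2d[kk] + 2d[ll]$ — one can solve for $d[ii]$ as a $k$-linear combination of elements of $\sum_{p \neq q} J_{pq}^2$; since $1/2 \in k$ the factor of $2$ is harmless. (Alternatively, and more cleanly, one can note that $d[ik] \circ 1[ki]$ and $d[ik'] \circ 1[k'i]$ for two distinct $k, k' \neq i$ together with $d[kk'] \circ 1[k'k]$ form a $3 \times 3$ invertible-over-$k$ linear system in $d[ii], d[kk], d[k'k']$.) Having obtained every $d[ii]$, summing over $i$ gives $\sum_i J_{ii} \subset \sum_{i \neq k} J_{ik}^2$, and combined with the first inclusion this yields equality.

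The main obstacle — really the only point requiring care — is making sure the linear algebra in the $\supset$ direction is honestly solvable over $k$ and not merely over a field: this is exactly where $\card I \geq 3$ and $1/2 \in k$ enter, since with only two indices the system $\{d[ii] + d[kk], \ldots\}$ is degenerate, whereas a third index produces a Vandermonde-type $3 \times 3$ matrix with determinant a unit of $k$. I would state this linear solvability explicitly (e.g. exhibit the inverse combination) rather than appeal to rank arguments, so that the proof is valid over an arbitrary base ring satisfying the standing hypothesis $1/2 \in k$.
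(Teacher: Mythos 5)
Your proof is correct and takes the same approach as the paper: the three-index trick (relying on $1/2 \in k$ and $\card I \geq 3$) that recovers $d[ii]$ from products of the form $d[pq]\circ 1[pq] = 2d([pp]+[qq])$, which is exactly the ``standard trick'' the paper attributes to \cite[4.2]{Neh1996}. One small point of precision: the two equations displayed in your main line, $d[ik]\circ 1[ki] = 2d([ii]+[kk])$ and $d[kl]\circ 1[lk] = 2d([kk]+[ll])$, do not by themselves determine $d[ii]$ over $k$ --- you genuinely need a third one, $d[il]\circ 1[li] = 2d([ii]+[ll])$, after which $d[ii] = \tfrac{1}{4}\bigl(d[ik]\circ 1[ki] + d[il]\circ 1[li] - d[kl]\circ 1[lk]\bigr)$; your parenthetical $3\times 3$ system is the complete and correct argument (although the matrix in question is not of Vandermonde type) and should be promoted to the main text.
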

\begin{proof}As noted before
$$J_{ij} = \{ d E_{ij}  + \bar d E_{ji } : d \in D\}, \quad J_{ii} = D_0E_{ii}$$ where $D_0$ is the set of hermitian elements of $D.$
 Since $d[ik] b[ik] = (d \bar b  + b\bar d)[ii] + (\bar d  b  + \bar b d)[kk] )$ and $ D_0$ is spanned by elements of the form $\{d + \bar d : d \in D \}$ we have to write $(b + \bar b)[jj]$ as a linear combination of elements of the form $(d [ii] + \bar d[kk]).$ 
This is a standard trick (see for example \cite[4.2]{Neh1996}): let $\beta = b + \bar b$ and pick three distinct indices $i,j,k$, then 
$$(b + \bar b)[ii]  = b[ii] + \bar b[jj]+ \bar b [ii] + b [kk] - \bar b [jj] - b [kk] \in J_{ij}^2 + J_{ik}^2 + J_{jk}^2.$$
\end{proof}
\begin{cor} Let $L = \mathrm{TKK}(\mathcal H_I(D, \bar{\;}))$, $u  : \mathfrak{uce}(L) \rightarrow L$ the universal central extension. Then 
$$\ker u = \ker(\ud_0 ) $$ where $\ud_0: \mathcal D_0 \rightarrow \sum_{i \neq j}[L_{J_{ij}}, L_{J_{ij}}].$
\label{hermitian_D_0_cor}
\end{cor}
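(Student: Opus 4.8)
The plan is to combine the main structural result of Section~\ref{Jordan_hom_Subsection} with the reduction carried out for orthogonal families of idempotents. First I would recall that, since $1/2 \in k$, the Jordan algebra $J = \mathcal H_I(D, \bar{\;})$ gives a perfect Jordan pair $\mathbf J = (J, J)$, so $L = \TKK(\mathbf J)$ is a perfect $A_1$-graded Lie algebra admitting a universal central extension $u : \mathfrak{uce}(L) \rightarrow L$. By Proposition~\ref{one_half_jordan_hom_H_2_prop}, $\ker u \cong \mathrm{HC}(J) = \ker \ud_{JA}$, where $\ud_{JA} : J * J \rightarrow \mathrm{IDer}(J)$ is the central extension from Corollary~\ref{ud_JA_action_ce_cor}. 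Thus the claim reduces to identifying $\ker \ud_{JA}$ with $\ker \ud_0$, where $\ud_0 : \mathcal D_0 \rightarrow \sum_{i<j}[L_{J_{ij}}, L_{J_{ij}}]$ is the restriction considered in \eqref{ud_0_def}.

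Next I would verify that the hypotheses of Section~\ref{general_peirce_result_homology} are met in this situation. The elements $e_i = E_{ii}$, $i \in I$, form an orthogonal family of idempotents in $J$, and by Definition~\ref{hermitian_matrix_jord_alg} we have $J = \bigoplus_{i \leq j} J_{ij}$ for any total order on $I$, so condition~\eqref{supplementary_cond} holds. The key extra condition~\eqref{non_degeneracy_assumption_peirce_decomp}, namely $\sum_{i \neq k} J_{ik}^2 = \sum_i J_{ii}$ with $\card I \geq 3$, is precisely the content of Lemma~\ref{matrix_jord_alg_lem}. Having checked these, Proposition~\ref{Uni_der_Decomp_prop_ja} applies verbatim and gives the decomposition $J * J = \mathcal D_0 \oplus \mathcal D$ together with the inclusion $\ker \ud_{JA} \subset \mathcal D_0$ and the fact that $\ud_{JA}$ restricted to $\mathcal D$ is injective (since $\ud(\mathcal D) \cong \bigoplus_{i<j} J_{ij}$ and $\ud(\mathcal D) \cap \ud(\mathcal D_0) = \{0\}$ inside $\mathrm{IDer}(J)$).

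Finally I would assemble the chain of identifications. Since $\ker \ud_{JA} \subset \mathcal D_0$, we have $\ker \ud_{JA} = \ker(\ud_{JA}|_{\mathcal D_0}) = \ker \ud_0$, because $\ud_0$ is by definition the restriction of $\ud_{JA}$ to $\mathcal D_0$ and its image is exactly $\sum_{i<j}[L_{J_{ij}}, L_{J_{ij}}]$ (this last point is the content of the lemma preceding Corollary~\ref{matrix_jord_der_generation_cor}). Combining with $\ker u \cong \mathrm{HC}(J) = \ker \ud_{JA}$ from Proposition~\ref{one_half_jordan_hom_H_2_prop} yields $\ker u = \ker \ud_0$, as desired.

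I do not expect a serious obstacle here: every ingredient is already in place, and the argument is essentially a bookkeeping exercise linking Proposition~\ref{one_half_jordan_hom_H_2_prop}, Proposition~\ref{Uni_der_Decomp_prop_ja}, and Lemma~\ref{matrix_jord_alg_lem}. The only point requiring a little care is making explicit that the normalization conventions match — in particular that the ``$*$'' of Definition~\ref{Jordan_homology_defi_one_half} agrees with the operations used in Section~\ref{idempotent_homology} (which it does, by the remark following Definition~\ref{Jordan_homology_defi_one_half} and the use of the circle product in Definition~\ref{hermitian_matrix_jord_alg}), and that the perfectness needed to invoke the universal central extension is guaranteed by $1/2 \in k$ via Proposition~\ref{K_R_turns_rogral_perfect}. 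So the ``hard part'', such as it is, is simply checking that Lemma~\ref{matrix_jord_alg_lem} is exactly hypothesis~\eqref{non_degeneracy_assumption_peirce_decomp} of Proposition~\ref{Uni_der_Decomp_prop_ja} and that no further assumptions on $D$ or its involution are needed beyond those in Definition~\ref{hermitian_matrix_jord_alg}.
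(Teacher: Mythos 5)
Your proposal is correct and follows essentially the same route as the paper's own (quite terse) proof: reduce $\ker u$ to $\ker \ud_{JA}$ via Proposition~\ref{one_half_jordan_hom_H_2_prop}, then invoke Lemma~\ref{matrix_jord_alg_lem} to verify hypothesis~\eqref{non_degeneracy_assumption_peirce_decomp} so that Proposition~\ref{Uni_der_Decomp_prop_ja} yields $\ker \ud_{JA} \subset \mathcal D_0$ and hence $\ker \ud_{JA} = \ker \ud_0$. The paper additionally remarks that ``$e_i$ spans $J_{ii}$'' for the hermitian matrix Jordan algebra, but your argument does not need this (and in fact $J_{ii} = D_+ E_{ii}$ is not generally spanned by $e_i$); what is actually used is the containment $J_{ii} * J_{ij} \subset e_i * J_{ij}$ of Lemma~\ref{Peirce_mult_rules_hom}, which you correctly invoke implicitly through Proposition~\ref{Uni_der_Decomp_prop_ja}.
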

\begin{proof} Proposition~\ref{one_half_jordan_hom_H_2_prop} states that $\ker(u) = \ker(\ud_{JA}). $ Lemma~\ref{matrix_jord_alg_lem} combined with  Proposition~\ref{Uni_der_Decomp_prop_ja}  and the fact that $e_i$ spans $J_{ii}$ in the case of the hermitian matrix Jordan pair gives that 
$\ker(\ud_{JA}) = \ker(\ud_0 ).$
\end{proof}
\subsection{Alternative coordinates: $n= 3$}
\label{alt_coordinates}
\begin{center}
From now on assume that, in addition to $1/2$, also $1/3 \in k.$

\end{center}
For this subsection we consider the case where $n = 3,$ $D$ is an alternative unital  $k$-algebra with nuclear involution and $J = \mathcal H_3(D, \bar{\;})$. Denote the hermitian elements of $D$ by $D_+$ and the skew hermitian elements by $D_-.$ 
We are interested in the relation between (inner) trialities of the alternative algebra $D$ and the (inner) derivations of the Jordan algebra $\mathcal H_3(D, \bar{\;}).$ \\
We will see  that this relation can give us information about the module $\mathcal D_0$ and ultimately about the kernel of $\ud_0$ (see Proposition~\ref{Uni_der_Decomp_prop_ja}). 
The space of all inner trialities of the alternative algebra $D$ is ``too big'' to describe $\ud(\mathcal D_0)$.   It turns out that the following object is the correct one: 
\begin{defi} With the definition of $h$ as in Lemma~\ref{trial_der_alt_lemma} define
$$\mathcal T_{00} := \im(h)((\mathrm{IDer} (D), D_-, D_-)) \subset \mathcal T_0.$$
\end{defi}
\begin{prop} \label{inner_trial_prop}
Assume that all derivations of $D$ are inner or the involution on $D$ is central. 
Then the map $$ \theta : \ud(\mathcal D_0) \mapsto \mathcal T_{00} $$
given by 
$$ \theta(\ud(X)) =  (\overline{g_1}, g_2, g_3) $$
is injective and well-defined, 
where the $g_1$,$g_2$ and $g_3$ are the restriction of $\ud(X)$ to $J_{12},$ $J_{23}$ and $J_{31}$ respectively. \\
If the involution on $D$ is central, then $\theta : \ud (\mathcal D_0) \rightarrow \mathcal T_{00}$ is an isomorphism.
\end{prop}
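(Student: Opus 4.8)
\textbf{Proof plan for Proposition~\ref{inner_trial_prop}.}

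The plan is to build the map $\theta$ by first understanding the restrictions $g_1, g_2, g_3$ of an inner derivation $\ud(X)$, $X \in \mathcal D_0$, to the Peirce spaces $J_{12}, J_{23}, J_{31}$, and then identifying the triple $(\overline{g_1}, g_2, g_3)$ as an element of $\mathcal T_{00}$. By Proposition~\ref{Uni_der_Decomp_prop_ja} and Corollary~\ref{hermitian_D_0_cor}, $\mathcal D_0 = \sum_{i<j} J_{ij} * J_{ij}$ and $\ud(\mathcal D_0) = \sum_{i<j}[L_{J_{ij}}, L_{J_{ij}}]$, so it suffices to work with spanning elements $X = a[ij] * b[ij]$ and use Lemma~\ref{der_alt_explicit_form_lem} to write down the action on each Peirce space explicitly in terms of the operators $g_1(a\otimes b), g_2(a\otimes b), g_3(a\otimes b)$ of Definition~\ref{g_i_operators_def}. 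First I would invoke Lemma~\ref{general_trial_lemma}(i),(iii): any derivation of $J$ that preserves the off-diagonal Peirce spaces $J_{ij}$ ($i\neq j$) gives, by restriction, a triality $(\overline{g_1}, g_2, g_3)$ of $D$; and for an \emph{inner} derivation $\ud(X)$ the resulting triality is moreover inner, i.e.\ lies in $\mathcal T_0$. Since $\ud(X)$ annihilates all $e_i$ (that is the defining property of $\mathcal D_0$), each $g_i$ must fix $e_i$-related structure in a way that forces the triality into the distinguished subalgebra $\mathcal T_{00} = \im(h)((\mathrm{IDer}(D), D_-, D_-))$; concretely, using Lemma~\ref{trial_der_alt_lemma}, one reads off from $g_1(a\otimes b)(1) = 2(a,b,1) + (a\bar b - b\bar a) - (\bar a b - \bar b a) = 0$ that the $\lambda$- and $\rho$-components of the triality involve only the skew element $a\bar b - b\bar a \in D_-$, and the derivation part is $L_{a\bar b - b\bar a} - R_{\bar a b - \bar b a}$-type, hence inner (an element of $\mathrm{IDer}(D)$ by Proposition~\ref{lopera_der_proposition}, using $1/3 \in k$ so that $\mathrm{IDer}(D) = \mathrm{StanDer}(D)$). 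This shows $\theta$ is well-defined on spanning elements and extends linearly.

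Next I would prove injectivity. A triality is uniquely determined by any two of its components (as used repeatedly in Section~\ref{triality_subsubsection}), so if $\theta(\ud(X)) = 0$ then $g_1 = g_2 = g_3 = 0$, i.e.\ $\ud(X)$ vanishes on $J_{12}\oplus J_{23}\oplus J_{31}$. By Corollary~\ref{matrix_jord_der_generation_cor} and Lemma~\ref{matrix_jord_alg_lem}, $J*J$ is generated by the $e_i * J_{ij}$ and the diagonal spaces $J_{ii}$ are spanned by $\{E_{ii}\}$, which are fixed by $\ud(X)$ since $X\in\mathcal D_0$; moreover the whole Jordan algebra $J$ is generated by $J_{12}\cup J_{23}\cup J_{31}$ together with the idempotents (each $J_{ii}$ lies in $\sum_{k\neq i} J_{ik}^2$ by Lemma~\ref{matrix_jord_alg_lem}). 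A derivation is determined by its values on a generating set, so $\ud(X) = 0$. Thus $\theta$ is injective.

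Finally, for the central-involution case I would prove surjectivity, establishing that $\theta$ is an isomorphism. Here I would take an arbitrary element of $\mathcal T_{00}$, say $h(\Delta, c, c')$ with $\Delta \in \mathrm{IDer}(D)$ and $c, c' \in D_-$, write $\Delta = \sum[L_{a_i}, R_{b_i}]$ (possible since $1/3 \in k$ forces $\mathrm{IDer}(D) = \mathrm{StanDer}(D)$, and standard derivations are spanned by $SD(a,b) = L_{[a,b]} - R_{[a,b]} + 3[L_a, R_b]$), and then exhibit a preimage in $\ud(\mathcal D_0)$ built from elements $a[ij]*b[ij]$ by comparing the explicit formulas in Lemma~\ref{der_alt_explicit_form_lem} — in particular the central-involution simplifications (\ref{diagonal_action_1}) and (\ref{switching_signs_inin_proof_pre-triality}) let one solve for the required coefficients. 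The main obstacle I anticipate is precisely this surjectivity bookkeeping: one must show that the three restrictions $g_1, g_2, g_3$ of a candidate inner derivation can be matched simultaneously to the prescribed triality components, which requires the triality identities among the $g_i(a\otimes b)$ (the "pre-triality identities" (\ref{in_proof_pre-triality_id_2})–(\ref{in_proof_pre-triality_id_3})) to conspire correctly; this is where the centrality of the involution is genuinely needed, and it is the step where the computations in Lemma~\ref{der_alt_explicit_form_lem} and Lemma~\ref{general_trial_lemma} must be combined most carefully. The hypothesis "all derivations of $D$ are inner" handles the alternative, non-central case by a parallel but slightly coarser argument giving only injectivity and well-definedness.
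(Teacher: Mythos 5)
Your proposal follows essentially the same route as the paper: use Lemma~\ref{general_trial_lemma} to turn the restrictions $g_1,g_2,g_3$ into the triality $(\overline{g_1},g_2,g_3)$, use Lemma~\ref{trial_der_alt_lemma} (the isomorphism $h$) to read off the $\lambda$-, $\rho$-, and derivation components, show the former land in $D_-$ and the latter is inner, prove injectivity by the generation of $J$ from off-diagonal Peirce spaces plus idempotents, and then, for a central involution, exhibit explicit preimages under $\ud$ for each $\lambda(a),\rho(b)$ ($a,b\in D_-$) and for each $SD(a,b)$.

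One small caution about your pivot: you base the claim that the $\lambda$- and $\rho$-components are skew on $g_1(a\otimes b)(1)=0$. That identity does hold when the involution is \emph{central} (it reduces to $N(a,\bar b)=N(b,\bar a)$, which follows from the symmetry of $N$ together with (\ref{invariant_in_one argument_if _central_inv})), but it is not the formula $h^{-1}$ actually consumes. Since $h^{-1}$ reads off $t_2(1)$ and $t_3(1)$ (here $g_2(1)=\bar a b-\bar b a$ and $g_3(1)=b\bar a-a\bar b$), the paper simply checks directly that those are skew — a computation that also works in the merely nuclear case covered by the "all derivations inner" hypothesis, where $g_1(1)=0$ may fail. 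Your injectivity argument also detours through Corollary~\ref{matrix_jord_der_generation_cor} (about generation of $J*J$), when what is needed is generation of $J$ itself by $J_{12}\cup J_{23}\cup J_{31}$ and the idempotents; the conclusion is the same. Finally, in the surjectivity sketch the phrase ``write $\Delta=\sum[L_{a_i},R_{b_i}]$'' is imprecise — the correct form when $1/3\in k$ is $\Delta=\sum SD(a_i,b_i)$ — but you do note the role of $\mathrm{StanDer}(D)$, so this is a slip of notation rather than a wrong approach. None of these amounts to a genuine gap.
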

\begin{proof} Since $1/3 \in k$, standard inner derivations and inner derivations coincide (Proposition~ \ref{lopera_der_proposition}).
Assume that all derivations are inner.\\
Thus by  \ref{trial_der_alt_lemma}, all trialities are inner as well. Therefore the map $\theta$ is clearly well-defined as map into $\mathcal T_0$.  It is immediate that $\theta$ is a homomorphism of algebras. Also, since $\ud(X)$, $X \in \mathcal D_0$ is zero, if each of its restrictions  $\ud(X)|_{J_{ij}}$ is $0$,  $\theta$ is seen to be a monomorphism into $\mathcal T_{0}$. It remains to show that $\theta(\mathcal D_0) \subset\mathcal  T_{00},$  if all derivations are inner and that $\theta$ is  an isomorphism  onto $\mathcal T_{00}$ if the involution is central.\\
We know that $\mathcal D_0 = J_{12} * J_{12} + J_{23} * J_{23} + J_{31} * J_{31}.$
First assume that $X = a[12] * b[12] \in J_{12} * J_{12}.$
  The preimage of $\theta(\ud(X))$ under the isomorphism $h$ defined in Lemma~\ref{trial_der_alt_lemma} is $ h^{-1} (\bar g_1, g_2, g_3) = (\Delta, 2/3(\bar a b - \bar b a) -1/3(b \bar a- a \bar b), 1/3(\bar a b - \bar b a) + 2/3(b \bar a - a \bar b))$ for some (inner) derivation $\Delta$ which proves (since  $2/3(\bar a b - \bar b a) -1/3(b \bar a- a \bar b), 1/3(\bar a b - \bar b a) + 2/3(b \bar a - a \bar b)\in D_-$ ) that
  $h^{-1}(\theta(\ud(J_{12}* J_{12}))) \subset h^{-1}(\mathcal T_{00})$ and thus $(\theta(\ud(J_{12}*J_{12}))) \subset \mathcal T_{00}.$
  Let $\sigma$ be a cycle in the symmetric group on three elements. Then 
  $ \theta(\ud(a[\sigma 1, \sigma 2]* b[\sigma 1, \sigma 2])) = (\overline g_{\sigma 1}, g_{\sigma 2}, g_{\sigma 3} )$ which is a triality by Lemma~\ref{general_trial_lemma}(i).
 Clearly, if all derivations are inner, then in $h^{-1}((\overline{g_ \sigma 1}, g_{\sigma 1 }, g_{\sigma 3})) = (\Delta^\sigma, a^\sigma, b^\sigma )$ the 
 derivation $\Delta^\sigma$ will still be inner.  \\ (**) \\
  Assume $\sigma = (1, 2, 3).$ It is also not difficult to see that $a^\sigma$ and $b^\sigma$ are elements of $D_{-},$ since they are defined by 
 \begin{eqnarray*}
 3a^{\sigma^2} &=& 2 g_1(1) + g_2(1) =  2(a \bar b - b \bar a ) +  \bar a b - \bar b a, \\
  3b^{\sigma^{2}} &=&  -2 g_1(1) - g_2(1) =  -(a \bar b - b \bar a) - 2(\bar a b - \bar b a).  
 \end{eqnarray*}
 Therefore $\theta(\ud(J_{\sigma 1, \sigma 2} * J_{\sigma 1, \sigma 2})) \subset \mathcal T_{00}.$ 
 The argument for the cycle $\sigma^2$ is almost the same. Again it is given that in $(\Delta^{\sigma^2}, a^{\sigma^2}, b^{\sigma^2}),$ the derivation $\Delta^{\sigma^2}$ is inner. The elements $a^{\sigma^2}$ and $b^{\sigma^2}$ are given by
 \begin{eqnarray*}
 3a^{\sigma^2} &=& 2 g_3(1) + g_1(1) =  2(b \bar a - a \bar b ) +   a \bar b -  b  \bar a, \\
  3b^{\sigma^{2}} &=& - 2 g_3(1) - g_1(1 ) =  -(b \bar a - a \bar b ) - 2( a \bar b -  b \bar a).  
 \end{eqnarray*}
 and it is again easy to see that they lie in $D_{-}.$\\
 We have shown that for all cyclic permutations $\theta(\ud(J_{\sigma 1, \sigma 2} * J_{\sigma 1, \sigma 2})) \subset \mathcal T_{00}.$ Hence  $\theta(\ud(\mathcal D_0)) \subset \mathcal T_{00}.$ This shows that $\theta \circ \ud $ maps $\mathcal D_0$ into $\mathcal T_{00}.$\\
  Now assume also that the involution is central.
  It is left to show  that the map is surjective, i.e., for every element in  $\mathcal T_{00}$ there is a pre-image $\ud(\mathcal D_0).$ We already know that every element in $\mathcal T_{00}$ can be uniquely expressed as $(\Delta, \Delta, \Delta) + \lambda(a) + \rho(b)$ where $\Delta$ is an inner derivation and $a, b \in D_-$ and that under the isomorphism $h,$ this is the image of $(\Delta,a, b ).$ Therefore it suffices that  $\theta(\ud(\mathcal D_0))$ contains all $\lambda(a)$ and $\rho(b)$, $a, b \in \mathcal D_-$  and that for every inner derivation $\Delta$ there is an elements $X \in \mathcal D_0$ such that    
  $h^{-1} \circ \theta(\ud(X))$ is of the form $(\Delta,  A, B)$ for some $A$ and $B.$\\
 Let $\lambda(a) = (L_a, L_a + R_a, -L_a)$. If $\lambda(a)$ corresponds to an element in $\ud(\mathcal D_0)$, then equivalently so does the cyclic permutation $(-L_a, L_a, L_a + R_a)$ (by Lemma~\ref{general_trial_lemma}, since  the trialities associated to the set $\ud(\mathcal D_0)$ are invariant under cyclically permuting the entries). Furthermore, if $\bar a = -a$, then  Lemma~\ref{general_trial_lemma} and the identity $\overline{L_a} = R_{\bar a}$ imply that $ (R_a, -R_a, -L_a - R_a ) = -\rho(a)$ is an inner triality. In fact, $(R_a, -R_a, -L_a - R_a )$ is an inner triality if and only if $(L_a, L_a + R_a, -L_a)$ is an inner triality.
 Therefore we only have to show that some cyclic permutation of $\lambda(a)$ lies in $\theta(\ud(\mathcal D_0)).$ 
 Let $X = a[ij] * 1[ij]$ for fixed $i< j$ and $a = -\bar a.$
 Then $\theta(\ud(X)) = 2( \overline{L_a + R_a},  L_a, -R_a )$ by Lemma~\ref{der_alt_explicit_form_lem}. 
 Note  that, when $\bar a = -a ,$  then $\overline{L_a} = -R_a$, since $\bar{\;}$ is an involution. 
 Therefore,   $$ \theta(\ud(X)) = -2(L_a + R_a, -L_a, R_a  )  $$ which is a  cyclic permutation of $-2\lambda(a)$, hence all $\lambda(a)$ and $\rho(b)$ for $\bar a = -a$ and $\bar b = -b$ are in the image of $\theta \circ \ud.$
 Let $\Delta$ be the standard inner derivation $SD(a, b)$. We claim that $h(\theta(\ud(a[12] * b[12])))$ is of the form $2/3 (SD(a, b), A, B).$ Let $(\bar {t_1}, t_2, t_3)$ be the inner triality determined by $\ud(a[12] * b[12])$ and $(\Delta, A, B) = h(\theta(\ud(a[12] * b[12])))$. It follows from Lemma~\ref{der_alt_explicit_form_lem} that 
 $\Delta = (t_1 - L_A + R_B)$ (see below for the explicit formula for $A$ and $B$) where $\overline{t_1}(z) = 2(a, b,z) -  L_{a\bar b - b \bar a} +  R_{\bar a b - \bar b a}z $. Since for $x \in D_{-}$ we have $\overline{L_x} = -R_{x}$ and by centrality of the involution, 
$\overline{(a, b, \bar z)} = (a, b, z)$, the map $t_1$ is actually equal to $2[L_a, R_b] + R_{a\bar b - b \bar a} -L_{\bar a b - \bar b a} $.  By definition: $$A = -1/3t_2(1) - 2/3t_3(1) = -1/3(\bar a b - \bar b a) - 2/3 (b \bar a - a \bar b),$$ 
$$ B = 2/3 t_2(1) + 1/3 t_3(1) = 2/3 (\bar a b - \bar b a) + 1/3 (b \bar a - a \bar b).$$
Since the involution is central $[\bar a, b] + [a, \bar  b] =  -2[a, b]$,   and
thus $$-L_{\bar a b - \bar b a} - L_A = L_{-\bar a b + \bar b a + 1/3(\bar a b - \bar b a) + 2/3 (b \bar a - a \bar b)} = 2/3L_{[a,b]}$$ and likewise $$R_{a\bar b - b \bar a} + R_B = R_{a\bar b - b \bar a + 2/3 (\bar a b - \bar b a) + 1/3 (b \bar a - a \bar b) } = -2/3R_{[a, b]},$$
\begin{eqnarray}
\Delta &=& 2[L_a, R_b] + 2/3L_{[a,b]} -2/3R_{[a, b]} \\
 &=& 2/3SD (a, b), \label{image_is_inner_der}
\end{eqnarray}
and since $1/2, 1/3 \in k$ this proves that $3/2 h(\theta(\ud(a[12] * b[12]))) = (SD(a, b), A, B).$
Therefore $\theta$ is surjective. \\

If the involution is central, then we can drop the assumption that all derivations are inner and $\theta$ is still well-defined.
We have seen above (\ref{image_is_inner_der}) that the map is well-defined, if we restrict it to $J_{12} * J_{12} \subset \mathcal D_0$ Choose a cyclic permutation $\sigma$ of the indices $(1, 2, 3)$. By the argument used for the proof of  Lemma~\ref{general_trial_lemma}, we have that $\theta(\ud(a[\sigma 1 \sigma 2 ] * b[\sigma 1 \sigma 2])) = (\overline {g_{\sigma 1}}, g_{\sigma 2}, g_{\sigma 3})$ where $(\overline{g_1}, g_2, g_3 ) = \theta(\ud (a[12] * b[12])) \in \mathcal T_{00}$. We have already shown that  $(\overline {g_1}, g_2,  g_3 ) = (\Delta, \Delta, \Delta) + \lambda(A) + \rho(B)$ for the standard derivation $2/3SD(a, b)$ see  (\ref{image_is_inner_der})  and some $A, B \in D_{-}.$ It remains therefore to show that mapping an element $(\overline{g_1}, g_2, g_3 ) \in \mathcal T_{00}$ to $(\overline{g_{\sigma 1}}, g_{\sigma 2}, g_{\sigma 3})$ is a well-defined map from $\mathcal{T}_{00} $ to $\mathcal{T}_{00}$. \\
Consider $(\overline{g_{\sigma 1}}, g_{\sigma 2}, g_{\sigma 3}) = \sigma((\Delta, \Delta, \Delta)) + \sigma (\lambda(A)) + \sigma (\rho(B))$ where $\Delta$ is a standard derivation and $A, B \in  D_{-}.$ Since the involution is central, 
\begin{eqnarray*}
\overline{SD}(a, b)(c) &=& \overline{(a, \bar c, b) + L_{[a, b]}\bar c - R_{[a, b]}\overline c} \\ 
 &=& (a , c, b) - L_{[a, b]} c + R_{[a, b]}c = SD(a, b)c.
\end{eqnarray*}
  Thus $\sigma((\Delta, \Delta, \Delta)) = (\Delta, \Delta, \Delta). $ Then we can proceed with the argument from (**) on. 
\end{proof}

 \begin{cor} \label{inner_trialities_cor} Let $D$ be an alternative algebra with central involution. Then the  map
$\theta \circ \ud: \mathcal D_0 \mapsto \mathcal T_{00}$ is a central extension of Lie algebras. 
\end{cor}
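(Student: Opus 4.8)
The plan is to deduce Corollary~\ref{inner_trialities_cor} from Proposition~\ref{inner_trial_prop} together with the general mechanism for producing central extensions that has already been set up in the excerpt. Recall that by Corollary~\ref{ud_JA_action_ce_cor} the map $\ud_{JA}: J * J \to \mathrm{IDer}(J)$ is a central extension of Lie algebras, where $J = \mathcal H_3(D,\bar{\;})$; restricting this central extension to the subalgebra $\mathcal D_0 \subset J*J$ and its image $\sum_{i<j}[L_{J_{ij}}, L_{J_{ij}}]$ yields, as noted in Section~\ref{idempotent_homology}, a Lie algebra epimorphism $\ud_0 : \mathcal D_0 \to \sum_{i<j}[L_{J_{ij}}, L_{J_{ij}}]$ which is again a central extension (its kernel is contained in the kernel of $\ud_{JA}$, hence central in $J*J$, hence central in the subalgebra $\mathcal D_0$). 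So the source of the map in question is already known to sit at the top of a central extension.

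Next I would invoke Proposition~\ref{inner_trial_prop}: since $D$ has a \emph{central} involution, that proposition gives that $\theta : \ud(\mathcal D_0) \to \mathcal T_{00}$ is a Lie algebra \emph{isomorphism}. In particular $\theta$ is injective and a homomorphism of Lie algebras, which means that $\ud(\mathcal D_0) \cong \mathcal T_{00}$ as Lie algebras. The composite $\theta \circ \ud : \mathcal D_0 \to \mathcal T_{00}$ is therefore the composition of the Lie algebra epimorphism $\ud_0 : \mathcal D_0 \twoheadrightarrow \ud(\mathcal D_0)$ (a central extension by the previous paragraph) with the Lie algebra isomorphism $\theta$. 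A central extension followed by an isomorphism is again a central extension — the kernel is unchanged and remains central, surjectivity is preserved — so $\theta \circ \ud$ is a central extension of Lie algebras. This is really the whole argument: identify $\ud_0$ as the relevant central extension, then transport it along the isomorphism $\theta$.

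The one point that needs a sentence of care is making sure the identification $\mathcal D_0 \to \ud(\mathcal D_0)$ is genuinely a central extension in the sense of Section~\ref{gradings}, i.e. that its kernel lies in the Lie centre of $\mathcal D_0$ and not merely in the centre of $J*J$. This follows because $\mathcal D_0$ is a Lie subalgebra of $J*J$ (Proposition~\ref{Uni_der_Decomp_prop_ja} and Corollary~\ref{matrix_jord_der_generation_cor} describe its bracket), and $\ker(\ud_{JA}) \subseteq \mathcal D_0$ by Proposition~\ref{Uni_der_Decomp_prop_ja}, so $\ker(\ud_0) = \ker(\ud_{JA})$ is central in $J*J$ and a fortiori central in the subalgebra $\mathcal D_0$. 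I expect no real obstacle here; the substance of the result is entirely contained in Proposition~\ref{inner_trial_prop}, and Corollary~\ref{inner_trialities_cor} is a bookkeeping consequence recording that the isomorphism of that proposition is compatible with the central extension structure. The proof will be short — essentially: ``By Proposition~\ref{inner_trial_prop}, $\theta$ is an isomorphism; by Corollary~\ref{ud_JA_action_ce_cor} and Proposition~\ref{Uni_der_Decomp_prop_ja}, $\ud_0 : \mathcal D_0 \to \ud(\mathcal D_0)$ is a central extension; composing with the isomorphism $\theta$ gives the claim.''
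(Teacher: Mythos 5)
Your proof is correct and matches the paper's argument in substance: both reduce the claim to Proposition~\ref{inner_trial_prop} (that $\theta$ is an isomorphism when the involution is central) and then note that $\ker(\theta\circ\ud)=\ker(\ud)$ is central. The only cosmetic difference is where you anchor centrality of the kernel — you invoke Corollary~\ref{ud_JA_action_ce_cor} to say $\ker(\ud_{JA})$ is central in $J*J$ and hence in the subalgebra $\mathcal D_0$, while the paper routes through Proposition~\ref{one_half_jordan_hom_H_2_prop} to say $\ker(\ud)\subset Z(\mathfrak{uce}(L))\cap\mathcal D_0\subset Z_{\mathcal D_0}(\mathcal D_0)$; these are equivalent one-line observations and neither buys anything over the other.
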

\begin{proof}
Since $\theta$ is an isomorphism and $\ud$ a surjection, it follows that $\theta \circ \ud$ is surjective and $\ker(\theta \circ \ud) = \ker (\ud) \subset Z(\mathfrak{uce}(L)) \cap \mathcal D_{0} \subset Z_{\mathcal D_0}(\mathcal D_0)$, see Definition ~\ref{Liealgebra_centre_definition}.
\end{proof}
The following is worthwhile noting:
\begin{cor} Let $D$ be an alternative algebra with central involution.   The inner derivation algebra of the Jordan algebra $J = \mathcal H_3(D, \bar{\;})$ is isomorphic as a $k$-module to
$$\mathrm{StanDer}(D) \oplus  (D_-)^2 \oplus D^3 . $$
\label{stam_der_jrod_der_cor} 
\end{cor}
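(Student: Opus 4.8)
The plan is to assemble the structural results already established in this subsection; the statement is really a piece of bookkeeping on top of Propositions~\ref{Uni_der_Decomp_prop_ja} and \ref{inner_trial_prop}. First I would record that $J = \mathcal H_3(D,\bar{\;})$ is a unital Jordan algebra (this uses $1/2,1/3 \in k$ together with $D$ alternative) and that the $e_i = E_{ii}$, $i \in \{1,2,3\}$, form a complete orthogonal family of idempotents; since the family is complete the Peirce decomposition $J = \bigoplus_{i \le j} J_{ij}$ holds, and by Lemma~\ref{matrix_jord_alg_lem} the non-degeneracy hypothesis (\ref{non_degeneracy_assumption_peirce_decomp}) is satisfied. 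Hence Proposition~\ref{Uni_der_Decomp_prop_ja} applies and yields the $k$-module decomposition $J * J = \mathcal D_0 \oplus \mathcal D$ with $\mathcal D = \sum_{i<j} e_i * J_{ij} \cong \bigoplus_{i<j} J_{ij}$, together with $\ud_{JA}(\mathcal D) \cap \ud_{JA}(\mathcal D_0) = \{0\}$.

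Next, since $\ud_{JA} : J * J \to \mathrm{IDer}(J)$ is surjective (Corollary~\ref{ud_JA_action_ce_cor}) and $J * J = \mathcal D_0 \oplus \mathcal D$, I would conclude $\mathrm{IDer}(J) = \ud_{JA}(\mathcal D_0) \oplus \ud_{JA}(\mathcal D)$ as $k$-modules. For the second summand: with $n = 3$ the index pairs $i<j$ are exactly $(1,2),(1,3),(2,3)$, and each $J_{ij}$ with $i \neq j$ is a copy of $D$ as a $k$-module, so $\ud_{JA}(\mathcal D) \cong \bigoplus_{i<j} J_{ij} \cong D^3$. For the first summand I would invoke the central-involution case of Proposition~\ref{inner_trial_prop}: since the involution on $D$ is central, $\theta : \ud_{JA}(\mathcal D_0) \to \mathcal T_{00}$ is a $k$-module isomorphism, so $\ud_{JA}(\mathcal D_0) \cong \mathcal T_{00}$.

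Finally I would unwind $\mathcal T_{00}$. By definition $\mathcal T_{00} = h\big((\mathrm{IDer}(D),\, D_-,\, D_-)\big)$, and by Lemma~\ref{trial_der_alt_lemma} the map $h$ restricts to a $k$-module isomorphism $\mathrm{StanDer}(D) \oplus D^2 \to \mathcal T_0$; because $1/3 \in k$ we have $\mathrm{IDer}(D) = \mathrm{StanDer}(D)$ by Proposition~\ref{lopera_der_proposition}(iv), so restricting $h$ once more gives $\mathcal T_{00} \cong \mathrm{StanDer}(D) \oplus (D_-)^2$. Combining the three displays produces $\mathrm{IDer}(J) \cong \mathrm{StanDer}(D) \oplus (D_-)^2 \oplus D^3$, as claimed.

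Since every ingredient is already in place, there is no genuine obstacle; the only point requiring care is checking that each cited result is applied under hypotheses it actually assumes. In particular, the identification $\ud_{JA}(\mathcal D_0) \cong \mathcal T_{00}$ needs the involution to be \emph{central} rather than merely nuclear — which is precisely the standing hypothesis of the corollary — and $h$ and $\theta$ must be used only as $k$-module isomorphisms (which is all that is being claimed here), not as Lie algebra isomorphisms.
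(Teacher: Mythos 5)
Your proof is correct and follows essentially the same route as the paper's: decompose $J*J=\mathcal D_0\oplus\mathcal D$ via Proposition~\ref{Uni_der_Decomp_prop_ja}, push forward under $\ud_{JA}$ to get $\mathrm{IDer}(J)\cong\ud(\mathcal D_0)\oplus\bigoplus_{i<j}J_{ij}\cong\ud(\mathcal D_0)\oplus D^3$, identify $\ud(\mathcal D_0)\cong\mathcal T_{00}$ via the central-involution case of Proposition~\ref{inner_trial_prop}, and unwind $\mathcal T_{00}\cong\mathrm{StanDer}(D)\oplus(D_-)^2$ using $h$ and $\mathrm{IDer}(D)=\mathrm{StanDer}(D)$ for $1/3\in k$. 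You fill in the hypothesis checks (completeness of the idempotent family, the non-degeneracy condition (\ref{non_degeneracy_assumption_peirce_decomp}) via Lemma~\ref{matrix_jord_alg_lem}, and the distinction between $k$-module and Lie algebra isomorphisms) more explicitly than the paper does, but the skeleton is identical.
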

\begin{proof}By Proposition~\ref{Uni_der_Decomp_prop_ja}
$\mathrm{IDer}(J) \cong \ud(\mathcal D_0) \oplus  \bigoplus_{i< j} J_{ij} .$
since $\ud(\mathcal D ) \cong \bigoplus_{i< j} J_{ij}.$ Each of the three modules $J_{12}$, $J_{23}$ and $J_{13}$ is isomorphic to $D$. By \ref{inner_trial_prop}, $\ud(\mathcal D_0) \cong (\mathrm{IDer} (D), D_-, D_-)$ under the isomorphism $h$. Since, if $1/3 \in k$, $\mathrm{IDer}(D) = \mathrm{StanDer}(D),$ the claim follows. 
 \end{proof}
\begin{cor}Let $D$ be an alternative algebra with central involution and  $J = \mathcal H_3(D, \bar{\;}).$
If a base change $k \rightarrow K$ preserves the centre, i.e.,  
 $\mathrm{Cent}(D \otimes_k K) = \mathrm{Cent}(D) \otimes_k K$, then 
 $$\mathrm{IDer}(J) \otimes_k K \cong \mathrm{IDer}(J \otimes_k K)  \iff \mathrm{StanDer}(D) \otimes_k K = \mathrm{StanDer}(D \otimes_k K). $$
  In particular, if $D$ is finitely generated as a $k$-module and $k \rightarrow K$ is a flat base change, then  
  $$\mathrm{IDer}(J) \otimes_k K = \mathrm{IDer}(J \otimes_k K)  \mbox{ and } \mathrm{StanDer}(D) \otimes_k K = \mathrm{StanDer}(D \otimes_k K). $$
 
\label{herm_jord_der_flat_base_change_cor}
\end{cor}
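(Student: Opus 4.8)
The plan is to combine the module isomorphism of Corollary~\ref{stam_der_jrod_der_cor} with the behaviour of the various pieces under base change. Recall from Corollary~\ref{stam_der_jrod_der_cor} that, for $D$ alternative with central involution and $1/2, 1/3 \in k$,
$$\mathrm{IDer}(J) \;\cong\; \mathrm{StanDer}(D) \,\oplus\, (D_-)^2 \,\oplus\, D^3$$
as $k$-modules, the summand $(D_-)^2 \oplus D^3$ coming from $\ud(\mathcal D_0)\cong(\mathrm{StanDer}(D), D_-, D_-)$ via $h$ together with the three Peirce modules $J_{12}\cong J_{23}\cong J_{13}\cong D$. The same isomorphism holds over $K$ for $J\otimes_k K = \mathcal H_3(D\otimes_k K, \bar{\;})$ — here I would first note that $J\otimes_k K$ is again a hermitian matrix Jordan algebra over $D\otimes_k K$, that the involution on $D\otimes_k K$ is still central (this is where the hypothesis $\mathrm{Cent}(D\otimes_k K) = \mathrm{Cent}(D)\otimes_k K$ enters, guaranteeing the skew-hermitian/hermitian splitting $D_K = (D_+)_K \oplus (D_-)_K$ is the base change of the splitting over $k$), and that $1/2, 1/3 \in K$ since $k\to K$ is a ring homomorphism. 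Thus $(D_-\otimes_k K)^2 \oplus (D\otimes_k K)^3$ is canonically $\bigl((D_-)^2\oplus D^3\bigr)\otimes_k K$.

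\textbf{Key steps.} First, tensoring the $k$-module decomposition of Corollary~\ref{stam_der_jrod_der_cor} with $K$ gives
$$\mathrm{IDer}(J)\otimes_k K \;\cong\; \bigl(\mathrm{StanDer}(D)\otimes_k K\bigr)\,\oplus\,\bigl((D_-)^2\otimes_k K\bigr)\,\oplus\,\bigl(D^3\otimes_k K\bigr).$$
Second, applying the same corollary over $K$ gives
$$\mathrm{IDer}(J\otimes_k K) \;\cong\; \mathrm{StanDer}(D\otimes_k K)\,\oplus\,\bigl((D\otimes_k K)_-\bigr)^2\,\oplus\,(D\otimes_k K)^3.$$
Third, by the identification of the last two summands observed above, the two decompositions agree precisely on the $(D_-)^2$ and $D^3$ parts, so $\mathrm{IDer}(J)\otimes_k K \cong \mathrm{IDer}(J\otimes_k K)$ if and only if $\mathrm{StanDer}(D)\otimes_k K = \mathrm{StanDer}(D\otimes_k K)$. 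To make this rigorous I would track that the canonical comparison map $\mathrm{IDer}(J)\otimes_k K \to \mathrm{IDer}(J\otimes_k K)$ — sending $D(a,b)\otimes\alpha \mapsto D(a\otimes\alpha, b\otimes 1)$ — respects the decomposition summand by summand (using that the isomorphism $h$ of Lemma~\ref{trial_der_alt_lemma} is given by the base-change-compatible formulas $\lambda, \rho$, and that $\theta\circ\ud$ of Proposition~\ref{inner_trial_prop} is defined by restriction to Peirce spaces which commute with base change); then it is an isomorphism iff its restriction to the $\mathrm{StanDer}$ summand is. For the ``in particular'' clause: if $D$ is finitely generated as a $k$-module and $k\to K$ is flat, then $\mathrm{StanDer}(D)\otimes_k K = \mathrm{StanDer}(D\otimes_k K)$ follows from Proposition~\ref{herm_jord_der_flat_base_change_cor_rev} (or directly from the fact that $\mathrm{StanDer}(D) = \mathrm{IDer}(D)$ when $1/3\in k$, combined with the argument in Proposition~\ref{lopera_der_proposition}) applied to the surjection $D * D \to \mathrm{StanDer}(D)$, since flat base change preserves images; and then the centre is automatically preserved because for finitely presented $D$ over a flat extension $\mathrm{End}_k(D)\otimes_k K = \mathrm{End}_K(D\otimes_k K)$, so the fixed-point description of $\mathrm{Cent}(D)$ commutes with $\otimes_k K$.

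\textbf{Main obstacle.} The delicate point is not the module bookkeeping but checking that the comparison map $\mathrm{IDer}(J)\otimes_k K\to\mathrm{IDer}(J\otimes_k K)$ is genuinely compatible with the direct sum decomposition — i.e.\ that the splitting of Corollary~\ref{stam_der_jrod_der_cor} is \emph{natural} in $k$. This requires knowing that the isomorphism $\theta$ of Proposition~\ref{inner_trial_prop} and the isomorphism $h$ of Lemma~\ref{trial_der_alt_lemma} are given by universal formulas (they are: $\theta$ is restriction to $J_{12}, J_{23}, J_{31}$, and $h^{-1}$ is a fixed polynomial expression in evaluations at $1$), so that one can argue on the nose rather than merely up to abstract isomorphism. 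I would handle this by spelling out that both $\ud(\mathcal D_0)\to\mathcal T_{00}$ and the three projections $\ud(\mathcal D)\to J_{ij}$ are defined by evaluation/restriction operations that visibly commute with $-\otimes_k K$, so the comparison map is block-diagonal in the decomposition and the ``StanDer'' block is the only one that can fail to be an isomorphism. Everything else is then formal.
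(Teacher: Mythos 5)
Your proposal follows the same approach as the paper's proof: both use the module decomposition $\mathrm{IDer}(J)\cong\mathrm{StanDer}(D)\oplus(D_-)^2\oplus D^3$ from Corollary~\ref{stam_der_jrod_der_cor}, note that the hypothesis on $\mathrm{Cent}$ forces $(D\otimes_k K)_\pm = D_\pm\otimes_k K$ so the $(D_-)^2\oplus D^3$ summand visibly commutes with base change, and reduce the equivalence to the $\mathrm{StanDer}$ block. Your additional attention to naturality of the decomposition (tracking that the comparison map is block-diagonal via the explicit formulas for $\theta$ and $h$) is more careful than the paper's terse ``the isomorphism above proves the claimed equivalence,'' and your alternative justification of the ``in particular'' clause via flat base change preserving images is a plausible substitute for the paper's direct citations of \cite[Prop.\ 1.14, 2.9]{lopera} — just beware that $\mathrm{Cent}(D\otimes_k K)=\mathrm{Cent}(D)\otimes_k K$ through the $\mathrm{End}$-argument needs $D$ finitely presented, not merely finitely generated, and that Proposition~\ref{herm_jord_der_flat_base_change_cor_rev} concerns Jordan-algebra $\mathrm{IDer}$ rather than the alternative-algebra $\mathrm{StanDer}$ you need there.
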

\begin{proof} 
Since the base change preserves the centre, the canonically given involution on $D \otimes_k K$ is central if $(D_+ \otimes_k K) = (D \otimes_k K)_+.$  We claim
$$D_-  \otimes_k K=  (D \otimes_k K)_- \mbox{ and } D_+  \otimes_k K=  (D \otimes_k K)_+.$$
 We only prove the first of the two statements because the calculations are identical. One inclusion is clear $D_-  \otimes_k K \subset  (D \otimes_k K)_-$. Conversely,  every element in $D \otimes_k K$ is a combination of elements of the form $a \otimes \alpha$ where $a \in K,$ $\alpha \in K.$
Then $(D \otimes_k K)_-$ is spanned  by the skew parts of those elements, namely $a \otimes \alpha - \overline{a \otimes \alpha} = a \otimes \alpha - \bar  a \otimes \alpha = (a - \bar a) \otimes \alpha  \in D_- \otimes_k K.$ It follows that $D_-  \otimes_k K \supset  (D \otimes_k K)_-.$  
Hence, by Corollary~\ref{stam_der_jrod_der_cor}, 
$$\mathrm{IDer}(J \otimes_k K) \cong \mathrm{StanDer}(D\otimes_k K) \oplus (D_-^2 \otimes K) \oplus (D \otimes_k K)^3 .$$
Since we always have canonical epimorphisms,
\begin{eqnarray*}
\mathrm{IDer}(D) \otimes_k K & \rightarrow & \mathrm{IDer}(D \otimes_k K),\\ 
\mathrm{StanDer}(D) \otimes_k K & \rightarrow & \mathrm{StanDer}(D \otimes_k K), 
\end{eqnarray*}
the isomorphism above proves the claimed equivalence. 
Suppose that $k \rightarrow K$ is a flat base change and that $D$ is finitely generated over $k$ , then by (\cite[Proposition 1.14]{lopera})
$\mathrm{Cent}(D \otimes_k K) = \mathrm{Cent}(D) \otimes_k K.$ In the same paper, (\cite[Proposition 2.9]{lopera}), it is proven that under those assumptions $\mathrm{StanDer}(D) \otimes_k K = \mathrm{StanDer}(D \otimes_k K).$
\end{proof}

 \begin{defi} Assume that $M$ is a $k$-module  which is free of rank $n$ and that $M$ carries a quadratic form defined by a symmetric matrix $B$ by $N(x,y) = x^T B y.$ Then $\mathfrak{so}(M, B) = \{ X \in \mathrm{Mat}_n(k) : BX + X^TB = 0\} . $
 It is a standard exercise that $\mathfrak{so}(M, B)$ is a Lie algebra. 
 \end{defi}

\begin{prop} Assume that the alternative algebra $D$ is free of rank $2n$ and that $D$ has a basis $\{x_n, \ldots, x_1, x^1, \ldots x^n \}$ such that the norm form in Definition  \ref{alt_norm_trace_Defi}
is given by the matrix  $$\left [\begin{array}{cc} 0 & I_n \\ I_n & 0  \end{array} \right].$$
 Let $g_1(a \otimes b) \in \mathrm{End}(D)$ be defined as in Definition~\ref{g_i_operators_def}. Then
$\spa \{g_1(a \otimes b)\}  = \mathfrak{so}(D, N).$ \label{so_first_entry_of_trial_prop}
\end{prop}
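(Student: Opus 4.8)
The goal is to show that, when $D$ is free of rank $2n$ with a norm form given by the split matrix $B=\left[\begin{smallmatrix}0&I_n\\I_n&0\end{smallmatrix}\right]$, the $k$-span of all operators $g_1(a\otimes b)\in\End(D)$ equals $\mathfrak{so}(D,N)$. Recall from Definition~\ref{g_i_operators_def} that $g_1(a\otimes b)(c)=2(a,b,c)+L_{a\bar b-b\bar a}c-R_{\bar a b-\bar b a}c$, and from Lemma~\ref{general_trial_lemma}(i) (applied with any of the cyclic permutations of a triple of distinct indices) that $(\overline{g_1(a\otimes b)},g_2(a\otimes b),g_3(a\otimes b))$ is a triality of $D$. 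I would first show the inclusion $\spa\{g_1(a\otimes b)\}\subseteq\mathfrak{so}(D,N)$, then the reverse inclusion.

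\emph{Step 1: $g_1(a\otimes b)$ is skew with respect to $N$.} The key is that $D$ here is an octonion-type algebra (free of rank $2n$ with a norm permitting composition), so its norm is \emph{associative} in the sense that $N(xy,z)=N(y,\bar x z)$ and $N(xy,z)=N(x,z\bar y)$, and $N(\bar x,\bar y)=N(x,y)$. From triality $t_1(xy)=t_2(x)y+xt_3(y)$ with $(t_1,t_2,t_3)=(\overline{g_1},g_2,g_3)$, and the local-triality principle (the ``principle of triality'' for composition algebras: for a triality of an octonion algebra the three components automatically lie in $\mathfrak{o}(N)$), one gets that each $t_i$, hence $g_1=\overline{t_1}$ (conjugation preserves $\mathfrak{o}(N)$ since $N(\bar x,\bar y)=N(x,y)$), is skew for $N$. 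Alternatively, and more in the spirit of the present text, I would verify skewness by a direct computation using the identities $(a,b,c)$ is skew in each slot against $N$ (i.e. $N((a,b,c),d)=-N(c,(a,b,d))$, a standard octonion identity) together with $N(L_u c,d)+N(c,L_u d)=N(uc,d)+N(c,ud)$ and the fact that $u=a\bar b-b\bar a$ and $\bar a b-\bar b a$ are conjugate to each other's negatives. Summing the three contributions of $g_1(a\otimes b)$ and checking $N(g_1 c,d)+N(c,g_1 d)=0$ gives $g_1(a\otimes b)\in\mathfrak{so}(D,N)$, and since $\mathfrak{so}(D,N)$ is a submodule, so is the whole span.

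\emph{Step 2: the span is all of $\mathfrak{so}(D,N)$.} For this I would exhibit enough elements. Working in the split basis $\{x_i,x^i\}$, specialize $a$ and $b$ to basis vectors and read off the matrices of the $g_1(x_i\otimes x^j)$, $g_1(x_i\otimes x_j)$, $g_1(x^i\otimes x^j)$ from the formula; because $B$ is the split hyperbolic form, $\mathfrak{so}(D,N)$ has an explicit basis of ``root vectors'' (the elements $E_{ij}-E_{n+j,n+i}$, $E_{i,n+j}-E_{j,n+i}$, $E_{n+i,j}-E_{n+j,i}$ in the standard presentation), and I would match these against the $g_1$'s. The associator term $2(a,b,c)$ and the multiplication terms together span each root space — in fact one already knows from Lemma~\ref{der_alt_explicit_form_lem} and its use in Proposition~\ref{inner_trial_prop} that the $\lambda(a)$, $\rho(b)$ for $\bar a=-a$, $\bar b=-b$, together with inner derivations, exhaust the relevant triality algebra, and the first components of $\lambda(a)=(L_a,\dots)$, $\rho(b)=(R_b,\dots)$ are left/right multiplications by skew elements, which together with $\mathfrak{so}$-valued derivations generate $\mathfrak{so}(D,N)$ (this is the statement that $\mathfrak{o}(N)=\Der(D)\oplus L_{D_-}\oplus R_{D_-}$ for an octonion algebra $D$, the infinitesimal version of triality). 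Tracking which linear combinations of $g_1(a\otimes b)$ realize $L_u$, $R_u$ ($u\in D_-$) and the derivations then closes the argument.

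\emph{Main obstacle.} The genuinely delicate point is Step 1 — pinning down, over an arbitrary base ring $k$ with $1/2,1/3\in k$ and only the hypothesis that $D$ is free of rank $2n$ with the displayed split norm form, that $N$ is associative and that associators are $N$-skew in each variable, i.e.\ that $D$ really behaves like a split octonion (or more generally split composition) algebra rather than an arbitrary alternative algebra of that rank. I expect to handle this by invoking the structure theory of composition algebras already available in the cited references (\cite{prebookalt}, \cite{lopera}, \cite{TasteOfJA}): the hypothesis on the norm form forces $D$ to be a split composition algebra, for which associativity of the norm and skewness of associators are standard. Once those identities are in hand, Steps 1 and 2 are bookkeeping; the surjectivity in Step 2 is then just the classical infinitesimal triality decomposition $\mathfrak{so}(N)=\Der(D)\oplus L_{D_-}\oplus R_{D_-}$ combined with the explicit formulas already established in Lemma~\ref{der_alt_explicit_form_lem}.
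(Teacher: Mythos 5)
Your plan is workable in outline, and indeed your Step~2, done by direct basis computation, is exactly what the paper does: the entire published proof consists of computing $g_1(x_i\otimes x_j)$, $g_1(x_i\otimes x^j)$, $g_1(x^i\otimes x^j)$ on basis vectors and observing that the resulting matrices are (up to sign conventions) the standard spanning set $\{E_{-i,-j}-E_{j,i},\ E_{-i,j}-E_{-j,i},\ E_{i,-j}-E_{j,i}\}$ of $\mathfrak{so}(D,N)$ for the split form. There is no separate abstract argument for the inclusion $\spa\{g_1(a\otimes b)\}\subseteq\mathfrak{so}(D,N)$: once you have computed those matrices, membership in $\mathfrak{so}$ is visible by inspection, so your Step~1 is a detour. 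It is not wrong, but over an arbitrary ring it requires exactly the composition-algebra identities ($N$-associativity, $N$-skewness of associators) that your ``Main obstacle'' paragraph frets about, and the direct computation sidesteps all of that.

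Two genuine cautions about the alternative route you sketch for Step~2. First, the decomposition $\mathfrak{so}(N)=\mathrm{Der}(D)\oplus L_{D_-}\oplus R_{D_-}$ is, in this paper, \emph{derived from} Proposition~\ref{so_first_entry_of_trial_prop}: the lemma immediately after it (that $\mathfrak{so}(Z,N)$ is generated by $L_a,R_b$ with $a,b\in D_-$) and Proposition~\ref{jacobson_prop} are proved using the present statement. Invoking that decomposition to prove the present statement would be circular within this development; if you want to go that way you would need to import a self-contained external proof of the infinitesimal triality decomposition valid over the base ring in question, which is more work than the three-line matrix computation. Second, your observation that the stated hypotheses (alternative, free of rank $2n$, split norm form) do not by themselves determine the products and associators of $D$ is a fair criticism of the \emph{statement}, not a gap you can paper over abstractly: the proof as given tacitly computes in the split Zorn vector-matrix model of Definition~\ref{octonion_alge_Defi}, and that is the setting in which the proposition is actually applied. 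You should make that assumption explicit rather than hoping that the norm hypothesis alone forces $D$ to behave like a split composition algebra; over a general $k$ that inference requires at least that $N$ be $k$-valued (i.e.\ $\mathrm{Fix}(D)=k1$), which is not in the stated hypotheses.
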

\begin{proof} The Lie algebra $\mathfrak{so}(D, N)$ is spanned by the matrices $E_{-i, -j} - E_{j, i}$, $E_{-i, j} - E_{-j, i}$, $E_{i, -j} - E_{j, i}$, $1 \leq i, j \leq n.$ It is easy to see that for $i \neq j,$ 
$g_1(x_i \otimes  x_j) =  2(E_{-i, j} - E_{-j, i})$, $g_1(x_i \otimes x^j) =  2(E_{-i, -j} - E_{j, i})$ and $g_1(x^i \otimes x^j) = 2(E_{i, -j} - E_{j, i})$ and $g_1(x_i \otimes x^i) = 2(E_{-i,  -i} + E_{i, i}).$
\end{proof}

\subsection*{Example: Albert Algebras}
\label{Albert_example}
We will discuss the special case where the alternative algebra is an octonion algebra and $n= 3.$ 
For the rest of the  present subsection we will denote by $z \rightarrow \bar z$  the involution on a split octonion algebra $Z$ as defined in Definition~\ref{octonion_alge_Defi}:
$$\bar z = t(z)1_z - z $$ where $t(z)$ is the matrix trace. This involution is central, the only fixed points being multiples of the identity. The norm form is
$$N  \left ( \begin{array}{cc} \alpha_1 & u \\ x & \alpha_2 \end{array} \right ) = \alpha_1\alpha_2 -  (x,u) ,$$
where $(x,u) = x* u$ is defined as in Definition \ref{octonion_alge_Defi}. Since $*$ has already been used in this section to denote the spanning elements of $J*J$, we prefer to denote the inner product by simple parentheses. 
For this norm form, $Z$ decomposes as $Z_u \oplus Z_l$,  
where $Z_u = \spa\{x_1 ,x_2, x_3, x_4\},$ $Z_l = \spa\{x^1, x^2, x^3, x^4\},$ and $x_1 = e_{11},$ $x^1 = 1- e_{11} = e_{22}$ and $\{x_2, x_3, x_4\},$ $\{x^2, x^3, x^4\} $ are hyperbolic bases of $M^{\pm}$ respectively. Thus $N(x_i, x^j) = \delta_{ij}.$
\begin{defi} Let $Z$ be the split octonion algebra over $k$ with involution $\bar{z}$ as defined above and norm form $N(\cdot, \cdot)$ as above. Then 
$$\mathcal H_3(Z, \bar{\,}) := \left \{\left( \begin{array}{ccc} \alpha 1 & z & \bar y \\
\bar z & \beta 1 & x \\
y & \bar x & \gamma 1 \\
  \end{array} \right) : \alpha, \beta, \gamma \in k, x, y, z \in Z \right\}.$$
  We also abbreviate 
$$  \left( \begin{array}{ccc} \alpha 1 & z & \bar y \\
\bar z & \beta 1 & x \\
y & \bar x & \gamma 1
  \end{array} \right) := \alpha e_1 + \beta e_2 + \gamma e_3 + P_1(x) +P_2(y)+ P_3(z).$$
  
  Then $H_3(Z, \bar{\,})$ is a linear Jordan algebra with commutative product defined as follows, where $(i,j,k)$ is a cyclic  permutation of $(1, 2, 3)$, 
  \begin{eqnarray}
  e_i  \circ e_j &=& 2\delta_{ij} e_i, \\
  e_i  \circ P_j(x) &=& (1 - \delta_{ij})P_j(x),\\
  P_i(x)  \circ P_i(y) &=& N(x,y) (e_j + e_k),\\
  P_i(x) \circ P_j(y) &=&  P_k(\bar y \bar x).
  \end{eqnarray}
  In particular the set $\{e_1, e_2, e_3\}$ is a complete orthogonal set of idempotents and the Peirce spaces are 
  $$J_{ij} = P_k(Z), \; \{i,j,k\} = \{1,2,3\}, \quad J_{ii} = ke_{i}.$$
 Jordan algebras of this type are called \emph{split Albert algebras.}  A Jordan algebra $J$ over $k$ is called an \emph{Albert algebra} if there is a faithfully flat base change $k \rightarrow K$ such that $J \otimes_k K$ is isomorphic to a split $K$-Albert algebra.     
 \label{Albert_defi} 
 This is really just a fancy way to write the Jordan product as it was introduced on $\mathcal H_I(D, \bar{\;})$ in Definition~\ref{hermitian_matrix_jord_alg}. It works well for our purposes here. 
\end{defi}
\begin{rem} This definition of an Albert algebra is justified by the results of Petersson and Racine in  \cite{pera1996}.
\end{rem}

\paragraph{Orthogonal trialities}
\begin{center}
From now on $J$ is the Albert algebra with coordinates in the split octonion algebra $Z.$ 
\end{center}

\begin{defi} \label{orth_trial_defi}  
A triality $(T_1, T_2, T_3)$ is called \emph{orthogonal} if for each $T_i$ and all $a, b \in Z:$
$$N(T_ia, b ) = N(a, T_{i}(b)) $$  
\end{defi}

\begin{rem} The condition $N(T_ia, b ) = N(a, T_{i}(b)) $ is equivalent to $N(T_ia, a) = 0$, i.e., $T_i\in  \mathfrak{so}(D, N).$
\end{rem}

Recall that by Proposition~\ref{so_first_entry_of_trial_prop}
$\spa \{g_1(a \otimes b) : Z \rightarrow Z | a, b \in D\}  = \mathfrak{so}(Z, N).$ 
\begin{lem} 
Under this identification, the Lie algebra $\mathfrak{so}(Z, N)$ is generated by $L_a$ and $R_b$ for $a,b \in D_{-}.$ 
\end{lem}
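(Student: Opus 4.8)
The plan is to let $\mathfrak g \subseteq \End_k(Z)$ denote the Lie subalgebra generated by $\{L_a, R_b : a, b \in D_-\}$ and to prove $\mathfrak g = \mathfrak{so}(Z,N)$ by two inclusions. For $\mathfrak g \subseteq \mathfrak{so}(Z,N)$ it suffices to check that $L_a$ and $R_b$ are already skew for the norm when $a, b \in D_-$. Using the composition-algebra adjointness of $L_a$ (resp. $R_b$) for $N$, whose adjoint is $L_{\bar a}$ (resp. $R_{\bar b}$), together with the symmetry of the bilinear form, one finds that $N(ac,c)$ and $N(cb,c)$ are proportional to $t(a)$ and $t(b)$ respectively; since the involution on the split octonion algebra $Z$ is central, $a \in D_-$ forces $\bar a = -a$, hence $t(a) = 0$, so $N(ac,c) = 0$ for all $c$ and $L_a \in \mathfrak{so}(Z,N)$, and likewise $R_b \in \mathfrak{so}(Z,N)$ (here $1/2 \in k$ is used to pass between the quadratic and bilinear versions).

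For the reverse inclusion I would first establish $\mathrm{Der}(Z) \subseteq \mathfrak g$. For $a, b \in D_-$ one has $\overline{[a,b]} = \bar b \bar a - \bar a \bar b = -[a,b]$, so $[a,b] \in D_-$ and hence $L_{[a,b]}, R_{[a,b]} \in \mathfrak g$; from identity (\ref{alt_id_1}) we get $[L_a, R_b] = \tfrac12\bigl(L_{[a,b]} - [L_a, L_b]\bigr) \in \mathfrak g$, and therefore the standard derivation $SD(a,b) = L_{[a,b]} - R_{[a,b]} + 3[L_a, R_b]$ lies in $\mathfrak g$. Since $SD$ is bilinear and annihilates $1$ in either slot, and $Z = k1 \oplus D_-$, the elements $SD(a,b)$ with $a, b \in D_-$ already span $\mathrm{StanDer}(Z)$; and because $1/3 \in k$ and $Z$ is an octonion algebra, $\mathrm{StanDer}(Z) = \mathrm{IDer}(Z) = \mathrm{Der}(Z)$ (Proposition~\ref{lopera_der_proposition}(iv) together with the fact that all derivations of an octonion algebra are standard, cf. the discussion around (\ref{Der_base_change})). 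Thus $\mathrm{Der}(Z) + L_{D_-} + R_{D_-} \subseteq \mathfrak g$.

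It remains to see that every generator $g_1(a\otimes b)$ of $\mathfrak{so}(Z,N)$, in the identification of Proposition~\ref{so_first_entry_of_trial_prop}, lies in $\mathrm{Der}(Z) + L_{D_-} + R_{D_-}$. By Lemma~\ref{der_alt_explicit_form_lem} and Definition~\ref{g_i_operators_def}, under the coordinatization of the Peirce spaces $J_{12} \cong J_{23} \cong J_{31} \cong Z$ the operator $g_1(a\otimes b)$ is the restriction to $J_{12}$ of the inner Jordan derivation $\ud(a[12]\ast b[12])$ of $J = \mathcal H_3(Z,\bar{\;})$. Since the involution on $Z$ is central, Proposition~\ref{inner_trial_prop} applies and gives $\theta(\ud(a[12]\ast b[12])) = (\overline{g_1}, g_2, g_3) \in \mathcal T_{00} = h\bigl((\mathrm{IDer}(Z), D_-, D_-)\bigr)$. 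Reading off the first component of $h$ (Lemma~\ref{trial_der_alt_lemma}) gives $\overline{g_1(a\otimes b)} = \Delta + L_p - R_q$ for some $\Delta \in \mathrm{IDer}(Z)$ and $p, q \in D_-$. Applying the bar operator of Lemma~\ref{Lie_alg_auto_from_alt_involution} — it is a Lie automorphism of $\End_k(Z)$ of order two, preserves derivations, and by centrality of the involution satisfies $\overline{L_p} = -R_p$ and $\overline{R_q} = -L_q$ for $p, q \in D_-$ — yields $g_1(a\otimes b) = \overline\Delta + L_q - R_p \in \mathrm{Der}(Z) + L_{D_-} + R_{D_-} \subseteq \mathfrak g$. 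Combining the three steps, $\mathfrak{so}(Z,N) = \spa\{g_1(a\otimes b) : a, b \in Z\} \subseteq \mathfrak g \subseteq \mathfrak{so}(Z,N)$, so $\mathfrak g = \mathfrak{so}(Z,N)$.

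The part I expect to require the most care is the third step: matching the operators $g_i(a\otimes b)$ of Definition~\ref{g_i_operators_def} precisely with the restrictions of the inner Jordan derivation that enters Proposition~\ref{inner_trial_prop} under the identifications of the Peirce spaces with $Z$, and tracking the signs produced by the bar operator; everything else is routine bookkeeping with identities already recorded in the text. (Alternatively, Step 3 can be replaced by the classical local-triality fact $\mathfrak{so}(Z,N) = \mathrm{Der}(Z) \oplus L_{D_-} \oplus R_{D_-}$ — directness follows from $D_- \cap k1 = 0$ and a look at the unit, and equality from a rank count, $14 + 7 + 7 = 28 = \operatorname{rk}\mathfrak{so}_8$, reduced to the case of residue fields since all modules involved are free and stable under base change — but invoking Proposition~\ref{inner_trial_prop} keeps the argument inside the framework already developed.)
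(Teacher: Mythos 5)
Your proof is correct, but Step~3 is a detour. The paper proves the lemma with a single computation: writing $a = a_+ + a_-$, $b = b_+ + b_-$ with $a_\pm, b_\pm \in D_\pm$ and using $D_+ \subset Z(D)$, one has $[L_a, R_b] = [L_{a_-}, R_{b_-}]$, while the elements $a\bar b - b\bar a$ and $\bar a b - \bar b a$ appearing in $g_1(a\otimes b)$ are always skew; hence directly from Definition~\ref{g_i_operators_def}
\[
g_1(a\otimes b) \;=\; 2[L_{a_-}, R_{b_-}] + L_{a\bar b - b\bar a} - R_{\bar a b - \bar b a}
\]
is manifestly a sum of a bracket of two generators and two further generators of $\mathfrak g$, so by Proposition~\ref{so_first_entry_of_trial_prop} one gets $\mathfrak{so}(Z,N) \subseteq \mathfrak g$ immediately. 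This makes your Step~2 ($\mathrm{Der}(Z) \subseteq \mathfrak g$) superfluous as an intermediate waypoint and removes the need for Proposition~\ref{inner_trial_prop}, the map $h$, and the bar operator entirely; in particular it does not even use that every derivation of $Z$ is inner. (The printed proof writes $SD(a,b)$ where it evidently means the $g_1$-operator and has a sign slip on the $R$-term, but the underlying one-line calculation is exactly this.) What your longer route buys is a clean reconciliation with the triality machinery already set up, at the cost of importing the inner-derivation hypothesis; what the paper's route buys is brevity. Your Step~1 — that $L_a, R_b$ for $a, b \in D_-$ are skew for $N$, via the adjointness $N(ax,y)=N(x,\bar a y)$, $\bar a = -a$, and $1/2 \in k$ — is correct and is precisely the inclusion the paper leaves tacit.
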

\begin{proof}  For $a = a_+  + a_-$, $b = b_+ + b_- \in D,$  we get, using $D_+ \subset Z(D)$, that  $SD(a, b) = [3L_{a_-}, R_{b_-}] + L_{[a, b]} + R_{[a, b]}.$ Since $[a, b] \in D_{-}$  it follows that 
  $\{L_{D_-},  R_{D_-}\}$ generates $\mathfrak{so}(Z, N)$ as a Lie algebra.  
\end{proof}

For $k$ a field not of characteristic $2$ or $3$ the following is stated as the ``Principle of local triality'' in \cite{jacobson1971}: 
\begin{prop} Let $T_1 \in \mathfrak{so}(Z, N).$ Then there exist unique $T_2, T_3 \in \mathrm{End}_k(Z)$ such that $(T_1, T_2, T_3)$ is a triality and this triality is orthogonal.  \label{jacobson_prop}
\end{prop}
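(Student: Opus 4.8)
The plan is to prove Proposition~\ref{jacobson_prop} (local triality) over a general base ring containing $1/2$ and $1/3$ by using the structure theory developed above for $J = \mathcal H_3(Z,\bar{\;})$, the split Albert algebra with coordinates in the split octonion algebra $Z$. The key observation is that the triality algebra $\mathcal T(Z)$ of $Z$ has already been analyzed: by Lemma~\ref{inner_trial_char_zero} and Lemma~\ref{trial_der_alt_lemma}, every triality $(t_1,t_2,t_3)$ with $t_1 \in \mathfrak{so}(Z,N)$ and $t_i(1)\in Z_-$ is inner, and by Proposition~\ref{inner_trial_prop} the inner trialities with first component having the appropriate properties are exactly $\mathcal T_{00} = \im(h)\bigl((\mathrm{IDer}(Z), Z_-, Z_-)\bigr)$, which in turn (by Corollary~\ref{inner_trialities_cor}) receives a central extension from $\mathcal D_0 \subset J*J$. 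So the strategy is: first establish existence and uniqueness of $T_2, T_3$ given $T_1 \in \mathfrak{so}(Z,N)$, then show the resulting triality is orthogonal.

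First I would prove uniqueness and existence. For uniqueness: if $(T_1,T_2,T_3)$ and $(T_1,T_2',T_3')$ are both trialities, then their difference $(0, T_2-T_2', T_3-T_3')$ is a triality with zero first component; applying the triality identity $0 = (T_2-T_2')(a)b + a(T_3-T_3')(b)$ with $b=1$ gives $T_2 = T_2'$ and with $a=1$ gives $T_3 = T_3'$, since $Z$ is unital. For existence: by Proposition~\ref{so_first_entry_of_trial_prop} (applied to the split octonion algebra, whose norm form has the required hyperbolic shape on $Z$ as noted in the paragraph preceding Definition~\ref{Albert_defi}), $\mathfrak{so}(Z,N) = \spa\{g_1(a\otimes b): a,b\in Z\}$. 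By Lemma~\ref{der_alt_explicit_form_lem} (using that the involution on $Z$ is central), for each generator $g_1(a\otimes b)$ we have an explicit triality $\bigl(g_1(a\otimes b), g_2(a\otimes b), g_3(a\otimes b)\bigr)$ arising as $\theta(\ud(a[ij]*b[ij]))$ via Proposition~\ref{inner_trial_prop}; since $\mathcal T(Z)$ is closed under linear combinations (it is a Lie subalgebra of $\End_k(Z)^3$, hence in particular a submodule), an arbitrary $T_1\in\mathfrak{so}(Z,N)$, written as a linear combination of the $g_1(a\otimes b)$, extends to a triality $(T_1,T_2,T_3)$, and by uniqueness $T_2,T_3$ are well-defined functions of $T_1$.

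Next I would prove orthogonality, i.e.\ $T_2, T_3 \in \mathfrak{so}(Z,N)$ as well. It suffices to check this on the generators $g_i(a\otimes b)$, $i=2,3$, of the relevant spaces, and then extend by linearity. Concretely one computes $N(g_2(a\otimes b)(c), c)$ and $N(g_3(a\otimes b)(c), c)$ using the explicit formulas $g_2(a\otimes b)(c) = \bar a(bc) - \bar b(ac)$ and $g_3(a\otimes b)(c) = (cb)\bar a - (ca)\bar b$ from Definition~\ref{g_i_operators_def}, together with the standard octonion identities for the norm: $N(xy,z) = N(y,\bar x z)$, $N(xy,z) = N(x, z\bar y)$, and $N(\bar x, \bar y) = N(x,y)$, all of which hold in a composition algebra with its standard involution. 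An alternative, cleaner route that I would prefer: invoke Lemma~\ref{general_trial_lemma}, which says that if $(\bar t_1, t_2, t_3)$ is a triality then so is its conjugate and its cyclic permutations; combined with the characterization $\mathcal T_{00} = \im(h)((\mathrm{IDer}(Z), Z_-, Z_-))$ from Proposition~\ref{inner_trial_prop} and the fact that $\theta$ is an isomorphism onto $\mathcal T_{00}$ when the involution is central, one gets that $T_2$ and $T_3$ are forced to lie in $\mathfrak{so}(Z,N)$ because the $g_2, g_3$ components of the generating trialities visibly do (again by direct computation on the basis, or by the symmetry $g_i(a\otimes b) \mapsto \overline{g_{\sigma i}}$ which permutes the components while staying inside $\mathfrak{so}$).

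The main obstacle I expect is the orthogonality verification: over a field of characteristic zero one can cite Jacobson's principle of local triality directly, but here the point of the proposition is precisely to get it over an arbitrary ring with $1/2, 1/3$ invertible, so one genuinely has to either (a) grind through the norm-form identities for $g_2, g_3$ on a hyperbolic basis — harmless but tedious — or (b) package everything through the algebraic machinery ($\mathcal T_{00}$, the map $\theta$, Lemma~\ref{general_trial_lemma}) so that the characteristic-zero fact transfers. I would take route (b): realize $J \otimes_k K$ for $K$ an algebraically closed field of characteristic $\neq 2,3$, where the classical principle of local triality holds, observe that the defining equations "$(T_1,T_2,T_3)$ is a triality" and "$T_i \in \mathfrak{so}(Z,N)$" are polynomial identities stable under base change (and that $Z$, being free of finite rank, has $\mathfrak{so}(Z,N)$ and $\mathcal T(Z)$ compatible with base change), and conclude that the orthogonality of the uniquely-determined $(T_2,T_3)$ descends to $k$. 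The finitely-generated-free hypothesis on $Z$ (split octonions) makes all the base-change compatibilities automatic, so this is the cleanest path.
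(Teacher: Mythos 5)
Your uniqueness argument is wrong, and the error is not cosmetic. You claim that if $(T_1,T_2,T_3)$ and $(T_1,T_2',T_3')$ are both trialities, then setting $b=1$ in $(T_2-T_2')(a)b + a(T_3-T_3')(b)=0$ forces $T_2=T_2'$. But setting $b=1$ only gives $(T_2-T_2')(a) = -a\,(T_3-T_3')(1)$, which is \emph{not} zero unless you know $(T_3-T_3')(1)=0$ — and you do not. In fact, uniqueness among \emph{all} trialities with first component $T_1$ fails outright: for any $\alpha\in k$, $(T_1, T_2+\alpha\,\mathrm{id},\, T_3-\alpha\,\mathrm{id})$ is again a triality. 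Pushing the argument to its conclusion, one finds that the difference is $(0, R_u, -L_u)$ with $u$ in the nucleus of $Z$, so $u=\alpha 1$ and the two trialities differ by exactly this $\alpha$-ambiguity. Killing $\alpha$ genuinely requires the orthogonality of $T_2$ (or $T_3$): the paper's own proof invokes the skewness of $T_2$ at the very last step — $N(T_2 x_i, x^i) + N(x_i, T_2 x^i) = 2\alpha = 0$, hence $\alpha = 0$ since $1/2\in k$. So the proposition's uniqueness is uniqueness among \emph{orthogonal} trialities extending $T_1$, and any correct proof must use that hypothesis; yours does not, and this also infects your existence argument, which appeals to "uniqueness" to make $T_1 \mapsto (T_2,T_3)$ well-defined on a spanning set.

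On the remaining ingredients your route is defensible but diverges from the paper. For existence, the paper shows that the set of $T_1\in\mathfrak{so}(Z,N)$ admitting an orthogonal triality extension is a Lie subalgebra (because $\mathcal T(Z)$ and $\mathfrak{so}(Z,N)^3$ are both Lie subalgebras of $\End_k(Z)^3$), and that the generators $L_a, R_b$ with $a,b\in D_-$ of $\mathfrak{so}(Z,N)$ all lie in this set because $\lambda(a)$ and $\rho(b)$ are trialities whose components are visibly skew. That handles existence and orthogonality in one stroke. Your approach — generating $\mathfrak{so}(Z,N)$ by the $g_1(a\otimes b)$ via Proposition~\ref{so_first_entry_of_trial_prop} and checking orthogonality of $g_2,g_3$ either directly or by base change to an algebraically closed field — would also work, but the base-change route needs to be formulated carefully: you must first establish uniqueness of the orthogonal extension over $k$ (so you know which $T_2,T_3$ to compare after base change), and that brings you back to the orthogonality-dependent uniqueness argument above. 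Repair that and the rest of your plan goes through.
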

\begin{proof} See \cite[p.8 -9]{jacobson1971} for the proof where $k$ is a field. We first show that the set of $T_1 \in \mathfrak{so}(Z, N)$ such that $T_2$ and $T_3$ as stated exist, is a subalgebra of $\mathfrak{so}(Z, N).$ Let $\mathbf T = (T_1, T_2, T_3)$ and $\mathbf S = (S_1, S_2, S_3)$ be trialities such that all components are in $ \mathfrak{so}(Z, N).$ Then $ [\mathbf T, \mathbf S] = ([T_1, S_1], [T_2, S_2], [T_3, S_3])$ has again components in $\mathfrak{so}(D, N)$.
Because $L_{D_0}$ and $R_{D_0}$ generate $\mathfrak{so}(D, N)$, the fact that  $\lambda (a)$ and $\rho(b)$, $a, b \in D_{-}$ are  trialities implies that $T_2, T_3$ exist for every $T_1 \in \mathfrak{so}(Z, N)$ as they exist for $L_a$ and $R_b.$
To prove uniqueness, it suffices to show that $(T_2 x)y + x T_3(y) = 0$ for all $x,y \in D$ implies $T_2 = T_3 = 0.$ Let $x= y = 1$. Then there is $u \in D$ such that $T_2(1) = u = -T_3(1).$ Let $x = 1$ and let $y$ be arbitrary. This gives $T_3(y) = -uy$ and likewise we obtain $T_2(x) = xu.$ Hence $$(xu)y = x(uy) $$ for all $x,y \in D$. Since the centre is spanned by $1,$ it follows that $u = \alpha 1$. But moreover $x \mapsto T_2(x) = \alpha x$ is skew, and therefore $0  = N(T_2(x_i), x^i) =  N(\alpha x_i, x^i)= \alpha(N(x_i, x^i)) = \alpha \delta_{ii} = 0.$ This implies $\alpha = 0.$
\end{proof}

\begin{prop}Let $J$ be an Albert algebra over $k$.  The following are equivalent
\begin{itemize}
\item[\rm (i)] $\TKK(J)$ is centrally closed.
\item[\rm (ii)] $\TKK(\mathcal A)$ is centrally closed for some split octonion algebra $\mathcal A=  J \otimes_k K$ and $k \rightarrow K$ a faithfully flat extension.
\end{itemize}
  \label{albertfundobservatationlem} 
\end{prop}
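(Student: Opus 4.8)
The statement to prove is the equivalence: $\TKK(J)$ is centrally closed $\iff$ $\TKK(\mathcal A)$ is centrally closed for some split Albert algebra $\mathcal A = J \otimes_k K$ with $k \to K$ faithfully flat. This is completely parallel to Lemma~\ref{octonionfundobservatationlem} (the octonion version), so I will imitate that proof structure.

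The plan is as follows. First I would record the base-change compatibility: since $J$ is an Albert algebra, it is finitely generated (indeed finitely generated projective) as a $k$-module, and for a flat base change $k \to K$ the Jordan pair $\mathbf J_K = \mathbf J \otimes_k K$ satisfies $\instr(\mathbf J_K) = \instr(\mathbf J) \otimes_k K$ — this is exactly the content of the base-change remark for Jordan pairs following Proposition~\ref{TKKalghalfthirddfunc}, or can be derived from Proposition~\ref{herm_jord_der_flat_base_change_cor_rev} together with the fact that $1/2 \in k$ forces $\instr(\mathbf J) = L_J \oplus [L_J, L_J]$. Consequently $\TKK(\mathbf J_K) = \TKK(\mathbf J) \otimes_k K$, i.e., writing $L = \TKK(J)$, we have $L \otimes_k K = \TKK(J \otimes_k K)$. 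Note that here we are using $1/2 \in k$ (standing assumption of the section) so that $L$ is perfect and admits a universal central extension, and that by Definition~\ref{Albert_defi} the split Albert algebra $\mathcal A = J \otimes_k K$ exists for some faithfully flat $k \to K$.

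Second, I would invoke the general descent principle for central closedness, stated in the proof of Lemma~\ref{octonionfundobservatationlem} and attributed to \cite{prep2009EN}: for any $k$-Lie algebra $L$, the conditions (a) $L$ is centrally closed, (b) $L \otimes_k K$ is centrally closed for some faithfully flat $K/k$, and (c) $L \otimes_k K$ is centrally closed for all faithfully flat $K/k$ are all equivalent. Combining this with the identification $L \otimes_k K = \TKK(\mathcal A)$ from the previous step: the implication (i) $\Rightarrow$ (ii) follows from (a) $\Rightarrow$ (c) (take $K$ to be the specific faithfully flat extension over which $J$ becomes split, so that $J \otimes_k K = \mathcal A$ is a split Albert algebra), and the implication (ii) $\Rightarrow$ (i) follows from (b) $\Rightarrow$ (a).

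I do not anticipate a serious obstacle here — the argument is essentially a transcription of Lemma~\ref{octonionfundobservatationlem} with ``octonion algebra'' replaced by ``Albert algebra'' and $L_k(\mathbb O)$ replaced by $\TKK(J)$. The one point requiring a little care is verifying that $\instr(\mathbf J) \otimes_k K \to \instr(\mathbf J_K)$ is an \emph{isomorphism} (not just an epimorphism) under a flat base change; for this I would use that $J$ is finitely generated projective, so that $\mathrm{End}_k(J) \otimes_k K \to \mathrm{End}_K(J \otimes_k K)$ is bijective for all base changes (as recorded in the ``Base ring extension'' subsection of Chapter~\ref{fundamental_concepts_chapter}), and that $\instr(\mathbf J) = L_J \oplus [L_J, L_J]$ sits inside $\mathrm{End}_k(J)$ compatibly with this identification. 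With that in hand, the decomposition $\TKK(\mathbf J) = J \oplus \instr(\mathbf J)$ base-changes correctly and the proof concludes as above.
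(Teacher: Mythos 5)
Your proof is correct and follows the paper's own argument essentially verbatim: reduce to the identification $\TKK(J)\otimes_k K = \TKK(J\otimes_k K)$ via flatness and finite generation, then invoke the equivalence (a) $\iff$ (b) $\iff$ (c) from \cite{prep2009EN}. Your extra remark verifying that $\instr(\mathbf J)\otimes_k K\to\instr(\mathbf J_K)$ is an isomorphism (rather than merely an epimorphism) using finite generated projectivity is a welcome point the paper leaves implicit, and you also correctly read the statement's ``split octonion algebra'' as a typo for ``split Albert algebra.''
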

\begin{proof}
Let $J$ be an octonion algebra over $k$ and $k/K$ flat. Then the Jordan algebra $J$ is a finitely generated $k$-module and by flatness, $\instr(J \otimes _k K) = \instr(J) \otimes_k K, $ therefore $\TKK(J \otimes_k K) = \TKK(J) \otimes_k K.$ \\
It known (\cite{prep2009EN}) that the following are equivalent for any $k$-Lie algebra $L$:
\begin{itemize}
\item[(a)] $L$ is centrally closed,
\item[(b)] $L_k \otimes _k K$ is centrally closed for some $K/k$ faithfully flat, 
\item[(c)] $L_k \otimes _k K$ is centrally closed for all $K/k$ faithfully flat.
\end{itemize}
This (i) $\implies$ (ii) follows from the equivalence of (a) and (c). The direction (ii) $\implies$ (i) is given by (a) $\iff$ (b). 
\end{proof}
When dealing with questions regarding the central closure of $\TKK(J),$ $J$ an Albert algebra we can therefore without loss of generality assume that 
\begin{center}
$J$ is a split Albert algebra with coordinates in a split octonion algebra $Z.$
\end{center}

\begin{cor} There is a central extension $\phi: \mathcal D_{0} \rightarrow \mathfrak{so}(Z, N)$,  and this central extension is isomorphic to $\ud_0$. \label{D_null_so_ce_cor}
\end{cor}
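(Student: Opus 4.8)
The plan is to exhibit $\phi$ as the composition of maps we already have and then verify it is a central extension isomorphic to $\ud_0$. Recall from Corollary~\ref{hermitian_D_0_cor} (applied to $J = \mathcal H_3(Z,\bar{\,})$) that $\ker u = \ker(\ud_0)$, where $\ud_0 : \mathcal D_0 \rightarrow \sum_{i<j}[L_{J_{ij}}, L_{J_{ij}}]$ is the Lie algebra epimorphism of Proposition~\ref{Uni_der_Decomp_prop_ja}, and that $\ud_0$ is a central extension by Corollary~\ref{ud_JA_action_ce_cor} together with the inclusion $\ker\ud_0 = \ker\ud_{JA} \subset Z(\mathcal D_0)$. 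By Corollary~\ref{inner_trialities_cor}, since the octonion involution is central, $\theta\circ\ud : \mathcal D_0 \rightarrow \mathcal T_{00}$ is a central extension of Lie algebras, and $\theta$ is an isomorphism onto $\mathcal T_{00}$ by Proposition~\ref{inner_trial_prop}. So the content of the corollary is to identify $\mathcal T_{00}$ — or rather an isomorphic copy of it — with $\mathfrak{so}(Z,N)$.

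First I would use the projection onto the first component of a triality. By Proposition~\ref{jacobson_prop} (the principle of local triality), for every $T_1 \in \mathfrak{so}(Z,N)$ there exist unique $T_2, T_3$ making $(T_1,T_2,T_3)$ an orthogonal triality; conversely the first component of an orthogonal triality lies in $\mathfrak{so}(Z,N)$. Hence $p_1 : \mathbf t \mapsto t_1$ restricts to a $k$-module (indeed Lie algebra) isomorphism from the algebra $\mathcal T^{\mathrm{orth}}$ of orthogonal trialities onto $\mathfrak{so}(Z,N)$, with inverse $T_1 \mapsto (T_1,T_2,T_3)$. Next I must check that $\mathcal T_{00} \subseteq \mathcal T^{\mathrm{orth}}$, i.e. that every triality in $\mathcal T_{00}$ is orthogonal, and in fact that $\mathcal T_{00} = \mathcal T^{\mathrm{orth}}$. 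For the inclusion: $\mathcal T_{00}$ is generated (via the isomorphism $h$ of Lemma~\ref{trial_der_alt_lemma}) by $\lambda(a), \rho(b)$ with $a,b \in D_-$ and by $(\Delta,\Delta,\Delta)$ for $\Delta \in \mathrm{StanDer}(Z)$; each of these is orthogonal — $L_a, R_b$ for $a,b \in D_-$ lie in $\mathfrak{so}(Z,N)$ as in the lemma preceding Proposition~\ref{jacobson_prop}, and standard derivations preserve the norm — so since orthogonal trialities form a subalgebra (shown inside the proof of Proposition~\ref{jacobson_prop}), $\mathcal T_{00} \subseteq \mathcal T^{\mathrm{orth}}$. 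For equality: $\{L_{D_-}, R_{D_-}\}$ generates $\mathfrak{so}(Z,N)$ as a Lie algebra, so $p_1(\mathcal T_{00}) \supseteq \mathfrak{so}(Z,N)$, whence $p_1(\mathcal T_{00}) = \mathfrak{so}(Z,N)$ and, $p_1$ being injective on orthogonal trialities, $\mathcal T_{00} = \mathcal T^{\mathrm{orth}}$.

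Putting this together, I set $\phi := p_1 \circ \theta \circ \ud : \mathcal D_0 \rightarrow \mathfrak{so}(Z,N)$. It is a composite of Lie algebra epimorphisms, hence a Lie algebra epimorphism; and since $p_1|_{\mathcal T_{00}}$ is an isomorphism, $\ker\phi = \ker(\theta\circ\ud) = \ker\ud$, which is central in $\mathcal D_0$ by Corollary~\ref{inner_trialities_cor}. Thus $\phi$ is a central extension. Finally $\phi$ is isomorphic to $\ud_0$ in the category of central extensions of $\mathcal D_0$: both have the same kernel $\ker\ud = \ker\ud_0 \subset \mathcal D_0$ (by Corollary~\ref{hermitian_D_0_cor}), both are covers of their respective targets via epimorphisms out of the same domain with the same kernel, and the quotient $\mathcal D_0/\ker\ud$ is isomorphic to both $\sum_{i<j}[L_{J_{ij}},L_{J_{ij}}] = \mathrm{Im}(\ud_0)$ and $\mathfrak{so}(Z,N) = \mathrm{Im}(\phi)$; the induced isomorphism of targets intertwines $\ud_0$ and $\phi$ and restricts to the identity on $\ker\ud$, giving an isomorphism of central extensions.

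The main obstacle I anticipate is the verification that $\mathcal T_{00}$ consists exactly of orthogonal trialities, and in particular matching the generators of $\mathcal T_{00}$ against $\mathfrak{so}(Z,N)$ cleanly — this requires knowing that $L_a, R_b$ are skew for the norm when $a, b$ are skew-hermitian (which uses that the involution is central and the explicit form of $N$), that standard inner derivations of $Z$ preserve $N$, and that $\{L_{D_-},R_{D_-}\}$ Lie-generates $\mathfrak{so}(Z,N)$; all three facts are available from the lemmas immediately preceding Proposition~\ref{jacobson_prop} and from Proposition~\ref{so_first_entry_of_trial_prop}, so this should go through, but it is the step where care is needed. A secondary bookkeeping point is that $\mathcal D_0$ depends only on the split Albert algebra up to the faithfully flat descent of Proposition~\ref{albertfundobservatationlem}, so without loss of generality $Z$ may be taken split; in the split case $N$ has the hyperbolic form used in Proposition~\ref{so_first_entry_of_trial_prop}, which is what makes the identification with $\mathfrak{so}(Z,N)$ concrete.
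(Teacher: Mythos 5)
Your proof is correct and follows essentially the same route as the paper's: identify $\ud(\mathcal D_0)\cong\mathcal T_{00}$ via Proposition~\ref{inner_trial_prop}, then use the first-component projection together with Proposition~\ref{so_first_entry_of_trial_prop} and the principle of local triality (Proposition~\ref{jacobson_prop}) to identify $\mathcal T_{00}\cong\mathfrak{so}(Z,N)$, and compose to get the central extension $\phi$. The only difference is that you fill in the verification that $\mathcal T_{00}$ consists of orthogonal trialities (by checking generators and using that $\bar{\;}$ preserves $\mathfrak{so}(Z,N)$), which the paper leaves implicit in citing those two propositions.
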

\begin{proof}
Note that $Z$ has only inner derivations (\cite[Cor. 5.2]{lopera}) and that the involution is central. Thus Proposition~\ref{inner_trial_prop} can be applied and $\theta(\ud(\mathcal D_0)) \cong \mathcal T_{00}.$
It follows  by Proposition~\ref{so_first_entry_of_trial_prop} and Proposition~\ref{jacobson_prop} that as Lie algebras: 
$$ \mathcal T_{00}  \cong \ud_0(\mathcal D_0) \cong \mathfrak{so}(Z, N).$$ 
Call the isomorphism $f.$
Since $ \ud_0 : \mathcal D_0 \rightarrow \mathcal T_{00} $ is a central extension, we obtain a central extension $\phi = \ud_0 \circ f. $
\end{proof}
\begin{prop} 
Under the assumptions of this section ($1/2$, $1/3 \in k$ and $J$ an Albert algebra), the Lie algebra $\TKK(J)$ is centrally closed \label{TKK_matri_jord_cent_closed_theo}.
\end{prop}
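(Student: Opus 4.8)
The plan is to reduce the general statement to the split case and then to a concrete computation with the universal central extension. First, by Proposition~\ref{albertfundobservatationlem} it suffices to prove that $\TKK(J)$ is centrally closed when $J$ is a \emph{split} Albert algebra with coordinates in a split octonion algebra $Z$ over a ring $k$ containing $1/2$ and $1/3$. For such $J$ the Lie algebra $L = \TKK(J)$ is $A_1$-graded, hence perfect (since $1/2 \in k$, Proposition~\ref{K_R_turns_rogral_perfect}), so it admits a universal central extension $u : \mathfrak{uce}(L) \to L$. By Proposition~\ref{one_half_jordan_hom_H_2_prop}, $\ker u = \HF(J) = \ker(\ud_{JA})$, and by Corollary~\ref{hermitian_D_0_cor} this equals $\ker(\ud_0)$, where $\ud_0 : \mathcal D_0 \to \sum_{i<j}[L_{J_{ij}}, L_{J_{ij}}]$ is the restricted map from Proposition~\ref{Uni_der_Decomp_prop_ja}. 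So the whole problem collapses to showing $\ker(\ud_0) = \{0\}$.

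Next I would use Corollary~\ref{D_null_so_ce_cor}: for $J$ split Albert with split octonion coordinates $Z$, there is a central extension $\phi : \mathcal D_0 \to \mathfrak{so}(Z, N)$ isomorphic to $\ud_0$, so $\ker(\ud_0) \cong \ker(\phi)$. Thus it suffices to prove that $\phi$ is an \emph{isomorphism}, equivalently that $\mathcal D_0 \to \mathfrak{so}(Z,N)$ is a \emph{covering of a simply connected Lie algebra}. Here $\mathfrak{so}(Z,N) \cong \mathfrak{so}_8(k)$ (the split orthogonal Lie algebra of the $8$-dimensional split quadratic form $N$), which is the $k$-form $k \otimes_{\mathbb Z} \mathfrak g_{\mathbb Z}$ of a simple complex Lie algebra of type $D_4$. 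The strategy is then the same one van der Kallen's results provide (and which this thesis has reproved for root-graded Lie algebras): $D_4$ has no degenerate sums (see Table~\ref{tab:DegSums}), so by Proposition~\ref{torsion_bijection_prop} — or directly by van der Kallen's theorem \cite{vdK} — the simply-connected Chevalley $\mathbb Z$-form of type $D_4$ is simply connected over every ring in which, in this case, $1/2$ lies (type $D$ needs no restriction on $1/2$, but we have it anyway). Hence $\mathfrak{so}(Z,N)$ is simply connected, and since $\phi$ is a central extension and $\mathcal D_0$ is perfect (which follows from $\mathcal D_0$ being a quotient of a perfect Lie algebra, or from Corollary~\ref{matrix_jord_der_generation_cor} together with perfectness of $\uTKK(J)$), $\phi$ is a covering of a simply connected Lie algebra, hence an isomorphism by Lemma~\ref{uni_cov}. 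Therefore $\ker(\ud_0) = 0$, so $\ker u = 0$ and $\TKK(J)$ is centrally closed; being also perfect, it is simply connected, and the general case follows by Proposition~\ref{albertfundobservatationlem}.

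The main obstacle I expect is the precise identification in the previous paragraph: establishing that $\phi : \mathcal D_0 \to \mathfrak{so}(Z,N)$, composed back through the chain of isomorphisms $\ud_0 \leftrightarrow \theta \circ \ud$ of Proposition~\ref{inner_trial_prop} and the identification $\mathcal T_{00} \cong \mathfrak{so}(Z,N)$ via Proposition~\ref{so_first_entry_of_trial_prop} and the local triality Proposition~\ref{jacobson_prop}, is genuinely the \emph{universal} central extension datum — i.e.\ that $\mathcal D_0$ is perfect and that we may legitimately invoke the simple connectedness of the $D_4$-form. Perfectness of $\mathcal D_0$ needs to be argued carefully: one shows $\mathcal D_0 = \sum_{i<j} J_{ij}*J_{ij}$ and, using Corollary~\ref{matrix_jord_der_generation_cor}, that each $J_{ij}*J_{ij} = [e_i * J_{ij}, e_i*J_{ij}]$ lies in $[\mathcal D_0,\mathcal D_0]$ after also noting $[e_i*J_{ij},\mathcal D_0] \subseteq \mathcal D_0$; alternatively one transports perfectness along $\phi$ from $\mathfrak{so}_8(k)$ provided one first checks $\phi$ is surjective with central kernel, which is exactly Corollary~\ref{D_null_so_ce_cor}. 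Once perfectness and the invocation of \cite{vdK} for $D_4$ are in place, Lemma~\ref{uni_cov} finishes it mechanically; the rest is bookkeeping already carried out in Sections~\ref{alt_coordinates} and \ref{Albert_example}.
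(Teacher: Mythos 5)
Your skeleton matches the paper's: reduce to the split case via Proposition~\ref{albertfundobservatationlem}, pass through $\ker u \cong \ker(\ud_0) \cong \ker(\phi)$ for the central extension $\phi : \mathcal D_0 \to \mathfrak{so}(Z,N)$ of Corollary~\ref{D_null_so_ce_cor}, cite van der Kallen for central closedness of $\mathfrak{so}(Z,N)$, and then invoke Lemma~\ref{uni_cov} provided $\mathcal D_0$ is perfect. You also correctly flag perfectness of $\mathcal D_0$ as the crux. But both of the arguments you offer for it are flawed, and this is precisely the place where the paper does real work.

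Your first suggestion — use Corollary~\ref{matrix_jord_der_generation_cor} to get $J_{ij}*J_{ij} = [e_i*J_{ij}, e_i*J_{ij}]$ and conclude $\mathcal D_0 \subseteq [\mathcal D_0, \mathcal D_0]$ — does not go through, because the elements $e_i * x_{ij}$ lie in the complement $\mathcal D = \sum_{i<j} e_i * J_{ij}$, not in $\mathcal D_0$ (see Proposition~\ref{Uni_der_Decomp_prop_ja}: $J*J = \mathcal D_0 \oplus \mathcal D$, and $e_i*x_{ij}$ does not annihilate all idempotents). What that identity proves is that $J*J$ is perfect, not that $\mathcal D_0$ is; the invariance $[\mathcal D, \mathcal D_0] \subseteq \mathcal D_0$ (which moreover is not established anywhere and appears false, since $\ud(e_i*x_{ij}).e_k \neq 0$) would not salvage this. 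Your second suggestion — transport perfectness from $\mathfrak{so}(Z,N)$ back up through $\phi$ — is a non-starter: perfectness does not lift along a central extension (take the trivial extension $B \oplus M \to B$ with $M \neq 0$); that distinction is exactly what separates a central extension from a covering, and it is the entire content of what you are trying to prove.

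The paper closes this gap with a direct calculation inside $\mathcal D_0$ itself. Using the explicit action formula~(\ref{diagonal_action_1}), $(a*c).d = 2(N(a,d)c - N(c,d)a)$, and the hyperbolic basis $\{x_1,\dots,x_4,x^1,\dots,x^4\}$ of the split octonion algebra $Z \cong J_{12}$, one shows for any two basis elements $a,b$ with $N(a,b)=0$ that there exist basis elements $c,d$ with $N(a,d)=N(b,c)=N(a,c)=0$, $N(c,d)=1$, and then $[a*c,\, d*b]$ evaluates (up to a unit) to $a*b$; the remaining diagonal generators $x_j*x^j$ are produced by combining $[x_i*x^j,\, x_j*x^i]$ with $[x_i*x_j,\, x^i*x^j]$. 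All brackets here are of elements already inside $\mathcal D_0 = \sum_{i<j}J_{ij}*J_{ij}$, which is what makes the argument legitimate. You should replace your two suggested routes with this computation (or an equivalent one); without it the proof has a genuine hole at its load-bearing step.
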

\begin{proof}
 Let $L = \mathrm{TKK}(\mathbf J)$ and let $u: \mathfrak{uce}(L) \rightarrow L$ be a universal central extension. As explained above, we can assume that $J$ is split. Then $\ker u \cong \ker( f)$  where $f$ is the  central extension $\phi: \mathcal D_0 \rightarrow \mathfrak{so}(Z, N)$ of Corollary~\ref{D_null_so_ce_cor}. Since $\mathfrak{so}(Z, N)$ is centrally closed for $1/2 \in k$ (see \cite{vdK}), it suffices to prove
that $\mathcal D_0$ is perfect.\\
Since $\mathcal D_0 = \sum_{1 \leq i , j \leq 3}J_{ij} * J_{ij}$, it suffices to prove that for every two basis elements $a$ and $b$ of $J_{12} \cong Z$, $a * b \in [J_{12} * J_{12}, J_{12} * J_{12}].$ First assume that $a, b$ are basis elements such that $N(a, b) = 0$, in particular, $a * b \neq  \pm x_i * x^i$.  There are elements $c$ and $d$ linearly independent from $a$ and $b$ such that $N(a, d) = N(b,c) = N(a, c) = 0$ and $N(c,d) = 1$. We can even choose $c$ and $d$ to be basis elements. Thus using (\ref{diagonal_action_1}):
$$[a  * c, d * b] = 2\big((N(c,d)a - N(a, d)c)\big) * b + d * \big( N(c,b)a - N(a,bc) \big) = 2(a * b), $$ which proves that $a* b$ is in $[\mathcal D_0, \mathcal D_0]. $
If $N(a, b) = 1$ we can assume $a = x_i$, $b = x^i$. Let $i \neq j$, then 
$$[x_i * x^j, x_j * x^i] = 2(x_i * x^i -x_j * x^j)$$
and
$$[x_i * x_j, x^i * x^j] = 2(- x_j * x^j +  x^i * x_i).$$
Adding up those two elements in $[\mathcal D_0, \mathcal D_0]$ yields
$$-4(x_j * x^j) \in [\mathcal D_0, \mathcal D_0] $$ and thus
$\mathcal D_0 = [\mathcal D_0, \mathcal D_0]$ as needed. 
\end{proof}

\begin{defi}
 \label{Albert_Alg_Def_ring}If $k$ is an algebraically closed field, a Lie $k$-algebra is said to be of \emph{type $F_4$} if it is isomorphic to the derivation algebra of an Albert algebra $\mathcal A$ over $k$ (which is necessarily split). \\
 In general, if $k$ is a ring, we define a \emph{Lie algebra of type $F_4$} as a  Lie $k$-algebra $L$ such that for every $\mathfrak{p} \in \mathrm{Spec}(k),$ the prime spectrum of $k,$ the Lie algebra $L \otimes_{\overline{Q(k_{\mathfrak p})}} {\overline{Q(k_{\mathfrak p})}}$ is of type $F_4,$ where $Q(k_{\mathfrak p})$ is the quotient field of $k_{\mathfrak p}$ and $\overline{Q(k_{\mathfrak p})}$ its algebraic closure.
\end{defi}

We need one more fact:
\begin{prop}[\cite{jacobson1971}] Let $F$ be a field of characteristic $\neq 2$ and $J$ an Albert algebra over $F$. Then 
$$\mathrm{Der}_F(J) = \mathrm{IDer}_F(J) $$
is a  Lie algebra of type $F_4.$
\label{all_der_inner_albert}
\end{prop}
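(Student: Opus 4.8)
The final statement to be proved is Proposition~\ref{all_der_inner_albert}: over a field $F$ of characteristic $\neq 2$, for an Albert algebra $J$ one has $\mathrm{Der}_F(J) = \mathrm{IDer}_F(J)$, and this Lie algebra is of type $F_4$. My plan is to cite Jacobson's book for the substance and focus on assembling the pieces already developed in the excerpt. First I would reduce to the split case: by Proposition~\ref{flat_basechange_Jordan_Der} (derivations commute with flat base change for a finitely generated projective module) and Corollary~\ref{herm_jord_der_flat_base_change_cor} (inner derivations of $\mathcal H_3(D,-)$ commute with flat base change under a centre-preserving change), both $\mathrm{Der}_F(J)$ and $\mathrm{IDer}_F(J)$ behave well under the faithfully flat extension $F \to \overline F$ to the algebraic closure. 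Since $\overline F$ is algebraically closed of characteristic $\neq 2$, $J \otimes_F \overline F$ is a split Albert algebra over $\overline F$, coordinatized by the split octonion algebra $Z$ as in Definition~\ref{Albert_defi}; so it suffices to prove the equality $\mathrm{Der} = \mathrm{IDer}$ after this base change, hence I may assume $J$ is split and $F$ is algebraically closed.

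For the split case I would argue as follows. By Corollary~\ref{stam_der_jrod_der_cor}, $\mathrm{IDer}_F(J) \cong \mathrm{StanDer}(Z) \oplus (Z_-)^2 \oplus Z^3$ as an $F$-module (using $1/2, 1/3 \in F$), and $\mathrm{StanDer}(Z) = \mathrm{IDer}(Z) = \mathrm{Der}(Z)$ since $Z$ is octonion (Proposition~\ref{lopera_der_proposition}(iv) together with the fact that octonion algebras have only inner derivations, cited as \cite[Cor.~5.2]{lopera}). Counting dimensions: $\dim \mathrm{Der}(Z) = 14$, $\dim Z_- = 7$, $\dim Z = 8$, so $\dim_F \mathrm{IDer}_F(J) = 14 + 2\cdot 7 + 3 \cdot 8 = 52$. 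On the other hand $\dim_F \mathrm{Der}_F(J) = 52$ as well (the derivation algebra of a split Albert algebra is the $52$-dimensional simple Lie algebra of type $F_4$ — this is Jacobson's result, \cite{jacobson1971}). Since $\mathrm{IDer}_F(J)$ is a subalgebra of $\mathrm{Der}_F(J)$ of the same finite dimension, the inclusion is an equality. Finally, by Definition~\ref{Albert_Alg_Def_ring}, a Lie algebra of type $F_4$ over an algebraically closed field is exactly the derivation algebra of a (necessarily split) Albert algebra, so $\mathrm{Der}_F(J)$ is of type $F_4$; and returning to general $F$, for every $\mathfrak p \in \mathrm{Spec}(F)$ (a field, so $\mathrm{Spec}(F)$ is a point) the base change to $\overline{Q(F_{\mathfrak p})} = \overline F$ gives a Lie algebra of type $F_4$ by what we just showed, which is the required condition.

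The main obstacle — really the only one — is that the dimension count $\dim_F \mathrm{Der}_F(J) = 52$ for a split Albert algebra over an algebraically closed field of characteristic $\neq 2$ is not established in the excerpt; it is genuinely Jacobson's theorem (invoked as Proposition~\ref{all_der_inner_albert} itself and referenced to \cite{jacobson1971}). So the honest structure of the proof is: cite Jacobson for $\dim \mathrm{Der} = 52$ and for simplicity/type $F_4$, then use the excerpt's Corollary~\ref{stam_der_jrod_der_cor} to pin down $\dim \mathrm{IDer} = 52$, and conclude by equality of dimensions. A secondary technical point to be careful about is verifying that the base change $F \to \overline F$ is centre-preserving on $Z$ so that Corollary~\ref{herm_jord_der_flat_base_change_cor} applies — but this is immediate since $\mathrm{Cent}(Z) = F\cdot 1$ and a flat (here, field) extension preserves this, as recorded via \cite[Prop.~1.14]{lopera}. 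I would keep the write-up short, essentially the three paragraphs above made precise, leaning on the cited literature for the one input the thesis does not itself prove.
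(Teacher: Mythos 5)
Your approach is genuinely different from the paper's and it has a real gap. The paper's argument is two lines: $\mathrm{IDer}(J)$ is always a \emph{nonzero ideal} of $\mathrm{Der}(J)$ (the commutator of a derivation with an inner derivation $[L_a,L_b]$ is $[L_{\Delta a},L_b]+[L_a,L_{\Delta b}]$, again inner, and $J$ is non-associative so $\mathrm{IDer}\neq 0$), and by Jacobson \cite[p.~21, Thm.~3]{jacobson1971} the Lie algebra $\mathrm{Der}_F(J)$ is \emph{simple} whenever $\mathrm{char}\,F\neq 2$; a simple Lie algebra has no proper nonzero ideals, so $\mathrm{IDer}_F(J)=\mathrm{Der}_F(J)$. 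That argument works verbatim in characteristic $3$.

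Your dimension count instead routes through Corollary~\ref{stam_der_jrod_der_cor}, which gives $\mathrm{IDer}(J)\cong\mathrm{StanDer}(Z)\oplus (Z_-)^2\oplus Z^3$, hence dimension $14+14+24=52$. But that corollary is proved under the standing hypothesis of Subsection~\ref{alt_coordinates} that $1/3\in k$, and its proof relies on Lemma~\ref{trial_der_alt_lemma} and Proposition~\ref{lopera_der_proposition}(iv), which also need $1/3\in k$. So your argument only establishes the proposition for $\mathrm{char}\,F\neq 2,3$, strictly narrower than the stated hypothesis $\mathrm{char}\,F\neq 2$. That is a genuine gap: you would need a separate argument in characteristic $3$, or else adopt the paper's simplicity route, which sidesteps the issue entirely. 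As a side remark, both proofs cite Jacobson, but for different inputs: the paper for simplicity of $\mathrm{Der}(J)$, you for $\dim\mathrm{Der}(J)=52$; if you do restrict to characteristic $\neq 2,3$, your argument is otherwise sound (the base-change reduction is fine since $\mathrm{Cent}(Z)=F\cdot 1$ and $J$ is finitely generated projective), and it has the small virtue of making the equality $\dim\mathrm{IDer}(J)=52$ visible from the thesis's own decomposition. But as a proof of the proposition as stated, the missing characteristic-$3$ case needs to be closed.
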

\begin{proof} The inner derivations $\mathrm{IDer}(J)$ always form a non-zero ideal in $\mathrm{Der}(J).$ Since by \cite[p .21, Thm.3]{jacobson1971} the derivation algebra of $J$ is simple over a field not of characteristic $2$, it follows that  $\mathrm{Der}_F(J) = \mathrm{IDer}_F(J).$
\end{proof}

\begin{theo}Let $J = \mathcal H_3(\mathbb O, \bar{\;})$ for an octonion algebra $\mathbb O$ over a ring $k$ containing $1/2$ and $1/3$. Then 
$\mathrm{Der}_k(J)$ and $\mathrm{IDer}_k(J)$ are Lie algebras of type $F_4.$
\label{Der_Mat_alg_F_4_der}
\end{theo}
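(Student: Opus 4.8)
The plan is to reduce the statement to the already-established results about the Tits--Kantor--Koecher algebra of $J$ and about derivations of octonion and Albert algebras, and then to patch the two cases $\mathrm{Der}$ versus $\mathrm{IDer}$ together via a local-to-global argument over $\mathrm{Spec}(k)$. Since $1/2,1/3\in k$ throughout, $J=\mathcal H_3(\mathbb O,\bar{\;})$ is an Albert algebra in the sense of Definition~\ref{Albert_defi} (after a faithfully flat base change to a split octonion algebra), so everything proved in Subsection~\ref{alt_coordinates} applies. First I would record that by Proposition~\ref{all_der_inner_albert}, for any field $F$ of characteristic $\neq 2$ and any Albert $F$-algebra $J'$ one has $\mathrm{Der}_F(J')=\mathrm{IDer}_F(J')$ and this is a simple Lie algebra of type $F_4$; in particular over an algebraically closed field of characteristic $0$ (such as each $\overline{Q(k_{\mathfrak p})}$ with $1/2,1/3$ in the ring, hence $\mathrm{char}=0$) the two notions coincide and give the split $F_4$.

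Next I would handle the base-change compatibility. Since $J$ is finitely generated projective over $k$ (being a finitely generated projective module of rank $27$ by construction: $J=\bigoplus_{i\le j}J_{ij}$ with each $J_{ij}$ a copy of $\mathbb O$ or of $k$, and $\mathbb O$ finitely generated projective of rank $8$), Proposition~\ref{flat_basechange_Jordan_Der} gives $\mathrm{Der}_k(J)\otimes_k K\cong\mathrm{Der}_K(J\otimes_k K)$ for every flat base change. Likewise Corollary~\ref{herm_jord_der_flat_base_change_cor} (its ``in particular'' clause) gives $\mathrm{IDer}_k(J)\otimes_k K\cong\mathrm{IDer}_K(J\otimes_k K)$ for flat base changes, using that $\mathbb O$ has central involution. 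Therefore for each $\mathfrak p\in\mathrm{Spec}(k)$, writing $F=\overline{Q(k_{\mathfrak p})}$, both $\mathrm{Der}_k(J)\otimes_k F$ and $\mathrm{IDer}_k(J)\otimes_k F$ are isomorphic to $\mathrm{Der}_F(J\otimes_k F)=\mathrm{IDer}_F(J\otimes_k F)$, which by Proposition~\ref{all_der_inner_albert} combined with the base-change invariance of Albert algebras (Definition~\ref{Albert_defi}) is the split Lie algebra of type $F_4$ over $F$. By Definition~\ref{Albert_Alg_Def_ring}, this is exactly what it means for $\mathrm{Der}_k(J)$ and $\mathrm{IDer}_k(J)$ to be Lie algebras of type $F_4$.

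The one point that needs a little care, and which I expect to be the main (though modest) obstacle, is establishing $\mathrm{IDer}_k(J)\otimes_k F\cong\mathrm{IDer}_F(J\otimes_k F)$ for the specific base change $k\to k_{\mathfrak p}\to F$: the passage $k\to k_{\mathfrak p}$ is flat and $J$ is finitely presented, so Corollary~\ref{herm_jord_der_flat_base_change_cor} applies directly, but then $k_{\mathfrak p}\to Q(k_{\mathfrak p})\to\overline{Q(k_{\mathfrak p})}$ must also be checked to be flat (it is: localization followed by a field extension, each flat, using \cite[Ch.2 Thm 1]{AC} as in the proof of Proposition~\ref{oct_der_G_2_prop}) and to preserve the centre of $\mathbb O$. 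For the latter one uses \cite[Proposition 1.14]{lopera}: $\mathbb O$ is finitely generated over $k$, hence over each intermediate ring, so $\mathrm{Cent}(\mathbb O\otimes K)=\mathrm{Cent}(\mathbb O)\otimes K$ holds along the chain, and the involution stays central by the argument in the proof of Corollary~\ref{herm_jord_der_flat_base_change_cor}. Once these verifications are in place, the chain of isomorphisms closes and both statements follow; I would also remark that $\mathrm{IDer}_k(J)$ being of type $F_4$ gives, in the notation of Proposition~\ref{TKK_matri_jord_cent_closed_theo}, another proof that $\TKK(J)$ is centrally closed, but that is not needed here.
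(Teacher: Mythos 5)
Your proof is essentially the paper's own argument, and it is correct: pass to $F=\overline{Q(k_{\mathfrak p})}$ via a flat base change, use the compatibility of $\mathrm{Der}$ and $\mathrm{IDer}$ with flat base change for the finitely presented module $J$, observe that $J\otimes_k F$ is a split Albert algebra, and conclude via Proposition~\ref{all_der_inner_albert}. One small factual slip: $1/2,1/3\in k$ does not force $\mathrm{char}\,F=0$ (e.g.\ $k=\mathbb F_5$ has $1/2$ and $1/3$), only that $\mathrm{char}\,F\neq 2,3$; this does not damage the argument since Proposition~\ref{all_der_inner_albert} needs only $\mathrm{char}\neq 2$, but you should state the correct hypothesis. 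Also, for the $\mathrm{IDer}$ base-change step, Proposition~\ref{herm_jord_der_flat_base_change_cor_rev} (flat base change for $\mathrm{IDer}$ of any unital Jordan algebra) is a more direct reference than Corollary~\ref{herm_jord_der_flat_base_change_cor}, and it spares you the detour through centre-preservation of the octonion algebra; the paper implicitly relies on the former.
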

\begin{proof} By \cite[Ch.2 Thm 1]{AC} we know that $F = \overline{Q(k_{\mathfrak p}})$ is flat over $k_{\mathfrak p}$ and $k_{\mathfrak p}$ is flat over $k$ for all $\mathfrak p \in \mathrm{Spec}(k).$  So $F/k$ is flat. 
Since $J$ is finitely generated projective and finitely presented
$\mathrm{Der}_{F }(J \otimes _k  F ) = \mathrm{Der}_k(J) \otimes _k F $ and $\mathrm{IDer}_{F }(J \otimes _k  F ) = \mathrm{IDer}_k(J) \otimes _k F .$ 
We have $J_F = \mathcal H_3(\mathbb O, \bar{\;})  \otimes_k F =  \mathcal H_3(\mathbb O \otimes_k F, \bar{\;})$ and by \cite[4.1]{lopera} $\mathbb O \otimes_k F$ is a split octonion algebra, hence $J_F$ is a split Albert algebra.  
 By Proposition~\ref{all_der_inner_albert}, $\mathrm{Der}_F(J) = \mathrm{IDer}_F(J),$ thus both $\mathrm{Der}_k(J)$ and $\mathrm{IDer}_k(J)$ are of type $F_4.$
\end{proof}

\begin{cor} Let $J$ be any Albert algebra over k. Then 
$\mathrm{IDer}_k(J)$ is of type $F_4.$
\end{cor}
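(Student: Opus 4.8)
The plan is to reduce the general Albert algebra $J$ over $k$ to the split case treated in Theorem~\ref{Der_Mat_alg_F_4_der} and then to the pointwise condition in Definition~\ref{Albert_Alg_Def_ring}. First I would recall that, by Definition~\ref{Albert_defi}, there is a faithfully flat base change $k \to K$ with $J \otimes_k K$ isomorphic to a split $K$-Albert algebra, and split Albert algebras are of the form $\mathcal H_3(Z, \bar{\;})$ for a split octonion algebra $Z$ by Definition~\ref{Albert_defi}. So it suffices to prove the statement for $J = \mathcal H_3(\mathbb O, \bar{\;})$ with $\mathbb O$ an octonion algebra over $k$; but that is exactly Theorem~\ref{Der_Mat_alg_F_4_der}, which gives that $\mathrm{IDer}_k(\mathcal H_3(\mathbb O, \bar{\;}))$ is of type $F_4$ whenever $1/2, 1/3 \in k$. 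The only gap is that a priori a general Albert algebra need not be presented as $\mathcal H_3$ of an octonion algebra over the ground ring $k$ itself; the isomorphism with a split (or hermitian) model may only exist after a faithfully flat extension.

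To close that gap I would argue directly at the level of Definition~\ref{Albert_Alg_Def_ring}, i.e. check the type-$F_4$ condition at each prime $\mathfrak p \in \mathrm{Spec}(k)$. Fix such a $\mathfrak p$ and set $F = \overline{Q(k_{\mathfrak p})}$. By \cite[Ch.2 Thm 1]{AC}, $F$ is flat over $k_{\mathfrak p}$ and $k_{\mathfrak p}$ is flat over $k$, so $F/k$ is flat. Since an Albert algebra is finitely generated projective as a $k$-module (this is part of what it means to be an Albert algebra, by Definition~\ref{Albert_defi} together with faithfully flat descent of the projectivity), Proposition~\ref{flat_basechange_Jordan_Der} and Proposition~\ref{herm_jord_der_flat_base_change_cor_rev} give $\mathrm{IDer}_F(J \otimes_k F) = \mathrm{IDer}_k(J) \otimes_k F$. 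Next, $J \otimes_k F$ is an Albert algebra over the algebraically closed field $F$ (Albert algebras are stable under base change, again by Definition~\ref{Albert_defi} and the transitivity of faithfully flat extensions), and over an algebraically closed field every Albert algebra is split. By Proposition~\ref{all_der_inner_albert}, $\mathrm{Der}_F(J \otimes_k F) = \mathrm{IDer}_F(J \otimes_k F)$ is a Lie algebra of type $F_4$ over $F$. Therefore $\mathrm{IDer}_k(J) \otimes_{\overline{Q(k_{\mathfrak p})}} \overline{Q(k_{\mathfrak p})}$ is of type $F_4$ for every $\mathfrak p$, which is exactly the condition in Definition~\ref{Albert_Alg_Def_ring}.

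The one point requiring a little care — and I expect it to be the main (mild) obstacle — is verifying that a general Albert algebra over $k$ is genuinely finitely generated projective as a $k$-module, so that the flat-base-change statements for derivation algebras apply. This follows by faithfully flat descent: $J \otimes_k K$ is split Albert, hence free (or at least finitely generated projective) over $K$ of rank $27$, and finite generation and projectivity of a module descend along faithfully flat ring maps. Once that is in hand, the argument is a routine assembly of the cited propositions, and no new computation is needed. I would therefore present the proof as: (1) invoke faithfully flat descent for finite projectivity of $J$; (2) apply the flat-base-change isomorphisms for $\mathrm{IDer}$; (3) reduce to $F$ algebraically closed, where $J\otimes_k F$ is split and Proposition~\ref{all_der_inner_albert} applies; (4) conclude via Definition~\ref{Albert_Alg_Def_ring}.
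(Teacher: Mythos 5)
Your proof is correct, and it takes a genuinely different route from the paper's. The paper argues at the level of the ground ring $k$: using the explicit decomposition of Corollary~\ref{stam_der_jrod_der_cor} and faithfully flat descent, it tries to identify $\mathrm{IDer}_k(J)$ with $\mathrm{IDer}(\mathcal H_3(Z_k, \bar{\;}))$ for $Z_k$ the split octonion $k$-algebra, and then invokes Theorem~\ref{Der_Mat_alg_F_4_der}. You instead go straight to Definition~\ref{Albert_Alg_Def_ring} and verify the type-$F_4$ condition pointwise at each $\mathfrak p \in \mathrm{Spec}(k)$, using the flat-base-change result for inner derivation algebras together with Proposition~\ref{all_der_inner_albert}. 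Your route is more direct and in fact requires less: Definition~\ref{Albert_Alg_Def_ring} is literally a prime-by-prime condition, so you only need the $\mathrm{IDer}$-base-change statement (Proposition~\ref{herm_jord_der_flat_base_change_cor_rev}, which holds for any unital Jordan algebra over a flat extension — no projectivity hypothesis needed, so the concern you flag about descent of projectivity is harmless but unnecessary for that proposition) and the observation that $J\otimes_k F$ remains Albert over the algebraically closed field $F$. Two small wording fixes: the stability of ``Albert'' under an arbitrary ring map $k\to F$ follows from the fact that faithful flatness is preserved under cobase change (i.e.\ $F \to K\otimes_k F$ is faithfully flat when $k\to K$ is), not ``transitivity''; and you do not actually need $J\otimes_k F$ to be \emph{split} over $F$, since Proposition~\ref{all_der_inner_albert} already asserts type $F_4$ for any Albert algebra over a field of characteristic $\neq 2$. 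With those cosmetic adjustments the argument is complete and, arguably, tighter than the module-level descent the paper performs.
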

\begin{proof}
In view of Corollary ~\ref{stam_der_jrod_der_cor}, the  inner derivation algebra of $\mathcal H_3(Z_K, \bar{\;})$ is isomorphic to  $\mathrm{StanDer}_K(Z_K) \oplus  ((Z_K)_-)^2 \oplus (Z_K)^3$ for any ring $K$ containing $1/2$ and $1/3.$  \\ Let $J$ be any Albert algebra and assume that $\mathcal H_3(Z_K, \bar{\;}) \cong J \otimes_k K$ and that $K/k$ is faithfully flat. Then $\mathrm{IDer}(J) \otimes_k K = \mathrm{StanDer}_K(Z_K) \oplus  ((Z_K)_-)^2 \oplus (Z_K)^3. $ 
But $Z_K$ is split, thus there is an octonion $k$-algebra $Z_k$ such that $Z_k \otimes K = Z_K.$ As suggested by the notation, $Z_k$ can be chosen to be split. Hence, $\mathrm{StanDer}_K(Z_k) \oplus  ((Z_K)_-)^2 \oplus (Z_K)^3 = (\mathrm{StanDer}_k(Z_k) \oplus  ((Z_k)_-)^2 \oplus (Z_k)^3) \otimes_k K .$
By faithful flatness, $\mathrm{IDer}_k(J) = \mathrm{StanDer}_k(Z_k) \oplus  ((Z_k)_-)^2 \oplus (Z_k)^3$ and $\mathrm{StanDer}_k(Z_k) \oplus  ((Z_k)_-)^2 \oplus (Z_k)^3 = \mathrm{IDer}(\mathcal H_3(Z_k, \bar{\;})).$ Now the claim follows from Theorem~\ref{Der_Mat_alg_F_4_der}.
\end{proof}

\subsection{Associative coordinates: $n > 3$}
The setting for this section is the following: 

We continue to assume $1/2 \in k,$ but not necessarily $1/3 \in k.$
Furthermore $J$ is the hermitian Jordan algebra $\mathcal H_n(D, \bar{\;})$ where $n\geq 4$ and $D$ is an associative algebra with involution $\bar{\;} : D \rightarrow D$. Since $1/2 \in k$, we can uniquely write every element as sum of a symmetric and a skew-symmetric element:
$$ z = \frac{z+ \bar z}{2} + \frac{z -\bar z}{2} $$
and 
$D_+ = \{z: z = \bar z \}$ is spanned by the ``traces'' $z+ \bar z.$
and $D_-$ is spanned by $z - \bar z.$
The hermitian matrix Jordan algebra decomposes then into simultaneous Peirce spaces: 
if $1 \leq i < j \leq n$, then 
$$J_{ij} = \{ dE_{ij} + \bar d E_{ji}: d \in D\} = J_{ji}.$$
As usual, we will denote:
$d[ij] = dE_{ij} + \bar d E_{ji}$ for $i \neq j$ and  for $1 \leq i \leq n:$
$ J_{ii} = D_+E_{ii}= \{d_0E_{ii} : d_0 \in D_+\}.$

Many of the observations made for alternative coordinates will simplify when $D$ is associative. However, the trade-off is of course, that the number  of idempotents can be greater than $3.$

It is very easy to prove the following simplification of
\ref{der_alt_explicit_form_lem}.

\begin{lem} Let $D$ be associative. \label{ass_alt_explicit_form_lem}
 Recall (see Corollary~\ref{ud_JA_action_ce_cor}) that we have a central extension $ \ud: J* J \rightarrow \mathrm{IDer}(J)$ which induces an action by derivations $(x * y).z = D(x,y).z$ for all $x,y, z \in J.$ Under this action,  for $a,b,c \in D$,
  \begin{eqnarray}
   (a[ij] * b[ij]).c[ij] &=&  ((a \bar b - b \bar a)c - c(\bar a b  -  \bar b a))[ij],  \label{der_ass_explicit_form_lem1}\\
  (a[ij] * b[ij]).c[jl]  &=& (\bar abc  - \bar bac )[jl] \quad i,j, l \neq, \label{der_ass_explicit_form_lem2} \\
   (a[ij] * b[ij]).c[li]  &=& (cb\bar  a - ca\bar b)[li] \quad i,j, l \neq,  \label{der_ass_explicit_form_lem3}\\
   (a[ij] * b[ij]).c[kl]  &=& 0\quad \{i,j\} \cap \{kl\} = \emptyset.
   \end{eqnarray} 
  \end{lem}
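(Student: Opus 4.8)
The plan is to establish the four multiplication formulas in Lemma~\ref{ass_alt_explicit_form_lem} by specializing the computations already carried out in Lemma~\ref{der_alt_explicit_form_lem}, using the fact that the associator $(a,b,c)$ vanishes identically in an associative algebra and that, in the associative case, the action normalization changes from $(x*y).z = 4D(x,y).z$ to $(x*y).z = D(x,y).z$. Since associative algebras are alternative, all the identities from the earlier lemma remain valid, so the work is purely bookkeeping: drop every associator term, simplify the left/right multiplication expressions, and divide by the appropriate factor coming from the change of normalization for $D(x,y)$. No genuinely new Jordan- or alternative-algebra identity is needed.

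Concretely, for the first formula I would take the coefficient computation from the proof of \eqref{der_alt_explicit_form_lem1}, namely that the coefficient of $(a[ij]*b[ij]).c[ij]$ was shown to equal $2(a,b,c) + L_{a\bar b - b\bar a}c - R_{a\bar b - b\bar a}c$ in the alternative case. Setting $(a,b,c)=0$ and unwinding the left/right multiplication operators as ordinary associative products gives $(a\bar b - b\bar a)c - c(\bar a b - \bar b a)$ (here one must be slightly careful that in the earlier writeup $L_{a\bar b - b\bar a}c - R_{a\bar b - b\bar a}c$ was abbreviated; in the associative setting the skew-symmetric element appearing on the right is $\bar a b - \bar b a$, matching the $R$-operator as written in \eqref{der_alt_explicit_form_lem1}), and then applying the normalization $(x*y).z = D(x,y).z$ instead of $4D(x,y).z$. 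For the off-diagonal formulas \eqref{der_ass_explicit_form_lem2} and \eqref{der_ass_explicit_form_lem3}, I would start from the expressions $\bar a(bc) - \bar b(ac)$ and $(cb)\bar a - (ca)\bar b$ found in the proof of Lemma~\ref{der_alt_explicit_form_lem} and simply remove the parentheses since $D$ is associative, obtaining $\bar a b c - \bar b a c$ and $c b \bar a - c a \bar b$ respectively. The last case, $(a[ij]*b[ij]).c[kl] = 0$ for $\{i,j\}\cap\{k,l\} = \emptyset$, follows from the Peirce multiplication rules \eqref{Peirce_rule_4}, exactly as in the alternative case, and requires no modification.

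The only mildly delicate point — and the one I would be most careful about — is keeping the bookkeeping consistent between the two action normalizations and between the notations $a \circ b$ versus $ab$ that appear in this chapter (the remark preceding Lemma~\ref{der_alt_explicit_form_lem} stipulates that $ab$ really means $a\circ b$ for the Jordan product, while inside $D$ the juxtaposition is the associative product of $D$). I would state explicitly at the start of the proof that the central extension $\ud : J*J \to \mathrm{IDer}(J)$ of Corollary~\ref{ud_JA_action_ce_cor} gives $(x*y).z = D(x,y).z = 2[L_x,L_y].z$, which in the associative coordinate setting (after expanding $\{xyz\}$ via \eqref{circ_prod_Defi}) produces precisely the claimed coefficients, and then verify each of the four lines by direct substitution. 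Because the hard analytical content was already done in Lemma~\ref{der_alt_explicit_form_lem}, I expect no real obstacle here; the proof is essentially a one-paragraph specialization argument, and the main risk is merely a sign or factor error, which can be cross-checked against the alternative formulas by setting all associators to zero.
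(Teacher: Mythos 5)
Your strategy is the paper's: specialize Lemma~\ref{der_alt_explicit_form_lem} to the associative case. The paper's one-line proof simply invokes that lemma and reminds the reader that left and right multiplication operators commute (equivalently, associators vanish), which is precisely the specialization you propose.

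However, your plan to ``apply the normalization $(x*y).z = D(x,y).z$ instead of $4D(x,y).z$'' by dividing out a factor is misplaced and, if actually carried out, would give the wrong formulas. If you compare the displayed formulas of the two lemmas directly, they already coincide once associators vanish and parentheses are dropped. For instance, \eqref{der_alt_explicit_form_lem1} gives $2(a,b,c) + L_{a\bar b - b\bar a}c - R_{\bar a b - \bar b a}c$, which with $(a,b,c)=0$ is $(a\bar b - b\bar a)c - c(\bar a b - \bar b a)$ --- and this is already \eqref{der_ass_explicit_form_lem1} verbatim, with no factor of $4$. Likewise $\bar a(bc) - \bar b(ac)$ becomes $\bar a b c - \bar b a c$ with no rescaling. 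The apparent $4D(x,y).z$ versus $D(x,y).z$ discrepancy in the two lemma preambles is an internal notational inconsistency of the paper, not a genuine renormalization between the alternative and associative settings (the derivation $D(a,b) = 2[L_a,L_b]$ of Corollary~\ref{ud_JA_action_ce_cor} is the same in both). Since you propose cross-checking by setting associators to zero, you would most likely catch this; but as written your ``bookkeeping'' step is spurious, and you should instead state that the formulas specialize identically without any rescaling.
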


\begin{proof} This follows immediately from Lemma~\ref{der_alt_explicit_form_lem}. One just has to remember that left and right multiplication operators commute in this case.  
\end{proof}

\begin{lem}
In $\mathcal D_0 = \sum_{i < j} J_{ij} * J_{ij}$ the following hold:
\begin{itemize}
\item[\rm (i)] For any $i,j \neq$ and neither equal to $1$:
\begin{equation} ab[ij] * \bar c[ij]  + bc[1i] * \bar a[1i] + ca[j1] * \bar b[j1] = 0. \label{cyclic_with_involution}\end{equation}
\item[\rm(ii)] $T(a, b) := a[1j] *  b[1j]  - 1[1j]* ( \bar a b)[1j] $ does not depend on $j.$
\item[\rm(iii)] $1 [ij] * a[ij] = 0, \quad \forall a \in D_+ $
\end{itemize}
\end{lem}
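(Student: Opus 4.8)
The plan is to prove the three identities directly from the definition of $J*J$ and the Peirce multiplication rules, working inside $\mathcal D_0 = \sum_{i<j} J_{ij}*J_{ij}$, where $D$ is associative and $1/2 \in k$. The key computational inputs are the relations (\ref{HCJ1}) and (\ref{HCJ2}) defining $*$, together with the Peirce product formulas (\ref{Peirce_rule_1})--(\ref{Peirce_rule_5}) and their homological consequences in Lemma~\ref{Peirce_mult_rules_hom}, in particular (\ref{Peirce_mult_rules_hom_eq_1}), (\ref{Peirce_mult_rules_hom_eq_2}) and the shift identities $a*1 = 1*a = 0$ from (\ref{one_kills_allunider}). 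Throughout I will use the circle product, so that $x[ij]\circ y[jk] = (xy)[ik]$ etc.\ as recorded in Definition~\ref{hermitian_matrix_jord_alg}, and for $D$ associative the products of elementary matrices involve no associators.

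For (i): fix three distinct indices $1, i, j$. The three elements $ab[ij]$, $bc[1i]$, $ca[j1]$ live in $J_{ij}$, $J_{1i}$, $J_{j1}$ respectively, and their ``$*$-partners'' are chosen to match the relation (\ref{HCJ1}) $x\otimes(yz) + y\otimes(zx) + z\otimes(xy)$ after one substitutes Peirce-homogeneous elements and uses the multiplication rules. Concretely I would start from the cyclic relation applied to $x = \bar a[i1]$, $y = b[1j]$ (interpreting indices suitably) and $z$ chosen so that the three circle products land in the three Peirce spaces above; expanding $x\circ(y\circ z)$ etc.\ with the elementary-matrix multiplication rules and the fact that $\bar d[kl] = d[lk]$, the cyclic sum collapses to (\ref{cyclic_with_involution}). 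The main care needed here is bookkeeping of which factor carries the bar and keeping the index cycle $(1,i,j)$ consistent; associativity of $D$ removes any associator corrections. This is essentially the analogue of the identity used to produce (\ref{cyc_sum_3_alt_in_prop}) in the rectangular $A$-case, specialized to hermitian coordinates.

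For (ii): set $i = 1$ in (i), so that for $j, l$ both different from $1$ and from each other, $ab[1j]*\bar c[1j] + bc[1\cdot]*\bar a[1\cdot] + \dots = 0$; choosing $a = b = 1$ or $c = 1$ in the appropriate slots and using $1[1j]*a[1j]$-type shifts reduces the expression $a[1j]*b[1j] - 1[1j]*(\bar a b)[1j]$ for index $j$ to the same expression for index $l$. The cleanest route is to show both equal a common element by transporting through the third Peirce space $J_{jl}$ via (\ref{Peirce_mult_rules_hom_eq_1}): $a[1j]*b[1j]$ and $a[1l]*b[1l]$ are both related, after a $*1$-shift, to $e_j$ or $e_l$ paired with something in $J_{jl}$, using the isomorphism-type statements from the proof of Proposition~\ref{Uni_der_Decomp_prop_ja}. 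For (iii): $1[ij]*a[ij]$ with $a = \bar a \in D_+$ is, by (\ref{Peirce_mult_rules_hom_eq_2}) applied with the diagonal idempotent, equal to $e_i * (1\cdot a)[ij]$ type expression, but $1[ij]$ is itself $e_i \circ$-related to the off-diagonal identity, and the relation $1*x = 0$ together with $a$ being hermitian forces the class to vanish; alternatively one applies the derivation action $\ud(1[ij]*a[ij])$ and checks it annihilates every $e_k$ and every $J_{kl}$ using Lemma~\ref{ass_alt_explicit_form_lem}, and then notes that such an element must already be $0$ in $J*J$ since $1[ij]*a[ij] \in \mathcal D_0$ and $\ker(\ud_{JA})$ contributions here are controlled. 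I expect the main obstacle to be (i): getting the index cycle and the placement of the involution exactly right so that the cyclic relation (\ref{HCJ1}) specializes to precisely the stated combination — once (i) is correctly set up, (ii) and (iii) follow by the substitutions $a=b=1$ (resp.\ $c=1$) and the $*1$-shift identities with only routine verification.
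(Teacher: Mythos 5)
Your overall strategy for (i) — specialize the cyclic relation (\ref{HCJ1}) to Peirce-homogeneous elements and compute circle products — is the same as the paper's, and that part would work once the indices are pinned down. The paper takes $x = a[ik]$, $y = b[kj]$, $z = \bar c[ij]$ with $i,j,k$ pairwise distinct (not $x = \bar a[i1]$, $y = b[1j]$), obtains the general 3-index identity $a[ik]b[kj]*\bar c[ij] = -bc[ki]*\bar a[ki] - ca[jk]*\bar b[jk]$, and then sets $k=1$ to get (i). Keeping $k$ free matters: the paper never derives (ii) and (iii) from the specialized statement (i) alone, but from this more flexible 3-index form.

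For (ii), "set $i=1$ in (i)" is not available, since (i) is stated with $i,j$ both $\neq 1$; you would be plugging in a forbidden index and landing in $J_{11}*J_{11}$, which is not a well-formed term here. What the paper does is take the general identity with $c = 1$, $k = 1$ to get $ab[ij]*1[ij] = \bar a[1i]*b[1i] - \bar a[1j]*b[1j]$, write the companion identity for $1[ij]*ab[ij]$, add the two, use antisymmetry of $*$ to kill the left side, and read off that $T(\bar a, b)$ is $j$-independent.

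The genuine gap is in (iii), and both of your arguments there fail. First, the "shift identity $1*a = a*1 = 0$" from (\ref{one_kills_allunider}) applies to the Jordan unit $1_J = \sum e_i$, not to the off-diagonal matrix $1[ij] = E_{ij} + E_{ji} \in J_{ij}$; these are different elements and the shift cannot be applied. Second, the alternative route — showing $\ud(1[ij]*a[ij]) = 0$ and concluding $1[ij]*a[ij] = 0$ in $J*J$ — is circular: the map $\ud_{JA}$ has a nontrivial kernel $\mathrm{HC}(J)$, and the whole point of this section is to compute it, so one cannot infer vanishing in $J*J$ from vanishing of the associated derivation without a separate argument. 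Your hedge that "$\ker(\ud_{JA})$ contributions here are controlled" is exactly the thing that would need to be proved, and it is not. The paper's actual proof of (iii) is purely algebraic inside $J*J$: set $a = b = 1$ in the general 3-index identity to get $1[ij]*\bar c[ij] + c[ki]*1[ki] + c[jk]*1[jk] = 0$, swap $j \leftrightarrow k$ to get the companion identity, add, and after cancellation by antisymmetry obtain $(c + \bar c)[jk]*1[jk] = 0$; for $c \in D_+$ this is $2\,c[jk]*1[jk] = 0$, and $1/2 \in k$ gives (iii).
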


\begin{proof}Let $a, b, c \in D$ and consider the element $a[ik]b[kj]  * \bar c[ij]$ where $i,j, k\neq.$ We have by (\ref{HCJ1})
\begin{eqnarray}
ab[ij] * \bar c[ij] &=& a[ik]b[kj]  * \bar c[ji] \\
 &=& - b[kj] c[ji] * a[ik] -  c[ji] a[ik] * b[kj] \label{C_n_graded_eqn1_k_gen}\\
&=& -b[kj] c[ji] * \bar a[ki] - c[ji]a[ik] * b[kj] \\
&=& -bc[ki] * \bar a[ki] -ca[jk] * \bar b[jk] 
 \end{eqnarray}
whence (\ref{cyclic_with_involution}). 

Setting $ k= 1$ gives (\ref{cyclic_with_involution}). Assume  $b = 1$ and $k = 1$, then
\begin{eqnarray}
a[ij] * \bar c[ij] &=& -c[1i] * \bar a[1i] - ca[j1] * 1[1j]  \nonumber \\
&\in & \sum_{k \neq 1} J_{1k} * J_{1k} \label{C_n_graded_eqn1}.
\end{eqnarray}
In the above identity (\ref{C_n_graded_eqn1_k_gen}), let  now $a = b = 1,$ then 
$$1[ij] *  \bar c[ij] + c[ki] * 1[ki] + c[jk] * 1[jk].$$
If we switch $j$ and $k$, then a second identity can be obtained:  
$$1[ik] *  \bar c[ik] + c[ji] * 1[ji] + c[kj] * 1[kj].$$
Adding these two gives 
$ (\bar c + c)[jk] * 1[jk] = 0 .$ This is equivalent to 
\begin{equation}c[jk] * 1[jk] = -\bar c[jk] * 1[jk] \label{conjugation_switch_sgn_hom_id}. \end{equation}
Thus if $c = \bar c$, $c[jk] * 1[jk]= 0$ which gives (iii).  
Let $ k= 1$ and $c = 1$ in (\ref{C_n_graded_eqn1_k_gen}), then 
\begin{eqnarray*}
ab[ij] * 1[ij] &=& - b[1i] * \bar a[1i] - \bar a[1j] * b[1j] \\
&=& \bar a[1i] * b[1i] - \bar a[1j] * b[1j].
\end{eqnarray*}
Similarly, 
\begin{eqnarray*}
1[ij] * ab[ij] = - \bar b \bar a[1i] * 1[1i] +  \bar b \bar a[1j] * 1[1j]. \\
\end{eqnarray*}
If we add those two equations, the left hand side vanishes and we obtain 
$$ 0 =  \bar a[1i] * b[1i] - \bar a[1j] * b[1j] - \bar b \bar a[1i] * 1[1i] +  \bar b \bar a[1j] * 1[1j] $$
or equivalently
$$\bar a[1j] * b[1j] - \bar b \bar a[1j] * 1[1j] =  \bar a[1i] * b[1i]  - \bar b \bar a[1i] * 1[1i] +  \bar b \bar a[1j] * 1[1j]. $$
Since $\bar b \bar a[1j] * 1[1j] = 1[1j]  * a b[1j]$ and $\bar b \bar a[1i] * 1[1i] = 1[1i]  * a b[1i],$
this proves that $T(\bar a, b)$ does not depend on the choice of $j.$ If we replace $\bar a$ by $a,$ this is (ii).
\end{proof}

\begin{rem} 
 By the  result above, the following elements span $\mathcal D_0:$
\begin{eqnarray}
T(a,b) &&a, b \in D, \\
h_j(a):= 1 [1j] * a[1j] && a \in D_{-}.
\end{eqnarray}
\end{rem}

\begin{prop}
\label{C_N_prop_Center}
An element $\sum T(a_i, b_i) + \sum_{j \geq 2}1[1j] * c_j[1j]$ is in the kernel of $\ud: J* J \rightarrow \mathrm{IDer}(J) $ if and only if
there is $c \in Cent(D) \cap D_-$ and
$$c_j = c ,\quad \sum_{i}[\bar a_i, b_i] + {[a_i, \bar b_i]} = -2nc $$
where $n$ is as in the definition of $J.$ 
\end{prop}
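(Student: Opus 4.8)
The plan is to determine when the central extension $\ud : J * J \to \mathrm{IDer}(J)$ kills the element $X = \sum_i T(a_i, b_i) + \sum_{j \geq 2} h_j(c_j)$, using the fact that $\ker(\ud)$ consists precisely of those $X \in J*J$ whose action (by derivations) annihilates a generating set of $\TKK(J)$. Since $J = \mathcal H_n(D,\bar{\;})$ is generated as a Jordan algebra by the off-diagonal Peirce spaces $J_{1j}$, $j \geq 2$ (and more generally by all $J_{kl}$, $k \neq l$), it suffices to compute $\ud(X).c[1j]$ for all $c \in D$ and all $j \geq 2$, and to set these equal to zero. First I would assemble the explicit action formulas: by Lemma~\ref{ass_alt_explicit_form_lem} the derivation $\ud(T(a,b))$ acts on $c[1j]$ and the other relevant Peirce spaces through left/right multiplications by $[\bar a b - \bar b a]$-type and $[a\bar b - b \bar a]$-type elements, and by the definition of $h_j(a) = 1[1j] * a[1j]$ together with the $n=3$ computations in Section~\ref{alt_coordinates} (specialized to the associative case, where left and right multiplications commute), the action of $\ud(h_j(a))$ on $c[1l]$ is by $L_a$, $R_a$, $L_a + R_a$ etc. depending on whether the indices coincide.

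The core computation is then to write down, for each $j \geq 2$ and each $c \in D$, the coefficient of $E_{1j}$ (equivalently of $c[1j]$) in $\ud(X).c[1j]$; this is a $D$-linear expression in $c$ of the form $L_A c + R_B c$ plus a sum of associator-free multiplication terms coming from the $T(a_i,b_i)$. Setting $c = 1$ first extracts the ``diagonal'' constraint: one gets that each $c_j$ must equal a common element, call it $c$, and that $c$ together with $\sum_i([\bar a_i, b_i] + [a_i, \bar b_i])$ satisfies a single linear relation. Then, for $c$ arbitrary, the requirement that a pure left multiplication operator agree with a pure right multiplication operator forces the relevant coefficient to be central in $D$ — this is exactly the place where $c \in \mathrm{Cent}(D)$ appears. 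One must also track the skew-symmetry constraint: since $h_j(a)$ was only defined for $a \in D_-$ and since $T(\bar a, b) = -T(a, \bar b)$-type identities relate conjugates, the common element $c$ will be forced to lie in $D_-$ as well. Counting the $n$ idempotents (the singleton index $1$ together with the $n-1$ indices $j \geq 2$) produces the factor $2n$ in the final relation $\sum_i [\bar a_i, b_i] + [a_i, \bar b_i] = -2nc$; this mirrors the factor $\card K$ appearing in Proposition~\ref{A_n_assoc_coord_uni_centre_prop}. Conversely, I would verify that these conditions are sufficient by reversing the equivalence transformations: the only non-reversible step was the substitution $c = 1$, and once we know the offending coefficient is central the substitution becomes an equivalence, so $\ud(X)$ annihilates all of $J_{1j}$, hence (by the analogue of passing to the opposite algebra / conjugate indices) all Peirce spaces, hence all of $J$, so $X \in \ker(\ud)$.

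The main obstacle I expect is bookkeeping rather than conceptual: one must carefully separate the contributions of the various Peirce components (the action on $c[ij]$ for $i,j$ both $\neq 1$, on $c[1j]$, and the interaction between the $T(a_i,b_i)$ terms and the $h_j(c_j)$ terms) and make sure no terms are double-counted when the index $1$ is involved, since $J_{11}$ is one-dimensional over $D_+$ while the other diagonal blocks play no role here. A secondary subtlety is confirming that all associator terms cancel; in the associative case they vanish outright, which is why the statement is clean (compare the messier alternative-coordinate computations of Section~\ref{alt_coordinates}), so I would emphasize that associativity of $D$ is what collapses $\ud(T(a,b))$ to a sum of commuting left and right multiplications. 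Once the linear system is written out, extracting the stated characterization is a short argument, and the centrality claim follows from the standard fact (used already in the proof of Proposition~\ref{A_n_assoc_coord_uni_centre_prop}) that $L_A = R_B$ on a unital associative algebra forces $A = B \in \mathrm{Cent}(D)$.
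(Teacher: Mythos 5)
Your proposal is correct and follows the same approach the paper takes: reduce to annihilating Peirce components, compute the action of $\ud(X)$ via the associative specialization Lemma~\ref{ass_alt_explicit_form_lem}, set $c = 1$ to extract the scalar constraints, and use that $L_A = R_B$ in a unital associative algebra forces $A = B \in \mathrm{Cent}(D)$, together with the idempotent count producing the factor $2n$ just as in Proposition~\ref{A_n_assoc_coord_uni_centre_prop}. The paper makes the bookkeeping lighter by first evaluating $\ud(X)$ on the interior Peirce spaces $J_{il}$ with $i, l \neq 1$: there the $T(a_i, b_i)$ vanish and only $h_i$ and $h_l$ act, so the equation collapses at once to $L_{c_i} = R_{c_l}$, giving $c_i = c_l \in \mathrm{Cent}(D)$ in a single stroke; only then does it pass to $J_{1l}$ to obtain the relation with $\sum_i[\bar a_i,b_i]+[a_i,\bar b_i]$. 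You propose to work exclusively with the generating family $J_{1j}$, which is also valid (a derivation killing generators of $J$ kills $J$), but on each $J_{1l}$ all of the $T$'s and all of the $h_j$'s contribute at once in mixed $L$/$R$ form, so one must track carefully which $h_j$ produces the lone $R$-piece in a fixed $c[1l]$ parametrization before the $L=R$ centrality argument can be applied — exactly the bookkeeping hazard you flag.
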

\begin{proof}
The element $\ud(T(a, b))$ acts on $J_{1k}$ as $L_{[a, \bar b]  + [\bar a, b]}$ as one easily verifies using \linebreak[3] Lemma~\ref{ass_alt_explicit_form_lem} and annihilates all other Peirce spaces. 
The inner derivation $\ud(1[1j] * c[1j])$ is $L_c - L_{\bar c} + R_{c - \bar  c}$ on $J_{1j}$ and on $J_{jl}$, $l \neq 1,$ it acts by $L_{\bar c -c} = -2L_{c}.$ Analogously, it acts on $J_{lj}$ by $-2R_{c}.$
The element $\sum T(a_i, b_i) + \sum_{j \geq 2}1[1j] * c_j[1j]$ is in the kernel of $\ud : J* J \rightarrow \mathrm{IDer}(J) $ if and only if its image under $\ud$ annihilates all Peirce spaces. First consider a Peirce space $J_{il}$ where $i \leq j$ and $i \neq 1.$ Then the only summands which act on this space are $\ud ([1l] * c_l[1l])$  and $\ud(1[1i] * c_i[1i]),$ and so the condition to annihilate $J_{il}$ is that $-2L_{c_i} = -2R_{c_l}$ which implies $c_i = c_l \in \mathrm{Cent}(D).$ 
Thus $L_c - L_{\bar c} + R_{c - \bar  c} = 4L_c$ and the action of $\sum\ud(1[1j] * c[1j])$ on $J_{1l}$ is simply multiplication by $2nc$ where $c$ and $n$ are as in the assumption. (Indeed, we multiply once by $4c$ and then $n-1$ times by $2c$ since we only consider the cases where $j \neq 1$). Since $T(a_i, b_i)$ is simply left multiplication by $\sum_{i}[\bar a_i, b_i] + {[a_i, \bar b_i]}$ we obtain that this element has to be equal to $-2nc$ for some $c \in \mathrm{Cent}(D) \cap D_{-}$
\end{proof}

\begin{rem}
If the involution is trivial, then $D$ must be commutative and  there are no non-zero central elements in $D_{-},$ so that always $c= 0.$ Also in this case 
$[ \bar a, b] = -[a, b] = 0$ and thus $\sum_{i}[\bar a_i, b_i] + [a_i, \bar b_i] = - 2 \sum [a_i, b_i] = 0,$ so that  $T(D,D) = \ker \ud.$ This is in agreement with result obtained for $C_n$-graded Lie algebras with commutative coordinate algebra (see for instance \cite{Kas}).
\end{rem}

\begin{rem} The reader should compare our results to those of Allison and Gao in \cite{AllisonGao}. Their analysis goes much further and gives more insight in the structure of $\uider(V)_0,$ but under the assumptions that $k$ is a field and that the characteristic of the field does not divide $n-1$.  
\end{rem}

\subsubsection{Example}

Let $D$ be the algebra of square matrices with entries in $k$ and involution given by transposition.  The centre of $D$ is spanned by the multiples of the identity and we assume that the centre of $D$ intersects $[D, D]$ trivially. Also all the central elements are symmetric.  Thus 
an element $\sum T(a_i, b_i) + \sum_{j \geq 2}1[1j] * c_j[1j]$ is in the kernel of $f: J* J \rightarrow \mathrm{IDer}(J) $ if and only if
there is $c \in Z(D)$ which is also skew such that
$$c_j = c ,\quad \sum_{i}[\bar a_i, b_i] + {[a_i, \bar b_i]} = -2nc $$
where $n$ is the size of the matrices. Then
$c \in Z(D)$, $c = - \bar c$ and $-2nc \in [D, D]$ if and only if $c = \alpha 1_D$ where $2 \alpha = 0.$ In particular since $k$ does not have $2$-torsion,  this simply means $c= 0.$ 
The condition on $\sum T(a_i, b_i)$ is that $\sum_{i}[\bar a_i, b_i] + {[a_i, \bar b_i]} = 0.$
\backmatter

\bibliographystyle{alpha}

\def\cprime{$'$} \def\cprime{$'$} \def\cprime{$'$} \def\cprime{$'$}
  \def\cprime{$'$} \def\cprime{$'$}

\end{document}